\newcommand*{\transp}[2][-3mu]{\ensuremath{\mskip1mu\prescript{\smash{\mathrm t\mkern#1}}{}{\mathstrut#2}}}
\DeclareSymbolFont{cyrletters}{OT2}{wncyr}{m}{n}
\DeclareMathSymbol{\Sha}{\mathalpha}{cyrletters}{"58}
\let\Re\undefined
\DeclareMathOperator{\Re}{Re}
\DeclareMathOperator{\Tr}{Tr}
\DeclareMathOperator{\supp}{supp}
\DeclareMathOperator{\vol}{vol}
\DeclareMathOperator{\Spec}{Spec}
\DeclareMathOperator{\Hom}{Hom}
\begin{document}
\theoremstyle{plain}
	\newtheorem{thm}{Theorem}[section]
	
	\newtheorem{cor}[thm]{Corollary}
	\newtheorem{thmy}{Theorem}
	\renewcommand{\thethmy}{\Alph{thmy}}
	\newenvironment{thmx}{\stepcounter{thm}\begin{thmy}}{\end{thmy}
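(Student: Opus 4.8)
The main theorem is not reproduced in the excerpt above; but on the evidence of the symbols just set up — $\Sel$ and $\Sha$, the $\rank/\corank$ and $\chaR$ notation, the $\cris$ and $\Frob$ operators, and the subscript for continuous $\mathbb{Z}_p$-algebra homomorphisms — I take it to be an Iwasawa-theoretic statement, namely an equality (or one-sided divisibility) between the characteristic ideal of the Pontryagin dual of a Selmer group attached to a $p$-adic Galois representation (an eigenform, or a Hida/Coleman family over weight space) and a $p$-adic $L$-function, together with the consequences for $\Sha$ and for ranks. Here is the plan I would carry out.

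\emph{Algebraic side and the Euler-system bound.} First I would fix $\Lambda=\mathbb{Z}_p[[\Gamma]]$ with $\Gamma=\Gal(F_\infty/F)$ (enlarged by the weight variable if a family is present), form the big representation $\mathbb{T}$ interpolating the relevant $T$, and define $\Sel(\mathbb{T}^\vee(1)/F_\infty)$ by the usual local conditions: unramified outside a finite set $S\supseteq\{v\mid p\infty\}$, with the Greenberg/Panchishkin condition — or, in the non-ordinary range, a signed/crystalline condition — at the primes above $p$; this is exactly where the $\cris$ and $\Frob$ bookkeeping enters, through the Frobenius-stable filtration of the local representation and the Bloch–Kato exponential. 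Poitou–Tate duality and the global-to-local exact sequences then reduce control of $\chaR$ of the dual Selmer module to the cokernels of localisation maps, giving both its $\Lambda$-torsionness and the compatibility of its formation with specialisation. Next I would feed the appropriate Euler system for $\mathbb{T}$ (Kato's, or Beilinson–Flach / Rankin–Eisenstein classes) into the Euler-system machine — Kolyvagin derivatives, Chebotarev to manufacture auxiliary primes, a big-image hypothesis on $\rho$ — to obtain that $\chaR\bigl(\Sel(\mathbb{T}^\vee(1))^\vee\bigr)$ divides the image of the bottom class under the Coleman/dual-exponential map, which an explicit reciprocity law identifies with $L_p(\mathbb{T})$; this is one divisibility.

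\emph{Reverse divisibility and descent.} For the opposite inclusion I would either (a) invoke a known Main Conjecture at a smaller level or weight and propagate it across the family by a $\Lambda$-adic specialisation argument, or (b) run a Skinner–Urban-style Eisenstein-ideal argument: build a $\Lambda$-adic Eisenstein family, exhibit congruences to cusp forms, and use the lattice construction together with a Fitting-ideal comparison and multiplicity one to get $(L_p)\mid\chaR(\Sel^\vee)$. With equality in hand, a control theorem — descent from $F_\infty$ to $F$, the error terms computed from the local conditions at $S$ and the $\Gamma$-(co)invariants of $\Sha$ — delivers the arithmetic payoff: finiteness of $\Sha$ when the governing $L$-value is nonzero, and the $p$-part of the expected $\rank$/order formula up to units.

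\emph{The hard part.} The real difficulty is the reverse divisibility: in approach (b), constructing the Eisenstein family and controlling its constant term and the extracted lattice is delicate at the ramified and archimedean places and needs a vanishing-of-$\mu$ input; in approach (a), the subtlety is that the Euler-system and control hypotheses must survive specialisation — above all one must rule out trivial-zero phenomena at $p$, where the valuation of the crystalline Frobenius eigenvalue (the $\mathcal{L}$-invariant) enters. I expect the $\cris$/$\Frob$ computations and the $\floor{\cdot}$/$\round{\cdot}$ estimates in the body of the paper to be precisely the devices that tame these local error terms.
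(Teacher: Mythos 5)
Your proposal does not address the statement at all: you have reconstructed a hypothetical Iwasawa-theoretic main conjecture from stray preamble macros (\verb|\Sel|, \verb|\Sha|, \verb|\cris|, \verb|\Frob|, etc.), but those macros are never used in the body of the paper. The lettered theorems here concern a relative trace formula on $\mathrm{GL}(n+1)$ weighted by cusp forms on $\mathrm{GL}(n)$ over a number field: an identity $J_{\Spec}^{\Reg}(f,\mathbf{s};\phi_1',\phi_2')=J^{\Reg}_{\Geo}(f,\mathbf{s};\phi_1',\phi_2')$ of two-variable holomorphic functions with meromorphic continuation to $\Re(s_1),\Re(s_2)>-1/(n+1)$, together with its applications to the second moment of central Rankin--Selberg $L$-values $|L(1/2,\pi\times\pi')|^2$ in the level aspect and to simultaneous nonvanishing of $L(1/2,\pi\times\pi_1')L(1/2,\pi\times\pi_2')$. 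There are no Galois representations, Selmer groups, Euler systems, $p$-adic $L$-functions, or control theorems anywhere in the argument, so none of the machinery you set up (Kolyvagin derivatives, Eisenstein-ideal congruences, Greenberg conditions, descent) can even be formulated for this statement, let alone prove it.

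The paper's actual proof proceeds along entirely different lines: one regularizes the divergent double period $\iint \K^f(\iota(x),\iota(y))\phi_1'(x)\overline{\phi_2'(y)}\,dx\,dy$ by deforming with $|\det x|^{s_1}|\det y|^{s_2}$, truncating on the center $Z'$, and redistributing the degenerate Fourier terms of the non-cuspidal kernel to the geometric side via a mirabolic Fourier expansion (Lemmas \ref{Fourier} and \ref{decom}); the geometric side is then decomposed along the Bruhat cells into $J^{\Reg}_{\Geo,\sm}$, $J^{\Reg}_{\Geo,\du}$, and big-cell orbital integrals, whose convergence and continuation are obtained from Rankin--Selberg theory, Poisson summation applied to auxiliary Eisenstein series such as $E_{\gamma}(x,s;f_P^{\dag})$, and explicit local support computations; the spectral side is controlled by gauge estimates for the Kuznetsov-type kernel (Proposition \ref{Kl}) and the singular parts of the two sides are matched in \textsection\ref{sec10.}; the applications (Theorems B and D) then follow by choosing concrete test functions $f\in\mathcal{F}_S(\mathfrak{N},\omega^{-1})$ and evaluating both sides asymptotically in $|\mathfrak{N}|$. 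If you want to attempt a blind proof, you should start from the structure of the period integral \eqref{39} and the kernel decomposition \eqref{ker}, not from the notational preamble.
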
}
	\newtheorem{cory}{Corollary}
	\renewcommand{\thecory}{\Alph{cory}}
	\newenvironment{corx}{\stepcounter{thm}\begin{cory}}{\end{cory}}
	\newtheorem*{thma}{Theorem A}
	\newtheorem*{corb}{Corollary B}
	\newtheorem*{thmc}{Theorem C}
	\newtheorem{lemma}[thm]{Lemma}  
	\newtheorem{prop}[thm]{Proposition}
	\newtheorem{conj}[thm]{Conjecture}
	\newtheorem{fact}[thm]{Fact}
	\newtheorem{claim}[thm]{Claim}
	
	\theoremstyle{definition}
	\newtheorem{defn}[thm]{Definition}
	\newtheorem{example}[thm]{Example}
	\theoremstyle{remark}
	
	\newtheorem{remark}[thm]{Remark}	
	\numberwithin{equation}{section}
		
	\newcommand{\BA}{{\mathbb {A}}} \newcommand{\BB}{{\mathbb {B}}}
	\newcommand{\BC}{{\mathbb {C}}} \newcommand{\BD}{{\mathbb {D}}}
	\newcommand{\BE}{{\mathbb {E}}} \newcommand{\BF}{{\mathbb {F}}}
	\newcommand{\BG}{{\mathbb {G}}} \newcommand{\BH}{{\mathbb {H}}}
	\newcommand{\BI}{{\mathbb {I}}} \newcommand{\BJ}{{\mathbb {J}}}
	\newcommand{\BK}{{\mathbb {K}}} \newcommand{\BL}{{\mathbb {L}}}
	\newcommand{\BM}{{\mathbb {M}}} \newcommand{\BN}{{\mathbb {N}}}
	\newcommand{\BO}{{\mathbb {O}}} \newcommand{\BP}{{\mathbb {P}}}
	\newcommand{\BQ}{{\mathbb {Q}}} \newcommand{\BR}{{\mathbb {R}}}
	\newcommand{\BS}{{\mathbb {S}}} \newcommand{\BT}{{\mathbb {T}}}
	\newcommand{\BU}{{\mathbb {U}}} \newcommand{\BV}{{\mathbb {V}}}
	\newcommand{\BW}{{\mathbb {W}}} \newcommand{\BX}{{\mathbb {X}}}
	\newcommand{\BY}{{\mathbb {Y}}} \newcommand{\BZ}{{\mathbb {Z}}}
	
	\newcommand{\CA}{{\mathcal {A}}} \newcommand{\CB}{{\mathcal {B}}}
	\newcommand{\CC}{{\mathcal {C}}} \renewcommand{\CD}{{\mathcal {D}}}
	\newcommand{\CE}{{\mathcal {E}}} \newcommand{\CF}{{\mathcal {F}}}
	\newcommand{\CG}{{\mathcal {G}}} \newcommand{\CH}{{\mathcal {H}}}
	\newcommand{\CI}{{\mathcal {I}}} \newcommand{\CJ}{{\mathcal {J}}}
	\newcommand{\CK}{{\mathcal {K}}} \newcommand{\CL}{{\mathcal {L}}}
	\newcommand{\CM}{{\mathcal {M}}} \newcommand{\CN}{{\mathcal {N}}}
	\newcommand{\CO}{{\mathcal {O}}} \newcommand{\CP}{{\mathcal {P}}}
	\newcommand{\CQ}{{\mathcal {Q}}} \newcommand{\CR}{{\mathcal {R}}}
	\newcommand{\CS}{{\mathcal {S}}} \newcommand{\CT}{{\mathcal {T}}}
	\newcommand{\CU}{{\mathcal {U}}} \newcommand{\CV}{{\mathcal {V}}}
	\newcommand{\CW}{{\mathcal {W}}} \newcommand{\CX}{{\mathcal {X}}}
	\newcommand{\CY}{{\mathcal {Y}}} \newcommand{\CZ}{{\mathcal {Z}}}
	
	\newcommand{\RA}{{\mathrm {A}}} \newcommand{\RB}{{\mathrm {B}}}
	\newcommand{\RC}{{\mathrm {C}}} \newcommand{\RD}{{\mathrm {D}}}
	\newcommand{\RE}{{\mathrm {E}}} \newcommand{\RF}{{\mathrm {F}}}
	\newcommand{\RG}{{\mathrm {G}}} \newcommand{\RH}{{\mathrm {H}}}
	\newcommand{\RI}{{\mathrm {I}}} \newcommand{\RJ}{{\mathrm {J}}}
	\newcommand{\RK}{{\mathrm {K}}} \newcommand{\RL}{{\mathrm {L}}}
	\newcommand{\RM}{{\mathrm {M}}} \newcommand{\RN}{{\mathrm {N}}}
	\newcommand{\RO}{{\mathrm {O}}} \newcommand{\RP}{{\mathrm {P}}}
	\newcommand{\RQ}{{\mathrm {Q}}} \newcommand{\RR}{{\mathrm {R}}}
	\newcommand{\RS}{{\mathrm {S}}} \newcommand{\RT}{{\mathrm {T}}}
	\newcommand{\RU}{{\mathrm {U}}} \newcommand{\RV}{{\mathrm {V}}}
	\newcommand{\RW}{{\mathrm {W}}} \newcommand{\RX}{{\mathrm {X}}}
	\newcommand{\RY}{{\mathrm {Y}}} \newcommand{\RZ}{{\mathrm {Z}}}
	
	\newcommand{\fa}{{\mathfrak{a}}} \newcommand{\fb}{{\mathfrak{b}}}
	\newcommand{\fc}{{\mathfrak{c}}} \newcommand{\fd}{{\mathfrak{d}}}
	\newcommand{\fe}{{\mathfrak{e}}} \newcommand{\ff}{{\mathfrak{f}}}
	\newcommand{\fg}{{\mathfrak{g}}} \newcommand{\fh}{{\mathfrak{h}}}
	\newcommand{\fii}{{\mathfrak{i}}} \newcommand{\fj}{{\mathfrak{j}}}
	\newcommand{\fk}{{\mathfrak{k}}} \newcommand{\fl}{{\mathfrak{l}}}
	\newcommand{\fm}{{\mathfrak{m}}} \newcommand{\fn}{{\mathfrak{n}}}
	\newcommand{\fo}{{\mathfrak{o}}} \newcommand{\fp}{{\mathfrak{p}}}
	\newcommand{\fq}{{\mathfrak{q}}} \newcommand{\fr}{{\mathfrak{r}}}
	\newcommand{\fs}{{\mathfrak{s}}} \newcommand{\ft}{{\mathfrak{t}}}
	\newcommand{\fu}{{\mathfrak{u}}} \newcommand{\fv}{{\mathbf{v}}}
	\newcommand{\fw}{{\mathfrak{w}}} \newcommand{\fx}{{\mathfrak{x}}}
	\newcommand{\fy}{{\mathfrak{y}}} \newcommand{\fz}{{\mathfrak{z}}}
	\newcommand{\fA}{{\mathfrak{A}}} \newcommand{\fB}{{\mathfrak{B}}}
	\newcommand{\fC}{{\mathfrak{C}}} \newcommand{\fD}{{\mathfrak{D}}}
	\newcommand{\fE}{{\mathfrak{E}}} \newcommand{\fF}{{\mathfrak{F}}}
	\newcommand{\fG}{{\mathfrak{G}}} \newcommand{\fH}{{\mathfrak{H}}}
	\newcommand{\fI}{{\mathfrak{I}}} \newcommand{\fJ}{{\mathfrak{J}}}
	\newcommand{\fK}{{\mathfrak{K}}} \newcommand{\fL}{{\mathfrak{L}}}
	\newcommand{\fM}{{\mathfrak{M}}} \newcommand{\fN}{{\mathfrak{N}}}
	\newcommand{\fO}{{\mathfrak{O}}} \newcommand{\fP}{{\mathfrak{P}}}
	\newcommand{\fQ}{{\mathfrak{Q}}} \newcommand{\fR}{{\mathfrak{R}}}
	\newcommand{\fS}{{\mathfrak{S}}} \newcommand{\fT}{{\mathfrak{T}}}
	\newcommand{\fU}{{\mathfrak{U}}} \newcommand{\fV}{{\mathbf{v}}}
	\newcommand{\fW}{{\mathfrak{W}}} \newcommand{\fX}{{\mathfrak{X}}}
	\newcommand{\fY}{{\mathfrak{Y}}} \newcommand{\fZ}{{\mathfrak{Z}}}
	\newcommand{\Mod}[1]{\ (\mathrm{mod}\ #1)}
	\newcommand{\Int}{\operatorname{Int}}
	\newcommand{\pdiv}{\mid\!\mid}
	\newcommand{\Res}{\operatorname{Res}}
	\newcommand{\tr}{\operatorname{tr}}
	\newcommand{\Eis}{\operatorname{Eis}}
	\newcommand{\Geo}{\operatorname{Geo}}
	\newcommand{\End}{\operatorname{End}}
	\newcommand{\K}{\operatorname{K}}
	\newcommand{\sgn}{\operatorname{sgn}}
	\newcommand{\s}{\operatorname{sp}}
	\newcommand{\Ker}{\operatorname{Ker}}
	\newcommand{\Ad}{\operatorname{Ad}}
	\newcommand{\Weyl}{\operatorname{Weyl}}
	\newcommand{\ram}{\operatorname{ram}}
	\newcommand{\Supp}{\operatorname{Supp}}
	\newcommand{\Cond}{\operatorname{Cond}}
	\newcommand{\Id}{\operatorname{Id}}
	\newcommand{\Pet}{\operatorname{Pet}}
	\newcommand{\Reg}{\operatorname{Reg}}
	\newcommand{\reg}{\operatorname{reg}}
	\newcommand{\Red}{\operatorname{Red}}
	\newcommand{\fin}{\operatorname{fin}}
	\newcommand{\diag}{\operatorname{diag}}
	\newcommand{\LHS}{\operatorname{LHS}}
	\newcommand{\RHS}{\operatorname{RHS}}
	\newcommand{\Vol}{\operatorname{Vol}}
	\newcommand{\SO}{\operatorname{SO}}
	\newcommand{\si}{\operatorname{Sing}}
	\newcommand{\St}{\operatorname{St}}
	\newcommand{\sm}{\operatorname{Small}}
	\newcommand{\du}{\operatorname{Dual}}
	\newcommand{\bi}{\operatorname{Big}}
	\newcommand{\const}{\operatorname{Const}}
	\newcommand{\Kuz}{\operatorname{Kuz}}
	\newcommand{\Kl}{\operatorname{Kl}}
	\newcommand{\ER}{\operatorname{ER}}
	\newcommand{\sph}{\operatorname{sph}}
	\newcommand{\gen}{\operatorname{gen}}
	\newcommand{\aut}{\operatorname{aut}}
	\newcommand{\Cartan}{\operatorname{Cartan}}
	\newcommand{\Semi}{\operatorname{semi-reg}}
	\newcommand{\semi}{\operatorname{semi-sing}}
	\newcommand{\sing}{\operatorname{sing}}
	\newcommand{\Ind}{\operatorname{Ind}}
	\newcommand{\Nr}{\operatorname{Nr}}
	\newcommand{\Error}{\operatorname{Error}}
	\newcommand{\Main}{\operatorname{Main}}
	\newcommand{\hol}{\operatorname{hol}}
	\newcommand{\dist}{\operatorname{dist}}
	\newcommand{\Sym}{\operatorname{Sym}}
	\newcommand{\RNum}[1]{\uppercase\expandafter{\romannumeral #1\relax}}

\title[Rankin-Selberg $L$-functions for $\mathrm{GL}(n+1)\times \mathrm{GL}(n)$]{Relative Trace Formula and $L$-functions for $\mathrm{GL}(n+1)\times \mathrm{GL}(n)$}%
	\author{Liyang Yang}
	
	\address{Fine Hall, 304 Washington Rd, Princeton, 
		NJ 08544, USA}
	\email{liyangy@princeton.edu}
	
\begin{abstract}
We establish a relative trace formula on $\mathrm{GL}(n+1)$ weighted by cusp forms on $\mathrm{GL}(n)$ over number fields. The spectral side is a weighted average of Rankin-Selberg $L$-functions for $\mathrm{GL}(n+1)\times\mathrm{GL}(n)$ over the \textit{full spectrum}, and the geometric side consists of Rankin-Selberg $L$-functions for $\mathrm{GL}(n)\times\mathrm{GL}(n),$ and certain explicit holomorphic functions. The formula yields new results towards central $L$-values for $\mathrm{GL}(n+1)\times\mathrm{GL}(n)$ (over number fields): the second moment evaluation, and  simultaneous nonvnaishing in the level aspect. 
\end{abstract}
	
	\date{\today}%
	\maketitle
	\tableofcontents
	
\section{Introduction}

Let $F$ be a number field with ring of adeles $\mathbb{A}_F$. Let $\chi$ be a quadratic Hecke character over $\mathbb{A}_{F}^{\times}.$ Let $A=\diag(\mathrm{GL}(1),1).$ Let $\K^f$ be the automorphic kernel attached to a function $f$ on $\mathrm{GL}(2,\mathbb{A}_F)$ with compact support modulo the center. Jacquet \cite{Jac86} established a relative trace formula, which is an identity between the integral 
\begin{equation}\label{1.}
\int_{A(F)\backslash A(\mathbb{A}_F)}\int_{A(F)\backslash A(\mathbb{A}_F)}\K^f(x,y)\chi(\det y)d^{\times}xd^{\times}y,
\end{equation}
and its counterpart 
\begin{equation}\label{40}
\sum_{(G^*,T^*)}\int_{A^*(F)\backslash A^*(\mathbb{A}_F)}\int_{A^*(F)\backslash A^*(\mathbb{A}_F)} \K^{f^*}(x^*,y^*)d^{\times}x^*d^{\times}y^*,
\end{equation}
where $E/F$ is the quadratic extension corresponding to $\chi$, the sum is over pairs of inner forms $G^*$ (of $\mathrm{GL}(2)$) and its maximal torus $T^*$ which is isomorphic to $E^{\times}$ over $F,$ and $f^*$ is a test function associated to $(G^*,T^*).$ Via his relative trace formula Jacquet gave a new proof of the remarkable result of Waldspurger \cite{Wal85}. 
\medskip 

Note that by spectral expansion and Rankin-Selberg convolution, the cuspidal spectrum in \eqref{1.} is a weighted sum of $L(1/2,\pi)L(1/2,\pi\times\chi)$ as $\pi$ ranges through cuspidal  representations of $\mathrm{GL}(2,\mathbb{A}_F).$ Based on Jacquet's relative trace formula, Guo \cite{Guo96a} managed to show that $L(1/2,\pi)L(1/2,\pi\times\chi)\geq 0$ for all cuspidal representations $\pi.$  
\medskip 

On the other hand, following the spirit of the Arthur-Selberg trace formula, one can also study the spectral expansion of \eqref{1.} by its geometric side, which gives another perspective to understand Rankin-Selberg $L$-functions $L(1/2,\pi)L(1/2,\pi\times\chi)$ quantitatively. Ramakrishnan and Rogawski \cite{RR05} gave the first description of the geometric side of \eqref{1.} to generalize Duke's theorem \cite{Duk95}. Such a formula has many other applications, e.g., the moment formula of $L$-values, vertical Sato-Tate distribution, nonvanishing problem of central $L$-values, subconvexity problem, distribution of low-lying zeros (cf. \cite{RR05}, \cite{FW09}, \cite{Tsu15}, \cite{AC19}). 

\medskip 

We aim to study the integral of type \eqref{1.} in higher ranks. Let $G=\mathrm{GL}(n+1)$ and $G'=\mathrm{GL}(n).$ Let $\pi'_1,$ $\pi_2'$ be cuspidal representations of $G'(\mathbb{A}_F)$. Fix cusp forms $\phi'_1\in \pi'_1$ and $\phi_2'\in \pi'_2$. A rudimentary generalization is in principle based on the equality of geometric and spectral expansions of the integral
\begin{equation}\label{39}
\int_{G'(F)\backslash G'(\mathbb{A}_F)}\int_{G'(F)\backslash G'(\mathbb{A}_F)}\K^f\left(\begin{pmatrix}
		x\\
		&1
	\end{pmatrix},\begin{pmatrix}
		y\\
		&1
	\end{pmatrix}\right)\phi_1'(x)\overline{\phi_2'(y)}dxdy.
\end{equation}
Were \eqref{39} convergent, such an equality is of significant interest since its spectral side consists of products of Rankin-Selberg periods, which, by \cite{JPSS83}, \cite{JS90}, \cite{Jac09}, represent $L(1/2,\pi\times\pi_1')L(1/2,\widetilde{\pi}\times\widetilde{\pi}_2')$ as $\pi$ ranges over automotphic representations of $G(\mathbb{A}_F).$ Hence understanding the geometric side of the integral \eqref{39} will provide a new instrumental identity to study $L$-functions for $\mathrm{GL}(n+1)\times\mathrm{GL}(n)$ directly (as apposed to the Kuznetsov formula) by choosing an adequate $f$ and computing or estimating both sides. In the $U(3)\times U(2)$ case, such a relative trace formula, together with an asymptotic form of the Ichino-Ikeda formula, is the key ingredient to obtain new nonvanishing and subconvexity results. See \cite{MRY22} for details. 
\medskip 

However, there are at least two major barriers to obtain such a desired relative trace formula from \eqref{39}.
\medskip 

Firstly, unlike the situation in \cite{DGG12}, the underlying integral \eqref{39} does not converge in general. So a regularization is required. Moreover, since we expect applications in the analytic theory of $L$-functions, the regularized formula should be explicit enough for concrete calculation or estimate on both sides. Thus the truncation approaches in \cite{JLR99} and \cite{IY15} do not seem to be amenable to our purpose. When $n=1$ and $F$ is totally real, one can take the archimedean component $f_{\infty}$ to be the matrix coefficient of a holomorphic discrete series so that \eqref{39} converges. This is used in \cite{RR05}, \cite{FW09} and \cite{Tsu15}. Nevertheless, general situations in the case $n=1$ (e.g., the second moment case) have not been investigated yet. In this paper, we will handle the \textit{full spectrum} in the spectral side.
\medskip 

Secondly, when $n>1,$ the automorphic weights $\phi_1'$ and $\phi_2'$ ruin the Eulerian structure of orbital integrals in the geometric side. For instance, for $\gamma\in G(F)$ with stabilizer $G_{\gamma},$ the associated orbital integral factors through 
\begin{equation}\label{50}
\int_{G_{\gamma}(F)\backslash G_{\gamma}(\mathbb{A}_F)}\phi_1'(hx)\overline{\phi_2'(hy)}dh
\end{equation}
for some $x, y\in G(\mathbb{A}_F).$ The integral \eqref{50} does not decompose into local integrals for most $\gamma$'s. In these worst cases one has to apply Cauchy-Schwartz to bound \eqref{50} by $L^2$-norm of a cusp form restricted to $G_{\gamma}(F)\backslash G_{\gamma}(\mathbb{A}_F),$ which is rather delicate to handle. These disadvantages make general orbital integrals difficult to bound individually (e.g., cf. \cite{MRY22}). Also, since Eisenstein series have degenerate terms in their Fourier expansions, by (formal) unfolding, the periods from continuous spectrum fail to be factorizable. Hence, even if ignoring the convergence issue, an elementary expansion of \eqref{39} does not give a favorite relative trace formula.
\medskip

The goals in this paper are fourfold. 
\begin{itemize}
	\item Our first aim is to develop a deformed relative trace formula
\begin{equation}\label{1000}
J_{\Spec}^{\Reg}(f,\mathbf{s};\phi_1',\phi_2')=J^{\Reg}_{\Geo}(f,\textbf{s};\phi_1',\phi_2'),\ \ \textbf{s}=(s_1,s_2),\ \ \Re(s_1),\ \Re(s_2)\geq 1,
\end{equation}
which is an identity of holomorphic functions of two complex variables.
The spectral side $J_{\Spec}^{\Reg}(f,\mathbf{s};\phi_1',\phi_2')$ is obtained by moving the degenerate terms in the Fourier expansion of automorphic forms in the residual and continuous spectrum of \eqref{39} to the geometric side. See \textsection\ref{sec2} for details. After unfolding and swapping integrals, 
\begin{align*}
J_{\Spec}^{\Reg}(f,\mathbf{s};\phi_1',\phi_2')=\int_{\widehat{G(\mathbb{A}_F)}_{\gen}}\sum_{\phi\in\mathfrak{B}_{\pi}}\Psi(s_1,\pi(f)W_{\phi},W_{\phi_1'}')\Psi(s_2,\widetilde{W}_{\phi},\widetilde{W}_{\phi_2'}')d\mu_{\pi},
\end{align*}
where $\widehat{G(\mathbb{A}_F)}_{\gen}$ is the space of generic representations,  $W_{\phi_i'}'$ (resp. $W_{\phi}$) the Whittaker function of the cusp form $\phi_i',$ (resp. $\phi$), $i=1, 2,$ and $\Psi(s,W_{\phi},W_{\phi_1'}')$ is the Rankin-Selberg period in the Whittaker form. By \cite{JPSS83}, integrands in $J_{\Spec}^{\Reg}(f,\mathbf{s})$ represent $L(1/2+s_1,\pi\times\pi_1')L(1/2+s_2,\widetilde{\pi}\times\widetilde{\pi}_2').$ Note that the contribution from the cuspidal spectrum of $J_{\Spec}^{\Reg}(f,\mathbf{s})$ is the same as that of \eqref{39}. 


\medskip

\item The second goal is to derive meromorphic continuation of both spectral and geometric sides of \eqref{1000} to $\Re(s_1)>-1/(n+1),$ $\Re(s_2)>-1/(n+1).$ We will show the absolute convergence, and cancellation of singular parts of both sides. In particular, the evaluation at $\mathbf{s}=(0,0)$ gives the desired regularization of \eqref{39}. This resolves the qualitative-analytic problem of making sense of divergent integrals \eqref{39}. 
\medskip
\item The third goal is to concatenate  orbital integrals in the geometric side in an appropriate way to make them either factorizable or be convenient to majorize by Eulerian integrals (cf. e.g., \textsection\ref{sec5}).  
\medskip
\item Lastly we provide applications of the relative trace formula  towards the second moment computation and simultaneously nonvanishing problem for Rankin-Selberg $L$-functions for $\mathrm{GL}(n+1)\times\mathrm{GL}(n)$ over number fields. 
\end{itemize}
\medskip
In all, the relative trace formula $J_{\Spec}^{\Reg}(f,\mathbf{s};\phi_1',\phi_2')=J^{\Reg}_{\Geo}(f,\textbf{s};\phi_1',\phi_2')$ as an identity of meromorphic functions is summarized by Theorem \ref{C} below. A complete description can be found in Theorem \ref{thm51} in \text\textsection \ref{sec8.3} and Theorem \ref{thm52} in \textsection\ref{8.2}.

 \subsection{The Relative Trace Formula}

An informal description of the relative trace formula is the following.

\begin{thmx}\label{C}
Let notation be as before. Let $n\geq 2.$ Let $G=\mathrm{GL}(n+1)$ and $G'=\mathrm{GL}(n).$ Let $\pi_1', \pi_2'$ be unitary cuspidal representations of $G'$ over $F.$ Let $\phi_i'\in\pi_i',$ $i=1, 2.$ Let $\theta$ be a nontrivial additive character of $F\backslash \mathbb{A}_F.$ Let $P_0'$ be the mirabolic subgroup of $G'.$ Let $\mathbf{s}=(s_1,s_2)\in\mathbb{C}^2$.  Let $f$ be a continuous function on $G(\mathbb{A}_F)$ with compact support modulo the center. Then 
\begin{equation}\label{b}
J_{\Spec}^{\Reg}(f,\mathbf{s};\phi_1',\phi_2')=J^{\Reg}_{\Geo}(f,\textbf{s};\phi_1',\phi_2'),\end{equation}
which converge absolutely in the region $\Re(s_1),\ \Re(s_2)\gg1.$ Here the spectral side 
$$
J_{\Spec}^{\Reg}(f,\mathbf{s};\phi_1',\phi_2')=\int_{\widehat{G(\mathbb{A}_F)}_{\gen}}\sum_{\phi\in\mathfrak{B}_{\pi}}\Psi(s_1,\pi(f)W_{\phi},W_{\phi_1'}')\Psi(s_2,\widetilde{W}_{\phi},\widetilde{W}_{\phi_2'}')d\mu_{\pi},
$$ 
and the geometric side 
\begin{align*}
	J^{\Reg}_{\Geo}(f,\textbf{s};\phi_1',\phi_2'):=J^{\Reg}_{\Geo,\sm}(f,\textbf{s})+^{\Reg}_{\Geo,\du}(f,\textbf{s})-J^{\Reg,\RNum{1}}_{\Geo,\bi}(f,\textbf{s})+J^{\Reg,\RNum{2}}_{\Geo,\bi}(f,\textbf{s}),
\end{align*}
with $\iota(x):=\diag(x,1)$ for all $x\in G'(\mathbb{A}_F),$ and 
\begin{align*}
&J^{\Reg}_{\Geo,\sm}(f,\textbf{s}):=\int_{P_0'(F)\backslash {G'}(\mathbb{A}_F)}\int_{{G'}(\mathbb{A}_F)}\int_{\mathbb{A}_F^n}f\left(
	\begin{pmatrix}
		y&u\\
		&1
	\end{pmatrix}
	\right)\theta(\eta xu)du\\
	&\qquad\qquad \qquad \qquad\qquad \qquad \phi_1'(x)\overline{{\phi_2'}(xy)}|\det x|^{s_1+s_2+1}|\det y|^{s_2}dydx,\\
&J^{\Reg}_{\Geo,\du}(f,\textbf{s}):=\int_{P_0'(F)\backslash {G'}(\mathbb{A}_F)}\int_{{G'}(\mathbb{A}_F)}f\left(\begin{pmatrix}
		I_{n}&\\
		\eta x&1
	\end{pmatrix}\begin{pmatrix}
		y&\\
	      &1
	\end{pmatrix}\right)\\
	&\qquad \qquad \qquad\qquad \qquad\qquad \phi'_1(x)\overline{\phi'_2(xy)}|\det x|^{s_1+s_2}|\det y|^{s_2}dydx,\\
&J^{\Reg,\RNum{1}}_{\Geo,\bi}(f,\textbf{s}):=\int_{P_0'(F)\backslash {G'}(\mathbb{A}_F)}\int_{{G'}(\mathbb{A}_F)}\int_{\mathbb{A}_F^n}\phi_1'(x)\overline{\phi_2'(xy)}|\det x|^{s_1+s_2+1}|\det y|^{s_2}\\
	&\qquad \qquad \Bigg[f\left(\begin{pmatrix}
		I_n&u\\
		&1
	\end{pmatrix}\begin{pmatrix}
		I_n&\\
		\eta x&1
	\end{pmatrix}\begin{pmatrix}
		y&\\
		&1
	\end{pmatrix}\right)+f\left(\begin{pmatrix}
		I_n&\\
		\eta x&1
	\end{pmatrix}\begin{pmatrix}
		y&u\\
		&1
	\end{pmatrix}\right)\Bigg]dudydx,
	\end{align*}
\begin{align*}
&J^{\Reg,\RNum{2}}_{\Geo,\bi}(f,\textbf{s}):=\mathop{\sum\sum}_{(\boldsymbol{\xi},t)\in F^n-\{\mathbf{0}\}}\int_{{G'}(\mathbb{A}_F)}\int_{P_0'(F)\backslash {G'}(\mathbb{A}_F)}f\left(\iota(x)^{-1}\begin{pmatrix}
		I_{n-1}&&\boldsymbol{\xi}\\
		&1&t\\
		&1&1
	\end{pmatrix}\iota(xy)\right)\\
	&\qquad \qquad \qquad\qquad \phi'_1(x)\overline{\phi'_2(xy)}|\det x|^{s_1+s_2}|\det y|^{s_2}dxdy.
\end{align*}

Moreover, \eqref{b} admits a meromorphic continuation to $\Re(s_1),\ \Re(s_2)>-1/(n+1).$ We have the following description of the above integrals.
\begin{itemize}

\item The integral $J^{\Reg}_{\Geo,\sm}(f,\textbf{s})$ is a Rankin-Selberg convolution which converges absolutely in $\Re(s_1)+\Re(s_2)>0,$ and represents  $L(1+s_1+s_2,\pi_1'\times\widetilde{\pi}_2').$

\item The integral $J^{\Reg}_{\Geo,\du}(f,\textbf{s})$ is a Rankin-Selberg convolution which converges absolutely in $\Re(s_1)+\Re(s_2)>1,$ and represents  $L(s_1+s_2,\pi_1'\times\widetilde{\pi}_2').$ 

\item The integral $J^{\Reg,\RNum{1}}_{\Geo,\bi}(f,\textbf{s})$ converges absolutely in $\Re(s_1),\ \Re(s_2)\gg1,$ and admits a meromorphic continuation to $\Re(s_1),\ \Re(s_2)>-1/(n+1).$

\item The integral $J^{\Reg,\RNum{2}}_{\Geo,\bi}(f,\textbf{s})$ converges absolutely for all $\mathbf{s}\in\mathbb{C}^2.$

\item  $J^{\Reg}_{\Geo}(f,\textbf{s};\phi_1',\phi_2')$ and $J_{\Spec}^{\Reg}(f,\mathbf{s};\phi_1',\phi_2')$ have at most simple poles on $s_1+s_2=\delta,$ $\delta\in\{-1,0,1\},$ and are holomorphic elsewhere. Moreover, 
\begin{align*}
&\underset{s_1+s_2=\delta}{\Res}J_{\Spec}^{\Reg}(f,\mathbf{s};\phi_1',\phi_2')=\underset{s_1+s_2=\delta}{\Res}J^{\Reg}_{\Geo}(f,\textbf{s};\phi_1',\phi_2'),\ \ \delta\in\{-1,0,1\}.
\end{align*}
In particular, singular parts on both sides of \eqref{b} cancel with each other.

\item The geometric side $J^{\Reg}_{\Geo}(f,\textbf{s};\phi_1',\phi_2')$ has stability in the level aspect in the sense of \cite{MR12} and \cite{FW09}. Roughly, the regular orbital integrals vanish identically when the level is large enough. See Theorem \ref{Red} in \textsection\ref{sec10}.
\end{itemize}

A precise description of \eqref{b} is given by Theorem \ref{thm51} in \text\textsection \ref{sec8.3} (when $\mathbf{s}$ is general) and Theorem \ref{thm52} in \text\textsection \ref{8.2} (when $\mathbf{s}$ is near $(0,0)$).
\end{thmx}

\begin{remark}
\begin{enumerate}
\item[(i).] The superscript `Reg' (i.e., regularization) means that we move contributions from the residue spectrum and degenerate terms of Eisenstein series in the spectral side of \eqref{39} to the geometric side  in \eqref{b} (cf. \text\textsection \ref{sec2}). Divergence is captured by the fact that $L(1+s_1+s_2,\pi_1'\times\widetilde{\pi}_2')$ has a simple pole when $s_1=s_2=0$ and $\pi_1'\simeq \pi_2'.$

\item[(ii).] In $J_{\Spec}^{\Reg}(f,\mathbf{s};\phi_1',\phi_2')$ the continuous spectrum contributes \textit{higher moment} of Rankin-Selberg $L$-functions integrated along the critical line $\Re(s)=1/2.$ 


\end{enumerate}
\end{remark}

Theorem \ref{C} resolves the qualitative-analytic problem of making sense of divergent integrals \eqref{39}. This provides a new method to investigate mean values of Rankin-Selberg $L$-functions in higher ranks. Note that the spectral side $J_{\Spec}^{\Reg}(f,\mathbf{0};\phi_1',\phi_2')$ involves sums of central $L$-values, which are frequently used in subconvexity and nonvanishing problems via  techniques from analytic theory of $L$-functions. Moreover, both of the geometric and spectral sides, as (almost) factorizable integrals, are convenient for concrete calculation or estimate once $f$ is chosen. In particular, the geometric side $J^{\Reg}_{\Geo}(f,\textbf{s};\phi_1',\phi_2')$ does not involve delicate Kloosterman sums as in the Kuznetsov formula (cf. e.g., \cite{Ye00}, \cite{Blo13}). 

We give some quantitative estimates of both sides of Theorem \ref{C} in \textsection\ref{11.2} and \textsection\ref{sec8}, obtaining new results towards Rankin-Selberg $L$-functions for $\mathrm{GL}(n+1)\times\mathrm{GL}(n)$ over \textit{number fields}: the second moment calculation of central $L$-values over the full spectrum of $\mathrm{GL}(n+1)$, and simultaneously nonvanishing of central $L$-values. See \textsection\ref{sec1.2} and \textsection\ref{sec1.3} below. 

\subsection{Second Moment of $L$-functions for $\mathrm{GL}(n+1)\times\mathrm{GL}(n)$}\label{sec1.2}
In the analytic theory of automorphic forms, it has been a very common theme of research to study the associated automorphic $L$-functions in a suitable family and to search for statistical results on average therein (e.g., cf. \cite{IS00}). In particular, the asymptotic evaluation of the \textit{second moment} of central $L$-values for $\mathrm{GL}(n+1)\times\mathrm{GL}(n)$ plays an important role (cf. \cite{BFKMMS18} for the case that $n=1$). The difficulty of this evaluation balloons as  the rank of the underlying group (or the degree of the $L$-function) increases. 

Making use of the relative trace formula Theorem \ref{C}, we establish the following asymptotic second moment calculation of central Rankin-Selberg  $L$-values for $\mathrm{GL}(n+1)\times\mathrm{GL}(n)$ over number fields, for all $n\geq 2.$ 
\begin{thmx}[Second Moment]\label{cor2.}
Let notation be as before. Let $\pi'\in\mathcal{A}_0([G'],\omega')$ be  everywhere unramified. Let $\mathfrak{N}\subsetneq\mathcal{O}_F$ be an ideal with norm $|\mathfrak{N}|$. Then 
\begin{align*}
&\int_{\widehat{G(\mathbb{A}_F)}_{\gen}(\mathfrak{N})}\sum_{\phi\in\mathfrak{B}_{\pi}}\frac{|\beta_{\phi,\mathfrak{N}}|^2}{\alpha_{\phi,\mathfrak{N}}}\cdot \frac{|\Lambda(1/2,\pi\times\pi')|^2}{\mathbf{L}(\pi)}\cdot \mathcal{H}_{f_{\infty}}(\boldsymbol{\lambda}_{\pi_{\infty}})d\mu_{\pi}\\
=&c_{F,f_{\infty}}\cdot \mathcal{H}_{f_{\infty}}^{\heartsuit}(\boldsymbol{\lambda}_{\pi_{\infty}'})\cdot\Lambda(1,\pi',\Ad)|\mathfrak{N}|^n\log|\mathfrak{N}|+O(|\mathfrak{N}|^n),
\end{align*}
where $\alpha_{\phi,\mathfrak{N}}$ and $\beta_{\phi,\mathfrak{N}}$ are local factors (cf. \textsection\ref{8.5.1}),  $\mathbf{L}(\pi)$ is defined by \eqref{137}, $\mathcal{H}_{f_{\infty}}^{\heartsuit}(\boldsymbol{\lambda}_{\pi_{\infty}})$ (resp. $\mathcal{H}_{f_{\infty}}^{\heartsuit}(\boldsymbol{\lambda}_{\pi_{\infty}'})$)  is defined by \eqref{133..} (resp. \eqref{134..}), $\boldsymbol{\lambda}_{\pi_{\infty}}$ (resp. $\boldsymbol{\lambda}_{\pi_{\infty}'}$) is the Langlands parameter of $\pi_{\infty}$ (resp. $\pi_{\infty}'$), 
\begin{align*}
	c_{F,f_{\infty}}= n(n+1)2^{r_2(n-1)}\pi^{-r_2}|\mathfrak{D}_F|^{\frac{n}{2}}\underset{s=1}{\Res}\ \zeta_F(s),
\end{align*}	
and the implied constant in $O(|\mathfrak{N}|^n)$ depends on $F,$ $\pi'$ and $f_{\infty}.$ See Theorem \ref{M} in \textsection\ref{11.2}  for a detailed description. 
\end{thmx}


\begin{cor}[Corollary \ref{cor58}]\label{cor3.}
Let notation be as before. Let $\mathcal{D}$ be a large compact region in $\mathbb{C}^{n+1}.$ Let $\pi'\in\mathcal{A}_0([G'],\omega')$ be  everywhere unramified. Then 
\begin{equation}\label{242.}
\sum_{\substack{\pi\in\mathcal{A}_0(\mathfrak{N},\omega)\\
\boldsymbol{\lambda}_{\pi_{\infty}\in \mathcal{D}}}} \frac{|L(1/2,\pi\times\pi')|^2}{L(1,\pi,\Ad)}\ll_{\mathcal{D},\pi'} |\mathfrak{N}|^n\log|\mathfrak{N}|,
\end{equation}
where the implied constant depends on $\pi'$ and $\mathcal{D}.$ Here $\mathcal{A}_0(\mathfrak{N},\omega)$ denotes the set of cuspidal representations of $G(\mathbb{A}_F)$ of level $\mathfrak{N}$ and central character $\omega$ (cf. Definition \ref{defn57} in \textsection\ref{8.5.1}).
\end{cor}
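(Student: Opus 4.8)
\emph{Proof proposal.} The plan is to extract the bound from the second moment asymptotic of Theorem~\ref{cor2.} by a positivity argument. The cuspidal representations of exact level $\mathfrak{N}$ and central character $\omega$ form a discrete subset of $\widehat{G(\mathbb{A}_F)}_{\gen}(\mathfrak{N})$, and once the test function $f=\prod_v f_v$ is chosen so that every weight appearing on the spectral side of Theorem~\ref{cor2.} is nonnegative, discarding the continuous spectrum together with all cuspidal $\pi$ whose archimedean parameter lies outside $\mathcal{D}$ can only decrease the left-hand side. It therefore suffices to (i) choose $f$ so that the spectral weights are nonnegative and so that $\pi(f)$ acts on each $\pi\in\mathcal{A}_0(\mathfrak{N},\omega)$ as a positive multiple of the projector onto the new vector, (ii) bound those weights from below on the cuspidal set $\{\pi\in\mathcal{A}_0(\mathfrak{N},\omega):\boldsymbol{\lambda}_{\pi_\infty}\in\mathcal{D}\}$, and (iii) compare $|\Lambda(1/2,\pi\times\pi')|^2/\mathbf{L}(\pi)$ with $|L(1/2,\pi\times\pi')|^2/L(1,\pi,\Ad)$ up to constants depending only on $\mathcal{D}$ and $\pi'$.

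For step (i), I would take $f_v=g_v*g_v^{*}$ at every place, where $g_v^{*}(x)=\overline{g_v(x^{-1})}$, so that $\pi_v(f_v)=\pi_v(g_v)\pi_v(g_v)^{*}$ is a nonnegative operator and hence the relevant products $\Psi(s_1,\pi(f)W_\phi,W')\Psi(s_2,\widetilde{W}_\phi,\widetilde{W}')$ summed over $\phi\in\mathfrak{B}_\pi$ collapse, at $\mathbf{s}$ near $(0,0)$, to $|\Psi(0,W_{\mathrm{new}},W')|^2$ times a nonnegative weight. At the finite places $v\mid\mathfrak{N}$ I would further arrange $g_v$ so that $\pi_v(f_v)$ is, up to a positive scalar, the orthogonal projector onto the line of vectors fixed by the local compact open subgroup cutting out level $\mathfrak{N}$; this forces $\mathcal{A}_0(\mathfrak{N},\omega)$ into the support of the spectral measure and makes the local weight $|\beta_{\phi,\mathfrak{N}}|^2/\alpha_{\phi,\mathfrak{N}}$ at such $v$, evaluated on the normalized new vector, an explicit positive quantity bounded below uniformly in $\mathfrak{N}$. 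At the archimedean places I would choose $g_\infty$ by a Paley--Wiener construction so that the archimedean transform $\mathcal{H}_{f_\infty}(\boldsymbol{\lambda})$ (and the companion $\mathcal{H}^{\heartsuit}_{f_\infty}$ entering the main term) is real, nonnegative on the unitary dual, and bounded below by a positive constant $c_{\mathcal{D}}$ for $\boldsymbol{\lambda}\in\mathcal{D}$: concretely, prescribe a smooth compactly supported nonnegative bump on the tempered parameters that is $\ge 1$ on $\mathcal{D}$, and take an inverse spherical transform of a square root.

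For steps (ii)--(iii), on the set $\{\pi\in\mathcal{A}_0(\mathfrak{N},\omega):\boldsymbol{\lambda}_{\pi_\infty}\in\mathcal{D}\}$ one has: $|\beta_{\phi,\mathfrak{N}}|^2/\alpha_{\phi,\mathfrak{N}}\gg 1$ uniformly in $\mathfrak{N}$ by the choice of $f_v$ above; $\mathcal{H}_{f_\infty}(\boldsymbol{\lambda}_{\pi_\infty})\ge c_{\mathcal{D}}>0$; the ratio $\mathbf{L}(\pi)/L(1,\pi,\Ad)$ lies between two positive constants depending only on $\mathcal{D}$ (from the definition \eqref{137} and the fact that the remaining local and archimedean factors in $\mathbf{L}(\pi)$ are bounded above and below for $\boldsymbol{\lambda}_{\pi_\infty}\in\mathcal{D}$); and $|\Lambda(1/2,\pi\times\pi')|^2\asymp_{\mathcal{D},\pi'}|L(1/2,\pi\times\pi')|^2$ since the finitely many archimedean $\Gamma$-factors of $\pi_\infty\times\pi'_\infty$ are bounded above and below once $\pi'$ is fixed and $\boldsymbol{\lambda}_{\pi_\infty}$ ranges over the compact set $\mathcal{D}$. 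Hence each such cuspidal term contributes at least $c_{\mathcal{D},\pi'}\cdot|L(1/2,\pi\times\pi')|^2/L(1,\pi,\Ad)$ to the left-hand side of Theorem~\ref{cor2.}, whose total equals $c_{F,f_\infty}\mathcal{H}^{\heartsuit}_{f_\infty}(\boldsymbol{\lambda}_{\pi'_\infty})\Lambda(1,\pi',\Ad)|\mathfrak{N}|^n\log|\mathfrak{N}|+O(|\mathfrak{N}|^n)$. Since $\omega$ is the central character forced by $\omega'$ in that normalization, dividing by $c_{\mathcal{D},\pi'}$ yields
\[
\sum_{\substack{\pi\in\mathcal{A}_0(\mathfrak{N},\omega)\\ \boldsymbol{\lambda}_{\pi_\infty}\in\mathcal{D}}}\frac{|L(1/2,\pi\times\pi')|^2}{L(1,\pi,\Ad)}\ll_{\mathcal{D},\pi'}|\mathfrak{N}|^n\log|\mathfrak{N}|+O(|\mathfrak{N}|^n)\ll_{\mathcal{D},\pi'}|\mathfrak{N}|^n\log|\mathfrak{N}|,
\]
which is the claim.

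The main obstacle is the archimedean input: producing a single $g_\infty$ whose transform is simultaneously nonnegative on the whole unitary dual, bounded below on the prescribed compact set $\mathcal{D}$, and still keeps the constant $c_{F,f_\infty}\mathcal{H}^{\heartsuit}_{f_\infty}(\boldsymbol{\lambda}_{\pi'_\infty})$ in the main term finite, while ensuring the $O(|\mathfrak{N}|^n)$ error in Theorem~\ref{cor2.} has an implied constant genuinely independent of $\mathfrak{N}$; this goes hand in hand with checking that the level-aspect local weights $|\beta_{\phi,\mathfrak{N}}|^2/\alpha_{\phi,\mathfrak{N}}$ for the new vector are bounded below uniformly in $\mathfrak{N}$. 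Once these are in place, the remainder is routine bookkeeping with $\Gamma$-factors and the comparison $\mathbf{L}(\pi)\asymp L(1,\pi,\Ad)$.
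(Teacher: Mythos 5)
Your overall strategy (positivity of the spectral side, dropping the continuous spectrum and the cuspidal $\pi$ with $\boldsymbol{\lambda}_{\pi_\infty}\notin\mathcal{D}$, choosing $f_\infty=h_\infty*h_\infty^*$ with transform bounded below on $\mathcal{D}$, and converting completed $L$-values to finite ones on the compact set $\mathcal{D}$ via bounds toward Ramanujan) is the same as the paper's. But there is a genuine gap at the heart of step (ii). The weights $\alpha_{\phi,\mathfrak{N}}$ and $\beta_{\phi,\mathfrak{N}}$ appearing in Theorem \ref{cor2.} are \emph{intrinsic} local quantities attached to the basis vectors $\phi$: they are ratios of local Rankin--Selberg and norm integrals of the Whittaker functions $W_{\phi,v}$ at $v\mid\mathfrak{N}$ against the spherical $W'_v$ (cf.\ \eqref{136.} and the definition of $\beta_{\phi,\mathfrak{N}}$ in \textsection\ref{8.5.1}); they do not depend on the test function at all. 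Consequently, ``arranging $g_v$ so that $\pi_v(f_v)$ is the projector onto the level-$\mathfrak{N}$ fixed line'' has no bearing on the size of $|\beta_{\phi,\mathfrak{N}}|^2/\alpha_{\phi,\mathfrak{N}}$, and the assertion that this quantity is ``an explicit positive quantity bounded below uniformly in $\mathfrak{N}$'' is left without any justification — indeed for a generic basis vector $\phi$ it can be small or vanish. The input that actually closes this step, and the one the paper uses, is local newvector theory: since $\pi'$ is everywhere unramified, by \cite{JPSS81} every $\pi\in\mathcal{A}_0(\mathfrak{N},\omega)$ contains a vector $\phi\in\mathfrak{B}_\pi$ (the newform) whose local zeta integrals at $v\mid\mathfrak{N}$ equal the corresponding local $L$-factors exactly, so that $\alpha_{\phi,\mathfrak{N}}=\beta_{\phi,\mathfrak{N}}=1$; positivity then lets you keep only these terms. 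Without citing (or reproving) this test-vector fact, your lower bound on the cuspidal contribution does not follow.

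A secondary, structural point: Theorem \ref{cor2.} is proved for the specific test function $f\in\mathcal{F}_S(\mathfrak{N},\omega^{-1})$ of \textsection\ref{2.6}, \textsection\ref{8.5.1}, whose components at $v\mid\mathfrak{N}$ are already fixed (and are in fact essentially the idempotents you describe). If you re-choose $f_v$ at the finite places you can no longer quote Theorem \ref{cor2.} as a black box; you would have to redo the geometric-side evaluation for your $f$. The paper avoids this by keeping $f_{\fin}$ as given and only exploiting the freedom in $f_\infty$ (nonnegativity of $\mathcal{H}_{f_\infty}$ and a lower bound on $\mathcal{D}$), which is also all your argument actually needs once the newvector input is in place. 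Your step (iii) — $\mathbf{L}(\pi)=\Lambda(1,\pi,\Ad)$ for cuspidal $\pi$ and the comparison of archimedean factors at $1/2$ and $1$ on the compact set $\mathcal{D}$ using \cite{KS03} — matches the paper and is fine.
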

\begin{remark}
Note that \eqref{242.} gives the average Lind\"{o}f bound in the level aspect as 
	$$\sum_{\substack{\pi\in\mathcal{A}_0(\mathfrak{N},\omega),\ 
\boldsymbol{\lambda}_{\pi_{\infty}\in \mathcal{D}}}}1\asymp _{\mathcal{D}} |\mathfrak{N}|^n.
	$$  
\end{remark}

\subsection{Nonvanishing of $L$-functions for $\mathrm{GL}(n+1)\times\mathrm{GL}(n)$ }\label{sec1.3}

Nonvanishing of Rankin-Selberg $L$-functions for $\mathrm{GL}(n+1)\times\mathrm{GL}(n)$ conveys  various important arithmetic information, e.g., Landau-Siegel zeros (cf. \cite{IS00a}), Langlands functorial lifts (cf. \cite{GJR04}), the Gan-Gross-Prasad conjecture (cf. \cite{Zha14b}, \cite{Zha14a}), Whittaker periods (cf. \cite{GH16}), and the Bloch-Kato conjecture (cf. \cite{LTXZZ22}). In many of these examples (e.g., \cite{LTXZZ22}), the base fields are at least CM fields, rather than the field $\mathbb{Q}$ of rational numbers. We aim to establish nonvnaishing results for these central $L$-values over general \textit{number fields}. 

Many nonvanishing results of the aforementioned type have been established in lower ranks ($n\leq 2$) over $\mathbb{Q}$, see for instance \cite{Duk95}, \cite{IS00a}, \cite{MRY22}, \cite{ST22} for a far from exhaustive list. Moreover, the existence of simultaneous nonvanishing of central $L$-values has been proved in a few of other cases in higher ranks, e.g., \cite{Tsu21} (where the base field is $\mathbb{Q}$) and \cite{JN21} (where the Ramanujan conjecture is assumed and the base field is totally real). In these cases the underlying family of cuspidal representations is \textit{infinite}. 

Taking advantage of the relative trace formula Theorem \ref{C}, we prove, for \textit{general $n$}, for the first time an \textit{unconditionally simultaneous} nonvanishing result on central Rankin-Selberg  $L$-values for $\mathrm{GL}(n+1)\times\mathrm{GL}(n)$ over \textit{number fields}. Furthermore, the underlying family of cuspidal representations is finite.


\begin{thmx}[Simultaneous Nonvanishing]\label{cor5}
Let $F$ be a number field. Let $\mathfrak{N}_1', \mathfrak{N}_2'\subseteq \mathcal{O}_F$ be ideals such that $\mathfrak{N}_1'\mathfrak{N}_2'\subsetneq \mathcal{O}_F,$ and $\mathfrak{N}_1'+\mathfrak{N}_2'=\mathcal{O}_F.$ Let $\pi_j'\in\mathcal{A}_0([G'],\omega_j')$ with arithmetic conductor $\mathfrak{N}_j',$ $j=1, 2.$ Suppose further that $\pi_{1,v_*}'\simeq \pi_{2,v_*}'$ at a finite split place $v_*.$    
\begin{enumerate}
	\item There are infinitely many $\pi\in\mathcal{A}_0([G],\omega)$ such that 
\begin{align*}
L(1/2,\pi\times\pi_1')L(1/2,\pi\times\pi_2')\neq 0.
\end{align*}

\item Suppose further that $\pi_{1,\infty}'\simeq \pi_{2,\infty}'.$ Then  $L(1/2,\pi\times\pi_1')L(1/2,\pi\times\pi_2')\neq 0$
for some $\pi\in\mathcal{A}_0(\mathfrak{N};\mathfrak{N}_1',\mathfrak{N}_2',\sigma_{v_*},\omega,\varepsilon).$ Here $\mathcal{A}_0(\mathfrak{N};\mathfrak{N}_1',\mathfrak{N}_2',\sigma_{v_*},\omega,\varepsilon)$ is a \textit{finite} set of cuspidal representations of $G(\mathbb{A}_F)$ (cf. Definition \ref{defn63} in \textsection\ref{9.2.1}). 

\end{enumerate}
See Theorem \ref{thm53.} for a detailed description. 
\end{thmx}
\begin{remark}
\begin{enumerate}
	\item[(i).] The nonvanishing of $L(1/2,\pi\times\pi_1')L(1/2,\pi\times\pi_2')$ for some $\pi\in\mathcal{A}_0([G],\omega)$ follows from a soft argument by using Theorem \ref{C}. 
	\item[(ii).] The second part of Corollary \ref{cor5} is more delicate since 
	$$
	\#\mathcal{A}_0(\mathfrak{N};\mathfrak{N}_1',\mathfrak{N}_2',\sigma_{v_*},\omega,\varepsilon)\ll_{\mathfrak{N}_1',\mathfrak{N}_2',v_*} |\mathfrak{N}|^{n+O(\varepsilon)}
	$$ 
	is an explicit  \textit{finite} family, while the set $\mathcal{A}_0([G],\omega)$ is infinite.

\end{enumerate}	
\end{remark} 

\begin{cor}[Special Case]
Let $\pi_1'$ and $\pi_2'$ be cuspidal representations of $\mathrm{GL}(n)/\mathbb{Q}$ with levels $N_1$ and $N_2,$ respectively. Suppose $N_1N_2>1,$ $(N_1, N_2)=1,$ and $\pi_{1,p}'\simeq \pi_{2,p}'$ for some $p\nmid N_1'N_2'.$ Let $\varepsilon>0$ be fixed. Then for all large prime  $N$ with $(N,pN_1N_2)=1,$ there exists a cuspidal representation $\pi$ of $\mathrm{GL}(n+1)/\mathbb{Q}$ with level $\asymp_{p, N_1, N_2} N,$ and $\|\boldsymbol{\lambda}_{\pi_{\infty}}\|\leq N^{\varepsilon},$ such that 
$$
L(1/2,\pi\times\pi_1')L(1/2,\pi\times\pi_2')\neq 0.
$$
Here $\asymp_{p, N_1, N_2}$ means the implied constant relies on $p,$ $N_1$ and $N_2,$ and $\|\boldsymbol{\lambda}_{\pi_{\infty}}\|$ is the Euclid norm of $\boldsymbol{\lambda}_{\pi_{\infty}}$ viewed as a point in $\mathbb{C}^{n+1}.$ 
\end{cor}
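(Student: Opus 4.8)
The plan is to deduce this ``Special Case'' Corollary as a direct specialization of Corollary \ref{cor5} (Simultaneous Nonvanishing) over the base field $F=\mathbb{Q}$, together with the standard dictionary between classical levels and arithmetic conductors of ideals. First I would set $\mathcal{O}_F=\mathbb{Z}$, identify the classical levels $N_1,N_2$ with the ideals $\mathfrak{N}_1'=(N_1)$ and $\mathfrak{N}_2'=(N_2)$, and note that the hypotheses $N_1N_2>1$ and $(N_1,N_2)=1$ translate verbatim into $\mathfrak{N}_1'\mathfrak{N}_2'\subsetneq\mathcal{O}_F$ and $\mathfrak{N}_1'+\mathfrak{N}_2'=\mathcal{O}_F$. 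The hypothesis $\pi_{1,p}'\simeq\pi_{2,p}'$ for a prime $p\nmid N_1N_2$ provides the required finite split place $v_*=p$ of $\mathbb{Q}$ (every finite place of $\mathbb{Q}$ is split, being its own residue place), with the local isomorphism at $v_*$. Since $\pi_1',\pi_2'$ are cuspidal representations of $\mathrm{GL}(n)/\mathbb{Q}$, they are in particular unitary, so they lie in $\mathcal{A}_0([G'],\omega_j')$ for the appropriate central characters, and the hypotheses of Corollary \ref{cor5}(2) are met once we also arrange $\pi_{1,\infty}'\simeq\pi_{2,\infty}'$.

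The subtlety is that the corollary as stated does \emph{not} assume $\pi_{1,\infty}'\simeq\pi_{2,\infty}'$, so I would invoke only part (1) of Corollary \ref{cor5} to get infinitely many $\pi\in\mathcal{A}_0([G],\omega)$ with $L(1/2,\pi\times\pi_1')L(1/2,\pi\times\pi_2')\neq 0$, and then extract from this infinite family a member with controlled level and controlled archimedean parameter. For the level: I would take $\mathfrak{N}=(N)$ for a large rational prime $N$ coprime to $pN_1N_2$ and appeal to the finiteness statement in part (2) of Corollary \ref{cor5} --- more precisely to Theorem \ref{thm53.} referenced there --- which localizes the nonvanishing to the explicit finite family $\mathcal{A}_0(\mathfrak{N};\mathfrak{N}_1',\mathfrak{N}_2',\sigma_{v_*},\omega,\varepsilon)$; membership in that family forces the level of $\pi$ to be $\asymp_{p,N_1,N_2} N$ and the Langlands parameter $\boldsymbol{\lambda}_{\pi_\infty}$ to satisfy $\|\boldsymbol{\lambda}_{\pi_\infty}\|\leq N^{\varepsilon}$. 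The archimedean bound comes from the cutoff by the region $\mathcal{D}$ in the weight $\mathcal{H}_{f_\infty}$ of the relative trace formula, rescaled in terms of the level following the recipe in \textsection\ref{9.2.1}.

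The main obstacle is matching the archimedean data: part (2) of Corollary \ref{cor5} is stated under the extra hypothesis $\pi_{1,\infty}'\simeq\pi_{2,\infty}'$, which is not part of the ``Special Case'' hypotheses. To handle this I would either (a) observe that the quantitative conclusion we need --- level $\asymp N$ and $\|\boldsymbol{\lambda}_{\pi_\infty}\|\leq N^\varepsilon$ --- only requires the \emph{finiteness} and \emph{size} bounds on the underlying family, which flow from the geometric-side stability (Theorem \ref{Red} in \textsection\ref{sec10}) and the second-moment bound Corollary \ref{cor3.}, neither of which uses $\pi_{1,\infty}'\simeq\pi_{2,\infty}'$; or (b) note that when $\pi_{1,\infty}'\simeq\pi_{2,\infty}'$ fails one may still run the soft argument of Remark (i) after Corollary \ref{cor5} with an archimedean test function $f_\infty$ whose spectral transform $\mathcal{H}_{f_\infty}$ is supported in a ball of radius $N^\varepsilon$, at the cost of replacing ``finitely many'' by ``at least one with bounded parameter,'' which is exactly what the Special Case asks. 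Concretely, I would choose $f_\infty$ as in \textsection\ref{11.2} with $\mathcal{H}_{f_\infty}$ concentrated near a fixed point, scale the level to $N$, apply Theorem \ref{C} to see that the spectral side is a sum of $|L(1/2,\pi\times\pi_1')L(1/2,\pi\times\pi_2')|$-type quantities that is nonzero (the geometric side being computably nonzero for large prime $N$, by the stability and main-term analysis), and conclude that some $\pi$ in the support contributes nontrivially. Verifying that the geometric side is genuinely nonvanishing for all large prime $N$ --- i.e., that the main term survives and dominates the error --- is the one place where a short computation, drawing on the explicit formulas for $J^{\Reg}_{\Geo,\sm}$ and $J^{\Reg}_{\Geo,\du}$ in Theorem \ref{C} and the conductor bookkeeping of \textsection\ref{9.2.1}, is unavoidable; everything else is formal translation.
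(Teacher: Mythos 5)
Your overall route—read the Special Case as the specialization of Corollary \ref{cor5} (i.e.\ Theorem \ref{thm53.}) to $F=\mathbb{Q}$, $\mathfrak{N}_j'=(N_j)$, $v_*=p$, $\mathfrak{N}=(N)$—is exactly what the paper intends, and your dictionary between classical levels and ideals, and the identification of $v_*$ with $p$, are fine. The problem is the step where you try to dispense with the hypothesis $\pi_{1,\infty}'\simeq\pi_{2,\infty}'$: both of your repairs (a) and (b) miss where that hypothesis is actually used. The bound $\|\boldsymbol{\lambda}_{\pi_\infty}\|\leq N^{\varepsilon}$ is not a consequence of the finiteness of the family $\mathcal{A}_0(\mathfrak{N};\mathfrak{N}_1',\mathfrak{N}_2',\sigma_{v_*},\omega,\varepsilon)$ (that set is finite by definition), nor of stability (Theorem \ref{Red}) or of the second-moment bound (Corollary \ref{cor3.}, which is proved only for $\pi_1'=\pi_2'=\pi'$ everywhere unramified and cannot be imported here). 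In the paper it comes from the tail estimate on $J_0^{\dagger}(f,\mathbf{0})$ in the proof of Theorem \ref{thm53.}: one needs $0\leq \mathcal{H}_{f_\infty}\ll 1$ so that the Kuznetsov formula can be used to majorize the sub-sum over $\|\boldsymbol{\lambda}_{\pi_\infty}\|>|\mathfrak{N}|^{\varepsilon}$, and this positivity is exactly why $f_\infty$ is taken of the form $h_\infty*h_\infty^**\omega_\infty$, bi-$K_\infty$-invariant—a choice made only in the case $\pi_{1,\infty}'\simeq\pi_{2,\infty}'$.

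Your alternative (b) fails for two concrete reasons. First, a compactly supported smooth $f_\infty$ cannot have spherical transform supported in a ball of radius $N^{\varepsilon}$ (Paley--Wiener: the transform is of exponential type), so the ``spectral cutoff'' you invoke does not exist; the paper instead cuts the tail using positivity of $\mathcal{H}_{f_\infty}$ together with the Stirling decay of $\Lambda_\infty(1,\pi_\infty\times\pi_\infty')$. Second, when $\pi_{1,\infty}'\not\simeq\pi_{2,\infty}'$ the nonvanishing of the archimedean factor of the geometric main term (Lemma \ref{thm53}) is obtained in \textsection\ref{9.2.3} with a bespoke $f_\infty$ that is neither bi-$K_\infty$-invariant nor of positive type; for that choice ``$\mathcal{H}_{f_\infty}(\boldsymbol{\lambda}_{\pi_\infty})$'' is not a nonnegative scalar weight and the tail-cutting argument collapses, which is precisely why the paper only obtains part (1), with no archimedean control, in that case. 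Conversely, if you insist on a positive-type bi-$K_\infty$-invariant $f_\infty$, you must separately prove $\mathcal{I}_\infty(f_\infty,0)\neq 0$ for non-isomorphic local components, which your sketch does not address. So the correct deduction of the Special Case within the paper's framework is to carry the hypothesis $\pi_{1,\infty}'\simeq\pi_{2,\infty}'$ along (i.e.\ quote Theorem \ref{thm53.}(2) verbatim, noting the statement of the Special Case suppresses this assumption) or to supply a genuinely new archimedean argument; your (a)/(b) do not close that gap.
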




\subsection{Structure of This Paper}
In \textsection\ref{sec2} we introduce the framework of the relative trace formula and describe the regularization, which is  different from \cite{JLR99}. Through \textsection\ref{sec6} to \textsection\ref{sec5} we handle the geometric side: verifying the convergence in a cone in $\mathbb{C}^2$ and establishing meromorphic continuation outside. The spectral side is treated in \textsection\ref{sec.spec}, where we obtain absolute convergence in certain region and further derive meromorphic continuation. In \textsection\ref{sec10.} we show that the meromorphic parts from the spectral side and the geometric side match perfectly.  The relative trace formula is summarized in \textsection\ref{7.3}.  In \textsection\ref{sec9..} we specify the test function and gather estimates from preceding sections on both sides to obtain applications to the second moment evaluation and the nonvanishing problem (cf. \textsection\ref{11.2} and \textsection\ref{sec8}).

\subsection{Notation}\label{notation}
Let $F$ be a number field with ring of integers $\mathcal{O}_F.$ Let $[F:\mathbb{Q}]$ be the degree. Let $N_F$ be the absolute norm. Let $\mathfrak{O}_F$ be the different of $F.$ Let $\mathbb{A}_F$ be the adele group of $F.$ Let $\Sigma_F$ be the set of places of $F.$ Denote by $\Sigma_{F,\fin}$ (resp. $\Sigma_{F,\infty}$) the set of nonarchimedean (resp. archimedean) places. For $v\in \Sigma_F,$ we denote by $F_v$ the corresponding local field and $\mathcal{O}_{F_v}$ its ring of integers. Denote by $\widehat{\mathcal{O}_F}=\prod_{v<\infty}\mathcal{O}_{F_v}.$ For a nonarchimedean place $v,$ let  $\mathfrak{p}_v$ be the maximal prime ideal in $\mathcal{O}_{F_v}.$ Given an integral ideal $\mathcal{I},$ we say $v\mid \mathcal{I}$ if $\mathcal{I}\subseteq \mathfrak{p}_v.$ Fix a uniformizer $\varpi_{v}\in\mathfrak{p}_v$ of the discrete valuation ring $\mathcal{O}_{F_v}.$ Denote by $e_v(\cdot)$ the evaluation relative to $\varpi_v$ normalized as $e_v(\varpi_v)=1.$ Denote by $\mathbb{F}_v=\mathcal{O}_{F_v}/\mathfrak{p}_v$ the residue field with cardinality $\#\mathbb{F}_v=q_v.$ We use $v\mid\infty$ to indicate an archimedean place $v$ and write $v<\infty$ if $v$ is nonarchimedean. Let $|\cdot|_v$ be the norm in $F_v.$ Put $|\cdot|_{\infty}=\prod_{v\mid\infty}|\cdot|_v$ and $|\cdot|_{\fin}=\prod_{v<\infty}|\cdot|_v.$ Let $|\cdot|_{\mathbb{A}_F}=|\cdot|_{\infty}\otimes|\cdot|_{\fin}$. We will simply write $|\cdot|$ for $|\cdot|_{\mathbb{A}_F}$ in calculation over $\mathbb{A}_F^{\times}$ or its quotient by $F^{\times}$.   

Denote by $\Tr_F$ the trace map, extended to $\mathbb{A}_F\rightarrow \mathbb{A}_{\mathbb{Q}}.$ Let $\psi_{\mathbb{Q}}$ be the additive character on $\mathbb{Q}\backslash \mathbb{A}_{\mathbb{Q}}$ such that $\psi_{\mathbb{Q}}(t_{\infty})=\exp(2\pi it_{\infty}),$ for $t_{\infty}\in \mathbb{R}\hookrightarrow\mathbb{A}_{\mathbb{Q}}.$ Let $\psi=\psi_{\mathbb{Q}}\circ \Tr_F.$ Then $\psi(t)=\prod_{v\in\Sigma_F}\psi_v(t_v)$ for $t=(t_v)_v\in\mathbb{A}_F.$ For $v\in \Sigma_F,$ let $dt_v$ be the additive Haar measure on $F_v,$ self-dual relative to $\psi_v.$ Then $dt=\prod_{v\in\Sigma_F}dt_v$ is the standard Tamagawa measure on $\mathbb{A}_F$. Let $d^{\times}t_v=\zeta_{F_v}(1)dt_v/|t_v|_v,$ where $\zeta_{F_v}(\cdot)$ is the local Dedekind zeta factor. In particular, $\Vol(\mathcal{O}_{F_v}^{\times},d^{\times}t_v)=\Vol(\mathcal{O}_{F_v},dt_v)=N_{F_v}(\mathfrak{D}_{F_v})^{-1/2}$ for all finite place $v.$ Moreover, $\Vol(F\backslash\mathbb{A}_F; dt_v)=1$ and $\Vol(F\backslash\mathbb{A}_F^{(1)},d^{\times}t)=\underset{s=1}{\Res}\ \zeta_F(s),$ where $\mathbb{A}_F^{(1)}$ is the subgroup of ideles $\mathbb{A}_F^{\times}$ with norm $1,$ and $\zeta_F(s)=\prod_{v<\infty}\zeta_{F_v}(s)$ is the finite Dedekind zeta function. More properties of $dt$ and $d^{\times}t$ can be found in \cite{Lan94}, Ch. XIV.

Let $G=\mathrm{GL}(n+1)$ and $G'=\mathrm{GL}(n).$ Denote by $Z$ (resp. $Z'$) the center of $G$ (resp. $G'$). For $z'=\alpha I_n\in Z'(\mathbb{A}_F),$ $\alpha\in \mathbb{A}_F^{\times},$ we simply write $|z_v'|_v=|\alpha_v|_v,$ $v\in \Sigma_F.$ Let $G'^0(\mathbb{A}_F)$ be the subgroup of $G'(\mathbb{A}_F)$ consisting of $g'\in G'(\mathbb{A}_F)$ with $|\det g'|=1.$ Then the subgroup $Z'(\mathbb{A}_F)G'^0(\mathbb{A}_F)$ is open and has index $1$ (or $n-1$ if $F$ is a function field) in $G'(\mathbb{A}_F).$ So we may write $G'(\mathbb{A}_F)=Z'(\mathbb{A}_F)G'^0(\mathbb{A}_F).$ Let $\overline{G}=Z\backslash G$ and $\overline{G'}=Z'\backslash G'.$ We will identify $\overline{G'}$ with $G'^0$ as the conventional notation from Rankin-Selberg theory. For $x\in G(\mathbb{A}_F)$ or $G'(\mathbb{A}_F),$ we denote by $x^{\vee}$ the transpose inverse of $x.$ Fix the embedding from $G'$ to $G:$
\begin{align*}
	\iota:\ G'\longrightarrow G,\quad \gamma\mapsto \begin{pmatrix}
		\gamma&\\
		&1
	\end{pmatrix}.
\end{align*} 

For a matrix $g=(g_{i,j})\in G(\mathbb{A}_F),$ we denote by $E_{i,j}(g)=g_{i,j},$ the $(i,j)$-th entry of $g.$ For a vector $\mathbf{v}=(v_1, \cdots, v_m)$ denote by $E_i(\mathbf{v})=v_i,$ the $i$-th component of $\mathbf{v}.$ Let $m_1, m_2\in \mathbb{N}.$ We write $M_{m_1, m_2}$ for the group of $m_1\times m_2$ matrices.

For an algebraic group $H$ over $F$, we will denote by $[H]:=H(F)\backslash H(\mathbb{A}_F).$ We equip measures on $H(\mathbb{A}_F)$ as follows: for each unipotent group $U$ of $H,$ we equip $U(\mathbb{A}_F)$ with the Haar measure such that, $U(F)$ being equipped with the counting measure and the measure of $[U]$ is $1.$ We equip the maximal compact subgroup $K$ of $H(\mathbb{A}_F)$ with the Haar measure such that $K$ has total mass $1.$ When $H$ is split, we also equip the maximal split torus of $H$ with Tamagawa measure induced from that of $\mathbb{A}_F^{\times}.$

Let $\omega$ and $\omega_1, \omega_2$ be unitary idele class characters on $\mathbb{A}_F^{\times},$ which are trivial on $\mathbb{R}_+^{\times}.$ Denote by $\mathcal{A}_0\left([G],\omega\right)$ (resp. $\mathcal{A}_0\left([G'],\omega_i\right)$) the set of cuspidal representations on $G(\mathbb{A}_F)$ (resp. $G'(\mathbb{A}_F)$) with central character $\omega$ (resp. $\omega_i$), $i=1, 2.$ Let $\omega'=\omega_1\overline{\omega}_2.$

Let $B$ (resp. $B'$) be the group of upper triangular matrices in $G$ (resp. $G'$). Let $T_B$ (resp. $T_{B'}$) be the Levi subgroup of $B$ (resp. $B'$). Let $A=Z\backslash T_B$ and $A'=Z'\backslash T_{B'}.$ Let $N$ (resp. $N'$) be the unipotent radical of $B$ (resp. $B'$). Let $R'$ be the standard parabolic subgroup of $G'$ of type $(1,n-2,1),$ and $R_0'=Z'\backslash R'.$ Let $W$ be Weyl group of $G$ with respect to $(B,T_{B}).$ Let $\Delta=\{\alpha_{1,2},\alpha_{2,3},\cdots,\alpha_{n,n+1}\}$ be the set of simple roots, and for each simple root $\alpha_{k,k+1},$ $1\leq k\leq n,$ denote by $w_k$ the corresponding reflection. Explicitly, for each $1\leq k\leq n,$
\begin{align*}
	w_k=\begin{pmatrix}
		I_{k-1} &\\
		& S_2&\\
		&&I_{n-k}
	\end{pmatrix},\ \text{where $S_2=\begin{pmatrix}
			&1\\
			1&
		\end{pmatrix}$}.
\end{align*}
For $1\leq k\leq n-1,$ denote by $w_k'$ the unique element in $G'$ such that $\iota(w_k')=w_k.$ Then $w_k$'s generate the Weyl group $W'$ of $G'$ with respect to $(B',T_{B'}).$ Let $\widetilde{w}_j'=w_j'w_{j+1}'\cdots w_{n-1}',$ $1\leq j\leq n-1.$ Denote by $N_j'=N'/(\widetilde{w}_j 'N' \widetilde{w}_j'^{-1}).$ Denote by $\widetilde{w}_n'=I_n$ and $N_n'=I_n.$ Define the generic character $\theta$ on $[N]$ by setting $\theta(u)=\prod_{j=1}^{n}\psi(u_{j,j+1})$ for $u=(u_{i,j})_{1\leq i, j\leq n+1}\in N(\mathbb{A}_F).$ Let $\theta'=\theta\mid_{[\iota(N')]}$ be the generic character on $[N'].$

Let $P$ (resp. $P'$) be the standard parabolic subgroup of $G$ (resp. $G'$) of type $(n,1)$ (resp. $(n-1,1)$). Let $P_0=Z\backslash P$ (resp. $P_0'=Z'\backslash P'$). We will denote by $Q$ a general parabolic subgroup of $G.$

Let $Z'^T(\mathbb{A}_F)=\{z=\{\diag(a,a,\cdots, a)\in Z'(\mathbb{A}_F):\ T^{-1}\leq |a|\leq T\}.$ Let $[Z'^T]=\{z=\{\diag(a,a,\cdots, a)\in Z'(F)\backslash Z'(\mathbb{A}_F):\ T^{-1}\leq |a|\leq T\}.$ Denote by $\textbf{s}=(s_1, s_2)\in\mathbb{C}^2.$

Denote by $N_P$ (resp. $N_Q$) the unipotent radical of $P$ (resp. $Q$). Let $\mathcal{S}(\mathbb{A}_F^n)$ be  the space of Bruhat-Schwartz functions on $\mathbb{A}_F^n.$ Let $\Phi\in\mathcal{S}(\mathbb{A}_F^n)$ with Fourier transform $\widehat{\Phi}$ and let  $\omega''=|\cdot|^{in\alpha}$ be a unitary character of $Z'(\mathbb{A}_F).$ Define an Eisenstein series
\begin{equation}\label{277}
E(s,x;\Phi,\omega'')=\sum_{\delta\in P_0(F)\backslash \overline{G'}(F)}\int_{Z'(\mathbb{A}_F)}\Phi(z\eta\delta x)|\det zx|^s\omega''(z)d^{\times}z
\end{equation} 
on $[{G'}].$ Then $E(s,x;\Phi,\omega'')$ converges absolutely in $\Re(s)>1$ and admits a meromorphic continuation to $\mathbb{C},$ given by 
\begin{equation}\label{278}
E(s,x;\Phi,\omega'')=E_+(s,x;\Phi,\omega'')+E_+^{\wedge}(s,x;\Phi,\omega'')+E_{\Res}(s,x;\Phi,\omega''),
\end{equation}
where 
\begin{align*}
&E_{\Res}(s,x;\Phi,\omega''):=-\frac{\Phi(0)|\det x|^s}{n(s+i\alpha)}+\frac{\widehat{\Phi}(0)|\det x|^{s-1}}{n(s-1+i\alpha)}\\
&E_+(s,x;\Phi,\omega''):=\sum_{\delta\in P_0(F)\backslash \overline{G'}(F)}\int_{|z|\geq 1}\Phi(z\eta\delta x)|\det zx|^s\omega''(z)d^{\times}z,\\
&E_+^{\wedge}(s,x;\Phi,\omega''):=\sum_{\delta\in P_0(F)\backslash \overline{G'}(F)}\int_{|z|\geq 1}\widehat{\Phi}(z\eta\delta x)|\det zx|^{1-s}\omega''^{-1}(z)d^{\times}z.
\end{align*}
Moreover, $E_+(s,x;\Phi,\omega'')$ and $E_+^{\wedge}(s,x;\Phi,\omega'')$ converges absolutely for all $s.$ Define 
\begin{equation}\label{R}
\mathcal{R}:=\big\{\mathbf{s}=(s_1,s_2)\in\mathbb{C}^2:\ \Re(s_1),\ \Re(s_2)>-1/(n+1)\big\}.
\end{equation}

Define $K'=\prod_{v}K_v',$ where each $K_v'$ is a maximal compact subgroup of $G'(F_v)$ and $K_v'=G'(\mathcal{O}_{F_v})$ if $v<\infty.$

For $j\in \{1,2\},$ let $\pi_j'=\otimes_{v\in\Sigma_F}\pi_{j,v}'\in \mathcal{A}_0([G'],\omega')$ contain  everywhere unramified vectors. Choose $\phi_j'\in \pi_j'$ to be local new forms and $\langle\phi_j',\phi_j'\rangle=1.$ 

For a function $h$ on $G(\mathbb{A}_F),$ we define $h^*$ by assigning $h^*(g)=\overline{h({g}^{-1})},$ $g\in G(\mathbb{A}_F).$ Let $F_1(s), F_2(s)$ be two meromorphic functions. Write $F_1(s)\sim F_2(s)$ if there exists an \textit{entire} function $E(s)$ such that $F_1(s)=E(s)F_2(s).$ Denote by $\alpha\asymp \beta$ for $\alpha, \beta \in\mathbb{R}$ if there are absolute constants $c$ and $C$ such that $c\beta\leq \alpha\leq C\beta.$

Throughout, we follow the $\varepsilon$-convention: that is, $\varepsilon$ will always be positive number which can be taken as small as we like, but may differ from one occurrence to another. 

\textbf{Acknowledgements}
I am very grateful to Peter Sarnak for encouragement and numerous discussions. I would like to thank Dinakar Ramakrishnan,  Djordje  Mili\'{c}evi\'{c}, Paul Nelson, and Shouwu Zhang for their precise comments and valuable suggestions. 


\section{The Relative Trace Formula and Regularization}\label{sec2}

\subsection{An Elementary Relative Trace Formula on $G$}\label{sec2.1}
Consider a smooth function $f:$ $G(\mathbb{A}_F) \rightarrow \mathbb{C}$ which is left and right $K$-finite for a compact subgroup $K$ of $G(\mathbb{A}_F)$, transforms by a unitary character $\omega^{-1}$ of $Z\left(\mathbb{A}_F\right),$ and has compact support modulo $Z\left(\mathbb{A}_F\right).$ Denote by $\mathcal{H}(G(\mathbb{A}_F),\omega^{-1})$ the set of such functions. Each $f\in \mathcal{H}(G(\mathbb{A}_F),\omega^{-1})$ defines an integral operator 
\begin{equation}\label{p}
	R(f)\phi(g)=\int_{\overline{G}(\mathbb{A}_F)}f(g')\phi(gg')dg',
\end{equation}
on the space $L^2\left([G],\omega\right)$ of functions on $[G]$ which transform under $Z(\mathbb{A}_F)$ by $\omega$ and are square integrable on $[\overline{G}].$ This operator is represented by the kernel function
\begin{equation}\label{11}
	\K^{f}(g_1,g_2)=\sum_{\gamma\in \overline{G}(F)}f(g_1^{-1}\gamma g_2),\ \ g_1, g_2\in G(\mathbb{A}_F).
\end{equation}

It is well known that $L^2\left([G],\omega\right)$ decomposes into the direct sum of the space $L_0^2\left([G],\omega\right)$ of cusp forms and spaces $L_{\Eis}^2\left([G],\omega\right)$ and $L_{\Res}^2\left([G],\omega\right)$ defined using Eisenstein series and residues of Eisenstein series respectively. Then the kernel function $\K(g_1,g_2)$ splits up as
\begin{equation}\label{ker}
	\K^f(g_1,g_2)=\K_0^f(g_1,g_2)+\K_{\Eis}^f(g_1,g_2)+\K_{\Res}^f(g_1,g_2).
\end{equation}
Denote by $\K_{\ER}(g_1,g_2)=\K_{\Eis}^f(g_1,g_2)+\K_{\Res}^f(g_1,g_2).$ Explicitly, we have the spectral expansion 
\begin{equation}\label{2.}
	\K_0^f(g_1,g_2)=\sum_{\pi\in\mathcal{A}_0([G],\omega)}\sum_{\phi\in\mathcal{B}_{\pi}}\pi(f)\phi(g_1)\overline{\phi(g_2)},
\end{equation}
where $\mathfrak{B}_{\pi}$ is a family of orthonormal basis of $\pi.$ Since the test function $f$ is $K$-finite, the sum over $\phi\in\mathcal{B}_{\pi}$ is actually finite. We often omit the superscript $f$ if the test function is clear in the context.

Let $\pi$ (resp. $\pi'$) be a unitary cuspidal representation on $G(\mathbb{A}_F)$ (resp. $G'(\mathbb{A}_F)$). Let $\phi\in \pi$ and $\phi'\in \pi'.$ Then the period 
\begin{align*}
	\mathcal{P}(\phi,\phi')=\int_{[G']}\phi(\iota(x))\phi'(x)dx.
\end{align*}
is well defined. Moreover, by Rankin-Selberg theory, it is an integral representation of the central value $L(1/2, \pi\times\pi').$ Combining this with the spectral expansion \eqref{2.} of the cuspidal kernel function $\K_0(g_1,g_2)$ we then obtain 
\begin{equation}\label{cusp}
J_0(f)=\iint\K_0(\iota(x),\iota(y))\phi'(x)\overline{\phi'(y)}dxdy=\sum_{\pi}\sum_{\phi\in\mathcal{B}_{\pi}}\mathcal{P}(\phi,\phi')\overline{\mathcal{P}(R(f)\phi,\phi')},           
\end{equation}
where the double integral is taken over $x, y\in [G'],$ and $\pi\in\mathcal{A}_0([G],\omega).$ 

Since $\K_0(g_1,g_2)$ decreases rapidly on $[G']\times[G'],$ the left hand side of \eqref{cusp} converges absolutely. Hence switching of integrals in \eqref{cusp} is granted. By Rankin-Selberg theory, \eqref{cusp} gives roughly the second moment of central $L$-values on $\mathrm{GL}(n+1)\times \mathrm{GL}(n):$
\begin{equation}\label{59} 
	J_0(f)\approx \sum_{\pi} c_f(\pi,\pi')\big|L(1/2,\pi\times\pi')\big|^2,
\end{equation}
where $c_f(\pi,\pi')$ is certain constant depending on $f$ and $\pi, \pi'.$

To compute it, the usual strategy is to take advantage of \eqref{ker} to substitute $\K_0(g_1,g_2)=\K(g_1,g_2)-\K_{\ER}(g_1,g_2)$ into \eqref{cusp}, where $\K_{\ER}(g_1,g_2)=\K_{\Eis}(g_1,g_2)+\K_{\Res}(g_1,g_2),$ obtaining \textit{formally} that 
\begin{equation}\label{2}
	J_0(f)=J_{\Geo}(f)-J_{\ER}(f),
\end{equation}
where 
\begin{align*}
	J_{\Geo}(f)=&\int_{[G']}\int_{[G']}\K(\iota(x),\iota(y))\phi'(x)\overline{\phi'(y)}dxdy,\\
	J_{\ER}(f)=&\int_{[G']}\int_{[G']}\K_{\ER}(\iota(x),\iota(y))\phi'(x)\overline{\phi'(y)}dxdy.
\end{align*}

The basic idea of relative trace formula is to compute the RHS of \eqref{2}.

\begin{enumerate}
	\item[\textbf{Geom}:] Substituting \eqref{11} into \eqref{2} then the geometric side becomes
	\begin{equation}\label{10}
		J_{\Geo}(f)=\int_{[G']}\int_{[G']}\sum_{\gamma\in \overline{G}(F)}f(\iota(x)^{-1}\gamma \iota(y))\phi'(x)\overline{\phi'(y)}dxdy.
	\end{equation}
	One can formally write $J_{\Geo}(f)$ as a linear combination of integrals by some standard geometric manipulations:
	\begin{align*}
		J_{\Geo}(f)=\sum_{\gamma}\Vol([H_{\gamma}])\int_{H_{\gamma}(\mathbb{A}_F)\backslash G'(\mathbb{A}_F)\times G'(\mathbb{A}_F)}f(\iota(x)^{-1}\gamma \iota(y))\phi'(x)\overline{\phi'(y)}dxdy,
	\end{align*}
	where $\gamma$ runs over the representatives for $Z(F)\iota(G'(F))\backslash G(F)/\iota(G'(F)),$ and $H_{\gamma}$ is the stabilizer of $\gamma.$ We call the above integral relative to $x$ and $y$ a weighted orbital integral. For the sake of simplicity, we will just call them \textit{orbital integrals} as the ordinary ones in Jacquet's relative trace formula. 
	\medskip 
	
	\item[\textbf{Spec}:] The spectral contribution $J_{\ER}(f)$ can be written as direct integrals of Eisenstein series periods. Moreover, in a simple version of the relative trace formula (e.g., cf. \cite{RR05} and \cite{FW09}), one may choose certain suitable test function $f$ to annihilate the non-cuspidal contribution $\K_{\ER}(g_1,g_2)$ so that $J_{\ER}(f)=0.$ 
\end{enumerate}

However, many orbital integrals (e.g., the one relative to $\gamma\in \iota(G'(F))$) do not converge. To overcome these analytic barriers one needs certain regularization of the relative trace formula. One of the most powerful techniques to handle the convergence problem is Arthur's truncation process (cf. e.g., \cite{Art78} and \cite{Art80}). It is expected to construct modified truncations for $J_{\Geo}(f)$ and $J_{\ER}(f)$ so that the truncated distributions can be calculated. The regularization in \cite{IY15} for a single Rankin-Selberg period should be relevant. However, the desired truncations in the geometric and spectral sides have not been established.  

Instead of truncating the kernel functions $\K(x,y)$ and $\K_{\ER}(x,y),$ we propose an alternative approach to resolve the convergence problem and compute the RHS of \eqref{2} in terms of certain $L$-values modulo some tiny tails, which vanishes in many practical cases. Our regularization consists of four major steps: 
\begin{enumerate}
	\item[(\RNum{1}).] We consider certain deformation $J_0(f,\textbf{s}),$ which is a function of two complex variables $\textbf{s}=(s_1,s_2)\in\mathbb{C}^2,$ with the property that $J_0(f)=J_0(f,\textbf{s})$ at $\textbf{s}=(0,0).$ Apply an elementary truncation on the center $Z'(\mathbb{A}_F)$ to get a truncated function $J_0^T(f,\textbf{s}),$ with $J_0^T(f,\textbf{s})\rightarrow J_0(f,\textbf{s}),$ as $T\rightarrow \infty.$ Moreover, the corresponding geometric and spectral sides $J_{\Geo}^T(f,\textbf{s})$ and $J_{\ER}^T(f,\textbf{s})$ are \textit{well defined} (i.e., convergent) and satisfy
	\begin{align*}
		J_0^T(f,\textbf{s})=J_{\Geo}^T(f,\textbf{s})-J_{\ER}^T(f,\textbf{s}).
	\end{align*} 
	See \text\textsection \ref{sec4.2} below for details. 
		
\item[(\RNum{2}).] We will introduce a refined decomposition of $J_{\Geo}^T(f,\textbf{s})$ and $J_{\ER}^T(f,\textbf{s})$ by Fourier expansion so that each term will converge absolutely for all $\textbf{s}$. This manipulation allows us to move the singular part of $J_{\ER}^T(f,\textbf{s})$ into the geometric side $J_{\Geo}^T(f,\textbf{s}).$ See \text\textsection \ref{sec2.3} for details. We will show the remaining contribution $J_{\Spec}^{\Reg,T}(f,\textbf{s})$ from the spectral side has nice growth properties. 

\item[(\RNum{3}).] In \text\textsection \ref{sec2.4} we further decompose the geometric side, according to the Bruhat decomposition, as $J^{\Reg,T}_{\Geo,\sm}(f,\textbf{s})+J^{\Reg,T}_{\Geo,\bi}(f,\textbf{s})$, obtaining the regularized relative trace formula (see \eqref{reg}):
\begin{equation}\label{19'}
	J_{\Spec}^{\Reg,T}(f,\textbf{s})=J^{\Reg,T}_{\Geo,\sm}(f,\textbf{s})+J^{\Reg,T}_{\Geo,\bi}(f,\textbf{s}).
\end{equation}

\item[(\RNum{4}).] We then show \eqref{19'} holds for $\Re(s_1)\geq 1$ and $\Re(s_2)\geq 1$ after taking $T\rightarrow \infty,$ which gives an equality of holomorphic functions therein. The last key ingredient is to establish meromorphic continuation of both sides to obtain an equality between meromorphic functions over the region 
\begin{align*}
	\mathcal{R}:=\big\{\mathbf{s}=(s_1,s_2)\in\mathbb{C}^2:\ \Re(s_1)>-1/(n+1),\ \Re(s_2)>-1/(n+1)\big\} 
\end{align*}
(cf. \eqref{R} in \textsection\ref{notation}).
\end{enumerate}
We will complete the first three steps in this section, and the rest of this paper (i.e., \textsection\ref{sec6}--\textsection\ref{sec10.}) will be devoted to the step (\RNum{4}).
	
	\subsection{Regularization (\RNum{1}): Deformation and Truncation Process}\label{sec4.2} 
	We start with a general setting. Let $f\in\mathcal{H}(G(\mathbb{A}_F),\omega).$ Let $\pi_i'\in\mathcal{A}_0([G'],\omega_i),$ and $\phi_i'\in \pi_i',$ $1\leq i\leq 2.$ Denote by $\omega'=\omega_1\overline{\omega}_2.$ Let $\textbf{s}=(s_1,s_2)\in\mathbb{C}^2.$ Define
	\begin{equation}\label{j0}
		J_0(f,\textbf{s};\phi_1',\phi_2')=\int_{[{G'}]}\int_{[{G'}]}\K_0^f(\iota(x),\iota(y))\phi_1'(x)\overline{\phi_2'(y)}|\det x|^{s_1}|\det y|^{s_2}dxdy.
	\end{equation}
	where $\K_0^f$ is the cuspidal part of the kernel function relative to the test function $f$. Since $\K_0(x,y)$ decays rapidly on $[G']\times [G']$, the above integrals converge absolutely. Moreover, when $\phi_1'=\phi_2'=\phi'$ and $s_1=s_2=0,$ then $J_0(f,\textbf{s})$ reduces to $J_0(f)$ defined in \eqref{cusp} (cf. \textsection\ref{sec2.1}). 
	
	 In order to streamline notation, henceforth will we write $J_0(f,\textbf{s})$ for $J_0(f,\textbf{s};\phi_1',\phi_2'),$ omitting $\phi_1'$ and $\phi_2'.$ The same abbreviation applies to its counterparts. 
	
	To make the relative trace formula \eqref{2} well defined, we appeal to an elementary truncation on $Z'(\mathbb{A}_F),$ the center of $G'(\mathbb{A}_F)$. For $T>0,$ let $Z'^T(\mathbb{A}_F)=\{z=aI_n\in Z'(\mathbb{A}_F):\ T^{-1}\leq |a|\leq T\}.$ Denote by $[Z'^T]=\{z=aI_n\in [Z']:\ T^{-1}\leq |a|\leq T\}.$ Define $J_0^T(f,\textbf{s})$ to be 
	\begin{align*}
		\int_{[Z'^T]}\int_{[Z'^T]}\int_{[\overline{G'}]}\int_{[\overline{G'}]}\K_0(\iota(z_1x),\iota (z_1z_2y)&\phi_1'(x)\overline{\phi_2'(y)}\omega'(z_1)\overline{\omega_2(z_2)}\\
		&|\det z_1x|^{s_1}|\det z_1z_2y|^{s_2}dxdyd^{\times}z_1d^{\times}z_2.
	\end{align*}
	Since $J_0(f,\textbf{s})$ converges absolutely, then 
	\begin{align*}
		J_0(f,\textbf{s})=\lim_{T\rightarrow \infty}J_0^T(f,\textbf{s}).
	\end{align*}
	
	Define $J_{\ER}^T(f,\textbf{s})$ to be 
	\begin{align*}
		\int_{[Z'^T]}\int_{[Z'^T]}\int_{[\overline{G'}]}\int_{[\overline{G'}]}\K_{\ER}(\iota(z_1x),\iota (z_1z_2y)&\phi_1'(x)\overline{\phi_2'(y)}\omega'(z_1)\overline{\omega_2(z_2)}\\
		&|\det z_1x|^{s_1}|\det z_1z_2y|^{s_2}dxdyd^{\times}z_1d^{\times}z_2
	\end{align*}
	the non-cuspidal counterpart of $J_0^T(f,\textbf{s}).$ Similarly, define $J_{\Geo}^T(f,\textbf{s})$ to be 
	\begin{align*}
		\int_{[Z'^T]}\int_{[Z'^T]}\int_{[\overline{G'}]}\int_{[\overline{G'}]}\K(\iota(z_1x),\iota(z_1z_2y))&\phi_1'(x)\overline{\phi_2'(y)}\omega'(z_1)\overline{\omega_2(z_2)}\\
		&|\det z_1x|^{s_1}|\det z_1z_2y|^{s_2}dxdyd^{\times}z_1d^{\times}z_2.
	\end{align*}
	
	Note that $\K$ and $\K_{\ER}$ increase slowly on $\overline{G'}(\mathbb{A}_F)\times\overline{G'}(\mathbb{A}_F),$ and $\phi'$ decays rapidly on $[\overline{G'}].$ Then the above integrals are well defined and converge absolutely for fixed $T$ and $\mathbf{s}$ of fixed real parts.  Moreover, we have by \eqref{ker} that  
	\begin{equation}\label{8}
		J_{0}^T(f,\textbf{s})=J_{\Geo}^T(f,\textbf{s})-J_{\ER}^T(f,\textbf{s}).
	\end{equation}
	
	However, neither $J_{\Geo}^T(f,\textbf{s})$ nor $J_{\ER}^T(f,\textbf{s})$ would converge when $T\rightarrow \infty.$ We will show the singular part of $J_{\Geo}^T(f,\textbf{s})$ and $J_{\ER}^T(f,\textbf{s})$ cancel with each other; more precisely, when $\textbf{s}\in \mathbb{C}^2$ with $\Re(s_1)\gg 0$ and $\Re(s_2)\gg 0,$ both $J_{\Geo}^T(f,\textbf{s})$ and $J_{\ER}^T(f,\textbf{s})$ converge as  $T\rightarrow \infty.$ Our argument relies on a decomposition of kernel functions via certain Fourier expansion in the following subsection.

	\subsection{Regularization (\RNum{2}): Manipulation of Kernel Functions}\label{sec2.3}
	We will follow the observation in \cite{Yan19a} and \cite{Yan19} to rewrite the spectral expansion \eqref{ker} into certain variant form (see Lemma \ref{decom} in \text\textsection \ref{sec2.3}) by an iteration of Poisson summations. Our regularization starts with this spectral expansion and incorporates the idea of meromorphic continuation into the relative trace formula. 
	
	\subsubsection{Mirabolic Fourier Expansion}
	Assume $n\geq2.$ Let $\psi$ be a nontrivial additive character on $F\backslash \mathbb{A}_F.$ Then for any $\alpha=(\alpha_1,\cdots,\alpha_{n})\in F^{n},$ define a character $\psi_{\alpha}:$ $N(\mathbb{A}_F)\rightarrow \mathbb{C}$ by
	\begin{align*}
		\psi_{\alpha}(u)=\prod_{i=1}^{n}\psi\left(\alpha_iu_{i,i+1}\right),\quad \forall\ u=(u_{i,j})_{(n+1)\times (n+1)}\in N(\mathbb{A}_F).
	\end{align*}  
	Write $\psi_k=\psi_{(0,\cdots,0,1,\cdots,1)}$ (where the first $n+1-k$ components are $0$'s and the remaining $k$ components are $1$) and $\theta=\psi_{(1,\cdots,1)},$ the standard generic character used to define Whittaker functions. 
	
	For $1\leq k\leq n,$ let $B_{n+1-k}$ be the standard Borel subgroup (i.e. the subgroup consisting of nonsingular upper triangular matrices) of $\mathrm{GL}(n+1-k);$ let $N_{n+1-k}$ be the unipotent radical of $B_{n+1-k}.$ For any $i,$ $j\in\mathbb{N},$ let $M_{i, j}$ be
	the additive group scheme of $i\times j$-matrices. Define the unipotent radicals 
	$$
	N_{(k,1,\cdots,1)}=\bigg\{\begin{pmatrix}
		I_{k} & B\\
		& D\\
	\end{pmatrix}:\ B\in M_{k,n+1-k},\ D\in N_{n+1-k}\bigg\},\ \text{$1\leq k\leq n$ }.
	$$
	For $1\leq k\leq n,$ set the generalized mirabolic subgroups 
	\begin{align*}
		R_{k}&=\bigg\{\left(
		\begin{array}{cc}
			A&C\\
			0&B
		\end{array}
		\right):\ A\in \mathrm{GL}(k),\ C\in M_{k,n+1-k},\ B\in N_{n+1-k}
		\bigg\}.
	\end{align*}
	For $2\leq k\leq n,$ define subgroups $R_{k}^0$ of $R_k$ by
	\begin{align*}
	\Bigg\{\left(
		\begin{array}{ccc}
			A&B'&C\\
			0&a&D\\
			0&0&B
		\end{array}
		\right):\ A\in \mathrm{GL}(k-1),\  \begin{pmatrix}
		B'&C\\
		a&D
		\end{pmatrix}\in M_{k,n+2-k},\ B\in N_{n+1-k}
		\Bigg\}.
	\end{align*} 
	Also we define $R_0=R_1^0:=N_{(1,1,\cdots,1)}$ to be the unipotent radical of the standard Borel subgroup of $G=\mathrm{GL}(n+1).$ Then we have the mirabolic Fourier expansion:
	
	\begin{lemma}\label{Fourier}
Let $h$ be a continuous function on $P_0(F)\backslash G(\mathbb{A}_F).$ Then we have 
		\begin{equation}\label{fourier}
			h(x)=\sum_{k=1}^{n+1}\sum_{\delta_k\in R_{n+1-k}(F)\backslash R_{n}(F)}\int_{[N_{(n+1-k,1,\cdots,1)}]}h(u\delta_kx)\psi_{k-1}(u)du
		\end{equation}
		if the RHS converges absolutely and locally uniformly.
	\end{lemma}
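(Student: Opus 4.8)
The statement to prove is the mirabolic Fourier expansion in Lemma~\ref{Fourier}, and the natural approach is an induction on $k$ that peels off one root at a time, at each stage applying the classical (abelian) Fourier expansion on the relevant one-parameter unipotent subgroup. The base case is the ordinary Fourier expansion of $h$, viewed as a function on $P_0(F)\backslash G(\mathbb{A}_F)$, along the top row of the unipotent radical: since $h$ is left-invariant under the mirabolic $P_0$, it is in particular left-invariant under the abelian group $N_{(n,1,\dots,1)}(\mathbb{A}_F)$ modulo its rational points, so Fourier expansion on $[N_{(n,1,\dots,1)}]\cong (F\backslash\mathbb{A}_F)^{n}$ writes $h(x)=\sum_{\alpha\in F^n}\int h(ux)\psi_{\alpha}(u)\,du$. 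Grouping the characters $\psi_\alpha$ into $P_0(F)$- (equivalently $R_n(F)$-)orbits, and noting that $\GL(n)$ acting on row vectors has exactly two orbits (the zero vector and everything else), splits this into the $\alpha=0$ term — which by induction will produce the terms with $k\le n$ — and the "big cell" term, where one uses a transversal $R_{n-1}(F)\backslash R_n(F)$ to sweep out the nonzero $\alpha$'s and is left with the single character $\psi_n = \theta$ restricted to $N_{(n,1,\dots,1)}(\mathbb{A}_F)$.

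The inductive step is the same move carried out one level down: having reached a term of the shape $\int_{[N_{(k,1,\dots,1)}]} h(u\,\delta\,x)\psi_{k-1}(u)\,du$ indexed by $\delta\in R_{n+1-k}(F)\backslash R_n(F)$, one observes that the function $x\mapsto \int h(u\delta x)\psi_{k-1}(u)\,du$ is left-invariant (up to the character) under the next layer of unipotents — specifically under $N_{(k-1,1,\dots,1)}/N_{(k,1,\dots,1)}$, which is again abelian — and applies Fourier expansion there. The stabilizer/orbit bookkeeping is governed by $\GL(k-1)$ acting on $F^{k-1}$: the zero orbit feeds the next term in the telescoping sum, and the generic orbit contributes a term indexed by $R_{n-k}(F)\backslash R_{n+1-k}(F)$ with the character upgraded from $\psi_{k-1}$ to $\psi_k$. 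Iterating until the $\GL(1)$ stage, where the "zero orbit" is empty and only the generic piece survives, terminates the recursion at $k=n+1$ with the fully generic term $\int_{[N]} h(ux)\theta(u)\,du$ (the $\delta_{n+1}$ sum being over $R_n(F)\backslash R_n(F)=\{e\}$). Collecting the generic terms produced at each stage gives exactly the right-hand side of~\eqref{fourier}.

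The point requiring care, though not deep, is the identification at each stage of the transversal: that the $R_n(F)$-orbit of a generic character among the $\psi_\alpha$ with $\alpha\in F^k$, $\alpha\neq 0$, is parametrized by $R_{n+1-(k+1)}(F)\backslash R_{n+1-k}(F)$ and that along this orbit the character becomes $\psi_{k}$ after conjugating a representative into standard position. This is the standard Piatetski-Shapiro / Shalika computation for the Kirillov model, and amounts to checking that the conjugation action of $R_{n+1-k}(F)$ on the relevant abelian unipotent is transitive on nonzero characters with the stated stabilizer; I would verify it by an explicit matrix computation in the $(k,1,\dots,1)$ block decomposition. The other routine point is justifying that each Fourier expansion converges and that the interchanges of summation and integration are legitimate — but the hypothesis of the lemma ("if the RHS converges absolutely and locally uniformly") is precisely designed to let one assume this a posteriori, so the proof is a formal manipulation that becomes an honest identity under that convergence hypothesis. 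The main obstacle, such as it is, is simply keeping the indexing of the nested unipotent subgroups and their character lattices consistent through the induction; there is no analytic difficulty once the convergence hypothesis is granted.
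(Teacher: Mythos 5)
The paper states Lemma \ref{Fourier} without proof (it is the standard Piatetski-Shapiro--Shalika iteration; compare the remark that for $n=1$ Lemma \ref{decom} amounts to applying Poisson summation twice), so your plan has to be measured against the argument that the formula \eqref{fourier} actually encodes. Your overall strategy --- iterate abelian Fourier expansions along the chain of column groups, use the two-orbit action of a general linear group on each column, and let the convergence hypothesis legitimize the rearrangements --- is the right one, but the bookkeeping as written is backwards and would not produce \eqref{fourier}. At the first step you expand along the last-column group $N_{(n,1,\dots,1)}$ and propose to recurse on the $\alpha=0$ term while keeping the nonzero-orbit term as a final term with character $\theta|_{N_{(n,1,\dots,1)}}$. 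This cannot work: every term of \eqref{fourier} with $k\ge 2$ carries the character $\psi_{k-1}$, which is nontrivial on the entry $u_{n,n+1}$ of the last column, whereas any further Fourier coefficient of the constant term $\int_{[N_{(n,1,\dots,1)}]}h(ux)\,du$ is necessarily trivial in those directions. So the $\alpha=0$ piece is not the source of the terms $k\le n$; it \emph{is} the $k=1$ term and needs no further treatment, while the nonzero orbit (cosets $R_{n-1}(F)\backslash R_n(F)$, character $\psi(u_{n,n+1})$, still integrated only over $N_{(n,1,\dots,1)}$) is the piece that must be expanded further and eventually yields all terms $k=2,\dots,n+1$. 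Relatedly, your termination is misread: the $k=n+1$ term of \eqref{fourier} is summed over $R_{0}(F)\backslash R_n(F)=N(F)\backslash R_n(F)$, not over $R_n(F)\backslash R_n(F)=\{e\}$ (the trivial coset sum belongs to $k=1$), and at the $\GL(1)$ stage the zero orbit is not empty --- it produces the $k=n$ term.

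Curiously, your inductive-step paragraph contains the correct mechanism and contradicts your base case: starting from the coefficient with character $\psi_{k-1}$ and cosets $R_{n+1-k}(F)\backslash R_n(F)$ --- whose integration domain should be $N_{(n+2-k,1,\dots,1)}$, not $N_{(k,1,\dots,1)}$ --- one expands along the abelian quotient $N_{(n+1-k,1,\dots,1)}/N_{(n+2-k,1,\dots,1)}\cong \mathbb{A}_F^{\,n+1-k}$, on whose characters $\GL(n+1-k,F)$ acts (not $\GL(k-1)$ on $F^{k-1}$; with your stated coset contribution $R_{n-k}(F)\backslash R_{n+1-k}(F)$ only the former is consistent). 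The zero orbit enlarges the integration domain and gives exactly the $k$-th term of \eqref{fourier}; the open orbit upgrades $\psi_{k-1}$ to $\psi_k$ and feeds the next step; one must also check that conjugation by $R_{n+1-k}(F)$ fixes $\psi_{k-1}$, so the coefficient is genuinely left $R_{n+1-k}(F)$-invariant and the expansion is legitimate. If you repair the base case accordingly (recurse on the generic piece, keep the $\alpha=0$ piece as the $k=1$ term) and fix the indices, the argument closes up; the cleanest packaging is an induction on $n$, applying the lemma for $\GL(n)$ to the function $g'\mapsto\int_{[N_{(n,1,\dots,1)}]}h(u\,\iota(g')\delta x)\psi(u_{n,n+1})\,du$ on $R_{n-1}'(F)\backslash G'(\mathbb{A}_F)$ and using $R_j=\iota(R_j')\ltimes N_{(n,1,\dots,1)}$ to splice the cosets, domains and characters.
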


	\subsubsection{Reformulation of the Relative Trace Formula}
	For $0\leq i, j\leq n,$ define 
	\begin{align*}
		\mathcal{F}_i\mathcal{F}_j\K(g_1,g_2)=&\sum_{\alpha_i\in R_{n+1-i}(F)\backslash R_{n}(F)}\sum_{\beta_j\in R_{n+1-j}(F)\backslash R_{n}(F)}\int_{[N_{(n+1-i,1,\cdots,1)}]}\\
		\qquad &\int_{[N_{(n+1-j,1,\cdots,1)}]}\K(u\alpha_ig_1,v\beta_jg_2)\psi_{i-1}(u)\psi_{j-1}(v)dudv.
	\end{align*}
	
	Denote by $\mathcal{F}_0\mathcal{F}_0\K(g_1,g_2)=\K(g_1,g_2),$ and
	\begin{align*}
		\mathcal{F}_i\mathcal{F}_0\K(g_1,g_2)=&\sum_{\alpha_i\in R_{n+1-i}(F)\backslash R_{n}(F)}\int_{[N_{(n+1-i,1,\cdots,1)}]}\K(u\alpha_ig_1,g_2)\psi_{i-1}(u)du,\\
		\mathcal{F}_0\mathcal{F}_j\K(g_1,g_2)=&\sum_{\alpha_j\in R_{n+1-j}(F)\backslash R_{n}(F)}\int_{[N_{(n+1-j,1,\cdots,1)}]}\K(g_1,v\beta_jg_2)\psi_{j-1}(v)dv.
	\end{align*}
	
	Similarly we can define $\mathcal{F}_i\mathcal{F}_j\K_{\ER}(g_1,g_2)$ by replacing $\K(g_1,g_2)$ with $\K_{\ER}(g_1,g_2)$ in the above definition. 
	
	\begin{lemma}\label{decom}
		Let notation be as before. Let $g_1, g_2\in G(\mathbb{A}_F).$ Then 
		\begin{align*}
			\K_0(g_1,g_2)=&\mathcal{F}_0\mathcal{F}_0\K(g_1,g_2)-\sum_{i=1}^{n}\mathcal{F}_i\mathcal{F}_0\K(g_1,g_2)-\sum_{j=1}^{n}\mathcal{F}_{0}\mathcal{F}_j\K(g_1,g_2)\\
			&\quad +\sum_{i=1}^{n}\sum_{j=1}^{n}\mathcal{F}_{i}\mathcal{F}_j\K(g_1,g_2)-\mathcal{F}_{n+1}\mathcal{F}_{n+1}\K_{\ER}(g_1,g_2).
		\end{align*}
	\end{lemma}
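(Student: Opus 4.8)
The plan is to derive Lemma \ref{decom} from the mirabolic Fourier expansion of Lemma \ref{Fourier} by applying it separately in each of the two variables and then invoking the characterization of the cuspidal spectrum. First I would fix $g_2$ and regard $g_1\mapsto \K(g_1,g_2)$ as a function on $P_0(F)\backslash G(\mathbb{A}_F)$; since $f$ has compact support modulo $Z(\mathbb{A}_F)$, the sum defining $\K$ is locally finite and the Fourier expansion \eqref{fourier} applies and converges absolutely and locally uniformly. Regrouping the terms of \eqref{fourier} according to the value of $k$, the $k=n+1$ term is exactly the full unipotent integral against $\theta$ (a Whittaker-type term), and one recognizes that
\begin{equation*}
\K(g_1,g_2)-\sum_{i=1}^n\mathcal{F}_i\mathcal{F}_0\K(g_1,g_2)
\end{equation*}
collapses to the single $k=n+1$ term, because the $i$-th subtracted piece removes precisely the $k=n+1-i$ contributions for $1\le i\le n$. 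Thus after applying Fourier expansion in $g_1$ one gets a clean identity expressing the "fully degenerate-removed" part as an integral over $[N_{(1,\dots,1)}]$ against $\theta$ in the first variable.

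Next I would apply the same expansion in the second variable $g_2$ to each of the two-variable operators $\mathcal{F}_i\mathcal{F}_0$, obtaining the double sum $\sum_{i,j}\mathcal{F}_i\mathcal{F}_j\K$ together with the $\mathcal{F}_i\mathcal{F}_{n+1}$-type terms; combining the two expansions and bookkeeping the inclusion-exclusion carefully yields
\begin{equation*}
\mathcal{F}_0\mathcal{F}_0\K-\sum_{i=1}^n\mathcal{F}_i\mathcal{F}_0\K-\sum_{j=1}^n\mathcal{F}_0\mathcal{F}_j\K+\sum_{i=1}^n\sum_{j=1}^n\mathcal{F}_i\mathcal{F}_j\K
=\mathcal{F}_{n+1}\mathcal{F}_{n+1}\K,
\end{equation*}
i.e. the alternating sum on the left equals the fully Whittaker-expanded kernel in both variables. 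It then remains to identify $\mathcal{F}_{n+1}\mathcal{F}_{n+1}\K$ with $\K_0+\mathcal{F}_{n+1}\mathcal{F}_{n+1}\K_{\ER}$. For this I would use \eqref{ker}, so that $\mathcal{F}_{n+1}\mathcal{F}_{n+1}\K=\mathcal{F}_{n+1}\mathcal{F}_{n+1}\K_0+\mathcal{F}_{n+1}\mathcal{F}_{n+1}\K_{\ER}$, and then show $\mathcal{F}_{n+1}\mathcal{F}_{n+1}\K_0=\K_0$. The latter is the key representation-theoretic input: the operator $\mathcal{F}_{n+1}\mathcal{F}_{n+1}$ replaces a function $\phi$ by the sum $\sum_{\gamma\in N(F)\backslash P_0(F)}W_\phi(\gamma\,\cdot)$ of its Whittaker function over the mirabolic orbit, and for a \emph{cusp form} $\phi$ on $\mathrm{GL}(n+1)$ this reconstruction is precisely the Piatetski-Shapiro/Shalika expansion $\phi(g)=\sum_{\gamma\in N_n(F)\backslash P_0(F)}W_\phi(\iota'(\gamma)g)$; applying this to the spectral expansion \eqref{2.} of $\K_0$ in each variable recovers $\K_0$ exactly.

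The main obstacle I expect is the justification of the interchange of the (infinite) spectral sum in $\K_0$, $\K_{\Eis}$, $\K_{\Res}$ with the unipotent integrations and the mirabolic lattice sums in $\mathcal{F}_i\mathcal{F}_j$, and more fundamentally the absolute and locally uniform convergence of the right-hand side of \eqref{fourier} when $h=\K(\cdot,g_2)$ or $h=\K_{\ER}(\cdot,g_2)$ — the kernels $\K$ and $\K_{\ER}$ are only of moderate growth, not rapidly decreasing, so the Fourier expansion must be applied with care (e.g. by first expanding $\K_0$ directly via Shalika and expanding $\K_{\ER}$ via the Fourier expansion of Eisenstein series and their residues, rather than expanding the non-convergent $\K$ naively). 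This is exactly the point where one leans on the references \cite{Yan19a} and \cite{Yan19}, where this type of manipulation of kernel functions was carried out; the combinatorial inclusion-exclusion bookkeeping over the indices $i,j$ and the orbit sets $R_{n+1-k}(F)\backslash R_n(F)$, while delicate to write cleanly, is otherwise routine once the convergence is in hand.
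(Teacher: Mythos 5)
Your proposal is correct and follows essentially the same route as the paper: the Shalika/Whittaker expansion applied to the spectral expansion \eqref{2.} gives $\mathcal{F}_{n+1}\mathcal{F}_{n+1}\K_0=\K_0$, the decomposition \eqref{ker} converts this into $\K_0=\mathcal{F}_{n+1}\mathcal{F}_{n+1}\K-\mathcal{F}_{n+1}\mathcal{F}_{n+1}\K_{\ER}$, and the mirabolic Fourier expansion of Lemma \ref{Fourier} applied in each variable yields the inclusion--exclusion identity expressing $\mathcal{F}_{n+1}\mathcal{F}_{n+1}\K$ as the alternating sum of the $\mathcal{F}_i\mathcal{F}_j\K$. (The only blemish is a harmless indexing slip: subtracting $\mathcal{F}_i\mathcal{F}_0\K$ removes the $k=i$ term of \eqref{fourier}, not the $k=n+1-i$ term, though the removed set $\{1,\dots,n\}$ is the same.)
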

	\begin{proof}
		Let $\pi\in \mathcal{A}_0\left([G],\omega\right),$ and $\phi\in\pi,$ denote by  
		\begin{align*}
			W_{\phi}(x)=\int_{[N]}\phi(ux)\overline{\theta}(u)du,
		\end{align*}
		the Whittaker function associated to $\phi.$ Take the Fourier-Whittaker expansion:
		\begin{equation}\label{3'}
			\phi(x)=\sum_{\gamma\in N(F)\backslash P_0(F)}W_{\phi}(\gamma x)=\sum_{\gamma\in N(F)\backslash P_0(F)}\int_{[N]}\phi(u\gamma x)\overline{\theta}(u)du.
		\end{equation}
		
		It the follows from \eqref{3'} and the spectral expansion of $\K_0$ that 
		\begin{align*}
			\K_0(g_1,g_2)=\sum_{\gamma_1\in N(F)\backslash P_0(F)}\sum_{\gamma_2\in N(F)\backslash P_0(F)}\int_{[N]}\int_{[N]}\K_0(u\gamma_1 g_1,v\gamma_2g_2)\overline{\theta}(u)\theta(v)dudv.
		\end{align*}
		
		In conjunction with \eqref{ker} we can write $\K_0(g_1,g_2)$ as 
		\begin{equation}\label{5}
			\K_0(g_1,g_2)=\mathcal{F}_{n+1}\mathcal{F}_{n+1}\K(g_1,g_2)-\mathcal{F}_{n+1}\mathcal{F}_{n+1}\K_{\ER}(g_1,g_2).
		\end{equation}
		
		By Lemma \ref{Fourier} we have the expressions:
		\begin{align*}
			\mathcal{F}_0\mathcal{F}_j\K(g_1,g_2)=&\sum_{i=1}^{n+1}\mathcal{F}_i\mathcal{F}_j\K(g_1,g_2),\ \ 0\leq j\leq n+1,\\
			\mathcal{F}_i\mathcal{F}_0\K(g_1,g_2)=&\sum_{j=1}^{n+1}\mathcal{F}_i\mathcal{F}_j\K(g_1,g_2),\ \ 0\leq i\leq n+1,
		\end{align*}
		from which $\mathcal{F}_{n+1}\mathcal{F}_{n+1}\K(g_1,g_2)$ is equal to 
		\begin{align*}
			\mathcal{F}_0\mathcal{F}_0\K(g_1,g_2)-\sum_{i=1}^{n}\mathcal{F}_i\mathcal{F}_0\K(g_1,g_2)-\sum_{j=1}^{n}\mathcal{F}_{0}\mathcal{F}_j\K(g_1,g_2)+\sum_{i=1}^{n}\sum_{j=1}^{n}\mathcal{F}_{i}\mathcal{F}_j\K(g_1,g_2).
		\end{align*}
		Therefore, the lemma follows from \eqref{5} and the above equality.
	\end{proof}
\begin{remark}
When $n=1,$ Lemma \ref{decom} is a consequence of applying Poisson summation twice. 
\end{remark}
	Since $\mathcal{F}_0\mathcal{F}_0\K(g_1,g_2)=\K(g_1,g_2),$ then by Lemma \ref{decom}, $\K(g_1,g_2)-\K_0(g_1,g_2)-\mathcal{F}_{n+1}\mathcal{F}_{n+1}\K_{\ER}(g_1,g_2)$ is equal to 
	\begin{equation}\label{12}
		\sum_{i=1}^{n}\mathcal{F}_i\mathcal{F}_0\K(g_1,g_2)+\sum_{j=1}^{n}\mathcal{F}_{0}\mathcal{F}_j\K(g_1,g_2)-\sum_{i=1}^{n}\sum_{j=1}^{n}\mathcal{F}_{i}\mathcal{F}_j\K(g_1,g_2).
	\end{equation}
	Note that kernel functions in \eqref{12} are geometric, i.e., one has $\K$ rather than $\K_{\ER}$ in \eqref{12}. We then write the non-cuspidal kernel $\K_{\ER}(g_1,g_2)=\K(g_1,g_2)-\K_0(g_1,g_2)$ into the sum of the non-constant Fourier coefficient $\mathcal{F}_{n+1}\mathcal{F}_{n+1}\K_{\ER}(g_1,g_2)$ and the geometric kernel functions \eqref{12}. This feature is one of the main differences between our treatment of the spectral side and Arthur's truncation approach.

	\begin{defn}[$\mathcal{F}_{i,j}J_{\Geo}^T(f,\mathbf{s})$]\label{6}
		Let $0\leq i, j\leq n.$ Define $\mathcal{F}_{i,j}J_{\Geo}^T(f,\textbf{s})$ to be
		\begin{align*}
			\int_{[Z'^T]}\int_{[Z'^T]}\int_{[\overline{G'}]}\int_{[\overline{G'}]}&\mathcal{F}_{i}\mathcal{F}_{j}\K(\iota(z_1x),\iota(z_1z_2y))\phi_1'(x)\overline{\phi_2'(y)}\omega'(z_1)\overline{\omega_2(z_2)}\\
			&|\det z_1x|^{s_1}|\det z_1z_2y|^{s_2}dxdyd^{\times}z_1d^{\times}z_2.
		\end{align*}
	\end{defn}
	
	\begin{lemma}\label{lem6}
		Let notation be as before. Let $0\leq i, j\leq n.$ Then $\mathcal{F}_{i,j}J_{\Geo}^T(f,\textbf{s})\equiv 0$ unless $i, j\in \{0, 1\}.$
	\end{lemma}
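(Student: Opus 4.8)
The plan is to show that $\mathcal{F}_i\mathcal{F}_j\K(\iota(z_1x),\iota(z_1z_2y))$, after integration against the cusp forms $\phi_1'$ and $\phi_2'$ over $[\overline{G'}]$, vanishes whenever $i\geq 2$ or $j\geq 2$. The key observation is that $\mathcal{F}_i\mathcal{F}_j\K$ is built from partial Fourier coefficients of the kernel along the unipotent radicals $N_{(n+1-i,1,\dots,1)}$ with the degenerate character $\psi_{i-1}$ (and similarly in the second variable). First I would fix attention on the inner sum over $\alpha_i \in R_{n+1-i}(F)\backslash R_n(F)$ appearing in $\mathcal{F}_i\mathcal{F}_0\K$; by unfolding, the relevant integral against $\phi_1'(x)$ over $[\overline{G'}]$ produces, after a change of variables absorbing $\alpha_i$, an integral of $\phi_1'$ against a character of a unipotent subgroup of $G'$ that is \emph{nondegenerate only on a proper sub-unipotent}. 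Concretely, the piece of $N_{(n+1-i,1,\dots,1)}$ that lands inside $\iota(G')$ under the embedding, paired against $\psi_{i-1}$, picks out a Fourier coefficient of $\phi_1'$ along a unipotent orbit that is neither the trivial one (that would be $i=0$) nor the full mirabolic one — for $i\geq 2$ it is a "intermediate" degenerate Whittaker coefficient.

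The main step is then the following vanishing principle: for a cusp form $\phi'$ on $\mathrm{GL}(n)$, its Fourier coefficient along the unipotent radical of the $(k,1,\dots,1)$-parabolic against the character $\psi_{k-1}$ (in the notation above, the character that is trivial on the first block and generic on the rest) vanishes identically when the corresponding nilpotent orbit is not the regular one restricted to the mirabolic — equivalently, the only surviving "row" expansions are the trivial coefficient and the one-step mirabolic coefficient. This is exactly the mechanism behind the Jacquet--Piatetski-Shapiro--Shalika mirabolic Fourier expansion (Lemma \ref{Fourier}): when we expand $\phi_1'$ itself via \eqref{fourier} and match it against the character $\psi_{i-1}$ forced by $\mathcal{F}_i$, orthogonality of characters on the compact quotients $[N_{(\cdots)}]$ kills every term except those with matching "degeneracy level", and the cuspidality of $\phi_1'$ (vanishing of constant terms along all standard parabolics) eliminates the rest. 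I would carry this out by: (1) inserting the Fourier--Whittaker / mirabolic expansion of $\phi_1'$ and $\phi_2'$; (2) interchanging the (absolutely convergent, for fixed $T$ and $\Re(\mathbf{s})$ fixed) sums and integrals; (3) integrating over the unipotent variable in $\mathcal{F}_i\mathcal{F}_j\K$ and using orthogonality to force the Fourier modes of $\phi_1',\phi_2'$ to align with $\psi_{i-1},\psi_{j-1}$; (4) observing that for $i\geq 2$ the required mode of $\phi_1'$ is a constant term along a proper parabolic of $G'$, hence zero by cuspidality.

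The hard part will be step (3)–(4): carefully identifying, under the embedding $\iota$ and the twist by $|\det z_1 x|^{s_1}$, exactly which unipotent subgroup of $G'$ and which additive character the Fourier integral of $\phi_1'$ is taken against, and then checking that for $i\geq 2$ this character has a nontrivial stabilizer forcing a genuine constant-term (as opposed to a still-generic Whittaker-type) integral. One has to track the block structure of $N_{(n+1-i,1,\dots,1)}\cap \iota(G')$ and verify that the "extra" rows beyond the mirabolic contribute a unipotent period of $\phi_1'$ that factors through a parabolic of $G'$ of type $(n+1-i,\dots)$ — this bookkeeping is where the condition $i\in\{0,1\}$ (versus the full range up to $n$ available before restricting to $\iota(G')$) comes from, since the extra coordinate consumed by the bottom row of $\iota$ shifts the mirabolic structure by one. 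Once that matching is pinned down, cuspidality of $\phi_i'$ finishes it immediately.
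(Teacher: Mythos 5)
Your proposal is correct in its essential mechanism — the degenerate character $\psi_{i-1}$ forced by $\mathcal{F}_i$ leaves untouched a unipotent subgroup of $\iota(G')$, so the $x$-integral ultimately produces a unipotent period of $\phi_1'$ that factors through the constant term along a proper parabolic of $G'$, which vanishes by cuspidality — and your remark that the embedding $\iota$ shifts the mirabolic structure by one (hence the threshold $i,j\le 1$ rather than the full range available on $G$) is exactly the right bookkeeping point. However, the paper's route is lighter than the one you sketch: it never expands $\phi_1'$ or $\phi_2'$ in mirabolic Fourier series and never invokes orthogonality of modes. Instead, it identifies $R_{n+1-i}(F)\backslash R_n(F)=\iota\bigl(R'_{n+1-i}(F)\backslash G'(F)\bigr)$ and folds the sum over $\alpha_i$ into the $x$-integral, so that $x$ runs over $R'_{n+1-i}(F)\backslash\overline{G'}(\mathbb{A}_F)$; it then observes that the inner kernel coefficient $h(x,z_1,z_2,z,y)=\int_{[N_{(n+1-i,1,\dots,1)}]}\mathcal{F}_0\mathcal{F}_j\K(u\,\iota(z_1zx),\iota(z_2y))\psi_{i-1}(u)\,du$ is left invariant under $N'_{n+1-i,i-1}(\mathbb{A}_F)$ (since $\iota$ of this group lands inside $N_{(n+1-i,1,\dots,1)}$ where $\psi_{i-1}$ is trivial on it), and inserting the integration over the compact quotient $[N'_{n+1-i,i-1}]$ turns $\phi_1'(x)$ directly into its constant term $\mathcal{P}_i(\phi_1')$, which is zero for $i\ge 2$. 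In particular only the $x$-variable is manipulated: the case $i\ge 2$ is treated for arbitrary $0\le j\le n$, and $j\ge 2$ follows symmetrically, so no simultaneous alignment of modes in both variables is needed. Your approach would also work if the character-matching in step (3) is carried out carefully, but note that the "vanishing principle" you quote is cleanest when stated as: the coefficient in question contains the genuine constant term along the $(n+1-i,i-1)$ parabolic of $G'$ (equivalently, a cusp form has no nonzero Fourier component invariant under that unipotent radical) — that single invariance observation replaces the double expansion and orthogonality argument entirely.
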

	\begin{proof}
		Suffice it to show $\mathcal{F}_{i,j}J_{\Geo}^T(f,\mathbf{s})\equiv 0$ for $2\leq i\leq n$ and $0\leq j\leq n.$ By definition of $\mathcal{F}_{i,j}J_{\Geo}^T(f,\mathbf{s}),$ we only need to show 
		\begin{equation}\label{9}
			\int_{[\overline{G'}]}\int_{[\overline{G'}]}\mathcal{F}_{i}\mathcal{F}_{j}\K(\iota(z_1x),\iota(z_1z_2y))\phi_1'(x)\overline{\phi_2'(y)}|\det x|^{s_1}|\det y|^{s_2}dxdy=0.
		\end{equation}
		for $2\leq i\leq n$ and $0\leq j\leq n.$ Let 
		\begin{align*}
			R_{k}'&=\bigg\{\left(
			\begin{array}{cc}
				A&C\\
				0&B
			\end{array}
			\right):\ A\in GL_k,\ C\in M_{k\times(n-k)},\ B\in N_{n-k}
			\bigg\}.
		\end{align*}
		So $\iota(R_k')=R_k\cap \iota(G').$ Then we have 
		\begin{equation}\label{14}
			R_{n+1-i}(F)\backslash R_n(F)=\iota(R_{n+1-i}'(F)\backslash G'(F)).
		\end{equation}
		
		Substitute the definition of $\mathcal{F}_{i}\mathcal{F}_j\K(x,y)$ into the Definition \eqref{6}. Note that $i\geq 2,$ then by \eqref{14} the left hand side of \eqref{9} becomes
		\begin{align*}
			&\int_{[\overline{G'}]}\int_{[\overline{G'}]}\sum_{z\in Z'(F)}\sum_{\alpha_i}\phi'_1(x)\overline{\phi_2'(y)} h(\alpha_ix,z_1,z_2,z,y)|\det x|^{s_1}|\det y|^{s_2}dxdy\\
			=&\int_{[\overline{G'}]}\int_{R'_{n+1-i}(F)\backslash \overline{G'}(\mathbb{A}_F)}\sum_{z\in Z'(F)}\phi'_1(x)\overline{\phi_2'(y)}h(x,z_1,z_2,z,y)|\det x|^{s_1}|\det y|^{s_2}dxdy,
		\end{align*}
		where $\alpha_i$ ranges through $R_{n+1-i}'(F)\backslash \overline{G'}(F),$ and 
		\begin{align*}
			h(x,z_1,z_2,z,y):=\int_{[N_{(n+1-i,1,\cdots,1)}]}\mathcal{F}_0\mathcal{F}_j\K(u\iota(z_1zx),\iota(z_2y))\psi_{i-1}(u)du.
		\end{align*}
		
		Let $N_{n+1-i, i-1}'$ be the unipotent radical of the standard parabolic of $G'$ of type $(n+1-i, i-1).$ Then by a change of variable the function $h(x)=h(x,z_1,z_2,z,y)$ is left invariant under $N_{n+1-i, i-1}'(\mathbb{A}_F),$ i.e., $h(x,z_1,z_2,z,y)=h(vx,z_1,z_2,z,y)$ for $v\in N_{n+1-i, i-1}'(\mathbb{A}_F).$ Integrating over $[N_{n+1-i, i-1}']$ we then see the left hand side of \eqref{9} is equal to 
		
		\begin{equation}\label{15}
			\int_{[\overline{G'}]}\int\sum_{z\in Z'(F)}\mathcal{P}_i(\phi_1'(x))\overline{\phi_2'(y)}h(x,z_1,z_2,z,y)|\det x|^{s_1}|\det y|^{s_2}dxdy,
		\end{equation}
		where $x$ runs through the region $R'_{n+1-i}(F)\backslash \overline{G'}(\mathbb{A}_F)$ and 
		\begin{align*}
			\mathcal{P}_i(\phi_1'(x))=\int_{[N_{n+1-i, i-1}']}\phi_1'(vx)dv
		\end{align*}
		is the constant term of the Fourier expansion of $\phi'$ relative to $[N_{n+1-i, i-1}'].$ Since $\phi_1'$ is \textit{cuspidal}, $\mathcal{P}_i(\phi_1'(x))=0$ for $i\geq 2.$ Hence the desired equality follows from cuspidality of $\phi_1'$ and \eqref{15}.
	\end{proof}

	\begin{defn}[$J_{\Eis}^{\Reg,T}(f,\textbf{s})$]
		Let notation be as before. Let $J^{\Reg,T}_{\Eis}(f,\textbf{s})$ be 
		\begin{align*}
			\int_{[Z'^T]}\int_{[Z'^T]}\int_{[\overline{G'}]}\int_{[\overline{G'}]}&\mathcal{F}_{n+1}\mathcal{F}_{n+1}\K_{\ER}(\iota(z_1x),\iota(z_1z_2y))\phi_1'(x)\overline{\phi_2'(y)}\omega'(z_1)\overline{\omega_2(z_2)}\\
		&|\det z_1x|^{s_1}|\det z_1z_2y|^{s_2}dxdyd^{\times}z_1d^{\times}z_2,
		\end{align*}
	\end{defn}
Note that $J_{\Eis}^{\Reg,T}(f,\textbf{s})$ converges absolutely for fixed $T$ and $\textbf{s}\in\mathbb{C}^2$ since $\phi_1'$ and $\phi_2'$ decay rapidly on $[\overline{G'}].$ The subscript `Eis' indicates that only Eisenstein spectrum in the spectral decomposition of $\K_{\ER}$ will contribute to the integral. See \text\textsection \ref{sec.spec} for a detailed discussion. We then make the following definition.

	\begin{defn}[$J_{\Spec}^{\Reg,T}(f,\textbf{s})$]
		Let notation be as before. Define 
		\begin{align*}
			J_{\Spec}^{\Reg,T}(f,\textbf{s})=J_0^T(f,\textbf{s})+J_{\Eis}^{\Reg,T}(f,\textbf{s}),\ \ \textbf{s}\in\mathbb{C}^2,\ T>0.
		\end{align*}
	\end{defn}
	We put the superscript `$\Reg$' on $J_{\Spec}$ to indicate that $J_{\Spec}^{\Reg,T}(f,\textbf{s})$ is certain regularization of $J_0^T(f)+J_{\ER}^T(f).$ Note that since we write some part of $\K_{\ER}$ (i.e., \eqref{12}) into the geometric side, $J_{\Spec}^{\Reg,T}(f,\textbf{0})\neq J_0^T(f)+J_{\ER}^T(f),$ where $\textbf{0}=(0,0).$ Nevertheless, we always have $J_0^T(f,\textbf{0})=J_0^T(f),$ which is  the contribution from the cuspidal spectrum. 
	
	One of the major advantages of working with $J_{\Spec}^{\Reg,T}(f,\textbf{s})$ is that (cf. \text\textsection \ref{sec.spec}) 
	$$
	J_{\Spec}^{\Reg}(f,\textbf{s}):=\lim_{T\rightarrow \infty}J_{\Spec}^{\Reg,T}(f,\textbf{s})
	$$ converges absolute in $\Re(s_1)>1/2$ and $\Re(s_2)>1/2,$ and thus defines a holomorphic function of two variables $\textbf{s}$ therein. After some further investigation we will obtain a meromorphic continuation of $J_{\Spec}^{\Reg}(f,\textbf{s})$ to $\textbf{s}\in\mathcal{R}:=\{(s_1,s_2):\  \Re(s_1)>-1/(n+1),\ \Re(s_2)>-1/(n+1)\}$ and further to $\mathbb{C}^2$ by functional equation. This evades the obstruction that $J_0^T(f)+J_{\ER}^T(f)$ does not converge in general as $T\rightarrow \infty.$

	\subsection{Regularization (\RNum{3}): The Truncated Relative Trace Formula}\label{sec2.4}
	With the preparations in \text\textsection \ref{sec4.2} and \text\textsection \ref{sec2.3}, we deduce by Lemmas \ref{decom} and \ref{lem6} that 
	\begin{equation}\label{13}
		J_{\Spec}^{\Reg,T}(f,\textbf{s})=J_{\Geo}^T(f,\textbf{s})-\mathcal{F}_{1,0}J_{\Geo}^T(f,\textbf{s})-\mathcal{F}_{0,1}J_{\Geo}^T(f,\textbf{s})+\mathcal{F}_{1,1}J_{\Geo}^T(f,\textbf{s}).
	\end{equation}
	Recall that $\mathcal{F}_{i,j}J_{\Geo}^T(f,\textbf{s})$ is defined by Definition \ref{6} in \text\textsection \ref{sec2.3}. 
	
	Since $\mathcal{F}_{i,j}J_{\Geo}^T(f,\textbf{s})$ is defined from the whole kernel function $\K(x,y)$, the RHS of \eqref{13} could be regarded as a linear combination of four relative trace formulas. Typically one can unfold each $\mathcal{F}_{i,j}J_{\Geo}^T(f,\textbf{s})$ into a sum of orbital integrals according to the decomposition of $\iota(G'(F))\backslash \overline{G}(F)/\iota(G'(F)).$ However, this conventional approach is not compatible with our truncation on the center $Z'$ of $G'$ since some of the orbital integrals (e.g., the one relative to $\gamma=\Id$), which we call \textit{singular}, does not converge as $T\rightarrow \infty.$ We then propose another strategy to handle the RHS of \eqref{13} as follows. 
	
	Consider the Bruhat decomposition $\overline{G}(F)=P_0(F)\sqcup P_0(F)w_nP_0(F),$ where $w_n$ is the Weyl element corresponding to the mirabolic subgroup $P_0$ of $G.$ According to \eqref{11} we can write $\K(g_1,g_2)=\K_{\sm}(g_1,g_2)+\K_{\bi}(g_1, g_2),$ where 
	\begin{align*}
		\K_{\sm}(g_1,g_2)=\sum_{\gamma\in P_0(F)}f(g_1^{-1}\gamma g_2),\quad \K_{\bi}(g_1,g_2)=\sum_{\gamma\in P_0(F)w_nP_0(F)}f(g_1^{-1}\gamma g_2).
	\end{align*}
	
	\subsubsection{Orbital Integrals on the Small Cell}
	For $0\leq i, j\leq n+1,$ define 
	\begin{align*}
		\mathcal{F}_i\mathcal{F}_j\K_{\sm}(g_1,g_2)=&\sum_{\alpha_i\in R_{n+1-i}(F)\backslash R_{n}(F)}\sum_{\beta_j\in R_{n+1-j}(F)\backslash R_{n}(F)}\int_{[N_{(n+1-i,1,\cdots,1)}]}\\
		\qquad &\int_{[N_{(n+1-j,1,\cdots,1)}]}\K_{\sm}(u\alpha_ig_1,v\beta_jg_2)\psi_{i-1}(u)\psi_{j-1}(v)dudv.
	\end{align*}
	
	Similar to Definition \ref{6} we define, for $0\leq i, j\leq n+1,$ 
	\begin{align*}
		\mathcal{F}_{i,j}^{\sm}J_{\Geo}^T(f,\textbf{s})&:=\int_{[Z'^T]}\int_{[Z'^T]}\int_{[\overline{G'}]}\int_{[\overline{G'}]}\mathcal{F}_{i}\mathcal{F}_{j}\K_{\sm}(\iota(z_1x),\iota(z_1z_2y))\\
		&\phi_1'(x)\overline{\phi_2'(y)}\omega'(z_1)\overline{\omega_2(z_2)}|\det z_1|^{s_1}|\det z_1z_2|^{s_2}dxdyd^{\times}z_1d^{\times}z_2.
	\end{align*}
	
	We will call $\mathcal{F}_{i,j}^{\sm}J_{\Geo}^T(f,\textbf{s})$ orbitals integrals on the \textit{small} cell. 
	\begin{defn}[$J^{\Reg,T}_{\Geo,\sm}(f,\textbf{s})$]
		Let notation be as before. Define the corresponding regularized geometric side $J^{\Reg,T}_{\Geo,\sm}(f,\textbf{s})$ to be 
		\begin{align*}
			\mathcal{F}_{0,0}^{\sm}J_{\Geo}^T(f,\textbf{s})-\mathcal{F}_{1,0}^{\sm}J_{\Geo}^T(f,\textbf{s})-\mathcal{F}_{0,1}^{\sm}J_{\Geo}^T(f,\textbf{s})+\mathcal{F}_{1,1}^{\sm}J_{\Geo}^T(f,\textbf{s}).
		\end{align*}
	\end{defn}
	
	While not all constituents in $J^{\Reg,T}_{\Geo,\sm}(f,\textbf{s})$ converge as $T\rightarrow \infty,$ e.g, the integral $\mathcal{F}_{0,0}^{\sm}J_{\Geo}^T(f,\textbf{s})\approx T$ when $T$ is large, we will appeal to Fourier expansion (which in this case is in fact Poisson summation) in \text\textsection \ref{sec6} to show $J^{\Reg,T}_{\Geo,\sm}(f,\textbf{s})$ converges when $T\rightarrow \infty,$ under the assumption that $\Re(s_1)+\Re(s_2)>0.$ Moreover, it admits a meromorphic continuation to $\textbf{s}\in\mathbb{C}^2.$ 
	
	\subsubsection{Orbital Integrals on the Big Cell}\label{2.4.2}
	For $0\leq i, j\leq n+1,$ define 
	\begin{align*}
		\mathcal{F}_i\mathcal{F}_j\K_{\bi}(g_1,g_2)=&\sum_{\alpha_i\in R_{n+1-i}(F)\backslash R_{n}(F)}\sum_{\beta_j\in R_{n+1-j}(F)\backslash R_{n}(F)}\int_{[N_{(n+1-i,1,\cdots,1)}]}\\
		\qquad &\int_{[N_{(n+1-j,1,\cdots,1)}]}\K_{\bi}(u\alpha_ig_1,v\beta_jg_2)\psi_{i-1}(u)\psi_{j-1}(v)dudv.
	\end{align*}
	
	Similar to Definition \ref{6} we define, for $0\leq i, j\leq n+1,$ 
	\begin{align*}
		\mathcal{F}_{i,j}^{\bi}J_{\Geo}^T(f,\textbf{s})&:=\int_{[Z'^T]}\int_{[Z'^T]}\int_{[\overline{G'}]}\int_{[\overline{G'}]}\mathcal{F}_{i}\mathcal{F}_{j}\K_{\bi}(\iota(z_1x),\iota(z_1z_2y))\\
		&\phi_1'(x)\overline{\phi_2'(y)}\omega'(z_1)\overline{\omega_2(z_2)}|\det z_1x|^{s_1}|\det z_1z_2y|^{s_2}dxdyd^{\times}z_1d^{\times}z_2.
	\end{align*}
	
	We will call $\mathcal{F}_{i,j}^{\bi}J_{\Geo}^T(f,\textbf{s}),$ $0\leq i, j\leq n+1,$ orbitals integrals on the \textit{big} cell. We will show they
	converge as $T\rightarrow \infty,$ supposing $\Re(s_1)+\Re(s_2)>1.$ 

	Let notation be as before. Then we have the \textit{regularized relative trace formula}: 
	\begin{equation}\label{reg}
		J_{\Spec}^{\Reg,T}(f,\textbf{s})=J^{\Reg,T}_{\Geo,\sm}(f,\textbf{s})+J^{\Reg,T}_{\Geo,\bi}(f,\textbf{s}),
	\end{equation}
	where $\mathbf{s}=(s_1,s_2)\in\mathbb{C}^2,$ and $J^{\Reg,T}_{\Geo,\bi}(f,\textbf{s})$ is defined by 
	\begin{equation}\label{big}
		\mathcal{F}_{0,0}J^{\bi,T}_{\Geo}(f,\textbf{s})-\mathcal{F}_{1,0}J^{\bi,T}_{\Geo}(f,\textbf{s})-\mathcal{F}_{0,1}J^{\bi,T}_{\Geo}(f,\textbf{s})+\mathcal{F}_{1,1}J^{\bi,T}_{\Geo}(f,\textbf{s}).
	\end{equation}

	\subsection{Regularization (\RNum{4}): Remove the Truncation}\label{sec2.5}
According to the analysis in \textsection\ref{sec6}--\textsection\ref{sec10.} we will see that $J^{\Reg,T}_{\Geo,\sm}(f,\textbf{s})$ converges as $T\rightarrow \infty,$ in the range $\Re(s_1)+\Re(s_2)>0;$ $J^{\Reg,T}_{\Geo,\bi}(f,\textbf{s})$ converges in the range $\Re(s_1)+\Re(s_2)>1$ when $T\rightarrow \infty;$ and $J_{\Spec}^{\Reg,T}(f,\textbf{s})$ converges absolutely when $\Re(s_1)\gg 0$ and $\Re(s_2)\gg 0.$ Hence, one can take $T\rightarrow \infty$ in \eqref{reg} in the region $\Re(s_1)\gg 0$ and $\Re(s_2)\gg 0,$ obtaining
\begin{equation}\label{41}
J_{\Spec}^{\Reg}(f,\textbf{s})=J^{\Reg}_{\Geo,\sm}(f,\textbf{s})+J^{\Reg}_{\Geo,\bi}(f,\textbf{s}),
\end{equation} 
as holomorphic functions of $\mathbf{s}$ in the region $\Re(s_1)\gg 0$ and $\Re(s_2)\gg 0.$ 

In order to remove the truncation in \eqref{41}, the meromorphic continuation to $$\textbf{s}\in\mathcal{R}:=\{(s_1,s_2):\  \Re(s_1)>-1/(n+1),\ \Re(s_2)>-1/(n+1)\}$$ remains to be an involved task and will occupy \textsection\ref{sec6}--\textsection\ref{sec10.}.
	
\subsection{Test Functions and Automorphic Weights}\label{2.6}
Recall that $\pi_i'\in\mathcal{A}_0([G'],\omega_i),$ $1\leq i\leq 2.$ Let $S\subset\Sigma_{F}$ be a finite set containing the archimedean places such that $\pi_1'$ and $\pi_2'$ are unramified outside $v\in S.$ Let $\phi_i'\in \pi_i',$ $1\leq i\leq 2,$ be cusp forms which are right-$G'(\mathcal{O}_{F_v})$-invariant for all $v\notin S.$ Let $\mathfrak{N}\subsetneq\mathcal{O}_F$ be an ideal such that $\{v\in\Sigma_{F,\fin}:\ v\mid\mathfrak{N}\}\cap S=\emptyset.$ Write $|\mathfrak{N}|:=N_F(\mathfrak{N}),$ the absolute norm of $\mathfrak{N}.$ 

For $v\mid\mathfrak{N},$ let $K_{v}=K_0(\mathfrak{p}_{v}^{e_v(\mathfrak{N})})$ the Hecke congruence of level $\mathfrak{p}_{v}.$ When $v<\infty$ and $v\nmid\mathfrak{N},$ set $K_v=G(\mathcal{O}_{F_v}).$ For $v\mid \infty,$ set $K_v$ to be the maximal compact subgroup of $G(F_v).$ Let $K_0(\mathfrak{N})=\prod_vK_v.$ Define $K'=\prod_{v}K_v',$ where each $K_v'$ is a maximal compact subgroup of $G'(F_v)$ and $K_v'=G'(\mathcal{O}_{F_v})$ if $v<\infty.$

Let $\omega$ be a unitary Hecke character which is unramified outside $v\mid \mathfrak{N}.$ For $v\mid\mathfrak{N},$ define a function on $G(F_v),$ supported on $Z(F_v)\backslash K_v,$ by  
\begin{equation}\label{f_v}
f_v(z_vk_v)=\Vol(\overline{K_v})^{-1}\omega_v(z)^{-1}\omega_v(E_{n+1,n+1}(k_v))^{-1},\ z_v\in Z(F_v),\ k_v\in K_v,
\end{equation}
where $\overline{K_v}$ is the image of $K_v=K_0(\mathfrak{p}_{v}^{e_v(\mathfrak{N})})$ in $\overline{G}(F_v),$ and $E_{n+1,n+1}(k_v)$ is the $(n+1,n+1)$-th entry of $k_v\in K_v.$

Denote by $\mathcal{F}_S(\mathfrak{N},\omega^{-1})$ the space of linear combination of the functions $f=\otimes_vf_v,$ where  $f_v$ is defined by \eqref{f_v} if $v\mid\mathfrak{N};$ and for for $v\nmid\mathfrak{N},$ 
$$
f_v(g_v)=\int_{Z(F_v)}\tilde{f}_v(z_vg_v)\omega_v(z_v)d^{\times}z_v
$$  
with $\tilde{f}$ being defined as follows
\begin{itemize}
\item $v\in S,$ let $\tilde{f}_v$ be a continuous function on $G(F_v)$ with a compact support. 
\item $v\notin S$ and $v\nmid \mathfrak{N},$ set $\tilde{f}_v=\Vol(K_v)^{-1}\textbf{1}_{K_v}.$  
\end{itemize}

From now on we will take $f\in \mathcal{F}_S(\mathfrak{N},\omega^{-1}).$ The restriction $\mathfrak{N}\neq\mathcal{O}_F$ is not essential, but it simplifies the calculation of the orbital integral $J^{\Reg,T}_{\Geo,\bi}(f,\textbf{s}).$ See \text\textsection \ref{5.1} for details. In practice it won't spoil the generality of the relative trace formula.

	\section{Orbital Integrals Relative to the Small Cell}\label{sec6}
	In this section, we will show that $J^{\Reg,T}_{\Geo,\sm}(f,\textbf{s}),$ which is defined by 
	\begin{align*}
		\mathcal{F}_{0,0}^{\sm}J_{\Geo}^T(f,\textbf{s})-\mathcal{F}_{1,0}^{\sm}J_{\Geo}^T(f,\textbf{s})-\mathcal{F}_{0,1}^{\sm}J_{\Geo}^T(f,\textbf{s})+\mathcal{F}_{1,1}^{\sm}J_{\Geo}^T(f,\textbf{s}),
	\end{align*}
	converges when $T\rightarrow \infty,$ in the region $\Re(s_1)+\Re(s_1)>0.$
	
	Denote by $\eta=(0,\cdots,0,1)\in F^n.$ For $\textbf{x}=(x_1,\cdots, x_n)\in\mathbb{A}_F^n$ we set
	\begin{align*}
		\check{f}_P(\textbf{x};z_2,y)=\int_{N_P(\mathbb{A}_F)}f(u\iota(z_2y))\prod_{i=1}^n\overline{\psi}(x_iE_{i,n+1}(u))du,
	\end{align*}
where $E_{i,j}(u)$ is the $(i,j)$-th entry of the matrix $u,$ and $\psi$ is the additive character on $F\backslash\mathbb{A}_F$ defining the generic character $\theta.$ 	
	
	Then $\check{f}_P(\textbf{x};z_2,y)$ is a Schwartz-Bruhat function on $\mathbb{A}_F^n$ for all $z_2\in\mathbb{A}_F^{\times}$ and $y\in\overline{G'}(\mathbb{A}_F).$ Using $\check{f}_P(\textbf{x};z_2,y)$ as a smooth section we define the associated Eisenstein series $E(x,s;\check{f}_P(\cdot,z_2,y))$ to be 
	\begin{align*}
		E(x,s;\check{f}_P(\cdot;z_2,y))=	\sum_{\delta\in P_0'(F)\backslash \overline{G'}(F)}\int_{Z'(\mathbb{A}_F)}\check{f}_P(\eta z_1x;z_2,y)|\det z_1x|^{s}d^{\times}z_1,
	\end{align*}
	which converges absolutely when $\Re(s)>1.$

	\begin{prop}\label{prop14}
		Let notation be as before. Let $\textbf{s}=(s_1,s_2)$ be such that $\Re(s_1+s_2)>0.$ Then the limit 
		\begin{align*}
			J^{\Reg}_{\Geo,\sm}(f,\textbf{s}):=\lim_{T\rightarrow \infty}J^{\Reg,T}_{\Geo,\sm}(f,\textbf{s})
		\end{align*}
		exists and is equal to
		\begin{equation}\label{34}
		\int_{Z'(\mathbb{A}_F)}\int_{\overline{G'}(\mathbb{A}_F)}\mathcal{P}(\textbf{s};\phi_1',\phi_2',\check{f}_P(\cdot;z_2,y))\omega'(z_1)\overline{\omega_2(z_2)}|\det z_2y|^{s_2}dyd^{\times}z_2.
		\end{equation}
		where the period $\mathcal{P}(\textbf{s};\phi_1',\phi_2',\check{f}_P(\cdot;z_2,y))$ is defined by 
		\begin{align*}\label{37}
			\int_{[\overline{G'}]}\phi_1'(x)\overline{R(y)\phi_2'(x)}E(x,s_1+s_2+1;\check{f}_P(\cdot;z_2,y))dx
		\end{align*}
		is a Rankin-Selberg period representing the $L$-function $L(s_1+s_2+1,\pi_1'\times\widetilde{\pi}_2').$ In articular, $J^{\Reg}_{\Geo,\sm}(f,\textbf{s})$ extends to a meromorphic function such that 
		\begin{align*}
			J^{\Reg}_{\Geo,\sm}(f,\textbf{s})\sim \Lambda(s_1+s_2+1,\pi_1'\times\widetilde{\pi}_2').
		\end{align*}
	\end{prop}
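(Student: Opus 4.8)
The plan is to unfold $J^{\Reg,T}_{\Geo,\sm}(f,\textbf{s})$ term by term, identify the Poisson-summation structure hidden in the combination $\mathcal{F}_{0,0}^{\sm}-\mathcal{F}_{1,0}^{\sm}-\mathcal{F}_{0,1}^{\sm}+\mathcal{F}_{1,1}^{\sm}$, and recognise the limit as an Eisenstein-series period. First I would compute $\mathcal{F}_{0,0}^{\sm}J_{\Geo}^T(f,\textbf{s})$: since $\K_{\sm}(g_1,g_2)=\sum_{\gamma\in P_0(F)}f(g_1^{-1}\gamma g_2)$, the sum over $P_0(F)$ factors through $N_P(F)\backslash P_0(F)\cong \overline{G'}(F)\ltimes$ (a lattice), and unipotent integration folds in $N_P(\mathbb{A}_F)$; integrating against $\theta$-characters produces the partial Fourier transform $\check f_P(\textbf{x};z_2,y)$ evaluated at the lattice points $\eta\delta z_1 x$ with $\delta\in P_0'(F)\backslash\overline{G'}(F)$. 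The key point is that the alternating sum over $i,j\in\{0,1\}$ is exactly the inclusion–exclusion that converts the sum over all of $F^n$ (in the naive $\mathcal{F}_{0,0}^{\sm}$ term) into a sum over $F^n\setminus\{\mathbf 0\}$, i.e. over $P_0'(F)\backslash\overline{G'}(F)$ via the orbit map $\delta\mapsto\eta\delta$, while the $\mathbf 0$-contribution (which is the divergent piece $\approx T$) is removed. After this Poisson/inclusion–exclusion step the $z_1$-integral over $[Z'^T]$ opens up: using $\omega'(z_1)|\det z_1 x|^{s_1+s_2}$ and letting $T\to\infty$, the truncated integral over $[Z'^T]$ combined with the sum over the lattice $Z'(F)$ becomes an integral over the full $Z'(\mathbb{A}_F)$, precisely producing $E(x,s_1+s_2+1;\check f_P(\cdot;z_2,y))$ as in the displayed definition, with the shift by $1$ coming from the extra $|\det x|$ in $J^{\Reg}_{\Geo,\sm}$ relative to $J^{\Reg}_{\Geo,\du}$ (equivalently, from the Jacobian of the $N_P$-integration).

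Second I would justify convergence: the Eisenstein series $E(x,s;\check f_P(\cdot;z_2,y))$ converges absolutely for $\Re(s)>1$, i.e. for $\Re(s_1+s_2)>0$, which matches the stated range; the outer integral over $y\in\overline{G'}(\mathbb{A}_F)$ and $z_2\in Z'(\mathbb{A}_F)$ converges because $\phi_1'$ and $\overline{R(y)\phi_2'}$ are rapidly decreasing on $[\overline{G'}]$ and $\check f_P$ has compact support modulo the relevant directions (so only finitely many $y$-translates matter at each finite place, and Gaussian/Schwartz decay at the archimedean places), while the Rankin–Selberg period against an Eisenstein series is the standard integral representation of $L(s_1+s_2+1,\pi_1'\times\widetilde\pi_2')$ by \cite{JS90}. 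One then invokes the meromorphic continuation \eqref{278} of the Eisenstein series built from $\check f_P$ as a Schwartz section to continue $J^{\Reg}_{\Geo,\sm}(f,\textbf s)$ past $\Re(s_1+s_2)=0$; the residual term $E_{\Res}$ in \eqref{278} is what creates the potential pole at $s_1+s_2=0$ (and the functional equation of the completed $L$-function supplies the pole at $s_1+s_2=-1$), giving $J^{\Reg}_{\Geo,\sm}(f,\textbf s)\sim\Lambda(s_1+s_2+1,\pi_1'\times\widetilde\pi_2')$.

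The main obstacle I expect is bookkeeping in the Poisson/inclusion–exclusion step: one must carefully match the index sets $R_{n+1-i}(F)\backslash R_n(F)$ appearing in $\mathcal{F}_{i,j}^{\sm}$ with the mirabolic coset spaces $P_0'(F)\backslash\overline{G'}(F)$ and $N_P(F)\backslash P_0(F)$, and track how the characters $\psi_{i-1},\psi_{j-1}$ interact with the $N_P(\mathbb{A}_F)$-integral to assemble exactly $\check f_P(\eta z_1 x;z_2,y)$ and not some partial transform. A secondary subtlety is the interchange of the limit $T\to\infty$ with the $(x,y,z_2)$-integrations and with the (now absolutely convergent, after the alternating cancellation) lattice sum: this requires a dominated-convergence argument using the absolute convergence of the Eisenstein series for $\Re(s_1+s_2)>0$, together with a uniform (in $T$) bound on the tail $|z_1|>T$ that decays because the surviving section $\check f_P$ is Schwartz, not merely the original kernel which only grows slowly.
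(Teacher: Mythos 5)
Your strategy lands on the same endpoint as the paper --- remove the divergent zero Fourier mode along $N_P$, recognize the surviving sum over $P_0'(F)\backslash\overline{G'}(F)$ together with the opened-up $z_1$-integral as the Eisenstein series built from the Schwartz section $\check f_P$, then invoke Rankin--Selberg theory --- but the route differs from the paper's and one step of your ``key point'' is not yet a proof. The paper does not treat the four-term alternating sum as a single inclusion--exclusion: it first proves $\mathcal{F}_{0,1}^{\sm}J_{\Geo}^T(f,\textbf{s})=\mathcal{F}_{1,1}^{\sm}J_{\Geo}^T(f,\textbf{s})$ (because $N_P$ is normal in $P_0$ and $\mathrm{vol}([N_P])=1$), so only the first-variable subtraction survives; it then applies the iterated mirabolic expansion (Lemma \ref{Fourier}) together with cuspidality of $\phi_1'$ to reduce $J^{\Reg,T}_{\Geo,\sm}$ to the full Whittaker coefficient $\mathcal{F}_{n+1,0}^{\sm}J_{\Geo}^T$, unfolds to the Whittaker-function integral \eqref{25}, takes $T\rightarrow\infty$ there, and only afterwards refolds that integral into the Eisenstein period \eqref{34}. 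Your shortcut --- a single abelian Fourier expansion along $N_P$ in the first variable, going straight to \eqref{34} without Whittaker functions --- is legitimate for this proposition and arguably shorter; the Whittaker form \eqref{25} is what the paper exploits later for the Euler-product computation in Proposition \ref{prop11'}, not for \eqref{34} itself.

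The genuine gap is the second variable. A literal double inclusion--exclusion $\mathcal{F}_{0,0}^{\sm}-\mathcal{F}_{1,0}^{\sm}-\mathcal{F}_{0,1}^{\sm}+\mathcal{F}_{1,1}^{\sm}$ removes the zero mode along $N_P$ in \emph{both} arguments, so after unfolding you would retain a nontrivial additive character in the second unipotent variable as well, and the resulting section would not be $\check f_P(\textbf{x};z_2,y)$ as defined (which carries a character only in the first variable). Your claim that the net effect is ``remove the zero mode in one copy of $F^n$'' is true, but it is exactly equivalent to the identity $\mathcal{F}_1\mathcal{F}_1\K_{\sm}=\mathcal{F}_0\mathcal{F}_1\K_{\sm}$, i.e.\ to the fact that the second-variable constant-term kernel is already left $N_P(\mathbb{A}_F)$-invariant in the first variable; this is where normality of $N_P$ in $P_0$ and the normalization $\mathrm{vol}([N_P])=1$ enter, and your write-up never supplies it. Once that observation is added, the rest of your plan --- the shift to $s_1+s_2+1$ from the Jacobian of conjugating the $N_P$-variable, unfolding $[Z'^T]$ against the rational points to get $Z'^T(\mathbb{A}_F)$ and letting $T\rightarrow\infty$, dominated convergence from absolute convergence of the Eisenstein series for $\Re(s_1+s_2)>0$ together with the compact-support constraint on $(z_2,y)$ coming from $f$ and the rapid decay of the cusp forms, and the continuation via \eqref{278} and Rankin--Selberg theory --- matches the paper's justification and is sound.
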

	\begin{proof}
		By definition we have for  $g_1, g_2\in G(\mathbb{A}_F)$ that 
		\begin{align*}
			\mathcal{F}_0\mathcal{F}_1\K_{\sm}&(g_1,g_2)=\int_{[N_P]}\sum_{\gamma\in P_0(F)}f(g_1^{-1}\gamma vg_2)dv,\\
			\mathcal{F}_1\mathcal{F}_1\K_{\sm}&(g_1,g_2)=\int_{[N_P]}\int_{[N_P]}\sum_{\gamma\in P_0(F)}f(g_1^{-1}u^{-1}\gamma vg_2)dudv.
		\end{align*}
		
		Since $N_P$ is a normal subgroup of $P_0,$ then 
		$$
		\mathcal{F}_1\mathcal{F}_1\K_{\sm}(g_1,g_2)=\mathcal{F}_0\mathcal{F}_1\K_{\sm}(g_1,g_2),
		$$ 
		noting that the volume of $[N_P]=N_P(F)\backslash N_P(\mathbb{A}_F)$ is equal to $1.$ Therefore, 
		\begin{align*}
			\mathcal{F}_{0,1}^{\sm}J_{\Geo}^T(f,\textbf{s})=\mathcal{F}_{1,1}^{\sm}J_{\Geo}^T(f,\textbf{s}).
		\end{align*}
		
		So $J^{\Reg,T}_{\Geo,\sm}(f,\textbf{s})=\mathcal{F}_{0,0}^{\sm}J_{\Geo}^T(f,\textbf{s})-\mathcal{F}_{1,0}^{\sm}J_{\Geo}^T(f,\textbf{s}).$ Since $\mathcal{F}_0\mathcal{F}_0\K_{\sm}(g_1,g_2)$ is left $P_0(F)$-invariant as a function of $g_1\in G(\mathbb{A}_F).$ Then we can appeal to Lemma \ref{Fourier} to deduce that 
		\begin{equation}\label{16}
			\mathcal{F}_0\mathcal{F}_0\K_{\sm}(g_1,g_2)-\mathcal{F}_1\mathcal{F}_0\K_{\sm}(g_1,g_2)=\sum_{i=2}^{n+1}\mathcal{F}_i\mathcal{F}_0\K_{\sm}(g_1,g_2).
		\end{equation}
		
		A similar argument as in the proof of Lemma \ref{lem6} implies that $\mathcal{F}_{i,0}^{\sm}J_{\Geo}^T(f,\textbf{s})=0$ for $2\leq i\leq n.$ Therefore, it follows from \eqref{16} that 
		\begin{align*}
			J^{\Reg,T}_{\Geo,\sm}(f,\textbf{s})=\mathcal{F}_{n+1,0}^{\sm}J_{\Geo}^T(f,\textbf{s}),
		\end{align*}
		which is defined by 
		\begin{align*}
			\int_{[Z'^T]}\int_{[Z'^T]}&\int_{[\overline{G'}]}\int_{[\overline{G'}]}\sum_{\alpha\in N(F)\backslash P_0(F)}\int_{[N]}\K_{\sm}(u\alpha \iota(z_1x),\iota(z_1z_2y))\theta(u)du\\
			&\quad \phi'(x)\overline{\phi'(y)}\omega'(z_1)\overline{\omega_2(z_2)}|\det z_1x|^{s_1}|\det z_1z_2y|^{s_2}dxdyd^{\times}z_1d^{\times}z_2.
		\end{align*}
		
		Unfolding $\K_{\sm}$ and changing variables then $J^{\Reg,T}_{\Geo,\sm}(f,\textbf{s})$ becomes 
		\begin{align*}
	&\int_{Z'^T(\mathbb{A}_F)}\int_{Z'^T(\mathbb{A}_F)}\int_{\overline{G'}(\mathbb{A}_F)}\int_{N'(F)\backslash \overline{G'}(\mathbb{A}_F)}\sum_{\delta\in N_P(F)}\int_{[N]}f(\iota(z_1x)^{-1}u^{-1}\delta^{-1}\iota(z_1z_2y))\\
	&\qquad \theta(u)du\phi_1'(x)\overline{\phi_2'(y)}\omega'(z_1)\overline{\omega_2(z_2)}|\det z_1|^{s_1}|\det z_1z_2|^{s_2}dxdyd^{\times}z_2d^{\times}z_1,
		\end{align*}
		where we make use of the fact that $N(F)\backslash P_0(F)\simeq N'(F)\backslash G'(F).$
		
		Note that $N(\mathbb{A}_F)=N_P(\mathbb{A}_F)\times \iota(N'(\mathbb{A}_F)).$ Then $J^{\Reg,T}_{\Geo,\sm}(f,\textbf{s})$ is equal to 
		\begin{align*}
			&\int_{Z'^T(\mathbb{A}_F)}\int_{Z'^T(\mathbb{A}_F)}\int_{\overline{G'}(\mathbb{A}_F)}\int_{N'(F)\backslash \overline{G'}(\mathbb{A}_F)}\int_{N_P(\mathbb{A}_F)}\int_{[N']}f(\iota(z_1x)^{-1}v^{-1}u^{-1}\iota(z_1z_2y))\\
			&\quad \theta(v)dv\psi(E_{n,n+1}(u))du\phi_1'(x)\overline{\phi_2'(z_2y)}|\det z_1x|^{s_1}|\det z_1z_2y|^{s_2}dxdyd^{\times}z_2d^{\times}z_1.
		\end{align*}
		
		Changing variables, then the above integral becomes
		\begin{align*}
			&\int_{Z'^T(\mathbb{A}_F)}\int_{Z'^T(\mathbb{A}_F)}\int_{\overline{G'}(\mathbb{A}_F)}\int_{N'(\mathbb{A}_F)\backslash \overline{G'}(\mathbb{A}_F)}\int_{N_P(\mathbb{A}_F)}\int_{[N']}f(\iota(z_1x)^{-1}v^{-1}u^{-1}\iota(z_1z_2y))\\
			&\theta(v)\psi(E_{n,n+1}(u))duW_{\phi_1'}(x)\overline{\phi_2'(y)}|\det z_1x|^{s_1}|\det z_1z_2y|^{s_2}dxdy\omega'(z_1)\overline{\omega_2(z_2)}d^{\times}z_2d^{\times}z_1,
		\end{align*}
		where
		\begin{align*}
			W_{\phi'}(x):=\int_{[N']}\phi'(nx)\overline{\theta}(n)dn
		\end{align*}
		is the Whittaker function of $\phi'$ relative to the generic character ${\theta}.$ One more change of variables yields that the integral $J^{\Reg,T}_{\Geo,\sm}(f,\textbf{s})$ is equal to 
		\begin{align*}
			\int_{Z'^T(\mathbb{A}_F)}&\int_{Z'^T(\mathbb{A}_F)}\int_{\overline{G'}(\mathbb{A}_F)}\int_{N'(\mathbb{A}_F)\backslash \overline{G'}(\mathbb{A}_F)}\check{f}_P(\eta z_1x;z_2,y) W_{\phi_1'}(x)\overline{W_{\phi_2'}(xy)}\\
			&\qquad \qquad \qquad\omega'(z_1)\overline{\omega_2(z_2)}|\det z_1x|^{s_1+s_2+1}|\det z_2y|^{s_2}dxdyd^{\times}z_2d^{\times}z_1.
		\end{align*}

		Moreover, since $\Re(s_1)+\Re(s_2)>0,$ and $\check{f}_P$ is a Schwartz-Bruhat function on $G(\mathbb{A}_F),$ then by Rankin-Selberg theory, $\lim_{T\rightarrow\infty}J^{\Reg,T}_{\Geo,\sm}(f,\textbf{s})$ exists and is equal to the absolutely convergent integral
		\begin{equation}\label{25}
	\iiint\check{f}_P(\eta x;z_2,y)W_{\phi_1'}(x)\overline{W_{\phi_2'}(xy)}|\det x|^{s}\omega'(z_1)\overline{\omega_2(z_2)}|\det z_2y|^{s_2}dxdyd^{\times}z_2,
		\end{equation}
		where $s=s_1+s_2+1,$ $x$ ranges over $N'(\mathbb{A}_F)\backslash G'(\mathbb{A}_F),$ $y$ (resp. $z_2$) runs through $G'(\mathbb{A}_F)$ (resp. $Z'(\mathbb{A}_F)$). Thus, $J^{\Reg}_{\Geo,\sm}(f,\textbf{s})$ is given by \eqref{25} when $\Re(s_1)+\Re(s_2)>0,$ and converges absolutely therein.
		
		Since the Eisenstein series $E(x,1+s_1+s_2;\check{f}_P(\cdot;z_2,y))$ converges absolutely in the region $\Re(s_1)+\Re(s_2)>0,$ one can unfold the Eisenstein series to see that 
		$$ 
		\int_{Z'(\mathbb{A}_F)}\int_{\overline{G'}(\mathbb{A}_F)}\mathcal{P}(s;\phi',\check{f}_P(\cdot;z_2,y))dyd^{\times}z_2
		$$
		is equal to 
		\begin{align*}
			\int_{Z'(\mathbb{A}_F)}\int_{Z'(\mathbb{A}_F)}\int_{\overline{G'}(\mathbb{A}_F)}\int_{P_0'(F)\backslash \overline{G'}(\mathbb{A}_F)}&\check{f}_P(\eta z_1x;z_2,y){\phi_1'}(x)\overline{{\phi_2'}(xy)}\\
			&|\det z_1x|^{s_1+s_2+1}d^{\times}z_1dxdyd^{\times}z_2.
		\end{align*}
		Executing the Fourier expansions of $\phi_1'$ and $\phi_2'$ the above integral becomes \eqref{25}, proving \eqref{34}. 
		
		By Rankin-Selberg theory, the period $\mathcal{P}(\textbf{s};\phi_1',\phi_2',\check{f}_P(\cdot;z_2,y)$ is an integral representation for $\Lambda(s_1+s_2+1,\pi_1'\times\widetilde{\pi}_2').$ It admits a meromorphic continuation to $\mathbb{C}$ with the property that $\mathcal{P}(\textbf{s};\phi_1',\phi_2',\check{f}_P(\cdot;z_2,y)/\Lambda(s_1+s_2+1,\pi_1'\times\widetilde{\pi}_2')$ is entire. Moreover, $f$ is a compactly supported function on $\overline{G}(\mathbb{A}_F).$ Therefore, by \eqref{34} the function $J^{\Reg}_{\Geo,\sm}(f,\textbf{s})$ also admits a meromorphic continuation to $\mathbb{C}$ such that $J^{\Reg}_{\Geo,\sm}(f,\textbf{s})/\Lambda(s_1+s_2+1,\pi_1'\times\widetilde{\pi}_2')$ is entire. 
	\end{proof}

	Let $\textbf{v}_j'=\otimes_v\textbf{v}_{j,v}'\in \pi_j'=\otimes_{v}\pi_{j,v}'$ be the vector corresponding to the automorphic form $\phi_j'.$ Let $W_{\phi_j',v}$ be a local Whittaker functional associated to the vector $\textbf{v}_{j,v}'$ and the generic character induced from $\psi_v$ so that 
	$$
	W_{\phi_2'}(x)=\prod_{v\in\Sigma_F}W_{\phi_2',v}(x_v).
	$$
	
	Let $x_v\in G'(F_v).$ Write $\eta x_v=(x_{v,1}, x_{v,2}, \cdots, x_{v,n})$ for the $n$-th row of $x_v.$ Let $s_2\in \mathbb{C}.$ Define $m_v(x_v,s_2)$ to be
	\begin{align*}
		\int_{G'(F_v)}\int_{N_P(F_v)}f_v(u_v\iota(y_v))|\det(y_v)|_v^{s_2}
		\overline{W_{\phi_2',v}(x_vy_v)}\prod_{i=1}^n\overline{\psi}_v(x_{v,i}E_{i,n+1}(u_v))du_vdy_v.
	\end{align*}
	
	\begin{lemma}\label{lem10}
		Let notation be as above. Let $v\notin S.$ Then 
		\begin{equation}\label{110}
			m_v(x_v,s_2)=\frac{\overline{W_{\phi_2',v}(x_v)}}{\Vol(K_v)N_{F_v}(\mathfrak{O}_{F_v})^{n/2}}\cdot \textbf{1}_{(\mathfrak{O}_{F_v}^{-1})^n}(\eta x_v).
		\end{equation}
	\end{lemma}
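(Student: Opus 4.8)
The plan is a direct unramified local computation. Two facts drive it: for $v\notin S$ the function $f_v$ is supported on $Z(F_v)K_v$, and the local Whittaker function $W_{\phi_2',v}$ is right-$K_v'$-invariant because $\textbf{v}_{2,v}'$ is the unramified vector ($\pi_{2,v}'$ being unramified, since $v\notin S$).

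First I would pin down the support of the integrand. Writing $u_v=\begin{pmatrix}I_n&b\\&1\end{pmatrix}$ with $b=(b_1,\dots,b_n)\in F_v^n$, so $E_{i,n+1}(u_v)=b_i$, one has $u_v\iota(y_v)=\begin{pmatrix}y_v&b\\&1\end{pmatrix}$, and $f_v(u_v\iota(y_v))\neq0$ forces this matrix into $Z(F_v)K_v$. Inspecting the $(n+1,n+1)$-entry of this matrix and of its inverse, both equal to $1$, one sees that any central scalar witnessing membership in $Z(F_v)K_v$ must lie in $\mathcal{O}_{F_v}^\times$; hence the matrix lies in $K_v$ itself, and for $v\mid\mathfrak{N}$ the Hecke-congruence condition defining $K_v$ is automatic because the last row is $(0,\dots,0,1)$. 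By Cramer's rule this membership is equivalent to $y_v\in K_v'=G'(\mathcal{O}_{F_v})$ and $b\in\mathcal{O}_{F_v}^n$. On this locus $|\det y_v|_v=1$ (so $|\det y_v|_v^{s_2}$ disappears), $E_{n+1,n+1}(u_v\iota(y_v))=1$, and $\omega_v$ is unramified; consequently $f_v$ is constant there, equal to $\Vol(\overline{K_v})^{-1}$ — the center-averaging in the definition of $f_v$ when $v\nmid\mathfrak{N}$ merely contributes the factor $\Vol(Z(\mathcal{O}_{F_v}))$, which passes into this normalization.

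The two remaining integrations then decouple. The $u_v$-integral becomes $\int_{\mathcal{O}_{F_v}^n}\prod_{i=1}^n\overline\psi_v(x_{v,i}b_i)\,db$, and by orthogonality of additive characters the $i$-th coordinate integral equals $\Vol(\mathcal{O}_{F_v},dt_v)$ if $x_{v,i}$ lies in the conductor $\mathfrak{O}_{F_v}^{-1}$ of $\psi_v$ (the inverse local different, since $dt_v$ is $\psi_v$-self-dual) and vanishes otherwise; as $\Vol(\mathcal{O}_{F_v},dt_v)=N_{F_v}(\mathfrak{O}_{F_v})^{-1/2}$, this integral contributes $N_{F_v}(\mathfrak{O}_{F_v})^{-n/2}\,\mathbf{1}_{(\mathfrak{O}_{F_v}^{-1})^n}(\eta x_v)$. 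For the $y_v$-integral over $K_v'$, right-$K_v'$-invariance of $W_{\phi_2',v}$ gives $\overline{W_{\phi_2',v}(x_vy_v)}=\overline{W_{\phi_2',v}(x_v)}$ throughout, so that integral produces $\Vol(K_v')\,\overline{W_{\phi_2',v}(x_v)}$.

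Collecting constants gives $m_v(x_v,s_2)=\Vol(\overline{K_v})^{-1}\Vol(K_v')\,N_{F_v}(\mathfrak{O}_{F_v})^{-n/2}\,\overline{W_{\phi_2',v}(x_v)}\,\mathbf{1}_{(\mathfrak{O}_{F_v}^{-1})^n}(\eta x_v)$; feeding in the Haar-measure normalizations of \textsection\ref{notation} (the quotient measure relating $\Vol(K_v)$ to $\Vol(\overline{K_v})$, and the normalization making $\Vol(K_v')=1$) collapses the constant to $\big(\Vol(K_v)N_{F_v}(\mathfrak{O}_{F_v})^{n/2}\big)^{-1}$, which is \eqref{110}. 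There is no serious obstacle here; the two points that demand care are the support analysis — above all, noticing that the central scalar is pinned to $\mathcal{O}_{F_v}^\times$, so the center-averaging is inert — and tracking the local different through the self-dual measure and the conductor of $\psi_v$, which is exactly what manufactures the factor $N_{F_v}(\mathfrak{O}_{F_v})^{-n/2}$; the rest is routine volume bookkeeping.
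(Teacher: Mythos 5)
Your argument is correct and is essentially the paper's own proof: the same support analysis pins $y_v\in K_v'$ and the unipotent coordinates to $\mathcal{O}_{F_v}^n$ with $f_v$ constant there, right-$K_v'$-invariance extracts $\overline{W_{\phi_2',v}(x_v)}$, and the remaining integral is the Fourier transform $\widehat{\textbf{1}_{\mathcal{O}_{F_v}^n}}(\eta x_v)$, producing the factor $N_{F_v}(\mathfrak{O}_{F_v})^{-n/2}\textbf{1}_{(\mathfrak{O}_{F_v}^{-1})^n}(\eta x_v)$. Your treatment is in fact a bit more careful than the paper's about the central averaging, the $(n+1,n+1)$-entry character factor at $v\mid\mathfrak{N}$, and the volume normalizations, but the route is identical.
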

	\begin{proof}
		By definition, $f_v(u_v\iota(y_v))$ is nonvanishing if and only if
		\begin{equation}\label{111}
			u_v\iota(y_v)\in Z(F_v)K_v,\ \text{i.e.},\ \lambda_vu_v\iota(y_v)\in K_v
		\end{equation} 
		for some $\lambda_v\in F_v^{\times}.$ So $\lambda_v\in \mathcal{O}_{F_v}^{\times},$ $y_v\in K_v'$ and $u_v\in N_P(\mathcal{O}_{F_v}).$ Hence,
		$$
		f_v(u_v\iota(y_v))=\frac{1}{\Vol(K_v)}\textbf{1}_{K_v'}(y_v)\cdot \textbf{1}_{N_P(\mathcal{O}_{F_v})}(u_v).
		$$
		One thus can compute the integral $m_v(x_v,s_2)$ as 
		\begin{align*}
			\int_{K_v'}\int_{N_P(\mathcal{O}_{F_v})}f_v(u_v\iota(y_v))|\det(y_v)|_v^{s_2}
			\overline{W_{\phi_2',v}(x_vy_v)}\prod_{i=1}^n\overline{\psi}(x_{v,i}E_{i,n+1}(u_v))du_vdy_v.
		\end{align*}
		Since $\overline{W_{\phi_2',v}(x_vy_v)}=\overline{W_{\phi_2',v}(x_v)}$ for $y_v\in K_v',$ then
		\begin{align*}
			m_v(x_v,s_2)=\frac{\overline{W_{\phi_2',v}(x_v)}}{\Vol(K_v)}\cdot \widehat{\textbf{1}_{\mathcal{O}_{F_v}^n}}(\eta x_v).
		\end{align*}
		Then \eqref{110} follows from a computation of the Fourier transform $\widehat{\textbf{1}_{\mathcal{O}_{F_v}^n}}.$
	\end{proof}
	
	Since \eqref{34} converges absolutely when $\Re(s_1)+\Re(s_2)>0,$ we can unfold the Eisenstein series $E(x,s_1+s_2+1;\check{f}_P(\cdot;z_2,y))$ and write 
	$$
	J^{\Reg}_{\Geo,\sm}(f,\textbf{s})=\prod_{v\in\Sigma_F}J^{\Reg}_{\Geo,\sm,v}(f_v,\textbf{s}),
	$$ 
	where
	\begin{align*}
		J^{\Reg}_{\Geo,\sm,v}(f_v,\textbf{s}):=\int_{N'(F_v)\backslash {G'}(F_v)}W_{\phi_1',v}(x_v)m_v(x_v,s_2)|\det x_v|_v^{s_1+s_2+1}dx_v.
	\end{align*}
	
	\begin{prop}\label{prop11'}
		Let notation be as before. Then 
		\begin{align*}
		\frac{J^{\Reg}_{\Geo,\sm}(f,\textbf{s})}{W_{\phi_1'}(I_n)\overline{W_{\phi_2'}(I_n)}}=\frac{N_F(\mathfrak{D}_F)^{n(s_1+s_2+\frac{1}{2})}\Lambda(s_1+s_2+1,\pi'_1\times\widetilde{\pi}_2')}{\Vol(K_0(\mathfrak{N}))}\prod_{v\in S}\mathcal{F}_{v}^{\natural}(f_v,\textbf{s};\phi_1',\phi_2'),
		\end{align*} 
		where 
		$$
		\mathcal{F}_{v}^{\natural}(f_v,\textbf{s};\phi_1',\phi_2')=\frac{J^{\Reg}_{\Geo,\sm,v}(f_v,\textbf{s})}{L_v(s_1+s_2+1,\pi'_{1,v}\times\widetilde{\pi}_{2,v}')W_{\phi_1',v}(I_n)\overline{W_{\phi_2',v}(I_n)}}
		$$
		is an entire function of $\textbf{s}.$
	\end{prop}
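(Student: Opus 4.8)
The plan is to evaluate the Euler product
\[
J^{\Reg}_{\Geo,\sm}(f,\textbf{s})=\prod_{v\in\Sigma_F}J^{\Reg}_{\Geo,\sm,v}(f_v,\textbf{s})
\]
established just above (valid for $\Re(s_1)+\Re(s_2)>0$ after unfolding the Eisenstein series in the expression \eqref{34} for $J^{\Reg}_{\Geo,\sm}(f,\textbf{s})$) place by place, splitting the product into the places $v\notin S$ and $v\in S$. For $v\notin S$ one has $v\nmid\mathfrak{N}$, and $\phi_1',\phi_2'$ are spherical at $v$, so substituting Lemma \ref{lem10} for $m_v(x_v,s_2)$ turns $J^{\Reg}_{\Geo,\sm,v}(f_v,\textbf{s})$ into $\Vol(K_v)^{-1}N_{F_v}(\mathfrak{O}_{F_v})^{-n/2}$ times the local Rankin--Selberg integral for $\mathrm{GL}(n)\times\mathrm{GL}(n)$ in the Whittaker model attached to the Schwartz function $\mathbf{1}_{(\mathfrak{O}_{F_v}^{-1})^n}$, namely $\int_{N'(F_v)\backslash G'(F_v)}W_{\phi_1',v}(x_v)\overline{W_{\phi_2',v}(x_v)}\,\mathbf{1}_{(\mathfrak{O}_{F_v}^{-1})^n}(\eta x_v)|\det x_v|_v^{s_1+s_2+1}dx_v$. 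A central change of variables converts $\mathbf{1}_{(\mathfrak{O}_{F_v}^{-1})^n}$ into $\mathbf{1}_{\mathcal{O}_{F_v}^n}$ at the cost of an explicit power of $N_{F_v}(\mathfrak{O}_{F_v})$, and the classical unramified computation of \cite{JPSS83} (with $\overline{W_{\phi_2',v}}$ the spherical Whittaker function of $\widetilde{\pi}_{2,v}'$ relative to $\overline{\psi}_v$) evaluates this as $W_{\phi_1',v}(I_n)\overline{W_{\phi_2',v}(I_n)}\,L_v(s_1+s_2+1,\pi_{1,v}'\times\widetilde{\pi}_{2,v}')$. Collecting these factors gives $J^{\Reg}_{\Geo,\sm,v}(f_v,\textbf{s})=N_{F_v}(\mathfrak{O}_{F_v})^{n(s_1+s_2+\frac12)}\Vol(K_v)^{-1}W_{\phi_1',v}(I_n)\overline{W_{\phi_2',v}(I_n)}L_v(s_1+s_2+1,\pi_{1,v}'\times\widetilde{\pi}_{2,v}')$.

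For $v\in S$ I would keep $J^{\Reg}_{\Geo,\sm,v}(f_v,\textbf{s})$ unevaluated and, by the very definition of $\mathcal{F}_v^{\natural}$, write it as $\mathcal{F}_v^{\natural}(f_v,\textbf{s};\phi_1',\phi_2')\cdot L_v(s_1+s_2+1,\pi_{1,v}'\times\widetilde{\pi}_{2,v}')\cdot W_{\phi_1',v}(I_n)\overline{W_{\phi_2',v}(I_n)}$. The substance of the proposition is that $\mathcal{F}_v^{\natural}$ is entire, and here the local theory of Rankin--Selberg integrals enters. Since $f$ is $K$-finite, $\check{f}_P(\cdot;z_2,y)$ is a Bruhat--Schwartz section, so $J^{\Reg}_{\Geo,\sm,v}(f_v,\textbf{s})$ is a finite linear combination of local Rankin--Selberg periods $\Psi(s_1+s_2+1,W_{\phi_1',v},W,\Phi_v)$ with $W\in\pi_{2,v}'$ and $\Phi_v\in\mathcal{S}(F_v^n)$; by \cite{JPSS83} each such period divided by $L_v(s_1+s_2+1,\pi_{1,v}'\times\widetilde{\pi}_{2,v}')$ is a polynomial in $q_v^{\pm(s_1+s_2+1)}$ when $v$ is finite (resp.\ holomorphic when $v\mid\infty$), because $L_v$ generates the fractional ideal spanned by all local integrals; moreover $W_{\phi_i',v}(I_n)\neq0$ since $\phi_i'$ is a local new vector. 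Hence $\mathcal{F}_v^{\natural}$ is entire.

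It then remains to assemble the local identities. Multiplying over all $v$, the product $\prod_{v\notin S}L_v$ is the partial completed $L$-function $L^S(s_1+s_2+1,\pi_1'\times\widetilde{\pi}_2')$, which together with the factors $L_v$, $v\in S$, forms $\Lambda(s_1+s_2+1,\pi_1'\times\widetilde{\pi}_2')$; the products $\prod_vW_{\phi_i',v}(I_n)$ reassemble to $W_{\phi_i'}(I_n)$; the local volume factors $\Vol(K_v)^{-1}$ ($v\notin S$) together with $\Vol(\overline{K_v})^{-1}$ ($v\mid\mathfrak{N}$, coming from \eqref{f_v}) combine to $\Vol(K_0(\mathfrak{N}))^{-1}$; and the powers $N_{F_v}(\mathfrak{O}_{F_v})^{n(s_1+s_2+\frac12)}$ combine, via $N_F(\mathfrak{D}_F)=\prod_vN_{F_v}(\mathfrak{O}_{F_v})$, to $N_F(\mathfrak{D}_F)^{n(s_1+s_2+\frac12)}$. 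Dividing through by $W_{\phi_1'}(I_n)\overline{W_{\phi_2'}(I_n)}$ yields the stated formula in the region $\Re(s_1)+\Re(s_2)>0$, and since both sides are meromorphic (the right side via $\Lambda$ and the entire functions $\mathcal{F}_v^{\natural}$, the left side via Proposition \ref{prop14}) the identity persists on all of $\mathbb{C}^2$.

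The main obstacle is the entirety of $\mathcal{F}_v^{\natural}$ at the ramified places $v\in S$: this rests on the local Rankin--Selberg theory of \cite{JPSS83}, i.e.\ on the fact that a local integral divided by the local $L$-factor is a polynomial in $q_v^{\pm(s_1+s_2+1)}$ (resp.\ holomorphic at archimedean $v$), combined with the non-vanishing $W_{\phi_i',v}(I_n)\neq0$ for the chosen new vectors. The reassembly of the global constant $N_F(\mathfrak{D}_F)^{n(s_1+s_2+\frac12)}/\Vol(K_0(\mathfrak{N}))$ is routine but needs care with the measure normalizations fixed in \textsection\ref{notation}.
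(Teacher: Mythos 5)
Your proposal is correct, and its skeleton --- unfolding \eqref{34} into the Euler product $\prod_v J^{\Reg}_{\Geo,\sm,v}(f_v,\textbf{s})$, inserting Lemma \ref{lem10} at $v\notin S$, and leaving the places of $S$ packaged as $\mathcal{F}_v^{\natural}$ --- is the same as the paper's. Where you genuinely diverge is at the finitely many $v\mid\mathfrak{D}_F$ with $v\notin S$: the paper evaluates $I_v(s)=\int W_{\phi_1',v}(x_v)\overline{W_{\phi_2',v}(x_v)}\mathbf{1}_{(\mathfrak{O}_{F_v}^{-1})^n}(\eta x_v)|\det x_v|_v^{s}dx_v$ by an Iwasawa decomposition, computing the central direction explicitly ($l_v(s)=N_{F_v}(\mathfrak{D}_{F_v})^{ns}\zeta_{F_v}(ns)$) and then invoking Macdonald's formula for the remaining torus sum, whereas you translate by the central element $\varpi_v^{-e_v(\mathfrak{D}_{F_v})}I_n$ to convert $\mathbf{1}_{(\mathfrak{O}_{F_v}^{-1})^n}$ into $\mathbf{1}_{\mathcal{O}_{F_v}^n}$ and quote the classical unramified computation. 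Both routes produce the same power $N_{F_v}(\mathfrak{D}_{F_v})^{n(s_1+s_2+\frac{1}{2})}$, and your shortcut is cleaner; you should, however, state explicitly that it uses the hypothesis that $\pi_1'$ and $\pi_2'$ share the unitary central character $\omega'$, so that the factor $\omega'_v(\varpi_v^{-d})\overline{\omega'_v(\varpi_v^{-d})}$ produced by the central shift equals $1$ (with distinct central characters a unimodular constant would survive). You also make explicit the entirety of $\mathcal{F}_v^{\natural}$ at $v\in S$ via the divisibility statements of \cite{JPSS83} and \cite{Jac09} together with $W_{\phi_i',v}(I_n)\neq0$, a point the paper's proof leaves implicit; the only small inaccuracy there is that at archimedean $v$ the local distribution $J^{\Reg}_{\Geo,\sm,v}$ is not a finite linear combination of periods $\Psi(s_1+s_2+1,W_{\phi_1',v},W,\Phi_v)$ but an integral over a compact set of $y$ of such periods depending continuously on $y$; holomorphy of the quotient by $L_v(s_1+s_2+1,\pi_{1,v}'\times\widetilde{\pi}_{2,v}')$ still follows from locally uniform bounds in $\mathbf{s}$. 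The final reassembly of $\Vol(K_0(\mathfrak{N}))^{-1}$ and $N_F(\mathfrak{D}_F)^{n(s_1+s_2+\frac{1}{2})}$ agrees with the paper, modulo the same measure-normalization bookkeeping the paper itself uses.
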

	\begin{proof}
		Let $v\notin S.$ By Lemma \ref{lem10}, $\mathcal{F}_{0,1}J^{\bi}_{\Geo,v}(f_v,\textbf{s})$ is equal to 
		\begin{align*}
			\frac{N_{F_v}(\mathfrak{O}_{F_v})^{-\frac{n}{2}}}{\Vol(K_v)}\int_{N'(F_v)\backslash {G'}(F_v)}W_{\phi_1',v}(x_v)\overline{W_{\phi_2',v}(x_v)}\cdot \textbf{1}_{(\mathfrak{O}_{F_v}^{-1})^n}(\eta x_v)|\det x_v|_v^{s_1+s_2+1}dx_v.
		\end{align*}
		
		By definition $\phi_1'$ and $\phi_2'$ are right $K_v'$-invariant, i.e., spherical at $v.$ Then we obtain by Rankin-Selberg theory, for $v\nmid \mathfrak{D}_F,$ that 
		\begin{align*}
			J^{\Reg,v}_{\Geo,\sm}(f_v,\textbf{s})=\frac{W_{\phi_1',v}(I_n)\overline{W_{\phi_2',v}(I_n)}}{\Vol(K_v)}\cdot L_v(s_1+s_2+1,\pi_{1,v}'\times\widetilde{\pi}_{2,v}').
		\end{align*}
		
		Now we assume $v\mid \mathfrak{D}_F.$ For $\Re(s)>1,$ define
		\begin{align*}
			I_v(s):=\int_{N'(F_v)\backslash {G'}(F_v)}W_{\phi_1',v}(x_v)\overline{W_{\phi_2',v}(x_v)}\cdot \textbf{1}_{(\mathfrak{O}_{F_v}^{-1})^n}(\eta x_v)|\det x_v|_v^{s}dx_v.
		\end{align*}
		
		Appeal to the Iwasawa decomposition $G'(F_v)=N'(F_v)T_{B'}(F_v)K_v'$ to write $x_v=n_v'\varpi_v^{\mathbf{m}}k_v'$ and $dx_v=dn_v'\delta_{B'}^{-1}(\varpi_v^{\mathbf{m}})dk_v',$ with $\mathbf{m}=(m_1,\cdots, m_n)\in \mathbb{Z}^n,$ where $\varpi_v^{\mathbf{m}}:=\diag(\varpi_v^{m_1},\cdots, \varpi_v^{m_n}).$ Then $I_v(s)$ is equal to
		\begin{align*}
			\int_{G'(\mathcal{O}_{F_v})}\sum_{\mathbf{m}\in\mathbb{Z}^n}W_{\phi_1',v}(\varpi_v^{\mathbf{m}})\overline{W_{\phi_2',v}(\varpi_v^{\mathbf{m}})}\cdot \textbf{1}_{(\mathfrak{O}_{F_v}^{-1})^n}(\eta \varpi_v^{\mathbf{m}}k_v')|\det (\varpi_v^{\mathbf{m}})|_v^{s}\delta_{B'}^{-1}(\varpi_v^{\mathbf{m}})dk_v'.
		\end{align*}
		
		Let $\mathbf{m}'=(m_1,\cdots, m_{n-1}, 0).$ Then we can rewrite the above expression as 
		\begin{equation}\label{200} 
			I_v(s)=\sum_{\mathbf{m}'\in\mathbb{Z}^{n-1}}W_{\phi_1',v}(\varpi_v^{\mathbf{m}'})\overline{W_{\phi_2',v}(\varpi_v^{\mathbf{m}'})}|\det (\varpi_v^{\mathbf{m}'})|_v^{s}\delta_{B'}^{-1}(\varpi_v^{\mathbf{m}'})\cdot l_v(s),
		\end{equation}
		where 
		\begin{align*}
			l_v(s)=\sum_{m\in \mathbb{Z}}\int_{K_v'}\textbf{1}_{(\mathfrak{O}_{F_v}^{-1})^n}(\eta \varpi_v^{m}k_v')q_v^{-nms}dk_v'
		\end{align*}
		
		Note that $\min\{e_v(E_{n,1}(k_v')),\cdots, e_v(E_{n,n}(k_v'))\}=0.$ Then 
		\begin{align*}
			l_v(s)=\sum_{m\geq -e_v(\mathfrak{D}_{F_v})}\int_{K_v'}\textbf{1}_{(\mathfrak{O}_{F_v}^{-1})^n}(\eta \varpi_v^{m}k_v')q_v^{-nms}dk_v',
		\end{align*}
		and $\textbf{1}_{(\mathfrak{O}_{F_v}^{-1})^n}(\eta \varpi_v^{m}k_v')\equiv 1$ for $m\geq -e_v(\mathfrak{D}_{F_v})$ and $k_v'\in K_v'.$ Therefore, 
		\begin{equation}\label{201}
			l_v(s)=\Vol(K_v')\sum_{m\geq -e_v(\mathfrak{D}_{F_v})}q_v^{-nms}=\frac{N_{F_v}(\mathfrak{D}_{F_v})^{ns}}{1-q_v^{-ns}}=N_{F_v}(\mathfrak{D}_{F_v})^{ns}\zeta_{F_v}(ns),
		\end{equation}
		where $\zeta_{F_v}$ is the local Dedekind zeta function attached to $F_v.$
		
		Appealing to Macdonald formula (cf. \cite{Mac79}) we then have 
		\begin{equation}\label{202}
			\sum_{\mathbf{m}'\in\mathbb{Z}^{n-1}}W_{\phi_1',v}(\varpi_v^{\mathbf{m}'})\overline{W_{\phi_2',v}(\varpi_v^{\mathbf{m}'})}|\det (\varpi_v^{\mathbf{m}'})|_v^{s}\delta_{B'}^{-1}(\varpi_v^{\mathbf{m}'})=\frac{L(s,\pi_{1,v}\times\widetilde{\pi}_{2,v})}{\zeta_{F_v}(ns)}.
		\end{equation}
		
		Then Proposition \ref{prop11'} follows from \eqref{200}, \eqref{201}, and \eqref{202}.
	\end{proof}

	\section{Orbital Integrals Relative to the Big Cell: \RNum{1}}\label{sec4}
	In this section we compute the orbital integrals $\mathcal{F}_{1,1}J^{\bi,T}_{\Geo}(f,\textbf{s}),$ $\mathcal{F}_{0,1}J^{\bi,T}_{\Geo}(f,\textbf{s})$ and $\mathcal{F}_{1,0}J_{\Geo}^{\bi,T}(f,\textbf{s})$ relative to the big cell $P_0(F)w_nP_0(F).$ Recall that their definitions are given in \textsection\ref{2.4.2}.

	\subsection{The Orbital Integral $\mathcal{F}_{1,1}J^{\bi,T}_{\Geo}(f,\textbf{s})$}
	We will make use of the cuspidality of $\phi'$ to show $\mathcal{F}_{1,1}J^{\bi,T}_{\Geo}(f,\textbf{s})$ vanishes identically for all $T>0$ and $\textbf{s}\in\mathbb{C}^2.$ 
	
	\begin{prop}
		Let notation be as before. Then for $\textbf{s}\in\mathbb{C}^2$ and $T>0,$
		\begin{equation}\label{17}
			\mathcal{F}_{1,1}J^{\bi,T}_{\Geo}(f,\textbf{s})=0.
		\end{equation}
	\end{prop}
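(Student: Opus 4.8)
The plan is to unfold $\mathcal{F}_1\mathcal{F}_1\K_{\bi}$ along the big Bruhat cell and to recognise the resulting inner integral against $\phi_1'$ as a constant term of a cusp form, which is then $0$. Since for $i=j=1$ the index sets $R_n(F)\backslash R_n(F)$ are singletons and the characters $\psi_0$ are trivial, we have
\[
\mathcal{F}_1\mathcal{F}_1\K_{\bi}(g_1,g_2)=\int_{[N_P]}\int_{[N_P]}\ \sum_{\gamma\in P_0(F)w_nP_0(F)}f\big(g_1^{-1}u_1^{-1}\gamma u_2\,g_2\big)\,du_1\,du_2,
\]
where $N_P$ is the abelian unipotent radical of the standard $(n,1)$-parabolic, so that $P_0=\iota(G')\ltimes N_P$ with Levi $\iota(G')$. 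Substituting $g_1=\iota(z_1x)$ and $g_2=\iota(z_1z_2y)$, I would parametrise the big cell by $\gamma=\nu_1\,\iota(\gamma_1)\,w_n\,\iota(\gamma_2)\,\nu_2$ with $\nu_1,\nu_2\in N_P(F)$ and $\gamma_1,\gamma_2$ running over $G'(F)$ modulo the redundancy governed by $\iota(G')\cap w_n\iota(G')w_n^{-1}$, which is the Levi $\diag(\GL(n-1),1)$ of the $(n-1,1)$-parabolic of $G'$. One of the two $[N_P]$-integrals then folds the corresponding $F$-sum over $\nu_i$ into a full integral over $N_P(\mathbb{A}_F)$, exactly as the $N_P$-integral does in the treatment of the small cell in Proposition~\ref{prop14}.

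The structural input that is absent in Lemma~\ref{lem6} (there $i=j=1$ is precisely the non-vanishing case for the \emph{full} kernel) is that the Weyl element $w_n$ converts $N_P$ into a unipotent subgroup of $\iota(G')$: writing a generic element of $N_P$ as $n(u)$ with $u=(u',u_n)\in\mathbb{A}_F^{n-1}\times\mathbb{A}_F$, one has the identity $n(u)\,w_n=m(u_n)\,\iota\!\left(\begin{smallmatrix}I_{n-1}&u'\\&1\end{smallmatrix}\right)$, with $\left(\begin{smallmatrix}I_{n-1}&u'\\&1\end{smallmatrix}\right)$ lying in the unipotent radical $N_{n-1,1}'$ of the $(n-1,1)$-parabolic of $G'$ and $m(u_n)$ depending only on $u_n$. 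Commuting the adelic $N_P$-variable past $\iota(\gamma_1)$ and $w_n$ via this identity, together with the change of variables it permits, shows — just as in the last three displays of the proof of Lemma~\ref{lem6} — that the integrand is left invariant in $x$ under $\iota\big(N_{n-1,1}'(\mathbb{A}_F)\big)$. Unfolding the residual left $\iota(G'(F))$-sum into the $[\overline{G'}]$-integral in $x$ (legitimate since $\phi_1'$ is $G'(F)$-invariant) and using $\vol([N_{n-1,1}'])=1$, the $x$-integral collapses by Fubini to $\int_{[N_{n-1,1}']}\phi_1'(vx)\,dv$ times a residual integrand; the inner integral is the constant term of $\phi_1'$ along the proper parabolic of type $(n-1,1)$, which vanishes because $\phi_1'$ is cuspidal and $n\geq2$. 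Hence $\mathcal{F}_{1,1}J^{\bi,T}_{\Geo}(f,\textbf{s})=0$ for every $T>0$ and every $\textbf{s}\in\mathbb{C}^2$; the remaining $[N_P]$-integral and the $y$-integral are inert for the vanishing (they would symmetrically produce the $(n-1,1)$-constant term of $\phi_2'$). All interchanges of sums and integrals are justified for fixed $T$ and fixed $\Re(\textbf{s})$ by absolute convergence, since $f$ is compactly supported modulo $Z(\mathbb{A}_F)$ and $\phi_1',\phi_2'$ decrease rapidly on $[\overline{G'}]$.

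Thus the argument is the big-cell refinement of $\mathcal{F}_{i,j}J_{\Geo}^T\equiv0$ for $\max(i,j)\geq2$: with $i=j=1$ the Fourier-expansion index is trivial, but the $w_n$ occurring in $P_0(F)w_nP_0(F)$ supplies the missing step of mirabolic expansion, thereby forcing a $(n-1,1)$-constant term on $\phi_1'$. The main obstacle is the Bruhat bookkeeping: one must choose the parametrisation $\gamma=\nu_1\iota(\gamma_1)w_n\iota(\gamma_2)\nu_2$ and the order of folding and unfolding so that, after all stabiliser cancellations, the variable surviving over $\phi_1'$ is genuinely the full $[N_{n-1,1}']$ (the adelic points modulo rational points of a unipotent radical), rather than a smaller subgroup or a reductive quotient — only then does cuspidality apply. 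Tracking which coordinates of the two $N_P$-variables are consumed by the big-cell sum and which build the period is the heart of the matter.
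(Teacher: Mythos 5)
Your overall strategy is the same as the paper's: exploit the fact that the big-cell Weyl element $w_n$ interchanges (part of) $N_P$ with $\iota(N_{P'})$, so that the weight $\phi_1'$ ends up integrated against a full $[N_{P'}]$, and then kill everything with cuspidality via the $(n-1,1)$-constant term. However, as written there is a genuine gap at the decisive step. Your pivotal identity $n(u)w_n=m(u_n)\iota\big(\begin{smallmatrix}I_{n-1}&u'\\&1\end{smallmatrix}\big)$ is correct, but it places the $N_{P'}$-type element to the \emph{right} of $w_n$, i.e.\ on the $\gamma_2,\nu_2,v,y$ (that is, $\phi_2'$) side of the argument of $f$; it does not sit next to $\iota(z_1x)^{-1}$. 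If you try to transport it back to the $x$-side, conjugation by $m(u_n)$ and by $\iota(\gamma_1)$ returns it to $N_P(\mathbb{A}_F)$, not to $\iota(N_{P'}(\mathbb{A}_F))$, so no left invariance of the inner integral in $x$ under $\iota(N_{P'}(\mathbb{A}_F))$ follows from the manipulation you describe; nor can the vanishing be salvaged on the $\phi_2'$-side, since after crossing $w_n$ the $u'$-integral runs over all of $\mathbb{A}_F^{n-1}$ (no compact quotient, no character), hence produces no constant term. The workable direction is the reverse one, which is what the paper does: insert $\iota(u')$, $u'\in N_{P'}(\mathbb{A}_F)$, on the $x$-side, conjugate it through the $N_P$-integral that has already been unfolded to all of $N_P(\mathbb{A}_F)$, use $w_n\iota(u')w_n\in N_P(\mathbb{A}_F)$ together with the fact that the rational mirabolic elements in the remaining Bruhat sum normalize $N_P$, and absorb the result into the surviving $[N_P]$-integral by translation invariance; only then is the inner integral $\iota(N_{P'}(\mathbb{A}_F))$-invariant in $x$ and the $[N_{P'}]$-average of $\phi_1'$ appears.

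The second gap is the one you yourself flag as ``the heart of the matter'': the double-coset bookkeeping. The redundancy in the parametrisation $\gamma=\nu_1\iota(\gamma_1)w_n\iota(\gamma_2)\nu_2$ is not governed only by $\iota(G')\cap w_n\iota(G')w_n^{-1}$ (the Levi $\diag(\mathrm{GL}(n-1),1)$); the true stabiliser $P_0\cap w_nP_0w_n^{-1}$ also contains torus and unipotent directions, and it is exactly this stabiliser computation (compare the cleaner instance in Lemma \ref{lem15}, where the $x$-projection of $H_{w_n}$ is the mirabolic $P_0'$) that guarantees the $x$-integral lives over a quotient of the form $N_{P'}(F)\backslash(\cdots)$, so that inserting $\int_{[N_{P'}]}$ is legitimate. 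Relatedly, your phrase ``unfolding the residual left $\iota(G'(F))$-sum into the $[\overline{G'}]$-integral in $x$'' would, if taken literally, replace $[\overline{G'}]$ by an adelic domain on which the constant-term insertion is no longer valid; one must retain (at least) the rational unipotent $N_{P'}(F)$ in the quotient. Since both the invariance step and the stabiliser bookkeeping are asserted rather than carried out, the proposal is an outline of the right mechanism rather than a proof; the missing work is precisely where the paper's argument (unfold one $N_P$ to $N_P(\mathbb{A}_F)$, insert $\iota(u')$, absorb via $w_n\iota(u')w_n\in N_P$, integrate over $[N_{P'}]$, cuspidality) does its job.
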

	\begin{proof}
		Denote by $P'_0$ the mirabolic subgroup of $G'.$ Let $N_{P'}$ be the unipotent radical of $P_0'.$ By definition, 	
		\begin{align*}
			\mathcal{F}_1\mathcal{F}_1\K_{\bi}(g_1,g_2)=&\int_{[N_P]}\int_{[N_P]}\sum_{\gamma\in P_0(F)w_nP_0(F)}f(g_1^{-1}u^{-1}\gamma vg_2)dudv.
		\end{align*}
		
		By Bruhat decomposition there exists a set $\Phi(F)$ consisting of finitely many disjoint subgroups of $P_0(F)$ such that 
		\begin{align*}
			P_0(F)w_{n}P_0(F)=P_0(F)\times \{w_n\}\times \Phi(F)
		\end{align*}
		as a bijection of sets. So we can express $\mathcal{F}_{1,1}J^{\bi,T}_{\Geo}(f,\mathbf{s})$ as 
		\begin{align*}
			\int_{[Z'^T]}&\int_{Z'^T(\mathbb{A}_F)}\int_{[\overline{G'}]}\int_{\overline{G'}(\mathbb{A}_F)}\int_{N_P(\mathbb{A}_F)}\int_{[N_P]}\sum_{\beta\in \Phi(F)}f(\iota(z_1x)^{-1}u^{-1}w_n\beta v\iota(z_1z_2y))dudv\\
			&\qquad\phi_1'(x)\overline{\phi_2'(y)}\omega'(z_1)\overline{\omega_2(z_2)}|\det z_1x|^{s_1}|\det z_1z_2y|^{s_2}dxdyd^{\times}z_1d^{\times}z_2.
		\end{align*}
		
		Let $u'\in N_{P'}(\mathbb{A}_F).$ Observe that  $w_n\iota(u')w_n\in N_P(\mathbb{A}_F).$ So
		\begin{align*}
			&\int_{N_P(\mathbb{A}_F)}\int_{[N_P]}\sum_{\beta\in \Phi(F)}f(\iota(z_1x)^{-1}\iota(u')u^{-1}w_n\beta v\iota(z_1z_2y))dudv\\
=&\int_{N_P(\mathbb{A}_F)}\int_{[N_P]}\sum_{\beta\in \Phi(F)}f(\iota(z_1x)^{-1}u^{-1}w_n\beta v\iota(z_1z_2y))dudv.
		\end{align*}
		
		Substituting this into the definition of $\mathcal{F}_{1,1}J^{\bi,T}_{\Geo}(f,\textbf{s})$ and executing a change of variables we obtain $\mathcal{F}_{1,1}J^{\bi,T}_{\Geo}(f,\textbf{s})$  is equal to 
		\begin{align*}
			\int_{[Z'^T]}&\int_{Z'^T(\mathbb{A}_F)}\int_{[\overline{G'}]}\int_{\overline{G'}(\mathbb{A}_F)}\int_{N_P(\mathbb{A}_F)}\int_{[N_P]}\sum_{\beta\in \Phi(F)}f(\iota(z_1x)^{-1}u^{-1}w_n\beta v\iota(z_1z_2y))dudv\\
			&\qquad\phi_1'(u'x)\overline{\phi_2'(y)}\omega'(z_1)\overline{\omega_2(z_2)}|\det z_1x|^{s_1}|\det z_1z_2y|^{s_2}dxdyd^{\times}z_1d^{\times}z_2
		\end{align*}
		for an arbitrary $u'\in N_{P'}(\mathbb{A}_F).$ Integrating the above integral over $u'\in [N_{P'}]$ we conclude from the cuspidality of $\phi_1'$ that $\mathcal{F}_{1,1}J^{\bi,T}_{\Geo}(f,\textbf{s})\equiv 0.$
	\end{proof}
	
	\subsection{The Orbital Integral: $\mathcal{F}_{0,1}J^{\bi,T}_{\Geo}(f,\textbf{s})$}\label{sec5.2}
	By definition, 
	\begin{align*}
		\mathcal{F}_{0,1}J^{\bi,T}_{\Geo}(f,\textbf{s})=&\int_{[Z'^T]}\int_{[Z'^T]}\int_{[\overline{G'}]}\int_{[\overline{G'}]}\int_{[N_P]}\sum_{\gamma}f(\iota(z_1x)^{-1}\gamma u\iota(z_1z_2y))\\
		&\phi_1'(x)\overline{\phi_2'(y)}\omega'(z_1)\overline{\omega_2(z_2)}|\det z_1x|^{s_1}|\det z_1z_2y|^{s_2}dudxdyd^{\times}z_1d^{\times}z_2,
	\end{align*}
	where $\gamma$ ranges through $Z(F)\backslash P(F)w_nP(F).$ 
	
	Note that $\mathcal{F}_{0,1}J^{\bi,T}_{\Geo}(f,\textbf{s})$ converges absolutely for all $\textbf{s}$. We then do the double coset decomposition for $Z(F)\backslash P(F)w_nP(F)$ to handle $\mathcal{F}_{0,1}J^{\bi,T}_{\Geo}(f,\textbf{s}).$ 
	
	\begin{lemma}\label{lem13}
		Let notation be as before. Let $\gamma=\begin{pmatrix}
		I_{n-1}&\\
		&1&\\
		&1&1
	\end{pmatrix}.$ Then the set $\{w_n, \gamma\}$ is a complete set of representatives for $Z(F)\iota(G'(F))\backslash P(F)w_nP(F)/P(F).$
	\end{lemma}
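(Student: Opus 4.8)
The plan is to translate the double-coset problem into an orbit problem on the partial flag variety $G/P$. Since $P$ is the standard parabolic of type $(n,1)$, it is precisely the stabiliser in $G$ of the hyperplane $H_0:=\langle e_1,\dots,e_n\rangle\subseteq F^{n+1}$, so $g\mapsto gH_0$ identifies $G(F)/P(F)$ with the set of all hyperplanes of $F^{n+1}$ (the action being transitive). Under this identification the Bruhat decomposition $G(F)=P(F)\sqcup P(F)w_nP(F)$ exhibits the two $P(F)$-orbits on hyperplanes: the fixed point $\{H_0\}$, corresponding to the coset $P(F)$, and its complement $\{H:H\neq H_0\}$, corresponding to $P(F)w_nP(F)$. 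Since $Z(F)\subseteq P(F)$ acts trivially on $G/P$, we get a bijection
\[
Z(F)\iota(G'(F))\backslash P(F)w_nP(F)/P(F)\ \longleftrightarrow\ \bigl\{\,\iota(G'(F))\text{-orbits on }\{H\neq H_0\}\,\bigr\}.
\]
So the whole lemma reduces to counting $\GL(n,F)$-orbits (acting through $\iota$) on hyperplanes $H\neq H_0$, and then producing matrix representatives.

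For the orbit count: the vector $e_{n+1}$ is fixed by $\iota(G'(F))$, so whether or not $e_{n+1}\in H$ is an invariant, and I would split into the two corresponding cases. If $e_{n+1}\in H$, then $H=(H\cap H_0)\oplus F e_{n+1}$ with $H\cap H_0$ an $(n-1)$-dimensional subspace of $F^n\cong H_0$, and $\GL(n,F)$ is transitive on such subspaces, so this is a single orbit. If $e_{n+1}\notin H$, then $F^{n+1}=H\oplus F e_{n+1}$, so $H$ is the graph $\{v+\ell(v)e_{n+1}:v\in F^n\}$ of a unique linear functional $\ell$ on $F^n$; here $\ell\neq 0$ exactly because $H\neq H_0$, the action of $\iota(g)$ carries $\ell$ to $\ell\circ g^{-1}$, and $\GL(n,F)$ is transitive on nonzero functionals, so again a single orbit. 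Hence there are exactly two double cosets. Finally I would check $w_n$ and $\gamma$ are representatives in these two classes: neither lies in $P(F)$ (each has a nonzero $(n+1,n)$-entry), so both lie in the big cell $P(F)w_nP(F)$; moreover $w_nH_0=\langle e_1,\dots,e_{n-1},e_{n+1}\rangle$ contains $e_{n+1}$, representing the first orbit, whereas $\gamma H_0=\langle e_1,\dots,e_{n-1},e_n+e_{n+1}\rangle$ is the graph of the nonzero functional $v\mapsto v_n$ and does not contain $e_{n+1}$, representing the second. This gives the claim.

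The lemma carries little genuine difficulty; the only points needing care are (i) that $P(F)w_nP(F)/P(F)$ is really the \emph{whole} complement $\{H\neq H_0\}$, which follows from the (standard) fact that $P(F)\backslash G(F)/P(F)$ has exactly two elements for this maximal parabolic of $\GL_{n+1}$, valid over an arbitrary field, and (ii) the two transitivity assertions above, which are elementary linear algebra over $F$ but should be stated over the base field rather than an algebraic closure. A purely matrix-theoretic alternative is also available — writing a general element of $P(F)w_nP(F)$ and using left multiplication by $Z(F)\iota(G'(F))$ and right multiplication by $P(F)$ to normalise it to $w_n$ or $\gamma$ — but it is longer and less transparent, so I would present the geometric argument.
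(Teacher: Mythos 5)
Your proof is correct, but it takes a genuinely different route from the paper's. You recast the double-coset problem geometrically: since $P$ is the stabiliser of the hyperplane $H_0=\langle e_1,\dots,e_n\rangle$, you identify $G(F)/P(F)$ with hyperplanes of $F^{n+1}$, identify the big cell $P(F)w_nP(F)/P(F)$ with $\{H\neq H_0\}$ (using the two-cell parabolic Bruhat decomposition, valid over any field), and then count $\iota(G'(F))$-orbits there via the invariant ``$e_{n+1}\in H$ or not,'' checking that $w_nH_0$ and $\gamma H_0$ land in the two distinct classes. The paper instead argues directly with matrices: it shows that any $g\in P(F)w_nP(F)$ not in $\iota(G'(F))w_nP(F)$ can be brought to $\iota(x)\gamma p$ by exhibiting an explicit factorisation of $\gamma$ through $w_n$ (its formula (5.2)), and separates the two classes by comparing canonical Bruhat forms. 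What your approach buys is a transparent and complete disjointness argument (the linear-algebra invariant does all the work, where the paper's ``compare the canonical Bruhat form'' step is left rather terse) and a conceptual explanation of why there are exactly two classes; what the paper's computation buys is explicit factorisations of the representatives inside $P(F)w_nP(F)$, which is the form of the data actually manipulated in the subsequent orbital-integral computations (stabilisers of $w_n$ and $\gamma$, the unfolding in \S 4.2). Your verification that $w_n,\gamma$ lie in the big cell (nonzero lower-left entry, hence not in $P$) together with the two transitivity claims over $F$ is all that is needed, so there is no gap.
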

	\begin{proof}
		Let $g\in P(F)w_nP(F).$ Suppose $g\notin \iota(G'(F))w_nP(F).$ Then there exists $x\in G'(F)$ and $p\in P(F)$ such that 
		\begin{align*}
			\iota(x^{-1})g=\begin{pmatrix}
				I_{n-2}&&\\
				&-1&1\\
				&&1
			\end{pmatrix}w_n\begin{pmatrix}
				I_{n-2}&&\\
				&1&1\\
				&&1
			\end{pmatrix}p=\gamma p,
		\end{align*}
	  Hence, $g=\iota(x)\gamma p\in \iota(G'(F))\gamma P(F).$ Now we need to check $w_n$ and $\gamma$ are not equivalent. Suppose for the sake of contrary that $\gamma\in \iota(G'(F))w_nP(F).$ Note that 
		\begin{equation}\label{103}
			\gamma=\begin{pmatrix}
				I_{n-1}&\\
				&-1&1\\
				&&1
			\end{pmatrix}w_n\begin{pmatrix}
				I_{n-1}&\\
				&1&1\\
				&&1
			\end{pmatrix}.
		\end{equation}
		We then get a contradiction by comparing the canonical Bruhat form of $\gamma$ and the elements in $\iota(G'(F))w_nP(F),$ which cannot be of the form \eqref{103}.
	\end{proof}
	
	According to Lemma \ref{lem13} we define $\mathcal{F}_{0,1}^{\bi,1}J_{\Geo}^T(f,\textbf{s})$ by
	\begin{align*}
		&\int_{[Z'^T]}\int_{[Z'^T]}\int_{[\overline{G'}]}\int_{[\overline{G'}]}\int_{[N_P]}\sum_{\gamma\in \iota(G'(F))w_nP(F)/Z(F)}f(\iota(z_1x)^{-1}\gamma u\iota(z_1z_2y))du\\
		&\qquad \phi_1'(x)\overline{\phi_2'(y)}\omega'(z_1)\overline{\omega_2(z_2)}|\det z_1x|^{s_1}|\det z_1z_2y|^{s_2}dxdyd^{\times}z_1d^{\times}z_2.
	\end{align*}
	
	\begin{lemma}\label{lem15}
		Let notation be as before. Then $\mathcal{F}_{0,1}^{\bi,1}J_{\Geo}^T(f,\textbf{s})\equiv 0.$
	\end{lemma}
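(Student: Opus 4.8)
The plan is to show that, once the "$\iota(G'(F))$-direction" in the $\gamma$-sum is unfolded against the $x$-integration, the orbital integral $\mathcal{F}_{0,1}^{\bi,1}J_{\Geo}^T(f,\textbf{s})$ is built from data that is left-invariant under $\iota(N_{P'}(\mathbb{A}_F))$ in the $x$-variable, where $N_{P'}$ is the unipotent radical of the standard $(n-1,1)$-parabolic $P'$ of $G'$ (so $P_0'=Z'\backslash P'$ is the mirabolic). Pairing such a function with the cusp form $\phi_1'$ then produces the constant term of $\phi_1'$ along the proper parabolic $P'$, which vanishes identically. This mirrors the mechanism already used for $\mathcal{F}_{1,1}J^{\bi,T}_{\Geo}(f,\textbf{s})$.

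Concretely, I would first parametrize $Z(F)\backslash\iota(G'(F))w_nP(F)$ by writing a representative as $\iota(\delta)w_np$ with $\delta\in G'(F)$, $p\in P(F)$, and, using $\iota(z_1x)^{-1}\iota(\delta)=\iota(\delta^{-1}z_1x)^{-1}$ (legitimate since $\iota(\delta)$ and $\iota(z_1)$ commute), rewrite $\mathcal{F}_{0,1}^{\bi,1}J_{\Geo}^T(f,\textbf{s})$ so that the $x$-variable enters only through $\iota(\delta^{-1}z_1x)$. Since $\iota(G')$ is a subgroup of the Levi $\mathrm{GL}(n)$ of $P$, and $\iota(G'(F))\cap w_nP(F)w_n^{-1}=\iota(P'(F))$ (equivalently, $w_n^{-1}\iota(N_{P'}(\mathbb{A}_F))w_n\subseteq N_P(\mathbb{A}_F)$ on unipotent radicals, together with the corresponding inclusion on Levis), unfolding the $\iota(G'(F))$-sum against $\int_{[\overline{G'}]}dx$ via the automorphy of $\phi_1'$ converts the $x$-integral into $\int_{P'(F)\backslash\overline{G'}(\mathbb{A}_F)}dx$, with $\phi_1'(x)$ still present, and leaves a sum over the residual "$p$-direction" together with the inner integral $\int_{[N_P]}du$; call this combined factor $K(x)$ (it also depends on the fixed $y,z_1,z_2$).

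Second, I would verify that $K$ is left $N_{P'}(\mathbb{A}_F)$-invariant in $x$. For $u'\in N_{P'}(\mathbb{A}_F)$ one has $\iota(u')\in N_P(\mathbb{A}_F)$ because $\iota(N_{P'})=\{u\in N_P:\ E_{n,n+1}(u)=0\}$; after the $\iota(G'(F))$-unfolding the factor $\iota(u')^{-1}$ sits immediately to the left of $w_n$, so $\iota(u')^{-1}w_n=w_n\widetilde{u'}^{-1}$ with $\widetilde{u'}:=w_n^{-1}\iota(u')w_n\in N_P(\mathbb{A}_F)$; then, using normality of $N_P$ in $P$ to write $\widetilde{u'}^{-1}p=p(p^{-1}\widetilde{u'}^{-1}p)$ with $p^{-1}\widetilde{u'}^{-1}p\in N_P(\mathbb{A}_F)$, the factor $\widetilde{u'}$ is absorbed by the change of variables $u\mapsto(p^{-1}\widetilde{u'}p)u$ in $\int_{[N_P]}du$. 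Hence $K(u'x)=K(x)$. The remaining factors multiplying $\phi_1'(x)$ — the character values, $|\det z_1x|^{s_1}$, and $\overline{\phi_2'(y)}$ — are unchanged under $x\mapsto u'x$ (determinants ignore unipotents), so one may integrate out $N_{P'}$:
\[
\int_{P'(F)\backslash\overline{G'}(\mathbb{A}_F)}K(x)\phi_1'(x)\,dx=\int_{P'(F)N_{P'}(\mathbb{A}_F)\backslash\overline{G'}(\mathbb{A}_F)}K(x)\Big(\int_{[N_{P'}]}\phi_1'(u'x)\,du'\Big)dx .
\]
The inner integral is the constant term of $\phi_1'$ along the proper parabolic $P'$, which is identically zero by cuspidality; therefore $\mathcal{F}_{0,1}^{\bi,1}J_{\Geo}^T(f,\textbf{s})\equiv 0$.

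The main obstacle is the first step: the coset space $Z(F)\backslash\iota(G'(F))w_nP(F)$ does not split as a direct product of the $\iota(G'(F))$-direction and a "$p$-direction" (the fibers of the two parametrizing projections are intertwined by conjugation by $w_n$), so one must check carefully that, after unfolding the $\iota(G'(F))$-part, the residual $p$-sum together with the $[N_P]$-integral genuinely assembles into a well-defined $K(x)$ enjoying the stated $N_{P'}(\mathbb{A}_F)$-invariance — this is exactly the bookkeeping that underlies the preceding proof for $\mathcal{F}_{1,1}J^{\bi,T}_{\Geo}$. Secondary issues are the routine care with the central tori $z_1,z_2$ and the identification $\overline{G'}\cong G'^{0}$, and the interchange of sum and integral, which is harmless here since $f$ has compact support modulo the centre and $\mathcal{F}_{0,1}J^{\bi,T}_{\Geo}(f,\textbf{s})$ converges absolutely for all $\textbf{s}$.
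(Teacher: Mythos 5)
Your proposal is correct and is essentially the paper's argument: the paper likewise identifies the stabilizer of $w_n$ (equivalently your observation $\iota(G'(F))\cap w_nP(F)w_n^{-1}=\iota(P'(F))$), unfolds the orbit, uses $w_n\iota(N_{P'})w_n\subseteq N_P$ together with normality of $N_P$ to absorb the translation into the $[N_P]$-integral, and then kills the $x$-integral by the vanishing of the constant term of the cusp form $\phi_1'$ along $N_{P'}$. The only difference is bookkeeping — the paper unfolds both variables over $H_{w_n}(F)$ and moves the $N_{P'}$-invariance onto $\phi_1'$ by a change of variables, while you unfold only the $x$-side — so no further comment is needed.
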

	\begin{proof}
		A straightforward calculation shows that the stabilizer of $w_n$ is 
		\begin{align*}
			H_{w_n}(F)=\Bigg\{\begin{pmatrix}
				A&U\\
				&1\\
				&&1
			\end{pmatrix}\times \begin{pmatrix}
				A&&U\\
				&1\\
				&&1
			\end{pmatrix}\in \iota(G'(F))\times P_0(F)
			\Bigg\}.
		\end{align*}
		
		Therefore, $\mathcal{F}_{0,1}^{\bi,1}J_{\Geo}^T(f,\textbf{s})$ is equal to 
		\begin{align*}
		\int_{[Z'^T]}\int_{[Z'^T]}&\int_{H_{w_n}(F)\backslash (\overline{G'}(F)\times\overline{G'}(F))}\int_{[N_P]}f(\iota(z_1x)^{-1}w_n u\iota(z_1z_2y))du\\
			&\quad \phi_1'(x)\overline{\phi_2'(y)}\omega'(z_1)\overline{\omega_2(z_2)}|\det z_1x|^{s_1}|\det z_1z_2y|^{s_2}dxdyd^{\times}z_1d^{\times}z_2.
		\end{align*}
		
		Let $u'\in N_{P'}(\mathbb{A}_F).$ Then $w_n\iota(u')w_n\in N_P(\mathbb{A}_F).$ Hence $\mathcal{F}_{0,1}^{\bi,1}J_{\Geo}^T(f,\textbf{s})$ becomes
		\begin{align*}
		\int_{[Z'^T]}\int_{[Z'^T]}&\int_{H_{w_n}(F)\backslash (\overline{G'}(F)\times\overline{G'}(F))}\int_{[N_P]}f(\iota(z_1u'x)^{-1}w_n u\iota(z_1z_2y))du\\
			&\qquad \phi_1'(x)\overline{\phi_2'(y)}\omega'(z_1)\overline{\omega_2(z_2)}|\det z_1x|^{s_1}|\det z_1z_2y|^{s_2}dxdyd^{\times}z_1d^{\times}z_2.
		\end{align*}
		
		Change variables $u'^{-1}x\mapsto x$ we then rewrite $\mathcal{F}_{0,1}^{\bi,1}J_{\Geo}^T(f,\textbf{s})$ as 
		\begin{align*}
			\int_{[Z'^T]}\int_{[Z'^T]}&\int_{H_{w_n}(F)\backslash (\overline{G'}(F)\times\overline{G'}(F))}\int_{[N_P]}f(\iota(z_1x)^{-1}w_n u\iota(z_1z_2y))du\\
			& \phi_1'(u'x)\overline{\phi_2'(y)}\omega'(z_1)\overline{\omega_2(z_2)}|\det z_1x|^{s_1}|\det z_1z_2y|^{s_2}dxdyd^{\times}z_1d^{\times}z_2.
		\end{align*}
		for all $u\in N_{P'}(\mathbb{A}_F).$ Taking the integral relative to $u'$ through $[N_{P'}]$ we then obtain from the cuspidality of $\phi_1'$ that $\mathcal{F}_{0,1}^{\bi,1}J_{\Geo}^T(f,\textbf{s})=0.$ 
	\end{proof}
	
	Hence, by Lemma \ref{lem13} and \ref{lem15} we obtain 
	\begin{align*}
		\mathcal{F}_{0,1}J^{\bi,T}_{\Geo}(f,\textbf{s})=&\int_{[Z'^T]}\int_{[Z'^T]}\int_{[\overline{G'}]}\int_{[\overline{G'}]}\int_{[N_P]}\sum_{\delta}f(\iota(z_1x)^{-1}\delta u\iota(z_1z_2y))\phi_1'(x)\\
	&\overline{\phi_2'(y)}|\det z_1x|^{s_1}|\det z_1z_2y|^{s_2}dudxdyd^{\times}z_1\omega'(z_1)\overline{\omega_2(z_2)}d^{\times}z_2,
	\end{align*}
where $\delta$ ranges over $\iota(G'(F))\gamma P(F)/Z(F).$
	
	To handily investigate the analytic behavior of $\mathcal{F}_{0,1}J^{\bi,T}_{\Geo}(f,\textbf{s})$ we introduce some notation as follows. For $\textbf{x}\in M_{1,n}(\mathbb{A}_F),$ set $u(\textbf{x})=\begin{pmatrix}
			I_n&\\
			\textbf{x}&1
		\end{pmatrix}.$ Denote by 
	\begin{equation}\label{102}
		f_P^{\dag}(\textbf{x};z_2,y)=\int_{N_P(\mathbb{A}_F)}f(u(\textbf{x})n\iota(z_2y))dn.
	\end{equation}
	Then $f_P^{\dag}$ is a Schwartz-Bruhat function on $\mathbb{A}_F^n\times Z'(\mathbb{A}_F)\times \overline{G'}(\mathbb{A}_F).$ Define 
	\begin{equation}\label{274}
		E_{\gamma}(x,s;f_P^{\dag}(\cdot;z_2,y))=\sum_{\delta\in P_0(F)\backslash \overline{G'}(F)}\int_{Z'(\mathbb{A}_F)}f_P^{\dag}(\eta z_1\delta x;z_2,y)|\det z_1x|^{s}d^{\times}z_1.
	\end{equation}
Then $E_{\gamma}(x,s;f_P^{\dag}(\cdot;z_2,y))$ converges absolutely when $\Re(s)>1.$ Moreover, for fixed $z_2\in Z'(\mathbb{A}_F)$ and $y\in \overline{G'}(\mathbb{A}_F),$ the Eisenstein series $E_{\gamma}(x,s;f_P^{\dag}(\cdot;z_2,y))$ has a meromorphic continuation to the whole $s$-plane (with at most simple poles at $s\in\{0,1\}$) and admits the function equation connecting the evaluations at $s$ and $1-s.$

\begin{prop}\label{prop22}
Let notation be as before. Let $n\geq 2.$ Let $\textbf{s}=(s_1,s_2)\in\mathbb{C}^2$ be such that $\Re(s_1+s_2)>0.$ Then 
		\begin{align*}
			\mathcal{F}_{0,1}J^{\bi}_{\Geo}(f,\textbf{s}):=\lim_{T\rightarrow\infty} \mathcal{F}_{0,1}J^{\bi,T}_{\Geo}(f,\textbf{s})
		\end{align*}
		exists, and is equal to the absolutely convergent integral
		\begin{equation}\label{56}
			\int_{Z'(\mathbb{A}_F)}\int_{\overline{G'}(\mathbb{A}_F)}\mathcal{P}(\textbf{s};\phi_2', f_P^{\dag}(\cdot;z_2,y))\omega'(z_1)\overline{\omega_2(z_2)}|\det z_2y|^{s_2}
			dyd^{\times}z_2,
		\end{equation} 
		where $\mathcal{P}(\textbf{s};\phi_2', f_P^{\dag}(\cdot;z_2,y))$ is defined to be 
		\begin{equation}\label{240}
		\int_{[ \overline{G'}]}	\phi_1'(x)\overline{\phi_2'(xy)}E_{\gamma}(x,s_1+s_2+1;f_P^{\dag}(\cdot;z_2,y))dx,
		\end{equation}
which is a Rankin-Selberg convolution representing $\Lambda(1+s_1+s_2,\pi_1'\times\widetilde{\pi}_2').$ Moreover, 
$$
\mathcal{F}_{0,1}J^{\bi}_{\Geo}(f,\textbf{s})\ll  |\mathfrak{N}|^{-n\Re(s_1+s_2)}, \ \ \Re(s_1)+\Re(s_2)>0,
$$
where the implied constant relies on $\phi_1',$ $\phi_2'$ and $f_S.$
	\end{prop}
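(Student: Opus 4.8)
The plan is to carry out the orbital–integral unfolding already begun in Lemmas~\ref{lem13} and~\ref{lem15} and then to recognise a Rankin--Selberg convolution, in close analogy with the small-cell case of Proposition~\ref{prop14}. After those lemmas, $\mathcal{F}_{0,1}J^{\bi,T}_{\Geo}(f,\textbf{s})$ is the orbital integral over the single class $\delta\in\iota(G'(F))\gamma P(F)/Z(F)$, with the inner $[N_P]$-integral still present; recall here that $\gamma=u(\eta)$ in the notation of~\eqref{102}, so the $\gamma$-orbit is precisely what produces lower unipotent matrices $u(\textbf{x})$ with $\textbf{x}$ in the $\overline{G'}(F)$-orbit of $\eta$. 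First I would compute the stabiliser $H_\gamma\subseteq\iota(G'(F))\times P_0(F)$ of $\gamma$; a direct matrix computation, as in the proof of Lemma~\ref{lem15}, identifies $H_\gamma$ with the graph of the mirabolic $P_0'$ of $G'$. Writing $P_0(F)=\iota(G'(F))N_P(F)$, so that $\delta=\iota(g_1)\gamma n_0\iota(g_2)$, I would then unfold: the sum over $g_1$ modulo the stabiliser becomes the coset sum over $P_0'(F)\backslash\overline{G'}(F)$ inside $E_\gamma$, with $x$ kept over $[\overline{G'}]$; the factor $\iota(g_2)$, which normalises $N_P$ and commutes with the central $z_1z_2$, is moved to the far right of the $[N_P]$-integral, and together with the recombination of the two $\overline{G'}$-variables as in the passage producing~\eqref{25} this arranges the automorphic weights into $\phi_1'(x)\overline{\phi_2'(xy)}$; finally the factor $n_0\in N_P(F)$ absorbs the $[N_P]$-integral and extends it to $N_P(\mathbb{A}_F)$, producing the Schwartz--Bruhat datum $f_P^{\dag}(\,\cdot\,;z_2,y)$ of~\eqref{102}. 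Combining the residual coset sum with $\int_{[Z'^T]}d^{\times}z_1$ and letting $T\to\infty$ assembles the Epstein series $E_\gamma(x,s_1+s_2+1;f_P^{\dag}(\,\cdot\,;z_2,y))$ of~\eqref{274}, which is exactly~\eqref{56}.

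The next step is absolute convergence and the interchange of $\lim_{T\to\infty}$ with the integral. For $\Re(s_1+s_2)>0$ one has $\Re(s_1+s_2+1)>1$, so $E_\gamma(x,s_1+s_2+1;\,\cdot\,)$ converges absolutely; the remaining $x$-integral over $[\overline{G'}]$ is then the $\mathrm{GL}(n)\times\mathrm{GL}(n)$ Rankin--Selberg convolution of the rapidly decreasing cusp forms $\phi_1'$ and $R(y)\phi_2'$ against a moderate-growth Eisenstein series, so it converges; and the outer integrals over $y\in\overline{G'}(\mathbb{A}_F)$ and $z_2\in Z'(\mathbb{A}_F)$ converge because $f$, hence $f_P^{\dag}$, has compact support modulo $Z(\mathbb{A}_F)$ and $\phi_2'$ is rapidly decreasing on $[\overline{G'}]$. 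Granting this, the truncated quantity is just~\eqref{56} with $z_1$ restricted inside the Epstein sum to $[Z'^T]$, and dominated convergence against the $T=\infty$ integrand gives $\mathcal{F}_{0,1}J^{\bi}_{\Geo}(f,\textbf{s})=$~\eqref{56}. That~\eqref{240} represents $\Lambda(1+s_1+s_2,\pi_1'\times\widetilde{\pi}_2')$ with entire quotient is the standard theory of $\mathrm{GL}(n)\times\mathrm{GL}(n)$ Rankin--Selberg integrals with an Epstein section (\cite{JPSS83}, \cite{JS90}).

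For the level bound, I would once more unfold $E_\gamma$ in the absolutely convergent integral~\eqref{56} and factor it into local integrals, following Lemma~\ref{lem10} and Proposition~\ref{prop11'}. At $v\notin S$ with $v\nmid\mathfrak{N}$, Macdonald's formula (\cite{Mac79}) identifies the $v$-factor with $L_v(1+s_1+s_2,\pi_{1,v}'\times\widetilde{\pi}_{2,v}')$ up to a harmless constant, and the product over such $v$ is $\sim\Lambda^{S\cup\{v\mid\mathfrak{N}\}}(1+s_1+s_2,\pi_1'\times\widetilde{\pi}_2')$, which is finite for fixed $\textbf{s}$ with $\Re(s_1+s_2)>0$; the $v\in S$ factors are $O_{f_S,\phi_1',\phi_2'}(1)$. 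The decisive computation is at $v\mid\mathfrak{N}$. There $f_v$ is supported on $Z(F_v)K_0(\mathfrak{p}_v^{e_v(\mathfrak{N})})$ with mass $\Vol(\overline{K_v})^{-1}\asymp q_v^{\,ne_v(\mathfrak{N})}$, and the requirement $u(\textbf{x})\,n\,\iota(z_2y)\in Z(F_v)K_0(\mathfrak{p}_v^{e_v(\mathfrak{N})})$ forces the bottom entry of $\textbf{x}$ into $\mathfrak{p}_v^{e_v(\mathfrak{N})}$ and restricts the Epstein idele $z_1$ to $e_v(z_1)\geq e_v(\mathfrak{N})$; combined with the weight $|\det z_1x|_v^{s_1+s_2+1}$ in the Epstein integral this produces a $v$-factor $\asymp q_v^{-ne_v(\mathfrak{N})\Re(s_1+s_2)}$, the factor $q_v^{\,ne_v(\mathfrak{N})}$ from $\Vol(\overline{K_v})^{-1}$ being precisely cancelled by the shrunken volume of the $N_P(F_v)$-integration. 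Multiplying over $v\mid\mathfrak{N}$ gives $\prod_{v\mid\mathfrak{N}}q_v^{-ne_v(\mathfrak{N})\Re(s_1+s_2)}=|\mathfrak{N}|^{-n\Re(s_1+s_2)}$, which is the claimed estimate with implied constant depending only on $\phi_1'$, $\phi_2'$, $f_S$ and the fixed $\textbf{s}$.

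The unfolding of the first two paragraphs is routine, paralleling Proposition~\ref{prop14} and Lemma~\ref{lem15}. I expect the main obstacle to be the local integral at $v\mid\mathfrak{N}$: one must see precisely how the single congruence condition cutting out $K_0(\mathfrak{p}_v^{e_v(\mathfrak{N})})$ simultaneously constrains the unipotent integration over $N_P(F_v)$ and the Epstein idele $z_1$, so that the volume gain $\Vol(\overline{K_v})^{-1}\asymp q_v^{\,ne_v(\mathfrak{N})}$ is exactly consumed and only the determinant-weight decay $q_v^{-ne_v(\mathfrak{N})\Re(s_1+s_2)}$ survives, with a constant uniform in $\mathfrak{N}$; a secondary point is to keep the tail Euler product over $v\notin S\cup\{v\mid\mathfrak{N}\}$ under control near the edge $\Re(s_1+s_2)=0$.
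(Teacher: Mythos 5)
Your first paragraph is fine and matches the paper: the stabiliser of $\gamma$ is the diagonal mirabolic $\Delta\iota(P_0')$, and the change of variables assembles $E_{\gamma}(x,s_1+s_2+1;f_P^{\dag}(\cdot;z_2,y))$, yielding \eqref{56}. The gap is in everything after that, which is where the actual content of the proposition lies. Your convergence argument rests on the claim that, because $f$ has compact support modulo $Z(\mathbb{A}_F)$, the outer integrals over $y\in\overline{G'}(\mathbb{A}_F)$ and $z_2$ run over an essentially compact set, so that convergence and the interchange with $\lim_{T\to\infty}$ are routine. This is false: $Z(\mathbb{A}_F)$ is the centre of $\mathrm{GL}(n+1)$, and the central scaling interacts with the Epstein variable $z_1$ and the unipotent $n$. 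Concretely, Lemma \ref{lem18.} (case (b)) shows that at an unramified place $v\nmid\mathfrak{N}$ the condition $f_v(u(\eta z_{1,v})n_v\iota(y_v))\neq 0$ is compatible with $e_v(z_{1,v})=l_0\leq -1$ provided $y_v=\diag(\varpi_v^{-l_0}I_{n-1},\varpi_v^{-2l_0})$ and $u_{v,2}\in-\varpi_v^{-l_0}+\varpi_v^{-2l_0}\mathcal{O}_{F_v}$: as $|z_1|$ grows, $y$ is forced out to infinity in $\overline{G'}(\mathbb{A}_F)$, so the support in $y$ is not compact uniformly in the Epstein variable (this is exactly what the remark after the proposition warns about). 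Consequently neither the absolute convergence of \eqref{56} nor the dominated-convergence step is automatic; the paper gets both from one convergent majorant, \eqref{118}, built from the support Lemmas \ref{lem18}, \ref{lem18.}, \ref{lem18'} and, crucially, from the rapid decay of the cusp form evaluated at $p\,\diag(z^{-1}I_{n-1},1)y^{*}$, i.e.\ super-polynomial decay along the forced excursion toward the cusp is what beats the sum over $z_1$ and the factor $|\det y|^{\Re(s_2)}$. You mention rapid decay only in passing and never use it where it is needed.

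The same intertwining undermines your route to the level bound. Because of case (b), the unramified local factor is not $L_v(1+s_1+s_2,\pi_{1,v}'\times\widetilde{\pi}_{2,v}')$ up to a harmless constant: there are extra contributions from $l_0\leq -1$ (these are what produce the correction factors of the shape $1+O(q_v^{-2-(n+1)\Re(s_2)})$ in the proof of Proposition \ref{prop19}), and controlling them again requires the joint support analysis; a place-by-place treatment also degrades when $\Re(s_2)$ is very negative even though $\Re(s_1+s_2)>0$, which is why the paper bounds things globally rather than through an exact Euler product. At $v\mid\mathfrak{N}$ your accounting of the cancellation is off as well: by Lemma \ref{lem18.} the unipotent integration there runs over all of $N_P(\mathcal{O}_{F_v})$ (the Hecke congruence condition does not constrain the last column), and the factor $\Vol(\overline{K_v})^{-1}\asymp q_v^{ne_v(\mathfrak{N})}$ is cancelled by the constraint $e_v(z_{1,v})\geq e_v(\mathfrak{N})$ acting through the exponent $1$ in the weight $|\det z_1x|^{s_1+s_2+1}$, leaving $q_v^{-ne_v(\mathfrak{N})\Re(s_1+s_2)}$; in the paper this is the restriction $z'\in\mathfrak{N}\widehat{\mathcal{O}_F}$ in \eqref{118} together with $\Vol(K_0(\mathfrak{N}))^{-1}$, giving $\mathcal{J}\ll|\mathfrak{N}|^{-ns'}/\Vol(K_0(\mathfrak{N}))\ll|\mathfrak{N}|^{-n\Re(s_1+s_2)}$. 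So although your final exponent is correct, the argument you sketch would not go through as written; the missing ingredient throughout is the analysis of how the centre, $z_1$, $n$ and $y$ constrain one another, together with the cusp-form-decay majorant.
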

\begin{proof}
Recall $P_0'$ is the mirabolic subgroup of $G'.$ Let 
$$
\Delta \iota(P_0')=\Bigg\{\left(\begin{pmatrix}
	p'&\\
	&1
\end{pmatrix}, \begin{pmatrix}
	p'&\\
	&1
\end{pmatrix}
\right):\ \ p'\in P_0'\Bigg\}.
$$ 
 
Then the stabilizer of $\gamma$ is $H_{\gamma}=\Delta \iota(P_0').$ Thus, $\mathcal{F}_{0,1}J^{\bi,T}_{\Geo}(f,\textbf{s})$ is equal to 
\begin{align*}
	\int_{Z'^T(\mathbb{A}_F)}&\int_{Z'^T(\mathbb{A}_F)}\int_{\Delta P_0'(F)\backslash (\overline{G'}(\mathbb{A}_F)\times\overline{G'}(\mathbb{A}_F))}\int_{N_P(\mathbb{A}_F)}f(\iota(z_1x)^{-1}\gamma u\iota(z_1z_2y))\\
	&\qquad \phi_1'(x)\overline{\phi_2'(y)}\omega'(z_1)\overline{\omega_2(z_2)}|\det z_1x|^{s_1}|\det z_1z_2y|^{s_2}dudxdyd^{\times}z_1d^{\times}z_2. 
\end{align*}

By a change of variables we have
\begin{align*}
	\mathcal{F}_{0,1}^{\bi}&J_{\Geo}^T(f,\textbf{s})=\int_{\overline{G'}(\mathbb{A}_F)}\int_{Z'^T(\mathbb{A}_F)}\int_{N_P(\mathbb{A}_F)}\int_{Z'^T(\mathbb{A}_F)}\int_{P_0'(F)\backslash \overline{G'}(\mathbb{A}_F)}f(u(\eta z_1x) n\iota(z_2y))\\
	&\phi_1'(x)\overline{\phi_2'(xy)}\omega'(z_1)\overline{\omega_2(z_2)}|\det z_1x|^{s_1+s_2+1}|\det z_2y|^{s_2}dxd^{\times}z_1dnd^{\times}z_2dy,
\end{align*}
where we replace the variable $u\in N_P(\mathbb{A}_F)$ with $n$ to distinguish it from $u(\eta z_1x).$ 
 
Hence, $\mathcal{F}_{0,1}J^{\bi,T}_{\Geo}(f,\textbf{s})$ is majorized by 
\begin{align*}
\mathcal{J}:=&\int_{{G'}(\mathbb{A}_F)}\int_{N_P(\mathbb{A}_F)}\int_{Z'(\mathbb{A}_F)}\int_{P_0'(F)\backslash \overline{G'}(\mathbb{A}_F)}|f(u(\eta z_1x) n\iota(y))|\\
	&|\phi_1'(x){\phi_2'(xy)}||\det z_1x|^{\Re(s_1+s_2)+1}|\det y|^{\Re(s_2)}dxd^{\times}z_1dndy.
\end{align*}

We proceed to show that $\mathcal{J}$ converges. To start with, we characterize the range of $z_2,$ $y$ and $n$ that contributes to $f(u(\eta z_1x) n\iota(z_2y)).$ 
By Iwasawa decomposition we can reparametrize $z_1x$ by $z_1pk,$ $p\in[P_0']$ and $k\in K'.$ Fix $x$  (i.e., switch the integrals), we can write $n=\begin{pmatrix}
	I_n&k^{-1}u\\
	&1
\end{pmatrix},$ and $y=k^{-1}z_2an'k',$ where $a=\diag(a_1,\cdots, a_{n-1},1)\in A'(\mathbb{A}_F),$ $k'\in K',$ and $n'=\begin{pmatrix}
	u''&u'\\
	&1
\end{pmatrix}\in N'(\mathbb{A}_F).$ Then $\mathcal{J}$ is equal to 
\begin{align*}
&\int_{K'}\int_{K'}\int_{Z'(\mathbb{A}_F)}\int_{N'(\mathbb{A}_F)}\int_{A'(\mathbb{A}_F)}\int_{\mathbb{A}_F^n}\int_{Z'(\mathbb{A}_F)}\int_{[P_0']}\Big|f\left(\begin{pmatrix}
	I_n&\\
	\eta z_1&1
\end{pmatrix}\begin{pmatrix}
z_2an'k'&u\\
	&1
\end{pmatrix}\right)\Big|\\
&|\phi_1'(pk){\phi_2'(pan'k')}||\det z_1p|^{\Re(s_1+s_2)+1}dpd^{\times}z_1du|\det z_2a|^{\Re(s_2)}d^{\times}adn'd^{\times}z_2dkdk'.
\end{align*}

Writing $z_{1,\fin}=z'/z,$ where $z, z'\in \widehat{\mathcal{O}_{F}}-\{0\}$ and $z$ and $z'$ coprime. By local analysis in Lemmas \ref{lem18}, \ref{lem18.} and \ref{lem18'} below, we derive that 
\begin{equation}\label{118}
\mathcal{J}\ll\max_{y^*\in C}\int\iint\Bigg|\phi_1'(p){\phi_2'\left(p\begin{pmatrix}
z^{-1}I_{n-1}\\
&1
\end{pmatrix}y^*\right)}\Bigg|\frac{|\det p|^{A}d^{\times}z|z'|_{\fin}^{ns'}d^{\times}z'dp}{|z|_{\fin}^{A}\Vol(K_0(\mathfrak{N}))},
\end{equation}
where $p$ ranges through $[P_0'],$ $z$ ranges over $\widehat{\mathcal{O}_{F}}-\{0\}$ and $z$ is coprimes with $\mathfrak{N},$ $z'\in \mathfrak{N}\widehat{\mathcal{O}_{F}}-\{0\},$  $s'=1+\Re(s_1+s_2),$   $C$ is a compact set in $G'(\mathbb{A}_F)$ determined by $\supp\tilde{f},$ $A>0$ is a constant depending on $\mathbf{s}$ and $\supp\tilde{f}.$

Due to the rapid decay of cusp forms $\phi_1'$ and $\phi_2'$ (cf. e.g., \cite{MS12}), the RHS of \eqref{118} converges. In particular, $\mathcal{J}\ll |\mathfrak{N}|^{-ns'}/\Vol(K_0(\mathfrak{N}))\ll |\mathfrak{N}|^{-n\Re(s_1+s_2)}.$

Therefore, when $\Re(s_1)+\Re(s_2)>0,$ 
\begin{align*}
	\mathcal{F}_{0,1}J^{\bi}_{\Geo}(f,\textbf{s}):=\lim_{T\rightarrow\infty} \mathcal{F}_{0,1}J^{\bi,T}_{\Geo}(f,\textbf{s})
\end{align*}
exists and is equal to \eqref{56}. The rest of Proposition \ref{prop22} can be deduced from the Rankin-Selberg theory.
\end{proof}
\begin{remark}
One may write $\mathcal{F}_{0,1}J^{\bi}_{\Geo,\gamma(0)}(f,\textbf{s})$ as
\begin{align*}
	\int_{[ \overline{G'}]}\phi_1'(x)\overline{\phi'(x)}\iint_{{G'}(\mathbb{A}_F)}\sum_{\phi'\in\mathfrak{B}_{\pi_2'}}\overline{\langle\pi_2'(y)\phi_2',\phi'\rangle}E_{\gamma}(x,s';f_P^{\dag}(\cdot,s_2;z_2,y))dyd^{\times}z_2dx,
\end{align*}
from which it seems likely that $\mathcal{F}_{0,1}J^{\bi}_{\Geo,\gamma(0)}(f,\textbf{s})$ is the  product of a Rankin-Selberg period and a Godement-Jacquet integral (cf. \cite{GJ06}). However, local calculation reveals that $f_P^{\dag}(\textbf{x},s_2;z_2,y),$ as a function of $z_2y\in G'(\mathbb{A}_F)\subset M_{n\times n}(\mathbb{A}_F),$ cannot be extended to a Schwartz-Bruhat function on $M_{n\times n}(\mathbb{A}_F).$ In nonarchimedean places, there are delicate intertwining between $z_1,$ $y$ and $n.$ See the following lemmas for details.
\end{remark}

\begin{lemma}\label{lem18}
Let notation be as before. Let $v\mid\infty.$ Let $\mathbf{c}_v=(c_1,\cdots,c_n)\in F_v^n$ and $\mathfrak{u}_v=\transp{(u_1,\cdots,u_n)}\in F_v^n.$ Let $n_v=\begin{pmatrix}
		I_n&\mathfrak{u}_v\\
		&1
	\end{pmatrix}.$ Suppose that $f_v(u(\mathbf{c}_v) n_v\iota(y_v))\neq 0.$ Then 
$c_l\ll 1,$ $u_l\ll 1,$ $1\leq l\leq n,$ with the implied constants depending on $\supp\tilde{f}_v;$ and $y_v$ lies inside a compact set determined by $\supp\tilde{f}_v.$
\end{lemma}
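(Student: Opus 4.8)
The plan is a direct computation exploiting the block structure of $M:=u(\mathbf{c}_v)n_v\iota(y_v)$. Since $f_v$ is obtained from the compactly supported function $\tilde f_v$ by integrating against the central character, the hypothesis $f_v(M)\neq 0$ forces $M\in Z(F_v)\cdot\supp\tilde f_v$; that is, there is a scalar $\lambda\in F_v^{\times}$ with $g:=\lambda M\in\Omega$, where $\Omega:=\supp\tilde f_v$ is a fixed compact subset of $G(F_v)$. Multiplying out the three factors gives
\[
M=\begin{pmatrix}y_v&\mathfrak{u}_v\\ \mathbf{c}_vy_v&1+\mathbf{c}_v\mathfrak{u}_v\end{pmatrix},
\qquad
M^{-1}=\begin{pmatrix}y_v^{-1}(I_n+\mathfrak{u}_v\mathbf{c}_v)&-y_v^{-1}\mathfrak{u}_v\\ -\mathbf{c}_v&1\end{pmatrix}.
\]
Since $\Omega$ and $\Omega^{-1}$ are compact in $G(F_v)$, both $g$ and $g^{-1}=\lambda^{-1}M^{-1}$ have all entries bounded by a constant $C_\Omega$, and $|\det g|=|\lambda|^{n+1}|\det y_v|\asymp 1$.

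The crux is to show $|\lambda|\asymp 1$ with implied constants depending only on $\Omega$. The lower bound is free: the $(n+1,n+1)$-entry of $g^{-1}$ equals $\lambda^{-1}$, so the boundedness of $g^{-1}$ gives $|\lambda|\gg 1$. For the upper bound I would use that $M$ lies in the open big cell $\{u(\mathbf{x}):\mathbf{x}\in F_v^{n}\}\cdot P(F_v)$ — its upper-left $n\times n$ block $y_v$ is invertible — with $g=u(\mathbf{c}_v)\,q$, $q:=\lambda\, n_v\iota(y_v)\in P(F_v)$; equivalently $\lambda$ is the Schur complement $\det g/\det(g_{[1,n]\times[1,n]})$, i.e.\ the reciprocal of $(g^{-1})_{n+1,n+1}$. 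All matrices arising in $\mathcal{F}_{0,1}J^{\bi,T}_{\Geo}(f,\mathbf{s})$ lie in this big cell, so only $\supp\tilde f_v$ intersected with it is relevant; taking $\Omega$ inside the big cell, the continuous coordinate maps $g\mapsto u(\mathbf{c}_v)$ and $g\mapsto q$ carry the compact set $\Omega$ into compact subsets of $\{u(\mathbf{x})\}$ and of $P(F_v)$, and the $(n+1,n+1)$-entry of $q$, which is $\lambda$, stays bounded. Granting $|\lambda|\asymp 1$, the remaining claims drop out in one line apiece: the top-right column of $g$ is $\lambda\mathfrak{u}_v$, so $u_l=\lambda^{-1}(\lambda\mathfrak{u}_v)_l\ll 1$; the upper-left block of $g$ is $\lambda y_v$, so $y_v$ has bounded entries, and since $|\det y_v|=|\det g|/|\lambda|^{n+1}\asymp 1$ this confines $y_v$ to a compact subset of $G'(F_v)$ depending only on $\Omega$ (in particular $y_v^{-1}$ is then bounded); finally the bottom-left row of $g$ is $\lambda\mathbf{c}_vy_v$, whence $\mathbf{c}_v=\lambda^{-1}(\lambda\mathbf{c}_vy_v)y_v^{-1}$ has bounded entries, i.e.\ $c_l\ll 1$.

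I expect the only genuine obstacle to be the upper bound $|\lambda|\ll 1$. Entrywise compactness of $\Omega$ pins down $g$ but not the possible degeneration of its upper-left $n\times n$ block, and it is precisely through that block that $\lambda$ could a priori blow up; controlling it is what forces one to pass to the big-cell (Schur-complement) coordinates and to use that the matrices produced by the orbital integral never leave that cell. Once $|\lambda|\asymp 1$ is established, the rest is routine $2\times 2$ block-matrix bookkeeping, and the same computation applies verbatim at the finite places.
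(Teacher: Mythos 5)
Your reduction of $f_v(u(\mathbf{c}_v)n_v\iota(y_v))\neq 0$ to the existence of $\lambda\in F_v^{\times}$ with $g:=\lambda M\in\supp\tilde f_v$, the block formulas for $M$ and $M^{-1}$, and the lower bound $|\lambda|\gg 1$ from the $(n+1,n+1)$-entry of $g^{-1}$ (hence $u_l\ll 1$ from the entries $\lambda u_l$ of $g$) all agree with the paper. The genuine gap is exactly where you flagged it: the upper bound $|\lambda|\ll 1$. Your justification --- ``taking $\Omega$ inside the big cell, the continuous coordinate maps carry the compact set $\Omega$ into compact subsets'' --- does not go through, because $\Omega=\supp\tilde f_v$ is an arbitrary compact subset of $G(F_v)$, not of the big cell: the cell is open, the cell coordinates are not proper on it, and nothing in the hypotheses keeps $\Omega$ away from the locus where the upper-left $n\times n$ block of $g$ is singular. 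Since $\lambda=\det g/\det(g_{[1,n]\times[1,n]})$, it blows up as $g$ approaches that locus inside a fixed compact set; for instance with $n=1$,
\[
g=\begin{pmatrix}\epsilon&1\\-1&0\end{pmatrix}
=\frac{1}{\epsilon}\begin{pmatrix}1&\\-\tfrac{1}{\epsilon}&1\end{pmatrix}\begin{pmatrix}1&\epsilon\\&1\end{pmatrix}\begin{pmatrix}\epsilon^2&\\&1\end{pmatrix},
\]
so $\lambda=1/\epsilon$, $c=-1/\epsilon$, $y=\epsilon^2$ while $g$ stays in a fixed compact set. Hence no argument of the shape ``first establish $|\lambda|\asymp 1$, then read off $c_l\ll1$ and $y_v$ compact'' can be completed from the stated hypotheses alone; you would need either an extra inequality tying $\lambda$ directly to $\mathbf{c}_v$, or an assumption that $\supp\tilde f_v$ avoids the boundary of the cell, and since in your write-up both the bound on $c_l$ and the compactness of $y_v$ hinge on the unproved $|\lambda|\ll1$, those two conclusions are currently unsupported.

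This is also not the paper's route, so the gap cannot be closed by simply citing its intermediate steps. The paper never bounds $\lambda$ from above at the outset: from the entries of $g$ and $g^{-1}$ it asserts the system $|\lambda c_l|\ll1$, $|\lambda u_l|\ll1$, $|\lambda|^{-1}\ll1$, $|\lambda^{-1}c_l|\ll1$, deduces $|c_l|,|u_l|\ll1$ first (using only $|\lambda|\gg1$), and only then writes $\lambda\iota(y_v)$ as the product of the inverse of the now-controlled unipotent factor with $g$ to conclude that $y_v$ lies in a compact set --- i.e.\ the logical order is opposite to yours. Note, if you try to repair your argument by importing the paper's inequality $|\lambda c_l|\ll1$, that this is not itself a visible entry of $g$ or $g^{-1}$ (those give $|\lambda(\mathbf{c}_vy_v)_l|\ll1$ and $|\lambda^{-1}c_l|\ll1$), so it requires its own justification beyond entrywise compactness. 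Finally, the closing claim that ``the same computation applies verbatim at the finite places'' is not right: at finite places in $S$ the analogue is Lemma \ref{lem18'}, whose conclusion is genuinely different, allowing $\mathfrak{u}_v$ and $y_v$ to grow like powers of $\varpi_v^{-l_0}$.
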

\begin{proof}
By definition,  $f_v(u(\mathbf{c}_v) n_v\iota(y_v))\neq 0$ amounts to 
\begin{equation}\label{105.}
	\lambda\begin{pmatrix}
		I_n&\\
	\mathbf{c}_v&1
	\end{pmatrix}\begin{pmatrix}
	I_n&\mathfrak{u}_v\\
		&1
	\end{pmatrix}\begin{pmatrix}
	y_v&\\
	&1
\end{pmatrix}\in \supp \tilde{f}_v
\end{equation} 
for some $\lambda\in F_v^{\times}.$ Recall $\supp \tilde{f}_v$ is a compact subset of $G(F_v).$ As a consequence of \eqref{105.} and its inverse, we have, for $1\leq l\leq n,$ that 
\begin{align*}
\begin{cases}
|\lambda c_l|_v\ll 1,\ \ |\lambda|_v^{-1}\ll 1,\ \ |\lambda^{-1}c_l|_v\ll 1\\
|\lambda+\lambda c_lu_l|_v\ll 1,\ |\lambda u_l|_v\ll 1.
\end{cases}
\end{align*}

Hence, $|c_l|_v\ll 1,$ $|u_l|_v\ll 1,$ where the implied constants relies on $\supp\tilde{f}_v.$ So $\begin{pmatrix}
	I_n&\\
	\mathbf{c}_v&1
\end{pmatrix}\begin{pmatrix}
	I_n&\mathfrak{u}_v\\
	&1
\end{pmatrix}$ lies in a compact set, determined by $\supp\tilde{f}_v.$ Consequently, $\lambda\diag(y_v,1)$ lies in a compact set determined by $\supp\tilde{f}_v.$ In conjunction with that $|\lambda|_v^{-1}\ll 1$ we then deduce that $y_v$ lies in a compact set determined by $\supp\tilde{f}_v.$
\end{proof}

\begin{lemma}\label{lem18.}
Let notation be as before. Let $v\notin S.$ Let $z_v=\varpi_v^{l_0}I_n\in Z'(F_v)$ and $n_v=\begin{pmatrix}
	I_n&\mathfrak{u}_v\\
	&1
\end{pmatrix}\in N_P(F_v),$ where $\mathfrak{u}_v=\begin{pmatrix}
	u_{v,1}\\
	u_{v,2}
\end{pmatrix}\in M_{n,1}(F_v)$ with $u_{v,1}\in M_{n-1,1}(F_v).$ Suppose that $f_v(u(\eta z_v) n_v\iota(y_v))\neq 0.$ 
\begin{itemize}
\item If $v\mid\mathfrak{N},$ then $l_0\geq 1,$ $y_v\in K_v',$ and $u_v\in N_P(\mathcal{O}_{F_v});$ 
\item If $v\nmid\mathfrak{N},$ we have the following two possibilities:
\begin{enumerate}
\item[(a)] $l_0\geq 0,$ $y_v\in K_v',$ and $u_v\in N_P(\mathcal{O}_{F_v});$
\item [(b)] $l_0\leq -1,$ $y_v=\diag(\varpi_v^{-l_0}I_{n-1},\varpi_v^{-2l_0}),$ $u_{v,1}\in \varpi_v^{-l_0}\mathcal{O}_{F_v}^{n-1},$ and $u_{v,2}\in -\varpi_v^{-l_0}+\varpi_v^{-2l_0}\mathcal{O}_{F_v}.$
\end{enumerate}
\end{itemize}

\end{lemma}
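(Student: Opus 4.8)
The plan is to translate the non-vanishing condition into a single membership statement in $K_v$ and then extract every assertion by tracking $\mathfrak{p}_v$-adic valuations of the entries of that matrix and of its inverse. By the definition of $f_v$ (see \eqref{f_v} when $v\mid\mathfrak{N}$, and the spherical normalization $\tilde f_v=\Vol(K_v)^{-1}\mathbf{1}_{K_v}$ with $K_v=G(\mathcal{O}_{F_v})$ when $v\nmid\mathfrak{N}$), one has $f_v(u(\eta z_v)n_v\iota(y_v))\neq 0$ if and only if there is $\lambda_v\in F_v^{\times}$ with $M_v:=\lambda_v u(\eta z_v)n_v\iota(y_v)\in K_v$. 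A direct computation gives
\[
u(\eta z_v)n_v\iota(y_v)=\begin{pmatrix}y_v&\mathfrak{u}_v\\ \varpi_v^{l_0}\eta y_v&1+\varpi_v^{l_0}u_{v,2}\end{pmatrix},\qquad \big(u(\eta z_v)n_v\iota(y_v)\big)^{-1}=\begin{pmatrix}y_v^{-1}(I_n+\varpi_v^{l_0}\mathfrak{u}_v\eta)&-y_v^{-1}\mathfrak{u}_v\\ -\varpi_v^{l_0}\eta&1\end{pmatrix},
\]
and, since $u(\eta z_v)$ and $n_v$ are unipotent, $\det(u(\eta z_v)n_v\iota(y_v))=\det y_v$. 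Put $a:=e_v(\lambda_v)$. Imposing $M_v\in M_{n+1,n+1}(\mathcal{O}_{F_v})$ and $M_v^{-1}\in M_{n+1,n+1}(\mathcal{O}_{F_v})$ and reading off the bottom row of $M_v^{-1}$ yields immediately $a\le 0$ and $l_0\ge a$, while $\det M_v\in\mathcal{O}_{F_v}^{\times}$ gives $(n+1)a+e_v(\det y_v)=0$.

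If $a=0$ we may take $\lambda_v=1$, so $M_v=u(\eta z_v)n_v\iota(y_v)\in K_v$; the three blocks force $y_v\in\GL_n(\mathcal{O}_{F_v})=K_v'$ and $\mathfrak{u}_v\in M_{n,1}(\mathcal{O}_{F_v})$ (i.e.\ $u_v\in N_P(\mathcal{O}_{F_v})$), and the bottom-left block $\varpi_v^{l_0}\eta y_v$ must lie in $M_{1,n}(\mathcal{O}_{F_v})$, resp.\ in $\mathfrak{p}_v^{e_v(\mathfrak{N})}M_{1,n}(\mathcal{O}_{F_v})$ when $v\mid\mathfrak{N}$; since $y_v\in\GL_n(\mathcal{O}_{F_v})$ has last row not divisible by $\varpi_v$, this gives $l_0\ge 0$, resp.\ $l_0\ge e_v(\mathfrak{N})\ge 1$. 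This is case (a). To finish the $v\mid\mathfrak{N}$ statement it remains to rule out $a<0$ there: expanding $\det M_v$ along its last row, which lies in $\mathfrak{p}_v^{e_v(\mathfrak{N})}M_{1,n}(\mathcal{O}_{F_v})\oplus\mathcal{O}_{F_v}$, gives $\det M_v\equiv (M_v)_{n+1,n+1}\det(\lambda_v y_v)\pmod{\mathfrak{p}_v}$, so $\lambda_v y_v\in\GL_n(\mathcal{O}_{F_v})$ and $na+e_v(\det y_v)=0$, which combined with $(n+1)a+e_v(\det y_v)=0$ forces $a=0$.

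The substantive case is $v\nmid\mathfrak{N}$ with $a\le -1$. Here $e_v(\det y_v)=-(n+1)a>0$. Inspecting the first $n$ columns of $M_v^{-1}$: the columns $1,\dots,n-1$ of $I_n+\varpi_v^{l_0}\mathfrak{u}_v\eta$ are $e_1,\dots,e_{n-1}$, so the first $n-1$ columns of $\lambda_v^{-1}y_v^{-1}$ are integral; combining its $n$-th column with the integral vector $-\lambda_v^{-1}y_v^{-1}\mathfrak{u}_v$ shows the $n$-th column of $\lambda_v^{-1}y_v^{-1}$ lies in $\varpi_v^{\min(0,l_0)}M_{n,1}(\mathcal{O}_{F_v})$. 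Since $\det(\lambda_v^{-1}y_v^{-1})=\varpi_v^{a}\cdot(\mathrm{unit})$ has negative valuation, $\lambda_v^{-1}y_v^{-1}$ cannot be integral, so $l_0\le -1$; then $\lambda_v^{-1}y_v^{-1}\diag(I_{n-1},\varpi_v^{-l_0})\in M_{n,n}(\mathcal{O}_{F_v})$, and its determinant $\varpi_v^{a-l_0}\cdot(\mathrm{unit})$ must be a unit because $l_0\ge a$; hence $l_0=a$ and $\lambda_v^{-1}y_v^{-1}\diag(I_{n-1},\varpi_v^{-l_0})\in\GL_n(\mathcal{O}_{F_v})$, which gives $y_v\in\diag(\varpi_v^{-l_0}I_{n-1},\varpi_v^{-2l_0})K_v'$. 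Finally, normalizing $\lambda_v=\varpi_v^{l_0}$ and using right-$\iota(K_v')$-invariance of $f_v$ to reduce to $y_v=\diag(\varpi_v^{-l_0}I_{n-1},\varpi_v^{-2l_0})$, integrality of the remaining blocks of $M_v=\varpi_v^{l_0}u(\eta z_v)n_v\iota(y_v)$ gives $u_{v,1}\in\varpi_v^{-l_0}\mathcal{O}_{F_v}^{n-1}$ and, from $\varpi_v^{l_0}(1+\varpi_v^{l_0}u_{v,2})\in\mathcal{O}_{F_v}$ with $\varpi_v^{l_0}$ of negative valuation, the sharp congruence $u_{v,2}\in-\varpi_v^{-l_0}+\varpi_v^{-2l_0}\mathcal{O}_{F_v}$. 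A direct check that this configuration indeed lies in $\GL_{n+1}(\mathcal{O}_{F_v})$ (with determinant $1$) shows it is exactly the locus (b).

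The step I expect to be the main obstacle is precisely this last case: collapsing the a priori many-parameter family of eligible $y_v$ onto the single coset $\diag(\varpi_v^{-l_0}I_{n-1},\varpi_v^{-2l_0})K_v'$ and, above all, extracting the exact congruence $u_{v,2}\equiv-\varpi_v^{-l_0}\pmod{\varpi_v^{-2l_0}\mathcal{O}_{F_v}}$ rather than a mere valuation bound. The clean route, as sketched, is to observe that integrality of $M_v$, integrality of $M_v^{-1}$, and the single determinant normalization together force the "twisted" inverse block $\lambda_v^{-1}y_v^{-1}\diag(I_{n-1},\varpi_v^{-l_0})$ to be an \emph{invertible} integral matrix; the delicate bookkeeping is then tracking the exact cancellation occurring in the $(n+1,n+1)$-entry $1+\varpi_v^{l_0}u_{v,2}$ after scaling by $\lambda_v$.
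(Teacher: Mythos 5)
Your proof is correct, and the conclusions you reach (including the sharp congruence on $u_{v,2}$) match the lemma; as in the paper's own proof, case (b) really pins $y_v$ down only up to right multiplication by $K_v'$, which is harmless since $f_v$ is right $\iota(K_v')$-invariant, and you handle this point explicitly. Your route is organized differently from the paper's: the paper writes $y_v$ in Iwasawa coordinates $z_{2,v}a_vn_v'k_v$, shows the $\mathrm{GL}(n-1)$-block is forced into $\mathrm{GL}(n-1,\mathcal{O}_{F_v})$ with $e_v(t_1)=\cdots=e_v(t_{n-1})=-e_v(\lambda_v)$ and $e_v(t_n)+2e_v(\lambda_v)=0$, and then reduces everything to a single $2\times 2$ condition (\eqref{192}--\eqref{193}) in $\mathrm{GL}(2,\mathcal{O}_{F_v})$ or its Hecke congruence subgroup, from which $l_0$, $u_{v,2}$ and the torus part of $y_v$ are read off; you avoid the Iwasawa decomposition entirely and instead work with $M_v$ and $M_v^{-1}$ directly, using the determinant identity $(n+1)e_v(\lambda_v)+e_v(\det y_v)=0$ and the integrality and invertibility of the twisted block $\lambda_v^{-1}y_v^{-1}\diag(I_{n-1},\varpi_v^{-l_0})$ to force $l_0=e_v(\lambda_v)$ and land $y_v$ in the single coset $\diag(\varpi_v^{-l_0}I_{n-1},\varpi_v^{-2l_0})K_v'$. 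Both arguments are elementary and of comparable length; your inverse-plus-determinant bookkeeping is somewhat more self-contained (and your last-row determinant expansion ruling out $e_v(\lambda_v)<0$ when $v\mid\mathfrak{N}$ is a clean substitute for the paper's $2\times 2$ congruence argument), while the paper's Iwasawa reduction is the template it reuses for the companion Lemma \ref{lem18'} at the finite places in $S$. Your closing converse check that configuration (b) actually lies in $K_v$ is not required (the lemma only asserts necessary conditions), but it does no harm.
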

\begin{proof}
By Iwasawa decomposition, $y_v=z_{2,v}a_vn_v'k_v,$ with $n'_v=\begin{pmatrix}
	u''&u'\\
	&1
\end{pmatrix}\in N'(F_v).$ Then $f_v(u(\eta z_v) n_v\iota(y_v))\neq 0$ implies that 
\begin{equation}\label{105}
		\lambda_v\begin{pmatrix}
			I_n&\\
			\eta z_{1,v}&1
		\end{pmatrix}\begin{pmatrix}
			z_{2,v}a_vn_v'&\mathfrak{u}_v\\
			&1
		\end{pmatrix}\in K_v,
	\end{equation} 
	for some $\lambda_v\in F_v^{\times},$ where $K_v$ is defined in \text\textsection \ref{2.6}. Write $z_{2,v}a_v=\diag(t_1,\cdots, t_{n-1},t_n),$ and $t_v'=\diag(t_1,\cdots, t_{n-1}).$ Then $e_v(t_1)=e_v(t_2)=\cdots=e_v(t_{n-1})=-e_v(\lambda_v)$ and $u_v''\in \mathrm{GL}(n-1,\mathcal{O}_{F_v}).$ Thus \eqref{105} becomes
	\begin{align*}
		\begin{pmatrix}
			I_{n-1}&\\
			&1\\
			& z_{1,v}&1
		\end{pmatrix}\begin{pmatrix}
			I_{n-1}&\lambda_vt_v'u_v'&\lambda_vu_{v,1}\\
			&\lambda_vt_{n}&\lambda_vu_{v,2}\\
			&&\lambda_v
		\end{pmatrix}\in K_v,
	\end{align*}
 So $\lambda_vt_v'u_v', \lambda_vu_{v,1}\in\mathcal{O}_{F_v}^{n-1},$ and
	\begin{equation}\label{191}
		\begin{pmatrix}
			1\\
			z_{1,v}&1
		\end{pmatrix}\begin{pmatrix}
			\lambda_vt_{n}&\lambda_vu_{v,2}\\
			&\lambda_v
		\end{pmatrix}\in \begin{pmatrix}
			I_{n-1}&\\
			&\mathrm{GL}(2,\mathcal{O}_{F_v})
		\end{pmatrix}\cap K_v,
	\end{equation}
	forcing that $e_v(t_n)+2e_v(\lambda_v)=0$ by considering the determinant. So \eqref{191} becomes
	\begin{equation}\label{192}
		\begin{pmatrix}
			1\\
			z_{1,v}&1
		\end{pmatrix}\begin{pmatrix}
			\lambda_v^{-1}&\lambda_vu_{v,2}\\
			&\lambda_v
		\end{pmatrix}\in K_{2,v},
	\end{equation}
	where $K_{2,v}$ is $\mathrm{GL}(2,\mathcal{O}_{F_v})$ when $v\nmid\mathfrak{N}$ and $K_{2,v}$ is the Hecke congruence subgroup of level $\mathfrak{q}_v^{e_v(\mathfrak{N})}$ if $v\mid\mathfrak{N}.$ 
	\begin{itemize}
		\item Suppose that $e_v(\lambda_v)=0.$ Then by \eqref{192} we have $z_{1,v}\in 
\varpi_v^{e_v(\mathfrak{N})}\mathcal{O}_{F_v}$ and  $u_{v,2}\in \mathcal{O}_{F_v}.$ Hence, $t_vn_v'\in G'(\mathcal{O}_{F_v}),$ $u_v\in \mathcal{O}_{F_v}^n.$ This gives the case (a).
		
		\item Suppose that $e_v(\lambda_v)\leq -1.$ Write $\lambda_v=\varpi_v^{-l},$ $l\geq 1.$ Then \eqref{192} becomes
		\begin{equation}\label{193}
			\begin{pmatrix}
				\varpi_v^l&u_{v,2}\varpi_v^{-l}\\
				\varpi_v^{l_0+l}&\varpi_v^{l_0-l}u_{v,2}+\varpi_v^{-l}
			\end{pmatrix}\in K_{2,v},
		\end{equation}
and we must have $n\nmid\mathfrak{N}$. By \eqref{193}, $l_0=-l,$ and $u_{v,2}\in -\varpi_v^{l}+\varpi_v^{2l}\mathcal{O}_{F_v}.$ Then $a_v=\diag(\varpi_v^{-l},\cdots, \varpi_v^{-l},1),$ $z_{2,v}=\varpi_v^{2l}I_n,$ and $n_v'\in N'(\mathcal{O}_{F_v});$ and 
		$$
		\begin{pmatrix}
			I_{n-1}&&u_{v,1}\\
			&1&u_{v,2}\\
			&&1
		\end{pmatrix}\in \begin{pmatrix}
			I_{n-1}&&\varpi_v^l\mathcal{O}_{F_v}^{n-1}\\
			&1&-\varpi_v^l+\varpi_v^{2l}\mathcal{O}_{F_v}\\
			&&1
		\end{pmatrix} \subseteq N_P(\mathcal{O}_{F_v}).
		$$
	This is the case (b).
	\end{itemize}
Therefore, Lemma \ref{lem18.} follows.
\end{proof}

\begin{lemma}\label{lem18'}
	Let notation be as before. Let $v\in S\cap\Sigma_{F,\fin}.$ Let $n_v=\begin{pmatrix}
		I_n&\mathfrak{u}_v\\
		&1
	\end{pmatrix}.$ Let $l_0\leq -1.$
Suppose that $f_v(u(\eta \varpi_v^{l_0}) n_v\iota(y_v))\neq 0$. Then 
\begin{equation}\label{119}
	\begin{cases}
	u_v\in\varpi_v^{-l_0}C_1\\
	y_v\in \diag(\varpi_v^{-l_0}I_{n-1},\varpi_v^{-2l_0}) C_2,
	\end{cases}
\end{equation}
where $C_1\subset M_{n,1}(F_v)$ and $C_2\subset G'(F_v)$ are compact sets determined by $\supp\tilde{f}_v.$ 
\end{lemma}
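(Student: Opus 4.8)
The plan is to argue as in case~(b) of Lemma~\ref{lem18.}, except that since $\tilde f_v$ is now an arbitrary compactly supported function, the conclusion takes the softer shape of a membership in a translate of a compact set. First I would unwind the definition of $f_v$: the hypothesis $f_v(u(\eta\varpi_v^{l_0})n_v\iota(y_v))\neq 0$ forces
\begin{equation*}
\lambda_v\,u(\eta\varpi_v^{l_0})\,n_v\,\iota(y_v)\in\Omega_v:=\supp\tilde f_v
\end{equation*}
for some $\lambda_v\in F_v^\times$, where $\Omega_v$ is a fixed compact subset of $G(F_v)$; consequently its inverse lies in $\Omega_v^{-1}$, which is again compact. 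Fix $M=M(\Omega_v)\ge 1$ so that every $h\in\Omega_v\cup\Omega_v^{-1}$ has all entries of $v$-valuation $\ge -M$ and $|\det h|_v\in[q_v^{-M},q_v^M]$. A direct block multiplication yields
\begin{equation*}
g:=u(\eta\varpi_v^{l_0})n_v\iota(y_v)=\begin{pmatrix}y_v&\mathfrak{u}_v\\ \varpi_v^{l_0}\,\eta y_v&1+\varpi_v^{l_0}u_{v,2}\end{pmatrix},\qquad \det g=\det y_v,
\end{equation*}
where $\eta y_v$ denotes the last row of $y_v$ and $u_{v,2}$ the last entry of $\mathfrak{u}_v$; a similar computation shows the last row of $g^{-1}$ is $(0,\dots,0,-\varpi_v^{l_0},1)$ (with $-\varpi_v^{l_0}$ in position $n$).

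The key point is to pin down $e_v(\lambda_v)$ modulo an error depending only on $\Omega_v$. For the upper bound, the last row of $(\lambda_v g)^{-1}=\lambda_v^{-1}g^{-1}$ is $(0,\dots,0,-\lambda_v^{-1}\varpi_v^{l_0},\lambda_v^{-1})$, and since this matrix lies in $\Omega_v^{-1}$ its entries have valuation $\ge-M$, so $e_v(\lambda_v)\le l_0+M$. For the lower bound I would use the elementary identity
\begin{equation*}
\varpi_v^{l_0}\cdot\bigl(\lambda_v u_{v,2}\bigr)-\lambda_v\bigl(1+\varpi_v^{l_0}u_{v,2}\bigr)=-\lambda_v,
\end{equation*}
in which the two terms on the left are, up to the explicit factor $\varpi_v^{l_0}$ and a sign, the $(n,n+1)$ and $(n+1,n+1)$ entries of $\lambda_v g\in\Omega_v$, hence of valuation $\ge l_0-M$ and $\ge-M$ respectively; as $l_0\le-1<0$, this gives $e_v(\lambda_v)\ge l_0-M$. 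Thus $|e_v(\lambda_v)-l_0|\le M$.

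Finally I would substitute $e_v(\lambda_v)=l_0+O_{\Omega_v}(1)$ into the entrywise estimates coming from $\lambda_v g\in\Omega_v$. The entries $\lambda_v(\mathfrak{u}_v)_i$ give $e_v((\mathfrak{u}_v)_i)\ge-e_v(\lambda_v)-M$, so $\varpi_v^{l_0}\mathfrak{u}_v$ has all entries of valuation $\ge-2M$, i.e.\ $\mathfrak{u}_v\in\varpi_v^{-l_0}C_1$ with $C_1:=(\mathfrak{p}_v^{-2M})^n$, which is compact. Likewise the first $n-1$ rows of $\lambda_v g$ force $e_v((y_v)_{ij})\ge-e_v(\lambda_v)-M$ for $1\le i\le n-1$, the bottom row $\varpi_v^{l_0}\eta y_v$ forces $e_v((y_v)_{nj})\ge-e_v(\lambda_v)-l_0-M$ for $1\le j\le n$, and $|\det(\lambda_v g)|_v\in[q_v^{-M},q_v^M]$ forces $e_v(\det y_v)=-(n+1)e_v(\lambda_v)+O_{\Omega_v}(1)$. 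Substituting $e_v(\lambda_v)=l_0+O(1)$, one reads off that $\diag(\varpi_v^{l_0}I_{n-1},\varpi_v^{2l_0})y_v$ has all entries of valuation $\ge-2M$ and determinant of valuation bounded by $(n+2)M$ in absolute value, hence lies in a fixed compact subset $C_2$ of $G'(F_v)$. Since $C_1$ and $C_2$ depend only on $M$, hence only on $\supp\tilde f_v$, this is \eqref{119}. The only step beyond routine bookkeeping is the lower bound on $e_v(\lambda_v)$; the displayed identity there plays the role that the $K_v$-structure played in Lemma~\ref{lem18.}, so I do not expect any real obstacle.
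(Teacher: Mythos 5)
Your proof is correct, and it rests on the same underlying mechanism as the paper's: from $\lambda_v\,u(\eta\varpi_v^{l_0})n_v\iota(y_v)\in\supp\tilde f_v$ one pins down $e_v(\lambda_v)=l_0+O_{\Omega_v}(1)$ and then reads off the sizes of $\mathfrak{u}_v$ and $y_v$. Indeed, your two-term identity $\varpi_v^{l_0}(\lambda_v u_{v,2})-\lambda_v(1+\varpi_v^{l_0}u_{v,2})=-\lambda_v$ is exactly the relation the paper exploits inside its lower-right $2\times 2$ block (and the upper bound from the $(n+1,n)$ entry of the inverse is likewise the same). The organizational difference is that the paper first writes $y_v$ in Iwasawa coordinates $y_v=t_vn_v'k_v'$ and bounds the torus entries $t_1,\dots,t_n$ and the unipotent part separately, whereas you skip the Iwasawa decomposition entirely: you bound the entries of $y_v$ row by row (the last row through the scaled bottom row $\lambda_v\varpi_v^{l_0}\eta y_v$ of the matrix in the support) and then use the two-sided bound on $e_v(\det y_v)$ coming from $|\det(\lambda_v g)|_v\in[q_v^{-M},q_v^M]$ to conclude that $\diag(\varpi_v^{l_0}I_{n-1},\varpi_v^{2l_0})y_v$ lies in a compact subset of $\mathrm{GL}_n(F_v)$ (entries in $\mathfrak{p}_v^{-2M}$, determinant bounded away from $0$). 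Since the lemma only asserts membership in a translate of a compact set, this entrywise-plus-determinant shortcut suffices and is, if anything, slightly cleaner; the paper's Iwasawa route gives marginally more structural information (approximate sizes of the diagonal entries) that the statement does not require. No gaps.
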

\begin{proof}
Let $d_v^{\pm}=\min_{1\leq i,j\leq n+1}\{e_v(E_{i,j}(x_v)):\ x_v\in\supp\tilde{f}_v\ \text{or}\ x_v^{-1}\in\supp\tilde{f}_v\}.$
By definition,  $f_v(\eta \varpi_v^{l_0}) n_v\iota(y_v))\neq 0$ amounts to 
\begin{equation}\label{112}
	\lambda\begin{pmatrix}
		I_n&\\
		\eta\varpi_v^{l_0}&1
	\end{pmatrix}\begin{pmatrix}
	y_v&\mathfrak{u}_v\\
		&1
	\end{pmatrix}\in \supp \tilde{f}_v
\end{equation} 
for some $\lambda\in F_v^{\times}.$ So $e_v(\lambda)\leq -d_v^-$ by considering the $(n+1,n+1)$-th entry of the inverse of \eqref{112}. Also, $-e_v(\lambda)+l_0\geq d_v^-.$ 

By Iwasawa decomposition, $y_v=t_vn_v'k_v',$ with $n'_v=\begin{pmatrix}
	u_v''&u_v'\\
	&1
\end{pmatrix}\in N'(F_v),$ and $t_v=\diag(t_1,\cdots, t_{n-1},t_n),$ and $t_v'=\diag(t_1,\cdots, t_{n-1}).$ Write $\mathfrak{u}_v=\begin{pmatrix}
	u_{v,1}\\
	u_{v,2}
\end{pmatrix}\in M_{n,1}(F_v)$ with $u_{v,1}\in M_{n-1,1}(F_v).$ Then \eqref{112} becomes
\begin{align*}
	\begin{pmatrix}
		I_{n-1}&\\
		&1\\
		& \varpi_v^{l_0}&1
	\end{pmatrix}\begin{pmatrix}
\lambda_vt_v'u_v''&\lambda_vt_v'u_v'&\lambda_vu_{v,1}\\
		&\lambda_vt_{n}&\lambda_vu_{v,2}\\
		&&\lambda_v
	\end{pmatrix}\in \supp \tilde{f}_v.
\end{align*}
So $\lambda_vt_v'u_v''$ and $\lambda_vu_{v,1}$ support in a compact set. In particular, $d_v^+\leq e_v(\lambda_v)+e_v(t_{l})\leq -d_v^-$, $1\leq l\leq n-1,$ leading to that $u_v'$ also lies in a compact set. So
\begin{align*}
	\begin{pmatrix}
I_{n-1}\\
&1\\
&\varpi_v^{l_0}&1
	\end{pmatrix}\begin{pmatrix}
I_{n-1}\\
&\lambda_vt_{n}&\lambda_vu_{v,2}\\
&&\lambda_v
	\end{pmatrix}\in \begin{pmatrix}
I_{n-1}\\
&\mathrm{GL}(2,F_v)
\end{pmatrix}\cap \supp \tilde{f}_v.
\end{align*}

Analyzing the above constraints we derive that 
\begin{align*}
\begin{cases}
d_v^++d_v^--2l_0\leq e_v(t_n)\leq -d_v^+-d_v^--2l_0\\
d_v^++l_0\leq e_v(\lambda)\leq -d_v^-+l_0,\ e_v(\lambda_v)+e_v(u_{v,2})\geq d_v^+\\
d_v^+\leq e_v(\lambda_v)+e_v(t_{l})\leq -d_v^-,\ 1\leq l\leq n-1.
\end{cases}
\end{align*}

So \eqref{119} follows.
\end{proof}

\subsection{Meromorphic continuation}
Now we show meromorphic continuation of $\mathcal{F}_{0,1}J^{\bi}_{\Geo,\gamma(0)}(f,\textbf{s}).$ By Poisson summation one can write $E_{\gamma}(x,s;f_P^{\dag}(\cdot,s_2;z_2,y))$ as 
\begin{align*}
E_{\gamma,+}(x,s;f_P^{\dag}(\cdot;z_2,y))+E_{\gamma,+}^{\wedge}(x,s;f_P^{\dag}(\cdot;z_2,y))+E_{\gamma,\Res}(x,s;f_P^{\dag}(\cdot;z_2,y)),
\end{align*} 
where 
\begin{align*}
E_{\gamma,+}(x,s;f_P^{\dag}(\cdot;z_2,y)):=\sum_{\delta}\int_{|z_1|\geq 1}f_P^{\dag}(\eta z_1\delta x;z_2,y)|\det z_1x|^{s}d^{\times}z_1,\\
E_{\gamma,+}^{\wedge}(x,s;f_P^{\dag}(\cdot;z_2,y)):=\sum_{\delta}\int_{|z_1|\geq 1}\hat{f}_P^{\dag}(\eta z_1\delta \transp{x}^{-1};z_2,y)\overline{\omega}'\omega'(z_1)|\det z_1x|^{1-s}d^{\times}z_1,\\
E_{\gamma,\Res}(x,s;f_P^{\dag}(\cdot;z_2,y)):=-\frac{f_P^{\dag}(\mathbf{0};z_2,y)|\det x|^s}{n(s)}+\frac{\hat{f}_P^{\dag}(\mathbf{0};z_2,y)|\det x|^{s-1}}{n(s-1)}.
\end{align*}
Here $\delta$ ranges over $P_0(F)\backslash \overline{G'}(F),$ $|z_1|\geq 1$ means $z_1\in Z'(\mathbb{A}_F)$ with $|z_1|\geq 1,$ and 
\begin{align*}
\hat{f}_P^{\dag}(\mathbf{x};z_2,y):=\int_{\mathbb{A}_F^n}\int_{N_P(\mathbb{A}_F)}f(u(\textbf{c})n\iota(z_2y))\theta(\mathbf{x}\cdot\transp{\mathbf{c}})dnd\mathbf{c}.
\end{align*}

Note that $E_{\gamma,+}(x,s;f_P^{\dag}(\cdot;z_2,y))$ and $E_{\gamma,+}^{\wedge}(x,s;f_P^{\dag}(\cdot;z_2,y))$ converge absolutely for all $(s_1,s_2)\in\mathbb{C}^2.$ Define  (at least formally) $\mathcal{F}_{0,1}J^{\bi}_{\Geo,+}(f,\textbf{s})$ to be
\begin{align*}
\int_{Z'(\mathbb{A}_F)}\int_{\overline{G'}(\mathbb{A}_F)}\int_{[ \overline{G'}]}	\phi_1'(x)\overline{\phi_2'(xy)}E_{\gamma,+}(x,s';f_P^{\dag}(\cdot;z_2,y))\overline{\omega}'(z_2)|\det z_2y |^{s_2}
	dxdyd^{\times}z_2,
\end{align*} 
where we identify $\overline{G'}$ with $G'^0.$

Likewise, define $\mathcal{F}_{0,1}J^{\bi,\wedge}_{\Geo,+}(f,\textbf{s})$ (resp. $\mathcal{F}_{0,1}J^{\bi}_{\Geo,\Res}(f,\textbf{s})$) in a similar way by inserting $E_{\gamma,+}^{\wedge}(x,s';f_P^{\dag}(\cdot;z_2,y))$ (resp. $E_{\gamma,\Res}(x,s';f_P^{\dag}(\cdot;z_2,y))$) instead of the Eisenstein series  $E_{\gamma,+}(x,s';f_P^{\dag}(\cdot;z_2,y))$. Then  (at least formally) 
\begin{equation}\label{260}
\mathcal{F}_{0,1}J^{\bi}_{\Geo}(f,\textbf{s})=\mathcal{F}_{0,1}J^{\bi}_{\Geo,+}(f,\textbf{s})+\mathcal{F}_{0,1}J^{\bi,\wedge}_{\Geo,+}(f,\textbf{s})+\mathcal{F}_{0,1}J^{\bi}_{\Geo,\Res}(f,\textbf{s}).
\end{equation}

By Proposition \ref{prop22}, both sides of \eqref{260} are well defined in $\Re(s_1)+\Re(s_2)>0.$ 
\begin{lemma}\label{lem19}
Let notation be as before. The integral $\mathcal{F}_{0,1}J^{\bi}_{\Geo,+}(f,\textbf{s})$ converges absolutely in $\mathbb{C}^2.$ In particular, $\mathcal{F}_{0,1}J^{\bi}_{\Geo,+}(f,\textbf{s})$ is entire, and satisfies the estimate
\begin{align*}
	\mathcal{F}_{0,1}J^{\bi}_{\Geo,+}(f,\textbf{s})\ll |\mathfrak{N}|^{-n\Re(s_1+s_2)},
\end{align*}
where the implied constant relies on $\textbf{s},$ $\phi_1',$ $\phi_2'$ and $f_S.$
\end{lemma}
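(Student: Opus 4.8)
The plan is to reduce the estimate to the majorant already produced in the proof of Proposition \ref{prop22}, the only genuinely new point being that the truncation $|z_1|\ge 1$ built into $E_{\gamma,+}$ makes the innermost Mellin integral in $z_1$ converge for \emph{every} $\textbf{s}\in\mathbb{C}^2$, not merely in the cone $\Re(s_1+s_2)>0$.

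First I would pass to absolute values: by the triangle inequality $|\mathcal{F}_{0,1}J^{\bi}_{\Geo,+}(f,\textbf{s})|$ is bounded by the same integral with $f$ replaced by $|f|$ and $s_1,s_2$ by $\Re(s_1),\Re(s_2)$, and $|f|$ is again continuous of compact support modulo the center, so the integrals \eqref{102} and \eqref{274} attached to $|f|$ remain Schwartz--Bruhat. Since $x\mapsto\phi_1'(x)\overline{\phi_2'(xy)}$ is left $G'(F)$-invariant, hence left $P_0'(F)$-invariant, one may unfold $E_{\gamma,+}(x,s_1+s_2+1;f_P^{\dag}(\cdot;z_2,y))$ over $P_0'(F)\backslash\overline{G'}(F)$ exactly as in the proof of Proposition \ref{prop22}, converting the $x$-integral over $[\overline{G'}]$ into an integral over $P_0'(F)\backslash\overline{G'}(\mathbb{A}_F)$ against $\int_{|z_1|\ge1}|f|_P^{\dag}(\eta z_1x;z_2,y)\,|\det z_1x|^{s'}\,d^{\times}z_1$, where $s'=1+\Re(s_1+s_2)$. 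This produces literally the majorant $\mathcal{J}$ of that proof, but with the $z_1$-range $Z'(\mathbb{A}_F)$ replaced by $\{|z_1|\ge1\}$.

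Next I would rerun the local analysis of Lemmas \ref{lem18}, \ref{lem18.} and \ref{lem18'} to describe, place by place, the support of $|f|_v(u(\eta z_{1,v})n_v\iota(y_v))$ in the variables $(z_1,n,y)$, and perform the same Iwasawa change of variables that led from $\mathcal{J}$ to \eqref{118}. The outcome is a bound of exactly the shape of the right-hand side of \eqref{118}, with $p$ ranging over $[P_0']$, $z$ over idèles coprime to $\mathfrak{N}$, $z'\in\mathfrak{N}\widehat{\mathcal{O}_F}-\{0\}$, and the $\mathfrak{N}$-dependent factors $|z|_{\fin}^{-A}$, $|z'|_{\fin}^{ns'}$ and $\Vol(K_0(\mathfrak{N}))^{-1}$ present as before; the single modification is that the Mellin integral in $z_1$ now runs over $|z_1|\ge1$. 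On that range $|f|_P^{\dag}(\eta z_1x;z_2,y)$ decays faster than any power of $|z_1|$ in terms of the Schwartz seminorms of $f_P^{\dag}$, so the $z_1$-integral converges for every $\textbf{s}$; this is precisely the mechanism removing the constraint $\Re(s_1+s_2)>0$, and the same extra decay also dominates the directions (case (b) of Lemma \ref{lem18.}) that forced the cone condition in Proposition \ref{prop22}. The rapid decay of the cusp forms $\phi_1',\phi_2'$ then makes the remaining $p$-integral converge, and summing the geometric-type expression in $z'$ together with $\Vol(K_0(\mathfrak{N}))^{-1}\asymp|\mathfrak{N}|^{n}$ yields $\ll|\mathfrak{N}|^{-ns'}/\Vol(K_0(\mathfrak{N}))\ll|\mathfrak{N}|^{-n\Re(s_1+s_2)}$, locally uniformly in $\textbf{s}$.

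Finally, since the integrand defining $\mathcal{F}_{0,1}J^{\bi}_{\Geo,+}(f,\textbf{s})$ is holomorphic in $\textbf{s}$ pointwise and is dominated, locally uniformly on $\mathbb{C}^2$, by the integrable majorant just constructed, Morera's theorem together with Fubini gives holomorphy, and since convergence holds throughout $\mathbb{C}^2$ the function is entire. I expect the main obstacle to be bookkeeping rather than conceptual: one must check that the reduction with the $|z_1|\ge1$-truncation in force genuinely reproduces the majorant \eqref{118} without reintroducing any convergence constraint, i.e.\ that the Schwartz decay of $f_P^{\dag}$ in its first argument survives all the Iwasawa and support manipulations with constants uniform in the auxiliary variables $(z_2,y)$ and in $\textbf{s}$ on compacta, and that the interplay between this truncation and the $\mathfrak p_v$-adic support conditions of Lemmas \ref{lem18}--\ref{lem18'} simultaneously tames the $z'$-direction and leaves the power of $|\mathfrak{N}|$ unchanged.
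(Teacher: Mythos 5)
There is a genuine gap at the heart of your argument, namely the claim that on $|z_1|\ge 1$ the function $|f|_P^{\dag}(\eta z_1x;z_2,y)$ ``decays faster than any power of $|z_1|$'' uniformly enough in $(z_2,y)$ to remove the cone condition and, in particular, to ``dominate'' the directions of case (b) of Lemma \ref{lem18.}. That is not the mechanism, and as stated it is false: in case (b) the nonvanishing of $f_v(u(\eta z_{1,v})n_v\iota(y_v))$ is a \emph{support} condition that pins $y_v=\diag(\varpi_v^{-l_0}I_{n-1},\varpi_v^{-2l_0})$ and the $n_v$-support to $z_{1,v}=\varpi_v^{l_0}I_n$; the nonzero region simply slides along with $z_{1,v}$, and $f_P^{\dag}$ supplies no decay whatsoever in $|z_{1,v}|_v$. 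After the reduction to \eqref{118} all of the $z_1$-dependence has already been converted into the explicit power behaviour $|z'|_{\fin}^{ns'}|z|_{\fin}^{-A}$ in the variables $z_{1,\fin}=z'/z$, so there is no residual ``Schwartz decay in $|z_1|$'' left to invoke; the divergence for $\Re(s_1+s_2)\le 0$ sits precisely in the unbounded range of the $z'$-integral, and rapid decay of a fixed Schwartz function cannot be traded against it because the relevant seminorms of $f_P^{\dag}(\cdot;z_2,y)$ are not uniform in $(z_2,y)$ in exactly this coupled regime. Your own caveat at the end identifies this uniformity as ``bookkeeping,'' but it is the conceptual crux, and your proposal never supplies the step that closes it.

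What the paper actually does is convert the truncation into a \emph{range restriction} on the finite part of $z_1$: by Lemma \ref{lem18}, the compact support of $f_\infty$ forces $|z_{1,\infty}|_{\infty}\ll 1$, so the condition $|\det p z_1|\ge 1$ coming from the truncated Eisenstein series yields $|z'|_{\fin}\gg |z|_{\fin}|\det p|^{-1/n}$. With this lower bound (and the upper bound $z'\in\mathfrak{N}\widehat{\mathcal{O}_F}-\{0\}$), the $z'$-integral in \eqref{118} converges for \emph{every} $\mathbf{s}$ and is $\ll |\mathfrak{N}|^{-n\Re(s')}$ times powers of $|\det p|$ and $|z|_{\fin}^{-1}$, which are then absorbed by the rapid decay of $\phi_1'$ and $\phi_2'$ (note the argument $p\,\diag(z^{-1}I_{n-1},1)y^*$ of $\phi_2'$); this also accounts for the factor $\Vol(K_0(\mathfrak{N}))^{-1}$ and gives the stated bound $\ll|\mathfrak{N}|^{-n\Re(s_1+s_2)}$. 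So the new input relative to Proposition \ref{prop22} is not extra decay of $f_P^{\dag}$ but the inequality $|z'|_{\fin}\gg|z|_{\fin}|\det p|^{-1/n}$ extracted from the truncation together with the archimedean support bound; if you add that step, the rest of your reduction to \eqref{118} goes through essentially as in the paper.
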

\begin{proof}
The proof is similar to that of Proposition \ref{prop22}. 
Write $z_{1,\fin}=z'/z$ with $z, z'\in \widehat{\mathcal{O}_{F}}-\{0\}$ and $z$ and $z'$ coprime. From Lemma \ref{lem18} we have that $|z_{1,\infty}|_{\infty}\ll 1.$ So $|\det pz_1|\geq 1$ forces that $|z'|_{\fin}\gg |z|_{\fin}|\det p|^{-\frac{1}{n}}.$ It follows from \eqref{118} that $\mathcal{F}_{0,1}J^{\bi}_{\Geo,+}(f,\textbf{s})$ is 
\begin{align*}
\ll\max_{y^*\in C}\int_{[P_0']}\iint\Bigg|\phi_1'(p){\phi_2'\left(p\begin{pmatrix}
z^{-1}I_{n-1}\\
&1
\end{pmatrix}y^*\right)}\Bigg||\det p|^{A}\frac{d^{\times}z}{|z|_{\fin}^{A}}|z'|_{\fin}^{ns'}d^{\times}z'dp,
\end{align*}
where $z\in\widehat{\mathcal{O}_{F}}-\{0\}$ and $z$ is coprimes with $\mathfrak{N},$ $z'\in \mathfrak{N}\widehat{\mathcal{O}_{F}}-\{0\},$ satisfying that $|z'|_{\fin}\gg |z|_{\fin}|\det p|^{-\frac{1}{n}}.$ Hence,
$\mathcal{F}_{0,1}J^{\bi}_{\Geo,+}(f,\textbf{s})$ is 
\begin{align*}
\ll\frac{|\mathfrak{N}|^{-n\Re(s')}}{\Vol(K_0(\mathfrak{N}))}\max_{y^*\in C}\int_{[P_0']}\int_{\widehat{\mathcal{O}_{F}}-\{0\}}\Bigg|\phi_1'(p){\phi_2'\left(p\begin{pmatrix}
z^{-1}I_{n-1}\\
&1
\end{pmatrix}y^*\right)}\Bigg|\frac{|\det p|^{A+|s'|+\frac{1}{n}}d^{\times}zdp}{|z|_{\fin}^{A+n|s'|+1}},
\end{align*}
which converges due to the rapid decay of cusp forms.
\end{proof}
\begin{remark}
\begin{itemize}
\item Note that in Proposition \ref{prop22}, the convergence domain comes from that of the Eisenstein series. In Lemma \ref{lem19} we get entireness from the holomorphy of $E_{\gamma,+}(x,s;f_P^{\dag}(\cdot;z_2,y)).$ 

\item Instead of using \cite{MS12}, one can derive a more explicit majorization by unfolding the cusp forms into Whittaker functions (cf. \cite{Yan22}, \textsection 8). 
\end{itemize}
\end{remark}

We also need the following generalization of Lemma \ref{lem18.}.
\begin{lemma}\label{lem20}
	Let notation be as before. Let $v\notin S.$ Let $\mathbf{c}_v=(c_1,\cdots,c_n)\in F_v^n$ and $\mathfrak{u}_v=\transp{(u_1,\cdots,u_n)}\in F_v^n.$ Let $n_v=\begin{pmatrix}
		I_n&\mathfrak{u}_v\\
		&1
	\end{pmatrix}.$ 
	\begin{enumerate}
		\item[(i).] For $v\mid\mathfrak{N},$ one has $\min_{1\leq i\leq n}\{e_v(c_i)\}\geq e_v(\mathfrak{N}).$ 		
		\item[(ii).] Suppose $\min_{1\leq i\leq n}\{e_v(c_i)\}\geq 0.$ Then $f_v(u(\mathbf{c}_v) n_v\iota(y_v))\neq 0$ unless $y_v\in K_v',$ $\mathbf{c}_v\in\mathcal{O}_{F_v}^n$ and $\mathfrak{u}_v\in \mathcal{O}_{F_v}^n.$
       \item[(iii).] Suppose $e_v(c_{i_0})=\min_{1\leq i\leq n}\{e_v(c_i)\}=-l\leq -1.$ Then $f_v(u(\mathbf{c}_v) n_v\iota(y_v))\neq 0$ unless $u_i\in \varpi_v^l\mathcal{O}_{F_v},$ $1\leq i<n,$ and $u_n\in -c_{i_0}^{-1}+\varpi_v^{2l}\mathcal{O}_{F_v},$ and 
		$$
		y_v=w_{n-1}\cdots w_{i_0}\begin{pmatrix}
			\varpi_v^lI_{n-1}&\\
			&\varpi_v^{2l}
		\end{pmatrix}\begin{pmatrix}
			I_{n-1}&\\
			\mathbf{c}_v'&1
		\end{pmatrix}k_v',
		$$
		where $w_j$ is the Weyl element realized in $K'$ corresponding to the $j$-th simple root, $\mathbf{c}_v'\in  \varpi_v^{-l}M_{\mathfrak{u}_v\mathbf{c}_v}+\mathcal{O}_{F_v}^{n-1},$ with $M_{\mathfrak{u}_v\mathbf{c}_v}\in\mathcal{O}_{F_v}^{n-1}$ being the vector consisting of the $(n,i)$-th entry of $w_{i_0}\cdots w_{n-1}\mathfrak{u}_v\mathbf{c}_v,$ $1\leq i\leq n-1.$
	\end{enumerate}

\end{lemma}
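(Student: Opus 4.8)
The plan is to reduce the whole statement to an Iwasawa-type analysis of the single condition $f_v(u(\mathbf{c}_v)n_v\iota(y_v))\neq 0$, exactly as in the proof of Lemma \ref{lem18.}, and then separate the three claimed cases according to $\min_i e_v(c_i)$. Since $v\notin S$, by definition $f_v=\Vol(K_v)^{-1}\mathbf{1}_{K_v}$ up to center (and $K_v=K_0(\mathfrak{p}_v^{e_v(\mathfrak{N})})$ if $v\mid\mathfrak{N}$, $K_v=G(\mathcal{O}_{F_v})$ otherwise), so $f_v(u(\mathbf{c}_v)n_v\iota(y_v))\neq 0$ means precisely that
\[
\lambda_v\begin{pmatrix}I_n&\\ \mathbf{c}_v&1\end{pmatrix}\begin{pmatrix}y_v&\mathfrak{u}_v\\ &1\end{pmatrix}\in K_v
\]
for some $\lambda_v\in F_v^\times$. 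First I would write $y_v=z_{2,v}a_vn_v'k_v'$ by Iwasawa ($a_v=\diag(t_1,\dots,t_{n-1},1)$, $n_v'\in N'(F_v)$, $k_v'\in K_v'$), absorb $k_v'$ into the $K_v$ on the right, and multiply out; as in Lemma \ref{lem18.} the upper-left $(n-1)\times(n-1)$ block being integral and invertible forces $u_v''\in\mathrm{GL}(n-1,\mathcal{O}_{F_v})$ and $e_v(\lambda_v t_l)=0$ for $1\le l\le n-1$, i.e.\ $e_v(t_l)=-e_v(\lambda_v)$ for all $l$.

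For part (i): the $(n+1,1)$-entry of the product is (up to a unit) $\lambda_v c_i$ for each $i$, hence $e_v(\lambda_v)+e_v(c_i)\ge 0$; on the other hand, if $v\mid\mathfrak{N}$, the Hecke congruence condition on $K_v=K_0(\mathfrak{p}_v^{e_v(\mathfrak{N})})$ forces the bottom-left entries to lie in $\mathfrak{p}_v^{e_v(\mathfrak{N})}$, so $e_v(\lambda_v c_i)\ge e_v(\mathfrak{N})$, while integrality of the $(n+1,n+1)$-entry $\lambda_v$ together with the analysis of the $2\times 2$ block at positions $n,n+1$ (exactly \eqref{191}--\eqref{192} in Lemma \ref{lem18.}) forces $e_v(\lambda_v)\le 0$; combining gives $e_v(c_i)\ge e_v(\lambda_v c_i)\ge e_v(\mathfrak{N})$. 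Part (ii) is the ``no new poles'' case $\min_i e_v(c_i)\ge 0$: then $\lambda_v\begin{pmatrix}I_n&\\ \mathbf{c}_v&1\end{pmatrix}$ is already in $G(\mathcal{O}_{F_v})$ once $e_v(\lambda_v)=0$, and the reductions above force $e_v(\lambda_v)=0$ (so $y_v\in Z'(\mathcal{O}_{F_v})K_v'=K_v'$ after normalizing the center, $\mathbf{c}_v\in\mathcal{O}_{F_v}^n$, $\mathfrak{u}_v\in\mathcal{O}_{F_v}^n$), which is the asserted alternative; one must check the $e_v(\lambda_v)<0$ sub-case is incompatible with $\min_i e_v(c_i)\ge 0$, which follows from the $2\times 2$-block determinant computation.

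Part (iii) is the main obstacle and the genuinely new content. Here $e_v(c_{i_0})=-l\le -1$ is the smallest valuation among the $c_i$. The idea is to left-multiply $u(\mathbf{c}_v)$ by a suitable permutation/Weyl element $w:=w_{n-1}\cdots w_{i_0}$ (realized in $K_v'$, hence harmless) so that the entry $c_{i_0}$ is moved into the $(n,n)$-slot of a $2\times 2$ block, reducing to the rank-one computation: the local integral ``$\begin{pmatrix}1&\\ c_{i_0}&1\end{pmatrix}\begin{pmatrix}t_n&u_n\\ &1\end{pmatrix}\in\mathrm{GL}(2,\mathcal{O}_{F_v})$'' whose solution set (after clearing denominators via $\lambda_v=c_{i_0}^{-1}$ up to units, forcing $e_v(\lambda_v)=l$, $e_v(t_n)=-2l$) is exactly $u_n\in -c_{i_0}^{-1}+\varpi_v^{2l}\mathcal{O}_{F_v}$, $t_n\in\varpi_v^{-2l}\mathcal{O}_{F_v}^\times$. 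The remaining entries $u_i$ ($i<n$) are then pinned to $\varpi_v^l\mathcal{O}_{F_v}$ by integrality of $\lambda_v u_i$, and unwinding $w$ converts the residual unipotent part of $y_v$ into the vector $\mathbf{c}_v'$ with the stated shape $\mathbf{c}_v'\in\varpi_v^{-l}M_{\mathfrak{u}_v\mathbf{c}_v}+\mathcal{O}_{F_v}^{n-1}$, where $M_{\mathfrak{u}_v\mathbf{c}_v}$ records the relevant entries of $w_{i_0}\cdots w_{n-1}\mathfrak{u}_v\mathbf{c}_v$. The bookkeeping of how conjugation by $w$ permutes the coordinates of $\mathbf{c}_v$ and $\mathfrak{u}_v$, and of the resulting $\diag(\varpi_v^lI_{n-1},\varpi_v^{2l})$ scaling, is the delicate but routine part; the conceptual point is simply that in $\mathrm{GL}(n+1,\mathcal{O}_{F_v})$ a single large off-diagonal entry $c_{i_0}$ can only be cancelled by pairing it against a diagonal entry of matching negative valuation, which is what produces the explicit form of $y_v$.
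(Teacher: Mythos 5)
Your overall strategy (reduce to the membership condition $\lambda_v\,u(\mathbf{c}_v)n_v\iota(y_v)\in K_v$ and run an Iwasawa-type analysis as in Lemma \ref{lem18.}) is the paper's strategy, but several of your key steps are wrong or unjustified. First, in part (i) you assert that the $(n+1,i)$-entries of the product are, up to units, $\lambda_vc_i$; they are in fact the entries of $\lambda_v\mathbf{c}_vy_v$, so the congruence condition on $K_0(\mathfrak{p}_v^{e_v(\mathfrak{N})})$ does not give $e_v(\lambda_vc_i)\ge e_v(\mathfrak{N})$ without already knowing the shape of $y_v$ (circular). The entries $\lambda_v^{-1}c_i$ appear honestly only in the bottom row of the \emph{inverse} matrix (cf. \eqref{263}), and there one needs $e_v(\lambda_v)=0$ (not merely $\le 0$) to conclude; this follows from the fact that the $(n+1,n+1)$-entry of an element of $K_0$ is a unit, or, as the paper does, from a Cramer's-rule conjugation reducing $\mathbf{c}_v$ to $\eta\varpi_v^{\min_ie_v(c_i)}$ so that Lemma \ref{lem18.} applies. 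Relatedly, your opening claim that ``as in Lemma \ref{lem18.} the upper-left $(n-1)\times(n-1)$ block forces $u_v''\in\mathrm{GL}(n-1,\mathcal{O}_{F_v})$ and $e_v(\lambda_vt_l)=0$'' does not transfer: that step in Lemma \ref{lem18.} used that the first $n-1$ coordinates of $\mathbf{c}_v$ vanish (so columns $1,\dots,n-1$ of the big matrix have no entry in row $n+1$); for general $\mathbf{c}_v$ the upper-left block need not be invertible over $\mathcal{O}_{F_v}$, and indeed in case (iii) it is not.

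Second, in part (iii) your $2\times 2$ computation has the signs reversed: unimodularity of the bottom row $\lambda_v^{-1}(-\mathbf{c}_v,1)$ of \eqref{263} forces $e_v(\lambda_v)=-l$ (so $\lambda_v$ is $c_{i_0}$ up to a unit, not $c_{i_0}^{-1}$), and the determinant then forces the last torus exponent to be $+2l$, consistent with the $\diag(\varpi_v^lI_{n-1},\varpi_v^{2l})$ in the statement, whereas you claim $e_v(\lambda_v)=l$ and $t_n\in\varpi_v^{-2l}\mathcal{O}_{F_v}^{\times}$ (with which the entry $\lambda_vc_{i_0}t_n$ would not even be integral). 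More seriously, permuting $c_{i_0}$ into the last slot does not reduce to the rank-one situation of Lemma \ref{lem18.}, because the remaining entries of $\mathbf{c}_v$ may still be non-integral; the paper instead pins down the full torus part by combining the determinant identity $l_1+\cdots+l_n=(n+1)l$ with $l_i\ge l$ from the diagonal entries and $l_n\ge 2l$ from the $(n+1,n)$-entry (cf. \eqref{264}), and then extracts the congruence $\mathbf{c}_v'\in\varpi_v^{-l}M_{\mathfrak{u}_v\mathbf{c}_v}+\mathcal{O}_{F_v}^{n-1}$ from the upper-left $n\times n$ block of the inverse (cf. \eqref{265}), using $e_v(u_i)\ge l$ from the last column. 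That congruence, and the exact exponents $l,\dots,l,2l$, are the substantive content of (iii) (they are what is used later to bound the $y_v$- and $n_v$-integrals), and your sketch dismisses precisely this part as ``routine bookkeeping'' without supplying the argument.
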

\begin{proof}
	Suppose that $f_v(u(\mathbf{c}_v) n_v\iota(y_v))\neq 0,$ which amounts to	\begin{equation}\label{262}
	   \lambda_v\begin{pmatrix}
			I_n&\\
			\mathbf{c}_v&1
		\end{pmatrix}\begin{pmatrix}
			y_v&\mathfrak{u}_v\\
			&1
		\end{pmatrix}\in K_v,\ \text{i.e.}, \ \lambda_v\begin{pmatrix}
			y_v&\mathfrak{u}_v\\
			\mathbf{c}_vy_v&1+\mathbf{c}_v\mathfrak{u}_v
		\end{pmatrix}\in K_v
	\end{equation}
	for some $\lambda_v\in {\varpi_v^{\mathbb{Z}}}.$ Taking inverse of \eqref{262} gives 
	\begin{equation}\label{263}
		\lambda_v^{-1}\begin{pmatrix}
			y_v^{-1}+y_v^{-1}\mathfrak{u}_v\mathbf{c}_v&-y_v^{-1}\mathfrak{u}_v\\
			-\mathbf{c}_v&1
		\end{pmatrix}\in K_v.
	\end{equation}
	From \eqref{263} one deduces that $e_v(\lambda_v)\leq 0.$ We may write $\lambda_v=\varpi_v^{-l'},$ $l'\geq 0.$ 
	\begin{enumerate}
	\item Suppose $v\mid\mathfrak{N}.$ By Cramer's rule there exists $k_v\in K_v'$ such that 
	\begin{align*}
		k_v\begin{pmatrix}
			I_n&\\
			\mathbf{c}_v&1
		\end{pmatrix}k_v^{-1}=\begin{pmatrix}
			I_n&\\
			\eta\varpi_v^{\min_{1\leq i\leq n}\{e_v(c_i)\}}&1
		\end{pmatrix}.
	\end{align*} 
	Substituting this into \eqref{262} we then derive from Lemma \ref{lem18.} that (i) holds.
		\item Suppose $l'=0.$ Then by \eqref{263}, $\mathbf{c}_v\in\mathcal{O}_{F_v}^n,$ $y_v^{-1}\mathfrak{u}_v\in\mathcal{O}_{F_v}^n,$ and thereby $y_v^{-1}\in M_{n\times n}(\mathcal{O}_{F_v}).$ By \eqref{262} we have $y_v\in M_{n, n}(\mathcal{O}_{F_v}).$ Hence, $y_v\in K_v',$ $\mathbf{c}_v\in\mathcal{O}_{F_v}^n,$ and $\mathfrak{u}_v\in\mathcal{O}_{F_v}^n.$  
		
		\item Suppose $l'\geq 1.$ Since $K_v$ is a group, then analyzing the $(n+1)$-th row of \eqref{263} leads to $l'=-\max_{1\leq i\leq n}\{e_v(c_i)\}=-l.$ Let $i_0$ be the smallest index such that $e_v(c_{i_0})=-l.$ Parametrize $y_v$ by the Iwasawa form $w_{i_0}\cdots w_{n-1}y_v=t_vn_v'k_v',$ where $w_j$ is the Weyl element realized in $K'$ corresponding to the $j$-th simple root, $t_v=(\varpi_v^{l_1},\cdots, \varpi_v^{l_n}),$ $n_v'$ is a lower triangle uniponent radical of a Borel, and $k_v'\in K_v'.$ Considering the determinant and diagonal entries of \eqref{262} gives 
		\begin{equation}\label{264}
			\begin{cases}
				l_1+l_2+\cdots+ l_n=(n+1)l\\
				l_1\geq l,\ l_1\geq l,\ \cdots,\ l_{n}\geq l.
			\end{cases}
		\end{equation} 
		Investigating the $(n+1,n)$-th entry of \eqref{262} gives that $\lambda_vc_{i_0}\varpi_v^{l_n}\in\mathcal{O}_{F_v},$ namely, $l_n\geq 2l.$ In conjunction with \eqref{264} we then derive that $l_1=\cdots=l_{n-1}=l$ and $l_{n}=2l.$ Since the upper left $(n-1)\times(n-1)$-block lies in $M_{n-1, n-1}(\mathcal{O}_{F_v}),$ then the upper left $(n-1)\times(n-1)$-block of $n_v'$ lies in $M_{n-1,n-1}(\mathcal{O}_{F_v}).$ Hence, we may write $w_{i_0}\cdots w_{n-1}y_v=t_vn_v''k_v'',$ with $n_v''=\begin{pmatrix}
			I_{n-1}&\\
			\mathbf{c}_v'&1
		\end{pmatrix}$ with $\mathbf{c}_v'\in F_v^{n-1},$ $k_v''\in K_v'.$ Moreover we may assume $k_v''=1$ as $f_v$ is right $\iota(K_v')$-invariant. 
		
		Now we consider the upper left $n\times n$-block of \eqref{263}, i.e., 
		\begin{equation}\label{265}
			\begin{pmatrix}
				I_{n-1}&\\
				-\mathbf{c}_v'&1
			\end{pmatrix}\begin{pmatrix}
				I_{n-1}&\\
				&\varpi_v^{-l}
			\end{pmatrix}w_{i_0}\cdots w_{n-1}\cdot (I_n+\mathfrak{u}_v\mathbf{c}_v)\in M_{n, n}(\mathcal{O}_{F_v}).
		\end{equation}
		By the $(n+1)$-th column of \eqref{262} we have $e_v(u_i)\geq l,$ $1\leq i\leq n.$ Hence, $\mathfrak{u}_v\mathbf{c}_v\in M_{n\times n}(\mathcal{O}_{F_v}).$ Therefore, we deduce from \eqref{265} that
		\begin{align*}
			\mathbf{c}_v'-\varpi_v^{-l}M_{\mathfrak{u}_v\mathbf{c}_v}\in \mathcal{O}_{F_v}^{n-1},\ \ e_v(1+u_nc_{i_0})\geq l.
		\end{align*} 
	\end{enumerate}
	
	Hence Lemma \ref{lem20} follows. 
\end{proof}

We now proceed to investigate the analytic behavior of  $\mathcal{F}_{0,1}J^{\bi,\wedge}_{\Geo,+}(f,\textbf{s})$. 
\begin{prop}\label{prop19}
The function $\mathcal{F}_{0,1}J^{\bi,\wedge}_{\Geo,+}(f,\textbf{s})$ converges absolutely in the region 
\begin{equation}\label{R.}
\mathcal{R}:=\big\{\mathbf{s}=(s_1,s_2)\in\mathbb{C}^2:\ \Re(s_1),\ \Re(s_2)>-1/(n+1)\big\},
\end{equation}
and extends to a holomorphic function therein. Moreover, 
\begin{equation}\label{63.}
\mathcal{F}_{0,1}J^{\bi,\wedge}_{\Geo,+}(f,\textbf{s})=O(|\mathfrak{N}|^{2+n\Re(s_1+s_2)}),\ \ \textbf{s}\in\mathcal{R},
\end{equation}
where the implied constant relies on $\varepsilon,$ $\textbf{s},$ $\phi_1',$ $\phi_2'$ and $f_S.$
\end{prop}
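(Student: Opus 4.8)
\emph{Strategy.} The plan is to run the argument of Proposition~\ref{prop22} and Lemma~\ref{lem19}, with $f_P^{\dag}$ replaced by its Fourier transform $\hat f_P^{\dag}$ and the weight $|\det z_1x|^{s'}$ replaced by $|\det z_1x|^{1-s'}=|\det z_1x|^{-(s_1+s_2)}$, where $s'=1+s_1+s_2$. Granting absolute convergence for a moment, one unfolds $E_{\gamma,+}^{\wedge}$ against the $[\overline{G'}]$-integral exactly as in the proof of Proposition~\ref{prop22} (using $P_0'(F)\backslash\overline{G'}(F)\leftrightarrow F^n-\{\mathbf 0\}$ via $\delta\mapsto\eta\delta$ and $N(\mathbb{A}_F)=N_P(\mathbb{A}_F)\times\iota(N'(\mathbb{A}_F))$), so that $\mathcal{F}_{0,1}J^{\bi,\wedge}_{\Geo,+}(f,\textbf{s})$ becomes the integral over $x\in P_0'(F)\backslash\overline{G'}(\mathbb{A}_F)$, $y\in\overline{G'}(\mathbb{A}_F)$, $z_1\in Z'(\mathbb{A}_F)$ with $|z_1|\ge 1$, and $z_2\in Z'(\mathbb{A}_F)$, of $\hat f_P^{\dag}(\eta z_1\transp{x}^{-1};z_2,y)\,\phi_1'(x)\overline{\phi_2'(xy)}$ against the idele-class characters and the weight $|\det z_1x|^{1-s'}|\det z_2y|^{s_2}$. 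Since $\hat f_P^{\dag}$ is a Schwartz--Bruhat function on $\mathbb{A}_F^n$ and the $z_1$-integral is truncated to $|z_1|\ge 1$, this is a ``$+$-part'': the $z_1$-integral converges for every $\textbf{s}$, so the content is the convergence of the $x$-, $y$- and $z_2$-integrals.

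\emph{Local analysis.} Next I would analyse the support of $\hat f_P^{\dag}=\otimes_v\hat f_{P,v}^{\dag}$ exactly as in Lemmas~\ref{lem18}, \ref{lem18.}, \ref{lem18'}, but now reading off the answer from Lemma~\ref{lem20}. At $v\mid\infty$ and $v\in S$ the compact support of $\tilde f_v$ forces $\mathbf c_v$, $\mathfrak u_v$ and $z_{2,v}y_v$ into compact sets up to scalars, yielding a bounded local factor. At $v\mid\mathfrak{N}$, Lemma~\ref{lem20}(i) forces $\min_ie_v(c_i)\ge e_v(\mathfrak{N})$, so the partial Fourier integral defining $\hat f_{P,v}^{\dag}$ runs over $\mathbf c_v\in\mathfrak{N}_v\mathcal O_{F_v}^n$ and is therefore supported in $\eta z_{1,v}\transp{x_v}^{-1}\in\mathfrak{N}_v^{-1}\mathcal O_{F_v}^n$ (here $\phi_i'$ is spherical at $v\notin S\supseteq\{v\mid\mathfrak{N}\}^c$, so $x_v$ may be taken in $K_v'$), with size controlled by a power of $|\mathfrak{N}|_v$ times $\Vol(\overline{K}_v)^{-1}$. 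At $v\notin S$ with $v\nmid\mathfrak{N}$, Lemma~\ref{lem20}(ii)--(iii) gives the dichotomy: either $y_v\in K_v'$ and $\mathbf c_v,\mathfrak u_v\in\mathcal O_{F_v}^n$---the generic term, which combines over all such $v$ into a ratio of local $\mathrm{GL}(n)\times\mathrm{GL}(n)$ $L$-factors via the Macdonald formula as in Proposition~\ref{prop11'}---or $e_v(c_{i_0})=-l\le-1$, in which case $y_v$ lies in the explicit Weyl-translated lattice orbit of $\diag(\varpi_v^lI_{n-1},\varpi_v^{2l})$ described there, so that $|\det y_v|_v=q_v^{-(n+1)l}$.

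\emph{Assembling the estimate; main obstacle.} The crux, and the step I expect to be genuinely delicate, is to combine these local data and to show that the global integral converges absolutely, and locally uniformly, on $\mathcal{R}=\{\Re(s_1),\Re(s_2)>-1/(n+1)\}$, with the claimed $\mathfrak{N}$-dependence. One bounds the $x$-integral by the rapid decay of $\phi_1'$ and of $\overline{\phi_2'(xy)}$ (as in Proposition~\ref{prop22}, invoking \cite{MS12}), reducing the majorant to a product of the bounded factors at $v\in S\cup\{v\mid\infty\}$, the $L$-factor ratio from the generic unramified places, and---at each degenerate place---a geometric series in the degeneration parameter $l\ge 1$ whose general term is the product of $|\det y_v|_v^{\Re(s_2)}=q_v^{-(n+1)l\Re(s_2)}$, the Haar volume of the $y_v$-orbit, the mass of the Fourier integral over $\mathbf c_v\in\varpi_v^{-l}\mathcal O_{F_v}^n$, and the geometric sum over $e_v(z_{1,v})\ge l$ weighted by $|\det z_{1,v}|_v^{1-s'}$. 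Tracking the exponents of $q_v$, these collapse into a single linear inequality on $\Re(s_1),\Re(s_2)$ which, because of the factor $(n+1)$ carried by $|\det y_v|_v=q_v^{-(n+1)l}$ (this is where the constant $1/(n+1)$ defining $\mathcal{R}$ enters), is satisfied exactly on $\mathcal{R}$; the geometric series then converge, the product over $v$ is finite, and holomorphy on $\mathcal{R}$ follows since an absolutely and locally uniformly convergent integral of holomorphic integrands is holomorphic. Finally, carrying the exact powers of $|\mathfrak{N}|_v$ from $\hat f_{P,v}^{\dag}$ at $v\mid\mathfrak{N}$ through the computation, together with $\Vol(K_0(\mathfrak{N}))^{-1}\asymp|\mathfrak{N}|^{n}$, yields $\mathcal{F}_{0,1}J^{\bi,\wedge}_{\Geo,+}(f,\textbf{s})=O(|\mathfrak{N}|^{2+n\Re(s_1+s_2)})$ uniformly on $\mathcal{R}$, with the implied constant depending only on $\varepsilon$, $\textbf{s}$, $\phi_1'$, $\phi_2'$ and $f_S$. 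The point that makes this bookkeeping unavoidable---already flagged in the remark after Proposition~\ref{prop22}---is that $\hat f_P^{\dag}$, regarded as a function of $z_2y\in G'(\mathbb{A}_F)\subset M_{n\times n}(\mathbb{A}_F)$, does \emph{not} extend to a Schwartz--Bruhat function on $M_{n\times n}(\mathbb{A}_F)$, so the $y$- and $z_2$-integrals cannot be disposed of by a naive Godement--Jacquet estimate and genuinely require the orbit-by-orbit description of Lemma~\ref{lem20}.
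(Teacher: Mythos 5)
Your overall strategy is the paper's: majorize, analyze the support of the Fourier-transformed data place by place via Lemmas \ref{lem18}, \ref{lem18.}, \ref{lem18'} and \ref{lem20}, sum a geometric series over the degeneration parameter at the unramified places, invoke decay of the cusp forms, and track the powers of $|\mathfrak{N}|$. But the proposal leaves unproved exactly the content of the proposition, and where it does commit itself it misattributes the source of the threshold $-1/(n+1)$. At a degenerate unramified place the contribution for fixed degeneration $l\geq 1$ is (as in \eqref{22}) of size $|z_{1,v}|_v^{-1}q_v^{-l(2+(n+1)\Re(s_2))}$: the measure $q_v^{(n-1)l}$ of $(c_1,\dots,c_{n-1})$, the Ramanujan-sum saving $|z_{1,v}|_v^{-1}$ in $c_n$, the measure $q_v^{-(n+1)l}$ of $\mathfrak{u}_v$, and $|\det y_v|_v^{\Re(s_2)}=q_v^{-(n+1)l\Re(s_2)}$. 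The geometric series in $l$ therefore converges already for $\Re(s_2)>-2/(n+1)$; what forces $\Re(s_2)>-1/(n+1)$ is that this degenerate contribution occurs at \emph{every} unramified place, so one must take the product over infinitely many $v$ of factors $1+O(q_v^{-2-(n+1)\Re(s_2)})$, which converges only when the exponent is $<-1$. Your phrase that the local collapse ``is satisfied exactly on $\mathcal{R}$'' together with ``the product over $v$ is finite'' skips precisely this point, and since the proposition \emph{is} the determination of the convergence region (and the $|\mathfrak{N}|$-exponent), asserting the final linear inequality without the bookkeeping — which you yourself flag as the genuinely delicate step — leaves the proof incomplete.

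Two further structural issues. First, the truncation $|\det z_1x|\geq 1$ in the $+$-part is a global condition coupling all places, so the integral does not factor as an Euler product; your plan to unfold the cusp forms into Whittaker functions and sum the generic unramified places into Rankin--Selberg $L$-factors ``via the Macdonald formula as in Proposition \ref{prop11'}'' does not literally apply here, and your proposed local geometric sum over $e_v(z_{1,v})\geq l$ weighted by $|\det z_{1,v}|_v^{1-s'}$ cannot coexist with the global constraint (and would in fact diverge for $\Re(s')>1$). The paper instead keeps the constraint global: it bounds the inner cusp form by $\|\phi_2'\|_{\infty}$, retains only the integrability of $\phi_1'$ over $[P_0']$ together with the indicator $|\det pz_1|\geq 1$, and converts the $z_{1,\fin}$-integral into a sum over fractional ideals $\mathfrak{m}$ with $N_F(\mathfrak{m})=m\in |\mathfrak{N}|^{-1}M_S^{-1}\mathbb{Z}_{\geq 1}$ and $m^n\leq |\det z_{1,\infty}|_{\infty}|\det p|$. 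Second, the bound $O(|\mathfrak{N}|^{2+n\Re(s_1+s_2)})$ is not produced solely by the local factors at $v\mid\mathfrak{N}$ and $\Vol(K_0(\mathfrak{N}))^{-1}$ (those roughly cancel against the factor $q_v^{-ne_v(\mathfrak{N})}$ coming from Lemma \ref{lem20}(i)); it arises from this global ideal sum, whose range is enlarged to $m\gg |\mathfrak{N}|^{-1}$ by the support condition $e_v(z_{1,v})\geq -e_v(\mathfrak{N})$ at $v\mid\mathfrak{N}$, yielding the factor $(|\mathfrak{N}|M_S)^{2+n\Re(s_1+s_2)}$. An argument along your lines can certainly be completed (the paper's remark after Lemma \ref{lem19} even endorses a Whittaker-function majorization as an alternative), but as written the proposal neither establishes the region \eqref{R.} nor the $|\mathfrak{N}|$-exponent, and the one concrete claim it makes about the origin of $-1/(n+1)$ is inaccurate.
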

\begin{proof}
Expanding the Eisenstein series $E_{\gamma,+}^{\wedge}(x,s';f_P^{\dag}(\cdot;I_n,y)),$ changing variables $x\mapsto \transp{x}^{-1},$ writing $x=pk,$ $p\in [P'_0]$ and $k\in K',$ $\mathcal{F}_{0,1}J^{\bi,\wedge}_{\Geo,+}(f,\textbf{s})$ is majorized by 
\begin{align*}
\mathcal{J}:=&\int_{K'}\int_{G'(\mathbb{A}_{F})}\int_{Z'(\mathbb{A}_F)}\int_{N_P(\mathbb{A}_F)}\Big|\int_{\mathbb{A}_F^n}f(u(\mathbf{c}) n\iota(y))\theta(\eta z_1k\transp{\mathbf{c}})d\mathbf{c}\Big|\\
&\int_{\substack{p\in [P_0']\\ |\det pz_1|\geq 1}}\big|\overline{\phi_1'(pk)}\phi_2'(pk\transp{y}^{-1})\big|dpdn|\det z_1|^{-\Re(s_1+s_2)}d^{\times}z_1|\det y|^{\Re(s_2)}dydk.
\end{align*}

Change variables $\mathbf{c}\mapsto k^{-1}\mathbf{c},$ $n\mapsto \iota(\transp{k})n\iota(\transp{k}^{-1}),$ $y\mapsto \transp{k}y$ to obtain 
\begin{align*}
\mathcal{J}=&\int_{K'}\int_{G'(\mathbb{A}_{F})}\int_{Z'(\mathbb{A}_F)}\int_{N_P(\mathbb{A}_F)}\Big|\int_{\mathbb{A}_F^n}f(u(\mathbf{c}) n\iota(y))\theta(\eta z_1\transp{\mathbf{c}})d\mathbf{c}\Big|dn\\
	&\int_{[P'_0]}\big|\overline{\phi_1'(pk)}\phi_2'(p\transp{y}^{-1})\big|\textbf{1}_{|\det pz_1|\geq 1}dp|\det z_1|^{-\Re(s_1+s_2)}d^{\times}z_1|\det y|^{\Re(s_2)}dydk.
\end{align*}
By orthogonality, $z_{1,\fin}$ supports in $Z'(\mathfrak{N}^{-1}\mathcal{I}),$ where $\mathcal{I}$ is a fractional ideal depending on $\supp f_S.$ Denote by $\mathcal{Z}=Z'(F_{\infty})Z'(\mathfrak{N}^{-1}\mathcal{I}).$ 

Let $\|\phi_2'\|_{\infty}$ be the sup-norm of $\phi_2'.$ Then  
\begin{align*}
\int_{|\det pz_1|\geq 1}\big|\overline{\phi_1'(pk)}\phi_2'(p\transp{y}^{-1})\big|dp\leq \|\phi_2'\|_{\infty}\int_{[P_0']}\big|\phi_1'(pk)\big|\textbf{1}_{|\det pz_1|\geq 1}dp.
\end{align*}
Therefore, 
\begin{align*}
\mathcal{J}\ll \int_{\mathcal{Z}}|\det z_1|^{-\Re(s_1+s_2)}\Big[\prod_{v\in\Sigma_F}\mathcal{J}_v(z_{1,v})\Big]\int_{K'}\int_{\substack{[P_0']}}\big|\phi_1'(pk)\big|\textbf{1}_{|\det pz_1|\geq 1}dpdkd^{\times}z_1,
\end{align*}
where $\mathcal{J}_v(z_{1,v})$ is defined by 
\begin{align*}
\int\int\Big|\int_{F_v^n}f_v(u(\mathbf{c}_v) n_v\iota(y_v))\int_{\mathcal{O}_{F_v}^{\times}}\theta_v(\eta \beta_vz_{1,v}\transp{\mathbf{c}}_v)d^{\times}\beta_vd\mathbf{c}_v\Big|dn_v|\det y_v|_v^{\Re(s_2)}dy_v
\end{align*}
if $v<\infty,$ where $y_v$ (resp. $n_v$) ranges over $G'(F_v)$ (resp. $N_P(F_v)$); and is defined as
\begin{align*}
\int_{G'(F_v)}\int_{N_P(F_v)}\Big|\int_{F_v^n}f_v(u(\mathbf{c}_v) n_v\iota(y_v))\theta_v(\eta z_{1,v}\transp{\mathbf{c}}_v)d\mathbf{c}_v\Big|dn_v|\det y_v|_v^{\Re(s_2)}dy_v
\end{align*}
if $v\mid\infty$. The integral over $\beta_v$ comes from a change of variable $z_{1}\mapsto z_1\prod_{v<\infty}\beta_v$ and taking the integral over $\beta_v$'s.

Let $v\in\Sigma_{F}.$ Let $l(\mathbf{c}_v)=\min_{1\leq j\leq n}\{e_v(c_j)\},$ where $\mathbf{c}_v=(c_1,\cdots,c_n).$

\begin{itemize}
\item Suppose $v\notin S$ and $v\nmid\mathfrak{N}.$ Then $\mathcal{J}_v(z_{1,v})=\mathcal{J}_v^+(z_{1,v})+\mathcal{J}_v^-(z_{1,v}),$ where $\mathcal{J}_v^+(z_{1,v})$ is defined by 
\begin{align*}
\int_{G'(F_v)}\int_{N_P(F_v)}\Big|\int_{F_v^n}f_v(u(\mathbf{c}_v) n_v\iota(y_v))\theta_v(\eta z_{1,v}\transp{\mathbf{c}}_v)\textbf{1}_{l(\mathbf{c}_v)\geq 0}d\mathbf{c}_v\Big|dn_v|\det y_v|_v^{\Re(s_2)}dy_v,
\end{align*}
and $\mathcal{J}_v^-(z_{1,v})$ is defied similarly, with $\textbf{1}_{l(\mathbf{c}_v)\geq 0}$ replaced by $\textbf{1}_{l(\mathbf{c}_v)< 0}.$

By Lemma \ref{lem20} we have $\mathcal{J}_v^+(z_{1,v})=1,$ and $\mathcal{J}_v^-(z_{1,v})$ is 
\begin{equation}\label{22}
\ll \sum_{l\leq -1}|z_{1,v}|_v^{-1}q_v^{-(n-1)l}q_v^{(n+1)l}q_v^{(n+1)l\Re(s_2)}\ll |z_{1,v}|_v^{-1}q_v^{-2-(n+1)\Re(s_2)},
\end{equation}
where the implied constant is absolute. Here $q_v^{(n-1)l}$ is the contribution from $c_1, \cdots, c_{n-1},$ while the contribution from $c_n$ is $|z_{1,v}|_v^{-1}$ due to the Ramanujan sum $\int_{\mathcal{O}_{F_v}^{\times}}\theta_v(\eta \beta_vz_{1,v}\transp{\mathbf{c}}_v)d^{\times}\beta_v$. The factor $q_v^{-(n+1)l}$ (resp. $q_v^{-(n+1)l\Re(s_2)}$) is the contribution from $n_v$ (resp. $y_v$) according to the second part of Lemma \ref{lem20}.

\item Suppose $v\mid\mathfrak{N}.$ By the first part of Lemma \ref{lem20} we have $l(\mathbf{c}_v)\geq e_v(\mathfrak{N}).$ So 
	$$
	\mathcal{J}_v(z_{1,v})=\mathcal{J}_v^+(z_{1,v})\ll q_v^{-ne_v(\mathfrak{N})}\|f_v\|_{\infty}\textbf{1}_{e_v(z_{1,v})\geq -e_v(\mathfrak{N})}.
	$$ 
\item Suppose $v\mid\infty.$ By Lemma \ref{lem18} the integral $\mathcal{J}_v(z_{1,v})$ converges, as the supports of $\mathbf{c}_v,$ $n_v$ and $y_v$ are compact. Here we notice that the archimedean Fourier transform is a Schwartz function $h_v$ of $z_{1,v}.$

\item Suppose $v\in S\cap\Sigma_{F,\fin}.$ Write simply that  $l=l(\mathbf{c}_v)=\min_{1\leq j\leq n}\{e_v(c_j)\}.$ 

If $l\geq 0,$ then $f_{v}(u(\mathbf{c}_{v}) n_{v}\iota(y_{v}))\neq 0$ unless $u_v\in C_1$ and
$y_v\in C_2,$ where  $C_1\subset F_v^n$ and $C_2\subset G'(F_v)$ are compact sets determined by $\supp\tilde{f}_v.$ 

If $l\leq -1,$ then there exists $k_0\in K_v'$ (depending on $\mathbf{c}_v$) such that $\iota(k_0^{-1})u(\mathbf{c}_v)\iota(k_0)=u(\eta\varpi_v^{l}).$ Appealing to Lemma \ref{lem18'} we then derive that $f_{v}(u(\mathbf{c}_{v}) n_{v}\iota(y_{v}))\neq 0$ unless
$u_v\in \varpi_v^{-l}C_1'$ and
$y_v\in \varpi_v^{-l}K_v'\diag(I_{n-1},\varpi_v^{-l}) C_2',$ where  $C_1'\subset F_v^n$ and $C_2'\subset G'(F_v)$ are compact sets determined by $\supp\tilde{f}_v.$ 

Parallel to \eqref{22} we derive that $\mathcal{J}_v^-(z_{1,v})\ll |z_{1,v}|_v^{-1}(1+q_v^{-2-(n+1)\Re(s_2)}),$ where the implied constant depends on $\supp\tilde{f}_v.$ Note that there are only finitely many $v$'s in $S.$ So $\prod_{v\in S\cap\Sigma_{F,\fin}}\mathcal{J}_v^-(z_{1,v})\ll \prod_{v\in S\cap\Sigma_{F,\fin}}|z_{1,v}|_v^{-1}.$ 
\end{itemize}

Write $|z_{1,\fin}|_{\fin}=m^{-1},$ $m\in |\mathfrak{N}|^{-1}M_S^{-1}\mathbb{Z}_{\geq 1},$ where $M_S\in\mathbb{Q}$ is determined by $\supp f_S.$ Collecting the above discussions we then deduce that $\mathcal{J}$ is 
\begin{align*}
	\ll &\frac{|\mathfrak{N}|^{-n}}{\Vol(K_0(\mathfrak{N}))}\int_{Z'(F_{\infty})}|\det z_{1,\infty}|_{\infty}^{-\Re(s_1+s_2)}\prod_{v\mid\infty}\mathcal{J}_v(z_{1,v})\int_{K'}\int_{\substack{[P_0']}}\big|\phi_1'(pk)\big|\\
	&\prod_{\substack{v\in\Sigma_{F,\fin}}}(1+O(q_v^{-2-(n+1)\Re(s_2)}))\sum_{\substack{\mathfrak{m}:\ N_F(\mathfrak{m})=m\\m\in |\mathfrak{N}|^{-1}M_S^{-1}\mathbb{Z}_{\geq 1}\\ m^n\leq |\det z_{1,\infty}|_{\infty}|\det p|}}m^{1+n\Re(s_1+s_2)}dpdkd^{\times}z_1,
\end{align*}
where $\mathfrak{m}$'s are fractional ideals in $F$, and $N_F(\mathfrak{m})$ is the absolute norm.

Consequently, when $-2-(n+1)\Re(s_2)<-1,$ i.e., $\Re(s_2)>-1/(n+1),$  
\begin{align*}
	\mathcal{J}\ll &\frac{|\mathfrak{N}|^{-n}}{\Vol(K_0(\mathfrak{N}))}\cdot (|\mathfrak{N}|M_S)^{2+n\Re(s_1+s_2)}\int_{Z'(F_{\infty})}|\det z_{1,\infty}|_{\infty}^{2/n}\prod_{v\mid\infty}\mathcal{J}_v(z_{1,v})d^{\times}z_1\\
	&\int_{K'}\int_{\substack{[P_0']}}\big|\phi_1'(pk)\big||\det p|^{\frac{2}{n}+\Re(s_1+s_2)}\textbf{1}_{|\det p|\geq (|\mathfrak{N}|M_S)^{-n}}dpdk.
\end{align*}

Since $\phi_1'$ decays rapidly over $Z'(\mathbb{A}_F)P_0'(F)\backslash G'(\mathbb{A}_F),$ then 
$$
\int_{K'}\int_{\substack{[P_0']}}\big|\phi_1'(pk)\big||\det p|^{\frac{2}{n}+\Re(s_1+s_2)}dpdk<\infty.
$$

Moreover, by the support of $f_{\infty}$ we have 
$$
\int_{Z'(F_{\infty})}|\det z_{1,\infty}|_{\infty}^{2/n}\prod_{v\mid\infty}\mathcal{J}_v(z_{1,v})d^{\times}z_1<\infty.
$$
Hence \eqref{63.} follows.
\end{proof}


\begin{prop}\label{prop21.}
The function $\mathcal{F}_{0,1}J^{\bi}_{\Geo,\Res}(f,\textbf{s})$ converges absolutely outside $s_1+s_2\in\{0,-1\},$ 
and extends to a meromorphic function in $\mathcal{R}$ with possible simple poles at $s_1+s_2\in \{0, -1\}.$ 
\end{prop}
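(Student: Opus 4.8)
The plan is to insert the explicit expression
\[
E_{\gamma,\Res}(x,s_1+s_2+1;f_P^{\dag}(\cdot;z_2,y))=-\frac{f_P^{\dag}(\mathbf{0};z_2,y)\,|\det x|^{s_1+s_2+1}}{n(s_1+s_2+1)}+\frac{\hat{f}_P^{\dag}(\mathbf{0};z_2,y)\,|\det x|^{s_1+s_2}}{n(s_1+s_2)}
\]
into the definition of $\mathcal{F}_{0,1}J^{\bi}_{\Geo,\Res}(f,\textbf{s})$ and to use that $|\det x|=1$ for $x\in[\overline{G'}]$ (recall we have identified $\overline{G'}$ with $G'^0$). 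Both powers of $|\det x|$ then drop out and, writing $P(y):=\int_{[\overline{G'}]}\phi_1'(x)\overline{\phi_2'(xy)}dx$, one is left with
\[
\mathcal{F}_{0,1}J^{\bi}_{\Geo,\Res}(f,\textbf{s})=-\frac{\mathcal{A}(s_2)}{n(s_1+s_2+1)}+\frac{\mathcal{B}(s_2)}{n(s_1+s_2)},
\]
where $\mathcal{A}(s_2)$ and $\mathcal{B}(s_2)$ are the integrals over $(z_2,y)\in Z'(\mathbb{A}_F)\times\overline{G'}(\mathbb{A}_F)$ of $f_P^{\dag}(\mathbf{0};z_2,y)\,P(y)\,\overline{\omega}'(z_2)|\det z_2y|^{s_2}$, resp.\ of $\hat{f}_P^{\dag}(\mathbf{0};z_2,y)\,P(y)\,\overline{\omega}'(z_2)|\det z_2y|^{s_2}$. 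In particular the only $\textbf{s}$-dependence outside $\mathcal{A}$ and $\mathcal{B}$ is carried by the two linear factors $n(s_1+s_2+1)$ and $n(s_1+s_2)$, which already isolates the candidate poles on $s_1+s_2\in\{-1,0\}$; it remains to show that $\mathcal{A}$ and $\mathcal{B}$ are holomorphic on $\mathcal{R}$ (indeed $\mathcal{A}$ on all of $\mathbb{C}$).

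For $\mathcal{A}$ the key point is that $f_P^{\dag}(\mathbf{0};z_2,y)=\int_{N_P(\mathbb{A}_F)}f(n\iota(z_2y))\,dn$ has \emph{compact} support in $z_2y\in G'(\mathbb{A}_F)$: the $(n+1,n+1)$-entry of $n\iota(g')$ equals $1$, so in the condition $n\iota(g')\in Z(\mathbb{A}_F)\cdot(\text{fixed compact})$ the central parameter is pinned down, forcing $g'$ into a compact set and, for fixed $g'$, the $N_P$-variable into a compact set. Since $|P(y)|\leq\|\phi_1'\|_2\|\phi_2'\|_2=1$ by Cauchy--Schwarz and $|\det z_2y|^{s_2}$ is bounded on that compact support, $\mathcal{A}(s_2)$ converges absolutely and locally uniformly on $\mathbb{C}$; hence $\mathcal{A}$ is entire and $-\mathcal{A}(s_2)/n(s_1+s_2+1)$ is meromorphic with at most a simple pole along $s_1+s_2=-1$.

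The substantive step is $\mathcal{B}$. Here $\hat{f}_P^{\dag}(\mathbf{0};z_2,y)=\int_{\mathbb{A}_F^n}\int_{N_P(\mathbb{A}_F)}f(u(\mathbf{c})n\iota(z_2y))\,dn\,d\mathbf{c}$ is, by the remark following Proposition~\ref{prop22}, no longer compactly supported. The plan is to run the same place-by-place analysis already used in the proof of Proposition~\ref{prop19} --- namely Lemmas~\ref{lem18}, \ref{lem18.}, \ref{lem18'} and \ref{lem20} --- to locate the support of $\hat{f}_P^{\dag}(\mathbf{0};z_2,y)$ and bound its size in terms of $|\det z_2y|=|z_2|^{n}$, and then to pair that bound with the rapid decay of the cuspidal matrix coefficient $P(y)$ on $Z'(\mathbb{A}_F)\backslash G'(\mathbb{A}_F)$ (note $P\equiv0$, hence $\mathcal{B}\equiv0$, unless $\pi_1'\simeq\pi_2'$; first carrying out the $x$-integration to expose this decay). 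Exactly as in Proposition~\ref{prop19}, the product over all places of the resulting local contributions converges precisely when $\Re(s_2)>-1/(n+1)$, so $\mathcal{B}(s_2)$ converges absolutely and locally uniformly there and defines a holomorphic function of $s_2$ on $\{\Re(s_2)>-1/(n+1)\}\supseteq\mathcal{R}$. This estimate is the main obstacle: one must combine the delicate local support description of the degenerate section $\hat{f}_P^{\dag}$ with a strong enough decay statement for matrix coefficients of cusp forms in order to reach the full range $\Re(s_2)>-1/(n+1)$ rather than merely a half-plane in $s_2$.

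Combining the two parts, $\mathcal{F}_{0,1}J^{\bi}_{\Geo,\Res}(f,\textbf{s})$ is the sum of an entire function divided by $n(s_1+s_2+1)$ and a function holomorphic on $\mathcal{R}$ divided by $n(s_1+s_2)$; it therefore converges absolutely off $s_1+s_2\in\{0,-1\}$ and extends to a meromorphic function on $\mathcal{R}$ whose only possible singularities are simple poles along $s_1+s_2\in\{0,-1\}$. (For $n\geq2$ the line $s_1+s_2=-1$ does not meet $\mathcal{R}$, so the genuine pole in $\mathcal{R}$ sits only on $s_1+s_2=0$, matching the pole of $L(1+s_1+s_2,\pi_1'\times\widetilde{\pi}_2')$ at $s_1+s_2=0$ when $\pi_1'\simeq\pi_2'$.)
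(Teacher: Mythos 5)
Your structural reduction is fine and agrees with what the paper itself records in \textsection\ref{8.2.3}: since $|\det x|=1$ on $[\overline{G'}]\simeq G'^0$, the residual term contributes
$-\mathcal{A}(s_2)/\bigl(n(s_1+s_2+1)\bigr)+\mathcal{B}(s_2)/\bigl(n(s_1+s_2)\bigr)$, and your treatment of $\mathcal{A}$ (compact support of $f_P^{\dag}(\mathbf{0};z_2,y)$ in $z_2y$ because the $(n+1,n+1)$-entry pins down the central scaling, plus Cauchy--Schwarz for $P(y)$) is correct. The problem is the coefficient $\mathcal{B}$, and there your argument has a genuine gap. First, the decay input you invoke is false: $P(y)=\langle\phi_1',\pi'(y)\phi_2'\rangle$ is a \emph{matrix coefficient} of a unitary cuspidal representation, and such matrix coefficients are not rapidly decreasing on $Z'(\mathbb{A}_F)\backslash G'(\mathbb{A}_F)$; they decay only polynomially, at a rate governed by bounds towards Ramanujan (Macdonald/Harish--Chandra $\Xi$-type bounds). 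The rapid decay used in the proofs of Propositions \ref{prop22} and \ref{prop19} is that of the cusp forms $\phi_j'$ in the $[\overline{G'}]$-variable (Siegel-domain decay), not of $P$ in the $y$-variable, so ``carrying out the $x$-integration to expose this decay'' does the opposite: it replaces rapidly decaying cusp forms by a slowly decaying matrix coefficient.

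Second, the assertion that the place-by-place analysis of Proposition \ref{prop19} gives convergence of $\mathcal{B}$ ``precisely when $\Re(s_2)>-1/(n+1)$'' does not follow from the cited lemmas. In Proposition \ref{prop19} the exponent $-2-(n+1)\Re(s_2)$ in \eqref{22}, which is exactly what forces the threshold $\Re(s_2)>-1/(n+1)$, comes from the Ramanujan-sum saving $|z_{1,v}|_v^{-1}$ produced by the nontrivial additive character $\theta(\eta z_1\transp{\mathbf{c}})$ in the dual term. In $\mathcal{B}$ you are at the zero frequency, $\hat{f}_P^{\dag}(\mathbf{0};z_2,y)=\int\int f(u(\mathbf{c})n\iota(z_2y))\,dn\,d\mathbf{c}$, so there is no oscillation over $c_n$ and that saving disappears: transposing Lemma \ref{lem20} verbatim, the $l$-th shell at an unramified place contributes $\asymp q_v^{nl}\cdot q_v^{-(n+1)l}\cdot q_v^{-(n+1)l\Re(s_2)}$, giving a local factor $1+O\bigl(q_v^{-1-(n+1)\Re(s_2)}\bigr)$ whose Euler product converges only for $\Re(s_2)>0$. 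To reach $\Re(s_2)>-1/(n+1)$ one must replace the lost factor $q_v^{-l}$ by genuine decay of the local components of $P(y)$ along the shells $y_v\in\varpi_v^{l}\diag(I_{n-1},\varpi_v^{l})K_v'$ singled out by Lemma \ref{lem20}(iii), and then check that the resulting exponent suffices uniformly in $v$ and $l$; this is a quantitative step (depending on bounds towards Ramanujan for $\pi'$, and not obviously sufficient for small $n$ by the naive exponent count) that your proposal neither states nor carries out. As written, the argument establishes holomorphy of $\mathcal{B}$ only in a half-plane strictly smaller than the one needed for the claimed meromorphic continuation to all of $\mathcal{R}$.
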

\begin{proof}
We omit the proof here as it is quite similar to (but easier than) that of Proposition \ref{prop19}. 
\end{proof}

Combining Propositions \ref{prop22}, \ref{prop19} and \ref{prop21.} we deduce the following.
\begin{thm}\label{thm22}
Let notation be as before. Then $\mathcal{F}_{0,1}J^{\bi}_{\Geo}(f,\textbf{s})$ converges absolutely in $\Re(s_1)+\Re(s_2)>0,$ admits a meromorphic continuation to the region $\mathcal{R}$ (cf. \eqref{R.}) with at most simple poles at $s_1+s_2\in \{0,-1\}.$

 Moreover, for $\mathbf{s}\in\mathcal{R},$ the holomorphic parts of $\mathcal{F}_{0,1}J^{\bi}_{\Geo}(f,\textbf{s})$ satisfy 
\begin{align*}
\mathcal{F}_{0,1}J^{\bi}_{\Geo,+}(f,\textbf{s})\ll |\mathfrak{N}|^{-n\Re(s_1+s_2)},\ \ \mathcal{F}_{0,1}J^{\bi,\wedge}_{\Geo,+}(f,\textbf{s})=O(|\mathfrak{N}|^{2+n\Re(s_1+s_2)}),
\end{align*}
 where the implied constants depend on $\pi_1',$ $\pi_2',$ and $f_S$. 
\end{thm}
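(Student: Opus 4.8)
The plan is to assemble the three preceding results --- Propositions \ref{prop22}, \ref{prop19} and \ref{prop21.} --- together with the decomposition \eqref{260}. Recall that by Proposition \ref{prop22} the limit $\mathcal{F}_{0,1}J^{\bi}_{\Geo}(f,\textbf{s})=\lim_{T\to\infty}\mathcal{F}_{0,1}J^{\bi,T}_{\Geo}(f,\textbf{s})$ exists in the cone $\Re(s_1)+\Re(s_2)>0$ and equals the absolutely convergent integral \eqref{56}; moreover, the bound $\mathcal{F}_{0,1}J^{\bi}_{\Geo}(f,\textbf{s})\ll|\mathfrak{N}|^{-n\Re(s_1+s_2)}$ holds there. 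The first step is simply to record that \eqref{56} is a Rankin--Selberg convolution representing $\Lambda(1+s_1+s_2,\pi_1'\times\widetilde\pi_2')$, which is entire except for the (at most simple) pole of the completed $L$-function at $s_1+s_2=0$ when $\pi_1'\simeq\pi_2'$; this already exhibits the pole at $s_1+s_2=0$. The second pole, at $s_1+s_2=-1$, will come from the residual term. Thus the main content is to promote the domain $\Re(s_1)+\Re(s_2)>0$ to the full region $\mathcal{R}=\{\Re(s_1),\Re(s_2)>-1/(n+1)\}$.

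Next I would invoke the identity \eqref{260}, namely
\[
\mathcal{F}_{0,1}J^{\bi}_{\Geo}(f,\textbf{s})=\mathcal{F}_{0,1}J^{\bi}_{\Geo,+}(f,\textbf{s})+\mathcal{F}_{0,1}J^{\bi,\wedge}_{\Geo,+}(f,\textbf{s})+\mathcal{F}_{0,1}J^{\bi}_{\Geo,\Res}(f,\textbf{s}),
\]
which is obtained by substituting the Poisson-summation decomposition of $E_{\gamma}(x,s;f_P^{\dag}(\cdot;z_2,y))$ and is valid, a priori, in $\Re(s_1)+\Re(s_2)>0$ by Proposition \ref{prop22}. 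By Lemma \ref{lem19}, the term $\mathcal{F}_{0,1}J^{\bi}_{\Geo,+}(f,\textbf{s})$ is entire on $\mathbb{C}^2$ with $\ll|\mathfrak{N}|^{-n\Re(s_1+s_2)}$; by Proposition \ref{prop19}, the term $\mathcal{F}_{0,1}J^{\bi,\wedge}_{\Geo,+}(f,\textbf{s})$ converges absolutely and is holomorphic in $\mathcal{R}$ with $\ll|\mathfrak{N}|^{2+n\Re(s_1+s_2)}$; and by Proposition \ref{prop21.}, the term $\mathcal{F}_{0,1}J^{\bi}_{\Geo,\Res}(f,\textbf{s})$ extends meromorphically to $\mathcal{R}$ with at most simple poles at $s_1+s_2\in\{0,-1\}$. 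Since all three right-hand terms extend (meromorphically) to $\mathcal{R}$, the right-hand side of \eqref{260} defines a meromorphic function on $\mathcal{R}$; by the identity theorem it must agree with the analytic continuation of the left-hand side, which is thereby shown to continue to $\mathcal{R}$ with at most simple poles on $s_1+s_2\in\{0,-1\}$. The growth bounds on the holomorphic parts $\mathcal{F}_{0,1}J^{\bi}_{\Geo,+}$ and $\mathcal{F}_{0,1}J^{\bi,\wedge}_{\Geo,+}$ are then just a restatement of Lemma \ref{lem19} and Proposition \ref{prop19}.

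The main obstacle in this chain is entirely contained in Proposition \ref{prop19}: establishing that $\mathcal{F}_{0,1}J^{\bi,\wedge}_{\Geo,+}(f,\textbf{s})$ converges in the enlarged region $\mathcal{R}$ rather than merely in the cone $\Re(s_1)+\Re(s_2)>0$. This is where the local analysis of Lemmas \ref{lem18}, \ref{lem18.}, \ref{lem18'} and \ref{lem20} is needed --- in particular the geometric-progression estimate \eqref{22}, whose convergence for $\Re(s_2)>-1/(n+1)$ is exactly what pins down the boundary of $\mathcal{R}$ --- together with the Ramanujan-sum cancellation in the finite Fourier transform and the rapid decay of the cusp forms $\phi_1',\phi_2'$ on $[P_0']$. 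Once Proposition \ref{prop19} is in hand, the remainder of the argument for Theorem \ref{thm22} is the elementary bookkeeping of combining meromorphic continuations described above; no further estimate is required, and the assembled statement follows.
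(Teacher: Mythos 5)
Your proposal is correct and follows essentially the same route as the paper: Theorem \ref{thm22} is obtained there precisely by combining Proposition \ref{prop22} (convergence and the bound in the cone $\Re(s_1)+\Re(s_2)>0$) with the Poisson-summation decomposition \eqref{260}, Lemma \ref{lem19}, Proposition \ref{prop19}, and Proposition \ref{prop21.}, exactly as you do. Your added remarks on the identity theorem and on Proposition \ref{prop19} being the real analytic content are consistent with the paper's argument and require no change.
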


\subsection{The Orbital Integral: $\mathcal{F}_{1,0}J^{\bi,T}_{\Geo}(f,\textbf{s})$}
	Similar to Lemma \ref{lem13}, the set $\{w_n, \gamma\}$ is a complete set of representatives for $Z(F)P(F)\backslash P(F)w_nP(F)/\iota(G'(F)).$ In parallel with Lemma \ref{lem15} we can reduce $\mathcal{F}_{1,0}J^{\bi,T}_{\Geo}(f,\textbf{s})$ to 
	\begin{align*}
		\int_{[Z'^T]}\int_{[Z'^T]}&\int_{[\overline{G'}]}\int_{[\overline{G'}]}\int_{[N_P]}\sum_{\gamma\in Z(F)\backslash P(F)\gamma\iota(G'(F))}f(\iota(z_1x)^{-1}u\gamma \iota(z_2y))\\
		&\qquad \phi_1'(x)\overline{\phi_2'(y)}\omega'(z_1)\overline{\omega_2(z_2)}|\det z_1x|^{s_1}|\det z_2y|^{s_2}dudxdyd^{\times}z_1d^{\times}z_2.
	\end{align*}

Recall that for $\textbf{x}\in M_{n,1}(\mathbb{A}_F),$ we set $u(\textbf{x})=\begin{pmatrix}
	I_n&\\
	\textbf{x}&1
\end{pmatrix}$. Denote by 
$$
^{\dag}f_P(\textbf{x};z_2,y)=\int_{N_P(\mathbb{A}_F)}f(nu(\textbf{x})\iota(z_2y))dn.
$$ 
Then $^{\dag}f_P$ is a Schwartz-Bruhat function on $\mathbb{A}_F^n\times Z'(\mathbb{A}_F)\times \overline{G'}(\mathbb{A}_F).$ Define 
\begin{align*}
	E_{\gamma}(x,s;^{\dag}f_P(\cdot;z_2,y))=\sum_{\delta}\int_{Z'(\mathbb{A}_F)}{^{\dag}f_P}(\eta z_1\delta x;z_2,y)|\det z_1x|^{s}d^{\times}z_1,
\end{align*}
which converges absolutely when $\Re(s)>1.$ Here $\delta$ ranges over $P_0(F)\backslash \overline{G'}(F).$ Moreover, for fixed $z_2\in Z'(\mathbb{A}_F)$ and $y\in \overline{G'}(\mathbb{A}_F),$ $E_{\gamma}(x,s;{^{\dag}f_P}(\cdot;z_2,y))$ has a meromorphic continuation to the whole $s$-plane and admits the function equation connecting the evaluations at $s$ and $1-s.$ Similar to Proposition \ref{prop22}, we obtain the following:
\begin{prop}\label{prop23}
Let notation be as before. Let $\textbf{s}=(s_1,s_2)$ be such that $\Re(s_1)+\Re(s_2)>0.$ Then 
\begin{align*}
	\mathcal{F}_{1,0}J^{\bi}_{\Geo}(f,\textbf{s}):=\lim_{T\rightarrow\infty} \mathcal{F}_{1,0}J^{\bi,T}_{\Geo}(f,\textbf{s})
\end{align*}
exists, and is represented by  the absolutely convergent integral 
\begin{align*}
\int_{[\overline{G'}]}\int_{Z'(\mathbb{A}_F)}\mathcal{P}(\textbf{s};\phi_2', ^{\dag}f_P(\cdot;z_2,y))\omega'(z_1)\overline{\omega_2(z_2)}|\det(z_2y)|^{s_2}d^{\times}z_2dy,
\end{align*} 
where $\mathcal{P}(\textbf{s};\phi_2', f_P^{\dag}(\cdot;z_2,y))$ is defined to be 
\begin{align*}
\int_{[\overline{G'}]}\phi_1'(x)\overline{\phi_2'(xy)}E_{\gamma}(x,s_1+s_2+1;^{\dag}f_P(\cdot;z_2,y))dyd^{\times}z_2,
\end{align*}
which is a Rankin-Selberg convolution representing $\Lambda(1+s_1+s_2,\pi_1'\times\widetilde{\pi}_2').$ Moreover, $$
\mathcal{F}_{1,0}J^{\bi}_{\Geo}(f,\textbf{s})\ll |\mathfrak{N}|^{-n(s_1+s_2)},\ \ \Re(s_1)+\Re(s_2)>0,
$$ 
where the implied constant depends on $\pi_1',$ $\pi_2',$ and $f_S$. 
\end{prop}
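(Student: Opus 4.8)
The plan is to mirror, \emph{mutatis mutandis}, the argument already carried out for $\mathcal{F}_{0,1}J^{\bi,T}_{\Geo}(f,\textbf{s})$ in Proposition \ref{prop22}; the whole point is that the roles of the two $\mathrm{GL}(n)$-variables $x$ and $y$ (equivalently, the two sides of the double coset decomposition) are swapped when passing from $\mathcal{F}_{0,1}$ to $\mathcal{F}_{1,0}$. First I would record the double coset decomposition: exactly as in Lemma \ref{lem13}, the set $\{w_n,\gamma\}$ with $\gamma=\begin{pmatrix}I_{n-1}&&\\&1&\\&1&1\end{pmatrix}$ is a complete set of representatives for $Z(F)P(F)\backslash P(F)w_nP(F)/\iota(G'(F))$ — the only change is that the quotient is now taken on the left by $P(F)$ and on the right by $\iota(G'(F))$, which is the transpose-inverse of the situation in Lemma \ref{lem13}. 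Next, the contribution of the $w_n$-orbit vanishes: its stabilizer (under the left $P$, right $\iota(G')$ action) contains a copy of $N_{P'}$ acting on the $y$-variable through $\phi_2'$, so integrating over $[N_{P'}]$ and invoking the cuspidality of $\phi_2'$ (rather than of $\phi_1'$ as in Lemma \ref{lem15}) kills that term identically for all $T$ and all $\textbf{s}$. Hence only the $\gamma$-orbit survives, whose stabilizer is again a diagonally embedded mirabolic $\Delta\iota(P_0')$, and unfolding that orbit together with the integration over $[N_P]$ produces the kernel section $^{\dag}f_P(\textbf{x};z_2,y)=\int_{N_P(\mathbb{A}_F)}f(nu(\textbf{x})\iota(z_2y))\,dn$, which is Schwartz-Bruhat in $\textbf{x}$, and the associated Eisenstein series $E_{\gamma}(x,s;{^{\dag}f_P}(\cdot;z_2,y))$.

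After this reduction the expression for $\mathcal{F}_{1,0}J^{\bi,T}_{\Geo}(f,\textbf{s})$ is, up to the harmless reversal $u(\textbf{x})n\leftrightarrow nu(\textbf{x})$ in the definition of the section, formally identical to the one obtained for $\mathcal{F}_{0,1}$ just before Proposition \ref{prop22}. So I would then run the same absolute-convergence estimate: reparametrize via Iwasawa decomposition, split off the center $Z'$, carry out the $p$-adic analysis of $f_v(nu(\eta z_v)\iota(y_v))\neq 0$ place by place (the analogues of Lemmas \ref{lem18}, \ref{lem18.}, \ref{lem18'} hold verbatim because $N_P$ is abelian and the support conditions are insensitive to the order $nu(\textbf{x})$ versus $u(\textbf{x})n$ once one also transposes-inverts, which does not affect being in $K_v$ or $\supp\tilde f_v$), and bound the resulting cuspidal integral over $[P_0']$ using the rapid decay of $\phi_1'$ and $\phi_2'$. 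This shows the $T\to\infty$ limit exists in the region $\Re(s_1)+\Re(s_2)>0$, equals the claimed absolutely convergent integral, and gives the level bound $\mathcal{F}_{1,0}J^{\bi}_{\Geo}(f,\textbf{s})\ll|\mathfrak{N}|^{-n(s_1+s_2)}$ with the same implied constant dependence. Finally, that the inner period $\int_{[\overline{G'}]}\phi_1'(x)\overline{\phi_2'(xy)}E_{\gamma}(x,s_1+s_2+1;{^{\dag}f_P}(\cdot;z_2,y))\,dx$ is a Rankin–Selberg convolution representing $\Lambda(1+s_1+s_2,\pi_1'\times\widetilde{\pi}_2')$ is exactly the Jacquet–Piatetski-Shapiro–Shalika / Jacquet–Shalika integral, applied to the Schwartz section ${^{\dag}f_P}$, precisely as invoked at the end of the proof of Proposition \ref{prop22}.

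I do not expect any genuinely new obstacle here; the statement is deliberately phrased as ``similar to Proposition \ref{prop22}.'' The one place that warrants a moment's care is the bookkeeping in the change of variables that moves $\iota(G'(F))$ from the right to the left of the double coset — i.e.\ making sure that, after the reduction, the cusp form attached to the $y$-integral (hence the one whose cuspidality annihilates the $w_n$-orbit) is $\phi_2'$ and not $\phi_1'$, and that the Eisenstein section one lands on is $nu(\textbf{x})\iota(z_2y)$ rather than $u(\textbf{x})n\iota(z_2y)$. Once that transpose-inverse symmetry is set up correctly, every estimate and every identity is a word-for-word transcription of the $\mathcal{F}_{0,1}$ case, so the proof is legitimately omitted with a pointer to the proof of Proposition \ref{prop22}.
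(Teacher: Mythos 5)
Your proposal is correct and follows essentially the same route as the paper: the paper states Proposition \ref{prop23} precisely by mirroring Proposition \ref{prop22} (same double coset representatives $\{w_n,\gamma\}$, vanishing of the $w_n$-orbit by cuspidality of $\phi_2'$, unfolding to the section $^{\dag}f_P$ and its Eisenstein series), and in the proof of Theorem \ref{thm24} it makes the transfer explicit via the identity $\mathcal{F}_{1,0}J^{\bi}_{\Geo}(f,\textbf{s};\phi_1',\phi_2')=\mathcal{F}_{0,1}J^{\bi}_{\Geo}(f^{\vee},\textbf{s}^{\vee};\phi_2',\phi_1')$ with $f^{\vee}(g)=f(g^{-1})$ and $\mathbf{s}^{\vee}=(s_2,s_1)$. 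One small correction to your justification: the symmetry that makes the local support lemmas (Lemmas \ref{lem18}, \ref{lem18.}, \ref{lem18'}) carry over is $g\mapsto g^{-1}$, which preserves $Z(F_v)K_v$ since $K_v$ is a group, not transpose-inverse, which does \emph{not} preserve the Hecke congruence subgroup $K_0(\mathfrak{N})$; so the estimates transfer after this change of variables rather than ``verbatim'', but this does not affect the validity of your argument.
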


Parallel to Propositions \ref{prop19} and \ref{prop21.} we obtain 
\begin{thm}\label{thm24}
Let notation be as before. Then $\mathcal{F}_{1,0}J^{\bi}_{\Geo}(f,\textbf{s})$ converges absolutely in $\{(s_1,s_2)\in\mathbb{C}^2,\ \Re(s_1)+\Re(s_2)>0\},$ admits a meromorphic continuation to the region $\mathcal{R},$ with at most simple poles at $s_1+s_2\in \{0,-1\}.$ 

Moreover, for $\mathbf{s}\in\mathcal{R},$ the holomorphic parts of $\mathcal{F}_{1,0}J^{\bi}_{\Geo}(f,\textbf{s})$ satisfy 
\begin{align*}
\mathcal{F}_{1,0}J^{\bi}_{\Geo,+}(f,\textbf{s})\ll |\mathfrak{N}|^{-n\Re(s_1+s_2)},\ \ \mathcal{F}_{1,0}J^{\bi,\wedge}_{\Geo,+}(f,\textbf{s})=O(|\mathfrak{N}|^{2+n\Re(s_1+s_2)}),
\end{align*}
 where the implied constants depend on $\pi_1',$ $\pi_2',$ and $f_S$. 
\end{thm}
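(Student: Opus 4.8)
The plan is to follow, almost verbatim, the route already used for $\mathcal{F}_{0,1}J^{\bi}_{\Geo}(f,\textbf{s})$ in Theorem \ref{thm22}, exploiting that ${}^{\dag}f_P(\textbf{x};z_2,y)=\int_{N_P(\mathbb{A}_F)}f(nu(\textbf{x})\iota(z_2y))dn$ is again a Schwartz--Bruhat function in $\textbf{x}$ and that Proposition \ref{prop23} already gives, for $\Re(s_1)+\Re(s_2)>0$, absolute convergence and an expression for $\mathcal{F}_{1,0}J^{\bi}_{\Geo}(f,\textbf{s})$ as a Rankin--Selberg period against the Eisenstein series $E_{\gamma}(x,s_1+s_2+1;{}^{\dag}f_P(\cdot;z_2,y))$. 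The first step is to run Poisson summation on this Eisenstein series, exactly as in the decomposition \eqref{278} of \textsection\ref{notation}, writing it as $E_{\gamma,+}+E_{\gamma,+}^{\wedge}+E_{\gamma,\Res}$, the last piece being the explicit rational expression $-{}^{\dag}f_P(\mathbf{0};z_2,y)|\det x|^{s}/(ns)+(\text{Fourier transform of }{}^{\dag}f_P\text{ at }\mathbf{0})\,|\det x|^{s-1}/(n(s-1))$ with $s=s_1+s_2+1$. Substituting and splitting the period yields $\mathcal{F}_{1,0}J^{\bi}_{\Geo}(f,\textbf{s})=\mathcal{F}_{1,0}J^{\bi}_{\Geo,+}(f,\textbf{s})+\mathcal{F}_{1,0}J^{\bi,\wedge}_{\Geo,+}(f,\textbf{s})+\mathcal{F}_{1,0}J^{\bi}_{\Geo,\Res}(f,\textbf{s})$, an identity valid a priori where all three converge, which I will show persists throughout $\mathcal{R}$.

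Two of the three summands are soft. For the $+$ piece, $E_{\gamma,+}$ converges absolutely for every $\textbf{s}$ since the range $|z_1|\geq 1$ uses the decay of the Schwartz--Bruhat section at infinity; hence, just as in Lemma \ref{lem19}, $\mathcal{F}_{1,0}J^{\bi}_{\Geo,+}(f,\textbf{s})$ is entire, and the orthogonality constraint forcing $z_{1,\fin}$ into $Z'(\mathfrak{N}^{-1}\mathcal{I})$, together with the rapid decay of $\phi_1',\phi_2'$ on $[\overline{G'}]$ (equivalently on $[P_0']$ after Iwasawa), gives $\mathcal{F}_{1,0}J^{\bi}_{\Geo,+}(f,\textbf{s})\ll|\mathfrak{N}|^{-n\Re(s_1+s_2)}$. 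For the residual piece, $\mathcal{F}_{1,0}J^{\bi}_{\Geo,\Res}(f,\textbf{s})$ is, up to the explicit denominators $ns$ and $n(s-1)$, a convergent integral of the same type but easier (no $\mathbf{c}$-sum), so it extends meromorphically to $\mathcal{R}$ with at most simple poles where $s=s_1+s_2+1\in\{0,1\}$, i.e. at $s_1+s_2\in\{-1,0\}$; this is the analogue of Proposition \ref{prop21.}.

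The real work, and the main obstacle, is the dual piece $\mathcal{F}_{1,0}J^{\bi,\wedge}_{\Geo,+}(f,\textbf{s})$, for which one must prove absolute convergence on all of $\mathcal{R}$ with the bound $O(|\mathfrak{N}|^{2+n\Re(s_1+s_2)})$, the analogue of Proposition \ref{prop19}. Expanding $E_{\gamma,+}^{\wedge}$, changing $x\mapsto {}^{t}x^{-1}$, writing $x=pk$ with $p\in[P_0']$, $k\in K'$, and using orthogonality on $z_{1,\fin}$, one reduces to a product of local integrals $\mathcal{J}_v(z_{1,v})$ of shape $\int\int\big|\int_{F_v^n}f_v(n_vu(\mathbf{c}_v)\iota(y_v))\,\theta_v(\eta z_{1,v}\,{}^{t}\mathbf{c}_v)\,d\mathbf{c}_v\big|\,dn_v\,|\det y_v|_v^{\Re(s_2)}\,dy_v$ with a Ramanujan-sum factor $\int_{\mathcal{O}_{F_v}^{\times}}\theta_v(\eta\beta_v z_{1,v}\,{}^{t}\mathbf{c}_v)\,d^{\times}\beta_v$ at finite $v$. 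The key input is the analogue of Lemma \ref{lem20} for $f_v(n_vu(\mathbf{c}_v)\iota(y_v))$, with the unipotent $n_v$ now on the left: a case-by-case description, over $v\mid\mathfrak{N}$, $v\notin S$ unramified, $v\in S$ finite (via the analogue of Lemma \ref{lem18'}), and $v\mid\infty$ (via Lemma \ref{lem18}), of the support of $y_v,\mathbf{c}_v,n_v$ in terms of the level $l(\mathbf{c}_v)=\min_i e_v(c_i)$. Feeding this in, one obtains the local bound $\mathcal{J}_v^-(z_{1,v})\ll|z_{1,v}|_v^{-1}q_v^{-2-(n+1)\Re(s_2)}$ as in \eqref{22}; the geometric series over negative levels converges, and the resulting Euler product over finite places converges precisely when $\Re(s_2)>-1/(n+1)$, exactly the defining inequality of $\mathcal{R}$ (the $s_1$-direction being absorbed by the rapid decay of the cusp forms). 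Summing the divisor-type sum over fractional ideals $\mathfrak{m}$ with $N_F(\mathfrak{m})^n\leq|\det z_{1,\infty}|_\infty|\det p|$ produces the factor $(|\mathfrak{N}|M_S)^{2+n\Re(s_1+s_2)}$, and the remaining integral over $[P_0']$ converges by the rapid decay of $\phi_1'$ on $Z'(\mathbb{A}_F)P_0'(F)\backslash G'(\mathbb{A}_F)$ together with the compact support of $f_\infty$, giving \eqref{63.} with $\mathcal{F}_{0,1}$ replaced by $\mathcal{F}_{1,0}$. Combining this with the $+$ and residual pieces and with Proposition \ref{prop23} yields Theorem \ref{thm24}. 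The only point beyond bookkeeping is getting the ramified and archimedean local geometry right in the order $n_vu(\mathbf{c}_v)$ rather than $u(\mathbf{c}_v)n_v$; I expect that conjugating by a suitable Weyl element of $K_v'$, as in the proof of Lemma \ref{lem20}(iii), reduces it to the already-treated case.
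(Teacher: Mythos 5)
Your overall architecture (Poisson summation on $E_{\gamma}(x,s_1+s_2+1;{}^{\dag}f_P(\cdot;z_2,y))$, splitting into a $+$ piece, a dual $\wedge$ piece and a residual piece, then local support analysis for the dual piece) is the same as the paper's treatment of $\mathcal{F}_{0,1}$, and the $+$ and residual pieces indeed transfer. But the step you yourself flag as the only nontrivial point is where the argument breaks. Conjugation by elements of $\iota(K_v')$ (Weyl elements included) acts by $u(\mathbf{c}_v)\mapsto u(\mathbf{c}_vk_v)$ and preserves $N_P(F_v)$; this is exactly what Lemma \ref{lem20}(iii) uses it for, namely to normalize the vector $\mathbf{c}_v$ to $\eta\varpi_v^{l}$. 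It can never interchange the relative order of the two unipotent factors, and that order genuinely matters:
\begin{equation*}
n\,u(\mathbf{c})=\begin{pmatrix} I_n+\mathfrak{u}\mathbf{c} & \mathfrak{u}\\ \mathbf{c} & 1\end{pmatrix},\qquad u(\mathbf{c})\,n=\begin{pmatrix} I_n & \mathfrak{u}\\ \mathbf{c} & 1+\mathbf{c}\mathfrak{u}\end{pmatrix},
\end{equation*}
so the membership conditions $\lambda_v\,n_vu(\mathbf{c}_v)\iota(y_v)\in K_v$ (resp. $\in\supp\tilde f_v$) are not the ones analyzed in Lemmas \ref{lem18.}, \ref{lem18'}, \ref{lem20}, and the crucial local bound $\mathcal{J}_v^-(z_{1,v})\ll |z_{1,v}|_v^{-1}q_v^{-2-(n+1)\Re(s_2)}$ for the dual term (your analogue of \eqref{22}) is left unproved. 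Trying instead to force $nu(\mathbf{c})$ into the form $u(\mathbf{c}')n'\iota(g)$ introduces the factor $g=I_n+\mathfrak{u}\mathbf{c}$, which must then be absorbed into $y$; this changes $|\det y|^{s_2}$ and the support conditions and is not the clean reduction you assert.

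The repair is either to redo the local casework for the order $n_vu(\mathbf{c}_v)\iota(y_v)$ (parallel but genuinely new computations), or—this is the paper's actual proof and the efficient route—to use the global symmetry: a change of variables ($g\mapsto g^{-1}$ inside the kernel, i.e. replacing $f$ by $f^{\vee}$ with $f^{\vee}(g)=f(g^{-1})$, together with the substitutions that swap the roles of $x$ and $y$) gives the identity \eqref{23},
\begin{equation*}
\mathcal{F}_{1,0}J^{\bi}_{\Geo}(f,\textbf{s};\phi_1',\phi_2')=\mathcal{F}_{0,1}J^{\bi}_{\Geo}(f^{\vee},\textbf{s}^{\vee};\phi_2',\phi_1'),\qquad \mathbf{s}^{\vee}=(s_2,s_1).
\end{equation*}
Inversion is precisely the operation that interchanges $N_P$ and the opposite unipotent $\{u(\mathbf{c})\}$, which is the structural point your Weyl-conjugation trick cannot supply. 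Since $f^{\vee}$ is of the same class as $f$ and all the stated bounds depend only on $\Re(s_1+s_2)$, Theorem \ref{thm24} then follows at once from the proof of Theorem \ref{thm22}; you should either adopt this reduction or supply the missing local lemmas in the reversed order.
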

\begin{proof}
Write $\mathcal{F}_{1,0}J^{\bi}_{\Geo}(f,\textbf{s};\phi_1',\phi_2')$ for $\mathcal{F}_{1,0}J^{\bi}_{\Geo}(f,\textbf{s})$ to highlight the dependence of the cusp forms $\phi_1'$ and $\phi_2'$ and their orders. Note that after a change of variables,
\begin{equation}\label{23}
\mathcal{F}_{1,0}J^{\bi}_{\Geo}(f,\textbf{s};\phi_1',\phi_2')=\mathcal{F}_{0,1}J^{\bi}_{\Geo}(f^{\vee},\textbf{s}^{\vee};\phi_2',\phi_1'),
\end{equation}
where for $\mathbf{s}=(s_1,s_2),$ $\mathbf{s}^{\vee}:=(s_2,s_1);$ and for $g\in G(\mathbb{A}_F),$ $f^{\vee}(g):=f(g^{-1}).$ Here $\mathcal{F}_{0,1}J^{\bi}_{\Geo}(f^{\vee},\textbf{s}^{\vee};\phi_2',\phi_1')$ means the integral with $f$ (resp. $\mathbf{s}$) replaced by $f^{\vee}$ (resp. $\mathbf{s}^{\vee}$), and the cusp forms $\phi_1'$ and $\phi_2'$ are switched.
	
Then Theorem \ref{thm24} follows from \eqref{23} and the proof of Theorem \ref{thm22}.	
\end{proof}

\begin{remark}
The equality \eqref{23} gives an explicit relation between $\mathcal{F}_{0,1}J^{\bi}_{\Geo}(f,\textbf{s})$ and $\mathcal{F}_{1,0}J^{\bi}_{\Geo}(f,\textbf{s})$.
\end{remark}

\section{Orbital Integrals Relative to the Big Cell: \RNum{2}}\label{sec5}
In this section we deal with the distribution $\mathcal{F}_{0,0}J_{\Geo}^{\bi,T}(f,\mathbf{s}),$ which  is defined as  
	\begin{align*}
		\int_{[Z'^T]}\int_{[Z'^T]}\int_{[\overline{G'}]}\int_{[\overline{G'}]}&\sum_{\gamma\in P_0(F)w_nP_0(F)}f(\iota(z_1x)^{-1}\gamma\iota(z_1z_2y))\phi'_1(x)\overline{\phi'_2(y)}\\
&\omega'(z_1)\overline{\omega_2(z_2)}|\det z_1x|^{s_1}|\det z_1z_2y|^{s_2}dxdyd^{\times}z_1d^{\times}z_2.
	\end{align*}
	
	The analytic behavior of the integral $\mathcal{F}_{0,0}J^{\bi,T}_{\Geo}(f,\mathbf{s})$ is more delicate than those of  $\mathcal{F}_{0,1}J^{\bi,T}_{\Geo}(f,\mathbf{s})$ and $\mathcal{F}_{1,0}J^{\bi,T}_{\Geo}(f,\mathbf{s}).$ We will write $\mathcal{F}_{0,0}J^{\bi,T}_{\Geo}(f,\mathbf{s})$ as a linear combination of orbital integrals via a double coset decomposition. This is a typical treatment of the geometric side. Note that the orbital integrals are not purely geometric as $\mathcal{F}_{0,0}J^{\bi,T}_{\Geo}(f,\mathbf{s})$ involves some automorphic information (e.g., $\phi'_1$). Some new ingredients are required to handle them. Let
	\begin{align*}
		&\gamma_1=\begin{pmatrix}
			I_{n-1}&&\\
			&&1\\
			&1&
		\end{pmatrix},\qquad 
		\gamma_2=\begin{pmatrix}
			I_{n-1}&\\
			&&1\\
			&1&1
		\end{pmatrix},\qquad \gamma_3=\begin{pmatrix}
			I_{n-1}&\\
			&1&1\\
			&1&
		\end{pmatrix},\\
	&\gamma_4=\begin{pmatrix}
		I_{n-1}&&\tau\\
		&&1\\
		&1&
	\end{pmatrix},\qquad 
	\gamma_5=\begin{pmatrix}
		I_{n-1}&&\tau\\
		&&1\\
		&1&1
	\end{pmatrix},\qquad \gamma_6=\begin{pmatrix}
		I_{n-1}&&\tau\\
		&1&1\\
		&1&
	\end{pmatrix},\\
&\gamma_{\tau}=\begin{pmatrix}
		I_{n-1}&&\tau\\
		&1&\\
		&1&1
	\end{pmatrix},\qquad \gamma(t)=\begin{pmatrix}
			I_{n-1}&\\
			&1&t\\
			&1&1
		\end{pmatrix},\quad t\in F-\{1\},
	\end{align*}
where $\tau=\transp{(1,0,\cdots,0)}\in F^{n-1}.$ Let $\Phi_1=\{\gamma_1, \gamma_2, \gamma_3, \gamma_4, \gamma_5, \gamma_6\}.$ Denote by $\Phi_2=\big\{\gamma_{\tau}, \gamma(t):\ t\in F-\{1\}\big\}.$
	
	\begin{lemma}\label{lem16}
		Let notation be as before. Then $\Phi:=\Phi_1\sqcup \Phi_2$ form a complete set of representatives for $\iota(G')(F)\backslash P_0(F)w_nP_0(F)/\iota(G')(F).$
	\end{lemma}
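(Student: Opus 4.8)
The plan is to reduce the statement to an explicit finite-dimensional orbit problem and then run a case analysis. Writing an element of $P(F)w_nP(F)$ modulo $Z(F)$ in block form $g=\begin{pmatrix}A&b\\ c&d\end{pmatrix}$ with $A\in M_{n,n}(F)$, $b\in M_{n,1}(F)$, $c\in M_{1,n}(F)$, $d\in F$, one has $g\in P(F)w_nP(F)$ precisely when $c\neq0$ (since $G(F)=P(F)\sqcup P(F)w_nP(F)$ and $P(F)=\{c=0\}$), and the two-sided action of $\iota(G'(F))\times\iota(G'(F))$, together with the centre, becomes
\[
(A,b,c,d)\longmapsto(\lambda h_1Ah_2^{-1},\ \lambda h_1b,\ \lambda ch_2^{-1},\ \lambda d),\qquad h_1,h_2\in\GL(n,F),\ \lambda\in F^{\times},
\]
subject to $\det g\neq0$ and $c\neq0$. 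Equivalently, one may first invoke Lemma~\ref{lem13} to split the cell, modulo $Z(F)$, into the two disjoint pieces $\iota(G'(F))w_nP_0(F)$ and $\iota(G'(F))\gamma P_0(F)$, each of which is a union of two-sided $\iota(G'(F))$-orbits, and treat them separately.

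First I would normalize the bottom row: using the right $\GL(n,F)$-factor one brings $c$ to $\eta=(0,\dots,0,1)$, whose residual stabiliser inside $\iota(G'(F))$ is the mirabolic $P_0'(F)$. Then I would use the centre to normalize $d$ into $\{0,1\}$ — a residual scalar survives only when $d=0$ — and the left $\GL(n,F)$-factor to normalize $b$. The essential dichotomy here is whether $b$ lies in the span of the first $n-1$ columns of $A$: this is what separates the representatives carrying the vector $\tau$ ($\gamma_4,\gamma_5,\gamma_6,\gamma_\tau$) from those that do not ($\gamma_1,\gamma_2,\gamma_3,\gamma(t)$). What is left after these steps is the residual matrix $A$, whose rank is constrained to $n$, $n-1$ or $n-2$ by $\det g\neq0$ together with the already-fixed $b,c,d$; it is acted on by the left stabiliser of $b$ and the right mirabolic, and classifying these residual orbits yields the finite set $\Phi_1$ together with the one-parameter family $\gamma(t)$. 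The parameter $t$ arises on the generic stratum where $A$ is invertible: there one normalizes $A$ to $I_n$, the residual group collapses to the diagonal copy of $P_0'(F)$ acting by conjugation, and $b$ reduces either to $t\,e_n$ (giving $\gamma(t)$, with $\det g$ proportional to $1-t$, whence $t\neq1$) or to a vector with nonzero component transverse to $e_n$ (giving $\gamma_\tau$).

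Finally I would check that $\Phi=\Phi_1\sqcup\Phi_2$ is exhaustive and irredundant. Exhaustiveness is the statement that every $(A,b,c,d)$ terminates, under the normalization above, at exactly one $\gamma_i$ or one $\gamma(t)$. Irredundancy I would prove by exhibiting a complete set of double-coset invariants: whether $d$ can be scaled to $0$; the rank of the $A$-block; the position of $b$ relative to the columns of $A$ and to $e_n$; and the scalar $t$ on the generic stratum. The remaining coincidences (e.g.\ $w_n$ against the various $\gamma_i$, or $\gamma_1,\gamma_2,\gamma_3$ among themselves) are ruled out by comparing canonical Bruhat forms, exactly as in the proof of Lemma~\ref{lem13}. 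I expect the main obstacle to be organisational rather than conceptual: keeping the roughly eight strata cleanly separated, pinning down the stabilisers $H_{\gamma_i}$ (which are needed again for the later orbital-integral computations) so that the residual orbit counts come out exactly right, and verifying that the invariant $t$ is genuinely constant on $\iota(G'(F))$-double cosets rather than an artifact of the chosen normalization.
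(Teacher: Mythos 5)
Your strategy---block coordinates $g=\begin{pmatrix}A&b\\ c&d\end{pmatrix}$, the big cell as $\{c\neq 0\}$, normalising $c=\eta$ and $d\in\{0,1\}$, then classifying the residual data---is a legitimate route and genuinely different from the paper's, which writes $g=p_1w_np_2$, absorbs the Levi parts of $p_1,p_2$ into the two copies of $\iota(G'(F))$, reduces to $n_1w_nn_2$ with only the parameters $\alpha_1,\alpha_2,\beta$ surviving, runs a case analysis on these (with an explicit identity absorbing $\beta$ when $1+\alpha_1\alpha_2\neq0$), and asserts disjointness via Bruhat canonical forms. However, your proposal goes wrong exactly where precision is needed. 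First, $\rank A\in\{n-1,n\}$ only: an $n\times n$ submatrix of an invertible $(n+1)\times(n+1)$ matrix has rank at least $n-1$, so there is no rank-$(n-2)$ stratum. Second, your proposed separating invariant does not do what you claim: after normalising $c=\eta$, ``$b$ lies in the span of the first $n-1$ columns of $A$'' is the condition $b\in A\ker c$, i.e.\ $cA^{-1}b=0$ on the open stratum; evaluated on $\Phi$ it holds only for $\gamma_\tau$ and $\gamma(0)$ and fails for every one of $\gamma_1,\dots,\gamma_6$ and $\gamma(t)$, $t\neq0$, so it does not separate the $\tau$-representatives from the others. Likewise, on the stratum where $A$ is invertible the residual mirabolic action preserves $\eta b=cA^{-1}b$, not the ``transverse part'' of $b$ (which can be removed whenever $\eta b\neq0$); the correct trichotomy there is $b=0$ (giving $\gamma(0)$), $b\neq0$ with $cA^{-1}b=0$ (giving $\gamma_\tau$), and $cA^{-1}b\neq0$ (giving $\gamma(t)$ with $t=cA^{-1}b/d$, and $t\neq1$ by invertibility).

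The deeper obstruction is to the irredundancy you plan to prove by exhibiting invariants: it cannot be done for the pairs $(\gamma_1,\gamma_4)$, $(\gamma_2,\gamma_5)$, $(\gamma_3,\gamma_6)$, because each pair lies in a single double coset. Indeed, with $h=\begin{pmatrix}I_{n-1}&\tau\\ &1\end{pmatrix}\in G'(F)$ one checks directly that $\iota(h)\gamma_1=\gamma_4$, $\iota(h)\gamma_2=\gamma_5$ and $\iota(h)\gamma_3\iota(h)^{-1}=\gamma_6$. Carrying your normalisation through correctly, you would find exactly the classes $\gamma_1,\gamma_2,\gamma_3,\gamma(0),\gamma_\tau$ and $\gamma(t)$, $t\in F\setminus\{0,1\}$: the union over $\Phi$ does exhaust $P_0(F)w_nP_0(F)$, but $\Phi$ is redundant, and no system of invariants---yours or anyone's---can separate those three pairs (the appeal to Bruhat canonical forms does not either, since e.g.\ $\gamma_4=\iota(h)w_n$ lies in the same cell $Bw_nB$ as $\gamma_1=w_n$; the redundancy is harmless downstream because the $\Phi_1$-orbital integrals vanish identically by Lemma \ref{lem17}). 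To salvage your write-up: keep the normalisation scheme, fix the rank range, replace your invariants by $\rank A$, the vanishing of $d$, the vanishing of $b$, and the class of $cA^{-1}b$ relative to $d$ on the invertible stratum, and prove exhaustiveness together with the distinctness of the six classes just listed, rather than of the full list $\Phi_1\sqcup\Phi_2$.
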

	\begin{proof}
		Let $g\in P_0(F)w_nP_0(F).$ By Bruhat composition we can write $g=p_1w_np_2$ with $p_1, p_2\in P_0(F).$ Let $p_1=m_1n_1,$ $p_2=m_2n_2$ be the Levi decomposition. We may assume assume
		\begin{align*}
		n_1=\begin{pmatrix}
		I_{n-1}&\\
		&1&\alpha_1\\
		&&1
		\end{pmatrix},\ \ n_2=\begin{pmatrix}
		I_{n-1}&&\beta\tau\\
		&1&\alpha_2\\
		&&1
	\end{pmatrix},\ \ \alpha_1, \alpha_2, \beta\in F.
		\end{align*}
		\begin{itemize}
			\item Suppose $\alpha_1=\alpha_2=0.$ Then $g\in \iota(G')(F)\gamma_1 \iota(G')(F)\bigcup \iota(G')(F)\gamma_4 \iota(G')(F).$
			
			\item Suppose $\alpha_1=0$ but $\alpha_2\neq 0.$ Then 
			$$
			g\in \iota(G')(F)\gamma_2 \iota(G')(F)\bigcup \iota(G')(F)\gamma_5 \iota(G')(F).
			$$
			
			\item Suppose $\alpha_1\neq 0$ but $\alpha_2= 0.$ Then 
			$$
			g\in \iota(G')(F)\gamma_3 \iota(G')(F)\bigcup \iota(G')(F)\gamma_6 \iota(G')(F).
			$$
			
			\item Suppose $\alpha_1\alpha_2\neq 0,$ and $\beta=0.$ Then $g\in \iota(G')(F)\gamma(t) \iota(G')(F)$ for some $t\in F-\{1\}.$
			
			\item Suppose $\alpha_1\alpha_2\neq 0,$ and $\beta\neq 0.$ 
			\begin{itemize}
			\item If $\alpha_1\alpha_2=-1,$ then $g\in \iota(G')(F)\gamma_{\tau}\iota(G')(F).$
			\item If $\alpha_1\alpha_2\neq -1,$ then $g\in \iota(G')(F)\gamma(t)\iota(G')(F)$ for some $t\in F-\{0,1\}.$ Here we make use of the fact that 
			\begin{align*}
			\begin{pmatrix}
			I_{n-1}&c\tau\\
			&1\\
			&&1
			\end{pmatrix}n_1w_n\begin{pmatrix}
			I_{n-1}&&\\
			&1&\alpha_2\\
			&&1
		\end{pmatrix}=n_1w_n\begin{pmatrix}
		I_{n-1}&&(1+\alpha_1\alpha_2)c\tau\\
		&1&\alpha_2\\
		&&1
	\end{pmatrix}.
			\end{align*}
			\end{itemize}
		\end{itemize}
		Therefore, we have 
		\begin{equation}\label{90}
			P_0(F)w_nP_0(F)=\bigcup_{\gamma\in \Phi}\iota(G')(F)\gamma \iota(G')(F).
		\end{equation}
		By uniqueness of Bruhat canonical form the union in \eqref{90} is disjoint.
	\end{proof}
	
	For $\delta\in \Phi,$ denote by $\mathcal{O}_{\delta}(F)$ the orbits of $\delta$ in $P_0(F)w_nP_0(F).$ Let 
	\begin{align*}
		H_{\delta}'=\bigg\{(x,y)\in G'\times G':\ \iota(x)\delta=\delta\iota(y)\bigg\}
	\end{align*} 
	be the stabilizer of $\delta.$ Then $\mathcal{O}_{\delta}(F)=\iota(H_{\delta}'\backslash (G'(F)\times G'(F))).$
	
	According to Lemma \ref{lem16} we can decompose $\mathcal{F}_{0,0}J_{\Geo}^{\bi,T}(f,\textbf{s})$ as 
	\begin{equation}\label{18}
		\mathcal{F}_{0,0}J^{\bi,T}_{\Geo}(f,\textbf{s})=\sum_{\delta \in \Phi}J_{\Geo,\delta}^{\bi,T}(f,\textbf{s}),
	\end{equation}
	where 
	\begin{align*}
		J_{\Geo,\delta}^{\bi,T}(f,\textbf{s}):=&\int_{[Z'^T]}\int_{[Z'^T]}\int_{[\overline{G'}]}\int_{[\overline{G'}]}\sum_{\gamma\in\mathcal{O}_{\delta}(F)}f(\iota(z_1x)^{-1}\gamma\iota(z_1z_2y))\\
		&\qquad \phi_1'(x)\overline{\phi_2'(y)}\omega'(z_1)\overline{\omega_2(z_2)}|\det z_1x|^{s_1}|\det z_1z_2y|^{s_2}dxdyd^{\times}z_1d^{\times}z_2.
	\end{align*}
	
	Substituting $\mathcal{O}_{\delta}(F)=\iota(H_{\delta}'\backslash (G'(F)\times G'(F)))$ into the above definition to rewrite $J_{\Geo,\delta}^{\bi,T}(f,\textbf{s})$ as 
	\begin{align*}
		\int_{[Z'^T]}\int_{[Z'^T]}\int_{[\overline{G'}]}&\int_{[\overline{G'}]}\sum_{(\gamma_1,\gamma_2)\in H_{\delta}'(F)\backslash (G'(F)\times G'(F))}f(\iota(z_1^{-1}x^{-1}\gamma_1^{-1})\delta \iota(\gamma_2z_1z_2y))\\
		&\phi_1'(x)\overline{\phi_2'(y)}\omega'(z_1)\overline{\omega_2(z_2)}|\det z_1x|^{s_1}|\det z_1z_2y|^{s_2}dxdyd^{\times}z_1d^{\times}z_2.
	\end{align*}
	Note that the kernel function $\K(g_1,g_2)$ is convergent absolutely and is slowly increasing with respective to $g_1$ and $g_2.$ Also, $\phi'$ is rapidly decaying on $[\overline{G'}],$ then
	$\mathcal{F}_{0,0}J_{\Geo}^{\bi,T}(f,\textbf{s})$ converges absolutely. Therefore, \eqref{18} is well defined and it converges absolutely as well. We need to show it is allowed to take limit $T\rightarrow \infty$ into the orbital integral decomposition \eqref{18}. In fact, we will show such a limit version of \eqref{18} is admissible for all $\textbf{s}\in\mathbb{C}^2$ and thus defines an entire function of $\textbf{s}.$
	
	\subsection{Vanishing Orbital Integrals}\label{5.1}
In this subsection, we will show $J_{\Geo,\delta}^{\bi,T}(f,\textbf{s})\equiv 0$ for $\gamma\in \Phi_1.$ This is  a consequence of the support of $f_{v},$ $v\mid\mathfrak{N}.$
	\begin{lemma}\label{lem17}
		Let notation be as before. Let $\gamma\in \Phi_1.$ Then 
		\begin{align*}
			J_{\Geo,\delta}^{\bi,T}(f,\textbf{s})\equiv 0.
		\end{align*}
	\end{lemma}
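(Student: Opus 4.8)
The plan is to exploit the support condition on the local components $f_v$ for $v \mid \mathfrak{N}$, which was recorded in \textsection\ref{2.6}: each $f_v$ is supported on $Z(F_v)K_v$ with $K_v = K_0(\mathfrak{p}_v^{e_v(\mathfrak{N})})$ a Hecke congruence subgroup, so in particular $f_v$ is supported on matrices whose reduction mod $\mathfrak{p}_v^{e_v(\mathfrak{N})}$ lies in the mirabolic-type block structure forced by $K_0$. Since $\mathfrak{N} \neq \mathcal{O}_F$, there is at least one such place $v_0$. First I would recall that
$$
J_{\Geo,\delta}^{\bi,T}(f,\textbf{s}) = \int\!\!\int\!\!\int\!\!\int \sum_{(\gamma_1,\gamma_2) \in H_\delta'(F)\backslash(G'(F)\times G'(F))} f\big(\iota(z_1 x)^{-1}\gamma_1^{-1}\delta\, \iota(\gamma_2 z_1 z_2 y)\big)\,\phi_1'(x)\overline{\phi_2'(y)}\,(\cdots)\,dx\,dy\,d^\times z_1\,d^\times z_2,
$$
and that $f = \otimes_v f_v$, so the integrand vanishes unless, for every $v \mid \mathfrak{N}$, the local argument $\iota(z_{1,v} x_v)^{-1}\gamma_1^{-1}\delta\,\iota(\gamma_2 z_{1,v} z_{2,v} y_v)$ lies in $Z(F_{v_0})K_{v_0}$.

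The key step is a purely local computation at $v_0$: for each of the six representatives $\delta = \gamma_i \in \Phi_1$, I would show that $\iota(a)^{-1}\delta\,\iota(b) \notin Z(F_{v_0})K_0(\mathfrak{p}_{v_0}^{e_{v_0}(\mathfrak{N})})$ for \emph{any} $a, b \in G'(F_{v_0})$ and any scaling. The point is that each $\gamma_i$ has a nontrivial entry in the bottom row or last column (the "$1$" appearing in the $(n,n+1)$ or $(n+1,n)$ slot, coming from the Bruhat cell of $w_n$) that cannot be absorbed by left and right multiplication by $\iota(G')$, which only acts on the top-left $n\times n$ block and fixes the last coordinate direction. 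More precisely, writing $\iota(a)^{-1}\gamma_i \iota(b)$ in block form, the last column and last row are essentially rigid: the $(n+1,n+1)$ entry stays $1$ while the presence of the off-diagonal $1$ in $\gamma_i$ produces, after the $\iota(G')$-action, a matrix whose $(n+1,j)$ or $(j,n+1)$ entries for $j \le n$ cannot simultaneously be made to vanish mod $\mathfrak{p}_{v_0}$ (which is what membership in $K_0$ would require, since $K_0(\mathfrak{p}_{v_0}^{e})$ forces the bottom row off-diagonal entries into $\mathfrak{p}_{v_0}^e$). Carrying this out representative by representative — using the explicit shapes of $\gamma_1,\dots,\gamma_6$ and the stabilizers $H'_{\gamma_i}$ to pin down the relevant entries — gives the vanishing of each local factor, hence of the whole integrand, hence $J_{\Geo,\delta}^{\bi,T}(f,\textbf{s}) \equiv 0$ termwise in the sum over $(\gamma_1,\gamma_2)$.

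The main obstacle I anticipate is the bookkeeping in the local rigidity argument: one must check that for \emph{every} $a,b$ and scalar, the congruence obstruction persists, rather than just for generic $a, b$. The clean way to organize this is to use the Bruhat decomposition $G'(F_{v_0}) = \bigsqcup B'(F_{v_0}) w B'(F_{v_0})$ to reduce $a, b$ to torus-and-unipotent form, observe that $\iota$ of the unipotent/torus part and $\gamma_i$ multiply to a matrix still lying in a single $P(F_{v_0})w' P(F_{v_0})$ cell for some Weyl element $w' \ne \mathrm{id}$ of $G$ involving the $n{+}1$ coordinate, and note $K_0(\mathfrak{p}_{v_0}^e)$ meets only the cells $w'$ fixing the $(n{+}1)$-st basis vector mod $\mathfrak{p}_{v_0}$. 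This is exactly the mechanism already used in Lemma \ref{lem18.} and Lemma \ref{lem20}, so I would cite those local analyses and adapt them; the restriction $\mathfrak{N} \neq \mathcal{O}_F$ is precisely what makes $e_{v_0}(\mathfrak{N}) \geq 1$ available, as flagged in \textsection\ref{2.6}.
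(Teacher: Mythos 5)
Your overall strategy coincides with the paper's: the vanishing is a purely local support obstruction at a place $v\mid\mathfrak{N}$, where $\supp f_v=Z(F_v)K_v$ with $K_v=K_0(\mathfrak{p}_v^{e_v(\mathfrak{N})})$, and one checks representative by representative that $\iota(a)^{-1}\gamma_i\,\iota(b)\notin Z(F_v)K_v$ for all $a,b\in G'(F_v)$ and all central scalings. However, the mechanism you give for the key local step is not correct as stated for four of the six representatives. Membership in $K_0(\mathfrak{p}_v^{e})$ constrains only the bottom row (the $(n+1,j)$ entries, $j\le n$, must lie in $\mathfrak{p}_v^{e}$), not the last column, and these bottom-row congruences \emph{can} be satisfied on the orbit of $\gamma_2$ (likewise $\gamma_3,\gamma_5,\gamma_6$): for $n=2$, $a=I_2$, $b=\diag(1,\varpi_v^{e})$ one gets $\iota(a)^{-1}\gamma_2\iota(b)=\begin{pmatrix}1&0&0\\0&0&1\\0&\varpi_v^{e}&1\end{pmatrix}$, whose bottom row $(0,\varpi_v^{e},1)$ and last column are perfectly compatible with the congruence structure of $K_0(\mathfrak{p}_v^{e})$; what excludes it from $Z(F_v)K_v$ is that its determinant is $-\varpi_v^{e}$ (equivalently, mod $\mathfrak{p}_v$ the $n$-th and $(n+1)$-st rows are both proportional to $e_{n+1}$, so the reduction is singular), and no central scaling can repair this. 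Your claim that the $(n+1,n+1)$ entry ``stays $1$'' is also wrong for $\gamma_1,\gamma_3,\gamma_4,\gamma_6$, where it is $0$ on the whole orbit --- and for $\gamma_1,\gamma_4$ that vanishing is precisely the obstruction, since any element of $Z(F_v)K_v$ has a nonzero $(n+1,n+1)$ entry. Finally, the Bruhat-cell shortcut you propose does not close the gap: $Z(F_v)K_0(\mathfrak{p}_v^{e})$ does meet the big cell $P(F_v)w_nP(F_v)$ (take bottom-left entries in $\mathfrak{p}_v^{e}\setminus\{0\}$), so cell membership alone decides nothing.

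The paper's proof fills exactly this hole with a two-pronged argument. For $\gamma_1$ (and similarly $\gamma_4$) it uses the $(n+1,n+1)$-entry observation above. For $\gamma_2$ (and $\gamma_5$; $\gamma_3,\gamma_6$ after taking inverses) it first invokes the Iwasawa decomposition to replace $x,y$ by upper-triangular elements $b_v,b_v'$ of $B'(F_v)$, absorbing $\iota(K_v')\subset K_v$ into the support, and then computes that the $n$-th row of $\iota(b_v)^{-1}\gamma_2\iota(b_v')$ is $(0,\dots,0,*)$; together with the bottom-row congruence defining $K_0$ this forces the reduction mod $\mathfrak{p}_v$ to be singular, so the element cannot lie in $Z(F_v)K_v$. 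To make your plan work you should replace the ``last row/column entries cannot vanish mod $\mathfrak{p}_v$'' claim by this rank/determinant argument after the Iwasawa reduction; with that correction your proof becomes the one in the paper.
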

	\begin{proof}
		Let $v\mid\mathfrak{N}.$ Consider $f_v(\iota(z_{1,v}x_v)^{-1}\gamma\iota(z_{2,v}y_v))$ for $\gamma\in \Phi_1.$ 
		
		\begin{enumerate}
			\item[(\RNum{1}).] Suppose $\gamma=\gamma_1.$ Then 
			\begin{align*}
				\iota(z_{1,v}x_v)^{-1}\gamma\iota((z_{1,v}z_{2,v}y_v)=\begin{pmatrix}
					*&*\\
					*&0
				\end{pmatrix}\in G(F_v),
			\end{align*}
			namely, the last line of $\iota(z_{1,v}x_v)^{-1}\gamma\iota(z_{1,v}z_{2,v}y_v)$ is of the form $(*,\cdots, *,0).$ Hence $\iota(z_{1,v}x_v)^{-1}\gamma\iota(z_{1,v}z_{2,v}y_v)\notin Z(F_v)K_v,$ whose $(n+1, n+1)$-th entry is nonvanishing. So  $f_v(\iota(z_{1,v}x_v)^{-1}\gamma_1\iota(z_{1,v}z_{2,v}y_v))=0.$
			
			\item[(\RNum{2}).] Suppose $\gamma=\gamma_2.$ Consider the Iwasawa decomposition
			\begin{align*}
				z_{1,v}x_v=b_vk_v,\ \ z_{2,v}y_v=z_{1,v}^{-1}b_v'k_v',
			\end{align*}
			where $b_v, b_v'\in B'(F_v)$ and $k_v, k_v'\in K_v'=G'(\mathcal{O}_{F_v}).$ Note that $\iota(K_v')\subset K_v.$ So 
			$$
			f_v(\iota(z_{1,v}x_v)^{-1}\gamma_1\iota(z_{1,v}z_{2,v}y_v))=f_v(\iota(b_v)^{-1}\gamma_2\iota(b_v')).
			$$
			However, a straightforward calculation shows that 
			$$
			\iota(b_v)^{-1}\gamma_2\iota(b_v')=\begin{pmatrix}
				*&*\\
				0&*\\
				*&1
			\end{pmatrix}\in G(F_v),
			$$
			namely, the last second line of $\iota(b_v)^{-1}\gamma_2\iota(b_v')$ is $(0,\cdots, 0, *),$ where the first $n$ entries are zeros. So it cannot lie in $Z(F_v)K_v,$ the Hecke congruence subgroup. Hence $f_v(\iota(z_{1,v}x_v)^{-1}\gamma_2\iota(z_{1,v}z_{2,v}y_v))=0.$
			
			\item[(\RNum{3}).] Suppose $\gamma=\gamma_3.$ Then taking the inverse and by a similar argument to the Case \RNum{2} we obtain $f_v(\iota(z_{1,v}x_v)^{-1}\gamma_3\iota(z_{2,v}y_v))=0.$
		\end{enumerate}
		
		In all, we have $f_v(\iota(z_{1,v}x_v)^{-1}\gamma\iota(z_{1,v}z_{2,v}y_v))=0$ for all $\gamma\in \Phi,$ $z_{1,v}x_v, z_{2,v}y_v\in G'(F_v),$ proving Lemma \ref{lem17}.
	\end{proof}
	
	By Lemma \ref{lem16} and Lemma \ref{lem17} we have 
	\begin{equation}\label{91}
		\mathcal{F}_{0,0}J_{\Geo}^{\bi,T}(f,\textbf{s})=J_{\Geo,\gamma(0)}^{\bi,T}(f,\textbf{s})+J_{\Geo,\gamma_{\tau}}^{\bi,T}(f,\textbf{s})+\sum_{t\in F-\{0,1\}}J_{\Geo,\gamma(t)}^{\bi,T}(f,\textbf{s}).
	\end{equation}

Following the convention in \cite{RR05} we shall call 
\begin{equation}\label{29}
J_{\Geo,\reg}^{\bi,T}(f,\textbf{s}):=\sum_{\substack{t\in F-\{0,1\}}}J_{\Geo,\gamma(t)}^{\bi,T}(f,\textbf{s}).
\end{equation}
\textit{regular} orbital integrals. Denote by $J_{\Geo,\du}^{\bi,T}(f,\textbf{s})=J_{\Geo,\gamma(0)}^{\bi,T}(f,\textbf{s}).$ We will see $J_{\Geo,\du}^{\bi,T}(f,\textbf{s})$ is the `dual' of $J_{\Geo,\sm}^{\bi,T}(f,\textbf{s})$ via the functional equation. 

The orbital integral $J_{\Geo,\du}^{\bi,T}(f,\textbf{s})$ will be dominant as one of the main terms if $(s_1,s_2)$ approaches $(0,0),$ while the sum of $J_{\Geo,\gamma_{\tau}}^{\bi,T}(f,\textbf{s})$ and regular orbital integrals $J_{\Geo,\gamma(t)}^{\bi,T}(f,\textbf{s}),$ as $t$ runs through $F-\{0,1\},$ will be shown to serve as tiny error terms. Moreover, if the test function $f$ has compact support modulo the center, then $\mathcal{F}_{0,0}J_{\Geo,\gamma(t)}^{\bi,T}(f,\textbf{s})\equiv 0$ for all $t\neq 0,1$ as long as $|\mathfrak{N}|$ is large enough. Hence, we will get stability in the sense of \cite{MR12} and \cite{FW09}. Here we recall $v\mid\mathfrak{N}$ is the place where we take the local test function to be the characteristic function of a Hecke congruence subgroup (cf. \textsection\ref{2.6}).

\subsection{Orbits Alteration} 
The stabilizers of $\gamma(0),$ $\gamma(t)$ and $\gamma_{\tau}$ are different, which makes the analytic behaviors of corresponding orbital integrals quite distinguishing. We will handle them by diverse methods. For $\gamma(0)$ and $\gamma(t)$ we simply use the usual manipulation of orbital integrals. However, the stabilizers of $\gamma_{\tau}$ is less favorable. We thus provide another treatment of the orbits by rewriting it into disjoint union of \textit{conjugacy classes}, despite the original orbits are relative.

\subsubsection{The Orbital Integral $J_{\Geo,\du}^{\bi,T}(f,\textbf{s})$}
Note that the stabilizer of $\gamma(0)$ in $G'\times G'$ is $\Delta P_0'(F),$ the diagonal embedding of the mirabolic subgroup $P_0'$ of $G'.$ So  
\begin{align*}
J_{\Geo,\du}^{\bi,T}(f,\textbf{s})=&\int_{Z'^T(\mathbb{A}_F)}\int_{Z'^T(\mathbb{A}_F)}\int_{\overline{G'}(\mathbb{A}_F)}\int_{P_0'(F)\backslash\overline{G'}(\mathbb{A}_F)}f(\iota(z_1x)^{-1}\gamma(0)\iota(z_1z_2xy))\\
	&\quad \phi'_1(x)\overline{\phi'_2(xy)}\omega'(z_1)\overline{\omega_2(z_2)}|\det z_1x|^{s_1+s_2}|\det z_2y|^{s_2}dxdyd^{\times}z_1d^{\times}z_2.
\end{align*}

Analytic behaviors of $J_{\Geo,\du}^{\bi,T}(f,\textbf{s})$ will be given in \textsection\ref{sec9.2}.

\subsubsection{The Orbital Integral $J_{\Geo,\gamma_{\tau}}^{\bi,T}(f,\textbf{s})$}\label{5.2.2}
A straightforward calculation shows that the stabilizer of $\gamma_{\tau}$ is $\Delta S_0(F),$ where $S_0$ is the matrix group of the form 
\begin{align*}
\begin{pmatrix}
1&\mathbf{b}&c_1\\
&A&\mathbf{c}'\\
&&1
\end{pmatrix},\ \ A\in\mathrm{GL}(n-2),\ \mathbf{b}\in M_{1,n-2},\ c_1\in M_{1,1}, \mathbf{c}'\in M_{n-2,1}.
\end{align*} 

Changing variables we obtain 
\begin{align*}
J_{\Geo,\gamma_{\tau}}^{\bi,T}(f,\textbf{s})=&\int_{Z'^T(\mathbb{A}_F)}\int_{Z'^T(\mathbb{A}_F)}\int_{\overline{G'}(\mathbb{A}_F)}\int_{S_0(F)\backslash\overline{G'}(\mathbb{A}_F)}f(\iota(z_1x)^{-1}\gamma_{\tau}\iota(z_1z_2xy))\\
	&\quad \phi'_1(x)\overline{\phi'_2(xy)}\omega'(z_1)\overline{\omega_2(z_2)}|\det z_1x|^{s_1+s_2}|\det z_2y|^{s_2}dxdyd^{\times}z_1d^{\times}z_2.
\end{align*}
However, the usual treatment of orbital integrals does not work well, since it is quite inconvenient to handle the integral 
\begin{align*}
\int_{[S_0]}\phi'_1(x)\overline{\phi'_2(xy)}|\det x|^{s_1+s_2}|\det y|^{s_2}dxdy.
\end{align*}

We thereby propose a different treatment. Let $\Phi_0(F)$ be a complete set of representative of $S_0(F)\backslash P_0'(F).$ Let $N_j$ be the unipotent radical of the standard parabolic subgroup of $\mathrm{GL}(j+1)$ of type $(j,1).$ So matrices in $N_j$ are upper triangle. By Bruhat decomposition we can take   
\begin{align*}
\Phi_0(F)=\bigsqcup_{j=1}^{n-2} \begin{pmatrix}
	\mathrm{GL}(1,F)\\
	&I_{n-1}
\end{pmatrix}w_1w_2\cdots w_j\begin{pmatrix}
N_j(F)\\
&I_{n-j-1}
\end{pmatrix}
\end{align*} 
if $n\geq 3,$ and $\Phi_0(F)=\begin{pmatrix}
	\mathrm{GL}(1,F)\\
	&1
\end{pmatrix}$ if $n=2.$ Define 
\begin{align*}
\mathcal{S}(F):=\Big\{\gamma_1^{-1}\gamma_{\tau}\gamma_1:\ \gamma_1\in \Phi_0(F)\Big\}.
\end{align*}
	
Let $\Phi_0'(F)$ be a complete set of representative of $P_0'(F)\backslash G'(F).$ Then $\gamma\in \mathcal{O}_{\gamma_{\tau}}(F)$ can be written uniquely as $\gamma_1^{-1}\gamma_2\gamma_1$, where $\gamma_1\in \Phi_0'(F),$ and $\gamma_2\in \mathcal{S}(F).$ Then $J_{\Geo,\gamma_{\tau}}^{\bi,T}(f,\textbf{s})$ becomes 
\begin{align*}
&\int_{Z'^T(\mathbb{A}_F)}\int_{Z'^T(\mathbb{A}_F)}\int_{\overline{G'}(\mathbb{A}_F)}\int_{P_0'(F)\backslash\overline{G'}(\mathbb{A}_F)}\sum_{\gamma\in \mathcal{S}(F)}f(\iota(z_1x)^{-1}\gamma\iota(z_1z_2xy))\\
&\quad \phi'_1(x)\overline{\phi'_2(xy)}\omega'(z_1)\overline{\omega_2(z_2)}|\det z_1x|^{s_1+s_2}|\det z_2y|^{s_2}dxdyd^{\times}z_1d^{\times}z_2.
\end{align*}

By definition, $\mathcal{S}(F)$ consists of matrices of the form 
\begin{align*}
\begin{pmatrix}
I_{n-1}&&\boldsymbol{\xi}\\
&1\\
&1&1
\end{pmatrix},\ \ \boldsymbol{\xi}=\transp{(\xi_1,\cdots,\xi_{n-1})}\in M_{n-1,1}(F)-\textbf{0}.
\end{align*}
	
\subsubsection{The Orbital Integral $J_{\Geo,\gamma(t)}^{\bi,T}(f,\textbf{s}):$ $t\neq 0,1$}\label{5.2.3}
Let $t\in F-\{0,1\}.$ The stabilizer of $\gamma(t)$ is $\Delta H,$ the diagonal embedding of $H,$ where $H=\diag(\mathrm{GL}(n-1),1)$ is the Levi component of $P_0'.$ Changing variables we obtain 
\begin{align*}
	J_{\Geo,\gamma(t)}^{\bi,T}(f,\textbf{s})=&\int_{Z'^T(\mathbb{A}_F)}\int_{Z'^T(\mathbb{A}_F)}\int_{\overline{G'}(\mathbb{A}_F)}\int_{H(F)\backslash\overline{G'}(\mathbb{A}_F)}f(\iota(z_1x)^{-1}\gamma(t)\iota(z_1z_2xy))\\
	&\quad \phi'_1(x)\overline{\phi'_2(xy)}\omega'(z_1)\overline{\omega_2(z_2)}|\det z_1x|^{s_1+s_2}|\det z_2y|^{s_2}dxdyd^{\times}z_1d^{\times}z_2.
\end{align*}

Notice that $P_0'(F)=H(F)N_{P'}(F),$ where $N_{P'}(F)$ is the unipotent radical of $P'(F).$ Define  
\begin{align*}
\mathcal{S}^*(F):=\Big\{\gamma_1^{-1}\gamma(t)\gamma_1:\ \gamma_1\in N_{P'}(F)\Big\}.
\end{align*}
Similar to \textsection\ref{5.2.2} we can rewrite $J_{\Geo,\gamma(t)}^{\bi,T}(f,\textbf{s})$ as
\begin{align*}
&\int_{Z'^T(\mathbb{A}_F)}\int_{Z'^T(\mathbb{A}_F)}\int_{\overline{G'}(\mathbb{A}_F)}\int_{P_0'(F)\backslash\overline{G'}(\mathbb{A}_F)}\sum_{\gamma\in\mathcal{S}^*(F)}f(\iota(z_1x)^{-1}\gamma(t)\iota(z_1z_2xy))\\
	&\quad \phi'_1(x)\overline{\phi'_2(xy)}\omega'(z_1)\overline{\omega_2(z_2)}|\det z_1x|^{s_1+s_2}|\det z_2y|^{s_2}dxdyd^{\times}z_1d^{\times}z_2.
\end{align*}

By definition, $\mathcal{S}^*(F)$ consists of matrices of the form 
\begin{align*}
	\begin{pmatrix}
		I_{n-1}&&\boldsymbol{\xi}\\
		&1&t\\
		&1&1
	\end{pmatrix},\ \ \boldsymbol{\xi}=\transp{(\xi_1,\cdots,\xi_{n-1})}\in M_{n-1,1}(F).
\end{align*}

	\subsection{The Orbital Integral  $J_{\Geo,\du}^{\bi}(f,\textbf{s})$}\label{sec9.2}

	Let $\textbf{x}=(x_1,\cdots, x_n)\in \mathbb{A}_F^n.$ Recall that $u(\textbf{x})=\begin{pmatrix}
		I_n&\\
		\textbf{x}&1
	\end{pmatrix}.$ Denote by $f^{\dag}(\textbf{x},z_2,y)=f(u(\textbf{x})\iota(z_2y)).$ Then $f^{\dag}$ is a Schwartz-Bruhat function on $\mathbb{A}_F^n\times Z'(\mathbb{A}_F)\times \overline{G'}(\mathbb{A}_F).$ 
	
	Let $\Re(s)>1.$ Define $E_{\du}(x,s;f^{\dag}(\cdot,z_2,y))$ to be 
	\begin{equation}\label{eisdu}
\sum_{\delta\in P_0(F)\backslash \overline{G'}(F)}\int_{Z'(\mathbb{A}_F)}f^{\dag}(\eta z_1\delta x,z_2,y)|\det z_1x|^{s}d^{\times}z_1.
	\end{equation}
	Then for fixed $z_2\in Z'(\mathbb{A}_F)$ and $y\in \overline{G'}(\mathbb{A}_F),$ the Eisenstein series $E_{\du}(x,s;z_2,y;f^{\dag})$ has a meromorphic continuation to the whole $s$-plane and admits the function equation connecting the evaluations at $s$ and $1-s.$
	
	\begin{prop}\label{prop18'}
		Let notation be as before. Let $\textbf{s}=(s_1,s_2)$ be such that $\Re(s_1)+\Re(s_2)>1.$ Then 
		\begin{align*}
			J_{\Geo,\du}^{\bi}(f,\textbf{s}):=\lim_{T\rightarrow\infty} J_{\Geo,\du}^{\bi,T}(f,\textbf{s})
		\end{align*}
		converges absolutely and is equal to 
		\begin{equation}\label{45}
		\int_{Z'(\mathbb{A}_F)}\int_{\overline{G'}(\mathbb{A}_F)}\mathcal{P}(s_1+s_2;\phi_1'.\phi_2',f^{\dag}(\cdot,y))\overline{\omega}'(z_2)|\det z_2y|^{s_2}dyd^{\times}z_2,
		\end{equation} 
		where 
		\begin{align*}
			\mathcal{P}(s;\phi_1',\phi_2',f^{\dag}(\cdot,y))=\int_{[\overline{G'}]}\phi'(x)\overline{R(y)\phi'(x)}E_{\du}(x,s;f^{\dag}(\cdot,z_2,y))dx
		\end{align*}
		is a Rankin-Selberg period representing the $L$-function $\Lambda(s_1+s_2,\pi_1'\times\widetilde{\pi}_2').$ In particular, $J_{\Geo,\du}^{\bi}(f,\textbf{s})/\Lambda(s_1+s_2,\pi_1'\times\widetilde{\pi}_2')$ is entire. 
	\end{prop}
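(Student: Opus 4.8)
The plan is to run the same unfolding-and-Rankin–Selberg strategy that was used for $J^{\Reg}_{\Geo,\sm}(f,\textbf{s})$ in Proposition \ref{prop14} and for $\mathcal{F}_{0,1}J^{\bi}_{\Geo}(f,\textbf{s})$ in Proposition \ref{prop22}, adapted to the orbit $\gamma(0)=\begin{pmatrix}I_{n-1}&\\&1&0\\&1&1\end{pmatrix}$. First I would recall from \textsection\ref{sec9.2} that the stabilizer of $\gamma(0)$ in $G'\times G'$ under the two-sided $\iota(G')$-action is the diagonally embedded mirabolic $\Delta P_0'$, so that after the substitution $y\mapsto x^{-1}y$ already carried out there,
\begin{align*}
J_{\Geo,\du}^{\bi,T}(f,\textbf{s})=\int_{Z'^T(\mathbb{A}_F)}\int_{Z'^T(\mathbb{A}_F)}\int_{\overline{G'}(\mathbb{A}_F)}\int_{P_0'(F)\backslash\overline{G'}(\mathbb{A}_F)}&f(\iota(z_1x)^{-1}\gamma(0)\iota(z_1z_2xy))\\
&\phi'_1(x)\overline{\phi'_2(xy)}\,\omega'(z_1)\overline{\omega_2(z_2)}\,|\det z_1x|^{s_1+s_2}|\det z_2y|^{s_2}\,dxdyd^{\times}z_1d^{\times}z_2.
\end{align*}
The key elementary computation is that conjugating $\gamma(0)$ by $\iota(z_1x)$ produces a factor of the shape $u(\eta z_1x)$ times $\iota(z_1z_2xy)$ up to the center; more precisely $\iota(z_1x)^{-1}\gamma(0)\iota(z_1z_2x y)$ can be rewritten, after absorbing $\iota(z_1x)^{-1}\iota(z_1z_2xy)=\iota(x^{-1}z_2xy)$ and using that the bottom row of $\gamma(0)$ only involves the last two coordinates, as $u(\eta z_1 x)\,n\,\iota(z_2 y)$ with $n$ ranging (after we integrate the inner $[N_P]$ that is \emph{not} present here — note the absence of the $\int_{[N_P]}$, since $\mathcal{F}_{0,0}$ carries no Fourier variable) trivially. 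This is exactly why $f^{\dag}(\textbf{x},z_2,y):=f(u(\textbf{x})\iota(z_2y))$ is the right kernel and why no unipotent integration over $N_P$ appears, in contrast to \eqref{102}.

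Next I would collapse the sum over $P_0'(F)\backslash\overline{G'}(F)$ (which, via $P_0(F)\backslash\overline{G'}(F)\leftrightarrow P_0'(F)\backslash\overline{G'}(F)$, is the index set for the Eisenstein series \eqref{eisdu}) together with the $z_1$-integral to assemble $E_{\du}(x,s_1+s_2;f^{\dag}(\cdot,z_2,y))$, which converges absolutely for $\Re(s_1+s_2)>1$ — this is precisely the hypothesis of the proposition, and the reason the convergence cone here is $\Re(s_1)+\Re(s_2)>1$ rather than $>0$ as for the small cell (the exponent on $|\det z_1x|$ is $s_1+s_2$, not $s_1+s_2+1$). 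Having done so, the $[\overline{G'}]$-integral against $\phi_1'(x)\overline{\phi_2'(xy)}$ is visibly the Rankin–Selberg period $\mathcal{P}(s_1+s_2;\phi_1',\phi_2',f^{\dag}(\cdot,z_2,y))$, and the remaining $y,z_2$ integrals over $\overline{G'}(\mathbb{A}_F)\times Z'(\mathbb{A}_F)$ give \eqref{45}. To justify taking $T\to\infty$ inside, I would bound the integrand in absolute value, dominate $|\phi_1'(x)\overline{\phi_2'(xy)}|$ by rapid decay as in the proof of Proposition \ref{prop22} (using \cite{MS12}), and check that the resulting majorant integral over $Z'(\mathbb{A}_F)$ — now weighted by $|\det z_1x|^{\Re(s_1+s_2)}$ rather than $|\det z_1x|^{\Re(s_1+s_2)+1}$ — still converges when $\Re(s_1+s_2)>1$; the compact support of $f$ modulo the center confines $z_1$ to a region controlled by $|\det x|$, exactly as before. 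Finally, the identification of $\mathcal{P}(s;\phi_1',\phi_2',f^{\dag}(\cdot,z_2,y))$ with $\Lambda(s,\pi_1'\times\widetilde{\pi}_2')$ up to an entire factor is the standard Jacquet–Piatetski-Shapiro–Shalika integral representation \cite{JPSS83},\cite{JS90}, after unfolding the Eisenstein series and the Fourier expansions of $\phi_1',\phi_2'$ into Whittaker functions — mechanically the same manipulation as in the last paragraphs of the proof of Proposition \ref{prop14}.

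The main obstacle, as in Proposition \ref{prop22}, is not the formal unfolding but the \emph{interchange of limit and integral} and the \emph{absolute convergence of the dominating integral} near the boundary $\Re(s_1+s_2)=1$: one must verify that after passing to absolute values the $z_1$-integral does not diverge, which requires carefully tracking how the support condition on $f$ (a compact set modulo $Z(\mathbb{A}_F)$) forces $|z_1|$ into a window depending on $|\det x|$ and $|\det y|$, and then checking that this window is narrow enough that the $|\det|^{s_2}$ and $|\det|^{s_1+s_2}$ weights, paired against rapid decay of the two cusp forms over $[\overline{G'}]$, yield a finite answer. Since \eqref{45} is literally the Rankin–Selberg convolution attached to $\pi_1'\times\widetilde\pi_2'$ with the Schwartz section $f^{\dag}$, once convergence is in hand the meromorphic continuation and the entireness of $J_{\Geo,\du}^{\bi}(f,\textbf{s})/\Lambda(s_1+s_2,\pi_1'\times\widetilde{\pi}_2')$ follow immediately from the known analytic properties of that integral, so no further work is needed there.
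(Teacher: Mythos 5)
Your proposal is correct and follows essentially the same route as the paper: identify the stabilizer $\Delta P_0'$, use the identity $\iota(z_1x)^{-1}\gamma(0)\iota(z_1z_2xy)=u(\eta z_1x)\iota(z_2y)$ to assemble the mirabolic Eisenstein series $E_{\du}$ with the Schwartz section $f^{\dag}$, remove the truncation via absolute convergence in $\Re(s_1+s_2)>1$ together with the compact support of $f$ (which confines $z_2y$) and rapid decay of the cusp forms, and then invoke Rankin--Selberg theory for the identification with $\Lambda(s_1+s_2,\pi_1'\times\widetilde{\pi}_2')$ and the entireness of the quotient. The only cosmetic difference is that your dominated-convergence justification is spelled out in the style of Proposition \ref{prop22}, whereas the paper argues more briefly from the pointwise convergence of $E_{\du}$ and the compact support of $f^{\dag}$ in the $(z_2,y)$-variables.
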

	\begin{proof}
		A straightforward calculation shows that the stabilizer of $\gamma(0)$ is $H_{\gamma(0)}'=\Delta P_0',$ the diagonal embedding of the mirabolic subgroup of $G'$ into its product $P_0'\times P_0'.$ Therefore, the orbital integral $J_{\Geo,\du}^{\bi,T}(f,\textbf{s})$ becomes 
		\begin{align*}
			\int_{[Z'^T]}\int_{[Z'^T]}\int_{[\overline{G'}]}\int_{[\overline{G'}]}&\sum_{(\gamma_1,\gamma_2)\in \Delta P_0'(F)\backslash (G'(F)\times G'(F))}f(\iota(z_1^{-1}x^{-1}\gamma_1^{-1})\gamma(0) \iota(\gamma_2z_1z_2y))\\
			& \phi_1'(x)\overline{\phi_2'(y)}\omega'(z_1)\overline{\omega_2(z_2)}|\det z_1x|^{s_1}|\det z_1z_2y|^{s_2}dxdyd^{\times}z_1d^{\times}z_2.
		\end{align*}
		
		After a change of variables we then write $J_{\Geo,\du}^{\bi,T}(f,\textbf{s})$ as 
		\begin{align*}
			\int_{Z'^T(\mathbb{A}_F)}\int_{\overline{G'}(\mathbb{A}_F)}&\int_{Z'^T(\mathbb{A}_F)}f(\iota(z_1^{-1}x^{-1})\gamma(0) \iota(z_1z_2xy))\omega'(z_1)\overline{\omega_2(z_2)}\\
			& \int_{P_0'(F)\backslash\overline{G'}(\mathbb{A}_F)}\phi_1'(x)\overline{\phi_2'(xy)}|\det z_1x|^s|\det z_1z_2xy|^{s_2}dxd^{\times}z_1dyd^{\times}z_2.
		\end{align*}

		Let $\eta=(0,\cdots,0,1)\in F^n.$ Then $J_{\Geo,\gamma(0)}^{\bi,T}(f,\textbf{s})$ is equal to 
		\begin{equation}\label{44}
			\int_{Z'^T(\mathbb{A}_F)}\int_{\overline{G'}(\mathbb{A}_F)}\int_{[\overline{G'}]}\phi_1'(x)\overline{\phi_2'(xy)}E_{\du}^T(x,s_1+s_2;f^{\dag}(\cdot,z_2,y))dxdyd^{\times}z_2,
		\end{equation}
		where $E_{\du}^T(x,s;f^{\dag}(\cdot,z_2,y))$ is a truncated Eisenstein series defined by 
		\begin{align*}
		\sum_{\delta\in P_0(F)\backslash \overline{G'}(F)}\int_{Z'^T(\mathbb{A}_F)}f^{\dag}(\eta z_1\delta x,z_2,y)|\det z_1x|^sd^{\times}z_1.
		\end{align*}
		Note that $E_{\du}(x,s;f^{\dag}(\cdot,z_2,y))$ converges absolutely when $\Re(s)>1.$ Then 
		$$
		\lim_{T\rightarrow \infty}E_{\du}^T(x,s;z_2,y)=E_{\du}(x,s;z_2,y;f^{\dag}),\ \ \Re(s_1+s_2)>0.
		$$
		
		Note also that $f(u(\mathbf{x})\iota(y))\neq 0$ unless $y$ lies in a compact set of $G'(\mathbb{A}_F),$ uniformly for $\mathbf{x}\in\mathbb{A}_F^n.$ Therefore, $J_{\Geo,\gamma(0)}^{\bi}(f,\textbf{s})$ is equal to 
		\begin{align*}
			\int_{Z'(\mathbb{A}_F)}\int_{\overline{G'}(\mathbb{A}_F)}\int_{[\overline{G'}]}\phi_1'(x)\overline{\phi_2'(xy)}E_{\du}(x,s_1+s_2;z_2,y;f^{\dag})dxdyd^{\times}z_2,
		\end{align*}
	which converges absolutely. Then \eqref{45} follows.
	\end{proof}
\begin{remark}
Under certain choice of $f$ (e.g., $f=f^*$) one obtains that $J_{\Geo,\gamma(0)}^{\bi}(f,\textbf{s})$ and $J^{\Reg}_{\Geo,\sm}(f,\textbf{s})$ have the same special value (or residue if they are not holomorphic) at $s_1+s_2=0.$ A special case of this phenomenon is the equality between the two unipotent orbital integrals in \cite{RR05}, which explains the factor $2$ in the leading term of Duke's theorem (cf. \cite{Duk95}). 
\end{remark}

	Denote by
	\begin{align*}\label{156}
		l_v(x_v,s_2)=\int_{G'(F_v)}f_v(u(\eta x_v)\iota(y_v))|\det(y_v)|_v^{s_2}\overline{W_{\phi_2',v}(x_vy_v)}dy_v.
	\end{align*}		
	
	Since \eqref{45} converges absolutely when $\Re(s_1)+\Re(s_2)>1,$ we can unfold the Eisenstein series and write 
	\begin{equation}\label{154}
		J_{\Geo,\du}^{\bi}(f,\textbf{s})=\prod_{v\in\Sigma_F}J^{\bi}_{\Geo,\du,v}(f_v,\textbf{s}),
	\end{equation}
	where
	\begin{align*}
		J^{\bi}_{\Geo,\du,v}(f_v,\textbf{s}):=\int_{N'(F_v)\backslash {G'}(F_v)}W_{\phi_1',v}(x_v)l_v(x_v,\eta x_v,s_2)|\det x_v|_v^{s_1+s_2}dx_v.
	\end{align*}
	
	Hence \eqref{45} is factorizable. We then compute the local integrals $J^{\bi}_{\Geo,\gamma(0),v}(f_v,\textbf{s})$ in the RHS of \eqref{154} as local $L$-factors $L_v(s_1+s_2,\pi_{1,v}'\times\widetilde{\pi}_{2,v}')$ if $v<\infty.$ 
	
	\begin{prop}\label{prop21}
		Let notation be as before. Then 
		\begin{equation}\label{155}
			\frac{J_{\Geo,\du}^{\bi}(f,\textbf{s})}{W_{\phi_1'}(I_n)\overline{W_{\phi_2'}(I_n)}}=\frac{\Lambda(s_1+s_2,\pi'_1\times\widetilde{\pi}_2')}{\Vol(K_0(\mathfrak{N}))|\mathfrak{N}|^{n(s_1+s_2)}}\prod_{v\mid \infty}\mathcal{F}_v^{\dagger}(f_v,\textbf{s};\phi_1',\phi_2'),
		\end{equation} 
		where 
		\begin{equation}\label{150} 
			\mathcal{F}_v^{\dagger}(f_v,\textbf{s};\phi_1',\phi_2')=\frac{J^{\bi}_{\Geo,\du,v}(f_v,\textbf{s})}{L_v(s_1+s_2,\pi'_{1,v}\times\widetilde{\pi}_{2,v}')W_{\phi_1',v}(I_n)\overline{W_{\phi_2',v}(I_n)}}
		\end{equation}
		is an entire function of $\textbf{s}.$
	\end{prop}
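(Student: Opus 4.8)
The plan is to prove \eqref{155} first on the region $\Re(s_1)+\Re(s_2)>1$, where by Proposition \ref{prop18'} the Euler factorization \eqref{154} of $J_{\Geo,\du}^{\bi}(f,\textbf{s})$ converges, and then to pass to all of $\mathbb{C}^2$ by meromorphic continuation: the left-hand side is meromorphic by Proposition \ref{prop18'}, and the right-hand side is meromorphic since $\Lambda(s_1+s_2,\pi_1'\times\widetilde{\pi}_2')$ is and each $\mathcal{F}_v^{\dagger}$ will be shown to be entire. So the core task is to evaluate each local factor $J^{\bi}_{\Geo,\du,v}(f_v,\textbf{s})$, following closely the template of the proof of Proposition \ref{prop11'}.

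For a finite place $v\notin S$ with $v\nmid\mathfrak{N}$, writing $u(\eta x_v)\iota(y_v)$ in block form $\left(\begin{smallmatrix} y_v&0\\ \eta x_vy_v&1\end{smallmatrix}\right)$ shows that $f_v(u(\eta x_v)\iota(y_v))\neq 0$ forces $\lambda_v\in\mathcal{O}_{F_v}^{\times}$, $y_v\in K_v'$ and $\eta x_v\in\mathcal{O}_{F_v}^n$; since $W_{\phi_2',v}$ is right $K_v'$-invariant this gives $l_v(x_v,s_2)=\Vol(K_v)^{-1}\textbf{1}_{\mathcal{O}_{F_v}^n}(\eta x_v)\overline{W_{\phi_2',v}(x_v)}$ up to the central twist. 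Inserting this in $J^{\bi}_{\Geo,\du,v}$, applying the Iwasawa decomposition, and then the Macdonald formula as in \eqref{200}--\eqref{202} --- but now with $\textbf{1}_{\mathcal{O}_{F_v}^n}$ in place of $\textbf{1}_{(\mathfrak{D}_{F_v}^{-1})^n}$, so that the $\varpi_v^{m_n}$-summation contributes a plain $\zeta_{F_v}(n(s_1+s_2))$ with no $\mathfrak{D}_{F_v}$-shift --- the zeta factor cancels the denominator from Macdonald's formula and yields $J^{\bi}_{\Geo,\du,v}(f_v,\textbf{s})=\Vol(K_v)^{-1}W_{\phi_1',v}(I_n)\overline{W_{\phi_2',v}(I_n)}L_v(s_1+s_2,\pi'_{1,v}\times\widetilde{\pi}'_{2,v})$. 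This is exactly why, in contrast with Proposition \ref{prop11'}, no power of $N_F(\mathfrak{D}_F)$ appears here: the kernel $f^{\dag}$ carries no partial Fourier transform. For $v\mid\mathfrak{N}$, the Hecke-congruence support of $f_v$ from \eqref{f_v} forces, by the same block computation, $y_v\in K_v'$ and $\eta x_v\in\mathfrak{p}_v^{e_v(\mathfrak{N})}\mathcal{O}_{F_v}^n$; since $\pi_1',\pi_2'$ are unramified at $v$ the $\omega_v$-twist is absorbed and $l_v(x_v,s_2)=\Vol(\overline{K_v})^{-1}\textbf{1}_{\mathfrak{p}_v^{e_v(\mathfrak{N})}\mathcal{O}_{F_v}^n}(\eta x_v)\overline{W_{\phi_2',v}(x_v)}$. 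The shifted support of $\eta x_v$ moves the range of the $\varpi_v^{m_n}$-sum, producing an extra factor $N_{F_v}(\mathfrak{p}_v^{e_v(\mathfrak{N})})^{-n(s_1+s_2)}$ beyond the local $L$-factor; multiplying over $v\mid\mathfrak{N}$ gives $|\mathfrak{N}|^{-n(s_1+s_2)}$ and, together with the volume factors from the unramified places, accounts for $\Vol(K_0(\mathfrak{N}))^{-1}$.

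For $v\mid\infty$, using the identity $u(\eta x_v)\iota(x_v^{-1}y_v)=\iota(x_v)^{-1}u(\eta)\iota(y_v)$ (which follows from $\eta x_v\cdot x_v^{-1}=\eta$), the substitution $y_v\mapsto x_v^{-1}y_v$ turns $J^{\bi}_{\Geo,\du,v}(f_v,\textbf{s})$ into an absolutely convergent integral of $W_{\phi_1',v}$ against $\overline{W_{\phi_2',v}}$ paired with the smooth kernel $f_v(\iota(x_v)^{-1}u(\eta)\iota(y_v))$, which is compactly supported modulo the center; by the local theory of $\mathrm{GL}(n)\times\mathrm{GL}(n)$ Rankin--Selberg integrals (cf. \cite{JPSS83}) this extends to a meromorphic function that is holomorphically divisible by $L_v(s_1+s_2,\pi'_{1,v}\times\widetilde{\pi}'_{2,v})$, so the function $\mathcal{F}_v^{\dagger}(f_v,\textbf{s};\phi_1',\phi_2')$ defined by \eqref{150} is entire. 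Assembling the three cases and using $\prod_vW_{\phi_i',v}(I_n)=W_{\phi_i'}(I_n)$ together with $\prod_vL_v(s_1+s_2,\pi'_{1,v}\times\widetilde{\pi}'_{2,v})=\Lambda(s_1+s_2,\pi_1'\times\widetilde{\pi}_2')$ yields \eqref{155} on $\Re(s_1)+\Re(s_2)>1$, hence everywhere by continuation.

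The main obstacle I anticipate is the finite-place bookkeeping at $v\mid\mathfrak{N}$: carrying the $\omega_v$-twist built into \eqref{f_v} through the block computations while preserving the sphericality of $\phi_i'$, and confirming that the exponent of $|\mathfrak{N}|$ and the volume normalization come out exactly as stated. The archimedean divisibility is essentially a formal consequence of the local Rankin--Selberg theory, but one still has to write the kernel $f_v(\iota(x_v)^{-1}u(\eta)\iota(y_v))$ explicitly in a form to which the standard holomorphy and divisibility statements apply.
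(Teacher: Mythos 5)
Your proposal is correct and follows essentially the same route as the paper's proof: analyze the support of $f_v$ at finite places to reduce $l_v(x_v,s_2)$ to $\Vol(K_v)^{-1}\mathbf{1}_{\mathfrak{N}\mathcal{O}_{F_v}^n}(\eta x_v)\overline{W_{\phi_2',v}(x_v)}$, evaluate the resulting unramified Rankin--Selberg integral to extract $L_v(s_1+s_2,\pi_{1,v}'\times\widetilde{\pi}_{2,v}')$ together with the $|\mathfrak{N}|^{-n(s_1+s_2)}$ and volume factors, invoke the local theory plus compact support of $f_\infty$ for entireness of the archimedean factors, and extend from $\Re(s_1)+\Re(s_2)>1$ by meromorphic continuation. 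The only differences (splitting $v\nmid\mathfrak{N}$ from $v\mid\mathfrak{N}$ and running the Iwasawa--Macdonald computation explicitly, plus the substitution $y_v\mapsto x_v^{-1}y_v$ at archimedean places) are presentational refinements of what the paper cites as Rankin--Selberg theory.
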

	\begin{proof}
		Suppose $v<\infty$. By definition, $f_v(u(\eta x_v)\iota(y_v))$ is nonvanishing if and only if $u(\eta x_v)\iota(y_v)\in Z(F_v)K_v,\ \text{i.e.},\ \lambda_vu(\eta x_v)\iota(y_v)\in K_v$ for some $\lambda_v\in F_v^{\times}.$ Taking the inverse we then obtain 
		\begin{align*}
			\begin{pmatrix}
				\lambda_v^{-1}y_v^{-1}&\\
				&\lambda_v^{-1}
			\end{pmatrix}\begin{pmatrix}
				I_{n}&\\
				-\eta x_v&1
			\end{pmatrix}\in K_v.
		\end{align*}
		Therefore, $\lambda_v\in\mathcal{O}_{F_v}^{\times},$ $y_v\in K_v'$ and $\eta x_v\in \mathfrak{N}\mathcal{O}_{F_v}^n,$ deducing
		\begin{equation}\label{106}
			l_v(x_v,s_2)=\frac{\overline{W_{\phi_2',v}(x_v)}}{\Vol(K_v)}\cdot \textbf{1}_{\mathfrak{N}\mathcal{O}_{F_v}^n}(\eta x_v).
		\end{equation} 
		
		Therefore, $J_{\Geo,\du,v}^{\bi}(f,\textbf{s})$ is equal to 
		\begin{align*}
			\frac{1}{\Vol(K_v)}\int_{N'(F_v)\backslash {G'}(F_v)}W_{\phi_1',v}(x_v)\overline{W_{\phi_2',v}(x_v)}\cdot \textbf{1}_{\mathfrak{N}\mathcal{O}_{F_v}^n}(\eta x_v)|\det x_v|_v^{s_1+s_2}dx_v.
		\end{align*}
		
		By definition $\phi_1'$ and $\phi_2'$ are right $K_v'$-invariant, i.e., spherical at $v.$ Then we obtain by Rankin-Selberg theory (cf. e.g., \cite{JPSS83}) that 
		\begin{align*}
			J^{\bi}_{\Geo,\du,v}(f_v,\textbf{s})=\frac{W_{\phi_1',v}(I_n)\overline{W_{\phi_2',v}(I_n)}}{\Vol(K_v)|\mathfrak{N}|^{n(s_1+s_2)}}\cdot L_v(s_1+s_2,\pi_{1,v}'\times\widetilde{\pi}_{2,v}'),
		\end{align*}
		and \eqref{150} is entire. Thus \eqref{155} holds when $\Re(s_1)+\Re(s_2)>1.$ Since there are only finitely many $v$'s satisfying $v\mid \infty,$ and each $\mathcal{F}_v^{\dagger}(f_v,\textbf{s};\phi_1',\phi_2')$ is entire, then \eqref{155} can be extended as an equality of meromorphic functions over the whole complex plane. Then Proposition  \ref{prop21} follows.
	\end{proof}
 
\subsection{The Orbital Integral $J^{\Reg,\RNum{2}}_{\Geo,\bi}(f,\textbf{s})$}\label{sec10}
In this section we investigate 
$$
J^{\Reg,\RNum{2}}_{\Geo,\bi}(f,\textbf{s}):=\lim_{T\rightarrow\infty}\Big[J_{\Geo,\gamma_{\tau}}^{\bi,T}(f,\textbf{s})+J_{\Geo,\reg}^{\bi,T}(f,\textbf{s})\Big].
$$
where $J_{\Geo,\gamma_{\tau}}^{\bi,T}(f,\textbf{s})$ and $J_{\Geo,\reg}^{\bi,T}(f,\textbf{s})$ are defined by \eqref{91} and \eqref{29}, respectively. 

We will show $J^{\Reg,\RNum{2}}_{\Geo,\bi}(f,\textbf{s})$ converges absolutely for all $\mathbf{s}.$ Moreover,   the regular orbital integral $\lim_{T\rightarrow\infty}J_{\Geo,\reg}^{\bi,T}(f,\textbf{s})$ is stable in the sense of \cite{MR12} and \cite{FW09}.
	\begin{thm}\label{Red}
		Let notation be as before. For $n\geq 1,$ we have the following.  
		\begin{enumerate}
			\item The integral	$J^{\Reg,\RNum{2}}_{\Geo,\bi}(f,\textbf{s})$ is equal to
\begin{align*}
\sum_{\substack{(\boldsymbol{\xi},t)\in F^n\\
			(\boldsymbol{\xi},t)\neq \textbf{0}}}&\int_{P_0'(F)\backslash {G'}(\mathbb{A}_F)}\int_{{G'}(\mathbb{A}_F)}f\left(\iota(x)^{-1}\begin{pmatrix}
		I_{n-1}&&\boldsymbol{\xi}\\
		&1&t\\
		&1&1
	\end{pmatrix}\iota(xy)\right)\\
	&\qquad \qquad \qquad \qquad \phi_1'(x)\overline{\phi_2'(xy)}|\det x|^{s_1+s_2}|\det y|^{s_2}dydx,
\end{align*}
which converges absolutely. 
\item Suppose $|\mathfrak{N}|$ is large enough, and $f_S$ is fixed. Then $J^{\Reg,\RNum{2}}_{\Geo,\bi}(f,\textbf{s})$ is equal to
\begin{align*}
\sum_{\substack{\boldsymbol{\xi}\in F^{n-1}\\
			\boldsymbol{\xi}\neq \textbf{0}}}&\int_{P_0'(F)\backslash {G'}(\mathbb{A}_F)}\int_{{G'}(\mathbb{A}_F)}f\left(\iota(x)^{-1}\begin{pmatrix}
		I_{n-1}&&\boldsymbol{\xi}\\
		&1&\\
		&1&1
	\end{pmatrix}\iota(xy)\right)\\
	&\qquad \qquad \qquad \qquad \phi_1'(x)\overline{\phi_2'(xy)}|\det x|^{s_1+s_2}|\det y|^{s_2}dydx,
\end{align*}
and $J^{\Reg,\RNum{2}}_{\Geo,\bi}(f,\textbf{s})\ll |\mathfrak{N}|,$
where the implied constant relies on $\textbf{s},$ $f_S,$ $\phi_1'$ and $\phi_2'.$ In particular, $J^{\Reg,\RNum{2}}_{\Geo,\bi}(f,\textbf{s})\equiv 0$ if $n=1.$ 
\end{enumerate} 
	\end{thm}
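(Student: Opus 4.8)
The plan is to begin from the definition $J^{\Reg,\RNum{2}}_{\Geo,\bi}(f,\textbf{s})=\lim_{T\to\infty}\big[J_{\Geo,\gamma_{\tau}}^{\bi,T}(f,\textbf{s})+J_{\Geo,\reg}^{\bi,T}(f,\textbf{s})\big]$ and to insert the unfolded expressions obtained in \textsection\ref{5.2.2} and \textsection\ref{5.2.3}. After the orbit alteration, $J_{\Geo,\gamma_{\tau}}^{\bi,T}$ is an integral over $[Z'^T]\times Z'^T(\mathbb{A}_F)\times\overline{G'}(\mathbb{A}_F)\times(P_0'(F)\backslash\overline{G'}(\mathbb{A}_F))$ whose inner sum runs over $\mathcal{S}(F)=\{\gamma_{\boldsymbol{\xi},0}:\ \boldsymbol{\xi}\in F^{n-1}-\{\mathbf{0}\}\}$, while each $J_{\Geo,\gamma(t)}^{\bi,T}$ ($t\in F-\{0,1\}$) has the same shape with $\mathcal{S}(F)$ replaced by $\mathcal{S}^{*}(F)$, the set of $\gamma_{\boldsymbol{\xi},t}$ with $\boldsymbol{\xi}\in F^{n-1}$; here $\gamma_{\boldsymbol{\xi},t}$ denotes the matrix displayed in the statement, and $t=1$ is excluded because $\gamma_{\boldsymbol{\xi},1}$ is singular. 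Taking the disjoint union of these index sets identifies $J_{\Geo,\gamma_{\tau}}^{\bi,T}+J_{\Geo,\reg}^{\bi,T}$ with a single truncated integral whose inner sum is $\sum_{(\boldsymbol{\xi},t)\in F^{n}-\{\mathbf{0}\}}$, and it remains only to pass to the limit $T\to\infty$.

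For part (1) I would establish absolute convergence of the limiting integral in one stroke, which simultaneously legitimizes interchanging $\lim_{T\to\infty}$, the sum over $(\boldsymbol{\xi},t)$, and the integrations, and lets the truncated central domains $[Z'^T]$ and $Z'^T(\mathbb{A}_F)$ grow to $Z'(\mathbb{A}_F)$ and be absorbed (together with $\overline{G'}(\mathbb{A}_F)$ and $P_0'(F)\backslash\overline{G'}(\mathbb{A}_F)$) into the integrals over $G'(\mathbb{A}_F)$ and $P_0'(F)\backslash G'(\mathbb{A}_F)$ appearing in the statement. The key inputs are: (a) $f$ has compact support modulo $Z(\mathbb{A}_F)$, so for fixed $x,y$ the summand $f(\iota(x)^{-1}\gamma_{\boldsymbol{\xi},t}\iota(xy))$ is nonzero for only finitely many $(\boldsymbol{\xi},t)$, and, arguing place by place in the spirit of Lemmas \ref{lem18}, \ref{lem18.}, \ref{lem18'}, the support condition bounds the number of such $(\boldsymbol{\xi},t)$ polynomially in the height of $\iota(x)$ while confining $\iota(xy)$ to a fixed compact set times $\iota(x)$; and (b) the cusp forms $\phi_1'$ and $\phi_2'$ decay rapidly on $[\overline{G'}]$. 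Together these produce a $T$-independent integrable majorant, which is what is needed, and the identity of part (1) follows.

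For part (2), fix a place $v\mid\mathfrak{N}$, where $f_v$ is supported on $Z(F_v)K_v$ with $K_v=K_0(\mathfrak{p}_v^{e_v(\mathfrak{N})})$. Writing $\iota(x_v)^{-1}\gamma_{\boldsymbol{\xi},t}\iota(x_vy_v)$ in Iwasawa (or Cartan) form and imposing membership in $Z(F_v)K_v$, exactly the type of computation carried out in Lemma \ref{lem18.} and Lemma \ref{lem20}(i), shows that $f_v(\iota(x_v)^{-1}\gamma_{\boldsymbol{\xi},t}\iota(x_vy_v))=0$ unless $e_v(t)$ (and each $e_v(\xi_i)$) is bounded below by a positive multiple of $e_v(\mathfrak{N})$; at the remaining finite unramified places one gets the weaker integrality constraints of Lemma \ref{lem20}, and at $v\in S$ and $v\mid\infty$ the local data is fixed once and for all. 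Summing these lower bounds over all places and using $\sum_v e_v(t)=0$ for $t\in F^{\times}$, we conclude that no $t\in F^{\times}$ can contribute once $|\mathfrak{N}|$ exceeds an explicit threshold depending only on $f_S$; hence only $t=0$ survives and the family collapses to $\boldsymbol{\xi}\in F^{n-1}-\{\mathbf{0}\}$, which is empty when $n=1$, giving $J^{\Reg,\RNum{2}}_{\Geo,\bi}(f,\textbf{s})\equiv 0$ in that case. Feeding the same local support information (the constraint on $\boldsymbol{\xi}$, the normalizing volumes $\Vol(\overline{K_v})^{-1}$ at $v\mid\mathfrak{N}$, and the formula for $\Vol(K_0(\mathfrak{N}))$) into the rapidly convergent $P_0'(F)\backslash G'(\mathbb{A}_F)$-integral then yields $J^{\Reg,\RNum{2}}_{\Geo,\bi}(f,\textbf{s})\ll|\mathfrak{N}|$, just as in the estimates of \textsection\ref{sec4}.

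The step I expect to be the main obstacle is the uniform-in-$T$ domination in part (1): although $f$ being compactly supported modulo the center kills the $z_1$- and $z_2$-tails for each individual $\gamma_{\boldsymbol{\xi},t}$, one must control these tails uniformly over the \emph{infinite} family $\{\gamma_{\boldsymbol{\xi},t}\}$ before the interchange of limit, summation, and integration can be justified. This is precisely where the place-by-place support analysis has to be done carefully enough to produce a majorant that is summable over $(\boldsymbol{\xi},t)$ and integrable over the group variables at the same time; once that is in hand, the $t=0$ reduction and the quantitative bound in part (2) are comparatively routine consequences of the product formula and the volume computations.
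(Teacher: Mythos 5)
Your proposal is correct and follows essentially the same route as the paper: combine the $\gamma_{\tau}$- and $\gamma(t)$-orbits into a single sum over $(\boldsymbol{\xi},t)\neq\mathbf{0}$, prove absolute convergence by a place-by-place analysis of the support of $f$ (which confines $(\boldsymbol{\xi},t)$, the central variable and $y$) played against the rapid decay of the cusp forms on a Siegel domain for $[P_0']$, and then use the $\mathfrak{N}$-divisibility of $t/(t-1)$ and of the $\xi_j$ at $v\mid\mathfrak{N}$ together with archimedean boundedness (a product-formula/ideal-norm argument) to force $t=0$ once $|\mathfrak{N}|$ is large, the bound $\ll|\mathfrak{N}|$ coming from the sparsity of admissible $\boldsymbol{\xi}$ weighed against $\|f_{\mathrm{fin}}\|_{\infty}\asymp|\mathfrak{N}|^{n}$, and the $n=1$ case being empty. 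The only structural difference is that the paper first decouples $\phi_1'$ and $\phi_2'$ by Cauchy--Schwarz, reducing to a single $|\phi_1'|^2$-weight before the counting, whereas you keep both forms and use the compact confinement of $y$; this is harmless, since over that confinement $\phi_2'(xy)$ is controlled by a sup-norm bound retaining the same rapid decay in $x$.
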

\subsubsection{Global Reduction by Cauchy-Schwartz Inequality}
By Cauchy-Schwartz,
\begin{align*}
\big|J^{\Reg,\RNum{2}}_{\Geo,\bi}(f,\textbf{s})\big|\leq \sqrt{\mathcal{J}_{\Geo,\bi}^{\Reg,\RNum{2}}(f,\phi_1',s_1)\tilde{\mathcal{J}}_{\Geo,\bi}^{\Reg,\RNum{2}}(f,\phi_2',s_2)},
\end{align*}
where $\mathcal{J}_{\Geo,\bi}^{\Reg,\RNum{2}}(f,\phi_1',s)$ is defined by 
\begin{align*}
\sum_{\substack{(\boldsymbol{\xi},t)\in F^n\\
			(\boldsymbol{\xi},t)\neq \textbf{0}}}\iint\Bigg|f\left(\iota(x)^{-1}\begin{pmatrix}
		I_{n-1}&&\boldsymbol{\xi}\\
		&1&t\\
		&1&1
	\end{pmatrix}\iota(xy)\right)\Bigg|\big|\phi_1'(x)\big|^2|\det x|^{2\Re(s)}dxdy,
\end{align*}
and $\tilde{\mathcal{J}}_{\Geo,\bi}^{\Reg,\RNum{2}}(f,\phi_2',s)$ is defined by 
\begin{align*}
\sum_{\substack{(\boldsymbol{\xi},t)\in F^n\\
			(\boldsymbol{\xi},t)\neq \textbf{0}}}&\iint\Bigg|f\left(\iota(x)^{-1}\begin{pmatrix}
		I_{n-1}&&\boldsymbol{\xi}\\
		&1&t\\
		&1&1
	\end{pmatrix}\iota(xy)\right)\Bigg|\big|\phi_2'(xy)\big|^2|\det xy|^{2\Re(s)}dxdy,
\end{align*}
with $x$ (resp. $y$) ranges over $P_0'(F)\backslash {G'}(\mathbb{A}_F)$ (resp. $G'(\mathbb{A}_F)$).

Define $f^{-1}$ by $f^{-1}(g)=f(g^{-1}),$ $g\in G(\mathbb{A}_F).$ Changing variable $x\mapsto xy^{-1}$ and $y\mapsto y^{-1}$ and swapping integrals, we then derive that 
$$
\tilde{\mathcal{J}}_{\Geo,\bi}^{\Reg,\RNum{2}}(f,\phi_2',s)=\mathcal{J}_{\Geo,\bi}^{\Reg,\RNum{2}}(f^{-1},\phi_2',-s).
$$ 

Therefore, it suffices to show that $\mathcal{J}_{\Geo,\bi}^{\Reg,\RNum{2}}(f,\phi_1',s)$ converges absolutely.

\subsubsection{Counting Rational Points}
By Siegel's results on $[P_0']$       every element may be represented in the form $b=ak^*\in A^*(F_{\infty})\times\Omega^*,$ where $$A^*(F_{\infty}):=\big\{\diag(a_1,\cdots,a_{n-1},1)\in T_{B'}(F_{\infty}):\ |a_{1,v}|_v\geq\cdots\geq |a_{n-1,v}|_v,\ v\mid\infty\big\},$$ and $\Omega^*$ is a large enough fixed compact set in $P_0'(\mathbb{A}_F)$. Let $\mathcal{V}$ be the set of rational primes such that $k_v^*\in G'(\mathcal{O}_{F_v})$ whenever $v\notin \mathcal{V}.$ So $\mathcal{V}$ is fixed, relying only on $\Omega^*$.

Now we analyze $f\left(\iota(x)^{-1}\begin{pmatrix}
	I_{n-1}&&(1-t)\boldsymbol{\xi}\\
	&1&t\\
	&1&1
\end{pmatrix}\iota(xy)\right)$ to determine the support of $x$ and $y.$ By Iwasawa decomposition we may write $x=zbk,$ $z\in \mathbb{A}_F^{\times},$ $b=ak^*\in [P_0']\subset A^*(F_{\infty})\times\Omega^*,$ and $k\in K'.$ So
$$
f\left(\iota(x)^{-1}\begin{pmatrix}
	I_{n-1}&&(1-t)\boldsymbol{\xi}\\
	&1&t\\
	&1&1
\end{pmatrix}\iota(xy)\right)
$$
factorizes as the product $\mathfrak{F}_{\infty}\mathfrak{F}_{\fin},$ where 
\begin{equation}\label{55}
	\mathfrak{F}_{\infty}:=f_{\infty}\left(\iota(z_{\infty}ak_{\infty}^*k_{\infty})^{-1}\begin{pmatrix}
		I_{n-1}&&(1-t)\boldsymbol{\xi}\\
		&1&t\\
		&1&1
	\end{pmatrix}\iota(z_{\infty}ak_{\infty}^*k_{\infty}y_{\infty})\right)
\end{equation}
and $\mathfrak{F}_{\fin}:=\prod_{v< \infty}\mathfrak{F}_v,$ with 
\begin{equation}\label{58}
	\mathfrak{F}_v:=f_v\left(\iota(z_vk_v^*k_v)^{-1}\begin{pmatrix}
		I_{n-1}&&(1-t)\boldsymbol{\xi}\\
		&1&t\\
		&1&1
	\end{pmatrix}\iota(z_vk_v^*k_vy_v)\right),\ \ v<\infty. 
\end{equation}

\begin{lemma}\label{lem9.2}
Let notation be as before. Define $\mathfrak{X}(f;C)$ by 
\begin{align*}
\big\{(\xi_1,\cdots,\xi_{n-1},t)\in F^n:\ \xi_j\in |\mathfrak{N}|C^{-1}\mathcal{O}_F,\ \frac{t}{t-1}\in |\mathfrak{N}|C^{-1}\mathcal{O}_F,\ 1\leq j<n\big\}.
\end{align*}

Then there exists an integer $C\in \mathbb{Z}$ depending on $\otimes_{v\in S\cap\Sigma_{F,\fin}}\supp f_v$ such that $\prod_{v<\infty}\mathfrak{F}_v=0$ 
unless $(\boldsymbol{\xi},t)\in\mathfrak{X}(f;C).$ Moreover,
\begin{align*}
	\int_{Z'(F_v)}\int_{G'(F_v)}\Bigg|f_v\left(\iota(z_vk_v^*k_v)^{-1}\begin{pmatrix}
		I_{n-1}&&(1-t)\boldsymbol{\xi}\\
		&1&t\\
		&1&1
	\end{pmatrix}\iota(z_vk_v^*k_vy_v)\right)\Bigg|dy_vd^{\times}z_v
\end{align*}
is $\ll q_v^{ne_v(\mathfrak{N})}\big(\big|\min\{e_v(t)-e_v(t-1), e_v(\xi_j):\ 1\leq j\leq n-1\}\big|+\big|e_v(t-1)\big|+1\big),$ where the implied constant relies on $S.$
\end{lemma}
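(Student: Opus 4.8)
The statement is a purely local, $v$-by-$v$ support and volume estimate. The plan is to fix a finite place $v<\infty$ and analyze when the local factor $\mathfrak{F}_v$ in \eqref{58} is nonzero, then integrate $|\mathfrak{F}_v|$ over $y_v\in G'(F_v)$ and $z_v\in Z'(F_v)$. First I would treat the generic places $v\notin S$ together with the places $v\mid\mathfrak{N}$: there $k_v^*\in G'(\mathcal{O}_{F_v})$ (by definition of $\mathcal{V}$ and since $v\notin\mathcal{V}$ for such $v$ when $v\nmid\mathfrak N$, and the argument at $v\mid\mathfrak N$ uses the Hecke-congruence support of $f_v$ exactly as in Lemmas \ref{lem18.} and \ref{lem20}), so $\iota(k_v^*k_v)$ can be absorbed into $K_v$ by bi-$\iota(K_v')$-invariance, reducing $\mathfrak F_v$ to $f_v\!\left(\iota(z_v)^{-1}\gamma(\boldsymbol\xi,t)\iota(z_vy_v)\right)$ with $\gamma(\boldsymbol\xi,t)=\begin{pmatrix}I_{n-1}&&(1-t)\boldsymbol\xi\\&1&t\\&1&1\end{pmatrix}$. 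The nonvanishing condition $\lambda_v\iota(z_v)^{-1}\gamma(\boldsymbol\xi,t)\iota(z_vy_v)\in K_v$ for some $\lambda_v\in F_v^\times$ is then analyzed entry-by-entry. Comparing entries of this matrix and of its inverse against the $\varpi_v$-adic constraints defining $K_v=K_0(\mathfrak p_v^{e_v(\mathfrak N)})$ — in particular the $(n+1,j)$ entries must lie in $\mathfrak p_v^{e_v(\mathfrak N)}$ — forces $\xi_j$ and $\tfrac{t}{t-1}$ (which appears because $\gamma(\boldsymbol\xi,t)^{-1}$ has $t/(t-1)$ in the relevant slot, as one sees directly by inverting the $2\times2$ lower-right block and using $(1-t)$ as the upper coupling) to lie in $|\mathfrak N|_v^{-1}$-scaled lattices after clearing the fixed denominator $C$ coming from $\otimes_{v\in S\cap\Sigma_{F,\fin}}\supp f_v$. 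Collecting these over all $v<\infty$ gives the global condition $(\boldsymbol\xi,t)\in\mathfrak X(f;C)$.

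For the volume bound, the key observation is that once $\gamma(\boldsymbol\xi,t)$ is fixed, the set of $(z_v,y_v)$ with $\lambda_v\iota(z_v)^{-1}\gamma(\boldsymbol\xi,t)\iota(z_vy_v)\in K_v$ is, up to the $K_v$-action, a single double coset, so the integral $\int\int|\mathfrak F_v|\,dy_vd^\times z_v$ equals $\Vol(K_v)^{-1}$ times the number of relevant $\varpi_v$-power cosets. Running the Iwasawa decomposition $y_v=z_{2,v}a_vn_v'k_v'$ as in Lemma \ref{lem18.} and tracking the valuations of the diagonal entries $a_v$ and of $z_v,z_{2,v}$ against the entry constraints, one finds that the number of admissible valuation tuples is governed by how far $e_v\!\big((1-t)\boldsymbol\xi\big)$, $e_v(t)$, $e_v(t-1)$ can spread while still allowing $\gamma(\boldsymbol\xi,t)$ to be completed to an element of $Z(F_v)K_v$; this range has length $\asymp |\min\{e_v(t)-e_v(t-1),e_v(\xi_j)\}| + |e_v(t-1)| + 1$, and each coset contributes a factor controlled by $q_v^{e_v(\mathfrak N)}$ per row/column affected (there are $n$ such, giving $q_v^{ne_v(\mathfrak N)}$), exactly as in the counting leading to \eqref{22}. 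The implied constant depends only on $S$ because outside $S$ the computation is the clean spherical/Hecke-congruence one with no extra denominators.

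I expect the main obstacle to be the careful bookkeeping of valuations in the mixed regime where $e_v(z_v)$ is negative and $y_v$ must move to a non-$K_v'$ translate — precisely the dichotomy (a)/(b) of Lemma \ref{lem18.} and the more elaborate case (iii) of Lemma \ref{lem20}, now complicated by the extra parameters $t$ and $\boldsymbol\xi$ sitting in the top-right block of $\gamma(\boldsymbol\xi,t)$. One must verify that the coupling between $z_v$, the Iwasawa torus part of $y_v$, and the unipotent $n_v\in N_P$ still closes up to give a finite, explicitly-bounded count rather than an unbounded family; the point that makes it work is that the determinant constraint $e_v(\det)=0$ pins down the sum of the torus valuations, and the $(n+1,n+1)$ and $(n,n+1)$ entry constraints pin down two more, leaving only the one-parameter spread measured by the stated quantity. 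Once this is in place the estimate follows by multiplying the local bounds, noting that only finitely many $v$ (those in $\mathcal V\cup S\cup\{v:v\mid\mathfrak N\}$) contribute anything beyond $1$.
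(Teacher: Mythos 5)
Your plan follows essentially the same route as the paper's proof: a place-by-place analysis of the support condition $\lambda_v\,\iota(z_v)^{-1}\gamma(\boldsymbol{\xi},t)\,\iota(z_vy_v)\in K_v$ (resp.\ the compact support of $\tilde f_v$ at $v\in S\cup\mathcal V$), splitting on the sign of $e_v(z_v)$ and using the Iwasawa decomposition of $y_v$ to extract exactly the valuation constraints defining $\mathfrak X(f;C)$, followed by a count of the admissible values of $e_v(z_v)$ (bounded by the stated spread) multiplied by $\|f_v\|_\infty\ll q_v^{ne_v(\mathfrak N)}$. Apart from minor slips that would disappear in execution (there is no $N_P$-unipotent in this particular integral, and the divisibility of $\xi_j$ and $t/(t-1)$ by $\mathfrak N$ at $v\mid\mathfrak N$ comes from combining several entry constraints, not from the bottom row alone), this is the paper's argument.
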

\begin{proof}
Write $\boldsymbol{\xi}=\transp{(\xi_1,\cdots,\xi_{n-1})}\in F^{n-1}.$ Let $v<\infty,$ write $z_v=\varpi_v^rI_n.$


\begin{enumerate}
	\item[(A)] Suppose $r\geq 0.$ By Cramer's rule there exists $k_v^{(1)}\in G(\mathcal{O}_{F_v})$ such that 
	\begin{align*}
\iota(z_v)^{-1}\begin{pmatrix}
			I_{n-1}&&(1-t)\boldsymbol{\xi}\\
			&1&t\\
			&1&1
		\end{pmatrix}\iota(z_v)=(1-t)k_{v}^{(1)}\begin{pmatrix}
			\frac{1}{1-t}I_{n-1}&&\varpi_v^{-r}\boldsymbol{\xi}\\
			&\frac{1}{1-t}&\frac{\varpi_v^{-r}t}{1-t}\\
			&&1
		\end{pmatrix}.
	\end{align*}
	
	\item[(B)] Suppose $r< 0.$ By Cramer's rule there exists $k_{v}^{(2)}\in G(\mathcal{O}_{F_v})$ such that 
	\begin{align*}
\iota(z_v)^{-1}\begin{pmatrix}
			I_{n-1}&&(1-t)\boldsymbol{\xi}\\
			&1&t\\
			&1&1
		\end{pmatrix}\iota(z_v)=\varpi_v^{-r}(1-t)k_{v}^{(2)}\begin{pmatrix}
			\frac{\varpi_v^r}{1-t}I_{n-1}&&\boldsymbol{\xi}\\
			&\frac{\varpi_v^{2r}}{1-t}&\frac{\varpi_v^{r}}{1-t}\\
			&&1
		\end{pmatrix}.
	\end{align*}
	
\end{enumerate}

Let $t\in F-\{1\}.$ Consider the various cases as follows.
\begin{enumerate}
	\item Let $v\notin S\cup\mathcal{V}$ and $v\nmid\mathfrak{N}.$ 
	\begin{itemize}
		\item Suppose $e_v(t-1)\geq 0.$ Then from the case (B) and the support of $f_v$ one must have $r\geq 0.$ So 
		\begin{align*}\mathfrak{F}_v=f_v\left(
	\begin{pmatrix}
			\frac{1}{1-t}I_{n-1}&&\varpi_v^{-r}\boldsymbol{\xi}\\
			&\frac{1}{1-t}&\frac{\varpi_v^{-r}t}{1-t}\\
			&&1
		\end{pmatrix}\iota(k_v^*k_vy_v)
	\right)\neq 0
		\end{align*}
	if and only if there exists some $\lambda_v\in F_v^{\times}$ such that 
	\begin{equation}\label{9.3}
\lambda_v\begin{pmatrix}
			\frac{1}{1-t}I_{n-1}&&\varpi_v^{-r}\boldsymbol{\xi}\\
			&\frac{1}{1-t}&\frac{\varpi_v^{-r}t}{1-t}\\
			&&1
		\end{pmatrix}\iota(k_v^*k_vy_v)\in G(\mathcal{O}_{F_v}),
	\end{equation}
	which forces that $\lambda_v\in \mathcal{O}_{F_v}^{\times},$ $y_v\in (t-1)K_v',$ and 
		\begin{align*}
			\begin{cases}
				e_v(t)-e_v(t-1)\geq r\geq 0\\
				e_v(\xi_j)\geq r\geq 0,\ 1\leq j\leq n-1.
			\end{cases}
		\end{align*}
					
		\item Suppose $e_v(t-1)\leq -1.$ Then $e_v(t)=e_v(t-1).$ From the case (B) we obtain $e_v(t-1)\leq r\leq -1;$ in the case (A) we have $0\leq r\leq e_v(t)-e_v(t-1),$ implying $r=0.$ So 
		\begin{equation}\label{9.4..}
			\mathfrak{F}_v=\begin{cases}
			f_v\left(
	\begin{pmatrix}
			\frac{1}{1-t}I_{n-1}&&\boldsymbol{\xi}\\
			&\frac{1}{1-t}&\frac{t}{1-t}\\
			&&1
		\end{pmatrix}\iota(k_v^*k_vy_v)
	\right),&\text{if $r=0$}\\
	f_v\left(
	\begin{pmatrix}
			\frac{\varpi_v^r}{1-t}I_{n-1}&&\boldsymbol{\xi}\\
			&\frac{\varpi_v^{2r}}{1-t}&\frac{p^{r}}{1-t}\\
			&&1
		\end{pmatrix}\iota(k_v^*k_vy_v)
	\right),& \text{if $r<0$}.
			\end{cases}
				\end{equation}
		
		Hence $\mathfrak{F}_v\neq 0$ unless $\boldsymbol{\xi}\in \mathcal{O}_{F_v}^{n-1}$ and $e_v(t)-e_v(t-1)\geq 0$ if $r=0,$ and $\mathfrak{F}_v\neq 0$ unless $\boldsymbol{\xi}\in \mathcal{O}_{F_v}^{n-1}$ and $r-e_v(t-1)\geq 0$ if $r<0.$ 
		
		In the case that $r<0,$ $r-e_v(t-1)\geq 0$ implies that $e_v(t-1)\leq r<0.$ So $e_v(t)-e_v(t-1)=0.$ 
	\end{itemize}
	
	Therefore, in the above cases we have $\mathfrak{F}_v\neq 0$ unless
	\begin{align*}
			\begin{cases}
				e_v(t)-e_v(t-1)\geq 0\\
				e_v(\xi_j)\geq  0,\ 1\leq j\leq n-1
			\end{cases}
		\end{align*} 
		and $y_v\in (t-1)K_v'$ or $(t-1)\begin{pmatrix}
			\varpi_v^{-r}\\
			&\varpi_v^{-2r}
		\end{pmatrix}K_v'.$
	
	\item Let $v\in S$ or $v\in\mathcal{V}.$ Then by definition, $\supp f_v$ is $Z(F_v)D_v,$ where $D_v$ is a compact set of $G(F_v).$ Hence, similar to \eqref{9.3}, there exists some $\lambda_v\in F_v^{\times}$ such that 
	\begin{equation}\label{9.4}
	\lambda_v\begin{pmatrix}
			\frac{1}{1-t}I_{n-1}&&\varpi_v^{-r}\boldsymbol{\xi}\\
			&\frac{1}{1-t}&\frac{\varpi_v^{-r}t}{1-t}\\
			&&1
		\end{pmatrix}\iota(k_v^*k_vy_v)\in \iota(k_v^*k_v)D_v
	\end{equation}
	if $r\geq 0;$ and if $r<0,$ we have 
	\begin{equation}\label{9.5}
	\lambda_v\begin{pmatrix}
			\frac{\varpi_v^r}{1-t}I_{n-1}&&\boldsymbol{\xi}\\
		&\frac{\varpi_v^{2r}}{1-t}&\frac{p^{r}}{1-t}\\
			&&1
		\end{pmatrix}\iota(k_v^*k_vy_v)\in \iota(k_v^*k_v)D_v.
	\end{equation}
	
	Note that both \eqref{9.4} and \eqref{9.5} implies that 
	\begin{align*}
			\begin{cases}
				e_v(t)-e_v(t-1)\geq -C\\
				e_v(\xi_j)\geq  -C,\ 1\leq j\leq n-1.
			\end{cases}
		\end{align*} 
		for some constant $C\geq 0$ which relies at most on $S,$ and $y_v$ supports in a translation of a compact subset of $K_v'$ (depending at most on $t$ and $r$).
	
		\item Finally we consider the case that $v\mid\mathfrak{N}.$ Then $v\notin\mathcal{V}.$ By Iwasawa decomposition, we can write $k_v^*k_vy_v=\varpi_v^{r'}a_v'u_v'k_v'$ for some $r'\in\mathbb{Z},$ $a_v'\in \overline{T_{B'}}(F_v),$ $u_v'\in N'(F_v)$ and $k_v'\in K_v'.$ Since $f_v=\Vol(K_v)^{-1}\textbf{1}_{Z(F_v)K_v},$ then  
		\begin{align*}
			\mathfrak{F}_v=f_v\left(\begin{pmatrix}
		*&*&\varpi_v^{-r}(1-t)\boldsymbol{\xi}\\
		&\varpi_v^{r'}&\varpi_v^{-r}t\\
		&\varpi_v^{r+r'}&1
	\end{pmatrix}\right)
		\end{align*}  
		and thus $\mathfrak{F}_v\neq 0$ unless $\lambda_v\begin{pmatrix}
		*&*&\varpi_v^{-r}(1-t)\boldsymbol{\xi}\\
		&\varpi_v^{r'}&\varpi_v^{-r}t\\
		&\varpi_v^{r+r'}&1
	\end{pmatrix}\in K_v$ for some $\lambda_v\in F_v,$ which leads to that 
		\begin{equation}\label{cons}
			\begin{cases}
				e_v(\lambda_v)+r'=0,\ e_v(\lambda_v)\geq 0,\\
				e_v(\lambda_v)+r+r'\geq e_v(\mathfrak{N})\geq 1,\\
				e_v(\lambda_v)-r+e_v(t)\geq 0,\\
				e_v(\lambda_v)-r+e_v(1-t)+e_v(\xi_j)\geq 0,\ 1\leq j\leq n-1,\\
				2e_v(\lambda_v)+r'+e_v(1-t)=0.
			\end{cases}
		\end{equation}
		Notice that the last constraint in \eqref{cons} comes from the determinant of the lower right $2\times 2$-corner. So $r\geq e_v(\mathfrak{N}),$ $e_v(1-t)=-e_v(\lambda_v)\leq 0,$ and $e_v(\xi_j)\geq r\geq e_v(M),$ $1\leq j\leq n-1.$
		
		Moreover, it follows from the assumption $\mathfrak{F}_v\neq 0$ that $y_v$ supports in a translation of  $K_v'$ (depending at most on $t$ and $r$).
		\end{enumerate}

Putting the above discussions together, we then see that $\prod_{v<\infty}\mathfrak{F}_v=0$ 
unless $(\boldsymbol{\xi},t)\in\mathfrak{X}(f;C).$ Moreover, for $v<\infty,$ the possible values of $r$ are $\leq \big|\min\{e_v(t)-e_v(t-1), e_v(\xi_j):\ 1\leq j\leq n-1\}\big|+\big|e_v(t-1)\big|+1,$ and is equal to $1$ if $e_v(\xi_1)=\cdots=e_v(\xi_{n-1})=e_v(t)=e_v(t-1)=0;$ the support of $y_v$ is a translation of a compact subset $C_v'$ of $K_v'$ (depending at most on $t$ and $r$), and $C_v'=K_v'$ if $e_v(\xi_1)=\cdots=e_v(\xi_{n-1})=e_v(t)=e_v(t-1)=0.$
Hence Lemma \ref{lem9.2} follows from the fact that $\|f_v\|_v\ll q_v^{ne_v(\mathfrak{N})},$ where the implied constant relies on $S.$
\end{proof}

\begin{lemma}\label{cor9.4}
Let notation be as before. 
\begin{enumerate}
	\item There exists $x, y$ such that $\mathfrak{F}_{\infty}=0$ unless $(\boldsymbol{\xi},t)$ satisfies that
\begin{equation}\label{992}
	|\xi_j|_v\ll |z_va_{j,v}|_v,\ 1\leq j\leq n-1,\ |t|_v\ll |z_v|_v\ll |t-1|_v,\ v\mid\infty,
\end{equation} 
where where $\mathfrak{F}_{\infty}$ is defined by \eqref{55}, and $y_{\infty}$ ranges over a compact set $\mathcal{Y}_{\infty}(t)=\prod_{v\mid\infty}\mathcal{Y}_v(t)$  determined by $k_{\infty}^*k_{\infty}$, $f_{\infty}$ and $t$:
\begin{align*}
	\mathcal{Y}_v&(t):=(k_v^*k_v)^{-1}\cdot \big\{
	z_{v}'\diag(a_{1}',\cdots, a_{n-1}',1)u_{v}k_{v}':\ |t-1|_v^{-1}\ll |z_v'|_v\ll |t-1|_v,\\
	& 1\ll |a_{j,v}'|_v\ll |t-1|_v,\ 1\leq j< n,\ \text{$u_v$ lies in a compact set determined by $f_{\infty}$}
	\big\}.
\end{align*}
Here all the implied constants depend on $\supp f_{\infty},$

\item When $|\mathfrak{N}|$ is large enough, $\mathfrak{F}_{\infty}\mathfrak{F}_{\fin}=0$ unless $t=0.$
\item For all $\mathfrak{N}\subseteq \mathcal{O}_F,$ the number of $t\in F$ such that $\mathfrak{F}_{\infty}\mathfrak{F}_{\fin}\neq 0$ is $O(1)$, where the implied constant $O(1)$ depends on $\mathfrak{N}$ and $\supp f_{S}.$ 
\end{enumerate}
\end{lemma}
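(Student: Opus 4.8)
The plan is to establish the three assertions in turn, the first being the archimedean counterpart of the finite-place computation already carried out in Lemma \ref{lem9.2}, and parts (2)--(3) then following by feeding part (1) and Lemma \ref{lem9.2} into the product formula. For part (1) I would argue as follows. Since $f_{\infty}$ has compact support modulo the centre, write $\supp f_{\infty}\subseteq Z(F_{\infty})D_{\infty}$ with $D_{\infty}$ a fixed compact set; note that $f_{\infty}$ is independent of $\mathfrak{N}$, so all constants produced here depend only on $\supp f_{\infty}$. Using that $\iota(k_{\infty}^{*}k_{\infty})^{\pm1}$ lies in the fixed compact $K_{\infty}$, the nonvanishing of $\mathfrak{F}_{\infty}$ (see \eqref{55}) forces
\[
\lambda_{\infty}\,\iota(z_{\infty}a)^{-1}\gamma\,\iota(z_{\infty}a)\,\iota(k_{\infty}^{*}k_{\infty}y_{\infty})\in D_{\infty}'
\]
for some $\lambda_{\infty}\in F_{\infty}^{\times}$ and a fixed compact set $D_{\infty}'$. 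One then computes the conjugate $\iota(z_{\infty}a)^{-1}\gamma\iota(z_{\infty}a)$ explicitly: it has the same shape as the matrix occurring in \eqref{55}, but with the off-diagonal entries rescaled by the components of $z_{\infty}$ and $a$. Reading off the last row and last column of this matrix, and of its inverse, in the style of the proof of Lemma \ref{lem9.2}, yields $|\lambda_{\infty}|\ll1$ and $|\lambda_{\infty}|\,|t-1|\gg1$, hence the place-by-place bounds \eqref{992}; substituting these back and applying the Iwasawa decomposition to $k_{\infty}^{*}k_{\infty}y_{\infty}$ pins down the torus and unipotent parts and confines $y_{\infty}$ to the compact set $\mathcal{Y}_{\infty}(t)$ in the statement.

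For part (2), suppose $\mathfrak{F}_{\infty}\mathfrak{F}_{\fin}\neq0$ with $t\neq0$. Then $\mathfrak{F}_{\fin}=\prod_{v<\infty}\mathfrak{F}_{v}\neq0$, so Lemma \ref{lem9.2} gives $(\boldsymbol{\xi},t)\in\mathfrak{X}(f;C)$, whence $\tfrac{t}{t-1}\in|\mathfrak{N}|C^{-1}\mathcal{O}_{F}$ and therefore $\prod_{v<\infty}\bigl|\tfrac{t}{t-1}\bigr|_{v}\ll(C/|\mathfrak{N}|)^{[F:\mathbb{Q}]}$. By part (1), $\bigl|\tfrac{t}{t-1}\bigr|_{v}=|t|_{v}/|t-1|_{v}\ll1$ for every $v\mid\infty$, so $\prod_{v\mid\infty}\bigl|\tfrac{t}{t-1}\bigr|_{v}\ll1$ with an implied constant independent of $\mathfrak{N}$. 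Since $t\neq0$, the product formula gives $\prod_{v<\infty}\bigl|\tfrac{t}{t-1}\bigr|_{v}=\bigl(\prod_{v\mid\infty}\bigl|\tfrac{t}{t-1}\bigr|_{v}\bigr)^{-1}\gg1$, contradicting the previous estimate as soon as $|\mathfrak{N}|$ is large enough; hence $t=0$.

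For part (3), Lemma \ref{lem9.2} shows that $\mathfrak{F}_{\fin}\neq0$ forces $\alpha:=\tfrac{t}{t-1}\in|\mathfrak{N}|C^{-1}\mathcal{O}_{F}$, a fixed fractional ideal depending only on $\mathfrak{N}$ and $\supp f_{S}$ (through $C$ and the fixed finite set $\mathcal{V}$), while part (1) gives $|\alpha|_{v}\ll1$ for all $v\mid\infty$. Thus $\alpha$ lies in the intersection of the lattice $|\mathfrak{N}|C^{-1}\mathcal{O}_{F}\subset F_{\infty}$ with a bounded region, hence in a finite set of cardinality $O(1)$; since $t=\alpha/(\alpha-1)$ is determined by $\alpha$ (and the value $\alpha=1$ cannot occur), the number of admissible $t$ is $O(1)$, with the constant depending on $\mathfrak{N}$ and $\supp f_{S}$. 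I expect part (1) to be the real obstacle: extracting all of \eqref{992} together with the precise shape of $\mathcal{Y}_{\infty}(t)$ from the compact-support condition requires careful bookkeeping of the conjugation and the Iwasawa decomposition, even though it is conceptually parallel to the argument in Lemma \ref{lem9.2}.
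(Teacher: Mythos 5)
Your parts (2) and (3) are correct and are essentially the paper's own arguments: Lemma \ref{lem9.2} puts $t/(t-1)$ in the fractional ideal $|\mathfrak{N}|C^{-1}\mathcal{O}_F$, part (1) bounds its archimedean absolute values, and then the product formula (for (2)) and a lattice-point count (for (3)) finish exactly as in the paper. Your part (1) also follows the paper's strategy in outline (compact support modulo the centre, Iwasawa decomposition of $k_\infty^*k_\infty y_\infty$, entry analysis of the constrained matrix), but the specific mechanism you propose does not suffice.

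Concretely, write $M=\lambda_\infty\,\iota(z_\infty a)^{-1}\gamma\,\iota(z_\infty a)\,\iota(k_\infty^*k_\infty y_\infty)$. The last column of $M$ and the last row of $M^{-1}$ are indeed independent of $y_\infty$ (the $\iota$-factor has last row and column $e_{n+1}$), and from them one reads off $|\lambda|_v\ll1$, $|\lambda|_v|t-1|_v\gg1$, $|\lambda t/z|_v\ll1$, $|\lambda(1-t)\xi_j/(z_va_{j,v})|_v\ll1$ and $|z_v|_v\ll|\lambda|_v|t-1|_v$; these give $|\xi_j|_v\ll|z_va_{j,v}|_v$, $|z_v|_v\ll|t-1|_v$ and $|t/(t-1)|_v\ll1$, which is what is used downstream (the literal inequality $|t|_v\ll|z_v|_v$ in \eqref{992} does not follow from these relations alone, so you should not claim all of \eqref{992} from them). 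The genuine gap is the second half of part (1): this last-row/last-column information is structurally blind to the Iwasawa data $z',a',u$ of $k_\infty^*k_\infty y_\infty$, so "substituting back" cannot confine $y_\infty$ to $\mathcal{Y}_\infty(t)$ — yet that compactness statement is the substantive content of part (1) and is what the proof of Theorem \ref{Red} later needs (it forces $|t-1|_v\asymp1$ and makes the $y_\infty$- and $z_\infty$-integrals converge). The paper obtains it by examining the remaining entries of $\mathcal{M}_v=\lambda\,\iota(z a)^{-1}\gamma\,\iota(zz'aa'u)$ and $\mathcal{M}_v^{-1}$: the upper-left $(n-1)\times n$ block confines $u_v$ and controls the $|a'_{j,v}|_v$, and pairing the $(n,n)$ and $(n+1,n+1)$ entries of $\mathcal{M}_v$ with the $(n+1,n+1)$ and $(n,n)$ entries of $\mathcal{M}_v^{-1}$ yields the two-sided bound $|t-1|_v^{-1}\ll|z'_v|_v\ll|t-1|_v$. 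You would need to carry out this entry-by-entry bookkeeping on the full matrix, not only its last row and column, to complete part (1).
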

\begin{proof}
By definition, $\mathfrak{F}_{\infty}\neq 0$ implies that 
\begin{equation}\label{990}
	\lambda_{\infty}\iota(z_{\infty}ak_{\infty}^*k_{\infty})^{-1}\begin{pmatrix}
		I_{n-1}&&(1-t)\boldsymbol{\xi}\\
		&1&t\\
		&1&1
	\end{pmatrix}\iota(z_{\infty}ak_{\infty}^*k_{\infty}y_{\infty})\in \mathcal{D}_{\infty}
\end{equation}
for some $\lambda_{\infty}\in F_{\infty}^{\times},$ where $\mathcal{D}_{\infty}$ is a compact set of $G(F_{\infty})$ determined by $\supp f_{\infty}.$ 

Write $k_{\infty}^*k_{\infty}y_{\infty}=z_{\infty}'a'u_{\infty}k_{\infty}'$ in the Iwasawa coordinates: $z_{\infty}'\in F_{\infty}^{\times},$ $a'=\diag(a_{1}',\cdots, a_{n-1}',1)\in \overline{T_{B'}}(F_{\infty}),$ $u_{\infty}\in N'(F_{\infty})$ and $k_{\infty}'\in K_{\infty}'$. Then \eqref{990} becomes 
\begin{equation}\label{991}
	\lambda_{\infty}\iota(z_{\infty}a)^{-1}\begin{pmatrix}
		I_{n-1}&&(1-t)\boldsymbol{\xi}\\
		&1&t\\
		&1&1
	\end{pmatrix}\iota(z_{\infty}z_{\infty}'aa'u_{\infty})\in \mathcal{D}_{\infty}',
\end{equation}
where $\mathcal{D}_{\infty}':=\iota(k_{\infty}^*k_{\infty})\mathcal{D}_{\infty}\iota(k_{\infty}'^{-1}).$

Denote by $\mathcal{M}_{\infty}=\prod_{v\mid\infty}\mathcal{M}_v$ the matrix on the left hand side of \eqref{991}. Then \eqref{991} becomes $\mathcal{M}_{\infty}\in \mathcal{D}_{\infty}'=\prod_{v\mid\infty}\mathcal{D}_v',$ which is a compact set. Let $v\mid\infty.$

Consider the upper-left $(n-1)\times n$-corner of $\mathcal{M}_v$ we derive that $u_v$ lies in a compact set determined by $\supp f_{\infty},$ and $|a_{j,v}'|_v\asymp |\lambda_v|_v^{-1},$ $1\leq j\leq n-1.$

Multiplying the $(n,n+1)$-th entry of $\mathcal{M}_v$ with the $(n+1,n)$-th entry of $\mathcal{M}_v^{-1}$ we then obtain that $\big|\frac{t}{t-1}\big|_v\ll 1.$ In conjunction with the $(i,n+1)$-th entry, $1\leq j\leq n-1,$ we have 
$$
|\lambda_va_{j,v}^{-1}z_v^{-1}(t-1)\xi_v|_v\ll 1,\ 1\leq j\leq n-1.
$$

Considering the $(n+1,n+1)$-th entry of $\mathcal{M}_v^{-1}$ we obtain $|\lambda_v|_v|t-1|_v\gg 1.$ So $|a_{j,v}^{-1}z_v^{-1}\xi_v|_v\ll 1,$ $1\leq j\leq n-1.$

Consider the $(n,n+1)$-th entry of $\mathcal{M}_v$ and  $(n+1,n+1)$-th entry of $\mathcal{M}_v$ we derive that $|\lambda_v|_v\ll 1$ and $|z_v|_v\gg |t|_v.$ Consider the $(n+1,n)$-th entry of $\mathcal{M}_v^{-1}$ and  $(n+1,n+1)$-th entry of $\mathcal{M}_v$ we derive that $|z_v|_v\ll |t-1|_v.$ 

Hence \eqref{992} holds and $1\ll |\lambda_v|_v^{-1}\ll |t-1|_v.$ So 
$$
1\ll |a_{j,v}'|_v\ll |t-1|_v,\ \ 1\leq j\leq n-1.
$$

Multiplying the $(n,n)$-th entry of $\mathcal{M}_v$ with $(n+1,n+1)$-th entry of $\mathcal{M}_v^{-1}$ we obtain that $|z_v'|_v\ll |t-1|_v.$ Multiplying the $(n+1,n+1)$-th entry of $\mathcal{M}_v$ with $(n,n)$-th entry of $\mathcal{M}_v^{-1}$ we obtain that $|z_v'|_v\gg |t-1|_v^{-1}.$ So $|t-1|_v^{-1}\ll |z_v'|_v\ll |t-1|_v.$ Putting the discussions together we conclude that 
$y_{\infty}\in \mathcal{Y}_{\infty}(t).$ 

Note that $|t|_v\ll |z_v|_v\ll |t-1|_v$ implies that $\big|\frac{t}{t-1}\big|_v\ll 1.$ Suppose that $|\mathfrak{N}|$ is large enough and $\mathfrak{F}_{\infty}\mathfrak{F}_{\fin}\neq 0.$ Then by Lemma \ref{lem9.2} we have $\big|\frac{t}{t-1}\big|_v\geq C^{-1}|\mathfrak{N}|,$ which is large. This contradicts with $\big|\frac{t}{t-1}\big|_v\ll 1.$ Hence, when $|\mathfrak{N}|$ is large enough, $\mathfrak{F}_{\infty}\mathfrak{F}_{\fin}=0$ unless $t=0.$ 

When $\mathfrak{N}$ is arbitrary, we notice that there are $O(1)$ integers $m\in\mathcal{O}_{F}$ satisfying that $\frac{t}{t-1}=C^{-1}|\mathfrak{N}|m$ and $\big|\frac{t}{t-1}\big|_{\infty}\ll 1.$
\end{proof}

\subsubsection{Proof of Theorem \ref{Red}}
Let $\mathfrak{X}^{\dagger}(a,z_{\infty})$ be the set of rational points $\textbf{0}\neq (\boldsymbol{\xi},t)\in\mathfrak{X}(f;C)$ (cf. Lemma \ref{lem9.2}) which satisfy the constraint  \eqref{992}.

Changing variables $\boldsymbol{\xi}\mapsto (1-t)\boldsymbol{\xi},$ 
$\mathcal{J}_{\Geo,\bi}^{\Reg,\RNum{2}}(f,\phi_1',s)$ becomes
\begin{align*}
\sum_{\substack{(\boldsymbol{\xi},t)\in F^n\\
			(\boldsymbol{\xi},t)\neq \textbf{0}}}\iint\Bigg|f\left(\iota(x)^{-1}\begin{pmatrix}
		I_{n-1}&&(1-t)\boldsymbol{\xi}\\
		&1&t\\
		&1&1
	\end{pmatrix}\iota(xy)\right)\Bigg|\big|\phi_1'(x)\big|^2|\det x|^{2\Re(s)}dxdy,
\end{align*}
where $x$ (resp. $y$) ranges over $P_0'(F)\backslash {G'}(\mathbb{A}_F)$ (resp. $G'(\mathbb{A}_F)$). Here we make use of the fact that $f(\cdots)=0$ if $t=1$ to exclude the case that $t=1$ in the changing of variable $\boldsymbol{\xi}\mapsto (1-t)\boldsymbol{\xi}.$ Swapping the integrals,  $\mathcal{J}_{\Geo,\bi}^{\Reg,\RNum{2}}(f,\phi_1',s)$ is 
 \begin{align*}
\ll 
	\int_{A^*(F_{\infty})}I(\phi_1',a)\Bigg[\max_{\substack{k^*\in\Omega^*\\ k\in K'}}\int_{G'(\mathbb{A}_F)}\int_{Z'(\mathbb{A}_F)}\sum_{\substack{(\boldsymbol{\xi},t)}}\big|\mathfrak{F}_{\infty}\cdot \mathfrak{F}_{\fin}\big|d^{\times}zdy\Bigg]\frac{|\det a|_{\infty}^{2\Re(s)}d^{\times}a}{\delta_{B'}(a)},
\end{align*}
where $\mathfrak{F}_{\infty}$ and $\mathfrak{F}_{\fin}$ are defined by \eqref{55} and \eqref{58}, $(\boldsymbol{\xi},t)\in\mathfrak{X}^{\dagger}(a,z_{\infty})$, and 
 $$
 I(\phi_1',a):=\int_{K'}\int_{\Omega^*}\big|\phi_1'(ak^*k)\big|^2dk^*dk.
 $$
 
 Let $E(t):=1+|t-1|_{\infty}^{\varepsilon}+|t-1|_{\infty}^{-\varepsilon}+|t/(t-1)|_{\infty}^{\varepsilon}.$ By Lemma \ref{lem9.2} and the supnorm $\|f\|_{\infty}\ll 1,$ the integral $\mathcal{J}_{\Geo,\bi}^{\Reg,\RNum{2}}(f,\phi_1',s)$ is 
\begin{align*}
\ll |\mathfrak{N}|^n\int_{A^*(F_{\infty})}I(\phi_1',a)\Bigg[\max_{\substack{k_{\infty}^*\in\Omega_{\infty}^*\\ k_{\infty}\in K_{\infty}'}}\int\int\sum_{\substack{(\boldsymbol{\xi},t)}}E(t)\big|\mathfrak{F}_{\infty}\big|d^{\times}z_{\infty}dy_{\infty}\Bigg]\frac{|\det a|_{\infty}^{2\Re(s)}d^{\times}a}{\delta_{B'}(a)},
\end{align*}
where $y_{\infty}$ (resp. $z_{\infty}$) ranges over $G'(F_{\infty})$ (resp. $F_{\infty}^{\times}$), and $(\boldsymbol{\xi},t)\in\mathfrak{X}^{\dagger}(a,z_{\infty}).$

By Lemma \ref{lem9.2}, $(\boldsymbol{\xi},t)\in\mathfrak{X}^{\dagger}(a,z_{\infty})$ could be parametrized by 
\begin{align*}
	\frac{t}{t-1}=|\mathfrak{N}|C^{-1}{m},\ \ \xi_j=|\mathfrak{N}|C^{-1}{m}_j,\ \ {m},\ {m}_j\in\mathcal{O}_F,\ 1\leq j\leq n-1.
\end{align*}

In conjunction with \eqref{992} in Lemma \ref{cor9.4}, one has $|m|_{\infty}\ll C^{[F:\mathbb{Q}]}|\mathfrak{N}|^{-[F:\mathbb{Q}]},$ and 
$$
|m_j|_{\infty}\ll C^{[F:\mathbb{Q}]}|\mathfrak{N}|^{-[F:\mathbb{Q}]}|z_{\infty}a_{j}|_{\infty}\ll C^{[F:\mathbb{Q}]}|\mathfrak{N}|^{-[F:\mathbb{Q}]}|t-1|_{\infty}|a_{j}|_{\infty},\ 1\leq j<n.
$$

When $t=0,$ $|t-1|_{\infty}=1.$ Suppose $t\neq 0.$ Then $m\neq 0.$ So $|C^{-1}|\mathfrak{N}|{m}-1|_{\infty}\gg C^{-[F:\mathbb{Q}]},$ implying that $|t-1|_{\infty}\ll C^{[F:\mathbb{Q}]}\ll 1.$  In all, we have $|t-1|_{\infty}\ll 1.$ Notice that the set $\mathcal{Y}_v(t)$ (cf. Lemma \ref{cor9.4}) is empty unless $|t-1|_v\gg 1$ for all $v\mid\infty.$ So $|t-1|_v\asymp 1$ for all $v\mid\infty.$ As a consequence, $|z_v|_v\ll 1,$ $v\mid\infty.$ 

Also, if $t\neq 0,$ then $|m|_{\infty}\geq 1.$ It follows from the parametrization $\frac{t}{t-1}=|\mathfrak{N}|C^{-1}{m}$ that $|t|_{\infty}\gg (1+C^{[F:\mathbb{Q}]}|\mathfrak{N}|^{-[F:\mathbb{Q}]})^{-1}.$ 
Hence, 
\begin{equation}\label{994}
\mathfrak{X}^{\dagger}(a,z_{\infty})\textbf{1}_{t\neq 0}\subseteq \mathfrak{X}^{\heartsuit}(a)\textbf{1}_{\substack{|z_{\infty}|_{\infty}\gg (1+C^{[F:\mathbb{Q}]}|\mathfrak{N}|^{-[F:\mathbb{Q}]})^{-1},\ |z_v|_v\ll 1,\ v\mid\infty}}
\end{equation} 
where the set $\mathfrak{X}^{\heartsuit}(a)$ is defined by 
\begin{align*}
	\big\{(\boldsymbol{\xi},t)\in\mathfrak{X}(f;C):\ |t/(t-1)|_{\infty}\ll 1,\ |t-1|_v\asymp 1,\ v\mid\infty;\ |\xi_j|_{\infty}\ll |a_{j}|_{\infty},\ j<n\big\}.
\end{align*}
In particular, $(\boldsymbol{\xi},t)$ ranges over only finitely many values for fixed $a.$ 
  
By Lemma \ref{cor9.4} and \eqref{994}, we have 
\begin{align*}
	\max_{\substack{k_{\infty}^*\in\Omega_{\infty}^*\\ k_{\infty}\in K_{\infty}'}}\int_{F_{\infty}^{\times}}\int_{G'(F_{\infty})}\sum_{\substack{(\boldsymbol{\xi},t)\in \mathfrak{X}^{\dagger}(a,z_{\infty})\\ t\neq 0}}E(t)\big|\mathfrak{F}_{\infty}\big|dy_{\infty}d^{\times}z_{\infty}\ll \#\mathfrak{X}^{\heartsuit}(a)\ll 1,
\end{align*}
where the implied constant depends on $f_S$ and $\mathfrak{N}.$ 

On the other hand, when $t=0,$ by Lemma \ref{cor9.4} the support of $y_{\infty}$ is compact, and $|z|_v\ll 1,$ $v\mid \infty.$ Since $(\boldsymbol{\xi},t)\neq\textbf{0},$ we must have $\boldsymbol{\xi}\neq \textbf{0}.$ Hence, $\xi_1\neq 0,$ implying that  $|m_1|_{\infty}\gg 1.$ So it follows from \eqref{992} that $|\mathfrak{N}|^{[F:\mathbb{Q}]}C^{-[F:\mathbb{Q}]}\ll |z_{\infty}|_{\infty}|a_1|_{\infty}.$ Hence,
\begin{align*}
	\max_{\substack{k_{\infty}^*\in\Omega_{\infty}^*\\ k_{\infty}\in K_{\infty}'}}\int_{F_{\infty}^{\times}}\int_{G'(F_{\infty})}\sum_{\substack{(\boldsymbol{\xi},t)\in \mathfrak{X}^{\dagger}(a,z_{\infty})\\ t=0}}E(t)\big|\mathfrak{F}_{\infty}\big|dy_{\infty}d^{\times}z_{\infty}
\end{align*}
is majorized by 
\begin{align*}
	\int_{\substack{|z_{\infty}|_{\infty}\gg \frac{|\mathfrak{N}|^{[F:\mathbb{Q}]}|a_1|^{-1}_{\infty}}{C^{[F:\mathbb{Q}]}}\\ |z_v|_v\ll1,\ v\mid\infty}}\prod_{j=1}^{n-1}\Big[1+\frac{C^{[F:\mathbb{Q}]}|z_{\infty}|_{\infty}|a_j|_{\infty}}{|\mathfrak{N}|^{[F:\mathbb{Q}]}}\Big]^{1+\varepsilon}d^{\times}z_{\infty}\ll \frac{|a_1|_{\infty}^{(n-1)(1+\varepsilon)}}{|\mathfrak{N}|^{(n-1)[F:\mathbb{Q}]}},
\end{align*}
where the implied constant depends on $n,$ $\varepsilon,$ and $f_S.$

Therefore, it follows from the rapid decay of $\phi_1'$ that $\mathcal{J}_{\Geo,\bi}^{\Reg,\RNum{2}}(f,\phi_1',s)$ converges is $\Re(s)$ lies a compact set of $\mathbb{R}.$

In particular, when $|\mathfrak{N}|$ is large, the constraint $|m|_{\infty}\ll C^{[F:\mathbb{Q}]}|\mathfrak{N}|^{-[F:\mathbb{Q}]}$ forces that $m=0,$ i.e., $t=0.$ So  $J_{\Geo,\reg}^{\bi,T}(f,\textbf{s})\equiv 0,$ and thus
\begin{align*}
	\mathcal{J}_{\Geo,\bi}^{\Reg,\RNum{2}}(f,\phi_1',s)\ll |\mathfrak{N}|\int_{A^*(F_{\infty})}I(\phi_1',a)|a_1|_{\infty}^{(n-1)(1+\varepsilon)}\frac{|\det a|_{\infty}^{2\Re(s)}d^{\times}a}{\delta_{B'}(a)}\ll |\mathfrak{N}|.
\end{align*} 

So Theorem \ref{Red} holds.

\section{Spectral Side: Convergence and Meromorphic Continuation}\label{sec.spec} 
In this section we focus on the convergence of the spectral side $J_{\Spec}^{\Reg,T}(f,\mathbf{s})$ as the truncation parameter $T$ tends to infinity, where $\mathbf{s}=(s_1,s_2)\in\mathbb{C}^2$. Recall  $J_{\Spec}^{\Reg,T}(f,\mathbf{s}):=J_{\Eis}^{\Reg,T}(f,\mathbf{s})+J_{0}^{T}(f,\mathbf{s})$ (cf. \text\textsection \ref{sec4.2}).  Since $\K_0$ decays rapidly on $[\overline{G}]\times [\overline{G}],$ the function $J_0^T(f,\mathbf{s})$ converges absolutely everywhere, as $T\rightarrow\infty,$  to 
$$
J_0(f,\mathbf{s})=\sum_{\substack{\pi\in\mathcal{A}_0([G],\omega)\\ \phi\in \mathcal{B}_{\pi}}}\int_{[G']}\pi(f)\phi(\iota(x))\phi_1'(x)|\det x|^{s_1}dx\int_{[G']}\overline{\phi(\iota(y))\phi_2'(y)}|\det y|^{s_2}dy,
$$ 
where $\mathcal{B}_{\pi}$ is an orthonormal basis of $\pi.$ Note that $\mathcal{B}_{\pi}$ is a finite set since $f$ is $K$-finite.

It then remains to prove the convergence of $J_{\Eis}^{\Reg,T}(f,\mathbf{s})$ as $T\rightarrow\infty,$ which will be done by a purely analytic argument in the \textsection \ref{10.1} under the condition that $\Re(s_1)$ and $\Re(s_2)$  are large enough. 
 
\subsection{Convergence in a Right Half Plane}\label{10.1}
Recall that  $J^{\Reg,T}_{\Eis}(f,\textbf{s})$ is defined by
\begin{align*}
	\int_{[Z'^T]}\int_{[Z'^T]}\int_{[\overline{G'}]}\int_{[\overline{G'}]}&\mathcal{F}_{n+1}\mathcal{F}_{n+1}\K_{\ER}(\iota(z_1x),\iota(z_1z_2y))\phi_1'(x)\overline{\phi_2'(y)}\omega'(z_1)\overline{\omega_2(z_2)}\\
	&|\det z_1x|^{s_1}|\det z_1z_2y|^{s_2}dxdyd^{\times}z_1d^{\times}z_2,
\end{align*}

Since $\phi_1'$ and $\phi_2'$ decay rapidly on $[\overline{G'}],$ then for fixed $T>0,$ $J_{\Eis}^{\Reg,T}(f,\textbf{s})$ converges absolutely and thus defines a holomorphic function of $\mathbf{s}\in\mathbb{C}^2.$ We then wish to take $T\rightarrow \infty$ to remove the truncation. However, the convergence when $T$ tends to infinity is rather subtle. We start with defining $\lim_{T\rightarrow \infty}J_{\Eis}^{\Reg,T}(f,\textbf{s}).$ 

By spectral decomposition, $\K_{\ER}=\K-\K_0,$ implying that $\mathcal{F}_{n+1}\mathcal{F}_{n+1}\K_{\ER}=\mathcal{F}_{n+1}\mathcal{F}_{n+1}\K-\mathcal{F}_{n+1}\mathcal{F}_{n+1}\K_0.$ By Fourier expansion we have $\mathcal{F}_{n+1}\mathcal{F}_{n+1}\K_0=\K_0.$ Hence, we can write $J_{\Eis}^{\Reg,T}(f,\textbf{s})=J_{\Kl}^T(f,\mathbf{s})-J_0^T(f,\mathbf{s}),$ where 
\begin{align*}
	J_{\Kl}^T(f,\mathbf{s}):=\int_{[Z'^T]}\int_{[Z'^T]}&\int_{X}\int_{X}J_{\Kuz}(\iota(z_1x),\iota(z_1z_2y))\phi_1'(x)\overline{\phi_2'(y)}|\det z_1x|^{s_1}\\
	&|\det z_1z_2y|^{s_2}\omega'(z_1)\overline{\omega_2(z_2)}dxdyd^{\times}z_1d^{\times}z_2,
\end{align*}
with $X=N'(\mathbb{A}_F)\backslash \overline{G'}(\mathbb{A}_F),$ and for $g_1, g_2\in G(\mathbb{A}_F),$
\begin{align*}
	J_{\Kuz}(g_1,g_2):=\int_{[N]}\int_{[N]}\K(n_1g_1,n_2g_2)\theta(n_1)\overline{\theta}(n_2)dn_1dn_2.
\end{align*}

Note that $J_{\Kuz}(g_1, g_2)$ is a relative trace formula on $G(\mathbb{A}_F)$ of Kuznetsov type. We will show that $J_{\Kuz}(\iota(x),\iota(y))$ is majorized by a gauge. 

Let $x, y\in A(\mathbb{A}_F)$ be given by $x=\diag (x_1\cdots x_{n}, \cdots, x_1x_2,  x_1, 1),$ and $y=\diag (y_1\cdots y_{n}, \cdots, y_1y_2,  y_1, 1)\in A(\mathbb{A}_F).$ We say a function $\mathcal{G}$ on $A(\mathbb{A}_F)\times A(\mathbb{A}_F)$ is a gauge if it is a positive function of the form 
\begin{align*}
	\mathcal{G}(x,y)=\xi(x_1,\cdots, x_{n}, y_1, \cdots, y_n)\cdot\left( |x_1x_2\cdots x_{n}|^{-M}+|y_1y_2\cdots y_{n}|^{-M}\right)
\end{align*}
where $M\geq 0$ and $\xi$ is a Schwartz-Bruhat function on $(\mathbb{A}_F^{\times})^{2n}.$

\begin{prop}\label{Kl}
	Let notation be as above. Let $T_{B'}$ be the Levi of $B'.$ Then as a function of $(x, y)\in T_{B'}(\mathbb{A}_F)\times T_{B'}(\mathbb{A}_F),$ $J_{\Kuz}(\iota(x),\iota(y))$ is a majorized by a finite sum of gauges on $A(\mathbb{A}_F)\times A(\mathbb{A}_F).$ 
\end{prop}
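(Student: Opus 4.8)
The plan is to bound $J_{\Kuz}(\iota(x),\iota(y))$ by first opening up the kernel $\K$ into its Bruhat decomposition over $G(F)$, then carrying out the two unipotent integrations over $[N]\times[N]$ against the characters $\theta,\overline{\theta}$, and finally estimating the resulting adelic integral placewise. Concretely, I would start from $\K(g_1,g_2)=\sum_{\gamma\in\overline{G}(F)}f(g_1^{-1}\gamma g_2)$ and substitute $g_1=n_1\iota(x)$, $g_2=n_2\iota(y)$. Since $f$ has compact support modulo the center, for fixed $x,y$ only finitely many Bruhat cells contribute, and within each cell the integration over $n_1,n_2\in N(\mathbb{A}_F)$ (modulo $N(F)$) against the generic characters collapses the sum over $\gamma$ in that cell to a Kloosterman-type sum times a product of local Kloosterman/Bessel integrals — this is the standard Kuznetsov unfolding on $\mathrm{GL}(n+1)$. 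The key structural input is that the nonvanishing of $f(n_1^{-1}\iota(x)^{-1}\gamma n_2\iota(y))$ forces $\iota(x)^{-1}\gamma\iota(y)$, up to bounded unipotent and compact pieces, to lie in the support of $\tilde f$, which confines the relevant Weyl element $w$ and pins down the torus coordinates of $\gamma$ in terms of $x,y$.

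Next I would organize the estimate by Weyl element $w\in W$. For each $w$, after the unipotent integrations one is left with an integral over a subgroup of $N(\mathbb{A}_F)$ (the part not killed by the character), and the compact-support condition on $f$ together with the character oscillation gives: (i) the modulus of the surviving unipotent variables is bounded in terms of $|x_i|,|y_j|$, and (ii) the "modulus" entries force inequalities of the shape $|x_1\cdots x_k|\,|y_1\cdots y_k|^{\pm1}\asymp$ (bounded) for the indices $k$ moved by $w$. Collecting these constraints, the contribution of cell $w$ is bounded by a Schwartz-Bruhat function in all the $x_i,y_j$ variables, multiplied — to absorb the places where $\gamma$ can be "large", i.e. the Kloosterman modulus grows — by a negative power $|x_1\cdots x_n|^{-M}+|y_1\cdots y_n|^{-M}$ coming from the volume/number-of-terms growth of the Kloosterman sum (this is exactly the role of $M$ in the definition of a gauge). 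Here I would invoke the standard bound that the local Kloosterman integrals are Schwartz in the torus variables and the global Kloosterman sum over a modulus $\mathfrak c$ grows at most polynomially in $N_F(\mathfrak c)$, which is controlled by a fixed negative power of the displayed product once summed. Summing the finitely many $w$ gives a finite sum of gauges.

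The main obstacle I expect is item (ii): controlling the Weyl elements $w$ for which the torus coordinates of $\gamma$ are \emph{not} pinned down to a compact set by the support condition — i.e. the cells where the Kloosterman modulus genuinely ranges over an infinite family. For those one cannot just bound term-by-term; one must use the oscillation of $\theta$ to get decay, or alternatively use that the sum over the modulus, weighted by the archimedean Kloosterman integral (a Schwartz function of the modulus divided by the torus parameter), converges and contributes a negative power of $|x_1\cdots x_n|$ (resp. $|y_1\cdots y_n|$). Making this uniform across all $w$ and all places, and checking that the exponent $M$ can be chosen once and for all, is the delicate point; everything else is bookkeeping with Bruhat cells and Schwartz-Bruhat functions. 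I would handle the generic "long-element-like" cells first to fix $M$, then observe the remaining cells are easier since their moduli lie in a set of bounded size (depending on $x,y$ through Schwartz constraints), so they are dominated by a single Schwartz-Bruhat term and hence by a gauge with $M=0$.
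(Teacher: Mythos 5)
Your overall skeleton (open the kernel along $N(F)\backslash\overline{G}(F)/N(F)$, analyze place by place, use the compact support of $f$) matches the paper, but the step you yourself flag as ``the delicate point'' is exactly the step that is missing, and the mechanism you propose for it is not the one that actually works. You want to treat the sum over the torus part of each cell as a Kloosterman sum and get the factor $|x_1\cdots x_n|^{-M}+|y_1\cdots y_n|^{-M}$ from ``oscillation of $\theta$'' or from polynomial growth of Kloosterman sums in the modulus being ``controlled by a fixed negative power once summed.'' As written this does not close: there is no genuine Kloosterman sum here (the weight is a fixed test function $f$, not a family), and no cancellation is needed or used. In the paper the sum over $a\in A(F)$ is controlled purely by support: $f(n_1^{-1}\iota(x)^{-1}wa\,\iota(y)n_2)\neq 0$ forces $x^{-1}w a w^{-1}y$ (in the appropriate coordinates) into a fixed compact set $A^*$ determined by $\supp f$, and the number of rational points of $A(F)$ in $w^{-1}x\cdot A^*\cdot wy^{-1}$ is bounded by $C\,(|\det x|^{M}+|\det x|^{-M})(|\det y|^{M}+|\det y|^{-M})$ by the standard height-function lattice-point count (Arthur). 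That counting bound, together with the fact that each individual term is Schwartz in $(x,y)$ (at archimedean places because the unipotent variables and $x^{-1}way$ are confined to compact sets, so one is looking at a Fourier transform of a compactly supported smooth function), is the sole source of the exponent $M$ in the gauge. No uniform-in-$w$ oscillation estimate is ever needed.

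Two further ingredients of the paper's proof are absent from your sketch. First, only \emph{relevant} double cosets $wa$ (those on which $\theta\otimes\overline\theta$ is trivial on the stabilizer $H_{wa}$) contribute, and their classification (Goldfeld, Jacquet--Rallis) reduces one to long Weyl elements of standard parabolics; this is what lets the archimedean analysis be done cleanly cell by cell. Second, at the finite places the Schwartz--Bruhat confinement of $x,y$ does not come from oscillation estimates but from a soft invariance/equivariance argument: $f$ is bi-$K_0$-invariant, so $J_{\Kuz,v}(\iota(x_v),\iota(y_v);wa)$ is invariant under $x_v\mapsto x_vu_v$, $y_v\mapsto y_vu_v'$ for $u_v,u_v'$ in a fixed open compact subgroup of $N(F_v)$, while it also transforms by $\theta(x_vu_vx_v^{-1})\overline\theta(y_vu_v'y_v^{-1})$; hence it vanishes unless $|\alpha_i(x_v)|_v\leq C_v$ and $|\alpha_i(y_v)|_v\leq C_v$ for all simple roots, with $C_v=1$ for almost all $v$. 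If you replace your Kloosterman/oscillation step by this support-plus-counting argument (and add the relevant-orbit reduction), your outline becomes the paper's proof; without it, the proposal has a genuine gap at its central step.
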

\begin{proof}
	By definition of the kernel function $\K(x,y)$ we have   
	\begin{align*}
		J_{\Kuz}(\iota(x),\iota(y))=\int_{[N]}\int_{[N]}\sum_{\gamma\in \overline{G}(F)}f(\iota(x)^{-1}n_1^{-1}\gamma n_2\iota(y))\theta(n_1)\overline{\theta}(n_2)dn_1dn_2,
	\end{align*}
	which converges absolutely since $\K(x,y)$ is continuous and $[N]$ is compact.
	
	Then we consider the double coset $N(F)\backslash \overline{G}(F)/N(F),$ whose element is of the form $wa,$ where $w$ is a Weyl element and $a\in A(F).$ Let 
	\begin{align*}
		H_{wa}:=\big\{(n_1, n_2)\in N\times N: \ n_1^{-1}wan_2a^{-1}w^{-1}\in Z\big\}
	\end{align*} 
	be the stabilizer relative to the representative $wa.$ Then 
	\begin{align*}
		J_{\Kuz}(\iota(x),\iota(y))=\sum_{wa\in\Phi}\int f(\iota(x)^{-1}n_1^{-1}wa n_2\iota(y))\theta(n_1)\overline{\theta}(n_2)dn_1dn_2,
	\end{align*}
	where $\Phi$ is a set of complete representatives for $N(F)\backslash \overline{G}(F)/N(F),$ and $(n_1,n_2)\in H_{wa}(F)\backslash N(\mathbb{A}_F)\times N(\mathbb{A}_F).$ So $J_{\Kuz}(\iota(x),\iota(y))$ becomes
	\begin{align*}
		\sum_{wa\in\Phi}C_{wa}\int_{H_{wa}(\mathbb{A}_F)\backslash N(\mathbb{A}_F)\times N(\mathbb{A}_F)}f(\iota(x)^{-1}n_1^{-1}wa n_2\iota(y))\theta(n_1)\overline{\theta}(n_2)dn_1dn_2,
	\end{align*}
	where 
	$$
	C_{wa}=\int_{[H_{wa}]}\theta(n_1')\overline{\theta}(n_2')dn_1'dn_2'.
	$$
	
	We say $wa\in\Phi$ is \textit{relevant} if $C_{wa}\neq 0,$ i.e., $\theta(n_1')\overline{\theta}(n_2')$ is trivial on $H_{wa}(\mathbb{A}_F).$ Denote by $\Phi^{*}$ the set of relevant elements in $\Phi.$ By the classification of relevant orbits in \cite{Gol87} (see also Prop. 1 in \cite{JR92}) one can take the following realization: $\Phi^*$ consists of $wa,$ where $w$ is the long Weyl element inside a stantard parabolic subgroup $P\subseteq G$ of type $(k_1, \cdots, k_r),$ and $a\in Z(F)\backslash\diag (T_{k_1}(F),\cdots, T_{k_r}(F))$ (modulo some further relations), with $T_{k_j}$ being the maximal split torus of $\mathrm{GL}(k_j).$ For instance, when $P=B$ the Borel, then $w=I_n$, $a=I_n$ and $H_{wa}=N.$ Therefore,
	\begin{align*}
		J_{\Kuz}(\iota(x),\iota(y))=\sum_{wa\in\Phi^*}\vol([H_{wa}])J_{\Kuz}(\iota(x),\iota(y);wa),
	\end{align*}
	where $J_{\Kuz}(\iota(x),\iota(y);wa)$ is defined by 
	\begin{align*}
		\int_{H_{wa}(\mathbb{A}_F)\backslash N(\mathbb{A}_F)\times N(\mathbb{A}_F)}f(\iota(x)^{-1}n_1^{-1}wa n_2\iota(y))\theta(n_1)\overline{\theta}(n_2)dn_1dn_2.
	\end{align*}
	
	By definition of $\Phi^*$ each $w$ corresponds to a unique (i.e., the minimal one) parabolic subgroup $P$ containing $w.$ Suppose $w\neq I_n.$ Then by Levi decomposition it suffices to consider the extreme case that $P=G$ and $w$ is the long Weyl element. 
	
	Recall that the test function $f$ is bi-$K$-finite. Hence there is some compact subgroup $K_0\subset G(\mathbb{A}_{F,\fin})$ such that $f$ is bi-$K_0$-invariant. Let $K_0=\prod_{v<\infty}K_{0,v}.$ Note that $J_{\Kuz}(\iota(x),\iota(y);wa)=\prod_{v\leq \infty}J_{\Kuz,v}(\iota(x_v),\iota(y_v);wa),$ where the local factor $J_{\Kuz,v}(\iota(x_v),\iota(y_v);wa)$ is defined by 
	$$\int_{H_{wa}(F_v)\backslash N(F_v)\times N(F_v)}f_v(\iota(x_v)^{-1}n_1^{-1}wa n_2\iota(y_v))\theta_v(n_1)\overline{\theta}_v(n_2)dn_1dn_2.
	$$
	Then for each finite place $v,$ $J_{\Kuz,v}(\iota(x_v),\iota(y_v);wa)$ is bi-$K_{0,v}$-invariant. So there exists a compact subgroup $N_{0,v}\subseteq K_{0,v}\cap N(F_v),$ depending only on $f_v,$ such that 
	$$
	J_{\Kuz,v}(\iota(x_v),\iota(y_v);wa)=J_{\Kuz,v}(\iota(x_vu_v),\iota(y_vu_v');wa),\ \text{for all $u_v, u_v'\in N_{0,v}$}.
	$$
	On the other hand, $$J_{\Kuz,v}(\iota(x_vu_v),\iota(y_vu_v');wa)=\theta(x_vu_vx_v^{-1})\overline{\theta}(y_vu_v'y_v^{-1})J_{\Kuz,v}(\iota(x_v),\iota(y_v);wa).$$
	There exists a constant $C_v$ depending only on $N_{0,v}$ and $\theta$ such that $\theta(x_vu_vx_v^{-1})=1$ and $\overline{\theta}(y_vu_v'y_v^{-1})=1$ if and only if $|\alpha_i(x_v)|_v\leq C_v$ and $|\alpha_i(y_v)|_v\leq C_v,$ where $\alpha_i$'s are the simple roots of $G(F)$ relative to $B.$ Note that for all but finitely many $v<\infty,$ $K_{0,v}=G(\mathcal{O}_{F,v}).$ Thus we can take the corresponding $C_v=1.$ Hence for any $x_v, y_v\in A(F_v),$ $J_{\Kuz,v}(\iota(x_v),\iota(y_v);wa)\neq 0$ implies that $|\alpha_i(x_v)|_v\leq C_v,$ and $|\alpha_i(y_v)|_v\leq C_v,$ $1\leq i\leq n-1,$ and $C_v=1$ for all but finitely many finite places $v.$ Denote the compact set by
	$$
	A_{f,\fin}=\Big\{a=(a_v)\in A(\mathbb{A}_{F,\fin}):\ |\alpha_i(a_v)|_v\leq C_v,\ 1\leq i\leq n-1\Big\}.
	$$
	Then $\supp J_{\Kuz,v}(\iota(x_v),\iota(y_v);wa)\subseteq (A(\mathbb{A}_{F,\infty})A_{f,\fin})\times (A(\mathbb{A}_{F,\infty})A_{f,\fin}).$
	
	For any $g=\otimes_v(g_{i,j,v})\in G(\mathbb{A}_F).$ We define $\|g_v\|_v=\max_{i,j}|g_{i,j,v}|_v$ if $v<\infty;$ and 
	$$
	\|g_v\|_v=\Big[\sum_{i,j}|g_{i,j,v}|_v^2\Big]^{1/2},\ \text{if}\ v\mid \infty.
	$$
	Then $\|g_v\|_v=1$ for almost all $v.$ The height function $\|g\|=\prod_v\|g_v\|_v$ is therefore well defined as a finite product. Also, by the compactness of $\supp f_v,$ and $\supp J_{\Kuz}(f,x;wa)\subseteq A(\mathbb{A}_{F,\infty})A_{f,\fin},$ we have $\|w^{-1}x_vwx_va\|_v\leq C_v'$ for some constant $C_v'$ depending only on $f_v,$ $v<\infty,$ and $C_v'=1$ for almost all $v$'s.
	
	Now we investigate the archimedean integral $J_{\Kuz,v}(\iota(x_v),\iota(y_v);wa),$ i.e., $v\mid\infty.$ Note that $f_v$ is a compactly supported on $\overline{G}(F_v).$ Then $J_{\Kuz,v}(\iota(x_v),\iota(y_v);wa)=0$ unless $n_{1,v}^{-1}g_vwn_{2,v} \in \supp f_v,$ where $g_v=\iota(x_v)^{-1}wa\iota(y_v)w^{-1}.$ Hence we have $\|n_{1,v}^{-1}g_v wn_{2,v}w^{-1}\|_{v}\leq C_v$ for some constant $C_v$ depending only on $f.$ A straightforward computation (or by Lemma 5.1 of \cite{Jac09}) shows that $\|n_{1,v}\|_v+\|n_{2,v}\|_v+\|g_v\|_v\leq C_v'$ for some constant $C_v'$ depending only on $f.$ So $f_v(n_{1,v}g_vwn_{2,v})$ has compact support relative to $n_{1,v}$ and $n_{2,v}.$ Therefore, $J_{\Kuz,v}(\iota(x_v),\iota(y_v);wa)=0$ unless $n_{1,v},$ $n_{2,v}$ run through a compact set of $N(F_v)$ and $\|g\|_v$ is bounded.
	
	For $\alpha=\diag(\alpha_1\cdots\alpha_n,\cdots,\alpha_1,1)\in A(\mathbb{A}_F),$ denote by
	$$
	\theta_{\alpha}(u)=\prod_{i=1}^{n-1}\psi_{F/\mathbb{Q}}\left(\alpha_iu_{i,i+1}\right),\quad \forall\ u=(u_{i,j})_{n\times n}\in N(\mathbb{A}_F).
	$$
	Then $J_{\Kuz,v}(\iota(x_v),\iota(y_v);wa)$ is equal to 
	\begin{align*}
		\delta_{w}(\iota(x))\delta_{w}(\iota(y))\int_{H_{wa}(\mathbb{A}_F)\backslash N(\mathbb{A}_F)\times N(\mathbb{A}_F)}f(n_1^{-1}\iota(x)^{-1}wa\iota(y)n_2)\theta_{x}(n_1)\overline{\theta}_y(n_2)dn_1dn_2,
	\end{align*}
	where $\delta_w$ is the modular character of the parabolic subgroup associated to $w.$ 
	
	Since $n_1$ and $n_2$ lie in a compact set determined by $\supp f,$ then for a fixed $a\in A(F),$ $v\mid \infty,$ the $v$-th component of 
	$$
	\int_{H_{wa}(\mathbb{A}_F)\backslash N(\mathbb{A}_F)\times N(\mathbb{A}_F)}f(n_1^{-1}\iota(x)^{-1}wa\iota(y)n_2)\theta_{x}(n_1)\overline{\theta}_y(n_2)dn_1dn_2
	$$
	is a Schwartz function of $(x,y)$ as the Fourier transform of a compactly supported smooth function on $G(F_{\infty})$. Hence for each $a$, the function $J_{\Kuz,v}(\iota(x_v),\iota(y_v);wa)$ is a Schwartz-Bruhat function on $A(\mathbb{A}_F)\times A(\mathbb{A}_F).$ Moreover, $J_{\Kuz,v}(\iota(x_v),\iota(y_v);wa)=0$ unless $x^{-1}wayw^{-1}\in A^*:=\{t\in A(\mathbb{A}_F): \ \|t\|\leq \prod_{v}C_v'\}.$ 
	
	By properties of the height $\|\cdot\|$ (e.g., cf. \cite{Art05} p. 70) one has 
	\begin{align*}
		\#\left(w^{-1}x\cdot A^*\cdot wy^{-1}\cap A(F)\right)\leq C\cdot (|\det x|^{M}+|\det x|^{-M}) (|\det y|^{M}+|\det y|^{-M}),
	\end{align*}
	for some constants $C$ and $M$ depending on $\supp f.$ Therefore, 
	\begin{align*}
		\sum_{a\in A(F)}|J_{\Kuz}(f,x;wa)|=\sum_{\substack{a\in A(F)\\ a\in w^{-1}x\cdot A^*\cdot wy^{-1}}}|J_{\Kuz}(f,x;wa)|
	\end{align*}
	is a finite sum of Schwartz-Bruhat functions. So $\sum_{a\in A(F)}|J_{\Kuz}(f,x;wa)|$ is majorized by $\xi(x,y)\cdot (|x_1\cdots x_n|^{-M}+|y_1\cdots y_n|^{-M})$ for some $M\geq 0$ and Schwartz-Bruhat function $\xi$ on $A(\mathbb{A}_F)\times A(\mathbb{A}_F).$  
	
	The remaining case is that $w=I_n,$ i.e., $P=B.$ In this case 
	\begin{align*}
		\sum_{a\in A(F)}J_{\Kuz}(f,x;wa)=\delta_{w}(y)\int_{N(\mathbb{A}_F)}f(x^{-1}yn)\overline{\theta}_y(n)dn
	\end{align*}
	is also a Schwartz-Bruhat function of $(x,y),$ observing that $x^{-1}y$ supports in a compact set of $\mathbb{A}_F^{\times}.$ So it is majorized by a gauge. Then Proposition \ref{Kl} follows.
\end{proof}

\begin{cor}\label{cor39}
	Let $\mathbf{s}=(s_1,s_2)\in\mathbb{C}^2$ be such that $\Re(s_1)\gg 0$ and $\Re(s_2)\gg 0.$ Then $J_{\Kl}^T(f,\mathbf{s})$ converges absolutely and uniformly, as $T\rightarrow\infty,$ to the function $J_{\Kl}(f,\mathbf{s})$ defined by 
	\begin{align*}
		\int_{N'(\mathbb{A}_F)\backslash G'(\mathbb{A}_F)}\int_{N'(\mathbb{A}_F)\backslash G'(\mathbb{A}_F)}J_{\Kuz}(x,y)\phi_1'(x)\overline{\phi_2'(y)}|\det x|^{s_1}|\det y|^{s_2}dxdy.
	\end{align*}
\end{cor}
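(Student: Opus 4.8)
The plan is to deduce Corollary~\ref{cor39} from Proposition~\ref{Kl} by a straightforward majorization argument, the point being that a finite sum of gauges, when integrated against rapidly decaying cusp forms and the extra determinant weights $|\det x|^{s_1}|\det y|^{s_2}$, yields an absolutely convergent integral provided $\Re(s_1),\Re(s_2)$ are large enough, and the truncation $[Z'^T]$ can then be removed by dominated convergence. First I would recall that $J_{\Kl}^T(f,\mathbf{s})$ is built from $J_{\Kuz}(\iota(z_1x),\iota(z_1z_2y))$, and that by the Iwasawa decomposition $G'(\mathbb{A}_F)=N'(\mathbb{A}_F)T_{B'}(\mathbb{A}_F)K'$ together with the left $N'$-invariance of $\phi_i'$ against $\theta'$ and the $\theta$-equivariance built into $J_{\Kuz}$, the $x$- and $y$-integrals over $X=N'(\mathbb{A}_F)\backslash\overline{G'}(\mathbb{A}_F)$ reduce to integrals over $T_{B'}(\mathbb{A}_F)K'$; after absorbing $K'$ (compact) one is integrating over the diagonal torus. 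Here one uses that $\iota$ maps $T_{B'}$ into $A$ (up to the center), so that Proposition~\ref{Kl} applies directly to $J_{\Kuz}(\iota(x),\iota(y))$ as a function on $T_{B'}(\mathbb{A}_F)\times T_{B'}(\mathbb{A}_F)$.

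Next I would combine the gauge bound with the growth of the Whittaker/cusp-form data. Writing $x=\diag(x_1\cdots x_n,\dots,x_1,1)$ and similarly for $y$ in the torus coordinates, Proposition~\ref{Kl} gives
\begin{equation*}
|J_{\Kuz}(\iota(x),\iota(y))|\ll \sum_{\text{finite}}\xi(x_1,\dots,x_n,y_1,\dots,y_n)\bigl(|x_1\cdots x_n|^{-M}+|y_1\cdots y_n|^{-M}\bigr),
\end{equation*}
with $\xi$ Schwartz--Bruhat. The cusp forms $\phi_1'$ and $\phi_2'$, restricted to $N'(\mathbb{A}_F)\backslash\overline{G'}(\mathbb{A}_F)$, decay rapidly in the torus direction (rapid decay of cusp forms, cf.\ the references used in the proof of Proposition~\ref{prop22}), which controls the factor $|x_1\cdots x_n|^{-M}$ for $x$ away from the ``large'' chamber; in the opposite regime the Schwartz factor $\xi$ and the rapid decay of $\phi'$ take over. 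The determinant weights contribute $|\det x|^{\Re(s_1)}=|x_1\cdots x_n\cdot x_1\cdots x_{n-1}\cdots x_1|^{\Re(s_1)}$ which, for $\Re(s_1)$ large, is dominated by the Schwartz decay near the cusp and forces convergence of the integral over the $z_1$-part as well, once we note $|\det z_1 x|^{s_1+s_2}$ appears and $z_1$ ranges over $Z'(\mathbb{A}_F)$. The upshot is an absolute bound, uniform in $T$, of the form $\int_X\int_X |J_{\Kuz}(x,y)|\,|\phi_1'(x)|\,|\phi_2'(y)|\,|\det x|^{\Re(s_1)}|\det y|^{\Re(s_2)}\,dx\,dy<\infty$.

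Finally I would remove the truncation: since the integrand of $J_{\Kl}^T(f,\mathbf{s})$ is, in absolute value, bounded by the ($T$-independent) integrable majorant just produced, and since $\mathbf{1}_{[Z'^T]}\to 1$ pointwise as $T\to\infty$, the dominated convergence theorem gives $\lim_{T\to\infty}J_{\Kl}^T(f,\mathbf{s})=J_{\Kl}(f,\mathbf{s})$ with the stated integral representation over $N'(\mathbb{A}_F)\backslash G'(\mathbb{A}_F)$ twice, and the convergence is absolute and (on compact subsets of the region $\Re(s_1),\Re(s_2)\gg0$) uniform, which also yields holomorphy. The main obstacle I anticipate is the bookkeeping in the torus coordinates: one must check that the exponents coming from $\delta_{B'}$ in the Iwasawa measure, from the gauge exponent $M$, from the determinant weights, and from the rapid-decay gain of the cusp forms, combine with the right signs so that \emph{every} coordinate direction $x_i$ (and $y_i$) is genuinely suppressed. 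This is exactly the kind of multi-variable estimate carried out in Proposition~\ref{prop22} and in the proof of the gauge proposition, so it is routine in principle, but it is where care is needed; choosing $\Re(s_1),\Re(s_2)$ large is precisely what buys the necessary slack.
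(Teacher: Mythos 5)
Your proposal is correct and follows essentially the same route as the paper: the paper's proof likewise combines the gauge majorization of $J_{\Kuz}$ from Proposition \ref{Kl} with the rapid decay of the automorphic weights, the Iwasawa measure factor $\delta_{B'}^{-1}$, and the determinant weights for $\Re(s_1),\Re(s_2)$ large to produce a $T$-independent integrable majorant, and then concludes by dominated convergence. The torus-coordinate bookkeeping you flag as the main care point is exactly what the paper compresses into its one-line estimate, so no substantive difference in method.
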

\begin{proof}
	By Proposition \ref{Kl} the function $J_{\Kuz}(x,y)$ is majorized by a gauge. So the integrand $J_{\Kuz}(x,y)\phi_1'(x)\overline{\phi_2'(y)}|\det x|^{s_1}|\det y|^{s_2}\delta_{B'}^{-1}(x)\delta_{B'}^{-1}(y)$ decay rapidly on $T_{B'}(\mathbb{A}_F)\times T_{B'}(\mathbb{A}_F)$ when $\Re(s_1)$ and $\Re(s_2)$ are large. Therefore, $J_{\Kl}^T(f,\mathbf{s})$ converges absolutely and uniformly when $T\rightarrow \infty$ by the dominant control theorem, and the limit function is $J_{\Kl}(f,\mathbf{s}).$ 
\end{proof}
\begin{remark}
	We do not essentially make use of cuspidality of $\phi_1'$ and $\phi_2'$ here, except that they are slowly increasing. So Corollary \ref{cor39} still holds when $\phi_1'$ and $\phi_2'$ are Eisenstein series.  
\end{remark}

By Corollary \ref{cor39}, when $\Re(s_1)$ and $\Re(s_2)$ are large, the function $J_{\Eis}^{\Reg,T}(f,\mathbf{s})=J_{\Kl}^T(f,\mathbf{s})-J_0^T(f,\textbf{s})$ converges when $T\rightarrow\infty.$ So we can define
\begin{equation}\label{Eis}
	J_{\Eis}^{\Reg}(f,\mathbf{s}):=\lim_{T\rightarrow\infty}J_{\Eis}^{\Reg,T}(f,\mathbf{s})=J_{\Kl}(f,\mathbf{s})-J_0(f,\mathbf{s}),
\end{equation}
which converges absolutely when $\Re(s_1)\gg 0$ and $\Re(s_2)\gg 0,$ depending on the test function $f.$  

However, $J_{\Eis}^{\Reg}(f,\mathbf{s})$ may not converge everywhere, e.g., it may diverge on the plane $s_1+s_2=1$ when $\pi_1'\simeq\pi_2'.$ Our strategy to overcome this is to find meromorphic continuation of $J_{\Eis}^{\Reg}(f,\mathbf{s})$ to $\mathbb{C}^2,$ which is similar to \cite{Yan19}. To achieve it, we will apply spectral decomposition of the kernel function $\K_{\ER}$ and (after a switch of integrals) write $J_{\Eis}^{\Reg}(f,\mathbf{s})$ as an infinitely sum of Rankin-Selberg periods. The justification of switching integrals is given in \textsection \ref{7.1} (cf. Proposition \ref{exp}).

\subsection{Spectral Decomposition of the Kernel Function}\label{7.1}
In this subsection, we review briefly the spectral theory of automorphic representation of reductive groups, and then apply the results to the non-cuspidal kernel function $\K_{\ER}=\K_{\Res}+\K_{\Eis}$. 
	
Let $X(G)_F$ be the space set of $F$-rational characters of $G.$ Denote by  $\mathfrak{a}_G=\Hom_{\mathbb{Z}}(X(G)_{F},\mathbb{R}).$ Let $\mathfrak{a}_G^*=X(G)_{F}\otimes \mathbb{R}.$ Let $Q$ be a standard parabolic subgroup of $G$ of type $(n_1,\cdots,n_l).$ Let $M_Q$ (resp. $N_Q$) be the Levi subgroup (resp. unipotent radical) of $Q.$ Let $X(M_Q)_F$ be the space set of $F$-rational characters of $M_Q.$ Denote by $\mathfrak{a}_Q=\mathfrak{a}_{M_Q}=\Hom_{\mathbb{Z}}(X(M_Q)_{F},\mathbb{R})$ and $\mathfrak{a}_Q^*=\mathfrak{a}_{M_Q}^*=X(M_Q)_{F}\otimes \mathbb{R}.$ Write $\mathfrak{a}_0$ (resp. $\mathfrak{a}_0^*$) for $\mathfrak{a}_{B}$ (resp. $\mathfrak{a}_{B}^*$). The abelian group $X(M_Q)_F$ has a canonical basis of rational characters $\chi_i: m\rightarrow \det m_i,$ for $,m=\begin{pmatrix}
m_1\\
&\ddots\\
&&m_l	
\end{pmatrix}\in M_Q.$ We then choose $\big\{n_i\chi_i^{-1}:\ 1\leq i\leq l\big\}$ to be a basis for $\mathfrak{a}_Q^*,$ and choose a dual basis on $\mathfrak{a}_Q$ to identified both $\mathfrak{a}_Q$ and $\mathfrak{a}_{Q}^*$ with $\mathbb{R}^l.$ Furthermore, define 
$$
H_{M_Q}(m)=\left(\frac{\log |\det m_1|}{n_1},\cdots, \frac{\log |\det m_l|}{n_l}\right)\in\mathfrak{a}_Q,
$$
where $m=\diag(m_1,\cdots, m_l)\in M_Q(\mathbb{A}_F).$ Denote by $\langle\cdot,\cdot\rangle$ the pair for $\mathfrak{a}_Q$ and $\mathfrak{a}_Q^*$ under the above chosen identification. We also write $H_Q$ for $H_{M_Q}$ to simplify notation.
	
	Then by spectral theory (e.g., cf. p. 256 and p. 263 of \cite{Art79}), the decomposition of the Hilbert space $L^2\left(Z_{G}(\mathbb{A}_F)N_Q(\mathbb{A}_F)M_Q(F)\backslash G(\mathbb{A}_F)\right)$ into right $G(\mathbb{A}_F)$-invariant subspaces is determined by the spectral data $\chi=\{(M,\sigma)\},$ where the pair $(M,\sigma)$ consists of a Levi subgroup $M$ of $G$ and a cuspidal representation $\sigma\in\mathcal{A}_0\left(Z(\mathbb{A}_F)\backslash M^1(\mathbb{A}_F)\right),$ where $M^1(\mathbb{A}_F):=\big\{m\in M(\mathbb{A}_F):\ H_M(m)=0\big\},$ and the class $(M,\sigma)$ derives from the equivalence relation $(M,\sigma)\sim(M',\sigma')$ if and only if $M$ is conjugate to $M'$ by a Weyl group element $w,$ and $\sigma'=\sigma^w$ on $Z(\mathbb{A}_F)\backslash M^1(\mathbb{A}_F).$ Let $\mathfrak{X}$ be the set of equivalence classes $\chi=\{(M,\sigma)\}$ of these pairs, we thus have
	\begin{equation}\label{49}
		L^2\left(Q\right):=L^2\left(Z_{G}(\mathbb{A}_F)N_Q(\mathbb{A}_F)M_Q(F)\backslash G(\mathbb{A}_F)\right)=\bigoplus_{\chi\in\mathfrak{X}}L^2\left(Q\right)_{\chi},
	\end{equation}
	where $L^2\left(Q\right)_{\chi}$ consists of functions $\phi\in L^2\left(Z_{G}(\mathbb{A}_F)N_Q(\mathbb{A}_F)M_Q(F)\backslash G(\mathbb{A}_F)\right)$ such that for each standard parabolic subgroup $Q_1$ of $G,$ with $Q_1\subset Q,$ and almost all $x\in G(\mathbb{A}_F),$ the projection of the function 
	$$
	m\mapsto x.\phi_{Q_1}(m)=\int_{N_{Q_1}(F)\backslash N_{Q_1}(\mathbb{A}_F)}\phi(nmx)dn
	$$
	onto the space of cusp forms in $L^2\left(Z_{G}(\mathbb{A}_F)M_{Q_1}(F)\backslash M_{Q_1}^1(\mathbb{A}_F)\right)$ transforms under $M_{Q_1}^1(\mathbb{A}_F)$ as a sum of representations $\sigma,$ in which $(M_{Q_1},\sigma)\in\chi.$ If there is no such pair in $\chi,$ $x.\phi_{{Q_1}}$ will be orthogonal to $\mathcal{A}_0\left(Z_{G}(\mathbb{A}_F)M_{Q_1}(F)\backslash M_{Q_1}^1(\mathbb{A}_F)\right).$ Denote by $\mathcal{H}_Q$ the space of such $\phi$'s. Let $\mathcal{H}_{Q,\chi}$ be the subspace of $\mathcal{H}_Q$ such that for any $(M,\sigma)\notin \chi,$ with $M=M_{Q_1}$ and $Q_1\subset Q,$ we have
	$$
	\int_{M(F)\backslash M(\mathbb{A}_F)^1}\int_{N_{Q_1}(F)\backslash N_{Q_1}(\mathbb{A}_F)}\psi_0(m)\phi(nmx)dn=0,
	$$
	for any $\psi_0\in L^2_{0}\left(M(F)\backslash M(\mathbb{A}_F)^1\right)_{\sigma},$ and almost all $x.$ This leads us to Langlands' result to decompose $\mathcal{H}_Q$ as an orthogonal direst sum $\mathcal{H}_Q=\bigoplus_{\chi\in\mathfrak{X}}\mathcal{H}_{Q,\chi}.$ Let $\mathcal{B}_Q$ be an orthonormal basis of $\mathcal{H}_Q,$ then we can choose $\mathcal{B}_Q=\bigcup_{\chi\in\mathfrak{X}}\mathcal{B}_{Q,\chi},$ where $\mathcal{B}_{Q,\chi}$ is an orthonormal basis of the Hilbert space $\mathcal{H}_{Q,\chi}.$ We may assume that vectors in each $\mathcal{B}_{Q,\chi}$ are $K$-finite and are pure tensors.
	
 Given a spectral datum $\chi=\{(M_Q,\sigma)\},$ and $\lambda\in\mathfrak{a}_Q^*\otimes\mathbb{C},$ we have the induced representation $\mathcal{I}_{Q}(\cdot, \lambda):=\Ind_{Q(\mathbb{A}_F)}^{G(\mathbb{A}_F)}\sigma\cdot e^{\langle \lambda, H_{M}(\cdot)\rangle}$ with the representation space being $\mathcal{H}_Q.$ Explicitly, for $\phi\in \mathcal{H}_Q,$ one has 
 $$
 (\mathcal{I}_Q(y,\lambda)\phi)(x)=\phi(xy)e^{\langle \lambda+\rho_Q,H_{M_Q}(xy)\rangle}e^{-\langle \lambda+\rho_Q,H_{M_Q}(x)\rangle},\ \ x, y\in G(\mathbb{A}_F),
 $$
 where $\rho_Q$ is the half sum of the positive roots of $(Q, A_Q).$ The associated Eisenstein series is defined by 
\begin{align*}
E(x,\phi,\lambda)=\sum_{\delta\in Q(F)\backslash G(F)}\phi(\delta x)e^{\langle \lambda+\rho_Q,H_{M_Q}(\delta x)\rangle}.
\end{align*}
 
Let $\chi=\{(M,\sigma)\}\in\mathfrak{X}.$ Define 
\begin{align*}
\K_{\chi}(x,y)=\sum_{Q}\frac{1}{n_Q}\left(\frac{1}{2\pi i}\right)^{\dim A_Q/A_G}\int_{i\mathfrak{a}_Q^*/i\mathfrak{a}_G^*}\sum_{\phi\in\mathfrak{B}_{Q,\chi}}E(x,\mathcal{I}(\lambda, f)\phi,\lambda)\overline{E(y,\phi,\lambda)}d\lambda,
\end{align*}
where $Q$ runs over standard parabolic subgroups that contain $M,$ and $n_Q$ is the number of chambers in $\mathfrak{a}_Q,$ and 
\begin{equation}\label{172}
E(x,\mathcal{I}(\lambda, f)\phi,\lambda)=\int_{G(\mathbb{A}_F)}\tilde{f}(y)E(xy,\phi,\lambda)dy.
\end{equation}
Then by \cite{Art79} the kernel function $\K_{\ER}$ admits the decomposition $\K_{\ER}(x,y)=\sum_{\chi}\K_{\chi}(x,y),$ where $\chi$ runs over proper spectral data.

Plugging the spectral decomposition of $\K_{\ER}$ into the above expression we show
\begin{prop}\label{exp}
Let notation be as before. Let $n\geq 1.$ Let $\Re(s_1), \Re(s_2)\gg 0.$ Then we have the absolutely convergent spectral expansion 
\begin{equation}\label{J}
J_{\Eis}^{\Reg}(f,\textbf{s})=\sum_{\chi}\sum_{Q}\frac{1}{n_Q}\left(\frac{1}{2\pi i}\right)^{\dim A_Q/A_G}\int_{i\mathfrak{a}_Q^*/i\mathfrak{a}_G^*}\sum_{\phi\in\mathfrak{B}_{Q,\chi}}\mathcal{H}_{\phi}(\lambda,s_1,s_2)d\lambda,
\end{equation}
where $\mathcal{H}_{\phi}(\lambda,s_1,s_2):=\Psi_{\lambda}(s_1,\mathcal{I}(\lambda,f)\phi,\phi_1')\overline{\Psi_{\lambda}(\overline{s_2},\phi,\phi_2')}$ with 
\begin{align*}
\Psi_{\lambda}(s_1,\mathcal{I}(\lambda,f)\phi,\phi_1')&=\int_{N'(\mathbb{A}_F)\backslash G'(\mathbb{A}_F)}\int_{[N]}E(ux,\mathcal{I}(\lambda, f)\phi,\lambda)\overline{\theta}(u)duW_1'(x)|\det x|^{s_1}dx\\
\Psi_{\lambda}(s_2,\phi,\phi_2')&=\int_{N'(\mathbb{A}_F)\backslash G'(\mathbb{A}_F)}\int_{[N]}E(ux,\phi,\lambda)\overline{\theta}(u)duW_2'(x)|\det x|^{s_2}dx.
\end{align*}
\end{prop}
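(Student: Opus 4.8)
\textbf{Proof strategy for Proposition \ref{exp}.}
The plan is to start from the identity \eqref{Eis}, namely $J_{\Eis}^{\Reg}(f,\textbf{s})=J_{\Kl}(f,\textbf{s})-J_0(f,\textbf{s})$, valid for $\Re(s_1),\Re(s_2)\gg 0$ by Corollary \ref{cor39}, and then re-express the right-hand side spectrally. Recall from \textsection\ref{7.1} that $\K_{\ER}(x,y)=\sum_{\chi}\K_{\chi}(x,y)$, where each $\K_{\chi}$ is the contour integral over $i\mathfrak{a}_Q^*/i\mathfrak{a}_G^*$ of products of Eisenstein series $E(x,\mathcal{I}(\lambda,f)\phi,\lambda)\overline{E(y,\phi,\lambda)}$. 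Since the definition of $J_{\Eis}^{\Reg}$ involves $\mathcal{F}_{n+1}\mathcal{F}_{n+1}\K_{\ER}$, i.e. the double Fourier--Whittaker coefficient of $\K_{\ER}$ in both variables, I would first interchange the sum over $\chi$, the sum over $Q$, the $\lambda$-contour integral, and the unipotent integrations defining $\mathcal{F}_{n+1}\mathcal{F}_{n+1}$. This produces, for each $\phi\in\mathfrak{B}_{Q,\chi}$, the Whittaker--Eisenstein coefficient $\int_{[N]}E(ux,\mathcal{I}(\lambda,f)\phi,\lambda)\overline{\theta}(u)\,du$ against one copy of $G'$, and similarly with $\phi$ and $\phi_2'$ against the other; integrating these against $\phi_1'(x)|\det x|^{s_1}$ and $\overline{\phi_2'(y)}|\det y|^{s_2}$ over $N'(\mathbb{A}_F)\backslash G'(\mathbb{A}_F)$ (after folding the $Z'$-integral and the $[\overline{G'}]$-integral back into $N'(\mathbb{A}_F)\backslash G'(\mathbb{A}_F)$ exactly as in the cuspidal unfolding that produced \eqref{25}) yields precisely $\Psi_{\lambda}(s_1,\mathcal{I}(\lambda,f)\phi,\phi_1')$ and $\overline{\Psi_{\lambda}(\overline{s_2},\phi,\phi_2')}$. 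Thus formally the right-hand side of \eqref{J} equals $J_{\Eis}^{\Reg}(f,\textbf{s})$; the content of the proposition is the legitimacy of all these interchanges.

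The key steps, in order, are: (i) record the growth of the Whittaker functions of $\phi_1',\phi_2'$ (rapid decay on $N'(\mathbb{A}_F)\backslash G'(\mathbb{A}_F)$ modulo the center, cf. \cite{MS12}) and the moderate growth, uniform in the contour parameter $\lambda\in i\mathfrak{a}_Q^*$, of the truncated-or-untruncated Eisenstein series and their Whittaker coefficients; (ii) use the gauge estimate of Proposition \ref{Kl} together with Corollary \ref{cor39} to know that $J_{\Kl}(f,\textbf{s})$ is given by an absolutely convergent integral over $N'(\mathbb{A}_F)\backslash G'(\mathbb{A}_F)\times N'(\mathbb{A}_F)\backslash G'(\mathbb{A}_F)$ for $\Re(s_1),\Re(s_2)\gg 0$; (iii) insert the spectral decomposition $J_{\Kuz}(x,y)=\sum_{\chi}\sum_Q \frac1{n_Q}(2\pi i)^{-\dim A_Q/A_G}\int_{i\mathfrak{a}_Q^*/i\mathfrak{a}_G^*}\sum_{\phi}W^{E}(x,\mathcal{I}(\lambda,f)\phi,\lambda)\overline{W^{E}(y,\phi,\lambda)}\,d\lambda$ of the Kuznetsov kernel (the Whittaker-coefficient version of $\K_{\ER}$'s Langlands decomposition, using $\mathcal{F}_{n+1}\mathcal{F}_{n+1}\K_0=\K_0$ to cancel the $J_0$-term), where $W^{E}$ denotes the $\theta$-Whittaker coefficient of the Eisenstein series; (iv) justify Fubini: bound $\sum_{\phi\in\mathfrak{B}_{Q,\chi}}\|W^{E}(\cdot,\mathcal{I}(\lambda,f)\phi,\lambda)\|$ using $K$-finiteness of $f$ (finitely many nonzero $\phi$'s with operator norms controlled by $\lambda$ polynomially) and the standard estimates for Whittaker functions of induced representations, multiply by the rapid decay of $\phi_1',\phi_2'$ to make the triple integral (over $x$, over $y$, over $\lambda$) and the sums over $Q$ and $\chi$ absolutely convergent; (v) conclude that the interchange is valid and \eqref{J} holds as an absolutely convergent expansion.

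The main obstacle is step (iv): controlling the Whittaker--Fourier coefficients $\Psi_{\lambda}(s,\mathcal{I}(\lambda,f)\phi,\phi_i')$ uniformly in $\lambda$ along the unitary axis $i\mathfrak{a}_Q^*$, and in particular ensuring that the sum over the spectral data $\chi$ and over parabolics $Q$ converges. The delicate point is that the Eisenstein series themselves are only of moderate growth, not rapidly decreasing, so the absolute convergence of $\Psi_{\lambda}$ relies essentially on the rapid decay of the \emph{cusp forms} $\phi_1',\phi_2'$ on $N'(\mathbb{A}_F)\backslash G'(\mathbb{A}_F)$ modulo center together with the convergence-producing weights $|\det x|^{s_i}$ with $\Re(s_i)\gg 0$; one must check that the implied constants depend on $\lambda$ at most polynomially (so that the contour integral converges) and that summing the resulting bound over all standard parabolics $Q$ and all $\chi$ — for which one invokes the trace-class property of $\mathcal{I}_Q(\lambda,f)$ coming from $f$ being $K$-finite and compactly supported, as in Arthur's convergence arguments \cite{Art79} — remains finite. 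This is analogous to, but more involved than, the corresponding step in \cite{Yan19}, because here there are two $G'$-integrals and the extra parameters $s_1,s_2$. Once absolute convergence is established the interchange of $\sum_\chi$, $\sum_Q$, $\int_{i\mathfrak{a}_Q^*}$ and the integrations over $x,y$ is automatic by Fubini--Tonelli, and the identification of the integrand with $\mathcal{H}_{\phi}(\lambda,s_1,s_2)$ is a formal unfolding identical to the one carried out for $J^{\Reg}_{\Geo,\sm}$ in Proposition \ref{prop14}.
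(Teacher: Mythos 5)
Your formal unfolding of $J_{\Eis}^{\Reg}(f,\textbf{s})$ into the periods $\mathcal{H}_{\phi}(\lambda,s_1,s_2)$ coincides with what the paper does, but your step (iv) — the justification of the interchange — is a genuine gap, and it is exactly the point where the paper's proof has real content. You propose to control $\sum_{\phi}\|W^{E}(\cdot,\mathcal{I}(\lambda,f)\phi,\lambda)\|$ by polynomial-in-$\lambda$ bounds on Eisenstein--Whittaker coefficients and to handle the sum over all cuspidal data $\chi$ by a ``trace-class'' property of $\mathcal{I}_Q(\lambda,f)$ as in \cite{Art79}. Neither tool, as stated, closes the argument: Arthur's pointwise decomposition $\K_{\ER}=\sum_{\chi}\K_{\chi}$ does not by itself give summability over the infinitely many $\chi$ (whose archimedean parameters are unbounded) of the integrated Whittaker periods weighted by $|\det x|^{\Re(s_i)}$, and for a single $K$-finite, compactly supported $f$ the operators $\mathcal{I}_Q(\lambda,f)$ need not furnish the required uniform decay without further smoothing. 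In other words, you have correctly named the obstacle but the route you sketch does not resolve it; it simply relocates the difficulty into unproved uniform estimates.

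The paper avoids all of this with a positivity argument. First reduce to $f=h*h^*$ (the general case follows by Dixmier--Malliavin and Cauchy--Schwarz). Then consider the expression \eqref{177}, i.e.\ the spectral expansion with absolute values inserted and with $\phi_1'=\phi_2'=\phi_j'$, $s_1=\overline{s_2}$. By Minkowski's integral inequality this is dominated by the square of $\int_{N'(\mathbb{A}_F)\backslash G'(\mathbb{A}_F)}\big[W_{\infty}(\iota(x),\iota(x))\big]^{1/2}|W'(x)||\det x|^{\Re(s_j)}dx$, where the point is that the sum over $\chi$, $Q$, $\lambda$, $\phi$ of $\big|\int_{[N]}E(ux,\mathcal{I}(\lambda,h)\phi,\lambda)\overline{\theta}(u)du\big|^2$ collapses, because $f=h*h^*$, to the diagonal value $W_{\infty}(\iota(x),\iota(x))$ of the double Whittaker coefficient of the kernel — precisely the quantity that Proposition \ref{Kl} majorizes by a gauge. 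Hence \eqref{177} converges for $\Re(s_j)\gg 0$ with no uniform-in-$\lambda$ bounds and no counting over $\chi$ whatsoever, and Cauchy--Schwarz then gives absolute convergence of the mixed expression \eqref{176}, after which Fubini yields \eqref{J}. If you rework your step (iv) along these lines — positivity on the diagonal plus Minkowski plus Cauchy--Schwarz, with Proposition \ref{Kl} carrying the analytic load — your argument becomes complete; as written, the convergence claim is asserted rather than proved.
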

\begin{proof}
We write $J_{\Eis}^{\Reg}(f,s_1, s_2;\phi_1',\phi_2')$ for $J_{\Eis}^{\Reg}(f,\mathbf{s})$ in this proof. First we assume $f$ is of the form $h*h^*,$ where $h^*(g):=h(\transp{g}^{-1}),$ $g\in G(\mathbb{A}_F).$ Then by \eqref{Eis} the function $J_{\Eis}^{\Reg}(f,s_1,s_2;\phi_1',\phi_2')$ is equal to 
\begin{align*}
\int_{{N'(\mathbb{A}_F)\backslash G'(\mathbb{A}_F)}}\int_{{N'(\mathbb{A}_F)\backslash{G'}(\mathbb{A}_F)}}{W}_{\infty}(\iota(x),\iota(y))W_1'(x)\overline{W_2'(y)}\det x|^{s_1}|\det y|^{s_2}dxdy,
\end{align*}
where $W_{1}'$ (resp. $W_2'$) is the Whittaker function associated to $\phi_1'$ (resp. $\phi_2'$), and 
\begin{align*}
	{W}_{\infty}(g_1,g_2):=\int_{[N]}\int_{[N]}\K_{\ER}(ug_1,vg_2)\theta(u)\overline{\theta(v)}dudv,\ \ g_1, g_2\in G(\mathbb{A}_F).
\end{align*}
Appealing to the spectral decomposition $\K_{\ER}(x,y)=\sum_{\chi}\K_{\chi}(x,y)$ then the function $J_{\Eis}^{\Reg}(f,s_1,s_2;\phi_1',\phi_2')$ is equal to 
\begin{equation}\label{176}
\iint\sum_{\chi}\sum_{Q}\frac{1}{n_Q}\left(\frac{1}{2\pi i}\right)^{\dim A_Q/A_G}\int\sum_{\phi}\mathcal{W}_{1,\chi}(x,\lambda;\phi)\overline{\mathcal{W}_{2,\chi}(y,\lambda;\phi)}d\lambda dxdy,
\end{equation}
where $x, y\in N'(\mathbb{A}_F)\backslash G'(\mathbb{A}_F),$ $\lambda\in i\mathfrak{a}_Q^*/i\mathfrak{a}_G^*,$  $\phi$ runs over $\mathfrak{B}_{Q,\chi},$ and for $1\leq j\leq 2,$
\begin{align*}
\mathcal{W}_{j,\chi}(x,\lambda;\phi):=W_j'(x)|\det x|^{s_j}\int_{[N]}E(u\iota(x),\mathcal{I}(\lambda, h)\phi,\lambda)\overline{\theta}(u)du.
\end{align*}

$J_{\Eis}^{\Reg}(f,s_1,s_2;\phi_1',\phi_2')$ converges absolutely for fixed $\phi_1', \phi_2'$ in $\Re(s_1)\gg 0$ and $\Re(s_2)\gg 0.$ In particular, $J_{\Eis}^{\Reg}(f,s_j,\overline{s_j};\phi_j',\phi_j')<\infty$ when $\Re(s_j)\gg 0,$ $j=1, 2.$ 

Define (at least formally) $\tilde{J}_{\Eis}^{\Reg}(f,s_j,\overline{s_j};\phi_j',\phi_j')$ to be 
\begin{equation}\label{177}
\iint\sum_{\chi}\sum_{Q}\frac{1}{n_Q}\left(\frac{1}{2\pi i}\right)^{\dim A_Q/A_G}\int\sum_{\phi}\big|\mathcal{W}_{j,\chi}(x,\lambda;\phi)\overline{\mathcal{W}_{j,\chi}(y,\lambda;\phi)}\big|d\lambda dxdy,
\end{equation}
which is precisely $J_{\Eis}^{\Reg}(f,s_1,s_2;\phi_1',\phi_2')$ with $\mathcal{W}_{1,\chi}(x,\lambda;\phi)\overline{\mathcal{W}_{2,\chi}(y,\lambda;\phi)}$ replaced by $|\mathcal{W}_{j,\chi}(x,\lambda;\phi)\overline{\mathcal{W}_{j,\chi}}(y,\lambda;\phi)|,$ $1\leq j\leq 2.$

By Minkowski's integral inequality, $\tilde{J}_{\Eis}^{\Reg}(f,s_j,\overline{s_j};\phi_j',\phi_j')$ is bounded by 
\begin{align*}
&\Bigg[\int_{N'(\mathbb{A}_F)\backslash G'(\mathbb{A}_F)}\Bigg[\sum_{\chi}\sum_{Q}\frac{1}{n_Q}\left(\frac{1}{2\pi i}\right)^{\dim A_Q/A_G}\int_{i\mathfrak{a}_Q^*/i\mathfrak{a}_G^*}\sum_{\phi\in\mathfrak{B}_{Q,\chi}}\\
&\quad \times\Big|\int_{[N]}E(u\iota(x),\mathcal{I}(\lambda, h)\phi,\lambda)\overline{\theta}(u)du\Big|^2d\lambda\Bigg]^{\frac{1}{2}}|W'(x)||\det x|^{\Re(s_j)}dx\Bigg]^2\\
=&\Bigg[\int_{N'(\mathbb{A}_F)\backslash G'(\mathbb{A}_F)}\Big[W_{\infty}(\iota(x),\iota(x))\Big]^{\frac{1}{2}}|W'(x)||\det x|^{\Re(s_j)}dx\Bigg]^2,
\end{align*}
which, by Proposition \ref{Kl}, converges when $\Re(s_j)\gg 0.$ So \eqref{177} converges. In conjunction with Cauchy inequality, \eqref{176} converges absolutely. We can switch the integrals in \eqref{176} to write $J_{\Eis}^{\Reg}(f,s_1,\overline{s_2};\phi_1',\phi_2')$ as
\begin{equation}\label{175}
\sum_{\chi}\sum_{Q}\frac{1}{n_Q}\left(\frac{1}{2\pi i}\right)^{\dim A_Q/A_G}\int\sum_{\phi}\Psi_{\lambda}(s_1,\mathcal{I}(\lambda,f)\phi,\phi_1')\overline{\Psi_{\lambda}(s_2,\phi,\phi_2')}d\lambda,
\end{equation}
where $\lambda$ ranges over $i\mathfrak{a}_Q^*/i\mathfrak{a}_G^*,$  $\phi$ runs over $\mathfrak{B}_{Q,\chi}.$ 

For general test function $f$ we can apply Dixmier-Malliavin's decomposition to write it as a finite sum of convolution products: $f=\sum_{j}h_{1,j}*h_{2,j}.$ Then \eqref{J} follows from \eqref{175} and Cauchy-Schwartz inequality.
\end{proof}

We then investigate the analytic behavior of each period $\mathcal{H}_{\phi}(\lambda,s_1,s_2)$ as a meromorphic function in \textsection \ref{9.3} below, where we show 
\begin{align*}
\sum_{\chi}\sum_{Q}\frac{1}{n_Q}\left(\frac{1}{2\pi i}\right)^{\dim A_Q/A_G}\int_{i\mathfrak{a}_Q^*/i\mathfrak{a}_G^*}\sum_{\phi\in\mathfrak{B}_{Q,\chi}}\mathcal{H}_{\phi}(\lambda,s_1,s_2)d\lambda
\end{align*}
converges absolutely for \textit{all} $\mathbf{s}\in\mathbb{C}^2,$ where $\chi$ ranges through $\mathfrak{X}$ minus \textit{two} cuspidal data. This would give us meromorphic continuation of $J_{\Eis}^{\Reg}(f,\mathbf{s})$ as shown below.

\subsection{Meromorphic Continuation of $J_{\Eis}^{\Reg}(f,\mathbf{s})$ to $\mathbb{C}^2$}\label{9.3}
Let $n\geq 1.$ Let $\mathbf{s}=(s_1,s_2)$ be such that $\Re(s_1)\gg 0$ and $\Re(s_2)\gg 0.$ Let $Q$ be a parabolic subgroup of $G$ of type $(n_1,\cdots,n_l),$ $l\geq 2.$ Let $\chi=\{(M_Q,\sigma)\}$ be a cuspidal datum, where $\sigma=\sigma_1\otimes\cdots\otimes\sigma_l$ is a cuspidal representation of the Levi subgroup $M_Q.$ Denote by 
\begin{align*}
\mathcal{J}_{\chi}(s_1,s_2,\phi_1',\phi_2')=\frac{1}{n_Q}\sum_{\phi\in\mathfrak{B}_{Q,\chi}}\left(\frac{1}{2\pi i}\right)^{\dim A_Q/A_G}\int_{i\mathfrak{a}_{Q}^*/i\mathfrak{a}_G^*}\mathcal{H}_{\phi}(\lambda,s_1,s_2)d\lambda.
\end{align*}

Write $\lambda=(\lambda_1,\cdots,\lambda_l)$ with $\lambda_1+\cdots+\lambda_l=0.$ According to the discussion in \cite{JPSS83} and \cite{Jac09} we have 
\begin{align*}
	\mathcal{H}_{\phi}(\lambda,s_1,s_2)\sim&\frac{\prod_{j=1}^l\Lambda(s_1+1/2+\lambda_j/n_j,\pi_1'\times\sigma_j)\Lambda(s_2+1/2-\lambda_j/n_j,\widetilde{\pi}_2'\times\widetilde{\sigma}_j)}{\prod_{\substack{i\leq l\\  j\leq l\\ i\neq j}}\Lambda(1+\lambda_i/n_i-\lambda_j/n_j,\sigma_i\times\widetilde{\sigma}_j)\Lambda(1-\lambda_i/n_i+\lambda_j/n_j,\widetilde{\sigma}_i\times\sigma_j)}.
\end{align*}
Note that the RHS, as a rational function of complete $L$-functions, is meromorphic. Hence, we obtain a meromorphic continuation of $\mathcal{H}_{\phi}(\lambda,s_1,s_2)$ as a function of $\lambda\in i\mathfrak{a}_P^*/i\mathfrak{a}_G^*,$ $(s_1,s_2)\in\mathbb{C}^2.$ Moreover, $\mathcal{H}_{\phi}(\lambda,s_1,s_2)$ is entire if neither $\sigma_j\simeq\widetilde{\pi}_1'$ nor 
$\sigma_j\simeq\pi_2',$ $1\leq j\leq l.$ In particular, $\mathcal{H}_{\phi}(\lambda,s_1,s_2)$ is entire if $l\geq 3.$ 

\begin{defn}
We say $\mathcal{J}_{\chi}(s_1,s_2,\phi_1',\phi_2')$ is \textit{regular} if $l\geq 3,$ or $l=2$ and $\sigma_j\not\simeq\widetilde{\pi}_1',$ $\sigma_j\not\simeq\pi_2',$ $1\leq j\leq 2;$ \textit{semi-singular} if $l=2,$ $\pi_1'\not\simeq\pi_2',$ and $\sigma_j\simeq\widetilde{\pi}_1'$ or $\sigma_j\simeq\pi_2',$ $1\leq j\leq 2;$ \textit{singular} if  $l=2,$ $\pi_1'\simeq\pi_2',$ and either $\sigma_1\simeq\widetilde{\pi}_1'$ or $\sigma_2\simeq\widetilde{\pi}_1'.$ 
\end{defn}

Let $\mathfrak{X}^{\reg}$ consist of $\chi\in\mathfrak{X}$ such that $\mathcal{J}_{\chi}(s_1,s_2,\phi_1',\phi_2')$ is regular. Let $\mathfrak{X}^{\semi}$ (resp. $\mathfrak{X}^{\sing}$) consist of $\chi\in\mathfrak{X}$ such that $\mathcal{J}_{\chi}(s_1,s_2,\phi_1',\phi_2')$ is semi-singular (resp. singular). Then $\#\mathfrak{X}^{\semi}=4$ and $\#\mathfrak{X}^{\sing}=2.$

Let $*$ be a script in $\{\reg, \semi,\sing\}.$ We define 
\begin{equation}\label{182}
J_{\Eis,*}^{\Reg}(f,\mathbf{s}):=\sum_{\chi\in\mathfrak{X}^*}\sum_{Q}\frac{1}{n_Q}\left(\frac{1}{2\pi i}\right)^{\dim A_Q/A_G}\int\sum_{\phi\in\mathfrak{B}_{Q,\chi}}\mathcal{H}_{\phi}(\lambda,s_1,s_2)d\lambda,
\end{equation}
where $\lambda$ ranges through $i\mathfrak{a}_Q^*/i\mathfrak{a}_G^*\simeq i\mathbb{R}.$

\begin{thm}\label{thm40}
Let notation be as before. Then 
$$
J_{\Eis}^{\Reg}(f,\mathbf{s})=J_{\Eis,\reg}^{\Reg}(f,\mathbf{s})+J_{\Eis,\semi}^{\Reg}(f,\mathbf{s})+J_{\Eis,\sing}^{\Reg}(f,\mathbf{s}),
$$ 
where $J_{\Eis,\reg}^{\Reg}(f,\mathbf{s})$ converges absolutely for all $\mathbf{s}\in\mathbb{C}^2;$ $J_{\Eis,\semi}^{\Reg}(f,\mathbf{s})$ admits a holomorphic continuation to $\mathbb{C}^2;$ and $J_{\Eis,\sing}^{\Reg}(f,\mathbf{s})$ admits a meromorphic continuation to $\mathbb{C}^2,$ with possible (simple) poles on $s_1+s_2=1$ and $s_1+s_2=0.$ In particular, $J_{\Eis}^{\Reg}(f,\mathbf{s})$ admits a meromorphic continuation to $\mathbb{C}^2,$ with possible (simple) poles on $s_1+s_2=1$ and $s_1+s_2=0.$
\end{thm}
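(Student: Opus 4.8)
The plan is to prove Theorem \ref{thm40} by combining the absolutely convergent spectral expansion of Proposition \ref{exp} with the meromorphic continuation of each individual period $\mathcal{H}_{\phi}(\lambda,s_1,s_2)$ coming from Rankin--Selberg theory, and then sorting the cuspidal data $\chi\in\mathfrak{X}$ into the three disjoint families $\mathfrak{X}^{\reg}$, $\mathfrak{X}^{\semi}$, $\mathfrak{X}^{\sing}$ already introduced. The decomposition $J_{\Eis}^{\Reg}(f,\mathbf{s})=J_{\Eis,\reg}^{\Reg}(f,\mathbf{s})+J_{\Eis,\semi}^{\Reg}(f,\mathbf{s})+J_{\Eis,\sing}^{\Reg}(f,\mathbf{s})$ is then just a regrouping of the terms in \eqref{J}, valid a priori in the region $\Re(s_1),\Re(s_2)\gg0$ where Proposition \ref{exp} gives absolute convergence; the content of the theorem is that each of the three pieces separately extends (meromorphically, or holomorphically for the first two) to all of $\mathbb{C}^2$, and that only the singular piece produces poles.

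First I would handle $J_{\Eis,\reg}^{\Reg}(f,\mathbf{s})$. For $\chi\in\mathfrak{X}^{\reg}$ the factorization recorded just before the definition of ``regular'' shows $\mathcal{H}_{\phi}(\lambda,s_1,s_2)$ is a ratio of complete $L$-functions whose numerator $L$-factors $\Lambda(s_1+1/2+\lambda_j/n_j,\pi_1'\times\sigma_j)$ and $\Lambda(s_2+1/2-\lambda_j/n_j,\widetilde{\pi}_2'\times\widetilde{\sigma}_j)$ are \emph{entire} in $(\lambda,\mathbf{s})$ because no $\sigma_j$ is isomorphic to $\widetilde{\pi}_1'$ or $\pi_2'$, and whose denominator Rankin--Selberg $L$-factors $\Lambda(1\pm(\lambda_i/n_i-\lambda_j/n_j),\sigma_i\times\widetilde\sigma_j)$ are nonvanishing on $\Re(\lambda)=0$. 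Hence each $\mathcal{H}_{\phi}(\lambda,s_1,s_2)$ is entire in $\mathbf{s}$ for $\lambda\in i\mathfrak{a}_Q^*/i\mathfrak{a}_G^*$. To get absolute convergence of the sum over $\chi$, over $\phi\in\mathfrak{B}_{Q,\chi}$, and of the integral over $\lambda$ uniformly for $\mathbf{s}$ in compact sets, I would reuse the Cauchy--Schwarz / Minkowski-inequality argument from the proof of Proposition \ref{exp}: bound $|\mathcal{H}_{\phi}(\lambda,s_1,s_2)|$ by $|\Psi_\lambda(s_1,\mathcal I(\lambda,f)\phi,\phi_1')|\cdot|\Psi_\lambda(\overline{s_2},\phi,\phi_2')|$, apply Cauchy--Schwarz over the discrete-plus-continuous spectral parameters, and recognize the resulting diagonal sums as $\int_{N'\backslash G'}W_\infty(\iota(x),\iota(x))^{1/2}|W_i'(x)||\det x|^{\Re(s_i)}dx$, which by Proposition \ref{Kl} (majorization of $J_{\Kuz}$ by a finite sum of gauges) converges for \emph{every} $\mathbf{s}$ once we isolate the regular data — the key point being that removing the finitely many semi-singular and singular $\chi$ removes exactly the terms whose $L$-factors could blow up, so the gauge estimate, which is insensitive to $\Re(s_i)$ after the cuspidal rapid decay of $\phi_1',\phi_2'$ is used, now holds in all of $\mathbb{C}^2$. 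This yields that $J_{\Eis,\reg}^{\Reg}(f,\mathbf{s})$ is entire.

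Next, for the four data in $\mathfrak{X}^{\semi}$ and the two in $\mathfrak{X}^{\sing}$ — a \emph{finite} list — I would argue termwise. For $\chi\in\mathfrak{X}^{\semi}$ we have $l=2$, $\pi_1'\not\simeq\pi_2'$, and (say) $\sigma_1\simeq\widetilde\pi_1'$ or $\sigma_1\simeq\pi_2'$; then one numerator factor, e.g.\ $\Lambda(s_1+1/2+\lambda_1/n_1,\pi_1'\times\widetilde\pi_1')=\Lambda(s_1+1/2+\lambda_1/n_1,\pi_1'\times\widetilde\pi_1')$, has a pole along a hyperplane in $(\lambda_1,s_1)$, but after performing the contour integral over $\lambda\in i\mathfrak{a}_Q^*/i\mathfrak{a}_G^*\simeq i\mathbb{R}$ the would-be pole at $s_1+1/2+\lambda_1/n_1=1$ is never hit on the contour $\Re(\lambda_1)=0$ as long as $\Re(s_1)\ne 1/2$, and because $\pi_1'\not\simeq\pi_2'$ the \emph{other} numerator factor $\Lambda(s_2+1/2-\lambda_1/n_2,\widetilde\pi_2'\times\widetilde\sigma_1)$ stays entire; a residue/contour-shift analysis (moving $\Re(\lambda_1)$ across the pole and picking up a residue that is itself entire in $\mathbf{s}$, exactly as in \cite{Yan19}) shows the $\lambda$-integral extends holomorphically in $\mathbf{s}$ to $\mathbb{C}^2$ with no net pole, since the residual term and the shifted integral both stay finite. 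For $\chi\in\mathfrak{X}^{\sing}$, $l=2$ and $\pi_1'\simeq\pi_2'$, so the same contour shift now produces a residue containing $\Lambda(s_1+s_2+1,\pi_1'\times\widetilde\pi_1')$-type factors — or, after the reflection $\lambda_1\mapsto-\lambda_1$, also an $s_1+s_2=0$ contribution — giving genuine simple poles precisely on $s_1+s_2=1$ and $s_1+s_2=0$, and holomorphy elsewhere; this is consistent with Remark (i) after Theorem \ref{C}, which attributes the $s_1+s_2=1$ pole to $L(1+s_1+s_2,\pi_1'\times\widetilde\pi_2')$ when $\pi_1'\simeq\pi_2'$. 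Summing the finitely many such $\chi$ and adding the entire pieces gives the claimed meromorphic continuation of $J_{\Eis}^{\Reg}(f,\mathbf{s})$ with at worst simple poles on $s_1+s_2\in\{0,1\}$.

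The main obstacle I anticipate is \textbf{justifying the contour shifts uniformly in $\mathbf{s}$ and controlling the growth of $\mathcal{H}_{\phi}(\lambda,s_1,s_2)$ in the $\lambda$-aspect} for the semi-singular and singular data: one must move the contour $\Re(\lambda)=0$ off the imaginary axis across the poles of the numerator Rankin--Selberg $L$-factors while ensuring the shifted integrand still decays fast enough in $|\Im\lambda|$ for the integral to converge, which requires standard-but-delicate convexity bounds for Rankin--Selberg $L$-functions in the spectral parameter together with the smoothing from the test function $\mathcal I(\lambda,f)$, and one must check that the residual terms assemble into honest meromorphic functions of $\mathbf{s}$ rather than merely formal expressions. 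The regular part is comparatively routine once Proposition \ref{Kl} is in hand; the analysis of the six exceptional $\chi$'s is where the real work lies, and it closely parallels the meromorphic continuation carried out in \cite{Yan19}, which I would cite and adapt rather than reprove from scratch.
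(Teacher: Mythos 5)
Your treatment of the semi-singular and singular data is essentially the paper's own route (termwise contour shifts across the poles of the numerator Rankin--Selberg factors, with the residues assembling into the functions $\mathcal{G}_{\chi},\widetilde{\mathcal{G}}_{\chi}$ that carry the simple poles on $s_1+s_2\in\{0,1\}$ only when $\pi_1'\simeq\pi_2'$); the one ingredient you omit there is that the shifted contour $\partial\mathcal{C}^{+}$ must be chosen inside a standard zero-free region of the denominator $L$-functions, which is how the paper guarantees the residue terms and shifted integrals are honest holomorphic functions of $\mathbf{s}$, and no convexity bound in the $\lambda$-aspect is actually needed since boundedness of $\mathcal{H}_{\phi}(\lambda,s_1,s_2)$ in vertical strips plus the already-established absolute convergence suffices.

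The genuine gap is in your argument for the regular part $J_{\Eis,\reg}^{\Reg}(f,\mathbf{s})$. You claim that once the six exceptional data are removed, the Cauchy--Schwarz/Minkowski majorization from Proposition \ref{exp} together with the gauge bound of Proposition \ref{Kl} yields absolute convergence for \emph{all} $\mathbf{s}\in\mathbb{C}^2$, because ``the gauge estimate is insensitive to $\Re(s_i)$.'' This is not true: the gauge majorizing $J_{\Kuz}(\iota(x),\iota(x))$ contains the factor $|x_1\cdots x_n|^{-M}$, and the dominating integral $\int_{N'(\mathbb{A}_F)\backslash G'(\mathbb{A}_F)}J_{\Kuz}(\iota(x),\iota(x))^{1/2}|W'(x)||\det x|^{\Re(s)}dx$ converges only when $\Re(s)$ is large enough to beat this growth in the central (small-determinant) direction, where neither the cusp form nor its Whittaker function supplies any decay --- this is exactly why Corollary \ref{cor39} and Proposition \ref{exp} are stated only for $\Re(s_1),\Re(s_2)\gg0$. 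Discarding the finitely many $\chi\in\mathfrak{X}^{\semi}\cup\mathfrak{X}^{\sing}$ does not remove this obstruction, since the divergence at small $\Re(s_i)$ has nothing to do with poles of the exceptional $L$-factors; it is present for the regular data as well (each individual period $\Psi_{\lambda}(s,\cdot,\cdot)$ only converges for $\Re(s)$ large and must be continued via its $L$-function). The paper's Lemma \ref{lem41} closes this by a different mechanism: reduce to $f=h*h^*$ and $\phi_1'=\phi_2'$, $s_1=\overline{s_2}$ (Dixmier--Malliavin plus Cauchy--Schwarz), note each regular term $\mathcal{H}_{\phi}(\lambda,s_1,s_2)$ is entire and bounded in vertical strips and that $\sum_{\phi}|\mathcal{H}_{\phi}|$ is then a sum of squares $\sum_{\phi}|\Psi_{\lambda}(s,\mathcal{I}(\lambda,h)\phi,\phi_j')|^2$, use the functional equation $\mathbf{s}\mapsto-\mathbf{s}$ (valid because the dual of a regular datum is regular) together with the maximum principle to push every term to the boundary lines $\Re(s)=\pm c_0$, and there dominate by $\tilde{J}_{\Eis}^{\Reg}(f,c_0,c_0;\phi_j',\phi_j')$, which is the quantity Proposition \ref{Kl} actually controls. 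Without some substitute for this positivity--functional-equation--maximum-principle step, your proof of entirety (and even of convergence) of $J_{\Eis,\reg}^{\Reg}(f,\mathbf{s})$ outside the cone $\Re(s_1),\Re(s_2)\gg0$ does not go through.
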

\begin{proof}
Theorem \ref{thm40} follows from Lemmas \ref{lem41}, \ref{lem43} and \ref{lem44} below.
\end{proof}

\subsubsection{Regular Periods}
Let $\chi\in\mathfrak{X}^{\reg}.$ Then $\mathcal{H}_{\phi}(\lambda,s_1,s_2)$ continues to an entire function of $\lambda,$ $s_1$ and $s_2.$ Recall (cf. \eqref{182})
\begin{align*}
J_{\Eis,\reg}^{\Reg}(f,\mathbf{s}):=\sum_{\chi\in\mathfrak{X}^{\reg}}\sum_{Q}\frac{1}{n_Q}\left(\frac{1}{2\pi i}\right)^{\dim A_Q/A_G}\int_{i\mathfrak{a}_Q^*/i\mathfrak{a}_G^*}\sum_{\phi\in\mathfrak{B}_{Q,\chi}}\mathcal{H}_{\phi}(\lambda,s_1,s_2)d\lambda.
\end{align*}

\begin{lemma}\label{lem41}
Let notation be as before. Then $J_{\Eis,\reg}^{\Reg}(f,\mathbf{s})$ converges absolutely for all $\mathbf{s}\in\mathbb{C}^2.$ In particular, $J_{\Eis,\reg}^{\Reg}(f,\mathbf{s})$ is entire.
\end{lemma}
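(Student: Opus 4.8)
The plan is to reduce the absolute convergence of $J_{\Eis,\reg}^{\Reg}(f,\mathbf{s})$ to a statement that is essentially uniform in $\mathbf{s}$ after extracting the $\mathbf{s}$-dependence from the completed $L$-functions appearing in $\mathcal{H}_{\phi}(\lambda,s_1,s_2)$. Recall from Proposition \ref{exp} that for $\Re(s_1),\Re(s_2)\gg 0$ we already have the absolutely convergent expansion
\begin{equation*}
J_{\Eis}^{\Reg}(f,\textbf{s})=\sum_{\chi}\sum_{Q}\frac{1}{n_Q}\left(\frac{1}{2\pi i}\right)^{\dim A_Q/A_G}\int_{i\mathfrak{a}_Q^*/i\mathfrak{a}_G^*}\sum_{\phi\in\mathfrak{B}_{Q,\chi}}\mathcal{H}_{\phi}(\lambda,s_1,s_2)d\lambda,
\end{equation*}
and that the $\reg$ part is the subsum over $\chi\in\mathfrak{X}^{\reg}$. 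First I would fix a vertical strip $\Re(s_1),\Re(s_2)\in[-A,A]$ and use the explicit description
\begin{equation*}
\mathcal{H}_{\phi}(\lambda,s_1,s_2)\sim\frac{\prod_{j=1}^l\Lambda(s_1+1/2+\lambda_j/n_j,\pi_1'\times\sigma_j)\Lambda(s_2+1/2-\lambda_j/n_j,\widetilde{\pi}_2'\times\widetilde{\sigma}_j)}{\prod_{i\neq j}\Lambda(1+\lambda_i/n_i-\lambda_j/n_j,\sigma_i\times\widetilde{\sigma}_j)\Lambda(1-\lambda_i/n_i+\lambda_j/n_j,\widetilde{\sigma}_i\times\sigma_j)}
\end{equation*}
together with the fact that for $\chi\in\mathfrak{X}^{\reg}$ none of the numerator factors has a pole for $\mathbf{s}$ in a neighbourhood of the strip (since $\sigma_j\not\simeq\widetilde\pi_1'$, $\sigma_j\not\simeq\pi_2'$, or $l\geq 3$ making the relevant Rankin--Selberg $L$-functions entire). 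Thus $\mathcal{H}_{\phi}(\lambda,s_1,s_2)$ is holomorphic in $\mathbf{s}$ on this strip, for every $\lambda\in i\mathfrak{a}_Q^*/i\mathfrak{a}_G^*$.

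Next I would bound $|\mathcal{H}_{\phi}(\lambda,s_1,s_2)|$ uniformly for $\mathbf{s}$ in the strip in terms of the value at a single auxiliary point where convergence is already known. The mechanism is the one already used in the proof of Proposition \ref{exp}: by Minkowski's integral inequality and Cauchy--Schwarz, the full sum-integral of $|\mathcal{H}_\phi|$ is controlled by the product of the two ``diagonal'' quantities $\int_{N'(\mathbb{A}_F)\backslash G'(\mathbb{A}_F)}[W_\infty(\iota(x),\iota(x))]^{1/2}|W_j'(x)||\det x|^{\sigma_j}dx$ with $\sigma_j=\Re(s_j)$, $j=1,2$, and by Proposition \ref{Kl} these converge once $\Re(s_j)\gg 0$. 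The point is that these diagonal bounds are monotone/controllable in $\Re(s_j)$, so to run the argument on $\Re(s_j)\in[-A,A]$ I instead compare, factor by factor in the Euler product / functional equation, against the value at some large real $\Re(s_j)=B$: the ratio of completed $L$-functions $\Lambda(s_j+\tfrac12\pm\lambda_j/n_j,\cdot)/\Lambda(B+\tfrac12\pm\lambda_j/n_j,\cdot)$ is, on $\chi\in\mathfrak{X}^{\reg}$, holomorphic and of polynomial growth in $\lambda$ and in $\Im(s_j)$ (standard convexity/Stirling estimates for $\Gamma$-factors and the absolutely convergent Dirichlet series in the region of convergence, extended by the functional equation and Phragmén--Lindelöf), with the polynomial degree bounded in terms of $n$ only. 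Absorbing this polynomial factor into the rapidly decaying gauge from Proposition \ref{Kl} (which beats any fixed polynomial), I get absolute convergence of $J_{\Eis,\reg}^{\Reg}(f,\mathbf{s})$ throughout the strip, hence on all of $\mathbb{C}^2$ since $A$ is arbitrary; holomorphy then follows by Morera/Fubini from the locally uniform absolute convergence.

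The main obstacle I anticipate is making the uniformity genuinely uniform over the \emph{infinitely many} cuspidal data $\chi\in\mathfrak{X}^{\reg}$ and over the full vector $\lambda$ of continuous parameters simultaneously --- that is, producing a single dominating function. The resolution is exactly the structure already in place: the quantity $W_\infty(\iota(x),\iota(x))=\int_{[N]}\int_{[N]}\K_{\ER}(u\iota(x),v\iota(x))\theta(u)\overline\theta(v)dudv$ packages the sum over $\chi$ and the integral over $\lambda$ into one object that Proposition \ref{Kl} controls by a finite sum of gauges, uniformly; so the spectral sum never needs to be estimated term-by-term in $\chi$. The only genuinely new input here compared to Proposition \ref{exp} is the polynomial-in-$\lambda$, polynomial-in-$\Im(\mathbf{s})$ control of the ratio of completed $L$-functions on the regular locus, which is where the hypothesis $\chi\in\mathfrak{X}^{\reg}$ (guaranteeing no numerator poles) is used in an essential way, and which is handled by the classical analytic continuation and growth estimates for Rankin--Selberg $L$-functions of $\mathrm{GL}\times\mathrm{GL}$ cited from \cite{JPSS83} and \cite{Jac09}.
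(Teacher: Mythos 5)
Your overall strategy (compare $\mathcal{H}_{\phi}(\lambda,s_1,s_2)$ on a fixed strip with its values at a large real abscissa where Proposition \ref{exp} already gives absolute convergence) is in the right spirit, but the crucial absorption step has a genuine gap. You propose to bound the ratio of completed $L$-functions by a quantity of polynomial growth in $\lambda$ and $\Im(\mathbf{s})$, and then to ``absorb this polynomial factor into the rapidly decaying gauge from Proposition \ref{Kl}.'' The gauge of Proposition \ref{Kl}, however, decays rapidly in the \emph{geometric} torus variable $x\in T_{B'}(\mathbb{A}_F)$, not in the spectral parameter $\lambda$: it controls $W_{\infty}(\iota(x),\iota(x))$, i.e.\ the kernel diagonal \emph{after} the sum over $\chi$, $\phi$ and the $\lambda$-integral have been performed. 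Once you insert a $\lambda$-dependent (or $\Im(\mathbf{s})$-dependent) weight $P(\lambda)$ in front of $|\mathcal{H}_{\phi}(\lambda,B,\cdot)|$, the resulting sum over $\chi,\phi$ and integral over $\lambda$ can no longer be repackaged as a diagonal kernel value, and nothing in the paper supplies the decay in $\lambda$ needed to beat $P(\lambda)$ (that would require, e.g., rapid decay of the archimedean spectral transform of $f_{\infty}$, which is not developed here). A second, related problem is that the relation $\mathcal{H}_{\phi}\sim$ (ratio of completed $L$-functions) hides a holomorphic factor coming from ramified and archimedean local integrals that depends on $\phi$, $\chi$ and $\lambda$; your factor-by-factor Phragm\'en--Lindel\"of comparison of $\Lambda$-ratios does not bound this factor uniformly over the infinitely many $\chi$ and the full range of $\lambda$, which is exactly the uniformity you flag but do not actually resolve.

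The paper's proof avoids any such loss. It first polarizes: take $f=h*h^*$ and $s_1=\overline{s_2}=s$, so that each term becomes $\big|\Psi_{\lambda}(s,\mathcal{I}(\lambda,h)\phi,\phi_j')\big|^2\geq 0$. Each $\mathcal{H}_{\phi}(\lambda,\cdot,\cdot)$ is entire and \emph{bounded in vertical strips} (Jacquet), and on the regular locus the dual datum is again regular, so the functional equation gives $J^{\Reg}_{\Eis,\reg,j}(f,\mathbf{s})=J^{\Reg}_{\Eis,\reg,j}(f,-\mathbf{s})$. The maximum principle on the strip $|\Re(s)|\leq c_0$ then dominates $|\mathcal{H}_{\phi}(\lambda,s_1,s_2)|$ by $|\mathcal{H}_{\phi}(\lambda,s_{\phi},\overline{s_{\phi}})|$ with $\Re(s_{\phi})=\pm c_0$ \emph{at the same $\lambda$ and with no multiplicative loss}, so the whole sum-integral of absolute values is majorized by $\tilde{J}^{\Reg}_{\Eis}(f,c_0,c_0;\phi_j',\phi_j')$ of \eqref{177}, which is finite by the argument of Proposition \ref{exp}; the general $f$ is handled by Dixmier--Malliavin and Cauchy--Schwarz. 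To repair your argument you would either have to follow this positivity--plus--maximum-principle route, or supply genuine decay in $\lambda$ of the spectral data, which is not available from Proposition \ref{Kl}.
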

\begin{proof}
Let $\chi\in\mathfrak{X}^{\reg}.$ By definition $\mathcal{H}_{\phi}(\lambda,s_1,s_2)$ is entire, and moreover is bounded in the vertical strip (cf. \cite{Jac09}). By Proposition \ref{exp} there exists $c=c_{f,\phi_1',\phi_2'}>0$ be such that $J_{\Eis,\reg}^{\Reg}(f,\mathbf{s})$ converges absolutely when $\Re(s_1)\geq c$ and $\Re(s_2)\geq c.$

Suppose $f=h*h^*$ and $s_1=\overline{s_2}=s\in\mathbb{C}$ be arbitrary. Let $J_{\Eis,\reg,j}^{\Reg}(f,\mathbf{s})$ be the counterpart of $J_{\Eis,\reg}^{\Reg}(f,\mathbf{s})$ with $\phi_1'$ and $\phi_2'$ replaced with $\phi_j'$ and $\phi_j',$ $1\leq j\leq 2.$  Note that when $\chi=\{(M,\sigma)\}\in\mathfrak{X}^{\reg},$ the dual $\widetilde{\chi}=\{(M,\widetilde{\sigma})\}\in\mathfrak{X}^{\reg}.$ Then by functional equation we have $J_{\Eis,\reg,j}^{\Reg}(f,\mathbf{s})=J_{\Eis,\reg,j}^{\Reg}(f,-\mathbf{s}),$ $1\leq j\leq 2.$  

Let $c_0=|s|+c.$ Then by maximal principle there exists $s_{\phi}$ with $\Re(s_{\phi})=c_0$ or $\Re(s_{\phi})=-c_0$ such that 
\begin{equation}\label{178}
|\mathcal{H}_{\phi}(\lambda,s_1,s_2)|\leq |\mathcal{H}_{\phi}(\lambda,s_{\phi},\overline{s_{\phi}})|
\end{equation}
for all $\phi\in\mathfrak{B}_{Q,\chi}$ and all $\chi\in\mathfrak{X}^{\reg}.$ Observe that 
\begin{align*}
\sum_{\phi\in\mathfrak{B}_{Q,\chi}}\big|\mathcal{H}_{\phi}(\lambda,s_1,s_2)\big|=\sum_{\phi\in\mathfrak{B}_{Q,\chi}}\big|\Psi_{\lambda}(s,\mathcal{I}(\lambda,h)\phi,\phi_j')\big|^2.
\end{align*}
In conjunction with \eqref{178} and the functional equation we conclude that 
\begin{equation}\label{179}
\sum_{\chi\in\mathfrak{X}^{\reg}}\sum_{Q}\frac{1}{n_Q}\left(\frac{1}{2\pi i}\right)^{\dim A_Q/A_G}\int_{i\mathfrak{a}_Q^*/i\mathfrak{a}_G^*}\sum_{\phi\in\mathfrak{B}_{Q,\chi}}\big|\mathcal{H}_{\phi}(\lambda,s_1,s_2)\big|d\lambda
\end{equation}
is majorized by $\tilde{J}_{\Eis}^{\Reg}(f,c_0,c_0;\phi_j',\phi_j')$ defined in \eqref{177}. So \eqref{179} converges. This proves Lemma \ref{lem41} in the situation that $f=h*h^*$ and $s_1=\overline{s_2}.$ The general case follows from the Dixmier-Malliavin's decomposition and Cauchy inequality.
\end{proof}

\subsubsection{Semi-singular Periods}\label{9.4}
The contribution from $\chi\in\mathfrak{X}^{\semi}$ to $J_{\Eis}^{\Reg}(f,\mathbf{s})$ is 
\begin{align*}
J_{\Eis,\semi}^{\Reg}(f,\mathbf{s}):=\frac{1}{2}\sum_{\chi\in\mathfrak{X}^{\semi}}\sum_{Q}\frac{1}{2\pi i}\int_{i\mathfrak{a}_Q^*/i\mathfrak{a}_G^*}\sum_{\phi\in\mathfrak{B}_{Q,\chi}}\mathcal{H}_{\phi}(\lambda,s_1,s_2)d\lambda.
\end{align*}

Let $\mathcal{D}=\mathcal{D}(\pi_1',\pi_2')$ be a standard (open) zero-free region of the $L$-function  $\Lambda(s,\pi_1'\otimes\omega\omega'^{-1})\Lambda(s,\widetilde{\pi}_1'\otimes\omega\omega'^{-1})\Lambda(s,\pi_2'\otimes\omega\omega'^{-1})\Lambda(s,\widetilde{\pi}_2'\otimes\omega\omega'^{-1})$ (cf. e.g., \cite{Bru06}). The boundary $\partial\mathcal{D}$ lies strictly on the left of $\Re(s)=1.$ Let $\mathcal{C}=\big\{c\in\mathbb{C}:\ 1-(1+n^{-1})c\in\mathcal{D},\ 1+(1+n^{-1})c\in\mathcal{D}\big\}.$ Then $\mathcal{C}$ is a ``neighborhood'' of the line $\Re(s)=0.$ The boundary $\partial\mathcal{C}$ has two branches. Let $\partial\mathcal{C}^{+}$ be the branch in $\Re(s)>0.$

\begin{lemma}\label{lem43}
Let notation be as before. Then $J_{\Eis,\semi}^{\Reg}(f,\mathbf{s})$ admits a holomorphic continuation to $\mathbb{C}^2.$ 
\end{lemma}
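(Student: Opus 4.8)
The plan is to reduce to a single semi-singular datum and then obtain the continuation by a contour shift, the point being that the residues one picks up are \emph{holomorphic} precisely because $\pi_1'\not\simeq\pi_2'$. By linearity it suffices to treat one $\chi\in\mathfrak{X}^{\semi}$; I would take the one attached to the parabolic $Q$ of type $(n,1)$ with cuspidal datum $\sigma=\widetilde{\pi}_1'\boxtimes\eta$, where $\eta$ is the Hecke character pinned down by the central character constraint, and handle the remaining three (the type $(1,n)$ analogue, and the two data with $\sigma_j\simeq\pi_2'$) identically — or deduce them via the functional equation $\mathbf{s}\mapsto-\mathbf{s}$ together with the involution $\chi\mapsto\widetilde{\chi}$, which preserves $\mathfrak{X}^{\semi}$. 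Parametrize $i\mathfrak{a}_Q^*/i\mathfrak{a}_G^*\simeq i\mathbb{R}$ by a variable $z$, so that $\lambda_1/n_1$ and $\lambda_2/n_2$ become affine functions of $z$. Substituting $\sigma_1=\widetilde{\pi}_1'$, $\sigma_2=\eta$ into the formula for $\mathcal{H}_\phi(\lambda,s_1,s_2)$ from \textsection\ref{9.3}, every Rankin--Selberg factor in numerator and denominator is entire in $(z,s_1,s_2)$ \emph{except} the single numerator factor $\Lambda(s_1+\tfrac12+\lambda_1/n_1,\pi_1'\times\widetilde{\pi}_1')$, whose simple poles at argument $\in\{0,1\}$ occur at two points $z=\zeta_0(s_1)$, $z=\zeta_1(s_1)$ depending affinely on $s_1$; here the cross term $\Lambda(s_2+\tfrac12-\lambda_1/n_1,\widetilde{\pi}_2'\times\pi_1')$ is entire precisely because $\pi_1'\not\simeq\pi_2'$.

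For $\Re(s_1),\Re(s_2)\gg 0$ the two poles $\zeta_0(s_1),\zeta_1(s_1)$ lie strictly to the left of $\Re(z)=0$, and Proposition \ref{exp} presents $J_{\Eis,\semi}^{\Reg}(f,\mathbf{s})$ as the sum over $Q,\phi$ of $\frac1{2\pi i}\int_{i\mathbb{R}}\mathcal{H}_\phi(z,s_1,s_2)\,dz$. I would then move each contour from $i\mathbb{R}$ to the fixed contour $\partial\mathcal{C}^+$, which by construction of $\mathcal{C}$ from the zero-free region $\mathcal{D}$ avoids all zeros of the denominator $L$-factors for $s_1$ in a neighborhood of $\Re(s_1)=0$, and let $s_1$ move to the left. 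Each time $\Re(s_1)$ decreases past the value at which $\zeta_1(s_1)$ (resp. $\zeta_0(s_1)$) crosses the contour, Cauchy's theorem produces the term $\pm\underset{z=\zeta_1(s_1)}{\Res}\mathcal{H}_\phi$ (resp. at $\zeta_0(s_1)$). Evaluating at $z=\zeta_1(s_1)$: the residue of $\Lambda(\cdot,\pi_1'\times\widetilde{\pi}_1')$ is a nonzero constant, the cross term becomes $\Lambda(s_1+s_2,\pi_1'\times\widetilde{\pi}_2')$, and all remaining factors stay entire; summing over $\phi$, these residues assemble into a Rankin--Selberg period of a residual Eisenstein series against $\phi_1',\phi_2'$, which represents $\Lambda(s_1+s_2,\pi_1'\times\widetilde{\pi}_2')$ up to an entire factor. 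Since $\pi_1'\not\simeq\pi_2'$ this $L$-function is \emph{entire} — in contrast with the singular case, where the analogous factor is $\Lambda(s_1+s_2,\pi_1'\times\widetilde{\pi}_1')$ and carries the poles on $s_1+s_2\in\{0,1\}$ — so the residue contributes a holomorphic function, while the shifted integral $\int_{\partial\mathcal{C}^+}\mathcal{H}_\phi$ is holomorphic in the same neighborhood by analyticity of the integrand off its poles. Iterating this deformation (picking up only the finitely many poles of the $\mathrm{GL}(n)\times\mathrm{GL}(1)$ factors as one moves further out) and invoking the $\mathbf{s}\mapsto-\mathbf{s}$ functional equation to cover the complementary half, one extends $J_{\Eis,\semi}^{\Reg}(f,\mathbf{s})$ holomorphically to all of $\mathbb{C}^2$.

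The main obstacle is not the residue bookkeeping but justifying the contour shift \emph{uniformly in the spectral data}, i.e.\ interchanging it with the sums over $Q$, $\chi\in\mathfrak{X}^{\semi}$ and $\phi\in\mathfrak{B}_{Q,\chi}$ and proving absolute convergence on the shifted contour. I would do this exactly as in Lemma \ref{lem41}: reduce to $f=h*h^*$ and $s_1=\overline{s_2}$ by Dixmier--Malliavin and Cauchy--Schwarz, bound $\sum_\phi|\mathcal{H}_\phi(z,s_1,s_2)|$ on the shifted contour by $\sum_\phi|\Psi_z(s,\mathcal{I}(z,h)\phi,\phi_j')|^2$ after absorbing the $\mathbf{s}$-dependence and the motion of the poles into convexity bounds for the Rankin--Selberg $L$-functions in vertical strips, and then dominate by the Minkowski/Cauchy--Schwarz majorant $\tilde{J}_{\Eis}^{\Reg}$ of Proposition \ref{exp}, which is finite by Proposition \ref{Kl}. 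The one genuinely new ingredient is that $\partial\mathcal{C}^+$ stays inside $\mathcal{C}$, so that the denominator $L$-values remain bounded away from $0$ along the shift — which is exactly what the choice of $\mathcal{C}$ was designed to guarantee.
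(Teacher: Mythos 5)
Your proposal follows essentially the same route as the paper's proof: for each of the four semi-singular data one shifts the $\lambda$-contour (to $\partial\mathcal{C}^{+}$, chosen via the zero-free region so the denominator $L$-values stay nonzero) past the unique pole coming from the $\pi_1'\times\widetilde{\pi}_1'$ factor, and the resulting residue is holomorphic because it is proportional to $\Lambda(s_1+s_2,\pi_1'\times\widetilde{\pi}_2')$, which is entire precisely since $\pi_1'\not\simeq\pi_2'$. Two of your detours are unnecessary (and slightly off): the $\mathrm{GL}(n)\times\mathrm{GL}(1)$ factors are entire, so no further poles are ever encountered, and no $\mathbf{s}\mapsto-\mathbf{s}$ functional equation is needed --- once the single residue is extracted, the representation ``integral over $i\mathbb{R}$ minus the residue term'' is already holomorphic for all $\Re(s_1)<1/2$ and all $s_2\in\mathbb{C}$, which together with the original region of convergence covers $\mathbb{C}^2$, exactly as in the paper.
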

\begin{proof}
By \cite{JPSS83} and \cite{Jac09} we have
\begin{align*}
\mathcal{H}_{\phi}(\lambda,s_1,s_2)\sim&\frac{L(s_1+1/2+\lambda,\pi_1'\times\sigma_1)L(s_1+1/2-\lambda/n,\pi_1'\times\sigma_2)}{L(1+(1+n^{-1})\lambda,\sigma_1\times\widetilde{\sigma}_2)}\\
	&\quad \times \frac{L(s_2+1/2-\lambda,\pi_2'\times\widetilde{\sigma}_1)L(s_2+1/2+\lambda/n,\widetilde{\pi}_2'\times\widetilde{\sigma}_2)}{L(1-(1+n^{-1})\lambda,\sigma_2\times\widetilde{\sigma}_1)}.
\end{align*}

Note that $\#\mathfrak{X}^{\semi}=4.$ So we can investigate one by one. Without loss of generality we may assume $\chi=\{(M_Q,\sigma)\},$ where $Q$ is of type $(1,n)$ and $\sigma=\sigma_1\otimes\sigma_2$ with $\sigma_2\simeq \widetilde{\pi}_1'$ and $\sigma_1=\omega\omega'^{-1}.$ Note that $\pi_1'\not\simeq \pi_2'.$ Then 
$$
\frac{L(s_2+1/2-\lambda,\pi_2'\times\widetilde{\sigma}_1)L(s_2+1/2+\lambda/n,\widetilde{\pi}_2'\times\widetilde{\sigma}_2)}{L(1-(1+n^{-1})\lambda,\sigma_2\times\widetilde{\sigma}_1)}
$$
is holomorphic for all $\lambda\in\mathcal{C}.$ Denote by 
\begin{equation}\label{180}
J_{\chi}^{\Reg}(f,\mathbf{s}):=\frac{1}{2\pi i}\int_{i\mathfrak{a}_Q^*/i\mathfrak{a}_G^*}\sum_{\phi\in\mathfrak{B}_{Q,\chi}}\mathcal{H}_{\phi}(\lambda,s_1,s_2)d\lambda,
\end{equation}
which is holomorphic when $s_1>1/2.$ Let $s_1>1/2$ and $s_1\in 1/2+\mathcal{C}.$ Then 
\begin{align*}
J_{\chi}^{\Reg}(f,\mathbf{s})=\frac{1}{2\pi i}\int_{\partial\mathcal{C}^+}\sum_{\phi\in\mathfrak{B}_{Q,\chi}}\mathcal{H}_{\phi}(\lambda,s_1,s_2)d\lambda-\sum_{\phi\in\mathfrak{B}_{Q,\chi}}\underset{\lambda=n(s_1-1/2)}{\Res}\mathcal{H}_{\phi}(\lambda,s_1,s_2),
\end{align*}
which is a holomorphic function for $s_1\in 1/2+\mathcal{C}$ and $s_2\in\mathbb{C}.$ When $\Re(s_1)\leq 1/2,$
\begin{align*}
J_{\chi}^{\Reg}(f,\mathbf{s})=\frac{1}{2\pi i}\int_{i\mathfrak{a}_P^*/i\mathfrak{a}_G^*}\sum_{\phi\in\mathfrak{B}_{Q,\chi}}\mathcal{H}_{\phi}(\lambda,s_1,s_2)d\lambda-\sum_{\phi\in\mathfrak{B}_{Q,\chi}}\underset{\lambda=n(s_1-1/2)}{\Res}\mathcal{H}_{\phi}(\lambda,s_1,s_2),
\end{align*}
which is holomorphic therein. Therefore, we obtain an analytic continuation of $J_{\chi}^{\Reg}(f,\mathbf{s})$ to $(s_1,s_2)\in\mathbb{C}^2.$ 

Similar arguments applies to remaining $3$ cases. In all, we obtain an analytic continuation of $J_{\Eis,\semi}^{\Reg}(f,\mathbf{s})$ to $(s_1,s_2)\in\mathbb{C}^2.$ 
\end{proof}

\subsubsection{Singular Periods}\label{9.5}
The contribution of $\chi\in\mathfrak{X}^{\sing}$ to $J_{\Eis}^{\Reg}(f,\mathbf{s})$ (cf. \eqref{182}) is 
\begin{equation}\label{182'}
J_{\Eis,\sing}^{\Reg}(f,\mathbf{s}):=\frac{1}{2}\sum_{\chi\in\mathfrak{X}^{\sing}}\sum_{Q}\frac{1}{2\pi i}\int_{i\mathfrak{a}_Q^*/i\mathfrak{a}_G^*}\sum_{\phi\in\mathfrak{B}_{Q,\chi}}\mathcal{H}_{\phi}(\lambda,s_1,s_2)d\lambda.
\end{equation}

Define the auxiliary functions 
\begin{equation}\label{183}
\mathcal{G}_{\chi}(s_1,s_2,\phi_1',\phi_2'):=\sum_{\phi\in\mathfrak{B}_{Q,\chi}}\underset{\lambda=n(s_1-1/2)}{\Res}\Psi_{\lambda}(s_1,\mathcal{I}(\lambda,f)\phi,\phi_1')\overline{\Psi_{\lambda}(\overline{s_2},\phi,\phi_2')}
\end{equation}
and similarly
\begin{equation}\label{198}
\widetilde{\mathcal{G}}_{\chi}(s_1,s_2,\phi_1',\phi_2'):=\sum_{\phi\in\mathfrak{B}_{Q,\chi}}\underset{\lambda=n(1/2-s_2)}{\Res}\Psi_{\lambda}(s_1,\mathcal{I}(\lambda,f)\phi,\phi_1')\overline{\Psi_{\lambda}(\overline{s_2},\phi,\phi_2')}.
\end{equation}

\begin{lemma}\label{lem44}
Let notation be as before. Then $J_{\Eis,\sing}^{\Reg}(f,\mathbf{s})$ admits a meromorphic continuation to $\mathbb{C}^2,$ with possible (simple) poles on $s_1+s_2=1$ and $s_1+s_2=0.$
\end{lemma}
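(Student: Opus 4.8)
The plan is to reduce the singular contribution $J_{\Eis,\sing}^{\Reg}(f,\mathbf{s})$ to the two associated residue functions $\mathcal{G}_{\chi}$ and $\widetilde{\mathcal{G}}_{\chi}$ defined in \eqref{183} and \eqref{198}, together with a main integral that is already holomorphic, exactly as in the semi-singular case but now keeping track of the genuine poles. First I would fix $\chi=\{(M_Q,\sigma)\}\in\mathfrak{X}^{\sing}$ with $Q$ of type $(1,n)$ (the type $(n,1)$ case being symmetric via the functional equation $\chi\mapsto\widetilde{\chi}$) and $\sigma=\sigma_1\otimes\sigma_2$ with $\sigma_2\simeq\widetilde{\pi}_1'\simeq\widetilde{\pi}_2'$; here the crucial difference from \textsection\ref{9.4} is that $\pi_1'\simeq\pi_2',$ so the factor
$$
\frac{L(s_2+1/2-\lambda,\pi_2'\times\widetilde{\sigma}_1)L(s_2+1/2+\lambda/n,\widetilde{\pi}_2'\times\widetilde{\sigma}_2)}{L(1-(1+n^{-1})\lambda,\sigma_2\times\widetilde{\sigma}_1)}
$$
now has a pole at $\lambda=n(1/2-s_2)$ coming from $L(s_2+1/2+\lambda/n,\widetilde{\pi}_2'\times\widetilde{\sigma}_2)=\zeta$-type factor, in addition to the pole of the first factor at $\lambda=n(s_1-1/2).$ So when I shift the contour from $i\mathfrak{a}_Q^*/i\mathfrak{a}_G^*$ across the zero-free strip $\mathcal{C}=\mathcal{C}(\pi_1',\pi_2')$ introduced in \textsection\ref{9.4}, I pick up \emph{two} residues rather than one.

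The key steps, in order. (1) For $\Re(s_1)>1/2$ large and $\Re(s_2)$ large, write $J_{\chi}^{\Reg}(f,\mathbf{s})=\frac{1}{2\pi i}\int_{i\mathfrak{a}_Q^*/i\mathfrak{a}_G^*}\sum_{\phi}\mathcal{H}_{\phi}(\lambda,s_1,s_2)\,d\lambda$ and move the contour to $\partial\mathcal{C}^+$, crossing the simple pole at $\lambda=n(s_1-1/2);$ this produces $\frac{1}{2\pi i}\int_{\partial\mathcal{C}^+}\cdots - \mathcal{G}_{\chi}(s_1,s_2,\phi_1',\phi_2').$ (2) Observe that the boundary integral over $\partial\mathcal{C}^+$ converges absolutely and locally uniformly for $s_1$ in a neighborhood of $\Re(s_1)=1/2$ and all $s_2$, by the same Minkowski/Cauchy--Schwarz majorization as in Lemma \ref{lem41} combined with the fact that on $\partial\mathcal{C}^+$ the denominators $L(1\pm(1+n^{-1})\lambda,\cdots)$ are bounded away from $0$; so it is holomorphic there. (3) Analyze $\mathcal{G}_{\chi}(s_1,s_2,\phi_1',\phi_2')$: substituting $\lambda=n(s_1-1/2)$ into the explicit shape of $\mathcal{H}_{\phi}$, the residue is (up to an entire factor) a ratio of complete $L$-functions whose only possible pole, as a function of $\mathbf{s}$, is where the surviving numerator factor $\Lambda(s_2+1/2+\lambda/n,\widetilde{\pi}_2'\times\widetilde{\sigma}_2)=\Lambda(s_1+s_2,\cdot)$ (a shifted Dedekind zeta, since $\widetilde{\pi}_2'\simeq\sigma_2$) has its pole, i.e.\ on $s_1+s_2=1$; similarly the competing denominator can force a pole on $s_1+s_2=0.$ (4) Repeat with the contour moved the other way, or equivalently apply the functional equation, to get the $\widetilde{\mathcal{G}}_{\chi}$ description valid near $\Re(s_2)=1/2$; patch the two descriptions on the overlap to obtain a single meromorphic continuation of $J_{\chi}^{\Reg}(f,\mathbf{s})$ to all of $\mathbb{C}^2$ with at worst simple poles on $s_1+s_2\in\{0,1\}.$ (5) Sum over the finitely many $\chi\in\mathfrak{X}^{\sing}$ (there are exactly two) to conclude.

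I expect the main obstacle to be Step (3): extracting the residue $\mathcal{G}_{\chi}$ and proving that, as a function of $(s_1,s_2),$ it is genuinely meromorphic with poles \emph{only} on $s_1+s_2\in\{0,1\}$ and that these poles are simple. This requires going back to the Rankin--Selberg integral $\Psi_{\lambda}(s_1,\mathcal{I}(\lambda,f)\phi,\phi_1')$, taking its residue at $\lambda=n(s_1-1/2)$ term by term over $\phi\in\mathfrak{B}_{Q,\chi}$ (a finite sum since $f$ is $K$-finite), recognizing the residue of the degenerate Eisenstein series as essentially a Rankin--Selberg period for $\pi_1'\times\widetilde{\sigma}_1$ times a Godement--Jacquet-type factor, and invoking the precise analytic properties from \cite{JPSS83}, \cite{Jac09}. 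One must also check that the contour-shift in Step (1) is legitimate, i.e.\ that $\sum_{\phi}|\mathcal{H}_{\phi}(\lambda,s_1,s_2)|$ decays fast enough as $|\Im\lambda|\to\infty$ uniformly on the relevant strip to justify moving the line of integration; this again follows from boundedness in vertical strips of the $L$-ratios together with the majorization in Proposition \ref{exp}, but it must be spelled out. Once Step (3) is in hand, Steps (4) and (5) are routine bookkeeping with the functional equation.
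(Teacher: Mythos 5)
Your proposal follows essentially the same route as the paper: shift the $\lambda$-integral from $i\mathfrak{a}_Q^*/i\mathfrak{a}_G^*$ to $\partial\mathcal{C}^+$, extract the residue $\mathcal{G}_{\chi}$ at $\lambda=n(s_1-1/2)$, continue in $s_2$ by a second shift producing $\widetilde{\mathcal{G}}_{\chi}$ at $\lambda=n(1/2-s_2)$, read off the polar behaviour from the \cite{JPSS83}, \cite{Jac09} $L$-function ratios, and sum over the two singular data; this is exactly the proof of Lemma \ref{lem44}. Two details in your write-up need correcting, though both are repaired by the computation you already plan in Step (3). First, the residue at $\lambda=n(s_1-1/2)$ is crossed only when $s_1\in 1/2+\mathcal{C}$ with $\Re(s_1)>1/2$ (not for $\Re(s_1)$ large, where that pole lies far to the right of $\partial\mathcal{C}^+$), and the $\partial\mathcal{C}^+$ integral is holomorphic only for $s_2\in\mathcal{D}^+$, not for all $s_2$ — the pole at $\lambda=n(1/2-s_2)$ meets the contour as $\Re(s_2)$ drops below $1/2$, which is precisely why your Step (4) is needed; your Step (2) as stated is inconsistent with this. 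Second, the pole of $\mathcal{G}_{\chi}$ on $s_1+s_2=0$ does \emph{not} come from ``the competing denominator'': after substituting $\lambda=n(s_1-1/2)$ one has
$$\mathcal{G}_{\chi}(s_1,s_2,\phi_1',\phi_2')\sim \frac{L(s_2+\tfrac{n+1}{2}-ns_1,\widetilde{\pi}'\times\widetilde{\sigma}_1)\,L(s_1+s_2,\pi'\times\widetilde{\pi}')}{L(1+\tfrac{n+1}{2}-(n+1)s_1,\widetilde{\pi}'\times\widetilde{\sigma}_1)},$$
and both poles, at $s_1+s_2=1$ and $s_1+s_2=0$, arise from the completed Rankin--Selberg factor $\Lambda(s_1+s_2,\pi'\times\widetilde{\pi}')$ (equivalently from the two degenerate terms of the underlying Eisenstein series), while the denominator must be shown \emph{non-vanishing} in the range of evaluation — which is exactly what the zero-free region defining $\mathcal{C}$ guarantees when $s_1\in 1/2+\mathcal{C}$ or $\Re(s_1)\le 1/2$. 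Without that non-vanishing input your argument would only confine the poles to $s_1+s_2\in\{0,1\}$ union the zero locus of the denominator, so make the zero-free-region restriction explicit for $\mathcal{G}_{\chi}$ and $\widetilde{\mathcal{G}}_{\chi}$, not just for the boundary integral.
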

\begin{proof}
Since $\#\mathfrak{X}^{\sing}=2,$ suffice it to consider each summand of $J_{\Eis,\sing}^{\Reg}(f,\mathbf{s}).$ We may assume $\chi=\{(M_Q,\sigma)\},$ where $Q$ is of type $(1,n)$ and $\sigma=\sigma_1\otimes\sigma_2$ with $\sigma_2\simeq \widetilde{\pi}_1'\simeq \widetilde{\pi}_2',$ $\sigma=\omega\omega'^{-1}.$ Then by \cite{JPSS83} and \cite{Jac09} we have
\begin{align*}
\mathcal{H}_{\phi}(\lambda,s_1,s_2)\sim&\frac{L(s_1+1/2+\lambda,\pi'\times\sigma_1)L(s_1+1/2-\lambda/n,\pi'\times\widetilde{\pi}')}{L(1+(1+n^{-1})\lambda,\pi'\times\sigma_1)}\\
	&\quad \times \frac{L(s_2+1/2-\lambda,\widetilde{\pi}'\times\widetilde{\sigma}_1)L(s_2+1/2+\lambda/n,\pi'\times\widetilde{\pi}')}{L(1-(1+n^{-1})\lambda,\widetilde{\pi}'\times\widetilde{\sigma}_1)}.
\end{align*}

Let $J_{\chi}^{\Reg}(f,\mathbf{s})$ be defined in \eqref{180}. Then $J_{\chi}^{\Reg}(f,\mathbf{s})$ is converges absolutely when $\Re(s_1)>1/2$ and $\Re(s_2)>1/2.$ Let $s_1>1/2,$ $s_1\in 1/2+\mathcal{C},$ and $\Re(s_2)>1/2.$ Then by Cauchy integral formula $J_{\Eis,\sing}^{\Reg}(f,\mathbf{s})$ is equal to 
\begin{equation}\label{197}
\frac{1}{2}\sum_{\chi\in\mathfrak{X}^{\sin}}\Bigg[\sum_{Q}\frac{1}{2\pi i}\int_{\partial\mathcal{C}^+}\sum_{\phi\in\mathfrak{B}_{Q,\chi}}\mathcal{H}_{\phi}(\lambda,s_1,s_2)d\lambda-\mathcal{G}_{\chi}(s_1,s_2,\phi_1',\phi_2')\Bigg],
\end{equation}
where $\mathcal{G}_{\chi}(s_1,s_2,\phi_1',\phi_2')$ is defined by \eqref{183}. Note that 
\begin{align*}
\mathcal{G}_{\chi}(s_1,s_2,\phi_1',\phi_2')\sim \frac{L(s_2+\frac{n+1}{2}-ns_1,\widetilde{\pi}'\times\widetilde{\sigma}_1)L(s_1+s_2,\pi'\times\widetilde{\pi}')}{L(1+\frac{n+1}{2}-(n+1)s_1,\widetilde{\pi}'\times\widetilde{\sigma}_1)}.
\end{align*}
Then $\mathcal{G}_{\chi}(s_1,s_2,\phi_1',\phi_2')$ continues to a meromorphic function in the region where  $s_1\in 1/2+\mathcal{C}$ or $\Re(s_1)\leq 1/2,$ and $s_2\in\mathbb{C},$ with at most simple poles on $s_1+s_2=0$ and $s_1+s_2=1.$ We then need to continue the function 
\begin{equation}\label{181}
\frac{1}{2\pi i}\int_{\partial\mathcal{C}^+}\sum_{\phi\in\mathfrak{B}_{Q,\chi}}\mathcal{H}_{\phi}(\lambda,s_1,s_2)d\lambda,\ \ s_1>1/2,\ s_1\in 1/2+\mathcal{C},\ \Re(s_2)>1/2.
\end{equation}
By the analytic behavior of $\mathcal{H}_{\phi}(\lambda,s_1,s_2),$ \eqref{181} extends to a holomorphic function in the region $s_1\in\mathcal{D}^-=\big\{s\in\mathbb{C}:\ \Re(s)< 1/2\ \text{or}\ s\in 1/2+\mathcal{C}\big\},$ and $s_2\in\mathcal{D}^+=\big\{s\in\mathbb{C}:\ \Re(s)> 1/2\ \text{or}\ s\in 1/2-\mathcal{C}\big\}.$ Let $s_2\in\mathcal{D}^+$ with $\Re(s_2)\leq 1/2.$ Then 
\begin{align*}
\frac{1}{2\pi i}\int_{\partial\mathcal{C}^+}\sum_{\phi\in\mathfrak{B}_{Q,\chi}}\mathcal{H}_{\phi}(\lambda,s_1,s_2)d\lambda=\frac{1}{2\pi i}\int_{i\mathfrak{a}_P^*/i\mathfrak{a}_G^*}\sum_{\phi\in\mathfrak{B}_{Q,\chi}}\mathcal{H}_{\phi}(\lambda,s_1,s_2)d\lambda+\widetilde{\mathcal{G}}_{\chi},
\end{align*}
where $\widetilde{\mathcal{G}}_{\chi}=\widetilde{\mathcal{G}}_{\chi}(s_1,s_2,\phi_1',\phi_2')$ is defined by \eqref{198}.

Note that $\frac{1}{2\pi i}\int_{i\mathfrak{a}_P^*/i\mathfrak{a}_G^*}\sum_{\phi\in\mathfrak{B}_{Q,\chi}}\mathcal{H}_{\phi}(\lambda,s_1,s_2)d\lambda$ is holomorphic when $s_1\in\mathcal{D}^{-}$ and $\Re(s_2)<1/2.$ Moreover, we have 
\begin{align*}
\widetilde{\mathcal{G}}_{\chi}(s_1,s_2,\phi_1',\phi_2')\sim \frac{L(s_1+\frac{n+1}{2}-ns_2,{\pi}'\times{\sigma}_1)L(s_1+s_2,\pi'\times\widetilde{\pi}')}{L(1+\frac{n+1}{2}-(n+1)s_2,{\pi}'\times{\sigma}_1)},
\end{align*}
implying that $\widetilde{\mathcal{G}}_{\chi}(s_1,s_2,\phi_1',\phi_2')$ continues to a meromorphic function in the region where $s_2\in\mathcal{D}^-$ and $s_1\in\mathbb{C},$ with at most simple poles on $s_1+s_2=0$ and $s_1+s_2=1.$

Therefore, we obtain a meromorphic continuation of $J_{\Eis,\sing}^{\Reg}(f,\mathbf{s})$ to $\mathbb{C}^2,$ with at most simple poles on $s_1+s_2=0$ and $s_1+s_2=1.$
 \end{proof}

\section{Singularity Matching}\label{sec10.}
Let $n\geq 1.$ Let $\mathbf{s}=(s_1,s_2)\in\mathcal{R}.$ In this section we let $Q$ be the parabolic subgroup of $G$ of type $(1,n),$ with Levi subgroup $M_Q.$ Suppose $\pi_1'\simeq \pi_2'\simeq\pi'.$ Let $\phi_j'\in\pi_j',$ $1\leq j\leq2.$ Let $\chi=\{(M_Q,\sigma)\}$ be a cuspidal datum such that $\sigma=\sigma_1\otimes\sigma_2$ with $\sigma_2\simeq\widetilde{\pi}'$ and $\sigma_1=\omega\omega'^{-1}.$ Denote by 
\begin{align*}
\mathcal{J}_{\chi}(f,\mathbf{s},\phi_1',\phi_2')=\sum_{\phi\in\mathfrak{B}_{Q,\chi}}\frac{1}{2\pi i}\int_{i\mathfrak{a}_{Q}/i\mathfrak{a}_G}\mathcal{H}_{\phi}(\lambda,s_1,s_2)d\lambda,
\end{align*}
where $\mathcal{H}_{\phi}(\lambda,s_1,s_2):=\Psi_{\lambda}(s_1,\mathcal{I}(\lambda,f)\phi,\phi_1')\overline{\Psi_{\lambda}(\overline{s_2},\phi,\phi_2')}.$ Note that $\mathcal{J}_{\chi}(f,\mathbf{s},\phi_1',\phi_2')$ is precisely the integral defined in \eqref{180}. We use the notation $\mathcal{J}_{\chi}(f,\mathbf{s},\phi_1',\phi_2')$ here to highlight the dependence of $\phi_1'$ and $\phi_2'.$

We have meromorphic continuation of $\mathcal{J}_{\chi}(f,\mathbf{s},\phi_1',\phi_2')$ to $\mathbb{C}^2$ with possible simple poles on $s_1+s_2=0$ and $s_1+s_2=1$ (cf. \textsection \ref{9.5}). In this section, we compute the \textit{meromorphic part} of $\mathcal{J}_{\chi}(f,\mathbf{s},\phi_1',\phi_2')$ explicitly in terms of $f,$ $\phi_1'$ and $\phi_2'.$ The main result is that the singularities from the spectral side (i.e., $\mathcal{G}_{\chi}(s_1,s_2,\phi_1',\phi_2')$ (cf. \eqref{183}) and $\widetilde{\mathcal{G}}_{\chi}(s_1,s_2,\phi_1',\phi_2')$ (cf. \eqref{198}) ) and the geometric side cancel with each other. In fact, we will show $\mathcal{G}_{\chi}(s_1,s_2,\phi_1',\phi_2')$ (resp. $\widetilde{\mathcal{G}}_{\chi}(s_1,s_2,\phi_1',\phi_2')$) is closely related to  $\mathcal{F}_{0,1}J^{\bi}_{\Geo}(\transp{f},\textbf{s})$ (resp. $\mathcal{F}_{1,0}J^{\bi}_{\Geo}(\transp{f},\textbf{s})$) under the functional equation, where $\transp{f}$ is defined by $\transp{f}(g)=f(\transp{g}).$
 
Define the Eisenstein series $E_P^{\dagger}(x,s;\widehat{f}_P,y)$ to be 
\begin{equation}\label{276}
\sum_{\delta \in P'(F)\backslash G'(F)}\int_{Z'(\mathbb{A}_F)}\int_{N_P(\mathbb{A}_F)}\int_{\mathbb{A}_F^n}f\left(u(\mathbf{x})n\iota(y)\right)\theta(\eta \delta zx\mathfrak{u})d\mathbf{x}dn|\det zx|^{s}d^{\times}z,
\end{equation}
where we write $n=\begin{pmatrix}
	I_n&\mathfrak{u}\\
	&1
\end{pmatrix}\in N_P(\mathbb{A}_F).$

Then $E_P^{\dagger}(x,s;\widehat{f}_P,y)$ converges absolutely when $\Re(s)>1,$ admits a functional equation and meromorphic continuation to $s\in\mathbb{C}.$ Likewise, the Eisenstein series $\widetilde{E}_P^{\dagger}(x,s_1+s_2;\widehat{f}_P,y)$ defined by 
\begin{align*}
	\sum_{\delta \in P'(F)\backslash G'(F)}\int_{Z'(\mathbb{A}_F)}\int_{N_P(\mathbb{A}_F)}\int_{\mathbb{A}_F^n}f\left(nu(\mathbf{x})\iota(y)\right)\theta(\eta \delta zx\mathfrak{u})d\mathbf{x}dn|\det zx|^{s}d^{\times}z
\end{align*}
admits the same analytic properties. Recall (cf. \eqref{183})
$$
\mathcal{G}_{\chi}(s_1,s_2,\phi_1',\phi_2'):=\sum_{\phi\in\mathfrak{B}_{Q,\chi}}\underset{\lambda=n(s_1-1/2)}{\Res}\Psi_{\lambda}(s_1,\mathcal{I}(\lambda,f)\phi,\phi_1')\overline{\Psi_{\lambda}(\overline{s_2},\phi,\phi_2')}.
$$
By \cite{JPSS83} and \cite{Jac09} we have
\begin{align*}
\mathcal{H}_{\phi}(\lambda,s_1,s_2)\sim&\frac{L(s_1+1/2+\lambda,\pi_1'\times\sigma_1)L(s_1+1/2-\lambda/n,\pi_1'\times\sigma_2)}{L(1+(1+n^{-1})\lambda,\sigma_1\times\widetilde{\sigma}_2)}\\
&\quad \times \frac{L(s_2+1/2-\lambda,\pi_2'\times\widetilde{\sigma}_1)L(s_2+1/2+\lambda/n,\widetilde{\pi}_2'\times\widetilde{\sigma}_2)}{L(1-(1+n^{-1})\lambda,\sigma_2\times\widetilde{\sigma}_1)}.
\end{align*}
As a consequence we have
\begin{align*}
\mathcal{G}_{\chi}(s_1,s_2,\phi_1',\phi_2')\sim \frac{L(s_2-ns_1+\frac{n+1}{2},\widetilde{\pi}_2'\otimes\overline{\omega}'\overline{\omega})L(s_1+s_2,\pi_1'\times\widetilde{\pi}_2')}{L(1-(n+1)s_2+\frac{n+1}{2},\widetilde{\pi}_2'\otimes\overline{\omega}'\overline{\omega})},
\end{align*}
which has a simple at $s_1=1-s_2.$ 

The main results in this section is the following.
\begin{thm}\label{thm49}
Let notation be as before. Then $\mathcal{G}_{\chi}(s_1,s_2,\phi_1',\phi_2')$ is equal to 
\begin{equation}\label{194}
-\int_{G'(\mathbb{A}_F)}\int_{[\overline{G'}]}\phi_1'(x)\overline{\phi_2'(xy)}E_P^{\dagger}(x,s_1+s_2;\widehat{f}_P,y)dx\overline{\omega'}(y)|\det y|^{s_2}dy.
\end{equation}
Similarly, $\widetilde{\mathcal{G}}_{\chi}(s_1,s_2,\phi_1',\phi_2')$ is equal to 
\begin{align*}
\int_{G'(\mathbb{A}_F)}\int_{[\overline{G'}]}\phi_1'(x)\overline{\phi_2'(xy)}\widetilde{E}_P^{\dagger}(x,s_1+s_2;\widehat{f}_P,y)dx\overline{\omega'}(y)|\det y|^{s_2}dy.
\end{align*}
Moreover, $\mathcal{G}_{\chi}(s_1,s_2,\phi_1',\phi_2')$ (resp. $\widetilde{\mathcal{G}}_{\chi}(s_1,s_2,\phi_1',\phi_2')$) admits a meromorphic continuation to $$
\mathcal{R}:=\big\{\mathbf{s}=(s_1,s_2)\in\mathbb{C}^2:\ \Re(s_1),\ \Re(s_2)>-1/(n+1)\big\}.
$$
\end{thm}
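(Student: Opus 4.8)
\textbf{Proof plan for Theorem \ref{thm49}.}

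The strategy is to compute the residue $\mathcal{G}_{\chi}(s_1,s_2,\phi_1',\phi_2')$ directly from its definition \eqref{183} by unwinding the Eisenstein periods $\Psi_{\lambda}$, and then to recognize the resulting object as a period against the residual Eisenstein series attached to $P$ of type $(1,n)$. First I would recall that for $\chi=\{(M_Q,\sigma)\}$ with $\sigma=\sigma_1\otimes\sigma_2$, $\sigma_2\simeq\widetilde{\pi}'$, $\sigma_1=\omega\omega'^{-1}$, the induced space $\mathcal{H}_{Q,\chi}$ is built from sections supported on $Q$, and the Eisenstein series $E(x,\phi,\lambda)$ along $i\mathfrak{a}_Q^*$ has a pole at $\lambda = n(s_1-1/2)$ coming (via the Rankin--Selberg integral representation of \cite{JPSS83}, \cite{Jac09}) from the factor $L(s_1+1/2-\lambda/n,\pi_1'\times\sigma_2)=L(s_1+1/2-\lambda/n,\pi_1'\times\widetilde{\pi}')$ crossing $s=1$. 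Taking the residue in $\lambda$ of the Eisenstein series produces (a multiple of) the residual Eisenstein series on $[G']$ attached to the Speh-type datum, which is exactly a degenerate Eisenstein series induced from $P'$; concretely the residue of $E(\cdot,\phi,\lambda)$ is the Eisenstein series built from the section $f\mapsto \widehat{f}_P$ obtained by integrating $f$ over $N_P(\mathbb{A}_F)$ and Fourier-expanding in the last column, which is precisely the section appearing in $E_P^{\dagger}(x,s;\widehat{f}_P,y)$ as in \eqref{276}.

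The key steps, in order, are: (i) write $\mathcal{G}_{\chi}$ as $\sum_{\phi}\operatorname{Res}_{\lambda=n(s_1-1/2)}\Psi_{\lambda}(s_1,\mathcal{I}(\lambda,f)\phi,\phi_1')\overline{\Psi_{\lambda}(\overline{s_2},\phi,\phi_2')}$, and observe that only the first factor has a pole at $\lambda=n(s_1-1/2)$ since $\pi_1'\not\simeq\pi_2'$ is \emph{not} assumed here — wait, in this section $\pi_1'\simeq\pi_2'\simeq\pi'$, so I must track that the pole is simple and isolate its residue, using the explicit $L$-factor shape $\mathcal{H}_{\phi}(\lambda,s_1,s_2)\sim \dots$ displayed just before the theorem; (ii) use the factorization $\Psi_{\lambda}(s,\mathcal{I}(\lambda,f)\phi,\phi_1')=\int_{N'(\mathbb{A}_F)\backslash G'(\mathbb{A}_F)}\int_{[N]}E(ux,\mathcal{I}(\lambda,f)\phi,\lambda)\overline{\theta}(u)du\,W_1'(x)|\det x|^{s_1}dx$ together with \eqref{172} to move the residue inside, so that $\operatorname{Res}_{\lambda}E(\cdot,\mathcal{I}(\lambda,f)\phi,\lambda)$ becomes the value of the residual Eisenstein series; (iii) sum over the orthonormal basis $\mathfrak{B}_{Q,\chi}$ using the spectral completeness for the $(1,n)$-induced space to collapse $\sum_{\phi}(\cdots)\overline{(\cdots)}$ into a single integral over $[G']\times G'(\mathbb{A}_F)$ against $\phi_1'(x)\overline{\phi_2'(xy)}$, picking up the kernel $f(u(\mathbf{x})n\iota(y))$ after unfolding; (iv) identify the $z$- and $\mathbf{u}$-integrals that remain as exactly the definition \eqref{276} of $E_P^{\dagger}(x,s_1+s_2;\widehat{f}_P,y)$, with the sign $-1$ coming from the residue of the normalizing zeta/$L$-factor $L(1+(1+n^{-1})\lambda,\sigma_1\times\widetilde{\sigma}_2)^{-1}$ in the denominator; (v) repeat verbatim with the roles of the two poles swapped (residue at $\lambda=n(1/2-s_2)$, coming from the $\overline{\Psi_{\lambda}(\overline{s_2},\phi,\phi_2')}$ factor) to get the formula for $\widetilde{\mathcal{G}}_{\chi}$ with $\widetilde{E}_P^{\dagger}$, where the order of $n$ and $u(\mathbf{x})$ in the kernel is reversed because the pole now sits on the $y$-variable side. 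Finally, meromorphic continuation to $\mathcal{R}$ follows by the same argument used for $\mathcal{F}_{0,1}J^{\bi}_{\Geo}(f,\mathbf{s})$ in Propositions \ref{prop19} and \ref{prop21.}: the Poisson-summation decomposition of $E_P^{\dagger}$ into a ``$+$'' part (absolutely convergent everywhere), a ``$\wedge$'' part (absolutely convergent in $\mathcal{R}$ by the local estimates of Lemmas \ref{lem18}, \ref{lem18.}, \ref{lem18'}, \ref{lem20}), and a residual part with simple poles only on $s_1+s_2\in\{0,-1\}$ — here shifted to $\{0,1\}$ because of the $|\det zx|^s$ normalization at $s=s_1+s_2$ rather than $s_1+s_2+1$.

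The main obstacle I expect is step (iii)--(iv): justifying the interchange of the residue with the basis sum and the spatial integrals, and then correctly matching the residual Eisenstein series obtained from $\operatorname{Res}_{\lambda}E(x,\phi,\lambda)$ with the explicit degenerate-section Eisenstein series $E_P^{\dagger}$. This requires (a) an absolute-convergence argument controlling $\sum_{\phi\in\mathfrak{B}_{Q,\chi}}|\operatorname{Res}_{\lambda}\Psi_{\lambda}(s_1,\mathcal{I}(\lambda,f)\phi,\phi_1')|^2$ in a right half-plane, which one can get by the Minkowski/Cauchy--Schwarz device already used in the proof of Proposition \ref{exp} applied to the residual datum; and (b) an explicit computation of the constant-term/residue of $E(x,\phi,\lambda)$ at $\lambda=n(s_1-1/2)$ identifying it with the (up to scalar) standard residual Eisenstein series attached to $(P, \sigma_1\otimes\widetilde{\pi}')$, so that unfolding its Whittaker--Rankin--Selberg integral against $\phi_1',\phi_2'$ produces the kernel $f(u(\mathbf{x})n\iota(y))$ with precisely the character $\theta(\eta\delta zx\mathfrak{u})$ appearing in \eqref{276}. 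Once the matching is pinned down, tracking the sign and the precise normalizing factor is bookkeeping; the rest is parallel to the geometric-side arguments already in place.
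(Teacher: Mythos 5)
There is a genuine gap, and it sits exactly at your step (ii), which is the engine of the whole argument. You propose to move the residue at $\lambda=n(s_1-1/2)$ onto the Eisenstein series itself, so that $\operatorname{Res}_{\lambda}E(\cdot,\mathcal{I}(\lambda,f)\phi,\lambda)$ produces a residual (Speh-type) Eisenstein series attached to $(P,\sigma_1\otimes\widetilde{\pi}')$. But for the cuspidal datum $\chi=\{(M_Q,\sigma_1\otimes\sigma_2)\}$ with $\sigma_1=\omega\omega'^{-1}$ on $\mathrm{GL}(1)$ and $\sigma_2\simeq\widetilde{\pi}'$ on $\mathrm{GL}(n)$, the Eisenstein series $E(x,\phi,\lambda)$ has no pole at $\lambda=n(s_1-1/2)$ — its poles in $\lambda$ are at fixed locations governed by the intertwining operator (requiring $\sigma_1\simeq\sigma_2$, impossible here), not at a point that moves with the external parameter $s_1$. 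The pole of $\mathcal{H}_{\phi}(\lambda,s_1,s_2)$ at $\lambda=n(s_1-1/2)$ is a pole of the \emph{period} $\Psi_{\lambda}(s_1,\cdot,\phi_1')$: it comes from the divergence of the integral over the non-compact quotient $N'(\mathbb{A}_F)\backslash G'(\mathbb{A}_F)$, i.e.\ from the factor $L(s_1+1/2-\lambda/n,\pi_1'\times\sigma_2)$ crossing $s=1$, not from any singularity of $E(x,\phi,\lambda)$. Consequently the object you want to take the residue of is holomorphic there, your proposed residue is literally zero, and no residual Eisenstein series ever enters; your stated "main obstacle (b)" is not a technical matching problem but a false premise. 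Relatedly, your explanation of the sign $-1$ as coming from the denominator $L(1+(1+n^{-1})\lambda,\sigma_1\times\widetilde{\sigma}_2)^{-1}$ is off: that factor contributes no pole or sign here.

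What the paper does instead — and what your plan is missing — is an identity (Proposition \ref{prop46}) re-expressing $\Psi_{\lambda}(s,\phi,\phi')$ as a pairing of $\phi'$ over the \emph{compact} quotient $[\overline{G'}]$ against a mirabolic (degenerate) Eisenstein series $e_v(x,\phi,s+1/2-\lambda/n)$ on $G'$ built from the section $\phi$. This is proved by writing every section as a Godement section $\phi^*_{\Phi,\phi_2}$, computing the Jacquet integral, unfolding, and projecting by an idempotent so that the period splits into a Rankin--Selberg integral for $L(s+1/2-\lambda/n,\pi'\times\sigma_2)$ times a Godement--Jacquet integral. The pole in $\lambda$ is then the explicit (Poisson-summation) pole of this mirabolic Eisenstein series, its residue is the elementary period of Corollary \ref{prop39} (this is where the minus sign comes from), and Corollary \ref{cor40} makes $\operatorname{Res}_{\lambda}\Psi_{\lambda}(s_1,\mathcal{I}(\lambda,f)R(w^{-1})\phi,\phi_1')$ an absolutely convergent integral. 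Your steps (iii)--(iv) — collapsing the basis sum by completeness and unfolding to reach $E_P^{\dagger}$ — and your continuation argument via the Poisson decomposition parallel to Propositions \ref{prop19} and \ref{prop21.} are in the right spirit and do match the paper (the collapse is the orthogonality identity \eqref{188} for the induced representation $\Pi_{s_1}$), but without the Proposition~\ref{prop46}-type computation replacing your step (ii), the argument as proposed does not get off the ground.
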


\subsection{Singular Eisenstein Periods}\label{10.2}
Let notation be as above. Recall 
\begin{align*}
	H_Q: M_Q(\mathbb{A}_F)\rightarrow \mathbb{R}^2,\  m=\begin{pmatrix}
		m_1&\\
		&m_2
	\end{pmatrix}\mapsto H_Q(m)=\left(\log |\det m_1|,\frac{\log |\det m_2|}{n}\right).
\end{align*}
Let $\rho_Q$ be the half sum of positive roots of $(Q,A_Q).$ Then $e^{\rho_QH_Q(m)}=\delta_Q^{1/2}(m)=|\det m_1|^{n/2}|\det m_2|^{-1/2}$ for $m=\begin{pmatrix}
	m_1&\\
	&m_2
\end{pmatrix}\in M_Q(\mathbb{A}_F).$

Let $\pi'$ and $\sigma_2$ be cuspidal automorphic representations of $G'(\mathbb{A}_F)$ of central character $\omega'$ and $\omega$ respectively. Let $\sigma_1=\omega'\omega^{-1}.$

Denote by $\mathcal{A}_0^{Q}$ the space of functions $\phi:$ $G(\mathbb{A}_F)\longrightarrow\mathbb{C}$ such that $\phi(amug)=\delta_Q^{\frac{1}{2}}\phi(g)$ for $m\in M_Q(\mathbb{A}_F),$ $u\in N_Q(\mathbb{A}_F),$ $a\in \diag(1,A_{G'}),$ and for all $k\in K$ the function $m\mapsto \phi(mk)$ is a cuspidal automorphic form on $G'(\mathbb{A}_F).$ Then $\mathcal{A}_0^{Q}$ decomposes as a direct sum of $\mathcal{A}_{0,\sigma}^{Q}$ over cuspidal representations $\sigma$ of $M_Q(\mathbb{A}_F).$ We can identify $\mathcal{A}_0^{Q}$ with the representation $\Ind _{Q(\mathbb{A}_F)}^{G(\mathbb{A}_F)}\mathcal{A}_{0}([M_Q])$ by $\phi\mapsto \delta_Q^{-1/2}\pi(g)\phi$. Let $\lambda\in\mathbb{C}.$ Denote by $\boldsymbol{\lambda}=(\lambda,-\lambda).$ Write $\boldsymbol{\lambda}H_Q(m)$ for the dot product of vectors $\boldsymbol{\lambda}$ and $H_Q.$ Set $\phi_{\boldsymbol{\lambda}}(g)=\phi(mk)e^{\boldsymbol{\lambda}H_Q(m)},$ where $g=umk$ is the Iwasawa decomposition with $u\in N_Q(\mathbb{A}_F),$ $m=\begin{pmatrix}
	m_1&\\
	&m_2
\end{pmatrix}\in M_Q(\mathbb{A}_F)$ and $k\in K.$ With the action $\mathcal{I}(g,s)\phi=\left(\phi(\cdot g)_{\boldsymbol{\lambda}+\rho_Q}\right)_{-\boldsymbol{\lambda}-\rho_Q}$ the space $\mathcal{A}_0^Q$ becomes  $\Ind _{Q(\mathbb{A}_F)}^{G(\mathbb{A}_F)}\mathcal{A}_{0}([M_P])\otimes \alpha^{\lambda},$ where $\alpha(\diag(m_1,m_2))=|\det m_2|^{-1/n}|\det m_1|.$ Define the Eisenstein series:
\begin{align*}
	E(g,\phi,\lambda)=\sum_{\delta\in Q(F)\backslash G(F)}\phi_{\boldsymbol{\lambda}+\rho_Q}(\delta g),
\end{align*}
which converges absolutely when $\Re(\lambda)> \rho_Q$ and admits meromorphic continuation to the whole complex plane and a functional equation. Moreover, it is holomorphic on the vertical line $\Re(\lambda)=0.$

Let $\phi\in \mathcal{A}_{0,\sigma}^Q$ with $\sigma=\sigma_1\otimes\sigma_2$ being a cuspidal representation of the Levi subgroup of $Q(\mathbb{A}_F).$ Let 
\begin{align*}
	W_{\lambda}(g)=\int_{[N]}E(ug,\phi,\lambda)\overline{\theta}(u)du,\ \ \Re(\lambda)\gg 0.
\end{align*}  

Let $v\in\Sigma_{F}.$ Denote by $\sigma_{v,\lambda}=\Ind_{Q(F_v)}^{G(F_v)}\sigma_v\otimes\alpha_v^{\lambda}.$ Unfolding the Eisenstein series one can show (e.g., cf. \cite{Sha10}, p. 122) that $W_{\lambda}(g)$ is factorizable:
\begin{align*}
	W_{\lambda}(g)=\prod_{v\in \Sigma_F}W_{\lambda,v}(g_v),
\end{align*}
where each factor $W_{\lambda,v}(g_v)$ can be given by a Jacquet integral. 

Let $W'=\prod_{v\in \Sigma_F}W_v'$ be a Whittaker function associated to the cuspidal representation $\pi'.$ Define the Rankin-Selberg convolution by 
\begin{equation}\label{r-s}
	\Psi_{\lambda}(s,\phi,\phi'):=\int_{N'(\mathbb{A}_F)\backslash G'(\mathbb{A}_F)}W_{\lambda}\left(\begin{pmatrix}
		x&\\
		&1
	\end{pmatrix}\right)W'(x)|\det x|^sdx,\ \ \Re(s)\gg 0.
\end{equation}

Integrals of form $\Psi_{\lambda}(s,\phi,\phi')$ will play an important role in the spectral side of our regularized relative trace formula. They are actually the possible non-holomorphic part of $J_{\Eis}^{\Reg}(f,\textbf{s}),$ the contribution from the continuous spectrum. We will study in this section the analytic behavior of $\Psi_{\lambda}(s,\phi,\phi')$ and compute their possible poles and residues. Together with further manipulation on the convergence of the whole spectral side, we will derive the meromorphic continuation and thus the regularization of the spectral side. See \text\textsection \ref{sec.spec} below.

By the local theory developed in \cite{JPSS83} and \cite{Jac09} the integral 
\begin{align*} 
	\Psi_{\lambda,v}(s,\phi_v,\phi'_v):=\int_{N'(F_v)\backslash G'(F_v)}W_{\lambda,v}\left(\begin{pmatrix}
		x_v&\\
		&1
	\end{pmatrix}\right)W_v'(x_v)|\det x_v|_v^sdx_v	
\end{align*}
represents the Rankin-Selberg $L$-function $L_v(s+1/2,\sigma_v\times\pi_v'\otimes e^{\boldsymbol{\lambda}H_Q})$ up to a holomorphic factor, which is $L_v(1-2\lambda,\sigma_1\times\sigma_2)^{-1}$ at all but finitely many places. Hence \eqref{r-s} is well defined when $\Re(s)$ and $\Re(\lambda)$ large. The convergence of $\Psi_{\lambda}(s,\phi,\phi')$ when $\Re(s)\gg 0$ can also be verified by making use of a modified truncation operator, see \cite{IY15} for details. 

Note that $L(s+1/2,\sigma\times\pi'\otimes e^{\boldsymbol{\lambda}H_Q})=L(s+1/2+\lambda,\sigma_1\times\pi')L(s+1/2-\lambda/n,\sigma_2\times\pi').$ Hence, $\Psi_{\lambda}(s,\phi,\phi')$ admits an meromorphic continuation to  unless $\sigma_2\equiv \widetilde{\pi}',$ in which case $\Psi_{\lambda}(s,\phi,\phi')$ has a simple pole at $s=\pm 1/2-\lambda$ and is holomorphic elsewhere. 

We shall compute $\Psi_{\lambda}(s,\phi,\phi')$ more explicitly and compute the possible residue at $s=1/2$. In the case of $\mathrm{GL}(2),$ this is done in \cite{JL70}. However, it seems likely that the higher rank case has not been investigated yet. 

In the process of regularization of $J_{\Eis}^{\Reg}(f,\textbf{s}),$ we have to regard $\Psi_{\lambda}(s,\phi,\phi')$ as a function of both $s$ and $\lambda.$ Hence, it is more convenient for us to apply the Godement section (cf. \cite{God95}) in $\Ind_{P(\mathbb{A}_F)}^{G(\mathbb{A}_F)}\sigma\otimes e^{\boldsymbol{\lambda}H_Q}$ in the calculation rather than the standard section, which is independent of $\lambda$ when restricted to the compact subgroup $K.$ 

Note that  $\Ind_{P(\mathbb{A}_F)}^{G(\mathbb{A}_F)}\sigma\otimes e^{\boldsymbol{\lambda}H_Q}$ is the restricted tensor product over all places $v\in\Sigma_F$ of analogously defined spaces $\Ind_{Q(F_v)}^{G(F_v)}\sigma_v\otimes e^{\boldsymbol{\lambda}H_{Q,v}}$. We may write a pure tensor section $\phi\in  \Ind_{P(\mathbb{A}_F)}^{G(\mathbb{A}_F)}\sigma\otimes e^{\boldsymbol{\lambda}H_Q}$ as $\phi=\otimes_v\phi_v,$ with $\phi_v\in \Ind_{Q(F_v)}^{G(F_v)}\sigma_v\otimes e^{\boldsymbol{\lambda}H_{Q,v}}$ and $\phi_v\equiv 1$ for all but finitely many $v\in\Sigma_F.$ 

Let $v$ be a finite place such that $\sigma_v$ is unramified. Let $\Phi_v$ be the characteristic function of $M_{n\times (n+1)}(\mathcal{O}_{F_v}).$ Then the function $h_v(g_v,{\lambda}),$ defined by
\begin{align*}
\sigma_{1,\lambda,v}(\det g_v)|\det g_v|_v^{\frac{n}{2}}\int_{G'(F_v)}\Phi_v[(0,g_v')g_v]\phi_{2,-\lambda,v}(g_v'^{-1})\sigma_{1,\lambda,v}(\det g'_v)|\det g'_v|_v^{\frac{n+1}{2}}dg_v'^{-1},
\end{align*}
is a section in $\Ind_{Q(F_v)}^{G(F_v)}\sigma_v\otimes e^{\boldsymbol{\lambda}H_{Q,v}},$ where $\phi_{2,-\lambda,v}$ is a vector in $\sigma_{2,-\lambda,v}.$ An explicit calculation shows that 
\begin{align*}
h_v(I_{n+1},{\lambda})=L_v(1+2\lambda,\sigma_{1,v}\times\widetilde{\sigma}_{2,v}).
\end{align*}
Hence, the function $\phi_v^*(g_v,{\lambda}):=h_v(g_v,{\lambda})/L_v(1+2\lambda,\sigma_{1,v}\times\widetilde{\sigma}_{2,v})$ is a standard section, i.e., $\phi_v^*(k_v,{\lambda})\equiv 1,$ $\forall$ $k_v\in K_v.$

Let $\mathcal{S}(n\times(n+1),\mathbb{A}_F)$ be the space of Bruhat-Schwartz functions on $M_{n\times (n+1)}(\mathbb{A}_F).$ Then $\phi^*_{\Phi,\phi_2}(g,{\lambda}),$ defined by 
\begin{equation}\label{163}
\frac{\sigma_{1,\lambda}(\det g)|\det g|^{\frac{n}{2}}}{L(1+2\lambda,\sigma_{1}\times\widetilde{\sigma}_{2})}\int_{G'(\mathbb{A}_F)}\Phi[(0,g')g]\phi_{2,-\lambda}(g'^{-1})\sigma_{1,\lambda}(\det g')|\det g'|^{\frac{n+1}{2}}dg',
\end{equation}
is a global section in $\Ind_{P(\mathbb{A}_F)}^{G(\mathbb{A}_F)}\sigma\otimes e^{\boldsymbol{\lambda}H_Q},$ where $\Phi\in\mathcal{S}(n\times(n+1),\mathbb{A}_F)$ and $\phi_{2,-\lambda}$ is a vector in $\sigma_{2,-\lambda}.$ Combining the local theory developed in \cite{JPSS83} (the proposition in \textsection6, p.428) and \cite{Jac09} (Proposition 7.1, p. 98) we conclude that every global section of $\Ind_{P(\mathbb{A}_F)}^{G(\mathbb{A}_F)}\sigma\otimes e^{\boldsymbol{\lambda}H_Q}$ is a finite sum of $F({\lambda})\phi^*_{\Phi,\phi_2}(g,{\lambda}),$ where $\Phi\in\mathcal{S}(n\times(n+1),\mathbb{A}_F),$ $\phi_2\in \sigma_{2},$ and  $F({\lambda})=\prod_v F_v(\lambda)$ is an elementary function of $\lambda$ with $F_v(\lambda)\equiv 1$ for all but finitely many places. 

Write $u=\begin{pmatrix}
	I_n&\mathfrak{u}\\
	&1
\end{pmatrix}\in N_P(\mathbb{A}_F).$ Let $\chi=\{(M_Q,\sigma)\}.$ Let $w=\begin{pmatrix}
	&1\\
	I_{n}&
\end{pmatrix}.$ Define the vector valued Eisenstein series associated to $\phi\in\mathfrak{B}_{Q,\chi}$ by 
\begin{align*}
e_v(x,\phi,s):=\sum_{\delta \in P'(F)\backslash G'(F)}\int_{Z'(\mathbb{A}_F)}\int_{N_P(\mathbb{A}_F)}\phi(wu)\overline{\theta}(\eta \delta zx\mathfrak{u})d\mathfrak{u}|\det zx|^{s}d^{\times}z.
\end{align*}
By Poisson summation, $e_v(x,\phi,s)$ converges absolutely when $\Re(s)>1,$ and admits a functional equation and meromorphic continuation to $\lambda\in\mathbb{C}$ with possible (simple) poles at $s\in \{0,1\}.$

\begin{prop}\label{prop46}
Let notation be as before. Let $\chi=\{(M_Q,\sigma)\}$ and $\phi\in\mathfrak{B}_{Q,\chi}.$ Then
\begin{equation}\label{166.}
\Psi_{\lambda}(s,\phi,\phi')=\int_{[\overline{G'}]}\phi'(x)\sigma_2'(x)e_v(x,\phi,s+1/2-\lambda/n)dx.
\end{equation}
In particular, $\Psi_{\lambda}(s,\phi,\phi')$ is holomorphic if $\sigma_2\not\simeq\widetilde{\pi}',$ and when $\sigma_2\simeq\widetilde{\pi}',$ $\Psi_{\lambda}(s,\phi,\phi')$ is meromorphic with (at most simple) poles on $s-\lambda/n=1/2$ and $s-\lambda/n=-1/2.$

\end{prop}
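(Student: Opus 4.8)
\textbf{Proof plan for Proposition \ref{prop46}.}
The plan is to unfold the Rankin--Selberg integral $\Psi_{\lambda}(s,\phi,\phi')$ against the Whittaker function $W_\lambda$ of the Eisenstein series, substitute a concrete realization of $W_\lambda$ coming from a Godement section, and recognize the resulting adelic integral as an inner product of $\phi'$ with the vector-valued Eisenstein series $e_v(x,\phi,\cdot)$. First I would take $\Re(s)$ and $\Re(\lambda)$ large enough that all integrals in sight converge absolutely, so that every manipulation below (Fubini, unfolding Eisenstein series, collapsing the sum over $P'(F)\backslash G'(F)$) is justified; the final formula \eqref{166.} is then extended by meromorphic continuation. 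Starting from \eqref{r-s}, I would write $W_\lambda(\iota(x))=\int_{[N]}E(u\iota(x),\phi,\lambda)\overline{\theta}(u)\,du$ and expand the Eisenstein series $E(\cdot,\phi,\lambda)$ over $Q(F)\backslash G(F)$. Using the Bruhat decomposition of $Q(F)\backslash G(F)/\iota(N'(F))$ and the fact that only the big cell supports a nontrivial $\theta$-integral (the small-cell contributions are killed by the integral of $\overline{\theta}$ over a unipotent subgroup on which $\theta$ is nontrivial, exactly as in the classical $\mathrm{GL}(n+1)\times\mathrm{GL}(n)$ computation of \cite{JPSS83}), I would collapse the unfolding so that the $N'(\mathbb{A}_F)\backslash G'(\mathbb{A}_F)$-integral combined with the residual sum over $P'(F)\backslash G'(F)$ reassembles into an integral over $P'(F)\backslash G'(\mathbb{A}_F)$, i.e.\ into (a Whittaker-coordinate version of) $e_v(x,\phi,\cdot)$.

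The key bookkeeping step is the shift of the complex parameter: the factor $|\det m_1|^{n/2}|\det m_2|^{-1/2}=e^{\rho_Q H_Q}$ together with the twist $e^{\boldsymbol{\lambda}H_Q}$ on the Levi of $Q$, when restricted along the embedding $\iota$ and combined with the $|\det x|^s$ weight in \eqref{r-s}, produces the argument $s+1/2-\lambda/n$ appearing in \eqref{166.}; here $w=\begin{pmatrix}&1\\ I_n&\end{pmatrix}$ and the coordinate $u=\begin{pmatrix}I_n&\mathfrak{u}\\ &1\end{pmatrix}$ enter precisely so that $\phi(wu)$ is the relevant matrix coefficient and $\overline\theta(\eta\delta z x\mathfrak{u})$ is the character picked out by the Whittaker integral. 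Once this change of variables is carried out carefully the identity \eqref{166.} falls out. I would phrase the computation using the explicit Godement section \eqref{163}, which is legitimate since, as recalled just above the proposition, every global section of $\Ind_{P(\mathbb{A}_F)}^{G(\mathbb{A}_F)}\sigma\otimes e^{\boldsymbol{\lambda}H_Q}$ is a finite sum of $F(\lambda)\phi^*_{\Phi,\phi_2}(g,\lambda)$; the elementary factor $F(\lambda)$ and the normalizing $L$-factor $L(1+2\lambda,\sigma_1\times\widetilde\sigma_2)$ are holomorphic and invertible in the relevant range and do not affect the location of poles.

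For the last two assertions I would invoke the local Rankin--Selberg theory of \cite{JPSS83} and \cite{Jac09}: the inner integral $\int_{[\overline{G'}]}\phi'(x)\sigma_2(x)e_v(x,\phi,s+1/2-\lambda/n)\,dx$ is, up to holomorphic and nonvanishing factors, the global $L$-function $L(s+1/2+\lambda,\sigma_1\times\pi')L(s+1/2-\lambda/n,\sigma_2\times\pi')$, because unfolding $e_v$ pairs $\phi'$ against the Eisenstein datum and the two constituents $\sigma_1,\sigma_2$ of the inducing representation contribute the two $L$-factors. The first factor is entire, and the second has a pole exactly when $\sigma_2\simeq\widetilde\pi'$, located at $s+1/2-\lambda/n=1$ and (via the functional equation for $e_v$, which has possible simple poles at its argument equal to $0$ and $1$) also at $s+1/2-\lambda/n=0$; translating these back gives the poles on $s-\lambda/n=1/2$ and $s-\lambda/n=-1/2$ claimed in the proposition, and holomorphy otherwise. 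The main obstacle I anticipate is the careful justification of the unfolding and the Fubini interchanges: the Eisenstein series $E(\cdot,\phi,\lambda)$ is only conditionally integrable against $\overline\theta$ before one identifies the relevant orbit, so one must either work in the domain of absolute convergence $\Re(\lambda)\gg 0$, $\Re(s)\gg 0$ throughout and continue at the end, or insert the modified truncation operator of \cite{IY15} to legitimize the computation near $\Re(\lambda)=0$; I would take the former route and record the meromorphic continuation of both sides of \eqref{166.} as the closing step.
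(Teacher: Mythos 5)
Your overall strategy --- work with the Godement section \eqref{163} in a domain of absolute convergence, convert $\Psi_{\lambda}(s,\phi,\phi')$ into a pairing of $\phi'$ against the mirabolic Eisenstein series $e_v$, and read off the poles from that Eisenstein series together with Rankin--Selberg theory --- is indeed the paper's strategy. But two of the intermediate mechanisms you invoke are not the ones that make the argument work, and as stated they leave genuine gaps. First, you attribute the sum over $P'(F)\backslash G'(F)$ (and the central $Z'(\mathbb{A}_F)$-integration) inside $e_v$ to a Bruhat-cell analysis of $Q(F)\backslash G(F)$. The big-cell reduction only produces the Jacquet integral \eqref{164}, a single adelic integral with no rational sum left over, so there is no ``residual sum over $P'(F)\backslash G'(F)$'' for your collapse to reassemble. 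In the paper that sum is created later: after computing the Whittaker function of the Godement section one reaches \eqref{165}, and the $\delta$-sum and $Z'$-integral arise by refolding the partial Fourier transform $\widehat{\Phi}_2$ of the Schwartz datum into the degenerate Eisenstein series $E(x,\widehat{\Phi}_2(y,\cdot),\lambda)$ introduced after \eqref{187}, followed by the Fourier expansions of the cusp forms and several changes of variables leading to \eqref{171..}; this is also exactly where the shift $s+1/2-\lambda/n$ and the $\sigma_2(x)$-twist in \eqref{166.} appear. Your sketch supplies no substitute for this step.

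Second, your pole analysis rests on the assertion that the integral equals $L(s+1/2+\lambda,\sigma_1\times\pi')L(s+1/2-\lambda/n,\sigma_2\times\pi')$ up to holomorphic and nonvanishing factors ``because the two constituents contribute the two $L$-factors.'' This is not immediate: in \eqref{165} the Godement--Jacquet variable $y$ and the Rankin--Selberg variable $x$ are coupled through $\widehat{\Phi}_2(y,\eta x)$, and the paper decouples them with the idempotent projection $R(\xi)$ of \cite{JPSS83} and \cite{Jac09}; that projection is what yields the absolute convergence for $\Re(s)-\Re(\lambda)/n>1/2$ and the identification with the product of a Rankin--Selberg integral for $\pi'\times\sigma_2$ and an entire Godement--Jacquet integral for $\pi'\times\sigma_1$. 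Without that step --- or, alternatively, without observing that the residues of $e_v$ at argument $0$ and $1$ are essentially degenerate (constant-type) sections whose pairing against the cusp form $\phi'$ over $[\overline{G'}]$ vanishes by cuspidal orthogonality unless $\sigma_2\simeq\widetilde{\pi}'$ --- your claim of holomorphy when $\sigma_2\not\simeq\widetilde{\pi}'$ is unsupported. With these two repairs your plan coincides with the paper's proof.
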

\begin{proof}
Substituting $\phi^*(g)=\phi^*_{\Phi,\phi_2}(g,{\lambda})$ into the Jacquet integral 
\begin{equation}\label{164}
	W_{\lambda}^*(g)=\int_{N(\mathbb{A}_F)}\phi^*(w_n^lug)\overline{\theta}(u)du,
\end{equation}
where $w_n^l$ is the long element in the Weyl group of $(G,B).$ We can write $w_n^l=\begin{pmatrix}
	&1\\
	w_{n-1}^l\\
\end{pmatrix},$ where $w_{n-1}^l$ is the long Weyl element relative to the pair $(G',B').$ Write $N(\mathbb{A}_F)\ni u=\begin{pmatrix}
	u'&\\
	&1
\end{pmatrix}\begin{pmatrix}
	I_n&\mathfrak{u}\\
	&1
\end{pmatrix}$ Then after a change of variable $g'\mapsto w_{n-1}^lu'g'$ to obtain 
\begin{align*}
	W_{\lambda}^*(g)=\frac{\sigma_{1,\lambda}(\det g)|\det g|^{\frac{n}{2}}}{L(1+(1+n^{-1})\lambda,\sigma_{1}\times\widetilde{\sigma}_{2})}&\int_{G'(\mathbb{A}_F)}W_{2,-\lambda}(g')\sigma_{1,\lambda}(\det g'^{-1})|\det g'|^{-\frac{n+1}{2}}dg'\\
	&\qquad \qquad \times \int_{\mathbb{A}_F^n}\Phi[(g'^{-1},g'^{-1}\mathfrak{u})g]\overline{\theta}(\eta \mathfrak{u})d\mathfrak{u},
\end{align*}
where 
$$
W_{2,-\lambda}(g'):=\int_{[N']}\phi_{2,-\lambda}(w_{n-1}u'g')\overline{\theta}(u')du'
$$
is a vector in $\mathcal{W}_{\sigma_{2,-\lambda}},$ the Whittaker model associated to $\sigma_{2,-\lambda}=\sigma_2\otimes |\det(\cdot)|^{-\lambda/n}.$  

Define $\widehat{\Phi}_2(x,\mathfrak{c})=\int_{\mathbb{A}_F^n}\Phi(x,\mathfrak{u})\overline{\theta}(\transp{\mathfrak{u}\mathfrak{c}})d\mathfrak{u},$ where $x\in M_{n\times n}(\mathbb{A}_F)$ and $\mathfrak{c}\in M_{n\times 1}(\mathbb{A}_F).$ Then by \eqref{164} the Rankin-Selberg integral defined by \eqref{r-s}  becomes 
\begin{align*}
	\Psi_{\lambda}&(s,\phi^*,\phi')=\frac{1}{L(1+(1+n^{-1})\lambda,\sigma_{1}\times\widetilde{\sigma}_{2})}\int_{N'(\mathbb{A}_F)\backslash G'(\mathbb{A}_F)}W'(x)\sigma_{1,\lambda}(\det x)\\
	& |\det x|^{s+\frac{n}{2}}dx\int_{G'(\mathbb{A}_F)}\widehat{\Phi}_2(g'x,\eta g'^{-1})W_{2,-\lambda}(g'^{-1})\sigma_{1,\lambda}(\det g')|\det g'|^{\frac{n-1}{2}}dg',
\end{align*}
which converges absolutely when $\Re(s)$ and $\Re(\lambda)$ are large. 

Change variable $g'\mapsto g'x^{-1}$ to yield 
\begin{align*}
	\Psi_{\lambda}(s,\phi^*,\phi')=&\frac{1}{L(1+(1+n^{-1})\lambda,\sigma_{1}\times\widetilde{\sigma}_{2})}\int_{N'(\mathbb{A}_F)\backslash G'(\mathbb{A}_F)}W'(x)|\det x|^{s+\frac{1}{2}}dx\\
	&\qquad \int_{G'(\mathbb{A}_F)}\widehat{\Phi}_2(g',\eta xg'^{-1})W_{2,-\lambda}(xg'^{-1})\sigma_{1,\lambda}(\det g')|\det g'|^{\frac{n-1}{2}}dg'.
\end{align*}

Change $x$ to $xg'$ then $L(1+(1+n^{-1})\lambda,\sigma_{1}\times\widetilde{\sigma}_{2})\Psi_{\lambda}(s,\phi^*,\phi')$ becomes
\begin{equation}\label{165}
	\int W_{2,-\lambda}(x)\int_{G'(\mathbb{A}_F)}W'(xy)\sigma_{1,\lambda}(\det y)|\det y|^{s+\frac{n}{2}}\widehat{\Phi}_2(y,\eta x)dy|\det x|^{s+\frac{1}{2}}dx,
\end{equation}
where $x$ ranges over $N'(\mathbb{A}_F)\backslash G'(\mathbb{A}_F).$ Following \cite{JPSS83} and \cite{Jac09} there exists a fundamental idempotent $\xi$ on $G'(\mathbb{A}_F)$ such that $R(\xi)(\Phi(y,\cdot)\sigma_{1,\lambda}(\det y))=\Phi(y,\cdot)\sigma_{1,\lambda}(\det y)$ for all $y.$ Then we can rewrite the above expression as 
\begin{align*}
	\int W_{2,-\lambda}(x)|\det x|^{s+\frac{1}{2}}dx\int_{G'(\mathbb{A}_F)}\widehat{\Phi}_2(y,\eta x)\sigma_{1,\lambda}(\det y)|\det y|^{s+\frac{n}{2}}R(\xi)\pi'(y)W'(x)dy,
\end{align*}
where $x$ ranges over $N'(\mathbb{A}_F)\backslash G'(\mathbb{A}_F).$ 

Note that the operator $R(\xi)$ is a projection onto the finite dimensional subspace of  $\mathcal{W}(\pi'),$ the Whittaker model of $\pi'.$ Let $\big\{W_j':\ j\in J\big\}$ be an orthonormal basis of this subspace, where $J$ is a finite index set. Then 
\begin{align*}
R(\xi)\pi'(y)W'=\sum_{j\in J}\langle R(\xi)\pi'(y)W', W_j'\rangle W_j'=\sum_{j\in J}\langle \pi'(y)W', W_j'\rangle W_j',
\end{align*}
where $y\in\supp \Phi.$ So 
\begin{align*}
	\Psi_{\lambda}(s,\phi^*,\phi')&=\frac{1}{L(1+(1+n^{-1})\lambda,\sigma_{1}\times\widetilde{\sigma}_{2})}\sum_{j\in J}\int_{N'(\mathbb{A}_F)\backslash G'(\mathbb{A}_F)}W_j'(x)W_{2,-\lambda}(x)\\
	&|\det x|^{s+\frac{1}{2}}dx\int_{G'(\mathbb{A}_F)}\widehat{\Phi}_2(y,\eta x)\langle \pi'(y)W', W_j'\rangle \sigma_{1,\lambda}(\det y)|\det y|^{s+\frac{n}{2}}dy.
\end{align*}

The first integral is a Rankin-Selberg convolution for $L(s+1/2,\pi'\times\sigma_{2,-\lambda})=L(s+1/2-\lambda/n,\pi'\times\sigma_2),$ and the second one is an Godement-Jacquet integral for $L(s+1/2,\pi'\times\sigma_{1,\lambda})=L(s+1/2+\lambda,\pi'\times\sigma_{1}),$ which is entire. 

By Rankin-Selberg theory, \eqref{165} converges absolutely when $\Re(s)-\Re(\lambda)/n>1/2,$ and admits a meromorphic continuation to $\lambda\in\mathbb{C}$ with possible (simple) poles at $\lambda=n(s-1/2)$ and $\lambda=n(s+1/2)$ when $\pi'\simeq\widetilde{\sigma}_2.$ 

We now proceed to derive the meromorphic continuation of $\Psi_{\lambda}(s,\phi^*,\phi').$ Define
\begin{equation}\label{187}
\widehat{\Phi}_2^{\sharp}(y,\eta g'):=\int_{\mathbb{A}_F^n}\Phi(y,\mathfrak{u})\overline{\theta}(\eta g'\mathfrak{u})d\mathfrak{u}
\end{equation}
for all $g'\in G'(\mathbb{A}_F).$ Define the Eisenstein series 
\begin{align*}
E(x,\widehat{\Phi}_2(y,\cdot),\lambda):=\sum_{\delta \in P'(F)\backslash G'(F)}\int_{Z'(\mathbb{A}_F)}\widehat{\Phi}_2^{\sharp}(y,\eta\delta zx)|\det zx|^{s+\frac{1}{2}-\frac{\lambda}{n}}d^{\times}z,
\end{align*}
which converges absolutely when $\Re(s)-\Re(\lambda)/n>1/2.$ Moreover, by Poisson summation $E(x,\widehat{\Phi}_2(y,\cdot),\lambda)$ admits a meromorphic continuation to $(s,\lambda)\in\mathbb{C}^2$ with at most simple poles on the line $s-\lambda/n=1/2$ and $s-\lambda/n=-1/2.$ Let $\Re(s)-\Re(\lambda)/n>1/2.$ Then $L(1+(1+n^{-1})\lambda,\sigma_{1}\times\widetilde{\sigma}_{2})\Psi_{\lambda}(s,\phi^*,\phi')$ is equal to 
\begin{align*}
\int_{G'(\mathbb{A}_F)}\sigma_{1,\lambda}(\det y)|\det y|^{s+\frac{n}{2}}\bigg[\int_{[\overline{G'}]}\phi_{2}(x)(\pi'(y)\phi')(x)E(x,\widehat{\Phi}_2(y,\cdot),\lambda)dx\bigg]dy.
\end{align*}

Hence $\int_{[\overline{G'}]}\phi_{2}(x)(\pi'(y)\phi')(x)E(x,\widehat{\Phi}_2(y,\cdot),\lambda)dx$ is equal to 
\begin{align*}
&\int_{[G']}\phi_{2}(x)(\pi'(y)\phi')(x)\sum_{\delta \in P'(F)\backslash G'(F)}\int_{\mathbb{A}_F^n}\Phi(y,\mathfrak{u})\overline{\theta}(\eta \delta x\mathfrak{u})d\mathfrak{u}|\det x|^{s+\frac{1}{2}-\frac{\lambda}{n}}dx\\
=&\int_{[G']}\phi_{2}(x)(\pi'(y)\phi')(x)\sum_{\delta \in P'(F)\backslash G'(F)}\int_{\mathbb{A}_F^n}\Phi(y,x^{-1}\mathfrak{u})\overline{\theta}(\eta \delta \mathfrak{u})d\mathfrak{u}|\det x|^{s-\frac{1}{2}-\frac{\lambda}{n}}dx.
\end{align*}

In conjunction with  $\Phi(y,\mathfrak{u})=\Phi[(y,\mathbf{0})\iota(y^{-1})u\iota(y)]=\Phi[(\mathbf{0},y)w\iota(y^{-1})u\iota(y)]$ and a change of variables $x\mapsto xy^{-1}$ and $y\mapsto yx,$ the above integral becomes 
\begin{align*}
&\int_{[G']}\int_{G'(\mathbb{A}_F)}\sigma_{1,\lambda}(\det yx)|\det yx|^{s+\frac{n}{2}}\phi_{2}(y^{-1})\phi'(x)\\
&\sum_{\delta \in P'(F)\backslash G'(F)}\int_{N_P(\mathbb{A}_F)}\Phi[(\mathbf{0},y)wu\iota(x)]\overline{\theta}(\eta \delta \mathfrak{u})d\mathfrak{u}|\det y|^{-s+\frac{1}{2}+\frac{\lambda}{n}}dydx,
\end{align*}
which, in terms of $\phi^*_{\Phi,\phi_2}(g,{\lambda})$ defined by \eqref{163}, turns out to be 
\begin{align*}
\int_{[G']}\phi'(x)
\sum_{\delta \in P'(F)\backslash G'(F)}\int_{N_P(\mathbb{A}_F)}\phi_{\Phi,\phi_2}^*(w\iota(x)u,\lambda)\overline{\theta}(\eta \delta x\mathfrak{u})d\mathfrak{u}|\det x|^{s+1}dx.
\end{align*}

Let $\phi\in\mathfrak{B}_{Q,\chi}.$ Making use of the decomposition $\phi=\sum F(\lambda)\phi^*_{\Phi,\phi_2}(\cdot,\lambda)$ gives
\begin{align*}
\Psi_{\lambda}(s,\phi,\phi')=\int_{[G']}\phi'(x)\sum_{\delta \in P'(F)\backslash G'(F)}\int_{N_P(\mathbb{A}_F)}\phi(w\iota(x)u)\overline{\theta}(\eta \delta x\mathfrak{u})d\mathfrak{u}|\det x|^{s+1-\frac{\lambda}{n}}dx.
\end{align*}

Note that $\phi(w\iota(x)u)=|\det x|^{-\frac{1}{2}}(\sigma_2(x)\phi)(wu).$ Then $\Psi_{\lambda}(s,\phi,\phi')$ becomes
\begin{equation}\label{171..}
\int_{[G']}\phi'(x)\sigma_2(x)\sum_{\delta \in P'(F)\backslash G'(F)}\int_{N_P(\mathbb{A}_F)}\phi(wu)\overline{\theta}(\eta \delta x\mathfrak{u})d\mathfrak{u}|\det x|^{s+\frac{1}{2}-\frac{\lambda}{n}}dx.
\end{equation}

So \eqref{166.} follows from \eqref{171..}.
\end{proof}

\begin{cor}\label{prop39}
	Let notation be as before. Let $\chi=\{(M_Q,\sigma)\}$ with $\pi'\simeq\widetilde{\sigma}_2.$ Let $\phi\in\mathfrak{B}_{R,\chi}.$ Denote by $w=\begin{pmatrix}
		&1\\
		I_{n}&
	\end{pmatrix}.$ Then 
	\begin{equation}\label{166}
		\underset{\lambda=n(s-1/2)}{\Res}\Psi_{\lambda}(s,\phi,\phi')=-\int_{[\overline{G'}]}\phi'(x)\phi(w\iota(x))|\det x|^{\frac{1}{2}}dx.
	\end{equation}
\end{cor}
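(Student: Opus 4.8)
The plan is to extract the residue at $\lambda = n(s-1/2)$ from the identity \eqref{166.} of Proposition \ref{prop46}, namely
\[
\Psi_{\lambda}(s,\phi,\phi') = \int_{[\overline{G'}]}\phi'(x)\sigma_2'(x)\,e_v(x,\phi,s+\tfrac12-\tfrac{\lambda}{n})\,dx.
\]
First I would observe that in the right-hand side the only source of a pole in $\lambda$ (at fixed $s$ with $\Re(s)-\Re(\lambda)/n$ near $1/2$) is the Eisenstein series $e_v(x,\phi,s+\tfrac12-\tfrac{\lambda}{n})$, which — as recorded just before the statement of Proposition \ref{prop46} and in its proof via Poisson summation — has at most a simple pole on the hyperplane $s+\tfrac12-\tfrac{\lambda}{n}=1$, i.e. at $\lambda = n(s-\tfrac12)$, and the integral against the cusp form $\phi'(x)\sigma_2'(x)$ converges absolutely and uniformly near that hyperplane. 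Hence one may interchange $\Res_{\lambda=n(s-1/2)}$ with $\int_{[\overline{G'}]}$.

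Next I would compute the residue of $e_v(x,\phi,\tilde s)$ at $\tilde s = 1$, where $\tilde s = s+\tfrac12-\tfrac{\lambda}{n}$. By definition
\[
e_v(x,\phi,\tilde s)=\sum_{\delta\in P'(F)\backslash G'(F)}\int_{Z'(\mathbb{A}_F)}\int_{N_P(\mathbb{A}_F)}\phi(wu)\,\overline{\theta}(\eta\delta z x\mathfrak u)\,d\mathfrak u\,|\det zx|^{\tilde s}\,d^{\times}z,
\]
which is precisely an Eisenstein series of the shape \eqref{277} built from the Bruhat--Schwartz section $\mathbf x\mapsto \int_{N_P(\mathbb A_F)}\phi(wu)\,\overline\theta(\mathbf x\cdot{}^{\mathrm t}\mathfrak u)\,d\mathfrak u$ evaluated at $\mathbf x = \eta\delta z x$. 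By the residue formula \eqref{278}--\eqref{R} for such Eisenstein series (the $E_{\Res}$ term), the residue at $\tilde s=1$ equals $\tfrac{1}{n}\,\widehat{\Phi}(0)\,|\det x|^{0}$ with $\widehat\Phi(0)$ the value at $0$ of the relevant Fourier transform; evaluating this Fourier transform at $0$ collapses the $N_P$-integral against $\overline\theta$ and leaves exactly $\phi(w)$, so that $\Res_{\tilde s=1} e_v(x,\phi,\tilde s) = \tfrac1n\,\phi(w\iota(x))\,|\det x|^{-1/2}\cdot(\text{const})$ up to bookkeeping of the self-dual measure normalization on $N_P(\mathbb A_F)\cong\mathbb A_F^n$, which has total mass $1$ on $[N_P]$. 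Finally, converting $\Res_{\tilde s=1}$ into $\Res_{\lambda=n(s-1/2)}$ introduces the Jacobian factor $\tfrac{d\tilde s}{d\lambda}=-\tfrac1n$, producing the overall sign and the constant $-1$ in \eqref{166}; the twist $\sigma_2'(x)$ combines with $|\det x|^{\tilde s}$-powers so that the surviving power of $|\det x|$ is $\tfrac12$, and since $\pi'\simeq\widetilde\sigma_2$ the central characters match so the integrand $\phi'(x)\phi(w\iota(x))|\det x|^{1/2}$ is genuinely a function on $[\overline{G'}]$.

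The main obstacle I expect is the careful tracking of normalizations: the half-sum of roots $\rho_Q$ and the modular shift $|\det x|^{-1/2}$ hidden in $\phi(w\iota(x))=|\det x|^{-1/2}(\sigma_2(x)\phi)(w)$ (used at the end of the proof of Proposition \ref{prop46}), the measure normalization making $\mathrm{Vol}([N_P])=1$, and the constant in the residue term $E_{\Res}$ of \eqref{278}, must all be assembled so that the residue comes out with coefficient exactly $-1$ and exponent exactly $\tfrac12$. A secondary point requiring care is the justification of pulling the residue inside the $x$-integral: one needs that, locally in $\lambda$ near $n(s-1/2)$, the holomorphic part $E_{+}+E_{+}^{\wedge}$ of $e_v(x,\phi,\tilde s)$ is dominated uniformly by a fixed gauge on $[\overline{G'}]$ integrable against the rapidly decaying cusp form $\phi'$, which follows from the absolute convergence of $E_{+}$ and $E_{+}^{\wedge}$ for all $\tilde s$ stated after \eqref{278}. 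Once these are in place, \eqref{166} is immediate.
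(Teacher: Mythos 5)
Your proposal is correct and follows essentially the same route the paper intends: Corollary \ref{prop39} is read off from formula \eqref{166.} of Proposition \ref{prop46} by taking the residue of the Eisenstein series $e_v(x,\phi,\tilde{s})$ at $\tilde{s}=s+\tfrac12-\tfrac{\lambda}{n}=1$, where the $E_{\Res}$-term of the decomposition \eqref{278} contributes $\tfrac1n\widehat{\Phi}_x(0)=\tfrac1n(\sigma_2(x)\phi)(w)=\tfrac1n|\det x|^{1/2}\phi(w\iota(x))$ and the interchange of residue and $x$-integral is justified by the uniform convergence of $E_{+}+E_{+}^{\wedge}$ against the rapidly decaying cusp form. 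One bookkeeping caution: a simple pole transforms under the change of variable by $\tfrac{d\lambda}{d\tilde{s}}=-n$ (not $\tfrac{d\tilde{s}}{d\lambda}=-\tfrac1n$ as you wrote), and it is this factor $-n$ combined with the $\tfrac1n$ from $E_{\Res}$ that produces exactly the constant $-1$ and exponent $\tfrac12$ in \eqref{166}.
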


\begin{cor}\label{cor40}
Let notation be as before. Let $\chi=\{(M_Q,\sigma)\}$ with $\pi'\simeq\widetilde{\sigma}_2.$ Let $\phi\in\mathfrak{B}_{R,\chi}.$ Then $\underset{\lambda=n(s-1/2)}{\Res}\Psi_{\lambda}(s,\mathcal{I}(\lambda,f)R(w^{-1})\phi,\phi')$ is equal to 
\begin{align*}
-\int_{[\overline{G'}]}\phi'(x)\int_{\mathbb{A}_F^n}\int_{G'(\mathbb{A}_F)}\int_{K}f(u(\mathbf{x})\iota(y)k)\phi(w\iota(xy)kw^{-1})|\det x|^{\frac{1}{2}}|\det y|^{\frac{3}{2}-s}dkdydx.
\end{align*}
In particular, $\underset{\lambda=n(s-1/2)}{\Res}\Psi_{\lambda}(s,\mathcal{I}(\lambda,f)R(w^{-1})\phi,\phi')$ is an entire function of $s.$
\end{cor}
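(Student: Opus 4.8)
## Proof Proposal for Corollary \ref{cor40}

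The plan is to deduce the formula from Corollary \ref{prop39} by unwinding the action of $\mathcal{I}(\lambda,f)R(w^{-1})$ on $\phi$, then using the $G(\mathbb{A}_F)$-equivariance of the Eisenstein series period together with the explicit form \eqref{172} of $E(x,\mathcal{I}(\lambda,f)\phi,\lambda)$. First I would start from the right-hand side of \eqref{166} in Corollary \ref{prop39}, applied not to $\phi$ but to $\mathcal{I}(\lambda,f)R(w^{-1})\phi$. Since residues commute with the (finite, $K$-finite) operations involved, we get
$$
\underset{\lambda=n(s-1/2)}{\Res}\Psi_{\lambda}(s,\mathcal{I}(\lambda,f)R(w^{-1})\phi,\phi')=-\int_{[\overline{G'}]}\phi'(x)\big[\mathcal{I}(\lambda,f)R(w^{-1})\phi\big](w\iota(x))\Big|_{\lambda=n(s-1/2)}|\det x|^{\frac{1}{2}}dx,
$$
so the task reduces to a purely local-global computation of $[\mathcal{I}(\lambda,f)R(w^{-1})\phi](w\iota(x))$ at the distinguished value $\lambda=n(s-1/2)$.

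Next I would unfold the definition $\big[\mathcal{I}(\lambda,f)\psi\big](g)=\int_{\overline{G}(\mathbb{A}_F)}\tilde f(h)\psi_{\boldsymbol\lambda+\rho_Q}(gh)\,dh$ (the Jacquet/Arthur normalization used around \eqref{172}), so that $[\mathcal{I}(\lambda,f)R(w^{-1})\phi](w\iota(x))=\int f(h)\,\phi\big(w^{-1}\cdot w\iota(x)h\big)$ up to the modular twist, i.e.\ $\int f(h)\phi(\iota(x)h)$ weighted by $e^{\langle\boldsymbol\lambda+\rho_Q,H_Q(\iota(x)h)\rangle}$. I would then use the Iwasawa decomposition $h=u(\mathbf{x})\,n\,\iota(y)\,k$ adapted to the parabolic $P$ (mirabolic direction $u(\mathbf{x})$, an $N_P$-part, a $G'$-block $\iota(y)$, and $k\in K$), exactly as in the reductions carried out for $\mathcal{F}_{0,1}J^{\bi}_{\Geo}$ in \textsection\ref{sec5.2}; the $N_P$ integration is absorbed since $\phi$ is left-invariant under the relevant unipotent up to the character, and the Schwartz–Bruhat nature of the integrated test function guarantees absolute convergence. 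Tracking the power of $|\det y|$ produced by $e^{\langle\boldsymbol\lambda+\rho_Q,H_Q\rangle}$ at $\lambda=n(s-1/2)$ — here $\rho_Q$ contributes $|\det\cdot|^{\pm 1/2}$-type factors and $\boldsymbol\lambda$ contributes $|\det y|^{-\lambda/n}=|\det y|^{1/2-s}$ — gives the combined exponent $\tfrac{3}{2}-s$ on $|\det y|$ and $\tfrac12$ on $|\det x|$, and the conjugation $\phi(w\iota(xy)kw^{-1})$ appears because $w\iota(x)\cdot u(\mathbf{x})n\iota(y)k = \big(u(\mathbf{x}')n'\big)\cdot w\iota(xy)k$ with $w\iota(y)w^{-1}$ landing in $M_Q$; matching this against the desired display yields the stated identity.

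The main obstacle I expect is the bookkeeping of the $N_P$-integral and the conjugation by $w$: one must check carefully that the intertwining of $w\iota(x)u(\mathbf{x})$ with the Levi/unipotent factorization of $P$ produces exactly the argument $w\iota(xy)kw^{-1}$ inside $\phi$ and exactly the variable $u(\mathbf{x})\iota(y)k$ inside $f$ (no stray Weyl twist or leftover unipotent), and that the modular characters combine to the clean exponents $\tfrac12$ and $\tfrac32-s$ rather than shifted versions. For the final assertion that the residue is \emph{entire} in $s$, I would invoke Proposition \ref{prop46}: the pole of $\Psi_{\lambda}(s,\cdot,\phi')$ at $s-\lambda/n=\pm1/2$ is simple, so its residue along $\lambda=n(s-1/2)$ is holomorphic away from that locus, while along $\lambda=n(s-1/2)$ the only potential obstruction would be the second pole $s-\lambda/n=-1/2$, which becomes $s-(s-1/2)=1/2\neq -1/2$ and hence never occurs; combined with the absolute convergence of the resulting integral for all $s$ (again from the Schwartz–Bruhat/rapid-decay estimates, as in Lemma \ref{lem19} and Proposition \ref{prop19}), this gives entireness. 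I would close by remarking that this entireness is precisely what is needed so that the singular spectral contribution $\mathcal{G}_{\chi}$ has its poles located only on $s_1+s_2\in\{0,1\}$, matching the geometric side as in Theorem \ref{thm49}.
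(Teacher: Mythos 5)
Your proposal follows essentially the same route as the paper's proof: apply Corollary \ref{prop39} to $\mathcal{I}(\lambda,f)R(w^{-1})\phi$, unfold the operator via \eqref{172}, conjugate by $w$ and track the $H_Q$-exponent (the paper computes $H_Q(w\iota(x)yw^{-1})=\tfrac1n\log|\det xy|$ and then changes variables $y\mapsto w^{-1}yw$ before passing to the Iwasawa coordinates $u(\mathbf{x})\iota(y)k$), and conclude entireness from absolute convergence of the resulting integral — where the paper argues simply from the compact support of $f$ modulo the center rather than from the two-variable pole structure of Proposition \ref{prop46}, though both arguments work. One bookkeeping caution: the correct decomposition of the $\overline{G}(\mathbb{A}_F)$-integral is $h=u(\mathbf{x})\iota(y)k$ with no separate $N_P$-factor, and the unipotent absorbed by the left $N_Q(\mathbb{A}_F)$-invariance of $\phi$ is $w\,u(\cdot)\,w^{-1}\in N_Q$, not a conjugated $N_P$ (indeed $wN_Pw^{-1}$ lies in the opposite unipotent and would not be absorbed); this slip in your sketch is harmless once the decomposition is set up correctly, as you yourself flag.
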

\begin{proof}
By \eqref{172} and Corollary \ref{prop39} $\underset{\lambda=n(s-1/2)}{\Res}\Psi_{\lambda}(s,\mathcal{I}(\lambda,f)R(w^{-1})\phi,\phi')$ is equal to 
\begin{equation}\label{173}
-\int_{\overline{G}(\mathbb{A}_F)}f(y)\int_{[\overline{G'}]}\phi'(x)\phi(w\iota(x)yw^{-1})e^{\langle\boldsymbol{\lambda}, H_Q(w\iota(x)yw^{-1})\rangle}|\det x|^{s}dxdy.
\end{equation}	

Let $wyw^{-1}=umk$ be the Iwasawa decomposition such that $um\in \overline{Q}(\mathbb{A}_F).$ Identify the Levi component of $\overline{Q}$ with $\diag(1,G').$ So 
$$
H_Q(w\iota(x)yw^{-1})=H_Q(w\iota(x)w^{-1}um)=H_Q(w\iota(x)w^{-1}m)=|\det xm|^{\frac{1}{n}}=|\det xy|^{\frac{1}{n}}.
$$ 
Thus \eqref{173} becomes, after a further change of variable $y\mapsto w^{-1}yw,$ that 
\begin{equation}\label{174}
-\int_{[\overline{G'}]}\phi'(x)\int_{\overline{G}(\mathbb{A}_F)}f(w^{-1}yw)\phi(w\iota(x)w^{-1}y)|\det x|^{\frac{1}{2}}|\det y|^{\frac{3}{2}-s}dydx.
\end{equation} 
Since $f$ is compact supported on $\overline{G}(\mathbb{A}_F),$ then \eqref{174} converges absolutely for all $s\in\mathbb{C},$ defining an entire function of $s.$ So Corollary \ref{cor40} follows.
\end{proof}

With the above preparation we can prove Theorem \ref{thm49} now. 
\begin{proof}[Proof of Theorem \ref{thm49}]
Note that one can replace $\phi$ with $R(w^{-1})\phi$ in the sum
$$
\sum_{\phi\in\mathfrak{B}_{Q,\chi}}\Psi_{\lambda}(s_1,\mathcal{I}(\lambda,f)\phi,\phi_1')\overline{\Psi_{\lambda}(\overline{s_2},\phi,\phi_2')}.
$$
So $\mathcal{G}_{\chi}(s_1,s_2,\phi_1',\phi_2')$ is equal to 
\begin{align*}
	\sum_{\phi\in\mathfrak{B}_{Q,\chi}}\overline{\Psi_{n(s_1-1/2)}(\overline{s_2},R(w^{-1})\phi,\phi_2')}\underset{\lambda=n(s_1-1/2)}{\Res}\Psi_{\lambda}(s_1,\mathcal{I}(\lambda,f)R(w^{-1})\phi,\phi_1').
\end{align*}
By Proposition \ref{prop46} and Corollary \ref{cor40}, $\mathcal{G}_{\chi}(s_1,s_2,\phi_1',\phi_2')$ is
\begin{align*}
&-\sum_{\phi\in\mathfrak{B}_{Q,\chi}}\int_{[\overline{G'}]}\int_{N_Q(\mathbb{A}_F)}\int_{G'(\mathbb{A}_F)}\int_{K}f\left(w^{-1}u'\begin{pmatrix}
	1\\
	&y
\end{pmatrix}kw\right)\\
&\quad \phi\left(\begin{pmatrix}
	1\\
	&x
\end{pmatrix}u'\begin{pmatrix}
	1\\
	&y
\end{pmatrix}k\right)\phi_1'(x)|\det x|^{\frac{1}{2}}|\det y|^{\frac{3}{2}-s_1}dkdu'dydx\\
&\int_{[G']}\overline{\phi_2'(h)}\sum_{\delta \in P'(F)\backslash G'(F)}\int_{N_P(\mathbb{A}_F)}\overline{\phi(w\iota(h)uw^{-1})}\theta(\eta \delta h\mathfrak{u})du|\det h|^{s_2+s_1+\frac{1}{2}}dh.
\end{align*}

For $g\in{G}(\mathbb{A}_F),$ we define $\mathfrak{F}(g;s_1)$ by 
\begin{equation}\label{189}
\int_{N_Q(\mathbb{A}_F)}\int_{G'(\mathbb{A}_F)}f\left(w^{-1}u'\begin{pmatrix}
	1\\
	&y
\end{pmatrix}gw\right)\pi'(y^{-1})\phi_1'(I_n)|\det y|^{1-s_1}dydu'.
\end{equation}
Write $g=zu''\begin{pmatrix}
	1&\\
	&x
\end{pmatrix}k\in{G}(\mathbb{A}_F),$ where $z\in Z(\mathbb{A}_F),$ $u''\in N_Q(\mathbb{A}_F).$ Then 
$$
\mathfrak{F}(g;s_1)=\omega^{-1}(z)|\det x|^{s_1-1}\pi'(x)\mathfrak{F}(k;s_1),
$$ 
indicating that $\mathfrak{F}(\cdot;s_1)\in \Pi_{s_1}:=\Int_{Q(\mathbb{A}_F)}^{G(\mathbb{A}_F)}\widetilde{\sigma}\otimes e^{\langle \left(n(1/2-s_1), n(s_1-1/2)\right),H_Q(\cdot)\rangle},$ where $\sigma=\sigma_1\otimes\sigma_2$ with $\sigma_1=\omega\omega'^{-1}$ and $\sigma_2=\widetilde{\pi}'.$ Denote by 
$$
\phi(g; s_1):=\phi\left(zu'\begin{pmatrix}
1\\
&x
\end{pmatrix}k\right)|\det x|^{3/2-s_1}.
$$ 
Then $\phi(g; s_1)=\omega(z)|\det x|^{1-s_1}(\textbf{1}\otimes\widetilde{\pi}'(x))\phi(k; 1-s_1).$ So $\phi(\cdot;s_1)\in \widetilde{\Pi}_{s_1},$ the contragredient of the representation $\Pi_{s_1}.$

With the above notation the function $-\mathcal{G}_{\chi}(s_1,s_2,\phi_1',\phi_2')$ becomes 
\begin{align*}
\sum_{\phi}\langle\langle \mathfrak{F}(\cdot;s_1),\phi(\cdot;s_1)\rangle\rangle\int_{[G']}\overline{\phi_2'(h)}\sum_{\delta}\int\overline{\phi(w\iota(h)uw^{-1})}\theta(\eta \delta h\mathfrak{u})du|\det h|^{s_2+s_1+\frac{1}{2}}dh,
\end{align*}
where $\phi\in\mathfrak{B}_{Q,\chi},$ $\delta$ (resp. $u$) ranges through $P'(F)\backslash G'(F)$ (resp. $N_P(\mathbb{A}_F)$), and   
\begin{align*}
\langle\langle \mathfrak{F}(\cdot;s_1),\phi(\cdot;s_1)\rangle\rangle:=\int_{[\overline{G'}]}\int_{K}\mathfrak{F}\left(\begin{pmatrix}
	1\\
	&x
\end{pmatrix}k;s_1\right)\phi\left(\begin{pmatrix}
	1\\
	&x
\end{pmatrix}k;s_1\right)dkdx
\end{align*}
is the pair for $\Pi_{s_1}$ and $\widetilde{\Pi}_{s_1}.$ By orthogonality we have  
\begin{equation}\label{188}
\sum_{\phi\in\mathfrak{B}_{Q,\chi}}\langle\langle \mathfrak{F}(\cdot;s_1),\phi(\cdot;s_1)\rangle\rangle\overline{\phi\left(\begin{pmatrix}
		1\\
		&h
	\end{pmatrix}\right)}|\det h|^{-\frac{1}{2}+s_1}=\mathfrak{F}\left(\begin{pmatrix}
1\\
&h
\end{pmatrix};s_1\right).
\end{equation}

As a consequence of \eqref{188} the function $\mathcal{G}_{\chi}(s_1,s_2,\phi_1',\phi_2')$ becomes 
\begin{align*}
& -\int_{[G']}\overline{\phi_2'(h)}\sum_{\delta \in P'(F)\backslash G'(F)}\int_{N_P(\mathbb{A}_F)}\mathfrak{F}(w\iota(h)uw^{-1};s_1)
\theta(\eta \delta h\mathfrak{u})d\mathfrak{u}|\det h|^{s_2+1}dh.
\end{align*}

Expand $\mathfrak{F}(w\iota(h)uw^{-1};s_1)$ according to \eqref{189} to write  $\mathcal{G}_{\chi}(s_1,s_2,\phi_1',\phi_2')$ as
\begin{align*}
&-\int_{[G']}\overline{\phi_2'(h)}\sum_{\delta \in P'(F)\backslash G'(F)}\int_{N_P(\mathbb{A}_F)}
\theta(\eta \delta h\mathfrak{u})du|\det h|^{s_2+1}dh\\
&\int_{N_Q(\mathbb{A}_F)}\int_{G'(\mathbb{A}_F)}f\left(w^{-1}u'\begin{pmatrix}
	1\\
	&yh
\end{pmatrix}wu\right)\pi'(y^{-1})\phi_1'(I_n)|\det y|^{1-s_1}dydu'.
\end{align*}

Change the variable $y\mapsto yh^{-1}$ and $h\mapsto hy$ to transform the above integral into 
\begin{align*}
& -\int_{[G']}\overline{\phi_2'(hy)}\sum_{\delta \in P'(F)\backslash G'(F)}\int_{N_P(\mathbb{A}_F)}
\theta(\eta \delta hy\mathfrak{u})du|\det hy|^{s_1+s_2}dh\\
&\int_{N_Q(\mathbb{A}_F)}\int_{G'(\mathbb{A}_F)}f\left(w^{-1}u'\begin{pmatrix}
	1\\
	&y
\end{pmatrix}wu\right)\phi_1'(h)|\det y|^{1-s_1}dydu',
\end{align*}
which becomes, after a further change of variable $\mathfrak{u}\mapsto y^{-1}\mathfrak{u},$ that 
\begin{align*}
& -\int_{[G']}\overline{\phi_2'(hy)}\sum_{\delta \in P'(F)\backslash G'(F)}\int_{N_P(\mathbb{A}_F)}
\theta(\eta \delta h\mathfrak{u})du|\det h|^{s_1+s_2}dh\\
&\int_{N_Q(\mathbb{A}_F)}\int_{G'(\mathbb{A}_F)}f\left(w^{-1}u'wu\iota(y)\right)\phi_1'(h)|\det y|^{s_2}dydu'.
\end{align*}

After an unfolding we then see the above integral becomes \eqref{194}. Moreover, we have the functional equation  $E_P^{\dagger}(x,s;\widehat{f}_P,y)=E_{\gamma}(x,1-s;f_P^{\dag}(\cdot;z_2,y)),$ which is defined by \eqref{274} in \text\textsection \ref{sec5.2}.

Apply Poisson summation to the Eisenstein series $E_P^{\dagger}(x,s_1+s_2;\widehat{f}_P,y)$ and follow the proof of Propositions \ref{prop22}, \ref{prop19} and Lemma \ref{prop21.} we then see that $\mathcal{G}_{\chi}(s_1,s_2,\phi_1',\phi_2')$ admits a meromorphic continuation to $\mathcal{R}=\big\{\mathbf{s}=(s_1,s_2)\in\mathbb{C}^2:\  \Re(s_2)>-1/(n+1)\big\}.$ 
The analytic behaviors of $\widetilde{\mathcal{G}}_{\chi}(s_1,s_2,\phi_1',\phi_2')$ can be proved  similarly.
\end{proof}

\subsection{Singularity Matching of Geometric and Spectral Sides}\label{sec7.2}
By Proposition \ref{prop11'}, Lemma \ref{prop21}, and Theorems \ref{thm22}, \ref{thm24} and \ref{Red}, the geometric side 
\begin{align*}
J_{\Geo}^{\Reg}(f,\textbf{s})=J^{\Reg}_{\Geo,\sm}(f,\textbf{s})-&\mathcal{F}_{1,0}J^{\bi}_{\Geo}(f,\textbf{s})-\mathcal{F}_{0,1}J^{\bi}_{\Geo}(f,\textbf{s})\\
&\quad +J_{\Geo,\du}^{\bi}(f,\textbf{s})+J^{\Reg,\RNum{2}}_{\Geo,\bi}(f,\textbf{s})
\end{align*}
converges absolutely in $\Re(s_1)+\Re(s_2)>1$ and admits a meromorphic continuation to the region $\mathcal{R}$ with at most simple poles at $s_1+s_2\in \{0,1\},$ where 
\begin{equation}\label{271}
\mathcal{R}=\big\{\mathbf{s}=(s_1,s_2)\in\mathbb{C}^2:\ \Re(s_1),\ \Re(s_2)>-1/(n+1)\big\}.
\end{equation} 

By Theorem \ref{thm40} the spectral side 
\begin{equation}\label{186}
J_{\Spec}^{\Reg}(f,\mathbf{s})=J_{0}(f,\mathbf{s})+J_{\Eis,\reg}^{\Reg}(f,\mathbf{s})+J_{\Eis,\semi}^{\Reg}(f,\mathbf{s})+J_{\Eis,\sing}^{\Reg}(f,\mathbf{s})
\end{equation}
converges absolutely in $\Re(s_1), \Re(s_2)\gg 1$ and admits a meromorphic continuation to the region $\mathcal{R}$ with at most simple poles at $s_1+s_2\in \{0,1\}.$ In particular, only $J_{\Eis,\sing}^{\Reg}(f,\mathbf{s})$ on the RHS may be meromorphic, and others are holomorphic.

Note that the meromorphic terms $J^{\Reg}_{\Geo,\sm}(f,\textbf{s}),$ $\mathcal{F}_{1,0}J^{\bi}_{\Geo}(f,\textbf{s}),$ $\mathcal{F}_{0,1}J^{\bi}_{\Geo}(f,\textbf{s}),$ $J_{\Geo,\du}^{\bi}(f,\textbf{s})$ and $J_{\Eis,\sing}^{\Reg}(f,\mathbf{s})$ can be represented by integrals involving an Eisenstein series  $E(s,x,y;\Phi,\omega'')$ of the form \eqref{277} (e.g., $E_{\gamma}(x,1-s;f_P^{\dag}(\cdot;z_2,y))$ defined by \eqref{274} in \text\textsection \ref{sec5.2}, or $E_P^{\dagger}(x,s;\widehat{f}_P,y)$ defined by \eqref{276} in \text\textsection \ref{sec10.}), parametrized by $y$ continuously, and are of the form 
\begin{equation}\label{279}
J(s)=\iint h(x,y)E(s,x,y;\Phi,\omega'')dxdy
\end{equation}
for some continuous function $h.$ Following the decomposition \eqref{278} we can decompose $E(s,x,y;\Phi,\omega'')$ into three parts and thereby obtain a decomposition of $J(s):$ 
\begin{equation}\label{280}
J(s)=J_+(s)+J_+^{\wedge}(x,s)+J_{\Res}(s),
\end{equation}
where $J_+(s)$ and $J_+^{\wedge}(x,s)$ are holomorphic, and $J_{\Res}(s)$ may be just meromorphic. 

Define the meromorphic part $J_{\Geo,\Res}^{\Reg}(f,\textbf{s})$ of $J_{\Geo}^{\Reg}(f,\textbf{s})$ to be 
\begin{align*}
J^{\Reg}_{\Geo,\sm,\Res}(f,\textbf{s})-\mathcal{F}_{1,0}J^{\bi}_{\Geo,\Res}(f,\textbf{s})-\mathcal{F}_{0,1}J^{\bi}_{\Geo,\Res}(f,\textbf{s})+J_{\Geo,\gamma(0),\Res}^{\bi}(f,\textbf{s}).
\end{align*}
Denote by $J_{\Eis,\sing,\Res}^{\Reg}(f,\mathbf{s})$ the meromorphic part of $J_{\Spec}^{\Reg}(f,\mathbf{s}).$

We now compute $J_{\Geo,\Res}^{\Reg}(f,\textbf{s})$ and $J_{\Eis,\sing,\Res}^{\Reg}(f,\mathbf{s}),$ and show they match with each other. The main result in this section is the following.
\begin{prop}\label{prop45}
Let notation be as above. Then $J_{\Geo,\Res}^{\Reg}(f,\textbf{s})-J_{\Eis,\sing,\Res}^{\Reg}(f,\mathbf{s})$ is holomorphic in $\mathcal{R}$. In particular, in the region $\mathbf{s}\in \big\{(s_1,s_2)\in\mathcal{R}:\ \Re(s_1)<1/2,\ \Re(s_2)<1/2\big\},$ the meromorphic function 
\begin{align*}
&J^{\Reg}_{\Geo,\sm,\Res}(f,\textbf{s})+J_{\Geo,\du,\Res}^{\bi}(f,\textbf{s})-\mathcal{F}_{0,1}J_{\Geo,\Res}^{\bi}(f,\textbf{s})-\mathcal{F}_{1,0}J_{\Geo,\Res}^{\bi}(f,\textbf{s})\\
&+\mathcal{G}_{\chi,\Res}(s_1,s_2,\phi_1',\phi_2')-\widetilde{\mathcal{G}}_{\chi,\Res}(s_1,s_2,\phi_1',\phi_2')
\end{align*}
is equal to 
\begin{align*}
-J^{\Reg,0}_{\Geo,\sm,\Res}(f,\textbf{s})-J_{\Geo,\du,\Res}^{\bi,1}(f,\textbf{s}),
\end{align*}
which is holomorphic in the region $\Re(s_1+s_2)<1.$ Here $J_{\Geo,\du,\Res}^{\bi,1}(f,\textbf{s})$ is  
\begin{align*}
\frac{1}{n(s_1+s_2-1)}\int_{G'(\mathbb{A}_F)}\int_{\mathbb{A}_F^n}
	f(u(\textbf{x})\iota(y))\int_{[\overline{G'}]}\phi_1'(x)\overline{\phi_2'(xy)}dx|\det y|^{s_2}d\textbf{x}dy,
\end{align*}
and  $J^{\Reg,0}_{\Geo,\sm,\Res}(f,\textbf{s})$ is equal to 
\begin{align*}
-\frac{1}{n(s_1+s_2+1)}\int_{G'(\mathbb{A}_F)}\int_{N_P(\mathbb{A}_F)}
	f(u\iota(y))\int_{[\overline{G'}]}\phi_1'(x)\overline{\phi_2'(xy)}dx|\det y|^{s_2}dudy.
\end{align*}
\end{prop}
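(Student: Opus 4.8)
The plan is to compute the residual (meromorphic) part of each of the five terms in $J_{\Geo,\Res}^{\Reg}(f,\mathbf{s})$ and of $J_{\Eis,\sing,\Res}^{\Reg}(f,\mathbf{s})$ separately, using in each case the decomposition \eqref{280} of the associated Eisenstein period into $J_+ + J_+^{\wedge} + J_{\Res}$, and then to verify that, after applying the functional equation relating the geometric and spectral Eisenstein series (established in the proof of Theorem \ref{thm49}), all the $J_{\Res}$ contributions telescope. Concretely, the first step is to extract the residual term $E_{\Res}$ from the Eisenstein series $E(s,x;\Phi,\omega'')$ as given in \eqref{278}: it is the elementary rational expression $-\Phi(0)|\det x|^s/(n(s+i\alpha)) + \widehat{\Phi}(0)|\det x|^{s-1}/(n(s-1+i\alpha))$. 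Plugging this into the integral representations of $J^{\Reg}_{\Geo,\sm}(f,\mathbf{s})$ (Proposition \ref{prop14}), $J_{\Geo,\du}^{\bi}(f,\mathbf{s})$ (Proposition \ref{prop18'}), $\mathcal{F}_{0,1}J^{\bi}_{\Geo}(f,\mathbf{s})$ (Proposition \ref{prop22}), $\mathcal{F}_{1,0}J^{\bi}_{\Geo}(f,\mathbf{s})$ (Proposition \ref{prop23}), and $\mathcal{G}_{\chi},\widetilde{\mathcal{G}}_{\chi}$ (Theorem \ref{thm49}) produces in each case two explicit pieces: a piece carrying the pole at $s_1+s_2 = 1$ and a piece carrying the pole at $s_1+s_2=0$ (or at $s_1+s_2 = \pm 1$, $0$, depending on the weight shift).

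Second, I would exploit the functional equations identified earlier. In the proof of Theorem \ref{thm49} it is shown that $E_P^{\dagger}(x,s;\widehat{f}_P,y) = E_{\gamma}(x,1-s;f_P^{\dag}(\cdot;z_2,y))$, and similarly $\widetilde{E}_P^{\dagger}$ corresponds to the Eisenstein series appearing in $\mathcal{F}_{1,0}J^{\bi}_{\Geo}$; these identities already pair the spectral residual terms $\mathcal{G}_{\chi,\Res}$, $\widetilde{\mathcal{G}}_{\chi,\Res}$ with $\mathcal{F}_{0,1}J^{\bi}_{\Geo,\Res}$, $\mathcal{F}_{1,0}J^{\bi}_{\Geo,\Res}$ respectively, up to sign. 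Thus in the displayed combination
\[
J^{\Reg}_{\Geo,\sm,\Res}+J_{\Geo,\du,\Res}^{\bi}-\mathcal{F}_{0,1}J_{\Geo,\Res}^{\bi}-\mathcal{F}_{1,0}J_{\Geo,\Res}^{\bi}+\mathcal{G}_{\chi,\Res}-\widetilde{\mathcal{G}}_{\chi,\Res},
\]
the four ``big-cell'' residual terms cancel in pairs (after matching constant terms $\Phi(0)$, $\widehat{\Phi}(0)$ of the relevant Schwartz sections — here $\check f_P$, $f_P^{\dag}$, $\widehat f_P$ — via Fourier inversion), leaving exactly the two ``small-cell-type'' remainders coming from the $\widehat\Phi(0)$-term of $J_{\Geo,\du}^{\bi}$ (which gives $J_{\Geo,\du,\Res}^{\bi,1}$ with its pole at $s_1+s_2=1$) and the $\Phi(0)$-term of $J^{\Reg}_{\Geo,\sm}$ (which gives $J^{\Reg,0}_{\Geo,\sm,\Res}$ with its pole at $s_1+s_2=-1$, lying outside the region $\Re(s_1+s_2)<1$ only as the second sheet — in $\mathcal{R}$ with $\Re(s_1),\Re(s_2)<1/2$ both expressions are honestly there). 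Explicitly unwinding the two surviving integrals against the formulas for $\Phi(0)=\int f(u\iota(y))\,du$ etc. yields precisely the two displayed integrals $-J^{\Reg,0}_{\Geo,\sm,\Res}-J_{\Geo,\du,\Res}^{\bi,1}$.

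Third, to conclude the first assertion — that $J_{\Geo,\Res}^{\Reg}(f,\mathbf{s}) - J_{\Eis,\sing,\Res}^{\Reg}(f,\mathbf{s})$ is holomorphic on $\mathcal{R}$ — I would note that the only genuine poles of $J_{\Geo,\du,\Res}^{\bi,1}$ and $J^{\Reg,0}_{\Geo,\sm,\Res}$ sit on $s_1+s_2=1$ and $s_1+s_2=-1$, the latter being excluded from $\mathcal{R}$ since $\Re(s_1),\Re(s_2)>-1/(n+1)$ forces $\Re(s_1+s_2)>-2/(n+1)>-1$; and that the potential pole at $s_1+s_2=1$ of the geometric remainder is, by the computation above, exactly cancelled by the pole at $s_1+s_2=1$ of $J_{\Eis,\sing,\Res}^{\Reg}$ coming from $\mathcal{G}_{\chi}$. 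The residues at $s_1+s_2=1$ on both sides reduce to the same Rankin--Selberg residue $\Res_{s=1}\Lambda(s,\pi_1'\times\widetilde\pi_2')$ weighted by the same local integral, because of the functional-equation pairing; hence the difference is holomorphic. The remaining terms $J_{\Eis,\reg}^{\Reg}$, $J_{\Eis,\semi}^{\Reg}$, $J_0$ are holomorphic by Lemmas \ref{lem41}, \ref{lem43} and the rapid decay of $\K_0$, so they do not contribute to the residual part.

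The main obstacle will be bookkeeping the constants and the character shifts correctly when applying the functional equation: the Eisenstein series in the geometric integrals carry the unitary twist $\omega'' = |\cdot|^{in\alpha}$ and are evaluated at $s = s_1+s_2+1$ or $s_1+s_2$, while the spectral residue $\mathcal{G}_{\chi}$ arises at $\lambda = n(s_1-1/2)$, so one must carefully track that the $\Phi\leftrightarrow\widehat\Phi$ swap under $s\mapsto 1-s$ matches the $N_P$-integral of $f$ against its partial Fourier transform $\widehat f_P$, and that the volume normalizations (Tamagawa measures, $\Vol([N_P])=1$) produce the clean factor $1/(n(s_1+s_2-1))$ and $-1/(n(s_1+s_2+1))$ with no stray constants. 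Once the dictionary between $(\Phi(0),\widehat\Phi(0))$ and the degenerate terms of $f$ is pinned down place by place, the cancellation is forced and the two surviving integrals are read off directly.
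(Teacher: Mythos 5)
Your overall strategy coincides with the paper's: extract the residual pieces $\Phi(0)$, $\widehat{\Phi}(0)$ of each Eisenstein series by Poisson summation, and use the functional equation $E_P^{\dagger}(x,s;\widehat{f}_P,y)=E_{\gamma}(x,1-s;f_P^{\dag}(\cdot;z_2,y))$ from Theorem \ref{thm49} to tie $\mathcal{G}_{\chi},\widetilde{\mathcal{G}}_{\chi}$ to geometric integrals (this is exactly \textsection\ref{7.2.1}--\textsection\ref{7.2.6}). However, the cancellation pattern you announce is not the one that occurs, and as stated it contradicts the identity you are proving. The four terms $\mathcal{F}_{0,1}J^{\bi}_{\Geo,\Res}$, $\mathcal{F}_{1,0}J^{\bi}_{\Geo,\Res}$, $\mathcal{G}_{\chi,\Res}$, $\widetilde{\mathcal{G}}_{\chi,\Res}$ cannot ``cancel in pairs'': the big-cell terms have their singular pieces at $s_1+s_2\in\{0,-1\}$ (their Eisenstein series sit at parameter $s_1+s_2+1$), while $\mathcal{G}_{\chi}$, $\widetilde{\mathcal{G}}_{\chi}$ have theirs at $s_1+s_2\in\{0,1\}$ (parameter $s_1+s_2$). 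Only the $1/(n(s_1+s_2))$ pieces pair off, namely $\mathcal{G}^0_{\chi,\Res}=\mathcal{F}_{0,1}J^{\bi,1}_{\Geo,\Res}$ and $\widetilde{\mathcal{G}}^0_{\chi,\Res}=-\mathcal{F}_{1,0}J^{\bi,1}_{\Geo,\Res}$; the $1/(n(s_1+s_2-1))$ pieces of $\mathcal{G}_{\chi}$ and $\widetilde{\mathcal{G}}_{\chi}$ instead pair with the \emph{dual-cell} term $J_{\Geo,\du,\Res}^{\bi,1}$, and the pieces $\mathcal{F}_{0,1}J^{\bi,0}_{\Geo,\Res}=\mathcal{F}_{1,0}J^{\bi,0}_{\Geo,\Res}=J^{\Reg,0}_{\Geo,\sm,\Res}$ (Lemma \ref{lem47}) survive uncancelled. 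Your outline also never mentions the cancellation $J^{\Reg,1}_{\Geo,\sm,\Res}+J_{\Geo,\du,\Res}^{\bi,0}=0$ of the $s_1+s_2=0$ poles of the small cell against the dual cell (Lemma \ref{185}), without which the bookkeeping does not close.

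The sign is the tell-tale symptom: if the four big-cell/spectral residual terms cancelled among themselves and the survivors were the $\widehat{\Phi}(0)$-piece of $J^{\Reg}_{\Geo,\du}$ and the $\Phi(0)$-piece of $J^{\Reg}_{\Geo,\sm}$, which enter the displayed combination with plus signs, the outcome would be $+J^{\Reg,0}_{\Geo,\sm,\Res}+J_{\Geo,\du,\Res}^{\bi,1}$, the negative of the asserted answer. In the correct computation the minus signs arise from cross-pairings: $-J^{\Reg,0}_{\Geo,\sm,\Res}$ is one plus copy from the small cell minus the two equal copies $\mathcal{F}_{0,1}J^{\bi,0}_{\Geo,\Res}$ and $\mathcal{F}_{1,0}J^{\bi,0}_{\Geo,\Res}$, while $-J_{\Geo,\du,\Res}^{\bi,1}$ is the plus copy from the dual cell cancelled by $\mathcal{G}^1_{\chi,\Res}=-J_{\Geo,\du,\Res}^{\bi,1}$ together with the uncancelled $-\widetilde{\mathcal{G}}^1_{\chi,\Res}=-J_{\Geo,\du,\Res}^{\bi,1}$ (compare Lemma \ref{167}). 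So the ``bookkeeping'' you defer as forced is precisely where the proof lives, and your proposed mechanism would not produce the stated identity; to repair it you must write out the six decompositions \eqref{158}, \eqref{160}, \eqref{27}, \eqref{28}, \eqref{168}, \eqref{26} (and \eqref{24}) and match them term by term as in Lemmas \ref{185}, \ref{lem47} and \ref{167}, rather than relying on a wholesale pairwise cancellation of the big-cell and spectral residual terms.
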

The proof of this Proposition will be given in \text\textsection \ref{7.2.6} below.

\subsubsection{Meromorphic Part of $J_{\Geo,\du}^{\bi}(f,\textbf{s})$}\label{7.2.1}
By Proposition \ref{prop18'}, $J_{\Geo,\du}^{\bi}(f,\textbf{s})$ is
\begin{align*}
\iint\int_{[\overline{G'}]}\phi_1'(x)\overline{\phi_2'(xy)}E_{\du}(x,s_1+s_2;f^{\dag}(\cdot,z_2,y))dx\overline{\omega}'(z_2)|\det z_2y|^{s_2}dyd^{\times}z_2,
\end{align*}
where $z_2$ (resp. $y$) ranges over $Z'(\mathbb{A}_F)$ (resp. $\overline{G'}(\mathbb{A}_F)$), and the Eisenstein series $E_{\du}(x,s_1+s_2;f^{\dag}(\cdot,z_2,y))$ is defined in \text\textsection \ref{sec9.2}. 

Execute Poisson summation to $E_{\du}(x,s_1+s_2;f^{\dag}(\cdot,z_2,y))$ implies that 
\begin{equation}\label{160}
J_{\Geo,\du,\Res}^{\bi}(f,\textbf{s})=J_{\Geo,\du,\Res}^{\bi,0}(f,\textbf{s})+J_{\Geo,\du,\Res}^{\bi,1}(f,\textbf{s}),
\end{equation} 
where $J_{\Geo,\du,\Res}^{\bi,1}(f,\textbf{s})$ is equal to 
\begin{align*}
\frac{1}{n(s_1+s_2-1)}\int_{G'(\mathbb{A}_F)}\int_{\mathbb{A}_F^n}
	f(u(\textbf{x})\iota(y))\int_{[\overline{G'}]}\phi_1'(x)\overline{\phi_2'(xy)}dx|\det y|^{s_2}d\textbf{x}dy,
\end{align*}
and $J_{\Geo,\du,\Res}^{\bi,0}(f,\textbf{s})$ is equal to 
\begin{equation}\label{157}
-\frac{1}{n(s_1+s_2)}\int_{{G'}(\mathbb{A}_F)}
	f(\iota(y))\int_{[\overline{G'}]}\phi_1'(x)\overline{\phi_2'(xy)}dx|\det y|^{s_2}dy.
\end{equation}

\subsubsection{Meromorphic Part of $J^{\Reg}_{\Geo,\sm}(f,\textbf{s})$}\label{7.2.2}
Fix $s_2\in\mathbb{C}.$ By Proposition \ref{prop14} and \eqref{34} the function $J^{\Reg}_{\Geo,\sm}(f,\textbf{s})$ has a simple pole at $s_1=-s_2,$ inherited from the Eisenstein series $E(x,s_1+s_2+1;\check{f}_P(\cdot;z_2,y)).$ By Poisson summation, $E(x,s_1+s_2+1;\check{f}_P(\cdot;z_2,y))$ has residue $n^{-1}\omega'(z_1)\overline{\omega_2(z_2)}|\det z_2y|^{s_2}\widehat{	\check{f}_P}(\mathbf{0};z_2,y).$ Note that $\check{f}_P$ is inverse Fourier transform and thus the Fourier transform $\widehat{\check{f}_P}$ gives back to $f.$ Hence, similar to the argument in \text\textsection \ref{7.2.1} we derive
\begin{equation}\label{158}
J^{\Reg}_{\Geo,\sm,\Res}(f,\textbf{s})=J^{\Reg,0}_{\Geo,\sm,\Res}(f,\textbf{s})+J^{\Reg,1}_{\Geo,\sm,\Res}(f,\textbf{s}),
\end{equation}
where $J^{\Reg,1}_{\Geo,\sm,\Res}(f,\textbf{s})$ is equal to 
\begin{equation}\label{158.}
\frac{1}{n(s_1+s_2)}\int_{{G'}(\mathbb{A}_F)}
	f(\iota(y))\int_{[\overline{G'}]}\phi_1'(x)\overline{\phi_2'(xy)}dx|\det y|^{s_2}dy,
\end{equation}
and $J^{\Reg,0}_{\Geo,\sm,\Res}(f,\textbf{s})$ is equal to 
\begin{align*}
-\frac{1}{n(s_1+s_2+1)}\int_{G'(\mathbb{A}_F)}\int_{N_P(\mathbb{A}_F)}
	f(u\iota(y))\int_{[\overline{G'}]}\phi_1'(x)\overline{\phi_2'(xy)}dx|\det y|^{s_2}dudy,
\end{align*}

As a consequence of \eqref{157} with \eqref{158.} we conclude the following cancellation.
\begin{lemma}\label{185}
Let notation be as before. Then 
\begin{align*}\label{159}
J_{\Geo,\du,\Res}^{\bi,0}(f,\textbf{s})+J^{\Reg,1}_{\Geo,\sm,\Res}(f,\textbf{s})=0.
\end{align*}
\end{lemma}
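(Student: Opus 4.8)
\textbf{Proof plan for Lemma \ref{185}.} The statement asserts the vanishing of the sum
\[
J_{\Geo,\du,\Res}^{\bi,0}(f,\textbf{s})+J^{\Reg,1}_{\Geo,\sm,\Res}(f,\textbf{s}),
\]
so the plan is simply to compare the two explicit integral formulas obtained in \textsection\ref{7.2.1} and \textsection\ref{7.2.2} and observe that they are negatives of one another. First I would write out both terms side by side: from \eqref{157} the term $J_{\Geo,\du,\Res}^{\bi,0}(f,\textbf{s})$ equals
\[
-\frac{1}{n(s_1+s_2)}\int_{{G'}(\mathbb{A}_F)}f(\iota(y))\int_{[\overline{G'}]}\phi_1'(x)\overline{\phi_2'(xy)}\,dx\,|\det y|^{s_2}\,dy,
\]
while from \eqref{158.} the term $J^{\Reg,1}_{\Geo,\sm,\Res}(f,\textbf{s})$ equals
\[
\frac{1}{n(s_1+s_2)}\int_{{G'}(\mathbb{A}_F)}f(\iota(y))\int_{[\overline{G'}]}\phi_1'(x)\overline{\phi_2'(xy)}\,dx\,|\det y|^{s_2}\,dy.
\]
These are visibly the same integral with opposite signs, so their sum is zero; this is the entire content of the lemma.

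The one point that genuinely needs justification — and which I would flag as the ``main obstacle'', though it is light — is that the two residue computations really do produce the \emph{same} integrand, not merely superficially similar expressions. Concretely, I would verify: (i) in \textsection\ref{7.2.1}, the pole of $J_{\Geo,\du}^{\bi}(f,\textbf{s})$ at $s_1+s_2=0$ arises from the term $-\tfrac{1}{n(s+i\alpha)}\cdot\Phi(0)|\det x|^s$ in $E_{\Res}$ (cf. \eqref{278}) applied to the section $f^{\dag}$, whose value at $\mathbf{0}$ is $f^{\dag}(\mathbf{0};z_2,y)=f(\iota(z_2y))$; after integrating out $z_2$ via the character $\overline{\omega}'(z_2)$ and absorbing it, one is left with $f(\iota(y))$ and the period $\int_{[\overline{G'}]}\phi_1'(x)\overline{\phi_2'(xy)}\,dx$. (ii) In \textsection\ref{7.2.2}, the pole of $J^{\Reg}_{\Geo,\sm}(f,\textbf{s})$ at $s_1+s_2=0$ (i.e.\ $s_1+s_2+1=1$ for the Eisenstein series $E(x,s_1+s_2+1;\check f_P(\cdot;z_2,y))$) comes from the \emph{other} residue term $+\tfrac{\widehat{\Phi}(0)|\det x|^{s-1}}{n(s-1+i\alpha)}$; since $\check f_P$ is built from an inverse Fourier transform, its Fourier transform $\widehat{\check f_P}$ recovers $f$ evaluated at the appropriate point, again yielding $f(\iota(y))$ and the same period. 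The signs $-1/(n(s_1+s_2))$ versus $+1/(n(s_1+s_2))$ are exactly the signs appearing in $E_{\Res}$ for the two residue contributions, with the shift by one in the exponent accounting for why the \emph{small-cell} contribution carries the ``dual'' residue and vice versa.

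Having pinned down that both integrands coincide with
\[
\frac{1}{n(s_1+s_2)}\int_{{G'}(\mathbb{A}_F)}f(\iota(y))\Big[\int_{[\overline{G'}]}\phi_1'(x)\overline{\phi_2'(xy)}\,dx\Big]|\det y|^{s_2}\,dy
\]
up to sign, the lemma follows immediately by addition. No convergence subtlety is introduced beyond what was already established in Propositions \ref{prop14} and \ref{prop18'}: both residues are represented by the absolutely convergent integral above (the inner period is bounded by rapid decay of the cusp forms, and $f$ has compact support modulo center, so the $y$-integral converges for all $\textbf{s}$), so the identity is an honest equality of meromorphic functions on $\mathcal{R}$. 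I would present this as a two-line verification referencing \eqref{157} and \eqref{158.} directly, since the substantive work of extracting the residues was already carried out in the two preceding subsections.
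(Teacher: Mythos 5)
Your proposal is correct and follows the paper's own argument: the paper likewise deduces the lemma directly by comparing the explicit residue formulas \eqref{157} and \eqref{158.}, which are negatives of one another, the substantive work having been done in the preceding residue computations in \textsection\ref{7.2.1} and \textsection\ref{7.2.2} (where, exactly as you note, the pole of the dual-cell term comes from the $\Phi(0)$-part of $E_{\Res}$ applied to $f^{\dag}$, while the small-cell pole comes from the $\widehat{\Phi}(0)$-part, with $\widehat{\check f_P}$ recovering $f$). Nothing further is needed.
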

\subsubsection{Meromorphic Part of $\mathcal{F}_{0,1}J^{\bi}_{\Geo}(f,\textbf{s})$}\label{8.2.3}
Denote by $\mathcal{F}_{0,1}J^{\bi}_{\Geo,\Res}(f,\textbf{s})$ the meromorphic part of  $\mathcal{F}_{0,1}J^{\bi}_{\Geo}(f,\textbf{s})$, inherited from the Eisenstein series $E_{\gamma}(x,s_1+s_2+1;f_P^{\dag}(\cdot;z_2,y)).$ According to Proposition \ref{prop22}, we have 
\begin{equation}\label{27}
\mathcal{F}_{0,1}J_{\Geo,\Res}^{\bi}(f,\textbf{s})=\mathcal{F}_{0,1}J^{\bi,0}_{\Geo,\Res}(f,\textbf{s})+\mathcal{F}_{0,1}J^{\bi,1}_{\Geo,\Res}(f,\textbf{s}),
\end{equation}
where $\mathcal{F}_{0,1}J^{\bi,1}_{\Geo,\Res}(f,\textbf{s})$ is equal to
\begin{align*}
\frac{1}{n(s_1+s_2)}\int_{G'(\mathbb{A}_F)}\int_{\mathbb{A}_F^n}\int_{N_P(\mathbb{A}_F)}f(u(\mathbf{x})n\iota(y))\int_{[\overline{G'}]}\phi_1'(x)\overline{\phi_2'(xy)}dx|\det y|^{s_2}dnd\mathbf{x}dy,
\end{align*} 
and  $\mathcal{F}_{0,1}J^{\bi,0}_{\Geo,\Res}(f,\textbf{s})$ is equal to
\begin{align*}
-\frac{1}{n(s_1+s_2+1)}\int_{G'(\mathbb{A}_F)}\int_{N_P(\mathbb{A}_F)}f(n\iota(y))\int_{[\overline{G'}]}\phi_1'(x)\overline{\phi_2'(xy)}dx|\det y|^{s_2}dndy.
\end{align*}

\subsubsection{Meromorphic Part of $\mathcal{F}_{1,0}J^{\bi}_{\Geo}(f,\textbf{s})$}\label{7.2.4}
Denote by $\mathcal{F}_{1,0}J^{\bi}_{\Geo,\Res}(f,\textbf{s})$ the meromorphic part of  $\mathcal{F}_{1,0}J^{\bi}_{\Geo}(f,\textbf{s})$, inherited from the Eisenstein series $E_{\gamma}(x,s_1+s_2+1;{^{\dag}f_P}(\cdot;z_2,y)).$ According to Proposition \ref{prop23}, we have 
\begin{equation}\label{28}
\mathcal{F}_{1,0}J_{\Geo,\Res}^{\bi}(f,\textbf{s})=\mathcal{F}_{1,0}J^{\bi,0}_{\Geo,\Res}(f,\textbf{s})+\mathcal{F}_{1,0}J^{\bi,1}_{\Geo,\Res}(f,\textbf{s}),
\end{equation}
where $\mathcal{F}_{1,0}J^{\bi,1}_{\Geo,\Res}(f,\textbf{s})$ is equal to 
\begin{align*}
\frac{1}{n(s_1+s_2)}\int_{G'(\mathbb{A}_F)}\int_{\mathbb{A}_F^n}\int_{N_P(\mathbb{A}_F)}f(nu(\mathbf{x})\iota(y))\int_{[\overline{G'}]}\phi_1'(x)\overline{\phi_2'(xy)}dx|\det y|^{s_2}dnd\mathbf{x}dy,
\end{align*} 
and  $\mathcal{F}_{1,0}J^{\bi,0}_{\Geo,\Res}(f,\textbf{s})$ is equal to 
\begin{align*}
-\frac{1}{n(s_1+s_2+1)}\int_{G'(\mathbb{A}_F)}\int_{N_P(\mathbb{A}_F)}f(n\iota(y))\int_{[\overline{G'}]}\phi_1'(x)\overline{\phi_2'(xy)}dx|\det y|^{s_2}dndy.
\end{align*} 

\begin{lemma}\label{lem47}
Let notation be as before. Then 
\begin{align*}
J^{\Reg,0}_{\Geo,\sm,\Res}(f,\textbf{s})=\mathcal{F}_{0,1}J^{\bi,0}_{\Geo,\Res}(f,\textbf{s})=\mathcal{F}_{1,0}J^{\bi,0}_{\Geo,\Res}(f,\textbf{s})
\end{align*}
is holomorphic in the region $\Re(s_1+s_2)>-1.$
\end{lemma}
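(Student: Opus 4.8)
The statement to prove is Lemma~\ref{lem47}: that the three ``$0$-terms'' $J^{\Reg,0}_{\Geo,\sm,\Res}(f,\textbf{s})$, $\mathcal{F}_{0,1}J^{\bi,0}_{\Geo,\Res}(f,\textbf{s})$, $\mathcal{F}_{1,0}J^{\bi,0}_{\Geo,\Res}(f,\textbf{s})$ all coincide, and that the common value is holomorphic on $\Re(s_1+s_2)>-1$. The plan is to compare the three explicit integral expressions written down in \text\textsection\ref{7.2.2}, \text\textsection\ref{8.2.3} and \text\textsection\ref{7.2.4}, and to exhibit changes of variables in $N_P(\mathbb{A}_F)$ and in the inner $[\overline{G'}]$-integral that transform one into another. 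Each of the three quantities is, up to the common scalar $-1/(n(s_1+s_2+1))$, an integral of the shape
\begin{align*}
\int_{G'(\mathbb{A}_F)}\int_{N_P(\mathbb{A}_F)} f\big(\star\big)\Big[\int_{[\overline{G'}]}\phi_1'(x)\overline{\phi_2'(xy)}\,dx\Big]|\det y|^{s_2}\,dn\,dy,
\end{align*}
where $\star$ is respectively $u\iota(y)$ (the $\sm$ term), $n\iota(y)$ (the $\mathcal{F}_{0,1}$ term), and again $n\iota(y)$ (the $\mathcal{F}_{1,0}$ term) — here $u=n\in N_P(\mathbb{A}_F)$ in all cases. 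So in fact the second and third expressions are \emph{literally identical} as written, and the only genuine content is the identification with the $\sm$ term and the holomorphy claim.

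First I would observe that the inner integral $I(y):=\int_{[\overline{G'}]}\phi_1'(x)\overline{\phi_2'(xy)}\,dx = \langle R(y)\phi_2',\phi_1'\rangle$ is a matrix coefficient of $\pi_2'$ (viewed against $\pi_1'$), hence it vanishes unless $\pi_1'\simeq\pi_2'$, in which case it is a smooth bounded function of $y\in\overline{G'}(\mathbb{A}_F)$, rapidly decaying in no direction but invariant under $[\overline{G'}]$ on the left in the appropriate sense; crucially it is unchanged under $y\mapsto z_2 y$ for $z_2\in Z'(\mathbb{A}_F)$ only up to the central character, so one must keep track of $Z'$. Then I would unfold the ambient $G'(\mathbb{A}_F)$ and $N_P(\mathbb{A}_F)$ integrals: since $f$ has compact support modulo the center $Z(\mathbb{A}_F)$, the condition $f(n\iota(y))\neq 0$ forces $\iota(y)$, and hence $y$, into a compact subset of $G'(\mathbb{A}_F)$ modulo $Z'$, uniformly in $n\in N_P(\mathbb{A}_F)$, and forces $n$ into a compact set as well (cf. the local analyses in Lemmas~\ref{lem18}, \ref{lem18.}, \ref{lem18'} and \ref{lem20}). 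This compactness is exactly what gives holomorphy: the integral converges absolutely and locally uniformly for all $\mathbf{s}$ with $s_1+s_2\neq -1$, and the prefactor $1/(n(s_1+s_2+1))$ is the only source of a pole, which lies outside $\Re(s_1+s_2)>-1$. Thus the holomorphy assertion reduces to the absolute convergence of the explicit integral on that region, which is immediate from the support of $f$ and boundedness of $I(y)$.

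For the equality of the $\sm$ term with the $\mathcal{F}_{0,1}$/$\mathcal{F}_{1,0}$ terms, I would trace back through how each ``$0$-term'' was produced: in \text\textsection\ref{7.2.2} it is the residue at $s_1+s_2=-1$ of the term $-\Phi(0)|\det x|^s/(n(s+i\alpha))$ in the Poisson decomposition \eqref{278} of the Eisenstein series $E(x,s_1+s_2+1;\check f_P(\cdot;z_2,y))$, with $\check f_P$ the mirabolic Fourier transform of $f$ along $N_P$; in \text\textsection\ref{8.2.3} and \text\textsection\ref{7.2.4} it is the corresponding residue for $E_\gamma(x,s_1+s_2+1;f_P^{\dag}(\cdot;z_2,y))$ and $E_\gamma(x,s_1+s_2+1;{}^{\dag}f_P(\cdot;z_2,y))$, where $f_P^{\dag},\,{}^{\dag}f_P$ are the $N_P$-integrals \eqref{102} of $f$ with the mirabolic factor on one side or the other. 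The point is that $\check f_P(\mathbf{0};z_2,y)=\int_{N_P(\mathbb{A}_F)}f(n\iota(z_2y))\,dn = f_P^{\dag}(\mathbf{0};z_2,y)={}^{\dag}f_P(\mathbf{0};z_2,y)$: evaluating any of these mirabolic/ordinary Fourier-type transforms at the zero frequency collapses them all to the same full $N_P$-integral of $f$. Substituting this common value of the ``constant term'' into the three residue formulas yields syntactically the same integral, after using $\vol([N_P])=1$ and absorbing $z_2$ by writing $G'(\mathbb{A}_F)=Z'(\mathbb{A}_F)\overline{G'}(\mathbb{A}_F)$ (which is how the displayed forms in \text\textsection\ref{7.2.2}–\text\textsection\ref{7.2.4} are already normalized).

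\textbf{Main obstacle.} The routine part is the bookkeeping of which Fourier transform sits on which side; the genuinely delicate point is making the interchange of the residue operation with the $y$- and $N_P$-integrations rigorous, i.e. justifying that ``residue of the integral = integral of the residue'' for the Eisenstein pieces. This requires knowing that the $y$-integral of the Eisenstein series converges locally uniformly near $s_1+s_2=-1$ away from the pole — which is supplied by the compact support of $f$ modulo center together with the decomposition \eqref{280} into $J_+$, $J_+^\wedge$ (holomorphic) and $J_{\Res}$ (meromorphic with the explicit simple pole). Once that is in place, the identification is a formal manipulation and the holomorphy on $\Re(s_1+s_2)>-1$ follows because the residue $J^{\Reg,0}_{\Geo,\sm,\Res}(f,\textbf{s})$ is visibly the product of $-1/(n(s_1+s_2+1))$ with an entire function of $\mathbf{s}$.
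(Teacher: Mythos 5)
Your proposal is correct and matches what the paper actually does: Lemma \ref{lem47} is proved there simply by inspecting the explicit expressions in \textsection\ref{7.2.2}, \textsection\ref{8.2.3} and \textsection\ref{7.2.4}, which are all literally $-\tfrac{1}{n(s_1+s_2+1)}$ times the same integral $\int_{G'(\mathbb{A}_F)}\int_{N_P(\mathbb{A}_F)}f(n\iota(y))\big[\int_{[\overline{G'}]}\phi_1'(x)\overline{\phi_2'(xy)}dx\big]|\det y|^{s_2}dn\,dy$, the coincidence coming, as you say, from the fact that $\check f_P$, $f_P^{\dag}$ and $^{\dag}f_P$ all evaluate at the zero frequency to $\int_{N_P(\mathbb{A}_F)}f(n\iota(\cdot))dn$, and the prefactor's only pole lies on $s_1+s_2=-1$. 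One small sharpening: the support condition $f(n\iota(y))\neq 0$ in fact forces $y$ into a genuinely compact subset of $G'(\mathbb{A}_F)$ (not merely compact modulo $Z'$), since the $(n+1,n+1)$-entry pins down the central scalar of $G$; this stronger fact is what you implicitly use to get absolute convergence of the $y$-integral for all $\mathbf{s}$, and it is exactly what the cited local lemmas provide.
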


\subsubsection{Meromorphic Part of $J_{\Eis,\sing}^{\Reg}(f,\mathbf{s})$}\label{7.2.5}
Denote by $\mathcal{G}_{\chi,\Res}(s_1,s_2,\phi_1',\phi_2')$ the meromorphic part of $\mathcal{G}_{\chi}(s_1,s_2,\phi_1',\phi_2').$ Applying Poisson summation to the Eisenstein series $E_P^{\dagger}(x,s_1+s_2;\widehat{f}_P,y)$ defined in Theorem \ref{thm49}, similar to the calculation in \text\textsection \ref{8.2.3}, we have
\begin{equation}\label{168}
\mathcal{G}_{\chi,\Res}(s_1,s_2,\phi_1',\phi_2')=\mathcal{G}_{\chi,\Res}^1(s_1,s_2,\phi_1',\phi_2')+\mathcal{G}_{\chi,\Res}^0(s_1,s_2,\phi_1',\phi_2'),
\end{equation}
where $\mathcal{G}_{\chi,\Res}^1(s_1,s_2,\phi_1',\phi_2')$ is defined by 
\begin{align*}
-\frac{1}{n(s_1+s_2-1)}\int_{{G'}(\mathbb{A}_F)}\int_{\mathbb{A}_F^n}
	f(u(\textbf{x})\iota(y))\int_{[\overline{G'}]}\phi_1'(x)\overline{\phi_2'(xy)}dx|\det y|^{s_2}d\textbf{x}dy,
\end{align*}
and $\mathcal{G}_{\chi,\Res}^0(s_1,s_2,\phi_1',\phi_2')$ is defined by 
\begin{align*}
\frac{1}{n(s_1+s_2)}\int_{G'(\mathbb{A}_F)}\int_{\mathbb{A}_F^n}\int_{N_P(\mathbb{A}_F)}f(u(\mathbf{x})n\iota(y))\int_{[\overline{G'}]}\phi_1'(x)\overline{\phi_2'(xy)}dx|\det y|^{s_2}dnd\mathbf{x}dy.
\end{align*}

Let $\widetilde{\mathcal{G}}_{\chi,\Res}(s_1,s_2,\phi_1',\phi_2')$ be the meromorphic part of $\widetilde{\mathcal{G}}_{\chi}(s_1,s_2,\phi_1',\phi_2').$ Then
\begin{equation}\label{26}
\widetilde{\mathcal{G}}_{\chi,\Res}(s_1,s_2,\phi_1',\phi_2')=\widetilde{\mathcal{G}}_{\chi,\Res}^1(s_1,s_2,\phi_1',\phi_2')+\widetilde{\mathcal{G}}_{\chi,\Res}^0(s_1,s_2,\phi_1',\phi_2'),
\end{equation}
where $\widetilde{\mathcal{G}}_{\chi,\Res}^1(s_1,s_2,\phi_1',\phi_2')$ is defined by 
\begin{align*}
\frac{1}{n(s_1+s_2-1)}\int_{{G'}(\mathbb{A}_F)}\int_{\mathbb{A}_F^n}
	f(u(\textbf{x})\iota(y))\int_{[\overline{G'}]}\phi_1'(x)\overline{\phi_2'(xy)}dx|\det y|^{s_2}d\textbf{x}dy,
\end{align*}
and $\widetilde{\mathcal{G}}_{\chi,\Res}^0(s_1,s_2,\phi_1',\phi_2')$ is defined by
\begin{align*}
-\frac{1}{n(s_1+s_2)}\int_{G'(\mathbb{A}_F)}\int_{\mathbb{A}_F^n}\int_{N_P(\mathbb{A}_F)}f(nu(\mathbf{x})\iota(y))\int_{[\overline{G'}]}\phi_1'(x)\overline{\phi_2'(xy)}dx|\det y|^{s_2}dnd\mathbf{x}dy.
\end{align*}

Since the cuspidal datum $(\chi, M_Q)$ is isomorphic to $\{(\chi',M_P)\},$ where $\chi'$ is associated to $\Ind_{P(\mathbb{A}_F)}^{G(\mathbb{A}_F)}\widetilde{\pi}'\otimes\omega\omega'^{-1}.$ Then by \eqref{197} in \text\textsection \ref{9.3}  the function  $J_{\Eis,\sing}^{\Reg}(f,\mathbf{s})+\mathcal{G}_{\chi}(s_1,s_2,\phi_1',\phi_2')$ is holomorphic when $-1/4\leq \Re(s_1)\leq 1/2,$ and  
$$
J_{\Eis,\sing}^{\Reg}(f,\mathbf{s})+\mathcal{G}_{\chi}(s_1,s_2,\phi_1',\phi_2')-\widetilde{\mathcal{G}}_{\chi}(s_1,s_2,\phi_1',\phi_2')
$$ 
is holomorphic in the region $-1/4\leq \Re(s_1)<1/2,$ and $-1/4\leq \Re(s_2)<1/2.$ Hence, by \eqref{168} and \eqref{26} we obtain 
\begin{equation}\label{24}
J_{\Eis,\sing}^{\Reg}(f,\mathbf{s})=J_{\Eis,\sing}^{\Reg,0}(f,\mathbf{s})+J_{\Eis,\sing}^{\Reg,1}(f,\mathbf{s})+J_{\Eis,\sing}^{\Reg,\hol}(f,\mathbf{s}),
\end{equation}
where $J_{\Eis,\sing}^{\Reg,1}(f,\mathbf{s})=-\mathcal{G}_{\chi}^1(s_1,s_2,\phi_1',\phi_2'),$  $J_{\Eis,\sing}^{\Reg,0}(f,\mathbf{s})=-\mathcal{G}_{\chi}^0(s_1,s_2,\phi_1',\phi_2')+\widetilde{\mathcal{G}}_{\chi}^0(s_1,s_2,\phi_1',\phi_2'),$ and $J_{\Eis,\sing}^{\Reg,\hol}(f,\mathbf{s})$ is holomorphic in the region $\mathcal{R}.$

\begin{lemma}\label{167}
Let notation be as before. Let $s\in\mathcal{R}.$ Then
\begin{align*}
J_{\Eis,\sing}^{\Reg,1}(f,\mathbf{s})&=J_{\Geo,\gamma(0),\Res}^{\bi,1}(f,\textbf{s})=\widetilde{\mathcal{G}}_{\chi,\Res}^1(s_1,s_2,\phi_1',\phi_2'),\\
J_{\Eis,\sing}^{\Reg,0}(f,\mathbf{s})&=-\mathcal{F}_{0,1}J^{\bi,1}_{\Geo}(f,\textbf{s})-\mathcal{F}_{1,0}J^{\bi,1}_{\Geo}(f,\textbf{s}).
\end{align*}
\end{lemma}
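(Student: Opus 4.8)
\textbf{Proof proposal for Lemma \ref{167}.}

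The plan is to prove the two displayed identities by matching, term by term, the explicit integral formulas for the meromorphic parts that were computed in \textsection\ref{8.2.3}, \textsection\ref{7.2.4}, \textsection\ref{7.2.1}, and \textsection\ref{7.2.5}. For the first chain of equalities, recall from \eqref{24} that $J_{\Eis,\sing}^{\Reg,1}(f,\mathbf{s})=-\mathcal{G}_{\chi}^1(s_1,s_2,\phi_1',\phi_2')$, and by \eqref{168} the term $\mathcal{G}_{\chi,\Res}^1(s_1,s_2,\phi_1',\phi_2')$ is the integral with the factor $-\tfrac{1}{n(s_1+s_2-1)}$ in front of $\int_{G'(\mathbb{A}_F)}\int_{\mathbb{A}_F^n} f(u(\textbf{x})\iota(y))\int_{[\overline{G'}]}\phi_1'(x)\overline{\phi_2'(xy)}\,dx\,|\det y|^{s_2}\,d\textbf{x}\,dy$. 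Comparing with the formula for $J_{\Geo,\du,\Res}^{\bi,1}(f,\textbf{s})$ recorded in \eqref{160} (and restated in Proposition \ref{prop45}), which carries $+\tfrac{1}{n(s_1+s_2-1)}$ in front of the \emph{same} inner integral, one sees $J_{\Eis,\sing}^{\Reg,1}(f,\mathbf{s})=-\mathcal{G}_{\chi}^1(\cdots)=J_{\Geo,\du,\Res}^{\bi,1}(f,\textbf{s})$; and the expression for $\widetilde{\mathcal{G}}_{\chi,\Res}^1(s_1,s_2,\phi_1',\phi_2')$ in \eqref{26} is literally identical to $J_{\Geo,\du,\Res}^{\bi,1}(f,\textbf{s})$, so the triple equality follows. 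The key input making this legitimate is Theorem \ref{thm49}, which identifies $\mathcal{G}_\chi$ and $\widetilde{\mathcal{G}}_\chi$ with integrals against the Eisenstein series $E_P^\dagger$ and $\widetilde{E}_P^\dagger$, together with the functional equation $E_P^{\dagger}(x,s;\widehat{f}_P,y)=E_{\gamma}(x,1-s;f_P^{\dag}(\cdot;z_2,y))$ stated in the proof of that theorem, which ensures the residues of the two Eisenstein series at the relevant hyperplanes are taken consistently.

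For the second identity, I would combine \eqref{27} and \eqref{28}: these give $\mathcal{F}_{0,1}J^{\bi,1}_{\Geo,\Res}(f,\textbf{s})$ as $\tfrac{1}{n(s_1+s_2)}\int f(u(\mathbf{x})n\iota(y))(\cdots)$ and $\mathcal{F}_{1,0}J^{\bi,1}_{\Geo,\Res}(f,\textbf{s})$ as $\tfrac{1}{n(s_1+s_2)}\int f(nu(\mathbf{x})\iota(y))(\cdots)$, where in each case $n$ ranges over $N_P(\mathbb{A}_F)$ and $\mathbf{x}$ over $\mathbb{A}_F^n$. On the spectral side, by \eqref{24}, $J_{\Eis,\sing}^{\Reg,0}(f,\mathbf{s})=-\mathcal{G}_{\chi}^0(s_1,s_2,\phi_1',\phi_2')+\widetilde{\mathcal{G}}_{\chi}^0(s_1,s_2,\phi_1',\phi_2')$, and by \eqref{168}, \eqref{26} these are $\mp\tfrac{1}{n(s_1+s_2)}\int f(u(\mathbf{x})n\iota(y))(\cdots)$ and $\mp\tfrac{1}{n(s_1+s_2)}\int f(nu(\mathbf{x})\iota(y))(\cdots)$ respectively. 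Thus $J_{\Eis,\sing}^{\Reg,0}(f,\mathbf{s})=-\tfrac{1}{n(s_1+s_2)}\int f(u(\mathbf{x})n\iota(y))(\cdots)-\tfrac{1}{n(s_1+s_2)}\int f(nu(\mathbf{x})\iota(y))(\cdots)=-\mathcal{F}_{0,1}J^{\bi,1}_{\Geo,\Res}(f,\textbf{s})-\mathcal{F}_{1,0}J^{\bi,1}_{\Geo,\Res}(f,\textbf{s})$. Here I should be slightly careful about the distinction between $\mathcal{F}_{0,1}J^{\bi,1}_{\Geo}(f,\textbf{s})$ and its residue-part notation $\mathcal{F}_{0,1}J^{\bi,1}_{\Geo,\Res}(f,\textbf{s})$; since these ``type I'' pieces are exactly the holomorphic-in-$s_1+s_2$ contributions carrying the pole only through the explicit $\tfrac{1}{n(s_1+s_2)}$ factor (and are themselves holomorphic in $\mathcal{R}$, cf. the structure in Theorem \ref{thm22}), the statement as written is consistent.

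The main obstacle is \emph{bookkeeping of signs and of which unipotent variable sits where}: the four Poisson-summation decompositions \eqref{27}, \eqref{28}, \eqref{160}, \eqref{168}, \eqref{26} each split an Eisenstein period into a ``$0$-part'' (pole at $s_1+s_2=0$, hyperplane coming from the $-\Phi(0)/n(\cdots)$ term of \eqref{278}) and a ``$1$-part'' (pole at $s_1+s_2=1$, coming from the $\widehat{\Phi}(0)/n(\cdots)$ term), and the correct matching requires knowing that the Fourier transform appearing in $\mathcal{G}_\chi$ (via $\widehat{f}_P$, defined through $\theta(\eta\delta zx\mathfrak{u})$ in \eqref{276}) is genuinely the one dual to the integration against $\theta$ used in defining $E_\gamma$, so that the residue of $E_P^\dagger$ at $s=1$ matches the residue of $E_\gamma$ at $s=0$ with the right orientation. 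I would handle this by tracking, for each of the six listed integrals, precisely the pair (pole location, sign of the $1/n(\cdots)$ prefactor, argument pattern $u(\mathbf{x})n$ versus $nu(\mathbf{x})$ versus $n$ alone), and then reading off the claimed equalities; the functional equations for $E_P^\dagger$, $\widetilde{E}_P^\dagger$, $E_\gamma$ proved earlier guarantee these data are compatible. Everything else is a direct comparison of displayed formulas, with no new analytic estimates needed beyond the absolute convergence already established in Propositions \ref{prop22}, \ref{prop23}, \ref{prop18'} and Theorem \ref{thm49}.
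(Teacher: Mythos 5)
Your proposal is correct and is essentially the paper's own argument: the paper's proof of this lemma is a one-line appeal to the explicit formulas computed in \textsection\ref{7.2.1}, \textsection\ref{8.2.3}, \textsection\ref{7.2.4} and \textsection\ref{7.2.5}, which is exactly the term-by-term comparison of \eqref{160}, \eqref{27}, \eqref{28}, \eqref{168}, \eqref{26} and \eqref{24} that you carry out (your sign and integrand-pattern bookkeeping, including the $u(\mathbf{x})n$ versus $nu(\mathbf{x})$ distinction, matches the displayed expressions). The extra appeal to Theorem \ref{thm49} and the functional equations is not needed at this stage, since those inputs are already built into the quoted formulas, but it does no harm.
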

\begin{proof}
Lemma \ref{167} follows from the calculations in \text\textsection \ref{7.2.1}, \text\textsection \ref{8.2.3}, \text\textsection \ref{7.2.4} and \text\textsection \ref{7.2.5}. 
\end{proof}

\subsubsection{Proof of Proposition \ref{prop45}}\label{7.2.6}
In the following proof, we stick the notation in the preceding subsections \text\textsection \ref{7.2.1}--\ref{7.2.5}. 
\begin{proof}
Note that for $\mathbf{s}\in\mathcal{R},$ $\mathcal{F}_{0,1}J^{\bi,0}_{\Geo,\Res}(f,\textbf{s})$ (cf. \textsection\ref{7.2.1}), $\mathcal{F}_{0,1}J^{\bi,0}_{\Geo,\Res}(f,\textbf{s})$ (cf. \textsection\ref{8.2.3}), and $\mathcal{F}_{1,0}J^{\bi,0}_{\Geo,\Res}(f,\textbf{s})$ (cf. \textsection\ref{7.2.4}) are holomorphic. Therefore, Proposition \ref{prop45} follows from decomposition \eqref{160}, \eqref{158}, \eqref{27}, \eqref{28}, \eqref{24}, Lemmas \ref{185}, \ref{lem47} and \ref{167}.
\end{proof}

\section{The Meromorphic Relative Trace Formula}\label{7.3}

Let $\pi_1', \pi_2'$ be unitary cuspidal representations of $\mathrm{GL}(n)$ over $F.$ Let $\phi_i'\in\pi_i',$ $i=1, 2.$  Let $\mathbf{s}=(s_1,s_2)\in\mathbb{C}^2$ with $\Re(s_1)>1/2,$ $\Re(s_2)>1/2.$  Let $f\in \mathcal{F}_S(\mathfrak{N},\omega^{-1})$ (cf. \textsection\ref{2.6}).  Consider
\begin{align*}
J^{\Reg}(f,\textbf{s}):=\iint J_{\Kuz}\left(\begin{pmatrix}
		x&\\
		&1
	\end{pmatrix},\begin{pmatrix}
		y&\\
		&1
	\end{pmatrix}\right)\phi_1'(x)\overline{\phi_2'(y)}|\det x|^{s_1}|\det y|^{s_2}dxdy,
\end{align*} 
where $x$ and $y$ range through  $N'(\mathbb{A}_F)\backslash G'(\mathbb{A}_F),$ and for $g_1,$ $g_2\in G(\mathbb{A}_F),$ 
\begin{align*}
	J_{\Kuz}(g_1,g_2):=\int_{[N]}\int_{[N]}\K(u_1g_2,u_2g_2)\theta(u_1)\overline{\theta}(u_2)du_1du_2.
\end{align*}
Here $\theta$ is a fixed unramified generic character of $[N]$ and $\K=\K^f$ is the kernel function associated with $f.$  

\subsection{The Spectral Side $J_{\Spec}^{\Reg}(f,\mathbf{s})$}\label{sec8.1}
Denote by $\widehat{G(\mathbb{A}_F)}$ the set of the isomorphism class of irreducible unitary automorphic representations which appear in the spectral decomposition of $L^2([G],\omega).$ Let $\widehat{G(\mathbb{A}_F)}_{\gen}(\mathfrak{N})$ be  the subclass of \textit{generic} representations of level $\mathfrak{N}\subseteq \mathcal{O}_F$ in $\widehat{G(\mathbb{A}_F)}.$ Fix an automorphic Plancherel measure $d\mu_{\aut}$ on $\widehat{G(\mathbb{A}_F)}_{\gen}$ compatible with the invariant probability measure on $\widehat{G(\mathbb{A}_F)}.$ 

For $\pi\in \widehat{G(\mathbb{A}_F)}_{\gen}(\mathfrak{N}),$ there exists a standard parabolic subgroup $Q$ of $G$ such that $\pi$ corresponds to a cuspidal data $\chi=\{(M_Q,\sigma)\}$ (cf. \textsection\ref{7.1}), where $M_Q$ is the Levi component of $Q.$ Define  
$$
\mathfrak{B}_{\pi}=\big\{\phi\in \mathfrak{B}_{Q,\chi}:\ \text{$\phi$ is right-$K_{\infty}\otimes K_0(\mathfrak{N})$-invariant}\big\}.
$$ 
By Weyl law, $\mathfrak{B}_{\pi}$ is a finite set. 

Executing the spectral expansion of $\K$ and swapping integrals, we obtain the spectral side of $J^{\Reg}(f,\textbf{s})$:
\begin{equation}\label{6444}
	J_{\Spec}^{\Reg}(f,\mathbf{s})=\int_{\widehat{G(\mathbb{A}_F)}_{\gen}(\mathfrak{N})}\sum_{\phi\in\mathfrak{B}_{\pi}}\Psi(s_1,\pi(f)W_{\phi},W_{\phi_1'}')\Psi(s_2,\widetilde{W}_{\phi},\widetilde{W}_{\phi_2'}')d\mu_{\pi},
\end{equation}
where $\mathbf{s}=(s_1,s_2)\in\mathbb{C}^2$ is such that $\Re(s_1), \Re(s_2)\gg1,$ and $\Psi(s,W_{\phi},W_{\phi_i'}')$ is the Rankin-Selberg period in the Whittaker form:
\begin{align*}
\Psi(s,W_{\phi},W_{\phi_i'}')=\int_{N'(\mathbb{A}_F)\backslash G'(\mathbb{A}_F)}W_{\phi}\left(\begin{pmatrix}
		x\\
		&1
	\end{pmatrix}\right)W_{\phi_i'}'(x)|\det x|^sdx,\ \ \Re(s)\gg 1.
\end{align*}
Here $W_{\phi}(g):=\int_{[N]}\phi(ng)\overline{\theta}(n)dn$ is the Whittaker function associated to $\phi.$  

In \textsection\ref{sec.spec} and \textsection\ref{sec10.} we  show that $J_{\Spec}^{\Reg}(f,\mathbf{s})$ converges absolutely in the cone $\Re(s_1)>1/2,$ $\Re(s_2)>1/2,$ and it admits a meromorphic continuation to $\textbf{s}\in\mathcal{R}$ (cf. \eqref{R} in \textsection\ref{notation}). In particular, the continuation is described in detail in \textsection\ref{9.3} and the singular part is analyzed in \textsection\ref{10.2}.

\subsection{The Geometric Side $J_{\Geo}^{\Reg}(f,\mathbf{s})$}
Substituting the geometric expansion of $\K(x,y)$ into $J^{\Reg}(f,\mathbf{s})$ we then obtain its geometric side $J_{\Geo}^{\Reg}(f,\mathbf{s}).$ According to the results in \textsection\ref{sec6}--\textsection\ref{sec5}, we obtain the decomposition
\begin{align*}
	J^{\Reg}_{\Geo}(f,\textbf{s})=J^{\Reg}_{\Geo,\sm}(f,\textbf{s})+^{\Reg}_{\Geo,\du}(f,\textbf{s})-J^{\Reg,\RNum{1}}_{\Geo,\bi}(f,\textbf{s})+J^{\Reg,\RNum{2}}_{\Geo,\bi}(f,\textbf{s}).
\end{align*}
See the notation and definitions therein.

By Propositions \ref{prop14}, \ref{prop22}, \ref{prop23}, \ref{prop18'}, Theorems \ref{thm22}, \ref{thm24}, \ref{Red}, and \ref{thm40}, the geometric side $J^{\Reg}_{\Geo}(f,\textbf{s})$ converges absolutely in $\Re(s_1)>1/2,$ $\Re(s_2)>1/2,$ and it admits a meromorphic continuation to $\mathcal{R}.$

\subsection{The Relative Trace Formula: $\textbf{s}\in\mathcal{R}$}\label{sec8.3}
Combining \textsection\ref{sec6}--\textsection\ref{sec10.}, the relative trace formula could be summarized as follows.
\begin{thmx}\label{thm51}
Let notation be as before. Let $f\in \mathcal{F}_S(\mathfrak{N},\omega^{-1}).$ Then 
\begin{equation}\label{bb}
J_{\Spec}^{\Reg}(f,\mathbf{s})=J^{\Reg}_{\Geo}(f,\textbf{s}),
\end{equation}
which converge absolutely in the region $\Re(s_1),\ \Re(s_2)\gg1.$ Moreover, \eqref{bb} admits a meromorphic continuation to $\Re(s_1),\ \Re(s_2)>-1/(n+1).$ We have the following description of the above integrals.
\begin{itemize}

\item The integral $J^{\Reg}_{\Geo,\sm}(f,\textbf{s})$ is a Rankin-Selberg convolution converges absolutely in $\Re(s_1)+\Re(s_2)>0,$ and represents  $L(1+s_1+s_2,\pi_1'\times\widetilde{\pi}_2').$

\item The integral $J^{\Reg}_{\Geo,\du}(f,\textbf{s})$ is a Rankin-Selberg convolution converges absolutely in $\Re(s_1)+\Re(s_2)>1,$ and represents  $L(s_1+s_2,\pi_1'\times\widetilde{\pi}_2').$ 

\item The integral $J^{\Reg,\RNum{1}}_{\Geo,\bi}(f,\textbf{s})$ converges absolutely in $\Re(s_1),\ \Re(s_2)\gg1,$ and admit a meromorphic continuation to $\Re(s_1),\ \Re(s_2)>-1/(n+1).$

\item The integral $J^{\Reg,\RNum{2}}_{\Geo,\bi}(f,\textbf{s})$ converges absolutely for all $\mathbf{s}\in\mathbb{C}^2.$

\item  $J^{\Reg}_{\Geo}(f,\textbf{s};\phi_1',\phi_2')$ and $J_{\Spec}^{\Reg}(f,\mathbf{s};\phi_1',\phi_2')$ have at most simple poles on $s_1+s_2=\delta,$ $\delta\in\{-1,0,1\},$ and are holomorphic elsewhere. Moreover, 
\begin{align*}
&\underset{s_1+s_2=\delta}{\Res}J_{\Spec}^{\Reg}(f,\mathbf{s})=\underset{s_1+s_2=\delta}{\Res}J^{\Reg}_{\Geo}(f,\textbf{s}),\ \ \delta\in\{-1,0,1\}.
\end{align*}
In particular, singular parts on both sides of \eqref{bb} cancel with each other.  See \textsection\ref{sec7.2} for details.

\item The geometric side has stability in the level aspect in the sense of \cite{MR12} and \cite{FW09}. Roughly, the regular orbital integrals vanish identically when the level is large enough. See Theorem \ref{Red} in \textsection\ref{sec10}.
\end{itemize}

\end{thmx}
\begin{remark}
	Note that both sides of \eqref{bb} depend on the cusp forms $\phi_1'$ and $\phi_2'.$ We write $J_{\Spec}^{\Reg}(f,\mathbf{s})$ (resp. $J^{\Reg}_{\Geo}(f,\textbf{s})$) for $J_{\Spec}^{\Reg}(f,\mathbf{s};\phi_1',\phi_2')$ (resp. $J^{\Reg}_{\Geo}(f,\textbf{s};\phi_1',\phi_2')$) to simplify the notation.
\end{remark}

\subsection{The Relative Trace Formula: $\textbf{s}\in\mathcal{R}^*$}\label{8.2}
In many crucial applications we hope to take $\textbf{s}=(0,0)$ in Theorem \ref{thm51}. Recall that (cf. Theorem \ref{thm40}) 
$$
J_{\Spec}^{\Reg}(f,\mathbf{s})=J_{0}(f,\mathbf{s})+J_{\Eis,\reg}^{\Reg}(f,\mathbf{s})+J_{\Eis,\semi}^{\Reg}(f,\mathbf{s})+J_{\Eis,\sing}^{\Reg}(f,\mathbf{s}),
$$ 
which is not of the form \eqref{6444} anymore when $\textbf{s}$ is near $(0,0).$ 

In this section we provide a modification of \eqref{bb},  which is more convenient to handle when $\textbf{s}$ is close to $(0,0).$  Define 
\begin{align*}
	\mathcal{R}^*:=\big\{\mathbf{s}=(s_1,s_2)\in\mathbb{C}^2:\ |\Re(s_1)|<1/(n+1),\ |\Re(s_2)|<1/(n+1)\big\}.
\end{align*}

For $\textbf{s}\in\mathcal{R}^*,$ let 
\begin{align*}
	J_{\Spec}^{\Reg,\heartsuit}(f,\textbf{s}):=J_{\Spec}^{\Reg}(f,\textbf{s})+\mathcal{G}_{\chi}(\textbf{s},\phi_1',\phi_2')-\widetilde{\mathcal{G}}_{\chi}(\textbf{s},\phi_1',\phi_2'),
\end{align*}
where $\mathcal{G}_{\chi}(\textbf{s},\phi_1',\phi_2')$ and $\widetilde{\mathcal{G}}_{\chi}(\textbf{s},\phi_1',\phi_2')$ are defined by \eqref{183} and \eqref{198}, respectively. 

According to the results in \textsection\ref{sec6}--\textsection\ref{sec5}, we obtain the decomposition
\begin{align*}
	J^{\Reg,\heartsuit}_{\Geo}(f,\textbf{s})=J^{\Reg}_{\Geo,\sm}(f,\textbf{s})+^{\Reg}_{\Geo,\du}(f,\textbf{s})-J^{\Reg,\RNum{1},\heartsuit}_{\Geo,\bi}(f,\textbf{s})+J^{\Reg,\RNum{2}}_{\Geo,\bi}(f,\textbf{s}),
\end{align*}
where the integral $J^{\Reg,\RNum{1},\heartsuit}_{\Geo,\bi}(f,\textbf{s})$ is defined by 
$$
\mathcal{F}_{0,1}J^{\bi}_{\Geo}(f,\textbf{s})+\mathcal{F}_{1,0}J^{\bi}_{\Geo}(f,\textbf{s})-\mathcal{G}_{\chi}(\textbf{s},\phi_1',\phi_2')+\widetilde{\mathcal{G}}_{\chi}(\textbf{s},\phi_1',\phi_2').
$$ 

Note that $\mathcal{G}_{\chi}(\textbf{s},\phi_1',\phi_2')$ and $\widetilde{\mathcal{G}}_{\chi}(\textbf{s},\phi_1',\phi_2')$ can be written into orbital integrals (cf. Theorem \ref{thm49}), regardless that they arise originally in the spectral side.

\begin{thmx}\label{thm52}
Let notation be as above. Let $\mathbf{s}=(s_1,s_2)\in\mathcal{R}^*.$ Then
\begin{equation}\label{70}
		J_{\Spec}^{\Reg,\heartsuit}(f,\textbf{s})=J_{\Geo}^{\Reg,\heartsuit}(f,\textbf{s}),
	\end{equation}
	as an equality of holomorphic functions in $\mathcal{R}^*.$ Moreover, \begin{equation}\label{67}
		J_{\Spec}^{\Reg,\heartsuit}(f,\textbf{s})=\int_{\widehat{G(\mathbb{A}_F)}_{\gen}(\mathfrak{N})}\sum_{\phi\in\mathfrak{B}_{\pi}}\Psi(s_1,\pi(f)W_{\phi},W_{\phi_1'}')\Psi(s_2,\widetilde{W}_{\phi},\widetilde{W}_{\phi_2'}')d\mu_{\pi},
	\end{equation}
	which converges absolutely. Here we identify $\Psi(s_1,\pi(f)W_{\phi},W_{\phi_1'}')\Psi(s_2,\widetilde{W}_{\phi},\widetilde{W}_{\phi_2'}')$ with its meromorphic continuation.  
\end{thmx}

 When $\phi_1'=\phi_2'$ and $f$ is of the form $h*h^*$ for some function $h,$  $J_{\Spec}^{\Reg,\heartsuit}(f,\mathbf{0})$ is a sum of \textit{nonnegative} terms, towards which one can drop some of them (e.g., the continuous spectrum) to derive an upper bound for the remaining terms. We will make use of this fact in \textsection\ref{11.2} below.

\section{Applications of the Relative Trace Formula}\label{sec9..}
\subsection{Moments of Rankin-Selberg $L$-functions}\label{11.2}
Now we apply Theorems \ref{thm51} and \ref{thm52} to compute the second moment of Rankin-Selberg $L$-values for $\mathrm{GL}(n+1)\times\mathrm{GL}(n)$ over number fields.
\subsubsection{Choice of Local and Global Data}\label{8.5.1}
Suppose that $\Sigma_{F,\infty}$ consists of $r_1$ real places and $r_2$ complex places.  

Let $\pi'\in \mathcal{A}_0([G'],\omega')$  be everywhere  unramified. Let $\phi'\in\pi'$ be a new vector whose   associated Whittaker function $W'=\otimes_vW_v'$ satisfies that $W'(I_n)=1.$ Take $\pi_1'=\pi_2'=\pi',$ and $\phi_1'=\phi_2'=\phi'.$ 

Let $S=\Sigma_{F,\infty}.$ Take $f=f_{\infty}\otimes f_{\fin}\in \mathcal{F}_S(\mathfrak{N},\omega^{-1})$ (cf. \textsection\ref{2.6}). Suppose that $f_{\infty}$ is bi-$K_{\infty}$-invariant. Denote by $\mathcal{H}_{f_{\infty}}$ the Harish-Chandra transform of $f_{\infty},$ i.e., 
 for an irreducible admissible unramified representation $\pi_{\infty}$ of $G(F_{\infty}),$ \begin{equation}\label{133..}
\int_{G'(F_{\infty})}f_{\infty}(g_{\infty})W_{\infty}(g'_{\infty}g_{\infty})=\mathcal{H}_{f_{\infty}}(\boldsymbol{\lambda}_{\pi_{\infty}})W_{\infty}(g_{\infty}')
\end{equation} 
for all $g_{\infty}'\in  G(F_{\infty}),$ where $W_{\infty}$ is a shperical vector in the Whittaker model of $\pi_{\infty},$ and $\boldsymbol{\lambda}_{\pi_{\infty}}$ is the Langlands parameter of $\pi_{\infty}.$

Since $f_{\infty}$ is bi-$K_{\infty}$-invariant, and $W_{\infty}'$ is spherical, then 
\begin{equation}\label{134..}
\int_{G'(F_{\infty})}f_{\infty}\left(\begin{pmatrix}
	y_{\infty}\\
	&1
\end{pmatrix}\right)W_{\infty}'(x_{\infty}y_{
\infty})=\mathcal{H}_{f_{\infty}}^{\heartsuit}(\boldsymbol{\lambda}_{\pi_{\infty}'})W_{\infty}'(x_{\infty})
\end{equation} 
for all $x_{\infty}\in  G'(F_{\infty}),$ where $\boldsymbol{\lambda}_{\pi_{\infty}}'$ is the Langlands parameters of $\pi_{\infty}'.$

For $\phi\in \mathfrak{B}_{\pi},$ let $W_{\phi}=\otimes_vW_{\phi,v}$ be the associated Whittaker vector. Suppose that $\pi=\pi_1\boxplus\cdots \boxplus\pi_m,$ with each $\pi_j$ being cuspidal, $1\leq j\leq m.$ Let 
\begin{equation}\label{137}
\mathbf{L}(\pi):=\prod_{j=1}^m\Lambda(1,\pi_j,\Ad)\cdot \prod_{1\leq j_1<j_2\leq m}|\Lambda(1,\pi_{j_1}\times\widetilde{\pi}_{j_2})|^2.
\end{equation}
Then by Rankin-Selberg theory (cf. \cite{JPSS83}) and the Langlands-Shahidi method, 
\begin{equation}\label{137.}
	\langle\phi,\phi\rangle=(n+1)\alpha_{\phi,\mathfrak{N}}|W(I_{n+1})|^2\cdot \mathbf{L}(\pi),
\end{equation}
where we denote by $\mathbf{L}_v(\pi_v)$ the $v$-th component of $\mathbf{L}(\pi),$ and 
\begin{equation}\label{136.}
	\alpha_{\phi,\mathfrak{N}}:=\prod_{v\mid\mathfrak{N}}\frac{\zeta_v(n+1)}{\mathbf{L}_v(\pi_v)}\int_{N'(F_v)\backslash G'(F_v)}\big|W_{\phi,v}(\iota(x_v))\big|^2dx_v.
\end{equation}

In particular, $\mathbf{L}(\pi)=\Lambda(1,\pi,\Ad)$ if $\pi$ is cuspidal. Similar to \eqref{137.}, we have 
\begin{equation}\label{138.}
\Psi(0,W_{\phi},W')=\beta_{\phi,\mathfrak{N}}W(I_{n+1})W'(I_n)\cdot \Lambda(1/2,\pi\times\pi'),
\end{equation}
where $\beta_{\phi,\mathfrak{N}}$ is defined by 
\begin{align*}
\lim_{s\rightarrow 0}\prod_{v\mid\mathfrak{N}}\frac{1}{L_v(1/2+s,\pi_v\times\pi_v')}\int_{N'(F_v)\backslash G'(F_v)}W_{\phi,v}(\iota(x_v))W'(x_v)|\det x_v|_v^sdx_v.
\end{align*}

\begin{thm}\label{M}
Let notation be as above. Then 
\begin{align*}
&\int_{\widehat{G(\mathbb{A}_F)}_{\gen}(\mathfrak{N})}\sum_{\phi\in\mathfrak{B}_{\pi}}\frac{|\beta_{\phi,\mathfrak{N}}|^2}{\alpha_{\phi,\mathfrak{N}}}\cdot \frac{|\Lambda(1/2,\pi\times\pi')|^2}{\mathbf{L}(\pi)}\cdot \mathcal{H}_{f_{\infty}}(\boldsymbol{\lambda}_{\pi_{\infty}})d\mu_{\pi}\\
=&c_{F,f_{\infty}}\cdot \mathcal{H}_{f_{\infty}}^{\heartsuit}(\boldsymbol{\lambda}_{\pi_{\infty}'})\cdot\Lambda(1,\pi',\Ad)|\mathfrak{N}|^n\log|\mathfrak{N}|+O(|\mathfrak{N}|^n),
\end{align*}
where $\mathbf{L}(\pi)$ is defined by \eqref{137}, $\mathcal{H}_{f_{\infty}}^{\heartsuit}(\boldsymbol{\lambda}_{\pi_{\infty}})$ (resp. $\mathcal{H}_{f_{\infty}}^{\heartsuit}(\boldsymbol{\lambda}_{\pi_{\infty}'})$)  is defined by \eqref{133..} (resp. \eqref{134..}),  
\begin{align*}
	c_{F,f_{\infty}}= n(n+1)2^{r_2(n-1)}\pi^{-r_2}|\mathfrak{D}_F|^{\frac{n}{2}}\underset{s=1}{\Res}\ \zeta_F(s),
\end{align*}	
and the implied constant in $O(|\mathfrak{N}|^n)$ depends on $F,$ $\pi'$ and $f_{\infty}.$ 
\end{thm}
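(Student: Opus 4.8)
\textbf{Proof proposal for Theorem \ref{M}.}

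The plan is to apply the relative trace formula in the form of Theorem \ref{thm52}, specialized to $\phi_1'=\phi_2'=\phi'$ and $\textbf{s}=(0,0),$ and to match the main term on the geometric side with the dominant piece on the spectral side. First I would record the spectral side: by \eqref{67} we have
\begin{align*}
J_{\Spec}^{\Reg,\heartsuit}(f,\mathbf{0})=\int_{\widehat{G(\mathbb{A}_F)}_{\gen}(\mathfrak{N})}\sum_{\phi\in\mathfrak{B}_{\pi}}\big|\Psi(0,\pi(f)W_{\phi},W')\big|^2d\mu_{\pi},
\end{align*}
assuming $f=h*h^*$ so that each summand is nonnegative (the passage from a general $f$ to this case follows by Dixmier--Malliavin and Cauchy--Schwarz, exactly as in Proposition \ref{exp}). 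Using the Harish-Chandra transform \eqref{133..}, the factorization $\langle\phi,\phi\rangle$-normalization in \eqref{137.}, and the local-global decomposition \eqref{138.} of the Rankin--Selberg period at the central point, each spectral term unwinds to $\mathcal{H}_{f_{\infty}}(\boldsymbol{\lambda}_{\pi_{\infty}})$ times $|\beta_{\phi,\mathfrak{N}}|^2\alpha_{\phi,\mathfrak{N}}^{-1}\mathbf{L}(\pi)^{-1}|\Lambda(1/2,\pi\times\pi')|^2$ up to an absolute constant coming from the choices of Haar and Plancherel measures; collecting these gives exactly the left-hand side of the theorem. This bookkeeping step is routine but must be done carefully to pin down the constant $c_{F,f_{\infty}}$ and the volume factors $2^{r_2(n-1)}\pi^{-r_2}|\mathfrak{D}_F|^{n/2}$.

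Next I would evaluate the geometric side. By Theorem \ref{thm51} (and the $\heartsuit$-variant of Theorem \ref{thm52}),
\begin{align*}
J_{\Geo}^{\Reg,\heartsuit}(f,\mathbf{0})=J^{\Reg}_{\Geo,\sm}(f,\mathbf{0})+J^{\Reg}_{\Geo,\du}(f,\mathbf{0})-J^{\Reg,\RNum{1},\heartsuit}_{\Geo,\bi}(f,\mathbf{0})+J^{\Reg,\RNum{2}}_{\Geo,\bi}(f,\mathbf{0}).
\end{align*}
The claim is that $J^{\Reg}_{\Geo,\sm}$ produces the main term. By Proposition \ref{prop11'}, $J^{\Reg}_{\Geo,\sm}(f,\textbf{s})$ is a multiple of $\Lambda(1+s_1+s_2,\pi'\times\widetilde{\pi}')$ times explicit local factors; since $\pi_1'\simeq\pi_2',$ this complete $L$-function has a simple pole at $s_1+s_2=0$ with residue proportional to $\Lambda(1,\pi',\Ad)\cdot\underset{s=1}{\Res}\zeta_F(s),$ and the level-$\mathfrak{N}$ local factor $\Vol(K_0(\mathfrak{N}))^{-1}N_F(\mathfrak{D}_F)^{n(s_1+s_2+1/2)}$ together with the $v\mid\mathfrak{N}$ contribution supplies the factor $|\mathfrak{N}|^n.$ Taking $\textbf{s}\to(0,0)$ along a path inside $\mathcal{R}^*,$ the simple pole of $J^{\Reg}_{\Geo,\sm}$ at $s_1+s_2=0$ must be cancelled by a matching pole among $J^{\Reg}_{\Geo,\du},$ $J^{\Reg,\RNum{1},\heartsuit}_{\Geo,\bi},$ and the singular spectral pieces $\mathcal{G}_{\chi},\widetilde{\mathcal{G}}_{\chi}$ that were moved into $J^{\Reg,\RNum{1},\heartsuit}_{\Geo,\bi}$ --- this is precisely the content of Proposition \ref{prop45}. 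After this cancellation the surviving finite part at $(0,0)$ of $J^{\Reg}_{\Geo,\sm}$ contributes the leading term $c_{F,f_{\infty}}\,\mathcal{H}_{f_{\infty}}^{\heartsuit}(\boldsymbol{\lambda}_{\pi_{\infty}'})\,\Lambda(1,\pi',\Ad)\,|\mathfrak{N}|^n\log|\mathfrak{N}|$: the $\log|\mathfrak{N}|$ arises because, after regularizing the $s_1+s_2=0$ pole, one is left with a derivative-type term $\partial_s\big[N_F(\mathfrak{D}_F)^{ns}\cdots\big]$ evaluated at the pole, and differentiating $|\mathfrak{N}|^{ns}$ produces $\log|\mathfrak{N}|.$ The factor $\mathcal{H}_{f_{\infty}}^{\heartsuit}(\boldsymbol{\lambda}_{\pi_{\infty}'})$ comes from the archimedean local integral $\mathcal{F}_v^{\natural}(f_v,\mathbf{s};\phi',\phi')$ of Proposition \ref{prop11'} evaluated at $\textbf{s}=(0,0)$ via \eqref{134..}.

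Finally I would bound the remaining geometric terms by $O(|\mathfrak{N}|^n).$ The dual orbital integral $J^{\Reg}_{\Geo,\du}$ represents $L(s_1+s_2,\pi'\times\widetilde{\pi}')/\Vol(K_0(\mathfrak{N}))|\mathfrak{N}|^{n(s_1+s_2)}$ (Proposition \ref{prop21}), which at $\textbf{s}=(0,0)$ is $\asymp |\mathfrak{N}|^n$ times a bounded factor --- this actually also has a pole at $s_1+s_2=0$ and will participate in the cancellation, but its regularized value is $O(|\mathfrak{N}|^n).$ The big-cell terms $\mathcal{F}_{0,1}J^{\bi}_{\Geo}$ and $\mathcal{F}_{1,0}J^{\bi}_{\Geo}$ are, by Theorems \ref{thm22} and \ref{thm24}, $O(|\mathfrak{N}|^{-n\Re(s_1+s_2)})+O(|\mathfrak{N}|^{2+n\Re(s_1+s_2)})$ on $\mathcal{R},$ and by choosing the evaluation point with $\Re(s_1+s_2)$ a small negative number the second exponent $2+n\Re(s_1+s_2)$ stays below $n$ for $n\geq 3$; for the threshold cases one argues more carefully using that $\Vol(K_0(\mathfrak{N}))^{-1}\asymp|\mathfrak{N}|^n$ is already extracted, so the genuine size is $O(|\mathfrak{N}|^{2})=O(|\mathfrak{N}|^n)$ when $n\geq 2.$ The term $J^{\Reg,\RNum{2}}_{\Geo,\bi}$ is $O(|\mathfrak{N}|)$ by Theorem \ref{Red}. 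The singular spectral corrections $\mathcal{G}_{\chi},\widetilde{\mathcal{G}}_{\chi}$ are handled in the same way as $\mathcal{F}_{0,1}J^{\bi}_{\Geo}$ via the functional equation identification in Theorem \ref{thm49}. Collecting, $J_{\Geo}^{\Reg,\heartsuit}(f,\mathbf{0})=c_{F,f_{\infty}}\,\mathcal{H}_{f_{\infty}}^{\heartsuit}(\boldsymbol{\lambda}_{\pi_{\infty}'})\,\Lambda(1,\pi',\Ad)\,|\mathfrak{N}|^n\log|\mathfrak{N}|+O(|\mathfrak{N}|^n),$ and equating with the spectral side yields the theorem. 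The main obstacle is the limit $\textbf{s}\to(0,0)$: one must carry out the pole cancellation of Proposition \ref{prop45} while simultaneously tracking the $|\mathfrak{N}|$-dependence uniformly, so that the $\log|\mathfrak{N}|$ appears with the exact constant and no spurious $|\mathfrak{N}|^n\log|\mathfrak{N}|$ contributions survive from the error terms; this requires knowing that the residues cancel \emph{identically in $\mathfrak{N}$}, which is exactly why the singularity-matching was proved at the level of meromorphic functions rather than just numerically.
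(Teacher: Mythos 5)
Your spectral unwinding is the same as the paper's (it is exactly \eqref{67} combined with \eqref{137.} and \eqref{138.}), but the geometric analysis contains a genuine error: you attribute the main term to the small-cell integral $J^{\Reg}_{\Geo,\sm}$ and claim the $\log|\mathfrak{N}|$ arises by differentiating an $|\mathfrak{N}|^{ns}$ factor inside it. No such factor exists there. By Proposition \ref{prop11'}, $J^{\Reg}_{\Geo,\sm}(f,\mathbf{s})$ equals $N_F(\mathfrak{D}_F)^{n(s_1+s_2+\frac12)}\Lambda(1+s_1+s_2,\pi'\times\widetilde{\pi}')\Vol(K_0(\mathfrak{N}))^{-1}\prod_{v\in S}\mathcal{F}^{\natural}_v$; its only dependence on the level is the $s$-independent factor $\Vol(K_0(\mathfrak{N}))^{-1}\asymp|\mathfrak{N}|^n$, while the $s$-dependent power involves the discriminant $\mathfrak{D}_F$, not $\mathfrak{N}$ (your displayed derivative $\partial_s[N_F(\mathfrak{D}_F)^{ns}\cdots]$ conflates the two). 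Hence the Laurent expansion at the pole $s_1+s_2=0$ yields only $\log|\mathfrak{D}_F|$-type constants, and the regularized value is $O(|\mathfrak{N}|^n)$ — this is precisely Lemma \ref{lem57}. The $|\mathfrak{N}|^n\log|\mathfrak{N}|$ main term comes instead from the dual orbital integral: by Proposition \ref{prop21}, $J^{\Reg}_{\Geo,\du}(f,\mathbf{s})$ carries the factor $\Vol(K_0(\mathfrak{N}))^{-1}|\mathfrak{N}|^{-n(s_1+s_2)}\Lambda(s_1+s_2,\pi'\times\widetilde{\pi}')$, and it is the interaction of the simple pole at $s_1+s_2=0$ with the genuinely $s$-dependent power $|\mathfrak{N}|^{-n(s_1+s_2)}$ that produces $n\log|\mathfrak{N}|$; Lemma \ref{lem58} carries this out (via the functional equation, the Kirillov-norm identity \eqref{143}, the sphere-measure constants \eqref{145}, Fourier inversion \eqref{144}, and Stade's formula) and this is where both the factor $\mathcal{H}^{\heartsuit}_{f_{\infty}}(\boldsymbol{\lambda}_{\pi_{\infty}'})$ and the constant $2^{r_2(n-1)}\pi^{-r_2}|\mathfrak{D}_F|^{n/2}\underset{s=1}{\Res}\,\zeta_F(s)$ come from, not from $\mathcal{F}^{\natural}_v$ at $\mathbf{s}=\mathbf{0}$ as you assert. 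Your parallel claim that the regularized value of $J^{\Reg}_{\Geo,\du}$ is merely $O(|\mathfrak{N}|^n)$ is therefore false, and since no correct computation can extract $\log|\mathfrak{N}|$ from a term whose $\mathfrak{N}$-dependence is $s$-independent, this is not an alternative route but a gap: as written, your argument cannot produce the stated asymptotic with the stated constant.

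The surrounding structure of your proposal is otherwise sound and matches the paper: the pole cancellation at $s_1+s_2=0$ between the small-cell and dual residues is Lemma \ref{185} (together with Proposition \ref{prop45} for the pieces exchanged with the spectral side), and the remaining geometric terms are bounded by Theorems \ref{thm22}, \ref{thm24} and \ref{Red}; note that at $\mathbf{s}=\mathbf{0}$ these already give $O(1)+O(|\mathfrak{N}|^2)+O(|\mathfrak{N}|)=O(|\mathfrak{N}|^n)$ for $n\geq 2$, so there is no need (nor any license, since the theorem is an evaluation at $\mathbf{s}=\mathbf{0}$) to move the evaluation point to $\Re(s_1+s_2)<0$. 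To repair the proof, swap the roles of $J^{\Reg}_{\Geo,\sm}$ and $J^{\Reg}_{\Geo,\du}$ and supply the explicit archimedean computation of Lemma \ref{lem58}, which is the real content behind the constant $c_{F,f_{\infty}}$.
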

\begin{remark}
Using the $\mathbf{c}$-function $f_{\infty}$ can be written explicitly by its Harish-Chandra transform $\mathcal{H}_{f_{\infty}}$ (cf. \cite{Wal92}). Hence, one can derive an explicit relation between $\mathcal{H}_{f_{\infty}}^{\heartsuit}(\boldsymbol{\lambda}_{\pi_{\infty}'})$ and $\mathcal{H}_{f_{\infty}}^{\heartsuit}(\boldsymbol{\lambda}_{\pi_{\infty}}).$ 
\end{remark}

\begin{defn}\label{defn57}
	Let $\mathcal{A}_0(\mathfrak{N},\omega)$ be the set of isomorphism classes of unitary cuspidal representations in  $\mathcal{A}_0([G],\omega)$ which are right invariant under $K_0(\mathfrak{N}).$  
\end{defn}

\begin{cor}\label{cor58}
Let notation be as before. Then 
\begin{align*}
\sum_{\substack{\pi\in \mathcal{A}_0(\mathfrak{N},\omega)\\ \boldsymbol{\lambda}_{\pi_{\infty}\in \mathcal{D}}}}\frac{|L(1,\pi\times\pi')|^2}{L(1,\pi,\Ad)}\ll |\mathfrak{N}|^n\log |\mathfrak{N}|,
\end{align*}
where $\mathcal{D}\subset \mathbb{C}^{n+1}$ is a compact subset, $\mathcal{A}_0(\mathfrak{N},\omega)$ is defined by Definition \ref{defn57}, and the implied constant depends on  $\mathcal{D},$ $\pi'$ and $f_{\infty}.$ 	
\end{cor}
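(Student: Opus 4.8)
\textbf{Proof proposal for Corollary \ref{cor58}.}

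The plan is to deduce this as a near-immediate consequence of Theorem \ref{M}, exploiting the positivity of the spectral side. First I would observe that in the setup of \textsection\ref{8.5.1} one takes $\pi_1'=\pi_2'=\pi'$ and $\phi_1'=\phi_2'=\phi'$, and that if $f$ is chosen of the form $h*h^*$ then each summand
$$
\frac{|\beta_{\phi,\mathfrak{N}}|^2}{\alpha_{\phi,\mathfrak{N}}}\cdot \frac{|\Lambda(1/2,\pi\times\pi')|^2}{\mathbf{L}(\pi)}\cdot \mathcal{H}_{f_{\infty}}(\boldsymbol{\lambda}_{\pi_{\infty}})
$$
appearing in Theorem \ref{M} is nonnegative, provided $\mathcal{H}_{f_\infty}\geq 0$ on the relevant part of the spectrum. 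So I would fix once and for all an archimedean test function $f_\infty=h_\infty*h_\infty^*$ whose Harish-Chandra transform $\mathcal{H}_{f_\infty}$ is nonnegative everywhere and is bounded below by a positive constant on the compact set $\mathcal{D}$ (for instance, by taking $h_\infty$ supported near the identity so that $\mathcal{H}_{h_\infty}$ is essentially an approximation to a Gaussian; one only needs $\mathcal{H}_{f_\infty}(\boldsymbol{\lambda})\geq c_{\mathcal{D}}>0$ for $\boldsymbol{\lambda}\in\mathcal{D}$ and $\mathcal{H}_{f_\infty}\geq 0$ globally). With this choice $\mathcal{H}_{f_\infty}^{\heartsuit}(\boldsymbol{\lambda}_{\pi_\infty'})$ is a fixed positive constant depending only on $\pi'$ and $f_\infty$.

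Next I would restrict the integral over $\widehat{G(\mathbb{A}_F)}_{\gen}(\mathfrak{N})$ in Theorem \ref{M} to the cuspidal spectrum with $\boldsymbol{\lambda}_{\pi_\infty}\in\mathcal{D}$; by positivity of every term this only decreases the left-hand side, so
$$
\sum_{\substack{\pi\in\mathcal{A}_0(\mathfrak{N},\omega)\\ \boldsymbol{\lambda}_{\pi_\infty}\in\mathcal{D}}}\sum_{\phi\in\mathfrak{B}_\pi}\frac{|\beta_{\phi,\mathfrak{N}}|^2}{\alpha_{\phi,\mathfrak{N}}}\cdot\frac{|\Lambda(1/2,\pi\times\pi')|^2}{\mathbf{L}(\pi)}\cdot\mathcal{H}_{f_\infty}(\boldsymbol{\lambda}_{\pi_\infty})\ \ll_{F,\pi',f_\infty}\ |\mathfrak{N}|^n\log|\mathfrak{N}|+O(|\mathfrak{N}|^n).
$$
For a cuspidal $\pi$ one has $\mathbf{L}(\pi)=\Lambda(1,\pi,\Ad)$, and the completed $L$-values differ from the finite ones $L(1/2,\pi\times\pi')$, $L(1,\pi,\Ad)$ only by archimedean gamma factors, which on the compact set $\mathcal{D}$ are bounded above and below by constants depending only on $\mathcal{D}$ and $\pi'$. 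Likewise $\mathcal{H}_{f_\infty}(\boldsymbol{\lambda}_{\pi_\infty})\geq c_{\mathcal{D}}>0$ on $\mathcal{D}$. So after absorbing all archimedean factors into the implied constant, the left side is bounded below by a positive constant times
$$
\sum_{\substack{\pi\in\mathcal{A}_0(\mathfrak{N},\omega)\\ \boldsymbol{\lambda}_{\pi_\infty}\in\mathcal{D}}}\sum_{\phi\in\mathfrak{B}_\pi}\frac{|\beta_{\phi,\mathfrak{N}}|^2}{\alpha_{\phi,\mathfrak{N}}}\cdot\frac{|L(1/2,\pi\times\pi')|^2}{L(1,\pi,\Ad)}.
$$
It remains to show $\sum_{\phi\in\mathfrak{B}_\pi}|\beta_{\phi,\mathfrak{N}}|^2/\alpha_{\phi,\mathfrak{N}}$ is bounded below by an absolute positive constant; this is a purely local statement at the places $v\mid\mathfrak{N}$ about newvectors in $K_0(\mathfrak{p}_v^{e_v(\mathfrak{N})})$-fixed spaces and the normalized local Rankin-Selberg integrals $\alpha_{\phi,\mathfrak{N}}$, $\beta_{\phi,\mathfrak{N}}$ defined in \eqref{136.} and \eqref{138.} — one picks $\phi$ to be the local newvector at each $v\mid\mathfrak{N}$, for which $\beta_{\phi,\mathfrak{N}}\asymp 1$ and $\alpha_{\phi,\mathfrak{N}}\asymp 1$. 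Combining these reductions yields the claimed bound.

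The main obstacle I expect is the last local step: verifying that there is a genuinely \emph{positive} lower bound for $|\beta_{\phi,\mathfrak{N}}|^2/\alpha_{\phi,\mathfrak{N}}$ summed over an orthonormal basis $\mathfrak{B}_\pi$ of the $K_0(\mathfrak{N})$-fixed vectors, uniformly in $\pi$ — equivalently, that the newvector contributes a non-negligible amount and is not killed by the normalization. This requires knowing the local Rankin-Selberg integral of the newvector against the unramified vector of $\pi'$ is nonzero and of size $\asymp 1$ (depending only on the ramification data of $\pi'$ at $v\mid\mathfrak{N}$, which is trivial here since $\pi'$ is everywhere unramified), together with the computation of $\alpha_{\phi,\mathfrak{N}}$ for the newvector; these are standard but must be done carefully, and constitute the only nontrivial input beyond Theorem \ref{M} and positivity. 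Everything else — truncating the spectral expansion to a positive sub-sum, controlling archimedean factors on a compact set, and choosing $f_\infty$ with nonnegative spectral transform — is routine.
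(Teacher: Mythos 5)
Your proposal is correct and follows essentially the same route as the paper: the paper likewise starts from Theorem \ref{M}, implicitly uses the positivity of the spectral side (for $f=h*h^*$ with $\phi_1'=\phi_2'$) to drop everything except cusp forms with $\boldsymbol{\lambda}_{\pi_\infty}\in\mathcal{D}$, invokes newvector theory (\cite{JPSS81}) to produce for each $\pi$ a $\phi\in\mathfrak{B}_\pi$ with $\alpha_{\phi,\mathfrak{N}}=\beta_{\phi,\mathfrak{N}}=1$ (your ``local lower bound'' step), and then passes from completed to finite $L$-values by noting $\Lambda_\infty(1,\pi_\infty\times\pi_\infty')\asymp 1$ and $\Lambda_\infty(1,\pi_\infty,\Ad)\asymp 1$ on the compact set $\mathcal{D}$ via the bounds of \cite{KS03}. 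The only cosmetic difference is that you spell out the choice of $f_\infty$ with nonnegative transform bounded below on $\mathcal{D}$, which the paper leaves implicit.
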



\subsubsection{Asymptotic Behavior of the Geometric Side}

Let $\mathbf{s}=(s,0).$ Define
\begin{align*}
	J^{\Reg,\hol}_{\Geo,\sm}(f,\textbf{s}):=&J^{\Reg}_{\Geo,\sm}(f,\textbf{s})-s^{-1}\underset{s=0}{\Res}J^{\Reg}_{\Geo,\sm}(f,\textbf{s}),\\
	J^{\Reg,\hol}_{\Geo,\du}(f,\textbf{s}):=&J^{\Reg}_{\Geo,\du}(f,\textbf{s})-s^{-1}\underset{s=0}{\Res}J^{\Reg}_{\Geo,\du}(f,\textbf{s}).
\end{align*}
Then $J^{\Reg,\hol}_{\Geo,\sm}(f,\textbf{s})$ and $J^{\Reg,\hol}_{\Geo,\du}(f,\textbf{s})$ are holomorphic near  $s=0.$
\begin{lemma}\label{lem57}
	Let notation be as above. Then $J^{\Reg,\hol}_{\Geo,\sm}(f,\textbf{0})=O(|\mathfrak{N}|^n),$ where the implied constant depends on $F,$ $\pi'$ and $f_{\infty}.$ 
\end{lemma}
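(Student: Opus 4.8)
The plan is to evaluate $J^{\Reg}_{\Geo,\sm}(f,\textbf{s})$ at $\textbf{s}=(s,0)$ using the explicit Euler product from Proposition \ref{prop11'}, extract the pole at $s=0$ coming from $\Lambda(s_1+s_2+1,\pi_1'\times\widetilde{\pi}_2')=\Lambda(1+s,\pi',\Ad)$ (recall $\pi_1'=\pi_2'=\pi'$), and bound the resulting holomorphic remainder. Concretely, Proposition \ref{prop11'} gives
\begin{align*}
J^{\Reg}_{\Geo,\sm}(f,\textbf{s})=\frac{N_F(\mathfrak{D}_F)^{n(s_1+s_2+\frac12)}\Lambda(s_1+s_2+1,\pi_1'\times\widetilde{\pi}_2')}{\Vol(K_0(\mathfrak{N}))}\prod_{v\in S}\mathcal{F}_v^{\natural}(f_v,\textbf{s};\phi_1',\phi_2'),
\end{align*}
since with our choice $W_{\phi_1'}(I_n)=W_{\phi_2'}(I_n)=W'(I_n)=1$. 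Here $S=\Sigma_{F,\infty}$, so the finite product over $v\in S$ is a product of archimedean entire factors $\mathcal{F}_v^{\natural}(f_v,(s,0);\phi',\phi')$, which by \eqref{133..}--\eqref{134..} is essentially $\mathcal{H}_{f_\infty}^{\heartsuit}(\boldsymbol\lambda_{\pi_\infty'})$ divided by $L_\infty(1+s,\pi',\Ad)$, up to the Godement-Jacquet archimedean factors attached to $f_\infty$; in any case it is holomorphic near $s=0$ and bounded by a constant depending only on $F,\pi',f_\infty$.

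First I would write $\Lambda(1+s,\pi',\Ad)=\frac{r_{-1}}{s}+r_0+O(s)$ near $s=0$ with $r_{-1}=\underset{s=0}{\Res}\Lambda(1+s,\pi',\Ad)$; note $\Lambda(1+s,\pi',\Ad)$ indeed has a simple pole at $s=0$ because $\pi'$ is self-contragredient-paired here, matching the remark after Theorem \ref{C}(i). Also expand $N_F(\mathfrak{D}_F)^{n(s+\frac12)}=N_F(\mathfrak{D}_F)^{n/2}(1+ns\log N_F(\mathfrak{D}_F)+O(s^2))$ and the archimedean product $\prod_{v\in S}\mathcal{F}_v^{\natural}(f_v,(s,0);\phi',\phi')=A_0+A_1 s+O(s^2)$. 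Multiplying these three Laurent/Taylor expansions and dividing by $\Vol(K_0(\mathfrak{N}))$, the Laurent expansion of $J^{\Reg}_{\Geo,\sm}(f,(s,0))$ at $s=0$ is
\begin{align*}
J^{\Reg}_{\Geo,\sm}(f,(s,0))=\frac{1}{\Vol(K_0(\mathfrak{N}))}\Big(\frac{N_F(\mathfrak{D}_F)^{n/2}r_{-1}A_0}{s}+C_0(F,\pi',f_\infty)+O(s)\Big),
\end{align*}
where $C_0(F,\pi',f_\infty)$ collects the finitely many cross terms ($r_0A_0$, $r_{-1}A_1$, and $r_{-1}A_0\cdot n\log N_F(\mathfrak{D}_F)$) and depends only on $F$, $\pi'$, $f_\infty$ — crucially not on $\mathfrak{N}$. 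By definition $J^{\Reg,\hol}_{\Geo,\sm}(f,(s,0))=J^{\Reg}_{\Geo,\sm}(f,(s,0))-s^{-1}\underset{s=0}{\Res}J^{\Reg}_{\Geo,\sm}(f,(s,0))$, so its value at $s=0$ is exactly the constant term $C_0(F,\pi',f_\infty)/\Vol(K_0(\mathfrak{N}))$. Since $\Vol(K_0(\mathfrak{N}))\asymp |\mathfrak{N}|^{-n}$ (the index of $K_0(\mathfrak{N})$ in $G(\widehat{\mathcal O}_F)$ grows like $|\mathfrak{N}|^n$ up to bounded factors, via the formula $[K_v:K_0(\mathfrak{p}_v^{e_v(\mathfrak{N})})]\asymp q_v^{n\cdot e_v(\mathfrak{N})}$), we conclude $J^{\Reg,\hol}_{\Geo,\sm}(f,\textbf{0})=O(|\mathfrak{N}|^n)$ with the implied constant depending only on $F,\pi',f_\infty$.

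The only mildly delicate points are: (i) confirming that $A_0=\prod_{v\in S}\mathcal{F}_v^{\natural}(f_v,(0,0);\phi',\phi')$ is finite and nonzero-independent-of-$\mathfrak{N}$, which is immediate since $S=\Sigma_{F,\infty}$ is fixed and each factor is entire in $\textbf{s}$ by Proposition \ref{prop11'}; (ii) the asymptotic $\Vol(K_0(\mathfrak{N}))^{-1}\asymp|\mathfrak{N}|^{n}$, which is a standard local index computation; and (iii) keeping track that every coefficient $r_{-1},r_0,A_0,A_1$ is $\mathfrak{N}$-independent. I expect (iii) — the bookkeeping that the entire $\mathfrak{N}$-dependence of $J^{\Reg}_{\Geo,\sm}$ sits in the single prefactor $\Vol(K_0(\mathfrak{N}))^{-1}$ — to be the main thing to verify carefully, but it follows directly from the product formula in Proposition \ref{prop11'} together with the hypothesis that $f_v$ is the normalized characteristic function for $v\mid\mathfrak{N}$ and spherical elsewhere. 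Everything else is a routine Laurent expansion.
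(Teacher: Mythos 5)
Your proposal is correct and follows essentially the same route as the paper: apply Proposition \ref{prop11'} with $\pi_1'=\pi_2'=\pi'$, Laurent-expand the completed Rankin--Selberg $L$-function at $s=0$ and Taylor-expand the ($\mathfrak{N}$-independent) discriminant and archimedean factors, observe that the entire $\mathfrak{N}$-dependence sits in the prefactor $\Vol(K_0(\mathfrak{N}))^{-1}\asymp|\mathfrak{N}|^n$, and read off the constant term. One cosmetic slip: the pole at $s=0$ comes from $\Lambda(1+s,\pi'\times\widetilde{\pi}')=\Lambda(1+s,\pi',\mathrm{Ad})\,\Lambda_F(1+s)$ via the Dedekind zeta factor, not from ``$\Lambda(1+s,\pi',\mathrm{Ad})$'' itself, but since you only use simplicity of the pole and $\mathfrak{N}$-independence of the Laurent coefficients, the argument is unaffected.
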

\begin{proof}
By Proposition \ref{prop11'} we have 
\begin{align*}
	J^{\Reg}_{\Geo,\sm}(f,\textbf{s})=\frac{|\mathfrak{D}_F|^{n(s+\frac{1}{2})}L(s+1,\pi'\times\widetilde{\pi}')}{\Vol(K_0(\mathfrak{N}))}\cdot \mathcal{I}_{\infty}(f_{\infty},s),
\end{align*}
where $|\mathfrak{D}_F|:=N_F(\mathfrak{D}_F),$ and 
\begin{align*}
\mathcal{I}_{\infty}(f_{\infty},s):=&\int_{N'(F_{\infty})\backslash G'(F_{\infty})}\int_{G'(F_{\infty})}\int_{F_{\infty}^n}f_{\infty}\left(\begin{pmatrix}
	y_{\infty}&u_{\infty}\\
	&1
\end{pmatrix}\right)\theta_{\infty}(\eta x_{\infty}u_{\infty})\\
&\qquad \qquad W_{\infty}'(x_{\infty})\overline{W_{\infty}'(x_{\infty}y_{\infty})}|\det x_{\infty}|_{\infty}^{1+s}du_{\infty}dy_{\infty}dx_{\infty}.
\end{align*}

Write $L(s+1,\pi'\times\widetilde{\pi}')=a_{-1}s^{-1}+a_0+O(s),$ and $|\mathfrak{D}_F|^{ns+\frac{n}{2}}\mathcal{I}_{\infty}(f_{\infty},s)=b_0+b_1s+O(s^2).$ Then 
\begin{align*}
	J^{\Reg,\hol}_{\Geo,\sm}(f,\textbf{0})=\frac{a_{-1}b_1+a_0b_0}{\Vol(K_0(\mathfrak{N}))}\ll |\mathfrak{N}|^n,
\end{align*}
where the implied constant relies on $F,$ $\pi'$ and $f_{\infty}.$
\end{proof}

\begin{lemma}\label{lem58}
	Let notation be as above. Then 
	\begin{equation}\label{135.}
	J^{\Reg,\hol}_{\Geo,\du}(f,\textbf{0})=c_{F,f_{\infty}}'\mathcal{H}_{f_{\infty}}^{\heartsuit}(\boldsymbol{\lambda}_{\pi_{\infty}'})\Lambda(1,\pi',\Ad)|\mathfrak{N}|^n\log|\mathfrak{N}|+O(|\mathfrak{N}|^n),
	\end{equation} 
	where $\mathcal{H}_{f_{\infty}}^{\heartsuit}(\boldsymbol{\lambda}_{\pi_{\infty}'})$ is defined by \eqref{134..},  and  
\begin{align*}
	c_{F,f_{\infty}}'= n2^{r_2(n-1)}\pi^{-r_2}|\mathfrak{D}_F|^{\frac{n}{2}}\underset{s=1}{\Res}\ \zeta_F(s),
\end{align*}	
and the implied constant in $O(|\mathfrak{N}|^n)$depends on $F,$ $\pi'$ and $f_{\infty}.$ 
\end{lemma}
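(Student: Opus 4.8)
The plan is to analyze the orbital integral $J^{\Reg}_{\Geo,\du}(f,\textbf{s})$ at $\textbf{s}=(s,0)$ near $s=0$ and extract its Laurent expansion. By Proposition \ref{prop21}, with the choice $\pi_1'=\pi_2'=\pi'$ and $\phi_1'=\phi_2'=\phi'$ everywhere unramified, we have
\begin{align*}
\frac{J_{\Geo,\du}^{\bi}(f,\textbf{s})}{|W'(I_n)|^2}=\frac{\Lambda(s,\pi'\times\widetilde{\pi}')}{\Vol(K_0(\mathfrak{N}))|\mathfrak{N}|^{ns}}\prod_{v\mid \infty}\mathcal{F}_v^{\dagger}(f_v,\textbf{s};\phi',\phi'),
\end{align*}
where each $\mathcal{F}_v^{\dagger}(f_v,\textbf{s};\phi',\phi')$ is entire in $\textbf{s}$. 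Since $W'(I_n)=1$, I would first evaluate the archimedean factor: by the Iwasawa-type unfolding already used in the proof of Proposition \ref{prop21} and the defining relation \eqref{134..} for $\mathcal{H}_{f_\infty}^{\heartsuit}$, the product $\prod_{v\mid\infty}\mathcal{F}_v^{\dagger}(f_v,(0,0);\phi',\phi')$ should equal $\mathcal{H}_{f_\infty}^{\heartsuit}(\boldsymbol{\lambda}_{\pi_\infty'})$ up to the explicit measure-normalization constant $2^{r_2(n-1)}\pi^{-r_2}$ coming from the self-dual Haar measures at complex places and the local zeta factors. This is essentially a bookkeeping computation of local integrals against spherical Whittaker functions.

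Next I would write $\Lambda(s,\pi'\times\widetilde{\pi}')=\Lambda(s,\pi',\Ad)\cdot\zeta_F(s)$ using the factorization of the Rankin-Selberg $L$-function of $\pi'$ with its dual; the complete Dedekind zeta $\zeta_F(s)$ has a simple pole at $s=1$ and, more importantly here, the relevant pole of $\Lambda(s,\pi'\times\widetilde{\pi}')$ is at $s=1$, not $s=0$. Wait---the subtlety is that $J^{\Reg}_{\Geo,\du}(f,\textbf{s})$ represents $L(s_1+s_2,\pi_1'\times\widetilde{\pi}_2')$, so at $\textbf{s}=(s,0)$ it is governed by $\Lambda(s,\pi'\times\widetilde{\pi}')$, whose pole sits at $s=0$ only after accounting for the shift in the completed $L$-function convention; more precisely the pole structure in the statement of Theorem \ref{thm51} is at $s_1+s_2\in\{0,1\}$, and the relevant singularity of $J^{\Reg}_{\Geo,\du}$ is the simple pole of $\zeta_F$-type. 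I would therefore expand $\Lambda(s,\pi'\times\widetilde{\pi}')$ as $a_{-1}s^{-1}+a_0+O(s)$ near the pole, with $a_{-1}$ a nonzero multiple of $\underset{s=1}{\Res}\,\zeta_F(s)\cdot\Lambda(1,\pi',\Ad)$ (after the shift), and expand $|\mathfrak{N}|^{-ns}=1-ns\log|\mathfrak{N}|+O(s^2)$.

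Combining these, $J^{\Reg}_{\Geo,\du}(f,(s,0))\cdot\Vol(K_0(\mathfrak{N}))$ has the form $(a_{-1}s^{-1}+a_0+O(s))(1-ns\log|\mathfrak{N}|+O(s^2))(b_0+O(s))$ where $b_0=\prod_{v\mid\infty}\mathcal{F}_v^\dagger(f_v,(0,0);\phi',\phi')$. The residue at $s=0$ is $a_{-1}b_0$, and the holomorphic part $J^{\Reg,\hol}_{\Geo,\du}(f,(0,0))$---obtained by subtracting $s^{-1}$ times this residue---picks up the cross term $-n\,a_{-1}b_0\log|\mathfrak{N}|+a_0b_0$. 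Since $\Vol(K_0(\mathfrak{N}))\asymp|\mathfrak{N}|^{-n}$ up to a constant depending only on $F$, dividing through yields the leading term $c_{F,f_\infty}'\mathcal{H}_{f_\infty}^{\heartsuit}(\boldsymbol{\lambda}_{\pi_\infty'})\Lambda(1,\pi',\Ad)|\mathfrak{N}|^n\log|\mathfrak{N}|$ with $c_{F,f_\infty}'=n2^{r_2(n-1)}\pi^{-r_2}|\mathfrak{D}_F|^{n/2}\underset{s=1}{\Res}\,\zeta_F(s)$, and the $O(|\mathfrak{N}|^n)$ error absorbing the $a_0b_0$ term and lower-order contributions in the expansions.

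The main obstacle I anticipate is the precise matching of normalizing constants: correctly tracking the volume $\Vol(K_0(\mathfrak{N}))=|\mathfrak{N}|^{-n}\prod(\text{local factors})$, the discriminant power $|\mathfrak{D}_F|^{n/2}$ arising from self-dual measures and the functional-equation conductor, the factor $2^{r_2(n-1)}\pi^{-r_2}$ from complex archimedean places, and the exact residue of $\Lambda(s,\pi'\times\widetilde\pi')$ in terms of $\Lambda(1,\pi',\Ad)$ and $\underset{s=1}{\Res}\,\zeta_F(s)$---all with the shift between $L$ and $\Lambda$ handled consistently. Verifying that these combine into exactly $c_{F,f_\infty}'$ is delicate but purely computational; everything else follows from the already-established Proposition \ref{prop21} and the defining identity \eqref{134..}.
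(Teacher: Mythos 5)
Your overall expansion mechanism (a simple pole of the $L$-factor at the point of interest, the cross term with the derivative of $|\mathfrak{N}|^{-ns}$ producing the $\log|\mathfrak{N}|$, division by $\Vol(K_0(\mathfrak{N}))$) is the same as in the paper, but your treatment of the archimedean factor has a genuine gap, and it sits exactly where the constant $c'_{F,f_{\infty}}$ --- the whole content of the lemma --- is produced. You claim that $\prod_{v\mid\infty}\mathcal{F}_v^{\dagger}(f_v,\mathbf{0};\phi',\phi')$ equals $\mathcal{H}^{\heartsuit}_{f_{\infty}}(\boldsymbol{\lambda}_{\pi'_{\infty}})$ up to the constant $2^{r_2(n-1)}\pi^{-r_2}$ ``by the unfolding in Proposition \ref{prop21} and the defining relation \eqref{134..}''. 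But \eqref{134..} involves $f_{\infty}$ evaluated at $\iota(y_{\infty})$ alone, whereas the local integrals entering $\mathcal{F}_v^{\dagger}$ involve $f_v\bigl(u(\eta x_v)\iota(y_v)\bigr)$ with the lower-triangular unipotent $u(\eta x_v)$ depending on the Whittaker variable $x_v$; no identity of the kind you assert follows from \eqref{134..}. In the paper the unipotent is removed only after first applying the functional equation of the Eisenstein series $E_{\du}$ of \eqref{eisdu} (equivalently, working with $L(1-s,\pi'\times\widetilde{\pi}')$, whose pole now sits at $s=0$), so that one faces an honest Fourier integral over the unipotent coordinate $c_v$ against $\theta_v(\eta z_vk_v\transp{c_v})$; then the Kirillov-invariance step \eqref{143}, the decomposition of the Haar measure on $K_v'$ into spheres, and Fourier inversion in polar coordinates collapse that integral to $f_v(\iota(y_v))$ as in \eqref{144}. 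It is precisely this computation that produces $2^{r_2(n-1)}$, the factors $\Gamma_v(n)$ (which cancel against Stade's formula for $\langle W'_{\infty},W'_{\infty}\rangle$), $\Lambda_{F,\infty}(1)=\pi^{-r_2}$, and, together with the unramified computations of Lemma \ref{lem10} type, the power $|\mathfrak{D}_F|^{n/2}$.

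If you insist on staying with Proposition \ref{prop21} and the pole of the completed $\Lambda(s,\pi'\times\widetilde{\pi}')$ at $s=0$, you must (i) compute $\underset{s=0}{\Res}\,\Lambda(s,\pi'\times\widetilde{\pi}')$ through the global functional equation, which brings in the epsilon/conductor factor (your $a_{-1}$ is not simply a multiple of $\Lambda(1,\pi',\Ad)\underset{s=1}{\Res}\,\zeta_F(s)$ without it, and this is where the discriminant power really lives), and (ii) relate the archimedean $\mathcal{F}_v^{\dagger}(f_v,\mathbf{0})$ to $\mathcal{H}^{\heartsuit}_{f_{\infty}}$ through the archimedean local functional equation with explicit gamma factors, and then verify that these two sets of factors cancel to leave exactly $c'_{F,f_{\infty}}$. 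That is a genuinely different (and heavier) route than the paper's dual-side argument, and your proposal neither carries out these steps nor identifies them; describing the matching as bookkeeping that ``follows from Proposition \ref{prop21} and \eqref{134..}'' misstates where the difficulty is, so as written the proof of \eqref{135.} is incomplete at its central point.
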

\begin{proof}
Let $s<0.$ By Proposition \ref{prop18'} and the functional equation of the Eisenstein series $E_{\du}(x,s;f^{\dag}(\cdot,z_2,y))$ defined by \eqref{eisdu}, we have 
\begin{align*}
	J^{\Reg}_{\Geo,\du}(f,\textbf{s})=\prod_{v\in\Sigma_F}\mathcal{I}_{v}^{\heartsuit}(f_{v},s)
\end{align*} 
where 
\begin{align*}
\mathcal{I}_{v}^{\heartsuit}(f_{v},s):=&\int_{N'(F_{v})\backslash G'(F_{v})}\int_{G'(F_{v})}\int_{F_{v}^n}f_{v}\left(\begin{pmatrix}
	I_n&\\
	c_v&1
\end{pmatrix}\begin{pmatrix}
	y_{v}&\\
	&1
\end{pmatrix}\right)\theta_{v}(\eta x_{v}\transp{c_v})\\
&\qquad \qquad \overline{W_{v}'(x_{v})}W_{v}'(x_{v}\transp{y}_{v}^{-1})|\det x_{v}|_{v}^{1-s}dc_vdy_{v}dx_{v}.
\end{align*}

Similar to Proposition \ref{prop21} we have 
\begin{align*}
	J^{\Reg}_{\Geo,\du}(f,\textbf{s})=\frac{|\mathfrak{D}_F|^{n(-s+\frac{1}{2})}L(1-s,\pi'\times\widetilde{\pi}')}{\Vol(K_0(\mathfrak{N}))|\mathfrak{N}|^{ns}}\cdot \mathcal{I}_{\infty}^{\heartsuit}(f_{\infty},s),
\end{align*}
where $\mathcal{I}_{\infty}^{\heartsuit}(f_{\infty},s)=\prod_{v\in\Sigma_{F,\infty}}\mathcal{I}_{v}^{\heartsuit}(f_{\infty},s),$ namely, 
\begin{align*}
\mathcal{I}_{\infty}^{\heartsuit}(f_{\infty},s)=&\int_{N'(F_{\infty})\backslash G'(F_{\infty})}\int_{G'(F_{\infty})}\int_{F_{\infty}^n}f_{\infty}\left(\begin{pmatrix}
	I_n&\\
	c_{\infty}&1
\end{pmatrix}\begin{pmatrix}
	y_{\infty}&\\
	&1
\end{pmatrix}\right)\theta_{\infty}(\eta x_{\infty}\transp{c_{\infty}})\\
&\qquad \qquad \overline{W_{\infty}'(x_{\infty})}W_{\infty}'(x_{\infty}\transp{y}_{\infty}^{-1})|\det x_{\infty}|_{\infty}^{1-s}du_{\infty}dy_{\infty}dx_{\infty}.
\end{align*}

Write $L(1-s,\pi'\times\widetilde{\pi}')=-a_{-1}'s^{-1}+a_0'+O(s),$ and $|\mathfrak{D}_F|^{-ns+\frac{n}{2}}|\mathfrak{N}|^{-ns}\mathcal{I}_{\infty}^{\heartsuit}(f_{\infty},s)=b_0'-b_1's+O(s^2).$ Then 
\begin{align*}
	J^{\Reg,\hol}_{\Geo,\sm}(f,\textbf{0})=\frac{a_{-1}'b_1'+a_0'b_0'}{\Vol(K_0(\mathfrak{N}))}=\frac{a_{-1}'b_1'}{\Vol(K_0(\mathfrak{N}))}+O(|\mathfrak{N}|^n),
\end{align*}
where the implied constant relies on $F,$ $\pi'$ and $f_{\infty}.$ By definition,
\begin{align*}
	b_1'=&|\mathfrak{D}_F|^{\frac{n}{2}}\int_{N'(F_{\infty})\backslash G'(F_{\infty})}\int_{G'(F_{\infty})}\int_{F_{\infty}^n}f_{\infty}\left(\begin{pmatrix}
	I_n&\\
	c_{\infty}&1
\end{pmatrix}\begin{pmatrix}
	y_{\infty}&\\
	&1
\end{pmatrix}\right)\theta_{\infty}(\eta x_{\infty}\transp{c_{\infty}})\\
&\overline{W_{\infty}'(x_{\infty})}W_{\infty}'(x_{\infty}\transp{y}_{\infty}^{-1})|\det x_{\infty}|_{\infty}\log(|\det x_{\infty}|_{\infty}|\mathfrak{D}_F|^n|\mathfrak{N}|^n)dc_{\infty}dy_{\infty}dx_{\infty}.
\end{align*}

So $b_1'=b_1''+O(|\mathfrak{N}|^n),$ where 
\begin{align*}
	b_1''=&|\mathfrak{D}_F|^{\frac{n}{2}}\int_{N'(F_{\infty})\backslash G'(F_{\infty})}\int_{G'(F_{\infty})}\int_{F_{\infty}^n}f_{\infty}\left(\begin{pmatrix}
	I_n&\\
	c_{\infty}&1
\end{pmatrix}\begin{pmatrix}
	y_{\infty}&\\
	&1
\end{pmatrix}\right)\theta_{\infty}(\eta x_{\infty}\transp{c_{\infty}})\\
&\quad\overline{W_{\infty}'(x_{\infty})}W_{\infty}'(x_{\infty}\transp{y}_{\infty}^{-1})|\det x_{\infty}|_{\infty}\log(|\mathfrak{N}|^n)dc_{\infty}dy_{\infty}dx_{\infty}.
\end{align*}

Let $H=\diag(\mathrm{GL}(n-1),1)$ and $N_H=N'\cap H.$ By Iwasawa decomposition, \begin{align*}
	b_1''=&C_{n,F}\int_{G'(F_{\infty})}\int_{K_{\infty}'}\int_{Z'(F_{\infty})}\int_{F_{\infty}^n}f_{\infty}\left(\begin{pmatrix}
	I_n&\\
	c_{\infty}&1
\end{pmatrix}\begin{pmatrix}
	y_{\infty}&\\
	&1
\end{pmatrix}\right)\theta_{\infty}(\eta z_{\infty}k_{\infty}\transp{c_{\infty}})\\
&\int_{N_H(F_{\infty})\backslash H(F_{\infty})}\overline{W_{\infty}'(h_{\infty}k_{\infty})}W_{\infty}'(h_{\infty}k_{\infty}\transp{y}_{\infty}^{-1})dh_{\infty}dc_{\infty}|\det z_{\infty}|_{\infty}d^{\times}z_{\infty}dy_{\infty},
\end{align*}
where $C_{n,F}:=n|\mathfrak{D}_F|^{\frac{n}{2}}\log |\mathfrak{N}|.$ By Kirillov model theory, we have
\begin{equation}\label{143}
	\int \overline{W_{\infty}'(h_{\infty}k_{\infty})}W_{\infty}'(h_{\infty}k_{\infty}\transp{y}_{\infty}^{-1})dh_{\infty}=\int\overline{W_{\infty}'(h_{\infty})}W_{\infty}'(h_{\infty}\transp{y}_{\infty}^{-1})dh_{\infty},
\end{equation}
where $h_{\infty}$ ranges through $N_H(F_{\infty})\backslash H(F_{\infty}).$ 

Let $v\mid\infty.$ For $y_v\in G'(F_v),$ define $\mathcal{J}_{v}(y_{v})$ by
\begin{align*}
\int_{K_{v}'}\int_{Z'(F_{v})}\int_{F_{v}^n}f_{v}\left(\begin{pmatrix}
	I_n&\\
	c_{v}&1
\end{pmatrix}\begin{pmatrix}
	y_{v}&\\
	&1
\end{pmatrix}\right)\theta_{v}(\eta z_{v}k_{v}\transp{c_{v}})dc_{v}|\det z_{v}|_{v}d^{\times}z_{v}dk_{v}.
\end{align*}

For $v\mid \infty,$ $K_v'=\mathrm{O}(n)$ if $v$ is real, and $K_v'=\mathrm{U}(n)$ if $v$ is complex. Using the decomposition of Haar measures of $K_v'$ into measures on spheres, we then obtain 
\begin{align*}
	\mathcal{J}_v(y_v)=d_v\int_{\mathbb{S}_v}\int_{F_{v}^{\times}}\int_{F_{v}^n}f_{v}\left(\begin{pmatrix}
	I_n&\\
	c_{v}&1
\end{pmatrix}\begin{pmatrix}
	y_{v}&\\
	&1
\end{pmatrix}\right)\theta_{v}(\eta z_{v}k_{v}\transp{c_{v}})dc_{v}|z_{v}|_{v}^{n}d^{\times}z_{v}dk_{v},
\end{align*} 
where $\mathbb{S}_v=\mathbb{S}^{n-1},$ the $(n-1)$-sphere, if $v$ is real; and $\mathbb{S}_v=\mathbb{S}^{2n-1}$ if $v$ complex, and 
\begin{equation}\label{145}
	d_v=\begin{cases}
	\Vol(\mathbb{S}^{n-1})^{-1}=\frac{\pi^{-n/2}\Gamma(n/2)}{2}=\frac{\Gamma_v(n)}{2},\ &\text{if $v$ is real},\\
	\Vol(\mathbb{S}^{2n-1})^{-1}=\frac{\pi^{-n}\Gamma(n)}{2}=\frac{2^n\Gamma_v(n)}{4},\ &\text{if $v$ is complex}.
	\end{cases}
\end{equation}

Using Fourier inversion and polar coordinates we obtain that 
\begin{equation}\label{144}
	\mathcal{J}_v(y_v)=2^{([F_v: \mathbb{R}]-1)(n-1)}\Gamma_v(n)f_{v}\left(\begin{pmatrix}
	y_{v}&\\
	&1
\end{pmatrix}\right).
\end{equation} 

Therefore, $b_1''$ is equal to 
\begin{align*}
n2^{r_2(n-1)}|\mathfrak{D}_F|^{\frac{n}{2}}\log |\mathfrak{N}|\Lambda_{F,\infty}(n)\int_{G'(F_{\infty})}f_{v}\left(\begin{pmatrix}
	y_{v}&\\
	&1
\end{pmatrix}\right)\overline{\langle \pi_{\infty}'(y_v)W_{\infty}',W_{\infty}'\rangle}dy_v,
\end{align*}
where $\Lambda_{F,\infty}=\prod_{v\in \Sigma_{F,\infty}}\Gamma_v$ is the archimedean component of the complete Dedekind zeta function attached to the number field $F.$

In conjunction with the definition of $\mathcal{H}_{f_{\infty}}^{\heartsuit}(\boldsymbol{\lambda}_{\pi_{\infty}'}),$ we obtain that
\begin{align*}
b_1''=n2^{r_2(n-1)}\pi |\mathfrak{D}_F|^{\frac{n}{2}}\log |\mathfrak{N}|\Lambda_{F,\infty}(n)\mathcal{H}_{f_{\infty}}^{\heartsuit}(\boldsymbol{\lambda}_{\pi_{\infty}'})\overline{\langle W_{\infty}',W_{\infty}'\rangle}.
\end{align*}

By Stade's formula (cf. \cite{Sta01} and \cite{Sta02}), 
$$
\langle W_{\infty}',W_{\infty}'\rangle=L_{\infty}(1,\pi_{\infty}'\times\widetilde{\pi}_{\infty}')/\Lambda_{F,\infty}(n).
$$

On the other hand, we have $\Lambda_{F,\infty}(1)=\pi^{-r_2},$ and 
\begin{align*}
a_{-1}'=\underset{s=1}{\Res}\ L(s,\pi'\times\widetilde{\pi}')=L(1,\pi',\Ad)\cdot \underset{s=1}{\Res}\ \zeta_F(s).
\end{align*}

Therefore, \eqref{135.} follows.
\end{proof}

\subsubsection{Proof of Theorem \ref{M} and Corollary \ref{cor58}}
\begin{proof}[Proof of Theorem \ref{M}]
By the construction of $f,$ we have, in conjunction with \eqref{67} and \eqref{138.}, that 
	\begin{align*}
		J_{\Spec}^{\Reg,\heartsuit}(f,\textbf{0})=\int_{\widehat{G(\mathbb{A}_F)}_{\gen}(\mathfrak{N})}\sum_{\phi\in\mathfrak{B}_{\pi}}\frac{|\beta_{\phi,\mathfrak{N}}W(I_n)\cdot \Lambda(1/2,\pi\times\pi')|^2}{\langle\phi,\phi\rangle}\cdot \mathcal{H}_{f_{\infty}}(\boldsymbol{\lambda}_{\pi_{\infty}})d\mu_{\pi},
	\end{align*}
where we also make use of the assumption that $W'(I_n)=1.$

By Propositions \ref{prop22}, \ref{prop23}, \ref{prop45}, Theorems \ref{thm22}, \ref{thm24}, \ref{Red}, and Lemmas \ref{lem57} and \ref{lem58}, we obtain that 
 \begin{align*}
 	J^{\Reg,\heartsuit}_{\Geo}(f,\textbf{0})= c_{F,f_{
 	\infty}}'\mathcal{H}_{f_{\infty}}^{\heartsuit}(\boldsymbol{\lambda}_{\pi_{\infty}'})\Lambda(1,\pi',\Ad)|\mathfrak{N}|^n\log|\mathfrak{N}|+O(|\mathfrak{N}|^n) \end{align*}
where the implied constant relies on $\phi'$ and $f_{\infty}.$ 

Then Theorem \ref{M} follows from Theorem \ref{thm52} and \eqref{137.}.
\end{proof}

\begin{proof}[Proof of Corollary \ref{cor58}]
Let $\pi\in \mathcal{A}_0(\mathfrak{N},\omega).$ Since $W'$ is everywhere unramified, by  \cite{JPSS81} there exists $\phi\in \mathfrak{B}_{\pi}$ such that $\alpha_{\phi,\mathfrak{N}}=\beta_{\phi,\mathfrak{N}}=1.$ So Theorem \ref{M} yields 
\begin{align*}
\sum_{\substack{\pi\in \mathcal{A}_0(\mathfrak{N},\omega)\\ \boldsymbol{\lambda}_{\pi_{\infty}\in \mathcal{D}}}}\frac{|\Lambda(1,\pi\times\pi')|^2}{\Lambda(1,\pi,\Ad)}\ll |\mathfrak{N}|^n\log |\mathfrak{N}|.
\end{align*}

Notice that $\boldsymbol{\lambda}_{\pi_{\infty}}\in \mathcal{D},$ which is compact. By the bound towards the Langlands parameters (cf. \cite{KS03}) we have $\Lambda_{\infty}(1,\pi_{\infty}\times\pi_{\infty}')\asymp 1$ and $\Lambda_{\infty}(1,\pi_{\infty},\Ad)\asymp 1,$ where the implied constants rely on $\mathcal{D}$ and $\pi_{\infty}'.$ Hence  Corollary \ref{cor58} follows.
\end{proof}

\subsection{Simultaneously Nonvanishing}\label{sec8}
Now we apply Theorem \ref{thm52} to the simultaneously nonvanishing problem for Rankin-Selberg $L$-functions. 
\subsubsection{Choice of Local and Global Data}\label{9.2.1}
Let $\mathfrak{N}_j'\subseteq \mathcal{O}_F$ be an integral ideal, $j=1,2.$ Suppose $\mathfrak{N}_1'+\mathfrak{N}_2'=\mathcal{O}_F$ and $\mathfrak{N}_1'\mathfrak{N}_2'\subsetneq \mathcal{O}_F.$
Let $\pi_j'\in \mathcal{A}_0([G'],\omega')$ with arithmetic conductor $\mathfrak{N}_j',$ $j=1,2.$ Then $\pi_1'\not\simeq \pi_2'.$ Let $\phi_j'\in\pi_j'$ be the new form with the normalization that $W_{\phi_j'}'(I_n)=1,$ $j=1, 2.$ 

For $v\mid\mathfrak{N}_2',$ we define  
$$
I_v(\mathfrak{N}_2'):=\big\{g_v=(g_{i,j})_{1\leq i,j \leq n}\in G(\mathcal{O}_{F_v}):\ g_{i,j}\equiv 0\mod{q_v^{e_v(\mathfrak{N}_2')}},\ i>j\big\},
$$  
the Iwahori subgroup of level $e_v(\mathfrak{N}_2').$ Let $I(\mathfrak{N}_2')=\prod_{v}I_v(\mathfrak{N}_2').$

\begin{defn}
We say $\pi_1'$ and $\pi_2'$ are \textit{linked at $v\in\Sigma_F$} if $\pi_{1,v}'\simeq \pi_{2,v}'.$ 
\end{defn}

\begin{defn}\label{defn63}
Let $\mathfrak{N}\subset\mathcal{O}_F$ and $v_*\in \Sigma_F$ be a split place. Let $\varepsilon>0.$ Define 
\begin{align*}
	\mathcal{A}_0(\mathfrak{N};\mathfrak{N}_1',\mathfrak{N}_2',\sigma_{v_*},\omega,\varepsilon)=\big\{\pi\in\mathcal{A}_0([G],\omega)^{K^{\dagger}}:\ \|\boldsymbol{\lambda}_{\pi_{\infty}}\|\leq |\mathfrak{N}|^{\varepsilon},\ \pi_{v_*}\simeq \sigma_{v_*}\big\},
\end{align*}
where $K^{\dagger}=K_0(\mathfrak{N}\mathfrak{N}_1'\mathfrak{p}_{v_*}^{n+2})\cap I(\mathfrak{N}_2'),$ $\pi\in\mathcal{A}_0([G],\omega)^{K^{\dagger}}$ means $\pi$ is right-$K^{\dagger}$-invariant,  and $\|\boldsymbol{\lambda}_{\pi_{\infty}}\|$ is the Euclid norm of $\boldsymbol{\lambda}_{\pi_{\infty}}$ viewed as a point in $\mathbb{C}^{n+1}.$ Then 
\begin{equation}\label{set}
\sum_{\pi\in \mathcal{A}_0(\mathfrak{N};\mathfrak{N}_1',\mathfrak{N}_2',\sigma_{v_*},\omega,\varepsilon)}\sum_{\phi\in\mathfrak{B}_{\pi}}1\ll |\mathfrak{N}|^{n+(n+1)^2\varepsilon}|\mathfrak{N}_1'|^n|\mathfrak{N}_2'|^{\frac{n(n+1)}{2}}q_{v_*}^{n(n+2)},
\end{equation}
where the implied constant is absolute.
\end{defn}
\subsubsection{Statament of the Main Result}
\begin{thm}\label{thm53.}
Let notation be as above. Suppose that $\pi_1'$ and $\pi_2'$ are linked at a fixed split place $v_*\in\Sigma_F$ with $v_*\nmid \mathfrak{N}_1'\mathfrak{N}_2'.$ 
\begin{enumerate}
	\item There exists some $\pi\in\mathcal{A}_0([G],\omega)^{K^{\dagger}}$ with $\pi_{v_*}\simeq \sigma_{v_*}$  such that 
	$$
L(1/2,\pi\times\pi_1')L(1/2,\pi\times\pi_2')\neq 0.
$$
	\item Suppose further that $\pi_{1,\infty}'\simeq \pi_{2,\infty}'.$ Then $L(1/2,\pi\times\pi_1')L(1/2,\pi\times\pi_2')\neq 0$ for some $\pi\in\mathcal{A}_0(\mathfrak{N};\mathfrak{N}_1',\mathfrak{N}_2',\sigma_{v_*},\omega,\varepsilon).$ 
\end{enumerate}

\end{thm}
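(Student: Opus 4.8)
The plan is to deduce Theorem~\ref{thm53.} from the relative trace formula in the form of Theorem~\ref{thm52}, evaluated at $\mathbf{s}=(0,0)$. Fix the new vectors $\phi_j'\in\pi_j'$ away from $v_*$, normalised by $W_{\phi_j'}'(I_n)=1$, and at the split place $v_*$ use the \emph{same} spherical vector for $\pi_1'$ and $\pi_2'$, which is legitimate by the linking hypothesis $\pi_{1,v_*}'\simeq\pi_{2,v_*}'$. Since $\pi_1'\not\simeq\pi_2'$ there is no singular cuspidal datum, so $J_{\Spec}^{\Reg,\heartsuit}=J_{\Spec}^{\Reg}$ and
$$
J_{\Spec}^{\Reg}(f,\mathbf{0})=\int_{\widehat{G(\mathbb{A}_F)}_{\gen}(\mathfrak{N})}\sum_{\phi\in\mathfrak{B}_{\pi}}\Psi(0,\pi(f)W_{\phi},W_{\phi_1'}')\,\overline{\Psi(0,W_{\phi},W_{\phi_2'}')}\,d\mu_{\pi}
$$
converges absolutely, and by Rankin--Selberg theory (\cite{JPSS83},\cite{Jac09}) the $\pi$-term equals $w_f(\pi)\cdot L(1/2,\pi\times\pi_1')\overline{L(1/2,\pi\times\pi_2')}$ for an explicit product $w_f(\pi)$ of local factors. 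Both parts then reduce to the single assertion that, for a suitable $f$ whose levels confine $\pi$ to the prescribed family, $J_{\Geo}^{\Reg}(f,\mathbf{0})\neq0$: granting this, $J_{\Spec}^{\Reg}(f,\mathbf{0})=J_{\Geo}^{\Reg}(f,\mathbf{0})\neq0$ forces some term to be nonzero, hence $L(1/2,\pi_0\times\pi_1')L(1/2,\pi_0\times\pi_2')\neq0$ for some $\pi_0$ in the family.

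Next I would fix the test function. At $v_*$ take $f_{v_*}=h_{v_*}*h_{v_*}^{*}$ with $h_{v_*}$ a matrix coefficient of a supercuspidal representation $\sigma_{v_*}$ of $G(F_{v_*})$ with central character $\omega_{v_*}^{-1}$ (of the conductor fixed in Definition~\ref{defn63}). Because $v_*$ splits and $\sigma_{v_*}$ is supercuspidal, $\pi_{v_*}(f_{v_*})=0$ unless $\pi_{v_*}\simeq\sigma_{v_*}$, and this forces $\pi$ to be \emph{cuspidal} --- a residual or Eisenstein $\pi$ has a parabolically induced, hence non-supercuspidal, component at $v_*$ --- so the whole continuous spectrum disappears from the spectral side. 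At $v\mid\mathfrak{N}_1'$ I take $f_v$ of Hecke-congruence type $K_0(\mathfrak{p}_v^{e_v(\mathfrak{N}_1')})$, at $v\mid\mathfrak{N}_2'$ of Iwahori level $I_v(\mathfrak{N}_2')$, at $v\mid\mathfrak{N}$ the function~\eqref{f_v}, at the remaining finite places the normalised spherical unit, and $f_\infty=h_\infty*h_\infty^{*}$. Using the Rankin--Selberg test-vector theory for $\mathrm{GL}(n+1)\times\mathrm{GL}(n)$ these local choices are arranged so that the local periods $\Psi_v(0,\pi_v(f_v)W_{\phi,v},W_{\phi_j',v}')$ are not identically zero, and $\pi$ is then automatically right-$K^{\dagger}$-invariant with $\pi_{v_*}\simeq\sigma_{v_*}$. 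For part~(1) one may take $h_\infty$ an approximate identity; for part~(2), using the extra hypothesis $\pi_{1,\infty}'\simeq\pi_{2,\infty}'$, I would choose $h_\infty$ by Paley--Wiener so that $\mathcal{H}_{f_\infty}\ge0$, is positive near $\boldsymbol{\lambda}_{\pi_{\infty}'}$, and is supported in $\|\boldsymbol{\lambda}\|\ll|\mathfrak{N}|^{\varepsilon}$, which restricts the surviving $\pi$ to the finite set $\mathcal{A}_0(\mathfrak{N};\mathfrak{N}_1',\mathfrak{N}_2',\sigma_{v_*},\omega,\varepsilon)$ (finiteness by Weyl's law, with cardinality bound~\eqref{set}). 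The use of $h*h^{*}$ at $v_*$ and $\infty$, together with the linking identifications, also renders the corresponding local weights nonnegative, which is what I would exploit below to rule out cancellation of the leading geometric terms.

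It then remains to analyse the geometric side. By Propositions~\ref{prop11'} and~\ref{prop21}, Theorems~\ref{thm22},~\ref{thm24},~\ref{Red},
$$
J_{\Geo}^{\Reg}(f,\mathbf{0})=J^{\Reg}_{\Geo,\sm}(f,\mathbf{0})+J^{\Reg}_{\Geo,\du}(f,\mathbf{0})-\mathcal{F}_{0,1}J^{\bi}_{\Geo}(f,\mathbf{0})-\mathcal{F}_{1,0}J^{\bi}_{\Geo}(f,\mathbf{0})+J^{\Reg,\RNum{2}}_{\Geo,\bi}(f,\mathbf{0}).
$$
Here $J^{\Reg}_{\Geo,\sm}(f,\mathbf{0})$ and $J^{\Reg}_{\Geo,\du}(f,\mathbf{0})$ are $\asymp|\mathfrak{N}|^{n}$: they are, respectively, $\Lambda(1,\pi_1'\times\widetilde{\pi}_2')$ and $\Lambda(0,\pi_1'\times\widetilde{\pi}_2')$ --- both nonzero, since $\pi_1'\not\simeq\pi_2'$, by non-vanishing of Rankin--Selberg $L$-functions on $\Re(s)=1$ and, via the functional equation, at $s=0$ --- multiplied by $\vol(K_0(\mathfrak{N}))^{-1}$ and nonzero local factors at the archimedean and ramified places. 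By Theorem~\ref{Red} the regular orbital integrals vanish once $|\mathfrak{N}|$ is large and $J^{\Reg,\RNum{2}}_{\Geo,\bi}(f,\mathbf{0})\ll|\mathfrak{N}|$, while by Theorems~\ref{thm22},~\ref{thm24} the bi-cell terms $\mathcal{F}_{0,1}J^{\bi}_{\Geo}(f,\mathbf{0})$, $\mathcal{F}_{1,0}J^{\bi}_{\Geo}(f,\mathbf{0})$ are $\ll|\mathfrak{N}|^{2}$; for $n\ge3$ these are of strictly lower order than the two main terms. The final point to settle is that the two main terms do not cancel, which I would achieve by varying the archimedean component of $f$ (or inserting an auxiliary spherical Hecke operator at a place where everything is unramified): the coefficients of $\Lambda(1,\pi_1'\times\widetilde{\pi}_2')$ and $\Lambda(0,\pi_1'\times\widetilde{\pi}_2')$ are independent functionals of that datum, so a generic --- and, for part~(2), positivity-preserving --- choice makes $J_{\Geo}^{\Reg}(f,\mathbf{0})\neq0$ for all sufficiently large $\mathfrak{N}$. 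Part~(1) then follows by letting $\mathfrak{N}$ range over infinitely many ideals (a fixed $\pi$ has bounded conductor), and part~(2) by the archimedean localisation above. For $n=2$, where the $|\mathfrak{N}|^{2}$ bi-cell error matches the size of the main term, one must in addition extract and compare the leading behaviour of $\mathcal{F}_{0,1}J^{\bi}_{\Geo,+}$, $\mathcal{F}_{1,0}J^{\bi}_{\Geo,+}$ and their dual counterparts, or exploit the small support of the supercuspidal $f_{v_*}$ to suppress those orbital integrals.

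The crux --- and the step I expect to be the main obstacle --- is precisely this last one: proving $J_{\Geo}^{\Reg}(f,\mathbf{0})\neq0$, i.e.\ controlling the interaction of the two comparably-sized geometric main terms (and, for small $n$, the bi-cell errors) while simultaneously arranging all relevant local Rankin--Selberg and orbital factors at $v_*$, at $v\mid\mathfrak{N}_1'\mathfrak{N}_2'$ and at $\infty$ to be nonzero. The splitness of $v_*$ and the linking $\pi_{1,v_*}'\simeq\pi_{2,v_*}'$ (and $\pi_{1,\infty}'\simeq\pi_{2,\infty}'$ for part~(2)) are exactly what make the local bookkeeping feasible, since they permit the same local vector for $\pi_1'$ and $\pi_2'$ at $v_*$ and hence a positive-definite local weight there --- the analogue, in the present setting, of the role played by the asymptotic Ichino--Ikeda formula in the $U(3)\times U(2)$ argument of~\cite{MRY22}.
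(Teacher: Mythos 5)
Your overall frame (evaluate the RTF of Theorem \ref{thm52} at $\mathbf{s}=\mathbf{0}$, note that $\pi_1'\not\simeq\pi_2'$ removes the singular data, kill the non-cuspidal spectrum with a supercuspidal matrix coefficient at the split place $v_*$, and reduce everything to $J^{\Reg}_{\Geo}(f,\mathbf{0})\neq 0$) matches the paper, but the step you yourself flag as the crux is a genuine gap, and it is precisely the point where the paper's argument differs from yours. You keep both geometric main terms $J^{\Reg}_{\Geo,\sm}(f,\mathbf{0})$ and $J^{\Reg}_{\Geo,\du}(f,\mathbf{0})$, both of size $\asymp|\mathfrak{N}|^n$, and propose to rule out cancellation by a ``generic'' variation of the archimedean datum or an auxiliary Hecke operator; this is not substantiated, and it is far from clear it can be: at $\mathbf{s}=\mathbf{0}$ the two terms scale identically in $|\mathfrak{N}|$, their archimedean weights are not obviously independent functionals of $f_\infty$ (the dual term is the functional-equation mirror of the small-cell term), and for $n=2$ you additionally have the bi-cell bound $O(|\mathfrak{N}|^{2})$ of Theorems \ref{thm22}--\ref{thm24} sitting at the same order as the main term. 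The paper avoids all of this by building the vanishing into the test function: the choice of $f_v$ at $v\mid\mathfrak{N}_1'$ and $v\mid\mathfrak{N}_2'$ (this is where the hypothesis $\mathfrak{N}_1'\mathfrak{N}_2'\subsetneq\mathcal{O}_F$ enters) forces $J^{\Reg}_{\Geo,\du}(f,\mathbf{s})\equiv 0$ via the local expansion in Proposition \ref{prop18'}, and the supercuspidal matrix coefficient at $v_*$ has vanishing constant term along $N_P$, so $\mathcal{F}_{0,1}J^{\bi}_{\Geo}(f,\mathbf{s})=\mathcal{F}_{1,0}J^{\bi}_{\Geo}(f,\mathbf{s})\equiv 0$ by Propositions \ref{prop22}--\ref{prop23}. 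After Theorem \ref{Red} the only surviving term is $J^{\Reg}_{\Geo,\sm}(f,\mathbf{0})$, whose lower bound $\gg|\mathfrak{N}|^n$ is then an explicit local computation (Kim's test-vector result at $v\mid\mathfrak{N}_1'\mathfrak{N}_2'$, a Kirillov-model/polar-coordinate evaluation at $v_*$ and $\infty$, Stade's formula), not a genericity argument. Without some such mechanism your proof of $J^{\Reg}_{\Geo}(f,\mathbf{0})\neq0$ does not go through.

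A secondary problem is your archimedean localisation for part (2): you ask for $f_\infty$ compactly supported on the group with $\mathcal{H}_{f_\infty}$ supported in $\|\boldsymbol{\lambda}\|\ll|\mathfrak{N}|^{\varepsilon}$, which is impossible (a compactly supported $h_\infty$ has spectral transform of Paley--Wiener type, never compactly supported), and dropping compact support would undermine the geometric-side framework $f\in\mathcal{F}_S(\mathfrak{N},\omega^{-1})$ on which Theorems \ref{thm51}--\ref{thm52} and the orbital estimates rest. The paper instead fixes $f_\infty$ independent of $\mathfrak{N}$ and removes the tail $\|\boldsymbol{\lambda}_{\pi_\infty}\|>|\mathfrak{N}|^{\varepsilon}$ a posteriori, bounding each period by its value at $\Re(s_0)=100$ via the maximum principle and functional equation, controlling $\sum_\phi|W_\phi(I_{n+1})|^2\mathcal{H}_{f_\infty}(\boldsymbol{\lambda}_{\pi_\infty})$ by a Kuznetsov pre-trace inequality, and using Stirling's decay of $\Lambda_\infty(1,\pi_\infty\times\pi_\infty')$ to show the tail is $O(|\mathfrak{N}|^{-100})$; this is what confines the nonvanishing $\pi$ to the finite family $\mathcal{A}_0(\mathfrak{N};\mathfrak{N}_1',\mathfrak{N}_2',\sigma_{v_*},\omega,\varepsilon)$.
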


\subsubsection{Construction of the test function $f$}\label{9.2.3} 
Let $\sigma_{v_*}$ be a simple supercuspidal representation of $G(F_{v_*})$ constructed by \cite{KL15}. Let $W_{v_*}$ be a new vector in the Whittaker model of $\sigma_{v_*}.$ Define $f=\otimes_{v}f_v$ as follows: 
 \begin{itemize}
 	\item  $v\mid \mathfrak{N},$ $f_v$ is defined in \textsection\ref{2.6}; 
	\item $v=v_*,$ $f_v(g):=\langle \sigma_{v_*}(g)W_{v_*},W_{v_*}\rangle_{v_*},$ $g\in G(F_{v_*}),$ is the matrix coefficient;
 	\item  $v\nmid v_*\mathfrak{N}\mathfrak{N}_1'\mathfrak{N}_2'$ and $v\in\Sigma_{F,\fin},$ $f_v$ is defined in \textsection\ref{2.6};
 	\item $v\mid \mathfrak{N}_1'.$ Define the function 
\begin{align*}
 \widehat{\Phi}_v((b_1,\cdots, b_n))=\begin{cases}\overline{\omega}_{1,v}(b_n)\omega_{2,v}(b_n),\ &\text{if $b_1,\cdots, b_{n-1}\in \mathfrak{N}_1'\mathcal{O}_{F_v},$ $b_n\in\mathcal{O}_{F_v}^{\times},$}\\
 			0,\ & \text{otherwise}.
 		\end{cases}
 	\end{align*}
 	Let $\Phi_v$ be the Fourier inversion of $\widehat{\Phi}_v$ relative to the additive character $\psi_v$ (cf. \textsection\ref{notation}).  Define $f_v(g_v):=\int_{Z(F_v)}\tilde{f}_v(z_vg_v)\omega_v(z_v)d^{\times}z_v,$ $g_v\in G(F_v),$ where 
 	 \begin{align*}
 		\tilde{f}_v(g_v)=\frac{\textbf{1}_{M_{n,n}(\mathcal{O}_{F_v})}(A)\Phi_v(\mathfrak{b})\textbf{1}_{M_{1,n}(\mathcal{O}_{F_v})}(\mathfrak{c})\textbf{1}_{\mathcal{O}_{F_v}}(d)\textbf{1}_{G(\mathcal{O}_{F_v})}(g_v)}{\Vol(K_0'(q_v^{e_v(\mathfrak{N}_1')}))}
 	\end{align*}
 	for all $g_v=\begin{pmatrix}
 			A&\mathfrak{b}\\
 			\mathfrak{c}&d
 		\end{pmatrix}\in G(F_v).$ Here $K_0'(q_v^{e_v(\mathfrak{N}_1')})$ is the Hecke congruence subgroup of $K_v'$ of level $e_v(\mathfrak{N}_1').$ 
 		
 		In particular, $f_v$ is right $K_0(q_v)$-invariant, $v\mid \mathfrak{N}_1'$. 
 
 	\item $v\mid \mathfrak{N}_2'.$ Define $f_v(g_v):=\int_{Z(\mathcal{O}_{F_v})}\tilde{f}_v(z_vg_v)\omega_v(z_v)d^{\times}z_v,$ $g_v\in G(F_v),$ where 
 	 \begin{align*}
 		\tilde{f}_v(g_v)=\frac{\textbf{1}_{K_0'(q_v^{e_v(\mathfrak{N}_2')})}(A)\Phi_v(\mathfrak{b})\textbf{1}_{M_{1,n}(\mathcal{O}_{F_v})}(\mathfrak{c})\textbf{1}_{\mathcal{O}_{F_v}}(d)\textbf{1}_{G(\mathcal{O}_{F_v})}(g_v)}{\Vol(K_0'(q_v^{e_v(\mathfrak{N}_2')})}
 	\end{align*}
 	for all $g_v=\begin{pmatrix}
 			A&\mathfrak{b}\\
 			\mathfrak{c}&d
 		\end{pmatrix}\in G(F_v).$

In particular, $f_v$ is right $I_v(\mathfrak{N}_2')$-invariant, $v\mid \mathfrak{N}_2'.$

\item Define $f_{\infty}$ as follows:
\begin{itemize}
\item if $\pi_{1,\infty}'\simeq\pi_{2,\infty}'=\pi_{\infty}',$ let $f_{\infty}=h_{\infty}*h_{\infty}^**\omega_{\infty},$ where $h_{\infty}\in C_c(\overline{G}(F_{\infty}))$ is bi-$K_{\infty}$-invariant (cf. \textsection\ref{8.5.1})  with the property that $\mathcal{H}_{f_{\infty}}^{\heartsuit}(\boldsymbol{\lambda}_{\pi_{\infty}'})\neq 0.$
\item if $\pi_{1,\infty}'\not\simeq\pi_{2,\infty}',$ define $f_{\infty}\in C_c^{\infty}(F_{\infty})$ by setting 
\begin{align*}
f_{\infty}\left(z_{\infty}\begin{pmatrix}
	I_n\\
	c_{\infty}&1
\end{pmatrix}\begin{pmatrix}
	y_{\infty}&u_{\infty}\\
	&1
\end{pmatrix}\right)=\omega_{\infty}^{-1}(z_{\infty})h_0(y_{\infty})h_1(u_{\infty})h_2(\|c_{\infty}\|_{\infty}),
\end{align*}
where $h_0\in C_c^{\infty}(G'(F_{\infty}))$ is bi-$K_{\infty}'$-invariant with $\mathcal{H}_{h_0}(\boldsymbol{\lambda}_{\pi_{2,\infty}'})\neq 0,$ $h_1\in C_c^{\infty}(F_{\infty}^n)$ with the Fourier transform $\widehat{h}_1$ of $h_1$ satisfying that $\widehat{h}_1(u_{\infty})=\widehat{h}_1(\|u_{\infty}\|_{\infty}\eta),$ and 
$$
\widehat{h}_1(|z_{\infty}|_{\infty}\eta)\omega_{1,\infty}(z_{\infty})\omega_{2,\infty}^{-1}(z_{\infty})\geq 0
$$ 
for all $z_{\infty}\in F_{\infty}^{\times};$
$h_2\in C_c^{\infty}(\mathbb{R})$ with $h_2(0)=1,$ and $\|\cdot\|_{\infty}$ is the Euclid norm equipped with $F_{\infty}^n.$ Extend $f_{\infty}$ continuously to $G(F_{\infty}).$
\end{itemize}
\end{itemize}

 
\subsubsection{Proof of Theorem \ref{thm53.}}
Since $\pi_1'\not\simeq\pi_2',$ then $\Lambda(s,\pi_1'\times\widetilde{\pi}_2')$ is entire. By Proposition \ref{prop14}  the function $J^{\Reg}_{\Geo,\sm}(f,\textbf{s})$ is entire. In particular, $J^{\Reg}_{\Geo,\sm}(f,\textbf{0})$ is well defined. 
\begin{lemma}\label{thm53}
Let notation be as above. Let $n\geq 2.$ Let $f,$ $\phi_1'$ and $\phi_2'$ be constructed as above. Then 
\begin{align*}
	J^{\Reg}_{\Geo,\sm}(f,\textbf{0})\gg |\mathfrak{N}|^n,
\end{align*}
where the implied constants depend on $\pi_1',$ $\pi_2'$ and $f.$
\end{lemma}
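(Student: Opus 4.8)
Since $\pi_1'\not\simeq\pi_2',$ the completed Rankin--Selberg $L$-function $\Lambda(1+s_1+s_2,\pi_1'\times\widetilde{\pi}_2')$ is entire and does not vanish at $\mathbf{s}=\mathbf{0};$ hence by Proposition \ref{prop14} the function $J^{\Reg}_{\Geo,\sm}(f,\mathbf{s})$ is entire and $J^{\Reg}_{\Geo,\sm}(f,\mathbf{0})$ is well defined. The plan is to evaluate it through the Euler product valid for $\Re(s_1)+\Re(s_2)>0,$
\begin{equation*}
J^{\Reg}_{\Geo,\sm}(f,\mathbf{s})=\prod_{v\in\Sigma_F}J^{\Reg}_{\Geo,\sm,v}(f_v,\mathbf{s}),
\end{equation*}
noting that each normalized factor $J^{\Reg}_{\Geo,\sm,v}(f_v,\mathbf{s})/L_v(1+s_1+s_2,\pi_{1,v}'\times\widetilde{\pi}_{2,v}')$ is entire in $\mathbf{s}$ and equals $1$ outside the finite set $T=\{v\mid\mathfrak{D}_F\mathfrak{N}\mathfrak{N}_1'\mathfrak{N}_2'v_*\}\cup\Sigma_{F,\infty}$ (the unramified local Rankin--Selberg computation, in which $L_v$ exhausts the contribution). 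Continuing the remaining factors to $\mathbf{s}=\mathbf{0}$ then gives
\begin{equation*}
J^{\Reg}_{\Geo,\sm}(f,\mathbf{0})=\Lambda(1,\pi_1'\times\widetilde{\pi}_2')\prod_{v\in T}\frac{J^{\Reg}_{\Geo,\sm,v}(f_v,\mathbf{0})}{L_v(1,\pi_{1,v}'\times\widetilde{\pi}_{2,v}')},
\end{equation*}
and it remains to control the finite product on the right.

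First I would isolate the dependence on $\mathfrak{N}.$ For $v\mid\mathfrak{N}$ the cusp forms $\phi_1',\phi_2'$ are unramified ($\mathfrak{N}$ being coprime to $v_*\mathfrak{N}_1'\mathfrak{N}_2'$) and $f_v$ is the test function of \textsection\ref{2.6} supported on $Z(F_v)K_0(\mathfrak{p}_v^{e_v(\mathfrak{N})});$ repeating the computations of Lemma \ref{lem10} and Proposition \ref{prop11'} (only $y_v\in K_v'$ and $u_v\in N_P(\mathcal{O}_{F_v})$ survive, the two $\omega_v$-twists cancelling) yields the normalized local factor $\Vol(K_v')/\Vol(\overline{K_v})\asymp q_v^{n\,e_v(\mathfrak{N})},$ so that $\prod_{v\mid\mathfrak{N}}$ contributes $\asymp|\mathfrak{N}|^n.$ At $v\mid\mathfrak{D}_F$ the Macdonald-formula step in Proposition \ref{prop11'} gives a fixed nonzero constant (a power of $N_{F_v}(\mathfrak{D}_{F_v})$), so those places contribute a constant depending only on $F.$

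It then remains to prove that the finitely many remaining factors --- at $v=v_*,$ at $v\mid\mathfrak{N}_1'\mathfrak{N}_2',$ and at $v\mid\infty$ --- are \emph{nonzero}; they depend only on $\pi_1',\pi_2'$ and the fixed data $f_{v_*},f_\infty,\mathfrak{N}_1',\mathfrak{N}_2',$ not on $\mathfrak{N},$ and this non-vanishing is exactly what the construction of \textsection\ref{9.2.3} is designed to secure. I expect this to be the main point. At $v\mid\mathfrak{N}_1'$ (resp.\ $v\mid\mathfrak{N}_2'$) one of $\pi_{1,v}',\pi_{2,v}'$ is ramified and the other unramified, with $\phi_1',\phi_2'$ the normalized new vectors, and the Schwartz datum $\Phi_v$ entering $f_v$ is chosen so that the small-cell integral unfolds to the new-vector local Rankin--Selberg integral, a nonzero multiple of $L_v(1,\pi_{1,v}'\times\widetilde{\pi}_{2,v}').$ At $v=v_*,$ where $f_{v_*}$ is the matrix coefficient of the simple supercuspidal $\sigma_{v_*},$ unfolding the matrix coefficient against the Rankin--Selberg period (the standard local see-saw identity) turns the local integral into a nonzero multiple of $\langle W_{v_*},W_{v_*}\rangle\,L_{v_*}(1,\sigma_{v_*}\times\widetilde{\pi}_{2,v_*}'),$ nonzero because $\sigma_{v_*}$ is generic. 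At $v\mid\infty$ the local factor is nonzero by the construction of $f_\infty$: when $\pi_{1,\infty}'\simeq\pi_{2,\infty}'$ bi-$K_\infty$-invariance and the spherical Plancherel transform reduce it, as in Proposition \ref{prop11'}, to a nonzero multiple of $\mathcal{H}^{\heartsuit}_{f_\infty}(\boldsymbol{\lambda}_{\pi_\infty'});$ when $\pi_{1,\infty}'\not\simeq\pi_{2,\infty}',$ substituting $f_\infty(u(\textbf{c})n\iota(y))=h_0(y)h_1(u)h_2(\|\textbf{c}\|_\infty)$ and carrying out first the Fourier integral over $u$ and then the $y$-integral --- exactly as in the archimedean analysis of Lemma \ref{lem58}, using $\mathcal{H}_{h_0}(\boldsymbol{\lambda}_{\pi_{2,\infty}'})\neq0,$ the normalization $h_2(0)=1,$ and the sign and support conditions on $\widehat{h}_1$ --- produces a nonzero, sign-definite value.

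Combining these, $J^{\Reg}_{\Geo,\sm}(f,\mathbf{0})$ equals $|\mathfrak{N}|^n$ times a nonzero constant depending only on $F,\pi_1',\pi_2'$ and the fixed part of $f,$ whence $J^{\Reg}_{\Geo,\sm}(f,\mathbf{0})\gg|\mathfrak{N}|^n.$ The hardest part is the systematic non-vanishing verification at the ramified and archimedean places for the explicitly chosen data; in the archimedean case one in fact wants a definite sign, so that this estimate cannot be cancelled when it is later assembled into the lower bound for $J^{\Reg,\heartsuit}_{\Geo}(f,\mathbf{0}).$
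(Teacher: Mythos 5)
Your global architecture coincides with the paper's: Proposition \ref{prop11'} (the Euler factorization together with the unramified local computation) reduces everything to $|\mathfrak{D}_F|^{n/2}\Lambda^{S}(1,\pi_1'\times\widetilde{\pi}_2')/\Vol(K_0(\mathfrak{N}))\asymp|\mathfrak{N}|^n$ times finitely many local integrals at $v\mid\mathfrak{N}_1'\mathfrak{N}_2'$, at $v_*$, and at the archimedean places; the paper evaluates the first group exactly as $L_v(1,\pi_{1,v}'\times\widetilde{\pi}_{2,v}')$ (citing \cite{Kim10}), and your archimedean discussion matches the paper's treatment (Kirillov norm, Fourier inversion and polar coordinates as in Lemma \ref{lem58}, Stade's formula, and the sign and support conditions on $\widehat{h}_1$ when $\pi_{1,\infty}'\not\simeq\pi_{2,\infty}'$).

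The genuine gap is at $v=v_*$, the step you yourself flag as the main point. After the Iwasawa/Kirillov reduction and Fourier inversion the local factor collapses to $\zeta_{F_{v_*}}(1)\,d_{v_*}\int_{G'(F_{v_*})}\langle\sigma_{v_*}(\iota(y))W_{v_*},W_{v_*}\rangle\,\overline{\langle\pi_{v_*}'(y)W',W'\rangle}\,dy$, a local Gan--Gross--Prasad-type period of matrix coefficients; it is not a multiple of $\langle W_{v_*},W_{v_*}\rangle\,L_{v_*}(1,\sigma_{v_*}\times\widetilde{\pi}_{2,v_*}')$, and no ``see-saw'' unfolding yields that $L$-value, so the identity you assert is unsubstantiated. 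Moreover genericity of $\sigma_{v_*}$ is not what makes this integral nonzero, and your argument never invokes the hypothesis $\pi_{1,v_*}'\simeq\pi_{2,v_*}'$, which is in the statement of Theorem \ref{thm53.} precisely for this step: it makes the inner pairing an honest matrix coefficient of a single unitary representation $\pi_{v_*}'$, and the paper then deduces nonvanishing from temperedness of $\sigma_{v_*}$, the nonvanishing $L_{v_*}(1/2,\sigma_{v_*}\times\pi_{v_*}')\neq0$, and local Rankin--Selberg theory giving $\Hom_{G'(F_{v_*})}(\sigma_{v_*}\mid_{G'(F_{v_*})},\pi_{v_*}')\neq 0$. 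Without some such input the finite product of special local factors could a priori vanish, and the lower bound $J^{\Reg}_{\Geo,\sm}(f,\mathbf{0})\gg|\mathfrak{N}|^n$ does not follow from your sketch as written.
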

\begin{proof}
By Proposition \ref{prop11'} we have 
\begin{align*}
	J^{\Reg}_{\Geo,\sm}(f,\textbf{0})=\frac{|\mathfrak{D}_F|^{\frac{n}{2}}\Lambda^S(1,\pi'\times\widetilde{\pi}')}{\Vol(K_0(\mathfrak{N}))}\cdot \prod_{v\in S}\mathcal{I}_{v}(f_{v},0),
\end{align*}
where $S=\Sigma_{F,\infty}\cup \{v:\ v\mid v_*\mathfrak{N}_1'\mathfrak{N}_2'\}$ and 
\begin{align*}
\mathcal{I}_{v}(f_{v},0):=&\int_{N'(F_{v})\backslash G'(F_{v})}\int_{G'(F_{v})}\int_{F_{v}^n}f_{v}\left(\begin{pmatrix}
	y_{v}&u_{v}\\
	&1
\end{pmatrix}\right)\theta_{v}(\eta x_{v}u_{v})\\
&\qquad \qquad W_{1,v}'(x_{v})\overline{W_{2,v}'(x_{v}y_{v})}|\det x_{v}|_{v}du_{v}dy_{v}dx_{v}.
\end{align*}

Note that $\mathfrak{N}_1'$ and $\mathfrak{N}_2'$ are coprime. By \cite{Kim10} (cf. Theorem 2.1.1) we have 
\begin{align*}
	\mathcal{I}_{v}(f_{v},0)=L_v(1,\pi_{1,v}'\times\widetilde{\pi}_{2,v}'),\ \ v\mid\mathfrak{N}_1'\mathfrak{N}_2'.
\end{align*}

For $v\in \Sigma_{F,\infty}\cup \{v_*\},$ by Iwasawa decomposition and Kirillov norm (cf. \eqref{143}),
\begin{align*}
\mathcal{I}_{v}(f_{v},0)=&\int_{G'(F_{v})}\int_{K_v'}\int_{F_v^{\times}}\int_{F_{v}^n}f_{v}\left(\begin{pmatrix}
	y_{v}&u_{v}\\
	&1
\end{pmatrix}\right)\theta_{v}(\eta z_{v}k_vu_{v})du_{v}|z_v|_v^{n}d^{\times}z_vdk_v\\
&\qquad \qquad \overline{\langle \pi_{2,v}'(y_v)W_{2,v}',W_{1,v}'\rangle_v} dy_{v},
\end{align*}
where $\langle \cdot,\cdot\rangle_v$ is the local Kirillov norm. By Fourier inversion and polar coordinates we obtain as \eqref{144}, for $v=v_*,$ and all  $v\in\Sigma_{F,\infty}$ if $\pi_{1,\infty}'\simeq\pi_{2,\infty}',$ that 
\begin{equation}\label{146}
\mathcal{I}_{v}(f_{v},0)=\zeta_{F_v}(1)d_v\int_{G'(F_{v})}f_{v}\left(\begin{pmatrix}
	y_{v}&\\
	&1
\end{pmatrix}\right)\overline{\langle \pi_{2,v}'(y_v)W_{2,v}',W_{1,v}'\rangle_v} dy_{v},
\end{equation}
where $d_v$ (e.g., cf. \eqref{145}) is certain positive constant depending only on $F_v$ and $n.$ 

Write $\pi_{1,v_*}'\simeq \pi_{2,v_*}'=\pi_{v_*}'.$  Note that $\sigma_{v_*}$ is tempered. Then $L_{v_*}(1/2,\sigma_{v_*}\times\pi_{v_*}')\neq 0.$ By local Rankin-Selberg theory, $\Hom_{G'(F_{v_*})}(\sigma_{v_*}\mid_{G'(F_{v_*})},\pi_{v_*}')\neq 0.$ Hence, 
\begin{align*}
	\mathcal{I}_{v}(f_{v},0)=&\zeta_{F_v}(1)d_v\int_{G'(F_{v})}\langle \sigma_{v}(y_{v})W_{v},W_{v}\rangle_{v}\overline{\langle \pi_{v}'(y_v)W_{v}',W_{v}'\rangle_v} dy_{v}\gg 1,
\end{align*}
for $v=v_*,$ where the implied constant depends on $\sigma_{v_*}$ and $\pi_{v_*}'.$ 

Now we consider $\mathcal{I}_{\infty}(f_{\infty},0):=\prod_{v\in\Sigma_{F,\infty}}\mathcal{I}_{v}(f_{v},0).$ 
\begin{itemize}
	\item Suppose that $\pi_{1,\infty}'\simeq\pi_{2,\infty}'.$ Then by \eqref{146} we have 
\begin{align*}
	\mathcal{I}_{\infty}(f_{\infty},0)\gg \mathcal{H}_{f_{\infty}}^{\heartsuit}(\boldsymbol{\lambda}_{\pi_{\infty}'})\langle W_{\infty}',W_{\infty}'\rangle_{\infty}\gg 1.
\end{align*}
\item Suppose that $\pi_{1,\infty}'\not\simeq\pi_{2,\infty}'.$ By the definition of $f_{\infty},$ 
\begin{align*}
\mathcal{I}_{\infty}(f_{\infty},0)=&\mathcal{H}_{h_0}(\boldsymbol{\lambda}_{\pi_{2,\infty}'})\int_{F_{\infty}^{\times}}\widehat{h}_1(|z_{\infty}|_{\infty}\eta )|z_{\infty}|_{\infty}^{n}\omega_{1,\infty}(z_{\infty})\omega_{2,\infty}^{-1}(z_{\infty})d^{\times}z_{\infty}\cdot \mathcal{J}_{\infty},
\end{align*}
where 
\begin{align*}
	\mathcal{J}_{\infty}:=\int_{N'(F_{\infty})\backslash \overline{G'}(F_{\infty})}W_{1,\infty}'(x_{\infty}) \overline{W_{2,\infty}'(x_{\infty})}|\det x_{\infty}|_{\infty}dx_{\infty}.\end{align*}

By Stade's formula, $\mathcal{J}_{\infty}=\Lambda_{\infty}(1,\pi_{1,\infty}'\times\widetilde{\pi}_{2,\infty}')/\Lambda_{\infty}(n).$ By Fourier inversion and polar coordinates transform, we have 
\begin{align*}
	\mathcal{I}_{\infty}(f_{\infty},0)\gg \mathcal{H}_{h_0}(\boldsymbol{\lambda}_{\pi_{2,\infty}'})\cdot \Lambda_{\infty}(1,\pi_{1,\infty}'\times\widetilde{\pi}_{2,\infty}')\gg 1.
\end{align*}
\end{itemize}

Therefore, Lemma \ref{thm53} follows.
\end{proof}

\begin{proof}[Proof of Theorem \ref{thm53.}]
Since $\mathfrak{N}_1'\mathfrak{N}_2'\subsetneq \mathcal{O}_F,$ by the construction of $f_v$ at $v\mid\mathfrak{N}_1'\mathfrak{N}_2',$ and  the integral expansion in Proposition \ref{prop18'}, we obtain that 
\begin{align*}
	J^{\Reg}_{\Geo,\du}(f,\textbf{s})\equiv 0
\end{align*}
for $\mathbf{s}=(s,0)$ with $\Re(s)>1.$

Moreover, since $f_{v_*}$ is the matrix coefficient of a supercuspidal representation, it has no constant in the Fourier expansion. By Propositions \ref{prop22} and \ref{prop23}, we have 
\begin{align*}
	\mathcal{F}_{0,1}J^{\bi}_{\Geo}(f,\textbf{s})=\mathcal{F}_{1,0}J^{\bi}_{\Geo}(f,\textbf{s})\equiv 0
\end{align*}
for $\mathbf{s}=(s,0)$ with $\Re(s)>0.$ In the spectral side we have $J^{\Reg,\heartsuit}_{\Spec}(f,\textbf{0})=J_0(f,\mathbf{0}),$ where $J_0(f,\textbf{0})$ is the cuspidal part defined by \eqref{j0} in \textsection\ref{sec4.2}.

By the uniqueness of meromorphic continuation, we have 
\begin{align*}
	J^{\Reg}_{\Geo,\du}(f,\textbf{0})=\mathcal{F}_{0,1}J^{\bi}_{\Geo}(f,\textbf{0})=\mathcal{F}_{1,0}J^{\bi}_{\Geo}(f,\textbf{0})=0.\end{align*}
	
Therefore, Theorems \ref{thm52} and \ref{Red} yield that
\begin{align*}
	J_0(f,\mathbf{0})=J^{\Reg}_{\Geo,\sm}(f,\textbf{0})+O(|\mathfrak{N}|).
\end{align*}

By the spectral expansion of $\K_0(x,y)$ and Lemma \ref{thm53}, we obtain 
\begin{equation}\label{147}
\sum_{\pi\in\mathcal{A}_0([G],\omega)}\sum_{\phi\in\mathfrak{B}_{\pi}}\Psi(0,\pi(f)W_{\phi},W_{1}')\overline{\Psi(0,W_{\phi},W_{2}')}\gg |\mathfrak{N}|^n.
\end{equation}

\begin{itemize}
	\item Suppose $\pi_{1,\infty}'\not\simeq\pi_{2,\infty}'.$ Then by \eqref{147} and the $K$-type of $f,$ there exists $\pi\in\mathcal{A}_0([G],\omega)^{K^{\dagger}}$ such that $\Psi(0,\pi(f)W_{\phi},W_{1}')\overline{\Psi(0,W_{\phi},W_{2}')}\neq 0,$ which implies that 
$L(1/2,\pi\times\pi_1')L(1/2,\pi\times\pi_2')\neq 0.$

\item Suppose $\pi_{1,\infty}'\simeq\pi_{2,\infty}'.$ Fix $f=f_{\infty}\otimes f_{\fin}$ as in \textsection\ref{9.2.3}. By the $K$-type of $f,$ 
\begin{align*}
J_0(f,\mathbf{0})=\sum_{\substack{\pi\in \mathcal{A}_0([G],\omega)^{K^{\dagger}}\\ \pi_{v_*}\simeq \sigma_{v_*}}}\sum_{\phi\in\mathfrak{B}_{\pi}^{K^{\dagger}}}\Psi(0,\pi(f)W_{\phi},W_{1}')\overline{\Psi(0,W_{\phi},W_{2}')} \cdot \mathcal{H}_{f_{\infty}}(\boldsymbol{\lambda}_{\pi_{\infty}}),
\end{align*}
where $\mathfrak{B}_{\pi}^{K^{\dagger}}$ consists of forms in $\mathfrak{B}_{\pi}$ which are right-$K^{\dagger}$-invariant. 

Since $\Psi(s,\pi(f)W_{\phi},W_{1}')\overline{\Psi(\overline{s},W_{\phi},W_{2}')}$ is entire, then by the maximum principle and the functional equation we have 
\begin{align*}
	|\Psi(0,\pi(f)W_{\phi},W_{1}')\overline{\Psi(\overline{0},W_{\phi},W_{2}')}|\leq |\Psi(s_0,\pi(f)W_{\phi},W_{1}')\overline{\Psi(\overline{s_0},W_{\phi},W_{2}')}|,
\end{align*}
where $\Re(s_0)=100.$ In conjunction with the sup-norm of $\phi$ (and thereby $W_{\phi}$) we obtain that $|\Psi(0,\pi(f)W_{\phi},W_{1}')\overline{\Psi(\overline{0},W_{\phi},W_{2}')}|$ is 
\begin{align*}
\ll (N_F(\mathfrak{N}\mathfrak{N}_1'\mathfrak{N_2'})q_{v_*}\|\boldsymbol{\lambda}_{\pi_{\infty}}\|)^{O(1)}|\Lambda_{\infty}(1,\pi_{\infty}\times\pi_{\infty}')|^2|W_{\phi}(I_{n+1})|^2|\mathcal{H}_{f_{\infty}}(\boldsymbol{\lambda}_{\pi_{\infty}})|,
\end{align*}
where the implied constant in $O(1)$ depends at most on $n$ and $F.$

Consider the Kuznetsov formula
\begin{align*}
\sum_{\substack{\pi}}\sum_{\phi\in\mathfrak{B}_{\pi}}\frac{|W_{\phi}(I_{n+1})|^2}{\langle \phi,\phi\rangle}\cdot \mathcal{H}_{f_{\infty}}(\boldsymbol{\lambda}_{\pi_{\infty}})=\int_{[N]}\int_{[N]}\K(u_1,u_2)\overline{\theta}(u_1)
\theta(u_2)du_1du_2,
\end{align*}
where $\pi\in \mathcal{A}_0([G],\omega)^{K^{\dagger}}$ with $ \pi_{v_*}\simeq \sigma_{v_*},$ $\K(\cdot,\cdot)$ is the kernel function associated with $f.$ 

Let $f^{\dagger}=\otimes_vf^{\dagger}_v$ be such that $f_{\infty}^{\dagger}=f_{\infty},$ $f_v^{\dagger}=f_v$ for all $v\nmid \mathfrak{N},$ and for $v\mid\mathfrak{N},$ $f_v^{\dagger}:=\|f_v\|_{\infty}\textbf{1}_{G(\mathcal{O}_{F_v})}*\omega_v.$ So $\supp f\subset \supp f^{\dagger}.$ Notice that (e.g., cf. \cite{Art05}) $\K(\cdot, \cdot)$ grows slowly, i.e., 
\begin{align*}
	\K(u_1,u_2)\ll\sum_{\gamma\in \overline{G}(F)}|f^{\dagger}(u_1^{-1}\gamma u_2)| \ll \|f\|_{\infty}\|u_1\|\|u_2\|,
\end{align*}
where the implied constant relies on $\supp f^{\dagger},$ which is independent of $\mathfrak{N}.$

By definition of $f_{\infty}$ (cf. \textsection\ref{9.2.3}), $0\leq \mathcal{H}_{f_{\infty}}(\boldsymbol{\lambda}_{\pi_{\infty}})\ll 1.$ Hence, 
\begin{align*}
\sum_{\substack{\pi\in \mathcal{A}_0([G],\omega)^{K^{\dagger}}\\ \pi_{v_*}\simeq \sigma_{v_*}}}\sum_{\phi\in\mathfrak{B}_{\pi}} |W_{\phi}(I_{n+1})|^2 \cdot \mathcal{H}_{f_{\infty}}(\boldsymbol{\lambda}_{\pi_{\infty}})\ll |\mathfrak{N}|^n,
\end{align*}
where the implied constant relies on $f_{\infty},$ $\mathfrak{N}_1',$ $\mathfrak{N}_2'$ and $\sigma_{v_*}.$

By definition, we have the decomposition 
\begin{align*}
	J_0(f,\mathbf{0})= \sum_{\pi\in\mathcal{A}_0(\mathfrak{N};\mathfrak{N}_1',\mathfrak{N}_2',\sigma_{v_*},\omega,\varepsilon)}\sum_{\phi\in\mathfrak{B}_{\pi}}\Psi(0,\pi(f)W_{\phi},W_{1}')\overline{\Psi(0,W_{\phi},W_{2}')}+J_0^{\dagger}(f,\mathbf{0}),
\end{align*}
where \begin{align*}
J_0^{\dagger}(f,\mathbf{0}):=\sum_{\substack{\pi\in \mathcal{A}_0([G],\omega)^{K^{\dagger}}\\ \pi_{v_*}\simeq \sigma_{v_*}\\
	\|\boldsymbol{\lambda}_{\pi_{\infty}}\|> |\mathfrak{N}|^{\varepsilon}}}\sum_{\phi\in\mathfrak{B}_{\pi}}\Psi(0,\pi(f)W_{\phi},W_{1}')\overline{\Psi(0,W_{\phi},W_{2}')}.
\end{align*}

Putting the above estimates together we then obtain that
\begin{align*}
	J_0^{\dagger}(f,\mathbf{0})\ll |\mathfrak{N}|^n\sum_{\substack{\pi\in \mathcal{A}_0([G],\omega)^{K^{\dagger}}\\ \pi_{v_*}\simeq \sigma_{v_*}\\
	\|\boldsymbol{\lambda}_{\pi_{\infty}}\|> |\mathfrak{N}|^{\varepsilon}}}(N_F(\mathfrak{N}\mathfrak{N}_1'\mathfrak{N_2'})q_{v_*}\|\boldsymbol{\lambda}_{\pi_{\infty}}\|)^{O(1)}|\Lambda_{\infty}(1,\pi_{\infty}\times\pi_{\infty}')|^2.
\end{align*}

By Stirling formula, $|\Lambda_{\infty}(1,\pi_{\infty}\times\pi_{\infty}')|^2\ll e^{-\pi \|\boldsymbol{\lambda}_{\pi_{\infty}}\|}<e^{-\pi|\mathfrak{N}|^{\varepsilon}}.$ So $J_0^{\dagger}(f,\mathbf{0})\ll |\mathfrak{N}|^{-100},$ where the implied constant relies on $f_{\infty},$ $\mathfrak{N}_1',$ $\mathfrak{N}_2'$ and $\sigma_{v_*}.$ As a consequence, we obtain 
\begin{align*}
	\sum_{\pi\in\mathcal{A}_0(\mathfrak{N};\mathfrak{N}_1',\mathfrak{N}_2',\sigma_{v_*},\omega,\varepsilon)}\sum_{\phi\in\mathfrak{B}_{\pi}}\Psi(0,\pi(f)W_{\phi},W_{1}')\overline{\Psi(0,W_{\phi},W_{2}')}\gg |\mathfrak{N}|^n,
\end{align*}
which implies that $\Psi(0,\pi(f)W_{\phi},W_{1}')\overline{\Psi(0,W_{\phi},W_{2}')}\neq 0$ for some $\phi\in \mathfrak{B}_{\pi}$ with $\pi\in\mathcal{A}_0(\mathfrak{N};\mathfrak{N}_1',\mathfrak{N}_2',\sigma_{v_*},\omega,\varepsilon).$ So $L(1/e,\pi\times\pi_1')L(1/2,\pi\times\pi_2')\neq 0.$
\end{itemize}

Therefore, Theorem \ref{thm53.} holds.
\end{proof}

\bibliographystyle{alpha}

\bibliography{RTF}

\end{document}